\documentclass[10pt]{amsart}

\usepackage{amsmath,amsthm,amscd,amsfonts,amssymb,epic,eepic,bbm,comment,enumitem,mathdots,mathtools}
\usepackage{mathrsfs} 
\usepackage[pagebackref,colorlinks=true,linkcolor=blue,citecolor=blue]{hyperref}
\usepackage{tikz-cd}

\allowdisplaybreaks
\newcommand{\norm}[1]{\left\lVert#1\right\rVert} 

\newtheorem{theorem}{Theorem}[section]
\newtheorem{corollary}{Corollary}[theorem]
\newtheorem{lemma}[theorem]{Lemma}
\newtheorem{proposition}[theorem]{Proposition}
\newtheorem{conjecture}[theorem]{Conjecture}

\theoremstyle{definition}
\newtheorem{remark}[theorem]{Remark}

\newtheorem{definition}[theorem]{Definition}

\newcounter{remarkscounter}

\numberwithin{equation}{section}

\DeclareMathOperator{\tr}{\mathrm{tr}}

\DeclareMathOperator{\vol}{\mathrm{vol}}
\DeclareMathOperator{\supp}{\mathrm{supp}}
\DeclareMathOperator{\Hom}{\mathrm{Hom}}
\DeclareMathOperator{\Spec}{\mathrm{Spec}}
\DeclareMathOperator{\GL}{\mathrm{GL}}
\DeclareMathOperator{\SL}{\mathrm{SL}}
\DeclareMathOperator{\Gal}{\mathrm{Gal}}

\numberwithin{equation}{subsection}

\newcommand{\Ind}{\mathrm{Ind}}

\newcommand{\ZZ}{\mathbb{Z}}
\newcommand{\RR}{\mathbb{R}}
\newcommand{\CC}{\mathbb{C}}
\newcommand{\QQ}{\mathbb{Q}}
\newcommand{\GG}{\mathbb{G}}

\newcommand{\calO}{\mathcal{O}}
\newcommand{\calB}{\mathcal{B}}

\newcommand{\Temp}{\mathrm{Temp}}
\renewcommand{\Im}{\mathop{\mathrm{Im}}}
\renewcommand{\Re}{\mathop{\mathrm{Re}}}

\newcommand{\HP}{\mathop{\mathrm{HP}}\nolimits}

\newenvironment{psmatrix}
  {\left(\begin{smallmatrix}}
  {\end{smallmatrix}\right)}

\begin{document}

\author{Chun-Hsien Hsu}
\address{Department of Mathematics\\
University of Chicago\\
Chicago, IL 60637}
\email{chunhsien@uchicago.edu}

\author{HaoYun Yao}
\address{Department of Mathematics\\
Duke University\\
Durham, NC 27708}
\email{haoyun.yao@duke.edu}

\subjclass[2020]{Primary 11F70; Secondary 11S23, 11S40}
\keywords{Braverman-Kazhdan-Ng\^o-program, Schwartz space, $L$-monoids, Integral representations}

\title{Schwartz spaces on $L$-monoids: non-Archimedean}

\begin{abstract}
    We complete the Braverman-Kazhdan-Ng\^{o} program over non-Archimedean local fields assuming local Langlands conjecture for tempered representations and a natural assumption on the $\gamma$-factors of non-supercuspidal discrete series. In particular, the program is complete unconditionally for general linear groups over non-Archimedean local fields.
\end{abstract}

\maketitle 
\tableofcontents

\section{Introduction} 

Let $F$ be a non-Archimedean local field. Let $\mathcal{O}_F$ denote its ring of integers and $\varpi=\varpi_F\in \mathcal{O}_F$ be a uniformizer. Let $|\cdot|=|\cdot|_F$ be the norm on $F$ such that $q=|\varpi|^{-1}$ is the cardinality of the residue field of $\mathcal{O}_F$. Let $\psi:F\to\mathbb{C}^\times$ be a nontrivial additive character.

Let $G$ be a connected reductive group over $F$ with a nontrivial rational character $\nu=\nu_G:G\longrightarrow\mathbb{G}_m$. Let $\rho:{}^LG\to \GL_{V_\rho}(\mathbb{C})$ be a representation of the $L$-group such that $\rho\circ\nu^\vee(z) =z\cdot\mathrm{id}_{V_\rho}$ for $z\in\mathbb{C}^\times.$ Braverman and Kazhdan \cite{BK-lifting} observed that Langlands functoriality conjecture would imply that one can associate to $\rho$ a Fourier theory on $G(F)$ generalizing Tate's thesis and the work of Godement and Jacquet \cite{GodementJacquetBook}. More explicitly, they conjectured the existence of a Schwartz space $\mathcal{S}\subseteq C^\infty(G(F))$ and a unitary Fourier transform $\mathcal{F}_{\rho,\psi}:\mathcal{S}\longrightarrow \mathcal{S}$ that realize local $L$-factors as the greatest common divisors of local zeta integrals. A more precise formulation is stated in  \cite[\S4]{Ngo:Hankel} and \cite[\S2]{Luo:Ngo}. 

Assume $\rho$ is tempered, i.e., the image of the Weil group under $\rho$ has bounded image. In a recent preprint \cite{DRS}, the authors observed the possibility of constructing a Fourier theory attached to $\rho$ in the Harish-Chandra Schwartz space $\mathcal{C}(G(F))$. Assuming tempered local Langlands correspondence for all Levi subgroups of $G$ (we refer one to \S\ref{ssec:assumptions} for more precise statements), they constructed a Fourier transform on $\mathcal{C}(G(F)).$ We summarize their results as follows.
\begin{theorem}[{\cite{DRS}}\footnote{ In \cite{DRS} local fields are assumed to have characteristic zero. However, all constructions and proofs work without any change for local fields of positive characteristic. }]
    Suppose $\rho$ is tempered and \eqref{LLC:LCFT}--\eqref{LLC:para} hold for all Levi subgroups of $G$. There exists a linear operator 
    \begin{align*}
        \mathcal{F}_{\rho,\psi}:\mathcal{C}(G(F))\to \mathcal{C}(G(F))
    \end{align*}
    that extends to an isometric automorphism of $L^2(G(F),dg)$ with the following properties:
    \begin{enumerate}
        \item $\mathcal{F}_{\rho,\bar{\psi}}\circ \mathcal{F}_{\rho,\psi}=\mathrm{id}$ and $\mathcal{F}_{\rho,\psi}^4=\mathrm{id}.$
        \item $\mathcal{F}_{\rho,\psi}$ is twisted-equivariant, i.e., $\mathcal{F}_{\rho,\psi}\circ R(g,g') = R(g',g)\circ \mathcal{F}_{\rho,\psi}$ for all $g,g'\in G(F),$ where $R:G(F)\times G(F)\longrightarrow\mathrm{Aut}(L^2(G(F)))$ is the regular action.
        \item Let $f\in \mathcal{C}(G(F))$ and $\pi$ be an irreducible tempered representation of $G(F)$. Then
        \begin{align*}
            \pi(\mathcal{F}_{\rho,\psi}(f)^\lor)=\gamma(\tfrac{1}{2},\pi,\rho,\psi)\pi(f),
        \end{align*}
        where $\mathcal{F}_{\rho,\psi}(f)^\lor(g):=\mathcal{F}_{\rho,\psi}(f)(g^{-1})$. Here the $\gamma$-factors are defined using the local Langlands correspondence \eqref{eq:Lgamma:LL}.
    \end{enumerate}
\end{theorem}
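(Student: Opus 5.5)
The plan is to build $\mathcal{F}_{\rho,\psi}$ on the spectral side, using the Harish-Chandra Plancherel theorem and its refinement by the Paley--Wiener theorem for $\mathcal{C}(G(F))$. Harish-Chandra's theory identifies $\mathcal{C}(G(F))$, through the operator-valued Fourier transform $f\mapsto(\pi\mapsto\pi(f))$, with a space of smooth sections. These sections live over the tempered dual, parametrized by pairs $(M,\sigma)$ with $M$ a Levi subgroup and $\sigma$ a discrete series of $M(F)$. Concretely, they are families $\lambda\mapsto \pi(f)$ for $\pi=I_P^G(\sigma\otimes\chi_\lambda)$, $\chi_\lambda$ unitary unramified, which are smooth in $\lambda$ and satisfy the compatibility relations coming from the intertwining operators. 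Property (3) therefore forces the definition
\[
\pi\bigl(\mathcal{F}_{\rho,\psi}(f)^\lor\bigr)=\gamma(\tfrac12,\pi,\rho,\psi)\,\pi(f),
\]
so the whole task is to show that the right-hand side is again in the image of $\mathcal{C}(G(F))$.

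First I would use \eqref{LLC:LCFT}--\eqref{LLC:para} to see that $\gamma(\tfrac12,I_P^G(\sigma\otimes\chi_\lambda),\rho,\psi)=\gamma(\tfrac12,\sigma\otimes\chi_\lambda,\rho|_{{}^LM},\psi)$. This reduces the $\gamma$-factor to a function on the unramified unitary characters of $M$. Second, I would show that this function is smooth in $\lambda$, indeed a finite Laurent polynomial ratio with no poles on the unitary axis. Temperedness of $\rho$ is what makes this work: the parameter $\rho\circ\phi_{\sigma\otimes\chi_\lambda}$ is tempered, so its $L$-factors have no poles on $\Re s>0$, and the poles of $\gamma(s)=\epsilon\, L(1-s,\cdot^\vee)/L(s,\cdot)$ at $s=\tfrac12$ are excluded. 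Third, because the $\gamma$-factor depends only on the $L$-parameter, it is invariant under the Weyl group action on $(M,\sigma\otimes\chi_\lambda)$. Hence multiplying by it preserves the intertwining relations. It also has absolute value $1$ on unitary parameters, since $\overline{\gamma(\tfrac12,\phi,\psi)}=\gamma(\tfrac12,\phi,\bar\psi)$ and $\gamma(s,\phi,\psi)\gamma(1-s,\phi^\vee,\psi)=\phi(-1)$-type identity. Combining these, multiplication preserves the Harish-Chandra Paley--Wiener space, so $\mathcal{F}_{\rho,\psi}(f)^\lor$ is well defined in $\mathcal{C}(G(F))$. By Plancherel, the unimodularity extends it to an isometry of $L^2$.

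For (1), note that $\gamma(\tfrac12,\pi,\rho,\bar\psi)$ differs from $\gamma(\tfrac12,\pi,\rho,\psi)$ by the sign $\det\rho\circ\phi_\pi(-1)$. The composition $\mathcal{F}_{\rho,\bar\psi}\circ\mathcal{F}_{\rho,\psi}$ also involves the contragredient, since the check operation converts $\pi$ into $\pi^\vee$ for the second application. The identity then becomes the functional equation $\gamma(\tfrac12,\pi,\rho,\psi)\gamma(\tfrac12,\pi^\vee,\rho,\bar\psi)=1$, which follows from the LLC compatibility with contragredients. The relation $\mathcal{F}^4=\mathrm{id}$ follows similarly. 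For (2), I would note that $\pi(R(g,g')f)=\pi(g')\pi(f)\pi(g)^{-1}$-type formulas, together with the check, swap the roles of $g$ and $g'$. Since multiplication by a scalar commutes with both, this gives twisted equivariance.

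I expect the main obstacle to be the second step: verifying that the $\gamma$-factor as a function of $\lambda$ lies in the multiplier algebra of the Paley--Wiener space. This means uniform smoothness across all components, including when $\sigma\otimes\chi_\lambda$ approaches reducibility points. There I would rely on the fact that the factor depends only on the parameter, and so is compatible with normalized intertwining operators.
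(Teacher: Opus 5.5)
Your proposal is correct and is the same construction the paper uses, following DRS: $(\mathcal{F}_{\rho,\psi}f)^\vee=\HP^{-1}\bigl(\gamma(\tfrac12,\cdot,\rho,\psi)\,\HP(f)\bigr)$. The scalar multiplier is smooth on each $\Im\Lambda_M$ by temperedness, $W$-invariant by \eqref{LLC:para}, and unimodular, so it preserves Waldspurger's $C^\infty(\Theta)^{\mathrm{inv}}$. The reducibility-point issue you flagged as the main obstacle does not arise, because the sections take values in $\mathrm{End}_\CC(\Ind_{K_P}^K\sigma|_{K_M})$ over compact tori and a scalar commutes with the ${}^\circ c_{P'\mid P}$.

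One slip needs fixing. The conjugation identity is $\overline{\gamma(\tfrac12,\phi,\psi)}=\gamma(\tfrac12,\bar\phi,\bar\psi)=\gamma(\tfrac12,\phi^\vee,\bar\psi)$ for unitary $\phi$. Combined with $\gamma(s,\phi,\bar\psi)=\det\phi(-1)\gamma(s,\phi,\psi)$ and the functional equation, this is what gives $|\gamma(\tfrac12,\phi,\psi)|=1$. As you wrote it, without the contragredient, the unimodularity argument fails for non-self-dual $\phi$.
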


Under the same assumption, the authors proceeded to cut out a $\mathcal{F}_{\rho,\psi}$-stable subspace $\mathcal{S}^{\mathrm{as}}_\rho(G(F))\subseteq \mathcal{C}(G(F)),$ called the \textbf{asymptotic $\rho$-Schwartz space} (c.f.  \cite[\S 5]{DRS} or \S \ref{ssec:ass} below). In what follows, we briefly review the definition of $\mathcal{S}^{\mathrm{as}}_\rho(G(F))$ and explain why it is not the correct Schwartz space attached to $\rho$ when $G/Z_G$ is isotropic, where $Z_G$ is the center of $G$. For detailed discussions, we refer the reader to \S \ref{sec:Schwartz:spectral}.

Fix a maximal split torus $A_0$ and a minimal parabolic subgroup $P_0\ge A_0$ of $G$. Let $P\ge P_0$ be a standard parabolic subgroup of $G$ and $M\ge A_0$ be the standard Levi subgroup of $P$. Let $\sigma$ be an irreducible unitary discrete series of $M(F)$. Let $\chi:M(F)\to \CC^\times$ be an unramified quasi-character, and let $\sigma_\chi:=\sigma\otimes \chi.$ Let $K$ be the stabilizer of a special point in the apartment of $G$ associated to $A_0$. We identify $\mathrm{Ind}_P^G\sigma_\chi$ with $\Ind_{K_P}^K\sigma|_{K_M}:=\Ind_{K\cap P(F)}^K\sigma|_{K\cap M(F)}$ under restriction.

For $f\in \mathcal{C}(G(F))$ and $(v,w)\in \Ind_{K_P}^K\sigma|_{K_M}\times \Ind_{K_P}^K\sigma^\lor|_{K_M},$ the zeta integral 
\begin{align*}
    Z_0(\mathrm{Ind}_P^G \sigma_\chi,f,v,w):=\int_{G(F)} f(g)\langle (\mathrm{Ind}_P^G\sigma_\chi)(g)v,w\rangle dg
\end{align*} 
converges absolutely for $\Re (\sigma_\chi)=\Re(\chi)=0.$ The subspace  $\mathcal{S}^{\mathrm{as}}_\rho(G(F))$ consists of functions $f$ such that $Z_0(\mathrm{Ind}_P^G \sigma_\chi,f,v,w)$ extends to a rational function in $\chi$ that is a holomorphic multiple of $L(\tfrac{1}{2},\sigma_\chi,\rho|_{{}^LM})$. Here the $L$-factors are defined via the local Langlands correspondence \eqref{eq:Lgamma:LL}. 

A function $f\in \mathcal{S}_\rho^{\mathrm{as}}(G(F))$ can have bad analytic behaviors if $G/Z_G$ is isotropic as observed in \cite[Example 6.2]{DRS}. The main obstruction arises as follows: if $\pi$ is a non-supercuspidal irreducible unitary discrete series of $G(F),$ then there is a proper standard parabolic subgroup $P$ of $G$  such that $\pi$ is a subrepresentation of $\Ind_P^G \sigma$ for some (nonunitary) supercuspidal representation $\sigma$ of $M(F).$ We can then identify a matrix coefficient $\langle \pi(g)v,w\rangle$ of $\pi$ as a matrix coefficient $\langle (\mathrm{Ind}_P^G\sigma)(g)v,\tilde{w}\rangle$ of $\Ind_P^G \sigma$ for some lift $\tilde{w}$ of $w$. However, one can find $f\in \mathcal{S}_\rho^{\mathrm{as}}(G(F))$ such that $Z_0(\pi,f,v,w)\neq 0$ while $Z_0(\mathrm{Ind}_P^G\sigma,f, v,\tilde{w})=0,$ which contradicts the expectation that these two quantities are equal. In the above example, one can take $f$ supported on $G(F)^1$ (see \eqref{eq:GF1}), on which it agrees with a matrix coefficient of $\pi$ restricted to $G(F)^1$. Thus $f$ is not compactly supported in $G(F)^1.$

To avoid the pathological examples above, \cite{DRS} introduced the space $C^\infty_{\mathrm{ac}}(G(F))$ consisting of smooth functions on $G(F)$ whose restriction to any coset of $G(F)^1$ in $G(F)$ are compactly supported (see \eqref{eq:ac}). As a more direct approach, we introduce a space $\mathcal{C}_{\mathrm{cs}}(G(F))\subseteq \mathcal{C}(G(F))$ consisting of functions with compatible zeta functions (see \hyperref[def:cs]{Definition \ref{def:cs}}). The third approach is to observe that examples above arise because the zeta functions of general $f\in \mathcal{S}^{\mathrm{as}}_\rho(G(F))$ only need to extend meromorphically but do not necessarily converge absolutely in an open region. Therefore, we define in \S \ref{sec:Schwartz:analytic} a space $\mathcal{S}_\rho(G(F)),$ referred to as  the \textbf{$\rho$-Schwartz space},  consisting of functions whose zeta integrals converge in positive cones (see \hyperref[def:positivecone]{Definition \ref{def:positivecone}}).

One of our main results is to show that all three fixes coincide and give the correct Schwartz spaces attached to $\rho$ (up to a central shift).
\begin{theorem}\label{thm:main:compactsupport}
    We have
    \begin{align*}
        \mathcal{S}_\rho(G(F))=|\nu|^{-1/2}\left(\mathcal{S}_\rho^{\mathrm{as}}(G(F))\cap \mathcal{C}_{\mathrm{cs}}(G(F))\right)=|\nu|^{-1/2}\mathcal{S}_\rho^{\mathrm{as}}(G(F))\cap C^\infty_{\mathrm{ac}}(G(F)).
    \end{align*}
    Furthermore, any function in the spaces above has compact support in $X_{\rho}(F),$ the $L$-monoid attached to $\rho$ constructed in \cite[\S5]{Ngo:Hankel}.
\end{theorem}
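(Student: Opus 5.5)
I plan to prove the three equalities by a cycle of inclusions, using the Paley--Wiener and Plancherel descriptions of $\mathcal{C}(G(F))$ and of the $G(F)^1$-graded pieces. Recall $G(F)/G(F)^1$ is a lattice $\Lambda$ of rank equal to the split rank of $Z_G$ (up to finite index). Any $f\in\mathcal{C}(G(F))$ then decomposes as a Fourier series along the unramified characters of $G(F)$ trivial on $G(F)^1$. Write $f=\sum_{\lambda\in\Lambda} f_\lambda$, with $f_\lambda$ supported on the coset indexed by $\lambda$. The shift $|\nu|^{-1/2}$ only translates the zeta integrals $Z_0(\cdot,f,\cdot,\cdot)$ from $s=\tfrac12$ to $s=0$ along the central direction $\nu^\vee$. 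I therefore work throughout with $f'=|\nu|^{1/2}f$.

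\textbf{Step 1: $\mathcal{S}_\rho\subseteq|\nu|^{-1/2}(\mathcal{S}^{\mathrm{as}}_\rho\cap C^\infty_{\mathrm{ac}})$.} Take $f\in\mathcal{S}_\rho(G(F))$. Absolute convergence of the zeta integrals in a positive cone, for all $\sigma_\chi$, forces the following. Restrict to $\chi$ that are central, i.e.\ unramified characters of $G(F)/G(F)^1$. Then $\chi\mapsto Z_0(\Ind_P^G\sigma_\chi,f,v,w)$ is a Laurent series in $\chi$ that converges on a translate of a positive cone. Its coefficients are the pairings of $f_\lambda$ with matrix coefficients of $\Ind_P^G\sigma$ restricted to $G(F)^1$. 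The Paley--Wiener theorem on $G(F)^1$ and the meromorphic/rational continuation assumed in the definition give two facts. First, for each $\lambda$, $f_\lambda$ has compact support; here I use that tempered Schwartz functions whose Harish-Chandra transform is polynomial in the compact torus $\mathrm{Im}\,X(M)$ are compactly supported. Second, $f_\lambda=0$ for $\lambda$ outside a shifted cone. This gives $f\in C^\infty_{\mathrm{ac}}$, and it also gives the support statement: the cone of $\lambda$ matches the image of $X_\rho$ under $\nu$, so $\supp f$ is contained in a compact subset of $X_\rho(F)$ by Ngô's description of $X_\rho$ via the dominant weights of $\rho$. The gcd property with $L(\tfrac12,\sigma_\chi,\rho)$ is then immediate from the definition, so $f'\in\mathcal{S}^{\mathrm{as}}_\rho$.

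\textbf{Step 2: $\mathcal{S}^{\mathrm{as}}_\rho\cap C^\infty_{\mathrm{ac}}\subseteq\mathcal{S}^{\mathrm{as}}_\rho\cap\mathcal{C}_{\mathrm{cs}}$.} Compact support on each coset means the zeta integral against a discrete series $\pi\subset\Ind_P^G\sigma$ is a finite sum on each coset. Hence $Z_0(\pi,f,v,w)=Z_0(\Ind_P^G\sigma,f,v,\tilde w)$ as an identity of Laurent polynomials in the central variable, which is exactly the compatibility required in Definition~\ref{def:cs}.

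\textbf{Step 3: $\mathcal{S}^{\mathrm{as}}_\rho\cap\mathcal{C}_{\mathrm{cs}}\subseteq|\nu|^{1/2}\mathcal{S}_\rho$.} This is the main obstacle. Here I must show absolute convergence in a positive cone, using only that the zeta functions are rational multiples of $L$-factors and are compatible along Jacquet embeddings. I will first use compatibility to reduce all zeta functions to those of representations induced from supercuspidals, $\Ind_{P}^G\tau_\chi$ with $\tau$ supercuspidal. For these, the matrix coefficients are controlled by Casselman's asymptotics with exponents that are explicit in $\chi$. Next I will recover $f$ spectrally via Plancherel inversion and shift contours in $\chi$ into the region $\Re\chi\gg0$ in the positive cone. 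This shift is legitimate because the poles are those of $L(s,\tau_\chi,\rho)$, which lie in a half-space determined by the weights of $\rho$. The assumption on the $\gamma$-factors of non-supercuspidal discrete series is needed exactly here. It ensures that the poles predicted by the $L$-factors of the discrete series agree with those coming from the supercuspidal support, so compatibility does not create extra poles. The shifted integral then yields exponential decay of $f$ away from $X_\rho$, giving convergence in the cone. The argument ends by combining Steps 1–3, so each inclusion closes the cycle and the support claim follows from Step 1.
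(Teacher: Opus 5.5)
Your Step 2 is fine. It is the converse of Corollary~\ref{Sas+csisac}, obtained by summing the coset-wise identities for $f\mathbf{1}_\alpha\in C_c^\infty(G(F))$. But the proposal misses the central point of the theorem.

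\textbf{The support claim in Step 1.} You deduce that $\supp f$ is compact in $X_\rho(F)$ from two facts: each $f\mathbf{1}_\alpha$ is compactly supported, and $\alpha$ lies in a shifted cone. These only control $H_G(g)$, not the position of $g$ inside $\alpha G(F)^1$ uniformly in $\alpha$. For instance, take $G=\GL_2$, $\rho=\mathrm{std}$, so $X_\rho=\mathrm{Mat}_2$. The function $\sum_{k\ge 0}\mathbf{1}_{K\mathrm{diag}(\varpi^{-k},\varpi^{2k})K}$ has both properties but is unbounded in $\mathrm{Mat}_2(F)$. What is needed is $\supp f\subseteq S_0(r)$ with $r$ independent of $\alpha$ (Proposition~\ref{prop:key}, then Lemma~\ref{lem:bounded}). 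That uniform bound is exactly the hard inclusion.

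\textbf{Step 3 does not supply it either.} It invokes \eqref{LLC:gamma}, which is not a hypothesis of the theorem and is not needed; it only enters for $\mathcal{F}_\rho$-stability (Proposition~\ref{LLC7=Fourierstable}). Contour-shifting the Harish-Chandra inversion to get ``decay'' is also not a workable mechanism. That formula carries discrete-series components and the Plancherel density, and what is needed is not decay but an actual support bound uniform in $\alpha$. The paper's route is:
\begin{enumerate}
\item From (CS2) and Lemma~\ref{alpha:supp:1}, get $f\mathbf{1}_\alpha\in C_c^\infty(G(F))$, so Heiermann's inversion formula over $\Re(\chi)=\mu\gg_P0$ applies.
\item The uniform input is again Lemma~\ref{alpha:supp:1}: the Laurent support of the $\alpha$-component $p(\chi)$ satisfies $\langle\tau,\beta\rangle\ge -c$ for all $\tau\in\mathcal{B}_0$, with $c$ independent of $\alpha$.
\item Combine this with Heiermann's asymptotic expansion on $A_0^+(\theta,t,h_\theta(t))$. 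Shift contours in directions $\tilde\tau\in\mathrm{conv}(\mathcal{B}_0)\cap X^*(M')^+_\RR$, which do not cross poles of the intertwining operators, and induct on $\theta$. This gives vanishing outside $S_0(r)$ (Lemma~\ref{lemma:support}).
\item Lemma~\ref{supporttrunc} converts the support bound into absolute convergence in a positive cone, and Proposition~\ref{Snur:HCSchwartz} concludes.
\end{enumerate}

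\textbf{A false lemma in Step 1.} A function in $\mathcal{C}(G(F))$ with polynomial Harish-Chandra transform need not be compactly supported on cosets of $G(F)^1$. This is precisely \cite[Example 6.2]{DRS}, recalled in the introduction, and it is why $\mathcal{C}_{\mathrm{cs}}(G(F))$ is introduced at all. Coset-wise compactness also requires (CS2) (Lemma~\ref{lem:comp+poly}, via Theorem~\ref{BH:PW}). For $f\in\mathcal{S}_\rho(G(F))$ you must first derive (CS2) from absolute convergence against representations induced from supercuspidals (Lemma~\ref{lem:rapiddecay}, Proposition~\ref{Cfpositive}). In other words, you have to pass through $\mathcal{C}_{\mathrm{cs}}$ as the paper does, rather than around it.

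Finally, $|\nu|^{1/2}f\in\mathcal{C}(G(F))$ is not ``immediate from the definition''. The definition only gives $|\nu|^rf\in\mathcal{C}(G(F))$ for $r\gg0$; getting down to $r=\tfrac12$ needs the holomorphic-continuation argument of Proposition~\ref{Snur:HCSchwartz}.
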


\begin{remark}
    We have chosen the normalization shift $|\nu|^{-1/2}$ in the definition of $\mathcal{S}_\rho(G(F))$ so that analytic behaviors of functions in $\mathcal{S}_\rho(G(F))$ are controlled via $L(s,\sigma,\rho|_{{}^LM})$ instead of $L(\tfrac{1}{2}+s,\sigma,\rho|_{{}^LM}).$ Under this shift, the Fourier transform on $\mathcal{S}_\rho(G(F))$ is $|\nu|^{-\frac{1}{2}}\mathcal{F}_{\rho,\psi}|\nu|^{\frac{1}{2}}$.
\end{remark}

Sections \ref{sec:Schwartz:spectral} and  \ref{sec:Schwartz:analytic} are devoted to establishing the identities of these three spaces. The most technical part of the proof is to show the inclusion $|\nu|^{-1/2}\mathcal{S}_\rho^{\mathrm{as}}(G(F))\cap C^\infty_{\mathrm{ac}}(G(F))\subseteq \mathcal{S}_\rho(G(F)),$ which is done in \S \ref{ssec:Supportproof}. Our argument is a modification of the proof of matrical Paley-Wiener theorem (\hyperref[BH:PW]{Theorem \ref{BH:PW}}) in \cite{Heiermann:HeckePlan} that uses an analogue of Harish-Chandra's Plancherel formula. 

As observed in \cite{DRS}, to prove that $\mathcal{S}_\rho^{\mathrm{as}}(G(F))\cap C^\infty_{\mathrm{ac}}(G(F))$ is stable under $\mathcal{F}_{\rho,\psi}$ requires information of cuspidal supports of discrete series of all Levi subgroups of $G$. Indeed, we show in \hyperref[LLC7=Fourierstable]{Proposition \ref{LLC7=Fourierstable}} that $\mathcal{S}_\rho^{\mathrm{as}}(G(F))\cap C^\infty_{\mathrm{ac}}(G(F))$ being stable under $\mathcal{F}_{\rho,\psi}$ is equivalent to \eqref{LLC:gamma}. The assumption \eqref{LLC:gamma} holds for general linear groups \cite[Proposition 3.3]{DRS} due to the Bernstein-Zelevinsky classification.

 Assuming in addition that \eqref{LLC:gamma} holds, we prove that local $L$-factors can be realized as the greatest common divisors of zeta integrals in \hyperref[multigcd]{Theorem \ref{multigcd}} and \hyperref[thm:gcd]{Theorem \ref{thm:gcd}}. For simplicity, we state a weaker form of \hyperref[thm:gcd]{Theorem \ref{thm:gcd}} below.

By the Langlands classification \cite{Silb:classification, BW:book, Representation-p-adic}, each irreducible smooth representation $\pi$ of $G(F)$ is the unique irreducible quotient of some $\mathrm{Ind}_P^G \sigma,$ where $P$ is a standard parabolic subgroup of $G$, and $\sigma$ is an irreducible essentially tempered representation of $M(F)$ with dominant $\Re(\sigma)$. We let
\begin{align*}
    L(s,\pi,\rho):=L(s,\sigma,\rho|_{{}^LM }),\quad \quad \gamma(s,\pi,\rho,\psi):=\gamma(s,\sigma,\rho|_{{}^L M},\psi).
\end{align*}

\begin{theorem}\label{BKN:conj} Suppose $G$ is split and \eqref{LLC:LCFT}--\eqref{LLC:gamma} hold for all Levi subgroups of $G$. Let $\pi$ be an irreducible smooth representation of $G(F)$ and let $\mathcal{C}(\pi)$ denote the space of matrix coefficients of $\pi.$
\begin{enumerate}
    \item\label{BKN:zeta} Let $f\in\mathcal{S}_\rho(G(F))$ and $c\in\mathcal{C}(\pi).$ The local zeta integral
    \begin{align*}
        Z(s,f,c) := \int_{G(F)} f(g)c(g)|\nu(g)|^s dg
    \end{align*}
    is absolutely convergent for $\mathrm{Re}(s)$ large and extends to a rational function in $q^{-s}$.
    \item\label{BKN:span:L} The set
    \begin{align*}
        \mathrm{Span}_{\mathbb{C}}\{Z(s,f,c):(f,c)\in\mathcal{S}_\rho(G(F))\times\mathcal{C}(\pi)\}
    \end{align*}
    is a nonzero fractional ideal of $\mathbb{C}[q^{\pm s}]$ generated by $L(s,\pi,\rho)$.
    \item\label{BKN:basic} There exists a basic function $|\nu|^{-\frac{1}{2}}b_\rho\in\mathcal{S}_\rho(G(F))$ such that $Z(s,|\nu|^{-\frac{1}{2}}b_\rho,c)$ is only nonzero when $\pi$ is unramified and $c$ is bi-$K$-invariant, in which case $Z(s,|\nu|^{-\frac{1}{2}}b_\rho,c^\circ) = L(s,\pi,\rho)$ where $c^\circ$ is the zonal spherical function of $\pi$. Furthermore, $\mathcal{F}_{\rho,\psi}(b_\rho) = b_\rho$ when $\psi$ is unramified.
       
    \item\label{BKN:FT}  We have functional equations
        \begin{align*}
        Z(1-s,|\nu|^{-\frac{1}{2}}\mathcal{F}_{\rho,\psi}|\nu|^{\frac{1}{2}} f,c^\vee) = \gamma(s,\pi,\rho,\psi)Z(s,f,c).
        \end{align*}
\end{enumerate}
\end{theorem}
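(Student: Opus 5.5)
We reduce every statement to the tempered case, using the Langlands classification together with the definition of $\mathcal{S}_\rho(G(F))$ through zeta integrals that converge in positive cones.

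\textbf{Step 1: reduction to an induced representation.} Write $\pi$ as the Langlands quotient of $\Ind_P^G\sigma$, with $\sigma$ essentially tempered and $\Re(\sigma)$ dominant. Every $c\in\mathcal{C}(\pi)$ is a matrix coefficient $\langle(\Ind_P^G\sigma)(g)v,\tilde w\rangle$, where $\tilde w$ is a vector of the contragredient pairing with the quotient. Write $\sigma$ as a subrepresentation of $\Ind_{P'\cap M}^M\tau_\chi$, with $\tau$ a unitary discrete series of $M'(F)$ and $\chi$ unramified. Then $Z(s,f,c)$ becomes a specialization of the zeta integral $Z_0(\Ind_{P'}^G\tau_{\chi|\nu|^s},f,\cdot,\cdot)$. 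This specialization is legitimate because the positive-cone convergence in Definition~\ref{def:positivecone} holds for $\Re(s)\gg0$, and Theorem~\ref{thm:main:compactsupport} gives compact support in $X_\rho(F)$.

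\textbf{Step 2: part \eqref{BKN:zeta}.} Absolute convergence for $\Re(s)$ large follows from Step 1. Rationality in $q^{-s}$ follows because the zeta integral is, by definition of $\mathcal{S}^{\mathrm{as}}_\rho$, a rational function of $\chi$ that is a holomorphic multiple of $L(s,\tau_\chi,\rho|_{{}^LM'})$.

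\textbf{Step 3: part \eqref{BKN:span:L}, the inclusion into $L(s,\pi,\rho)\mathbb{C}[q^{\pm s}]$.} This is the main obstacle. The holomorphic-multiple property only controls $L(s,\tau_\chi,\rho)$, whereas we need $L(s,\sigma,\rho|_{{}^LM})$, and the two differ when $\sigma$ is a non-supercuspidal discrete series or a tempered representation that is not generic. Here assumption \eqref{LLC:gamma}, as encoded in Theorem~\ref{multigcd}, should supply the required compatibility of $L$-factors along cuspidal supports. I would first try to prove the multivariable gcd statement, Theorem~\ref{multigcd}, for all $\Ind_{P}^G\sigma_\chi$ with $\sigma$ tempered, and then restrict to the line through $\Re(\sigma)$ dominant. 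Passing to the Langlands quotient keeps the span inside the ideal, because the quotient's coefficients form a subspace.

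\textbf{Step 4: part \eqref{BKN:span:L}, the reverse inclusion.} We must produce $f\in\mathcal{S}_\rho$ realizing $L(s,\pi,\rho)$. Build $f$ by the matrical Paley--Wiener theorem (Theorem~\ref{BH:PW}), prescribing its operator-valued Fourier transform to be $L(\tfrac12+\cdot,\sigma_\chi,\rho)$ times a Paley--Wiener section near $\sigma$ and supported on the component of $\sigma$. By the characterization in Theorem~\ref{thm:main:compactsupport}, such an $f$ lies in $\mathcal{S}_\rho$. Nonvanishing on the quotient $\pi$ is arranged by choosing $v,\tilde w$ that pair nontrivially with the Langlands quotient. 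Splitness of $G$ is used here, to reduce unramified characters of $M$ to cocharacter parametrizations.

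\textbf{Step 5: parts \eqref{BKN:basic} and \eqref{BKN:FT}.} For \eqref{BKN:basic}, define $b_\rho$ spectrally as the function whose spherical transform is $L(\tfrac12,\pi,\rho)$ on unramified tempered $\pi$ and $0$ on other Bernstein components; equivalently, $b_\rho$ is the element of the spherical Hecke algebra completion given by Satake inversion. It lies in $\mathcal{S}_\rho$ by Steps 2--3, it annihilates ramified $\pi$ and non-$K$-invariant $c$ by $K$-biinvariance, and analytic continuation yields $L(s,\pi,\rho)$ against $c^\circ$. Its Fourier invariance follows from property (3) of $\mathcal{F}_{\rho,\psi}$ together with the unramified identity $\gamma=L(1-s,\pi^\vee)/L(s,\pi)$ when $\psi$ is unramified, which gives $\mathcal{F}(b_\rho)^\vee=b_\rho^\vee$ spectrally. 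For \eqref{BKN:FT}, property (3) gives the equation on the tempered unitary axis. It then extends by rationality in $\chi$ (Step 2) to all $\Ind_P^G\sigma_\chi$, and descends to the Langlands quotient using the definition $\gamma(s,\pi,\rho,\psi)=\gamma(s,\sigma,\rho|_{{}^LM},\psi)$ and the stability of $\mathcal{S}_\rho$ under the shifted Fourier transform (Proposition~\ref{LLC7=Fourierstable}).
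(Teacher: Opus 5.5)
\textbf{The genuine gap is Step~4}, which is where essentially all of the content of part~(2) lies. Suppose you prescribe $\HP$ to be $L(\tfrac12,\sigma_\chi,\rho_M)$ times a Paley--Wiener section $T$ on the single component of $\sigma$, and zero elsewhere. This does not produce an element of $|\nu|^{1/2}\mathcal{S}_\rho(G(F))$. Even granting membership in $\mathcal{S}^{\mathrm{as}}_\rho(G(F))$, such a function violates \eqref{CS2} in general. Take $\sigma_i\in\Pi_2(M_i)$ embedded in $\Ind_{P\cap M_i}^{M_i}\sigma_{\chi_i}$. The transform of your function on the $\sigma_i$-component vanishes, while the zeta integral of $\Ind_P^G\sigma_{\chi_i\chi}$ against $v\in\Ind_{P_i}^G\sigma_i$ and a lift $\tilde w$ equals $L\cdot\langle T(\chi_i\chi)v,\tilde w\rangle$, which is generically nonzero. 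This is exactly the pathology of \cite[Example 6.2]{DRS}. So Theorem~\ref{thm:main:compactsupport} cannot be invoked, and Theorem~\ref{BH:PW} does not apply to $L\cdot T$, which is not polynomial.

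The paper's remedy is to constrain the sections.
Lemma~\ref{lem:varying} builds $U_\sigma$-invariant polynomial sections $T_{S,C}$ annihilating $\Ind_{K_{P_i}}^K\sigma_{i,1}|_{K_{M_i}}$ along $\chi_i\Lambda_{M_i}$ for every such $i$. It then shows, via Lemma~\ref{lem:cuspnonzero} (Casselman's criterion and the Geometric lemma) and the Nullstellensatz, that the pairings of these constrained sections still generate the unit ideal. Lemma~\ref{lem:cuspconst} then expands $L$ into its $\Omega_G$-coefficients, applies Paley--Wiener termwise, and uses Lemma~\ref{lem:cs:subquo} to land in $\mathcal{S}^{\mathrm{as}}_\rho\cap C^\infty_{\mathrm{ac}}$.

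Two further pieces are missing from Step~4. First, ``nonvanishing on $\pi$'' only gives a nonzero ideal. To get $L(s,\pi,\rho)$ as generator you need the quotients $Z/L$ to have no common zero with $w$ confined to $\pi^\vee$. The paper arranges this by choosing $w_{S,C}\in\pi^\vee\subseteq\bigcap_{i\in S}\Ind_{K_{P_i}}^K V_{i,\tau_{i,C}}^\vee$ when $\lambda\in C$. Second, if the discrete-series support of $\sigma$ is not supercuspidal, $f$ has to be built on the cuspidal component, and its transform on the component of $\sigma'$ is then forced. This is where \eqref{LLC:gamma} really enters, through Lemma~\ref{lem:auxL}. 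That lemma supplies a $W(M,\mathcal{O}_{\sigma\CC})$-invariant function on the cuspidal orbit that is divisible by the relevant intermediate $L$-factors and agrees with the $L$-factor of $\sigma'$ up to a unit. None of these ingredients appears in your plan.

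\textbf{Two further corrections.} Your Step~3 misplaces the difficulty. Condition \eqref{S2} is imposed for every $(M',\tau)\in\widetilde{\Temp}_\Ind(G)$, i.e.\ for all discrete series of standard Levi subgroups, not only supercuspidal ones. By \eqref{LLC:para}, a tempered $\sigma\hookrightarrow\Ind_{P'\cap M}^M\tau_\chi$ has the same parameter as $\tau_\chi$, so $L(s,\sigma,\rho_M)=L(s,\tau_\chi,\rho_{M'})$ exactly. Hence the inclusion of the span in $L(s,\pi,\rho)\CC[q^{\pm s}]$ is immediate once $c$ is written through a lift $\tilde v$ and $w\in\pi^\vee\subseteq\Ind_P^G(\sigma_\lambda)^\vee$, as in Proposition~\ref{prop:generalFE}. \eqref{LLC:gamma} plays no role there, and the ``$\subseteq$'' half of Theorem~\ref{multigcd} is just the definition of $\mathcal{S}_\rho$.

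In Step~5, the membership $|\nu|^{-1/2}b_\rho\in\mathcal{S}_\rho(G(F))$ does not follow from Steps~2--3, which only concern functions already known to lie in $\mathcal{S}_\rho$. You need either $b_\rho\in C^\infty_{\mathrm{ac}}(G(F))$ \cite[Proposition 6.16]{DRS}, or \eqref{CS2} for $b_\rho$ via the classification of unramified representations; the latter works because subrepresentations without $K$-fixed vectors are killed by $\HP(\mathbf{1}_K)$. Splitness is what makes $b_\rho$ available ($G$ unramified, $K$ hyperspecial), not an ingredient of Step~4. Apart from this, your treatment of parts (1) and (4) matches the paper's Proposition~\ref{prop:generalFE}.
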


Our proof of \hyperref[BKN:conj]{Theorem \ref{BKN:conj}.(2)} relies on Casselman's criterion on square-integrability, the Geometric lemma, the matrical Paley-Wiener theorem and \hyperref[thm:main:compactsupport]{Theorem \ref{thm:main:compactsupport}} to construct functions in $\mathcal{S}_\rho(G(F))$ spectrally. Indeed, we prove a multivariate generalization of \hyperref[BKN:conj]{Theorem \ref{BKN:conj}.(2)} in \hyperref[multigcd]{Theorem \ref{multigcd}}: for an irreducible unitary discrete series $\sigma$ of a standard Levi subgroup $M(F)$
\begin{align*}
    \CC[\Lambda_M]=\mathrm{Span}_\CC\left\{\frac{Z_0(\mathrm{Ind}_P^G \sigma_\chi,f,v,w)}{L(0,\sigma_\chi,\rho|_{{}^LM})}: f\in \mathcal{S}_\rho(G(F)),(v,w)\in \Ind_{K_P}^K\sigma\vert_{K_M}\times \Ind_{K_P}^K\sigma^\vee\vert_{K_M}\right\},
\end{align*}
where $\Lambda_M$ is the complex variety of the unramified quasi-characters of $M(F)$. As a consequence, we deduce that the na\"ive Schwartz space $C^\infty_c(G(F))+\mathcal{F}_\rho(C^\infty_c(G(F)))$ does not contain the basic function $b_\rho$ except possibly when $G$ has split rank $1$! In particular, in general the na\"ive Schwartz space is a proper subspace of $|\nu|^{1/2}\mathcal{S}_\rho(G(F))$ (see \hyperref[remark:small]{Remark \ref{remark:small}}).

Finally, in \S\ref{sec:toric:Schwartz} and \S\ref{sec:Lmonoids} we study the underlying geometry of the $\rho$-Schwartz spaces. In particular, we show that $b_\rho$ is supported on the integral points of the $L$-monoid $X_\rho$. As every reductive monoid is governed by an affine toric variety, we discuss in \S\ref{sec:toric:Schwartz}  in detail $\mathcal{S}_\rho(G(F))$ when $G$ is a torus, extending the discussion in \cite[\S 2.2]{Luo:Ngo}. Examples related to symmetric powers of $\GL_2$ are discussed in \S\ref{ssec:example}.

A key conjectural property of $\mathcal{S}_\rho(G(F))$ that we do not discuss in this paper is \textbf{locality}. Let $C^\infty(X_\rho(F))$ be the space of locally constant functions on $X_\rho(F),$ viewed as a function space on $G(F)$ by restriction.
\begin{conjecture}
    The Schwartz space $\mathcal{S}_\rho(G(F))$ is local, i.e., it is a $C^\infty(X_\rho(F))$-module under the usual function multiplication.
\end{conjecture}

We verify the conjecture when $G$ is a torus in \hyperref[thm:localT]{Theorem \ref{thm:localT}}. In an attempt to tackle the conjecture in general, we propose the following problem that may serve as an intermediate step to understand the $G(F)\times G(F)$-module structure of $\mathcal{S}_\rho(G(F))$. Let $C^\infty_c(X_\rho(F))$ be the space of compactly supported locally constant functions on $X_\rho(F)$. View it as a function space on $G(F)$ by restriction.
\begin{conjecture}\label{conj:main:SES}
    The space $C^\infty_c(X_\rho(F))$ is contained in $\mathcal{S}_\rho(G(F))$, up to a unique central twist.
\end{conjecture}
\hyperref[conj:main:SES]{Conjecture \ref{conj:main:SES}} if proven would allow us to write down easily more functions in $\mathcal{S}_\rho(G(F)).$ We verify the conjecture when $G$ is a split torus in \hyperref[prop:ESinrho]{Proposition \ref{prop:ESinrho}} and for examples in \S\ref{ssec:example}.  Establishing both conjectures may require deeper understanding of discrete series and geometric objects associated to $\rho$. We leave this for future work.

\vspace{0.2cm}
\begin{center}
    \textsc{Acknowledgment}
\end{center}
The first author thanks Bảo Ch\^au Ng\^o for teaching a course on the paper \cite{Luo:Ngo} and suggesting the study of the $\rho$-Schwartz space for $\GL_2,$ which leads to the author's interest in this topic.  
The second author thanks his advisor, Jayce R. Getz, for proposing this topic.

\section{Preliminaries}\label{sec:prelim}

We set up notations and review Harish-Chandra Schwartz spaces and the Harish-Chandra Plancherel theorem in \S \ref{ssec:characters}-\S \ref{ssec:constant}. Our main references are \cite{Waldspurger:Plancherel, Representation-p-adic}. In \S\ref{ssec:assumptions} we state the tempered local Langlands correspondence which we assume in the rest of the paper. We deduce some consequences on local $L$-factors in \S\ref{ssec:LLC}. 

\subsection{Notations} For $n\in \ZZ_{>0},$ let $[n]:=\{1,\ldots, n\}$. For two positive constants $a,b$, we write $a\ll_{?} b$ if there exists a constant $C_{?}>0$ depending on $?$ such that $a\leq C_{?}b$. For two sets $X,Y,$ we often write $X\le Y$ instead of $X\subseteq Y$ if the inclusion map preserves additional structures shared by both $X$ and $Y$. The same convention also applies to $\subset, \subsetneq, \not\subseteq$. For a subset $X$ in an $E$-module for some subring $E$ of $\RR,$ we write
\begin{align*}
    E_{\geq 0}X&:=\left\{\sum_{x\in I}a_xx: a_x\in E\cap \RR_{\geq 0},I\subseteq X \textrm{ finite}\right\}.
\end{align*}
Define analogously for $E_{>0}X$ and $E_{\le 0} X$.
For two subsets $X,Y$ in a module, let $X+Y:=\{x+y: x\in X, y\in Y\}.$ For a $\ZZ$-module $M$ and a (unital) ring $R$, we write $M_R := M\otimes_\ZZ R$. For two complex vector spaces $V$ and $W$, we write $V\otimes W = V\otimes_\CC W$ unless otherwise stated. For a group $G$, we write $\mathrm{id}=\mathrm{id}_G$ for its identity element. For a topological space $X$, let $\pi_0(X)$ be the set of connected components of $X$.

\subsection{Groups, characters and cocharacters}\label{ssec:characters}

Let $G$ be a connected reductive group over $F$. Let $Z_G$ denote its center and let $A_G$ be the maximal split subtorus of $Z_G$. Let $X^\ast(G)$ be the group of rational characters of $G$. When $G$ is commutative, let $X_*(G)$ be the group of rational cocharacters of $G$. Let $H_G:G(F)\to\Hom_{\ZZ}(X^*(G),\ZZ)$ be the function given by
\begin{align*}
    |\chi(g)| = q^{-\langle\chi,H_G(g)\rangle}
\end{align*}
for $(g,\chi)\in G(F)\times X^*(G).$ Set
\begin{align}\label{eq:GF1}
    G(F)^1 := \ker H_G = \bigcap_{\chi\in X^*(G)} \ker |\chi|
\end{align}
and put
\begin{align*}
    \Lambda_G &:= \Hom_{\mathbf{TopGp}}(G(F)/G(F)^1,\mathbb{C}^\times),\\
    \Re\Lambda_G &:= \Hom_{\mathbf{TopGp}}(G(F)/G(F)^1,\mathbb{R}_{>0}),\qquad 
    \Im\Lambda_G := \Hom_{\mathbf{TopGp}}(G(F)/G(F)^1,S^1).
\end{align*}
Under the usual identification $\mathbb{C}^\times = \mathbb{R}_{>0}\times S^1$, one has $\Lambda_G = \Re\Lambda_G\times \Im\Lambda_G$. For $\chi\in\Lambda_G$, we write $\Re(\chi)$ for its projection to $\Re\Lambda_G$. One has a surjection
\begin{equation}\label{XG->LambdaG:surj}
    \begin{tikzcd}[row sep=tiny]
        X^*(G)_\mathbb{C}\arrow[r]&\Lambda_G\\
        \chi\otimes z\arrow[r,maps to]&{(g\mapsto |\chi(g)|^z)}.
    \end{tikzcd}
\end{equation}
 The map restricts to a group isomorphism
\begin{equation}\label{XG->LambdaG:bij:rel}
    \begin{tikzcd}[row sep=tiny]
        X^*(G)_\mathbb{R}\arrow[r,"\sim"]&\Re\Lambda_G
    \end{tikzcd}
\end{equation}
and a surjection
\begin{center}
    \begin{tikzcd}[row sep=tiny]
        X^*(G)\otimes_{\mathbb{Z}}i\mathbb{R}\arrow[r,two heads]&\Im\Lambda_G.
    \end{tikzcd}
\end{center}
This identifies $\Lambda_G$ with the $\mathbb{C}$-points of an algebraic torus over $\mathbb{C}$. We sometimes write $X^\ast(G)_\RR$ and $\Re \Lambda_G$ interchangeably. 

 We can describe the algebraic structure of $\Lambda_G$ more explicitly. Let $\Omega_G := G(F)/G(F)^1$. We often identify $\Omega_G$ with the image of $H_G$ so that $\Omega_G\subseteq \Hom_{\ZZ}(X^*(G),\ZZ)$. Then
\begin{align*}
    \bigg(\Spec \CC[\Omega_G]\bigg)(\CC) = \Hom_{\CC\textrm{-}\textbf{Alg}}(\CC[\Omega_G],\CC) = \Hom_{\textbf{Gp}}(\Omega_G,\CC^\times) = \Lambda_G.
\end{align*}
This implies $X^*(\Lambda_G) \cong \Omega_G$. Explicitly, we associate to $\alpha\in\Omega_G$ a homomorphism $\Lambda_G\ni\chi\mapsto \chi(\alpha),$ where $\chi(\alpha)$ is the value of $\chi$ on the coset $\alpha G(F)^1$. Choose any lift $\tilde{\chi}\in X^\ast(G)_\CC$ of $\chi,$ then $\chi(\alpha)=q^{-\langle \tilde{\chi},\alpha\rangle}.$ Thus we also write $\chi(\alpha) =q^{-\langle\chi,\alpha\rangle}.$ Then an element $f\in \CC[\Lambda_G]$ has the form
\begin{align}\label{LambdaG:poly:supp}
    f(\chi) = \sum_{\alpha\in \Omega_G} c_\alpha \chi(\alpha) = \sum_{\alpha\in \Omega_G} c_\alpha q^{-\langle\chi,\alpha\rangle},
\end{align}
where $c_\alpha\in \CC$ is zero for all but finitely many $\alpha\in\Omega_G$.

Fix a maximal split torus $A_0$ of $G$, and let $M_0:=C_G(A_0)$ be the centralizer of $A_0$ in $G$. Fix a minimal parabolic subgroup $P_0$ with Levi component $M_0$ and unipotent radical $N_0$. A parabolic subgroup $P$ of $G$ is said to be semi-standard (resp. standard) if $A_0\le P$ (resp. $P_0\le P$). Let $\mathcal{P}$ (resp. $\mathcal{P}^{\mathrm{std}}$) denote the set of semi-standard (resp. standard) parabolic subgroups of $G$. A semi-standard (resp. standard) Levi
subgroup $M$ is the unique Levi subgroup of some $P\in \mathcal{P}$ (resp. $P\in\mathcal{P}^{\mathrm{std}}$) that contains
$A_0$. Let $\mathcal{M}$ (resp. $\mathcal{M}^{\mathrm{std}}$) denote the set of semi-standard (resp. standard) Levi subgroups of $G$. For each $M\in \mathcal{M},$ let $\mathcal{P}(M)$ be the set of $P\in \mathcal{P}$ with Levi component $M$. When $M\in \mathcal{M}^{\mathrm{std}},$ there is a unique standard parabolic $P\in \mathcal{P}(M),$ so we have a bijection $\mathcal{P}^{\mathrm{std}}\stackrel{\sim}{\longrightarrow} \mathcal{M}^{\mathrm{std}}.$ Whenever we say $P=MN$ is a semi-standard parabolic subgroup or $P=MN\in \mathcal{P}$, $M$ is the semi-standard Levi subgroup of $P$ and $N$ is the unipotent radical of $P$. We let  $\overline{P}=M\overline{N}$ be its opposite parabolic. Note that the product map
\begin{align*}
    \overline{N}\times M\times N\longrightarrow G
\end{align*}
is an open immersion. Also, if $M\in \mathcal{M}^{\mathrm{std}},$ we write $P=MN$ for the unique $P\in \mathcal{P}^{\mathrm{std}}\cap\mathcal{P}(M).$

For each $M\in \mathcal{M},$ let
\begin{align*}
    W(G,A_M):=N_G(A_M)(F)/C_G(A_M)(F)=N_G(A_M)(F)/M(F)=N_G(M)(F)/M(F)
\end{align*}
be the Weyl group. We have a natural exact sequence of finite groups
\begin{align*}
    1\longrightarrow W(M,A_0)\longrightarrow W(M,M):=(N_G(M)(F)\cap N_G(A_0)(F))/M_0(F)\longrightarrow W(G,A_M)\longrightarrow 1.
\end{align*}
For each $s\in W(G,A_M),$ we choose a representative in $N_G(A_M)(F),$ which we also denote by $s$. The Weyl group $W(G,A_M)$ has a natural action on $X^\ast(A_M)$ given by 
\begin{align*}
    s.\chi(a):=\chi(s^{-1}as), \quad \textrm{for } a\in A_M(F).
\end{align*}
Therefore, it also acts on $X^\ast(M)_\CC, X^\ast(M)_\RR\cong \Re\Lambda_M, \Im\Lambda_M$ and $\Lambda_M$. 

For semi-standard Levi subgroups $M'\leq M,$ we have a natural inclusion
\begin{center}
    \begin{tikzcd}
        X^\ast(M)\arrow[rr,hook,"\iota_{M\ge M'}"]&&X^\ast(M').
    \end{tikzcd}
\end{center}
Since $A_M\le A_{M'}$, the restriction gives a canonical projection
\begin{center}
    \begin{tikzcd}
        X^\ast(M')_\RR=X^\ast(A_{M'})_\RR\arrow[rr,two heads,"p_{M'\leq M}"]&&X^\ast(A_M)_\RR = X^\ast(M)_\RR
    \end{tikzcd}
\end{center}
which is a section to $\iota_{M\ge M'}$ over $\RR$. Let $X_{M'}^{M} := \mathrm{ker}(p_{M'\leq M})$ and let $\Re\Lambda_{M'}^{M}$ be its image in $\Re \Lambda_{M'},$ so we have splittings
\begin{align*}
    X^\ast(M')_\RR=X^\ast(M)_\RR\oplus X_{M'}^{M} \quad (\textrm{resp.}\quad \Re\Lambda_{M'} &= \Re\Lambda_M \times \Re\Lambda_{M'}^{M}).
\end{align*}
For an element $\chi$ in $X^\ast(M')_\RR$ (resp. $\Re\Lambda_{M'}$) we write
\begin{align*}
    \chi=\chi|_M+ \chi|_{M^\perp} \quad (\textrm{resp}.\quad \chi=\chi|_M\cdot \chi|_{M^\perp})
\end{align*}
for the decomposition. Thus $\chi|_M=p_{M'\le M}(\chi).$

For $M\in \mathcal{M},$ let $\Sigma(A_M)$ be the set of roots of $A_M$ in $\mathfrak{g}=\mathrm{Lie}(G).$ For $P=MN\in \mathcal{P},$ let $\Sigma(P)\subseteq \Sigma(A_M)$ be the set of positive roots relative to $P,$ and let $\Delta(P)$ be the set of simple roots in the set of reduced positive roots $\Sigma_{\mathrm{red}}(P)$. Let $\Delta_G=\Delta:=\Delta(P_0)$ be the subset of simple roots of $\Sigma_{\mathrm{red}}(P_0)\subseteq\Sigma:=\Sigma(P_0).$ Let $\Sigma^{\lor}:=\{ \alpha^\lor:\alpha\in \Sigma\}$ be the set of coroots. Let $\widehat{\Delta}:=\{\omega_\alpha:\alpha\in \Delta\}\subset X^\ast(M_0)_\QQ$ be the set of fundamental weights, i.e., 
\begin{align*}
    \langle \omega_\alpha,\beta^\lor\rangle=\delta_{\alpha,\beta}, \quad \quad \textrm{for all } \alpha,\beta\in \Delta,
\end{align*}
where $\delta_{\alpha,\beta}$ is the Kronecker $\delta$.
Let $\eta_G$ be the half sum of positive roots of $G$ (with multiplicities), i.e.,
\begin{align*}
    \eta_G:=\frac{1}{2}\sum_{\alpha\in \Sigma(P_0)} (\dim_F \mathfrak{g}_\alpha)\alpha,
\end{align*}
where $\mathfrak{g}_\alpha$ is the root space of $\alpha.$ Note that $\eta_G\in \QQ_{>0}\widehat{\Delta}.$ For each $P=MN\in \mathcal{P}$ and $s\in W(G,A_0),$ set $s.P:=sPs^{-1}\in \mathcal{P},$  $s.M:=sMs^{-1}\in \mathcal{M},$ and $s.N=sNs^{-1}$ is the unipotent radical of $s.P$. For each minimal parabolic  $P\in\mathcal{P},$ there is a unique $s\in W(G,A_0)$ such that $s. P=P_0$. Therefore, for each $P=MN\in \mathcal{P}$ there is a unique $s\in W(G,A_0)/W(M,A_0)$ such that $s.P$ is standard.

For each $M\in \mathcal{M}^{\mathrm{std}},$ let
\begin{align*}
    X^*(M)^+_\RR &:= \{\lambda\in X^*(M)_\RR : \langle \lambda,\alpha^\lor\rangle:=\langle \iota_{M\ge M_0}(\lambda),\alpha^\lor\rangle > 0\text{ for all }\alpha\in \Delta_G-\Delta_M\}\\
    &=\left\{\sum_{\alpha\in \Delta(P)} a_\alpha\omega_\alpha+\chi: a_\alpha>0,\,\chi\in X^{\ast}(G)_\RR\right\}
\end{align*}
be the dominant Weyl chamber, and $\overline{X^\ast(M)}^+_\RR$ be its closure.

\subsection{Decompositions and measures}\label{subsec:Cartan} 

Let $K$ be the stabilizer of a special point in the apartment of $G$ associated to $A_0$.  When $G$ is unramified, we assume $K$ is the stabilizer of a hyperspecial point.  Then $K\leq G(F)$ is a maximal compact subgroup, and by \cite[Theorem 5.3.4]{KP:BruhatTits} one has $G(F) = P(F)K = KP(F)$ for any parabolic subgroup $P$ of $G$. For a closed algebraic $F$-subgroup $H\le G,$ let $K_H:=H(F)\cap K.$  Let $d_\ell h$ be the left-invariant Haar measure on $H(F)$ such that $\mathrm{vol}(K_H,d_\ell h)=1$. For $P\in \mathcal{P},$ let $\delta_P$ be the modular character of $P(F)$. Note that for $m\in M_0(F)$
\begin{align*}
    \delta_{P_0}(m)=q^{-\langle 2\eta_G,H_{M_0}(m)\rangle}.
\end{align*}

For any $M\in \mathcal{M}$, by \cite[\S 6.7.2 and Corollary 9.7.3]{KP:BruhatTits} $K_M $ is again the stabilizer of a special point in the apartment of $M$ associated to $A_0$, and $K_P = K_MK_N$ for any $P=MN\in \mathcal{P}$. Choose continuous maps $\mathbf{a}_P:G(F)\to M(F),$ $\mathbf{n}_P:G(F)\to N(F)$ and $\mathbf{k}_P:G(F)\to K$ so that
\begin{align*}
    g = \textbf{a}_P(g)\mathbf{n}_P(g)\textbf{k}_P(g)
\end{align*}
for all $g\in G(F)$. When $P=P_0,$ we drop the subscript $P_0$ and write $\mathbf{a}=\mathbf{a}_{P_0},\mathbf{n}=\mathbf{n}_{P_0},\mathbf{k}=\mathbf{k}_{P_0}.$

Put $\Omega:=\Omega_{M_0}=M_0(F)/M_0(F)^1.$ If $G$ is unramified, one has $\Omega=A_0(F)/A_0(F)^1=X_*(A_0)$. Let
\begin{align*}
    M_0(F)^+ := \{m\in M_0(F) : \langle \alpha,H_{M_0}(m)\rangle\ge 0 \text{ for all }\alpha\in\Delta\}.
\end{align*}
The subgroup $M_0(F)^1 = K_{M_0}$ is compact, so $M_0(F)^1\le M_0(F)^+$ and $\Omega^{+} := M_0(F)^+ / M_0(F)^1$ is a submonoid of $\Omega$.  By \cite[Theorem 5.2.1]{KP:BruhatTits}, one has the Cartan decomposition
\begin{align}\label{decomp:group:Cartan}
    G(F) = \bigsqcup_{m\in \Omega^+} KmK.
\end{align}
Furthermore, there are positive constants $C_1<C_2$ such that for all $m\in \Omega^+$
\begin{align}\label{Cartan:measure}
    C_1\delta_{P_0}^{-1}(m)<\mathrm{vol}(KmK,dg)<C_2\delta_{P_0}^{-1}(m).
\end{align}

 For a compact open group $K'$ of $G(F)$ and a closed algebraic $F$-subgroup $H\le G,$ let $H_{K'}:=H(F)\cap K'.$ Let $P=MN\in \mathcal{P}$. We say $K'$ possesses an Iwahori factorization with respect to $P(F)$ if the product map
\begin{align*}
    \overline{N}_{K'}\times M_{K'} \times N_{K'}\longrightarrow K'
\end{align*}
is an isomorphism, and $ aN_{K'}a^{-1}\le N_{K'}, a^{-1} \overline{N}_{K'}a\le  \overline{N}_{K'}$ for every $a\in A_{M}^+,$ where
\begin{align*}
    A_{M}^+:=\{a\in A_M(F): |\alpha(a)|\le 1 \quad\forall \alpha\in \Delta(P)\}.
\end{align*}
By \cite[Proposition 1.4.4]{Casselman1995} there exists an open neighborhood basis $\{K_n\}_{n\ge 0}$ of the identity consisting of compact open subgroups such that the following holds: 
\begin{enumerate}
    \item Each $K_n$ is normal in $K_0,$ where $K_0\le K$ is the standard Iwahori subgroup with respect to $P_0$.
    \item Each $K_n$ has an Iwahori factorization with respect to all standard parabolic subgroups of $G$.
\end{enumerate}

\subsection{Representations} 

For any (complex) smooth representation $(\pi,V)$ of $G(F)$ and $\chi\in\Lambda_G$, let $$\pi_\chi(g) := \pi(g)\chi(g)$$ for $g\in G(F)$. Let $\pi^\lor$ be the smooth dual of $\pi$, and let $\langle \cdot ,\cdot \rangle_\pi$ be the natural pairing on $\pi\times \pi^\lor.$ We will drop the subscript $\pi$ whenever the context is clear. For $\pi$ a smooth admissible representation, let $\mathrm{End}_\CC(\pi)$ denote the space of smooth endomorphisms of the underlying vector space of $\pi$. Note that $\mathrm{End}_\CC(\pi)\cong \pi\otimes \pi^\vee$ as representations of $G(F)\times G(F)$.

Let
\begin{align*}
    \Pi_0(G)\subseteq \Pi_2(G)\subseteq \Temp(G)
\end{align*}
be the isomorphism classes of smooth irreducible unitary representations of $G(F)$ that are supercuspidal, (relatively) square integrable, and tempered respectively.  We equip $\Pi_2(G)_\CC := \{\pi_\chi: (\pi,\chi)\in\Pi_2(G)\times \Lambda_G\}$ with the unique topology so that the action map $\Lambda_G\times \Pi_2(G)_\CC\to \Pi_2(G)_\CC$ is continuous, and for each $\Lambda_G$-orbit $\mathcal{O}$ of  $\Pi_2(G)_\CC,$ the induced map $\Lambda_G\longrightarrow\mathcal{O}$ is a quotient map. Note that $\Pi_0(G)$, $\Pi_2(G)$ and $\Temp(G)$ are invariant under $\Im\Lambda_G$. We equip $\Pi_0(G)$ and $\Pi_2(G)$ with the subspace topology.

For $P=MN\in \mathcal{P}$ and a smooth representation $\sigma$ of $M(F),$ viewed as a smooth representation of $P(F)$ via the natural map $P(F)\to P(F)/N(F)\cong M(F),$ let $\mathrm{Ind}_P^G \sigma$ be the normalized parabolic induction in the category of smooth representations. For each $\chi\in\Lambda_M,$ restriction to $K$ induces a $K$-equivariant isomorphism
\begin{center}
    \begin{tikzcd}
        \Ind_P^G\sigma_\chi\arrow[r]&\Ind_{K_P}^{K}\sigma\vert_{K_M }.
    \end{tikzcd}
\end{center}
For $v\in \Ind_{K_P}^{K}\sigma\vert_{K_M }$, let $v_{\sigma_\chi}=v_\chi\in \Ind_P^G\sigma_\chi$ denote its fibre so that for $p\in P(F)$ and $k\in K$
\begin{align*}
    v_{\sigma_\chi}(pk)=v_\chi(pk) = \delta_P^{\frac{1}{2}}(p)\sigma_\chi(p)v(k).
\end{align*}
We have a canonical isomorphism $(\mathrm{Ind}_P^G \sigma_\chi)^\lor\cong \Ind_{P}^G (\sigma_\chi)^\lor.$ Under this isomorphism, by the Iwasawa decomposition  $\langle\cdot,\cdot\rangle_{\mathrm{Ind}_P^G\sigma_\chi}$ is given by
\begin{align*}
    \langle (\Ind_P^G\sigma_\chi)(g)v_\chi,w_\chi\rangle_{\Ind_P^G\sigma_\chi}&=\int_{P(F)\backslash G(F)} \langle v_\chi(hg),w_\chi(h)\rangle_\sigma\, d\dot{h} = \int_K \langle v_\chi(kg),w(k)\rangle_\sigma dk
\end{align*}
for $(v,w)\in \Ind_{K_P}^{K}\sigma\vert_{K_M }\times\Ind_{K_P}^{K}\sigma^\vee\vert_{K_M }$. We often suppress the subscript $\chi$ on vectors when the representation involved is obvious from the context. In particular,
\begin{align*}
    \langle v,w\rangle_{\mathrm{Ind}_P^G\sigma_\chi}=\langle v,w\rangle_{\mathrm{Ind}_{K_P}^K \sigma|_{K_M}}.
\end{align*}

Let
\begin{align*}
    \widetilde{\Temp}_\Ind(G) &:= \bigsqcup\limits_{M\in\mathcal{M}^{\mathrm{std}}} \Pi_2(M),\quad\quad\quad    \widetilde{\Temp}_{\Ind,0}(G) := \bigsqcup\limits_{M\in\mathcal{M}^{\mathrm{std}}} \Pi_0(M)
\end{align*}
equipped with the colimit topology. Let $\Temp_\Ind(G)$ (resp. $\Temp_{\Ind,0}(G)$) denote the isomorphism classes of representations of the form $\Ind_P^G\sigma$ with $(M,\sigma)\in\widetilde{\Temp}_\Ind(G)$ (resp. with $(M,\sigma)\in\widetilde{\Temp}_{\Ind,0}(G)$). Then $\Temp_\Ind(G)$ (resp. $\Temp_{\Ind,0}(G)$) is a quotient of $\widetilde{\Temp}_\Ind(G)$ (resp. of $\widetilde{\Temp}_{\Ind,0}(G)$). 

For $M\in \mathcal{M}^{\mathrm{std}},$ we equip $\Im\Lambda_{A_M}$ with the Haar measure of total mass $1$, and equip $\Im\Lambda_M$ with the pullback measure from that on $\Im\Lambda_{A_M}$ along the surjection $\Im\Lambda_{M}\to\Im\Lambda_{A_M}$. Equip $\Pi_2(M)$ with the measure $d\sigma$ so that each orbit acquires the pushforward measure of $\Im\Lambda_M$ along the action map. 

Define a regular Borel measure $d\pi$ on $\Temp_\Ind(G)$ as follows. For $f\in C_c(\Temp_\Ind(G))$, view it as a function on $\widetilde{\Temp}_\Ind(G)$ and set
\begin{align*}
    \int_{\Temp_\Ind(G)} f(\pi) d\pi := \sum_{M\in\mathcal{M}^{\mathrm{std}}} \frac{\gamma(G|M)}{\#W(G|M)} \int_{\Pi_2(M)} f(M,\sigma) \mu(\sigma)j(\sigma)^{-1} d\sigma.
\end{align*}
Here 
\begin{align*}
    W(G|M):=\{s\in W(G,A_0):s. M=M\}/W(M,A_0)=W(M,M)/W(M,A_0)\cong W(G,A_M),
\end{align*}
and $\gamma(G|M)$ is defined in \cite[p.241]{Waldspurger:Plancherel}, $\mu(\sigma)$ is the formal degree of $\sigma$, and $j(\sigma)$ is defined in \cite[p.285]{Waldspurger:Plancherel}.

\subsection{Intertwining operators}\label{subsec:intertwining:recall} We recall several facts of intertwining operators in \cite{Waldspurger:Plancherel}. Let $P=MN\in \mathcal{P}$ and $(\sigma,V)$ a smooth representation of $M(F)$. For $s\in W(G,A_0)$, let $(s.\sigma,V)$ denote the representation of $s.M(F)$ given by 
\begin{align*}
    s.\sigma(m) := \sigma(s^{-1}ms).
\end{align*}
Set
\begin{align*}
\lambda(s):\Ind_{P}^G \sigma &\longrightarrow \Ind_{s.P}^{G} s.\sigma\\
    v&\longmapsto v_s:g\mapsto v(s^{-1}g).
\end{align*}
This is a $G(F)$-equivariant isomorphism, and the same formula restricts to a $K$-equivariant isomorphism
\begin{align}\label{leftW}
\begin{split}
    \lambda(s):\Ind_{K_P}^K \sigma\vert_{K_M}&\longrightarrow \Ind_{K_{s.P}}^{K} s.\sigma\vert_{K_{s.M}}\\
    v&\mapsto v_s
\end{split}
\end{align}
if we choose (and we do) a representative of $s$ in $K$.


Let $P=MN,P'=MN'\in \mathcal{P}$ and $\sigma$ be an admissible representation of $M(F)$ of finite length. For $v\in\Ind_P^G\sigma$, consider the condition: for all $g\in G(F)$ and $w\in\sigma^\vee$, the integral
\begin{align}\label{J:absconv}\tag{$\clubsuit$}
    \int_{(N\cap N')(F)\backslash N'(F)} \langle v(ng), w\rangle_{\sigma} \,dn
\end{align}
is absolutely convergent. If \eqref{J:absconv} holds for all $v\in \Ind_P^G \sigma$, we obtain a $G(F)$-equivariant homomorphism
\begin{align*}
    J_{P'|P}(\sigma):\mathrm{Ind}_P^G\sigma&\longrightarrow\Ind_{P'}^G\sigma\\v&\longmapsto\left(g\mapsto  \int_{(N\cap N')(F)\backslash N'(F)} v(ng)\right).
\end{align*}
In this case we say that the intertwining operator $J_{P'|P}(\sigma)$ is defined via absolutely convergent integrals.

\begin{lemma}\label{lem:J} There exists $R\in\RR$ such that for $\chi\in\Lambda_M$ with $\langle\mathrm{Re}(\chi),\alpha\rangle>R$ for all $\alpha\in\Sigma(P)\cap\Sigma(\overline{P'})$, the intertwining operator $J_{P'\mid P}(\sigma_\chi)$ is defined via absolutely convergent integrals. If $\sigma$ is tempered, then one can take $R=0$.

Furthermore, the function $\chi\mapsto J_{P'\mid P}(\sigma_\chi)$ extends to a rational function on $\Lambda_M$. Moreover, there are finitely many $R_i\in \CC$ and $\alpha_i\in \Sigma(P)\cap \Sigma(\overline{P'})$ such that for any $(v,w)\in \Ind_{K_P}^K\sigma|_{K_M}\times \Ind_{K_{P'}}^K\sigma^\lor|_{K_M},$ poles of $\langle J_{P'|P}(\sigma_\chi)v_\chi,w\rangle,$ as functions on $X^\ast(M)_\CC$ modulo $\tfrac{2\pi i}{\log q}X^\ast(M)$, are contained in the hyperplanes $\langle \chi,\alpha_i^\lor\rangle=R_i.$
\end{lemma}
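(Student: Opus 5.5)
This lemma collects standard facts from \cite[IV.1]{Waldspurger:Plancherel} (see also \cite{Representation-p-adic}). I would prove it by reducing to the case where $P$ and $P'$ are adjacent, and then use the Bernstein–Casselman rationality argument.

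\emph{Step 1: reduction to adjacent parabolics.} Choose a minimal gallery $P=P_0',P_1',\dots,P_r'=P'$ in $\mathcal{P}(M)$, with consecutive terms adjacent, so that each $\Sigma(P'_{i-1})\cap\Sigma(\overline{P'_i})$ consists of the positive multiples of a single reduced root $\alpha_i\in\Sigma(P)\cap\Sigma(\overline{P'})$. Along such a gallery one has the factorization
\begin{align*}
J_{P'|P}(\sigma_\chi)=J_{P'_r|P'_{r-1}}(\sigma_\chi)\circ\cdots\circ J_{P'_1|P'_0}(\sigma_\chi).
\end{align*}
This holds whenever the integrals converge absolutely, by Fubini applied to the decomposition $(N\cap N')\backslash N'=\prod_i (N'_{i-1}\cap N'_i)\backslash N'_i$. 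It therefore suffices to treat adjacent $P,P'$, that is, a rank-one situation inside the Levi $M_\alpha\supseteq M$ whose roots are the multiples of $\pm\alpha$.

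\emph{Step 2: convergence.} Write $\langle v(ng),w\rangle$ using the Iwasawa decomposition $n=\mathbf{a}_P(n)\mathbf{n}_P(n)\mathbf{k}_P(n)$. Then
\begin{align*}
|\langle v_\chi(ng),w\rangle|\ll \delta_P^{1/2}(\mathbf{a}_P(n))\,\big|\chi(\mathbf{a}_P(n))\big|\,\Xi_\sigma(\mathbf{a}_P(n)),
\end{align*}
where $\Xi_\sigma$ is a bound for matrix coefficients of $\sigma$. For general admissible $\sigma$ of finite length, $\Xi_\sigma$ is bounded by a quasi-character $q^{\langle\lambda,H_M(\cdot)\rangle}$ on $H_M$ of the relevant range. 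For tempered $\sigma$ it is bounded by the Harish-Chandra function of $M$, which has only logarithmic growth. The standard estimate $H_M(\mathbf{a}_P(n))\in -\RR_{\geq0}\alpha^\vee$ on $\overline{N}_\alpha$, together with the convergence of Harish-Chandra's integral $\int_{\overline N}\delta^{1/2}(\mathbf a(n))^{1+\epsilon}\,dn$, then gives absolute convergence once $\langle\Re\chi,\alpha^\vee\rangle>R$. Here $R$ absorbs the exponent $\lambda$, and $R=0$ suffices in the tempered case.

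\emph{Step 3: rationality and location of poles.} This is the main obstacle. I would use Bernstein's principle, as in Waldspurger IV.1.1. For $\chi$ in the convergence cone, $\langle J v_\chi,w\rangle$ is the unique solution of a system of linear equations, namely $G$-equivariance together with the prescribed behaviour on the open $P'$-orbit (the open cell). The coefficients of this system are polynomial in $\chi\in\Lambda_M$, and it has a unique solution for generic $\chi$. Hence the solution is rational in $\chi$, and this gives the meromorphic continuation. Alternatively, in the rank-one case one can compute directly: cutting $\overline N_\alpha$ into shells where $H_M(\mathbf{a}_P(n))=-k\alpha^\vee$ (up to finitely many types), the integral becomes a finite sum of geometric series in $q^{-\langle\chi,\alpha^\vee\rangle}$ with coefficients controlled by the exponents of the Jacquet module. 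The poles are then of the form $q^{-\langle\chi,\alpha^\vee\rangle}=c$ for finitely many constants $c$, i.e.\ $\langle\chi,\alpha^\vee\rangle=R_i$ modulo $\tfrac{2\pi i}{\log q}$. These constants depend only on $\sigma$, not on $(v,w)$, because of the uniformity afforded by the finitely many exponents.

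\emph{Step 4: composition.} Composing along the gallery in Step 1, with the appropriate Weyl conjugation of $\chi$ for each factor, the pole hyperplanes of the composite are contained in the union of those of the factors. This produces finitely many pairs $(R_i,\alpha_i)$ with $\alpha_i\in\Sigma(P)\cap\Sigma(\overline{P'})$.
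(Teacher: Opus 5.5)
The paper gives no argument of its own here: it cites Waldspurger, Th\'eor\`eme IV.1.1 and Proposition IV.2.1, so your opening sentence already matches it. Your self-contained outline, however, has a genuine gap where you reduce the \emph{convergence} statement to adjacent parabolics. The gallery factorization comes from Fubini, which presupposes absolute convergence of the full integral over $(N\cap N')\backslash N'\cong N'\cap\overline{N}$, and that is what is to be proved. Absolute convergence of each adjacent integral, for all vectors, does not imply it either. After you integrate $|\langle v_\chi(n_1x),w\rangle|$ over the first factor, the resulting nonnegative function of $x$ is no longer the absolute value of a matrix coefficient of $\Ind_{P'_1}^G\sigma_\chi$, so the next adjacent step does not apply to it. Step 4 inherits the problem, since the factorization is only available where the full integral converges.

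What you need is a global nonnegative majorant that is itself a section of an induced representation. Then Tonelli applies, and each step of the gallery becomes a rank-one Gindikin--Karpelevich integral of a spherical section. For tempered $\sigma$ this is easy. Matrix coefficients of $\sigma$ are bounded (the relevant fact is $\Xi^M\le 1$, not logarithmic growth), so $|\langle v_\chi(x),w\rangle|\ll\delta_P^{1/2}|\chi|(\mathbf{a}_P(x))$, which is the spherical section of $\Ind_P^G\Re(\chi)$. The iterated rank-one integrals are then finite when $\langle\Re(\chi),\alpha^\vee\rangle>0$ for all $\alpha\in\Sigma(P)\cap\Sigma(\overline{P'})$, which gives $R=0$.

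For general finite-length $\sigma$, your bound by a quasi-character of $H_M$ holds only on the ray where $H_M(\mathbf{a}_P(\bar n))$ lives. Matrix coefficients of a non-cuspidal $\sigma$ are not dominated by a quasi-character of $H_M$ on all of $M$. Moreover, $\mathbf{a}_P(\bar n)$ is unbounded modulo $A_M$ as soon as $M$ is not minimal (already for $\GL_2\times\GL_1\subset\GL_3$), so the bound does not propagate along the gallery. A standard fix is to embed $\sigma$ into a finite sum of representations induced from finite-length cuspidal representations $\tau$ of smaller Levi subgroups. Their matrix coefficients are compactly supported modulo the center, hence globally dominated by a positive quasi-character. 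By transitivity of induction, $J_{P'|P}(\sigma_\chi)$ is the restriction of the operator for $\tau_\chi$, whose integration domain is again $N'\cap\overline{N}$.

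In Step 3, the two mechanisms you offer as alternatives are both needed, and they combine simply. For adjacent $P,P'$ one has $N'\cap\overline{N}\subseteq M_\alpha$. An Iwasawa decomposition inside $M_\alpha$ gives $v_{\chi\chi'}(\bar nk)=v_\chi(\bar nk)$ for $\bar n\in M_\alpha(F)$, $k\in K$ and $\chi'\in\Lambda_{M_\alpha}$. Equivalently, your shell decomposition exhibits $\langle J_{P'|P}(\sigma_\chi)v_\chi,w\rangle$ as a power series in the single variable $q^{-\langle\chi,\alpha^\vee\rangle}$ with $\chi$-independent coefficients. Together with Bernstein's rationality, this puts the poles on finitely many hyperplanes $\langle\chi,\alpha^\vee\rangle=R_i$. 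The uniformity in $(v,w)$ comes from Bernstein's argument producing one denominator $p(\chi)$ valid for all $(v,w)$, whose zero set contains only finitely many such hyperplanes; it does not come from ``finitely many exponents''. Summing the shells directly would require asymptotics of matrix coefficients of $\sigma$ at the unbounded elements $\mathbf{a}_P(\bar n)$, and the route above avoids this. Finally, no Weyl conjugation enters Step 4: all factors act on the same $\sigma_\chi$, with parabolics in $\mathcal{P}(M)$.
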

\begin{proof} 
This follows from \cite[Th\'eor\`eme IV.1.1 and Proposition IV.2.1]{Waldspurger:Plancherel}.     
\end{proof}

Let $M\in \mathcal{M}.$ Let $s\in W(M,M)$ and $\sigma\in\Pi_2(M)$. For $P,P'\in \mathcal{P}(M),$ define 
\begin{align*}
    {}^\circ c_{P'\mid P}(s,\sigma) &:= (J_{P'\mid s.P}(s.\sigma)\circ \lambda(s))\otimes (J_{s.P\mid P'}(s.(\sigma^\vee))^{-1}\circ \lambda(s))\\
    &\in \mathrm{Hom}_{G(F)\times G(F)} \left(\Ind_{K_P}^K\sigma\vert_{K_M}\otimes  \Ind_{K_P}^K \sigma^\lor|_{K_M},\Ind_{K_{P'}}^Ks.\sigma\vert_{K_{M}}\otimes\Ind_{K_{P'}}^Ks.(\sigma^\lor)\vert_{K_{M}}\right)\\
    &\cong  \mathrm{Hom}_{G(F)\times G(F)} \left(\mathrm{End}_\CC(\Ind_{K_P}^K\sigma\vert_{K_M}),\mathrm{End}_\CC(\Ind_{K_{P'}}^Ks.\sigma\vert_{K_{M}})\right).
\end{align*}
By \cite[Lemme V.3.1]{Waldspurger:Plancherel}, this is a well-defined unitary operator, and the map $\chi\mapsto {}^\circ c_{P'\mid P}(s,\sigma_\chi)$ extends to a rational function on $\Lambda_M$ that is holomorphic on $\Im\Lambda_M$. 

\begin{lemma}[{\cite[Lemme V.3.2]{Waldspurger:Plancherel}}]\label{oc:cocycle} Let $M\in \mathcal{M}$ and $P,P',P''\in \mathcal{P}(M)$. For $s,t\in W(M,M)$ and $\sigma\in \Pi_2(M)$, one has
\begin{align*}
    ^\circ c_{P''\mid P'}(s,t.\sigma)\circ{}^\circ c_{P'\mid P}(t,\sigma) = {}^\circ c_{P''\mid P}(st,\sigma).
\end{align*}\qed
\end{lemma}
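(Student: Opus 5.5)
We need to prove the cocycle relation ${}^\circ c_{P''|P'}(s,t.\sigma)\circ{}^\circ c_{P'|P}(t,\sigma)={}^\circ c_{P''|P}(st,\sigma)$. We would argue generically in the twist $\chi\in\Lambda_M$ and then conclude by rationality. Both sides are rational in $\chi$ (after replacing $\sigma$ by $\sigma_\chi$; note $t.(\sigma_\chi)=(t.\sigma)_{t.\chi}$). By the discussion preceding the lemma, both sides are holomorphic on $\Im\Lambda_M$. So it suffices to prove the identity on a Zariski-dense open subset of $\Lambda_M$ where all intertwining operators involved are defined by absolutely convergent integrals or are generic rational values. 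The relation then extends to $\Im\Lambda_M$ by analytic continuation.

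Unwinding the definition, the left tensor factor of the left-hand side is
\[
J_{P''|s.P'}(st.\sigma)\,\lambda(s)\,J_{P'|t.P}(t.\sigma)\,\lambda(t),
\]
and the right factor is the analogous expression with inverses and $\sigma^\vee$. The first key step is the conjugation identity $\lambda(s)\circ J_{P'|Q}(\tau)=J_{s.P'|s.Q}(s.\tau)\circ\lambda(s)$. This follows directly from the integral formula, using the change of variables $n\mapsto s n s^{-1}$, which preserves Haar measures because the representatives of $s$ are chosen in $K$ and the measures are normalized by $K$. It then continues rationally. We also need $\lambda(s)\lambda(t)=\lambda(st)$, which holds up to the difference of representatives. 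That difference is an element of $M_0(F)\cap K$ acting via $\sigma$, and we must check that it cancels between the two tensor factors. This cancellation is exactly why the object is the tensor product with the contragredient factor. The left factor becomes
\[
J_{P''|s.P'}(st.\sigma)\,J_{s.P'|st.P}(st.\sigma)\,\lambda(st).
\]

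The second key step is the product formula for intertwining operators. For general $P_1,P_2,P_3\in\mathcal P(M)$, the composition $J_{P_3|P_2}J_{P_2|P_1}$ equals $J_{P_3|P_1}$ times a scalar rational function of $\chi$. When the lengths add, i.e.\ $d(P_1,P_3)=d(P_1,P_2)+d(P_2,P_3)$, the scalar is $1$. In general it is a product of Plancherel-type scalars $j$, via $J_{P|\overline{P}}J_{\overline{P}|P}=j(\sigma)$ (Waldspurger IV.3). Using this, the left factor equals $\phi(\chi)\,J_{P''|st.P}(st.\sigma)\lambda(st)$ for a scalar $\phi$. Similarly, the right factor, written with inverses, equals $\phi'(\chi)\,J_{st.P|P''}(st.\sigma^\vee)^{-1}\lambda(st)$.

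The main obstacle is showing $\phi\phi'=1$. We would handle it by the transpose/contragredient relation: $J_{P'|P}(\sigma)$ and $J_{P|P'}(\sigma^\vee)$ are adjoint with respect to the natural pairings. Consequently the scalar anomalies produced by composing $J$'s on the $\sigma$ side are the same functions that appear on the $\sigma^\vee$ side, and they cancel because those operators enter inverted. Concretely, the pairing identity $\langle J_{P'|P}v,w\rangle=\langle v,J_{P|P'}w\rangle$ gives $\phi'=\phi^{-1}$. Putting the two factors together yields ${}^\circ c_{P''|P}(st,\sigma_\chi)$ generically, and hence everywhere by rationality.
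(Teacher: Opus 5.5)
Your argument is correct, and it is the standard proof behind Waldspurger's Lemme V.3.2, which the paper cites without an argument of its own. Since $J_{s.P\mid P'}(s.\sigma^\vee)^{-1}\lambda(s)$ is the inverse transpose of $A=J_{P'\mid s.P}(s.\sigma)\lambda(s)$, the operator ${}^\circ c_{P'\mid P}(s,\sigma)$ is simply $T\mapsto ATA^{-1}$. So the lemma reduces, generically in $\chi$ and then by rationality, to $J_{P''\mid s.P'}(st.\sigma)\lambda(s)J_{P'\mid t.P}(t.\sigma)\lambda(t)$ being a scalar multiple of $J_{P''\mid st.P}(st.\sigma)\lambda(st)$. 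Your conjugation and product-formula steps give exactly this, and your transpose computation $\phi'=\phi^{-1}$ is the same cancellation.

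One small correction concerns representatives. The discrepancy $m\in M_0(F)\cap K$ between $\dot s\dot t$ and $\dot{(st)}$ does not literally cancel between the two tensor factors unless $\sigma(m)$ is a scalar. Instead it produces conjugation by pointwise application of $\sigma(m)$, which is precisely the canonical identification of $\Ind_{P''}^G s.(t.\sigma)$ with $\Ind_{P''}^G (st).\sigma$ that the statement implicitly uses.
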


\subsection{Functions on groups} 
For a function $f:G(F)\to \CC,$ let $f^\lor(g):=f(g^{-1}).$ Let
\begin{center}
    \begin{tikzcd}[row sep=tiny]
        R:G(F)\times G(F)\times \Hom_{\textbf{Set}}(G(F),\mathbb{C})\arrow[r]& \Hom_{\textbf{Set}}(G(F),\mathbb{C})\\
        (g,h,f)\arrow[r,maps to]&{R(g,h)f:x\mapsto f(g^{-1}xh)}
    \end{tikzcd}
\end{center}
denote the regular $G(F)\times G(F)$-action on functions on $G(F)$. If $V\subseteq \Hom_{\textbf{Set}}(G(F),\mathbb{C})$ is an invariant subspace, we continue to use $R$ to denote the restriction of the action to $V$. If a function on $G(F)$ is bi-$K_0$-invariant for some compact open subgroup $K_0$ of $K$, we sometimes identify it as a function on $G(F)/\!/K_0:=K_0\backslash G(F)/K_0$. We use the same notation for function spaces, e.g., $C^\infty_c(G(F))^{K_0\times K_0}=C^\infty_c(G(F)/\!/K_0).$

Choose a closed $F$-embedding $\iota:G\to \GL_n$ and let $\norm{\cdot}_{M_n(F)}$ be the box norm on $M_n(F)$. For $g\in G(F),$ set
\begin{align*}
    \norm{g}&:= \max\big(\norm{\iota(g)}_{M_n(F)},\norm{\iota(g^{-1})}_{M_n(F)}\big),\\
    \sigma(g)&:= 1+\log \norm{g}.
\end{align*}
We always choose $\iota$ so that $\norm{\cdot}$ is bi-$K$-invariant. Note that both $\norm{\cdot}$ and $\sigma$ are submultiplicative, i.e.,
\begin{align*}
    \norm{hg}\ll \norm{h}\norm{g},\quad \sigma(hg)\ll \sigma(h)\sigma(g)
\end{align*}
for all $h,g\in G(F)$.

Let $v_0\in \Ind_{K_{P_0}}^K1$ be the unique element such that $v_0\vert_{K}\equiv 1$. For $\chi\in \Lambda_M$ and $g\in G(F)$, set
\begin{align}\label{defn:zonal:spherical}
    \Xi_{G,\chi}(g) :=\Xi_{\chi}(g):= \langle(\Ind_{P_0}^G \chi)(g)v_0,v_0\rangle.
\end{align}
This is the zonal spherical function for $\Ind_{P_0}^G \chi$. When $\chi=1$, we write $\Xi:=\Xi_G := \Xi_{G,1}$ for short; this is the Harish-Chandra $\Xi$-function. For $f\in C(G(F))$ and $d\in\mathbb{Z}_{\geq 0}$, set
\begin{align*}
    p_d(f) := \sup_{g\in G(F)} |f(g)|\Xi_G(g)^{-1}\sigma(g)^d.
\end{align*}
Let $C_u^\infty(G(F))\subseteq C^\infty(G(F))$ denote the subspace of $K\times K$-finite functions. The Harish-Chandra Schwartz space is defined as
\begin{align*}
    \mathcal{C}(G(F)):= \{f\in C_u^\infty(G(F)): p_d(f)<\infty \text{ for all }d\in\mathbb{Z}_{\geq 0}\}.
\end{align*}
For a compact open subgroup $K_0\le K,$ the seminorms $\{p_d\}_{d\ge 0}$ endow $\mathcal{C}(G(F)/\!/K_0)$ with a Fr\'echet space topology. Thus
$$\mathcal{C}(G(F))=\bigcup_{K_0\le K}\mathcal{C}(G(F)/\!/K_0)$$
is an LF-space. Under this topology, $C^\infty_c(G(F))$ is a dense subset of $\mathcal{C}(G(F)).$

\begin{lemma}\label{lem:CGconverge} Let $f\in \mathcal{C}(G(F))$ and let $\pi\in \Temp(G)\cup\Temp_\Ind(G)$. For $(v,w)\in \pi\times\pi^\vee$, one has $g\mapsto f(g)\langle \pi(g)v,w\rangle_\pi\in L^1(G(F))$. In particular, there exists a unique operator $\pi(f)\in\mathrm{End}_\mathbb{C}(\pi)$ such that
\begin{align*}
    \langle \pi(f)v,w\rangle_\pi = \int_{G(F)} f(g)\langle \pi(g)v,w\rangle_\pi dg
\end{align*}
for all $(v,w)\in \pi\times \pi^\vee$.
\end{lemma}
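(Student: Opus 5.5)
The plan is to reduce the integrability of $f(g)\langle\pi(g)v,w\rangle_\pi$ to a single estimate of matrix coefficients against the Harish-Chandra $\Xi$-function, and then to integrate over the Cartan decomposition \eqref{decomp:group:Cartan}.

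\emph{Step 1: the matrix coefficient bound.} For every $\pi\in\Temp(G)\cup\Temp_\Ind(G)$ and every $(v,w)\in\pi\times\pi^\vee$ we want constants $C,d_0\geq 0$ with
\begin{align*}
    |\langle\pi(g)v,w\rangle_\pi|\leq C\,\Xi_G(g)\,\sigma(g)^{d_0}\qquad\text{for all } g\in G(F).
\end{align*}
For $\pi=\Ind_P^G\sigma$ with $\sigma\in\Pi_2(M)$ unitary, we write the coefficient as $\int_K\langle v(kg),w(k)\rangle_\sigma\,dk$. Using the Iwasawa decomposition $kg=\mathbf{a}_P(kg)\mathbf{n}_P(kg)\mathbf{k}_P(kg)$, the integrand is bounded by $\delta_P^{1/2}(\mathbf{a}_P(kg))$ times a matrix coefficient of $\sigma$. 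Discrete series coefficients are bounded by $\Xi_M$, indeed by $\Xi_M$ times decay (Casselman's criterion). The standard identity $\Xi_G(g)=\int_K\delta_P^{1/2}(\mathbf{a}_P(kg))\,\Xi_M(\mathbf{a}_P(kg))\,dk$, which is transitivity of induction for $v_0$, then gives the bound, even with $d_0=0$. For $\pi\in\Temp(G)$, $\pi$ embeds in such an induced representation, so its coefficients are among these. Alternatively we can cite directly the Harish-Chandra estimate for tempered coefficients \cite{Waldspurger:Plancherel}.

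\emph{Step 2: integrability.} We have $|f(g)|\leq p_d(f)\,\Xi_G(g)\,\sigma(g)^{-d}$. It therefore suffices to show
\begin{align*}
    \int_{G(F)}\Xi_G(g)^2\,\sigma(g)^{d_0-d}\,dg<\infty
\end{align*}
for $d$ large. We use the Cartan decomposition together with \eqref{Cartan:measure}, bi-$K$-invariance of $\Xi_G$ and $\sigma$, and the classical bound $\Xi_G(m)\ll\delta_{P_0}^{1/2}(m)\,\sigma(m)^{r}$ for $m\in\Omega^+$. The integral becomes $\sum_{m\in\Omega^+}\sigma(m)^{2r+d_0-d}$. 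Since $\sigma(m)\asymp 1+|H_{M_0}(m)|$ on the lattice $\Omega$, this sum converges once $d>2r+d_0+\operatorname{rank}\Omega$. This proves $g\mapsto f(g)\langle\pi(g)v,w\rangle_\pi\in L^1(G(F))$.

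\emph{Step 3: the operator.} The pairing $(v,w)\mapsto\int f\langle\pi(g)v,w\rangle\,dg$ is bilinear. Since $f$ is bi-$K_0$-invariant for some open $K_0$, the resulting functional in $w$ factors through $(\pi^\vee)^{K_0}$, which is finite dimensional by admissibility. By duality it is therefore represented by a unique vector $\pi(f)v\in\pi^{K_0}$. Uniqueness follows because $\pi^\vee$ separates points of $\pi$.

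The main obstacle is Step 1: the uniform $\Xi$-bound for tempered or induced coefficients, and the estimate on $\Xi_G$ along $\Omega^+$. We would quote these from Waldspurger rather than reprove them.
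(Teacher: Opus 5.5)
Your proposal is correct and is essentially the standard argument behind the reference the paper cites instead of giving a proof (\cite[III.7]{Waldspurger:Plancherel}). That argument has three parts: the bound $|\langle\pi(g)v,w\rangle_\pi|\ll\Xi_G(g)\sigma(g)^{d_0}$ for tempered and unitarily induced coefficients, the integrability of $\Xi_G^2\sigma^{-d}$ for large $d$ (which your Cartan-decomposition computation reproves), and the usual admissibility argument producing $\pi(f)$. The one detail you leave implicit is that $\pi(f)\circ\pi(k)=\pi(f)$ for $k\in K_0$, which follows from right $K_0$-invariance of $f$ and is what makes $\pi(f)$ a smooth endomorphism, i.e.\ an element of $\mathrm{End}_\CC(\pi)$.
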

\begin{proof} See \cite[III.7, p.~273]{Waldspurger:Plancherel}.    
\end{proof}

\subsection{Harish-Chandra Plancherel theorem}\label{ssec:HCplancherel}

For $P=MN\in\mathcal{P}$ and an $\Im\Lambda_M$-orbit $\mathcal{O}\subseteq\Pi_2(M)$, let $C^\infty(\mathcal{O},P)$ be defined as in \cite[p.296]{Waldspurger:Plancherel}. Explicitly, choose a base point $\sigma\in \mathcal{O},$ and let $U_\sigma\subseteq \mathrm{Im}\, \Lambda_M$ consist of $\chi$ such that $\sigma\cong \sigma_\chi$ as $M(F)$-representations. Note that $U_\sigma$ is a finite group depending only on $\mathcal{O}$. Let $\chi\in U_\sigma.$ An isomorphism $\varphi:\sigma\cong \sigma_\chi$ induces an isomorphism 
\begin{align*}
    \mathrm{Ind}_P^G\sigma\otimes\Ind_P^{G} \sigma^\lor\cong \mathrm{Ind}_P^G \sigma_\chi\otimes \Ind_{P}^G(\sigma_\chi)^\lor,
\end{align*} 
that is independent of the choice of $\varphi.$ This isomorphism transports to an automorphism of $\Ind_{K_P}^K \sigma|_{K_M}\otimes \Ind_{K_P}^K \sigma^\lor|_{K_M},$ and hence an automorphism of $\mathrm{End}_\CC(\Ind_{K_P}^K \sigma|_{K_M}).$ Thus, we have a group action $U_\sigma$ on $\mathrm{End}_\CC(\Ind_{K_P}^K \sigma|_{K_M}).$ On the other hand, $U_\sigma$ acts naturally on $C^\infty(\Im\Lambda_M)$ by translation. Define
\begin{align*}
    C^\infty(\mathcal{O},P) := (C^\infty(\Im\Lambda_M)\otimes \mathrm{End}_\mathbb{C}(\Ind_{K_P}^K\sigma\vert_{K_M}))^{U_\sigma}
\end{align*}
to be the subspace invariant under the diagonal action of $U_\sigma$. We equip $C^\infty(\mathcal{O},P)$ with the colimit topology (c.f. \cite[VI.3, p.301]{Waldspurger:Plancherel}). Let
\begin{align*}
    \Theta = \Theta_G := \{(\mathcal{O},P) : P=MN\in\mathcal{P},\,\mathcal{O}\in \pi_0(\Pi_2(M))\}
\end{align*}
and
\begin{align*}
    C^\infty(\Theta) := \bigoplus_{(\mathcal{O},P)\in\Theta} C^\infty(\mathcal{O},P).
\end{align*}
For $T\in C^\infty(\Theta)$, $P=MN\in\mathcal{P}$ and $\sigma\in \Pi_2(M)$, we write $T(\sigma,P)\in\Ind_{K_P}^K\sigma\vert_{K_M}$ for its evaluation at the point $(\sigma,P)$. Set
\begin{align*}
    C^\infty(\Theta)^{\mathrm{inv}} := \left\{T\in C^\infty(\Theta) : T(s.\sigma,P') = {}^\circ c_{P'\mid P}(s,\sigma) T(\sigma,P)\text{ for all }\begin{array}{l}
         P\in\mathcal{P},\,\sigma\in\Pi_2(M),\\
         s\in W(G,A_0),\,P'\in\mathcal{P}(s.M) 
    \end{array}\right\}.
\end{align*}

Let $P=MN, P'=M'N'\in \mathcal{P}.$ Let $\sigma\in \Pi_2(M)$ and $\sigma'\in \Pi_2(M').$ By \cite[Th\'eor\`eme VII.2.6]{Representation-p-adic} and  \hyperref[oc:cocycle]{Lemma \ref{oc:cocycle}}, $\Ind_P^G\sigma\cong \Ind_{P'}^G{\sigma'}$ if and only if $s.M = M'$ and $s.\sigma \cong \sigma'$ as $M'(F)$-representations for some $s\in W(G,A_0)$. In this case, we have a natural $G(F)\times G(F)$-equivariant isomorphism
\begin{align*}
    \mathrm{End}_\CC(\Ind_P^G\sigma)&\longrightarrow \mathrm{End}_\CC(\Ind_{P'}^G\sigma')\\
    T&\longmapsto  {}^\circ c_{P'\mid P}(s,\sigma)T.
\end{align*}
These maps form an inverse system by \hyperref[oc:cocycle]{Lemma \ref{oc:cocycle}}. Let $\mathrm{End}$ denote the LF bundle over $\Temp_\Ind(G)$ whose fibre over $\pi$ is given by 
\begin{align*}
    \mathrm{End}_\pi := \varprojlim_{(\sigma,P):\pi\cong \Ind_P^G\sigma}\,\mathrm{End}_\CC(\Ind_P^G\sigma).
\end{align*}
The bundle $\mathrm{End}\to\Temp_\Ind(G)$ is trivial over each connected component of $\Temp_\Ind(G)$, as one has
\begin{align*}
    \mathrm{End}_\pi \cong \varprojlim_{(\sigma,P):\pi\cong \Ind_P^G\sigma}\,\mathrm{End}_\CC(\Ind_{K_P}^K\sigma\vert_{K_M}).
\end{align*}
Put
\begin{align*}
    \mathcal{C}(\Temp_\Ind(G)) := C^\infty(\widetilde{\Temp}_{\Ind}(G),\mathrm{End}) \cap \Gamma_c(\Temp_\Ind(G),\mathrm{End}).
\end{align*}
That is, $\mathcal{C}(\Temp_\Ind(G))$ consists of smooth functions  $\widetilde{\Temp}_{\Ind}(G)\to \mathrm{End}$ that descend to compactly supported sections of $\mathrm{End}\to \Temp_\Ind(G)$. By the universal property of limits, one has a canonical isomorphism
\begin{align*}
    \mathcal{C}(\Temp_\Ind(G)) \cong C^\infty(\Theta)^{\mathrm{inv}}.
\end{align*}

Equip $C^\infty(\Theta)$ with the inductive limit topology. Endow $\mathcal{C}(\Temp_\Ind(G))$ with the topology transferred from the subspace topology of $C^\infty(\Theta)^{\mathrm{inv}}$. The space $\mathcal{C}(\Temp_\Ind(G))$ acquires a natural $G(F)\times G(F)$-action: for $g,h\in G(F)$ and $T\in \mathcal{C}(\Temp_\Ind(G)),$ define the section $g.T.h$ by
\begin{align*}
    (g.T.h)(\pi):=\pi(g)\circ T(\pi)\circ \pi(h).
\end{align*}

\begin{theorem}[\cite{Waldspurger:Plancherel}]\label{HCPlan} The map 
\begin{center}
    \begin{tikzcd}[row sep=tiny]
        \mathrm{HP} = \HP_G:\mathcal{C}(G(F))\arrow[r]& \mathcal{C}(\Temp_{\Ind}(G))\\
f \arrow[r,maps to]&{\big(\pi\mapsto \pi(f)\big)}
    \end{tikzcd}
\end{center}
is a $G(F)\times G(F)$-equivariant topological $\mathbb{C}$-algebra isomorphism. Its inverse $ \mathrm{HP}^{-1}$ is given by 
\begin{align*}
    \mathcal{C}(\Temp_{\Ind}(G))\ni T\longmapsto \mathrm{HP}^{-1}(T)(g) := \int_{\Temp_{\Ind}(G)} \tr(\pi(g^{-1})\circ T(\pi)) d\pi.
\end{align*}
\qed
\end{theorem}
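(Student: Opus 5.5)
This is Harish-Chandra's Plancherel theorem, and the paper takes it directly from \cite{Waldspurger:Plancherel}. The plan is to reduce the statement to Waldspurger's formulation via the dictionary set up above. I would proceed in four steps.

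\textbf{Step 1: the map lands in the target space.} Lemma \ref{lem:CGconverge} shows that $\pi(f)$ is defined for every $\pi\in\Temp_\Ind(G)$. Then:
\begin{enumerate}
\item For fixed $(M,\sigma)$, realize $\Ind_P^G\sigma_\chi$ on the fixed space $\Ind_{K_P}^K\sigma\vert_{K_M}$. The function $\chi\mapsto(\Ind_P^G\sigma_\chi)(f)$ on $\Im\Lambda_M$ is smooth. Indeed, $f$ is bi-$K_0$-invariant for some compact open $K_0$, so the operator lives on the finite-dimensional $K_0$-fixed vectors. Its matrix coefficients are absolutely convergent integrals of $f$ against functions depending smoothly on $\chi$, and these are dominated by $\Xi_G\,\sigma^{d}$, so the seminorm $p_d$ controls them.
\item $U_\sigma$-invariance holds by construction.
\item The relation $T(s.\sigma,P')={}^\circ c_{P'\mid P}(s,\sigma)T(\sigma,P)$ follows because $J_{P'\mid s.P}(s.\sigma)\circ\lambda(s)$ intertwines $\Ind_P^G\sigma$ with $\Ind_{P'}^G s.\sigma$, so it conjugates $\pi(f)$ to $\pi'(f)$.
\item Compact support holds because $f$ is $K_0$-biinvariant. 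Only finitely many orbits $\mathcal{O}$ have $(\Ind_P^G\sigma)^{K_0}\neq0$, by Harish-Chandra's finiteness theorem for discrete series with fixed vectors under a given compact open subgroup, modulo twisting. Each $\Im\Lambda_M$-orbit is compact.
\end{enumerate}
This gives $\HP(f)\in C^\infty(\Theta)^{\mathrm{inv}}\cong\mathcal{C}(\Temp_\Ind(G))$.

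\textbf{Step 2: algebraic properties.} Equivariance is immediate: $\pi(R(g,h)f)=\pi(g)\pi(f)\pi(h^{-1})$, which matches the action $g.T.h^{-1}$ up to the convention for the action. Multiplicativity $\pi(f_1*f_2)=\pi(f_1)\pi(f_2)$ follows by Fubini, which is justified by absolute convergence: Harish-Chandra Schwartz functions form an algebra under convolution, and the integrals converge against tempered coefficients.

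\textbf{Step 3: continuity, bijectivity and the inversion formula.} This is the analytic heart.
\begin{enumerate}
\item Continuity of $\HP$ follows from the estimates in Step 1.
\item Define the proposed inverse $T\mapsto\HP^{-1}(T)$ by the integral formula. Show it lands in $\mathcal{C}(G(F))$ using the Harish-Chandra estimates $|\tr(\pi(g^{-1})T(\pi))|\ll \Xi_G(g)\sigma(g)^{-d}$ uniformly over compact sets of the parameter. These come from smoothness in $\chi$ together with integration by parts on the torus $\Im\Lambda_M$, which is Waldspurger's wave-packet estimate \cite[VI]{Waldspurger:Plancherel}.
\item Show that $\HP^{-1}\circ\HP=\mathrm{id}$. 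This is the Plancherel formula $f(1)=\int\tr\pi(f)\,d\pi$, with the measure normalized by $\gamma(G|M)$, $\mu(\sigma)$ and $j(\sigma)$; applying it to translates of $f$ gives the identity.
\item Show that $\HP\circ\HP^{-1}=\mathrm{id}$. This requires that wave packets recover their spectral data, via the orthogonality relations for Eisenstein integrals and the $c$-function identities of Lemma \ref{oc:cocycle}.
\end{enumerate}

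\textbf{Main obstacle.} The main difficulty is item 3 of Step 3 together with the surjectivity half, item 4: the Schwartz estimates for wave packets and the inversion. These are exactly Waldspurger's Théorème VIII.1.1 and its corollaries. I would cite them rather than reprove them, after checking that his normalizations of $d\pi$, of $C^\infty(\mathcal{O},P)$ and of the ${}^\circ c$-invariance match the ones fixed in this section. The remark that everything works in positive characteristic would also be cited, as in the footnote.
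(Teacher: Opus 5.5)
Your proposal matches the paper. The paper gives no proof of this statement and simply cites Waldspurger's Plancherel theorem, and your plan (check that $\HP(f)$ lands in $C^\infty(\Theta)^{\mathrm{inv}}\cong\mathcal{C}(\Temp_\Ind(G))$, then defer the Schwartz estimates and the two-sided inversion to Waldspurger after matching the normalizations of $d\pi$, $C^\infty(\mathcal{O},P)$ and ${}^\circ c$) is the same reduction.

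One correction to Step 3: the bound $\ll \Xi_G(g)\sigma(g)^{-d}$ holds only for the wave packet $\int_{\Temp_\Ind(G)}\tr(\pi(g^{-1})\circ T(\pi))\,d\pi$, not pointwise in $\pi$. A single tempered matrix coefficient has no such decay, for example along $A_G(F)$ or for unitary principal series, so the integration over $\Im\Lambda_M$ is essential.
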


For a compact open subgroup $K_0\le K$, let 
\begin{align}\label{thetaK0}
    \Theta^{K_0} := \{(M,\sigma)\in\widetilde{\Temp}_\Ind(G): (\Ind_P^G\sigma)^{K_0}\neq 0\}.
\end{align}
Note that $\Theta^{K_0}$ consists of finitely many connected components. Then $\HP_G$ induces an isomorphism  
\begin{align*}\mathcal{C}(G(F)/\!/K_0) \cong \mathcal{C}(\Temp_{\Ind}(G))^{K_0\times K_0}=\mathcal{C}(\Temp_{\Ind}(G))\cap C^\infty(\Theta^{K_0},\mathrm{End}).
\end{align*}

\subsection{Constant terms}\label{ssec:constant} For $P=MN\in \mathcal{P}$, $f\in \mathcal{C}(G(F))$ and $m\in M(F)$ the integral
\begin{align*}
    f^{(P)}(m) := \delta_P^{\frac{1}{2}}(m) \int_{N(F)} f(mn) dn 
\end{align*}
is absolutely convergent. It defines a $K_M\times K_M$-equivariant  homomorphism 
\begin{align*}
    (-)^{(P)}:\mathcal{C}(G(F))\to \mathcal{C}(M(F)).
\end{align*}
On the other hand, there is a map $(-)^{(P)}:\mathcal{C}(\Temp_\Ind(G))\longrightarrow \mathcal{C}(\Temp_\Ind(M))$ defined as follows. For $P'=M'N'\in \mathcal{P}$ with $M'\le M$ and $\sigma\in\Pi_2(M')$, there is a canonical isomorphism
\begin{align*}
    \mathrm{End}_\mathbb{C}(\Ind_{K_{P'}}^K\sigma\vert_{K_{M'}}) \cong \mathrm{End}_{\mathbb{C}}(\Ind_{K_{P}}^K\Ind^{K_{M}}_{K_{P'\cap M}}\sigma\vert_{K_{M'}}).
\end{align*}
Evaluating at the identity of $K\times K$ gives a homomorphism
\begin{align*}
    \mathrm{End}_{\mathbb{C}}(\Ind_{K_{P}}^K\Ind^{K_{M}}_{K_{P'\cap M}}\sigma\vert_{K_{M'}}) &\cong \Ind_{K_{P}\times K_{P}}^{K\times K}\Ind^{K_{M}\times K_{M}}_{K_{P'\cap M}\times K_{P'\cap M}}\sigma\otimes\sigma^\vee\vert_{K_{M'}\times K_{M'}} \\
    &\to \Ind^{K_{M}\times K_{M}}_{K_{P'\cap M}\times K_{P'\cap M}}\sigma\otimes\sigma^\vee\vert_{K_{M'}\times K_{M'}} \cong \mathrm{End}_{\mathbb{C}}(\Ind^{K_{M}}_{K_{P'\cap M}}\sigma\vert_{K_{M'}}).
\end{align*}
Composing the two maps gives $\mathrm{End}_\mathbb{C}(\Ind_{K_{P'}}^K\sigma\vert_{K_{M'}}) \longrightarrow \mathrm{End}_{\mathbb{C}}(\Ind^{K_{M}}_{K_{P'\cap M}}\sigma\vert_{K_{M'}})$, which globalizes to a map
\begin{align*}
    (-)^{(P)}:C^\infty(\Theta_G)\longrightarrow C^\infty(\Theta_M)
\end{align*}
that descends to
\begin{align*}
    (-)^{(P)}:\mathcal{C}(\Temp_\Ind(G))\longrightarrow \mathcal{C}(\Temp_\Ind(M))
\end{align*}
by {\cite[Lemma 2.14]{DRS}}. Furthermore, we have a commutative diagram
\begin{equation}\label{CT:spectral}
    \begin{tikzcd}
        \mathcal{C}(G(F))\arrow[d,swap,"(-)^{(P)}"]\arrow[r,"\HP_G"]&\mathcal{C}(\Temp_\Ind(G))\arrow[d,"(-)^{(P)}"]\\
        \mathcal{C}(M(F))\arrow[r,"\HP_M"]&\mathcal{C}(\Temp_\Ind(M)).
    \end{tikzcd}
\end{equation}

\subsection{Working assumptions} \label{ssec:assumptions}
Fix a separable closure $F^{\mathrm{sep}}$ of $F$, and let $\Gal_F:=\Gal(F^{\mathrm{sep}}/F)$ denote the absolute Galois group. Let $W_F$ be the Weil group of $F$. Let ${}^LG:=G^\vee(\mathbb{C})\rtimes W_F$ be the $L$-group of $G$. 

Let $\mathrm{Temp}(G)_\CC:=\{\pi_\chi: \pi\in\Temp(G),\,\chi\in\Lambda_G\}.$ We pullback the action of $\Lambda_G$ on $\mathrm{Temp}(G)_\CC$ along \eqref{XG->LambdaG:surj} to get an $X^*(G)_\mathbb{C}$-action.  Let $\Phi(G)$ (resp. $\Phi_t(G)$) denote the set of equivalence classes of (resp. tempered) $L$-parameters of $G$. The canonical injection 
\begin{align}\label{X*G:pair:ZGvee}
    X^*(G) = X^*(G/G^{\mathrm{der}}) = X_*((G/G^{\mathrm{der}})^\vee)^{\Gal_F} = X_*(Z_{G^\vee}^\circ)^{\Gal_F} \subseteq \Hom_{\textbf{TopGp}}(\CC^\times,Z_{G^\vee}^\circ(\CC)^{\Gal_F})
\end{align}
provides an action of $X^*(G)_\mathbb{C}$ on $\Phi(G)$, and $\Phi_t(G)$ is stable by $X^*(G)\otimes_\mathbb{Z}i\mathbb{R}$.

A local Langlands correspondence of $G$ (for non-Archimedean $F$) is a map 
\begin{align*}
    \mathrm{LL}_G:\mathrm{Temp}(G)_\CC\to \Phi(G)
\end{align*}
satisfying a list of properties. Following \cite{DRS}, we require:
\begin{enumerate}[label=(LLC\text{\arabic*}), ref=LLC\text{\arabic*}]
    \item\label{LLC:LCFT} If $G$ is a torus, $\mathrm{LL}_G$ is given by the class field theory \cite{Langlands:tori}.
    \item\label{LLC:GL} $\mathrm{LL}_{\GL_n}$ is given by Harris-Taylor-Henniart \cite{Harris-Taylor,Henniart} and Laumon-Rapoport-Stuhler \cite{Laumon:Rapoport:Stuhler:LLGLpositive}.
    \item\label{LLC:temp} One has $\mathrm{LL}_G^{-1}(\Phi_t(G)) = \Temp(G)$.
    \item\label{LLC:twist} $\mathrm{LL}_G$ is $X^*(G)_\mathbb{C}$-equivariant.
    \item\label{LLC:dual} If $\rho:{}^LG\longrightarrow\GL_{V_\rho}(\mathbb{C})$ is any representation, then $\rho\circ \mathrm{LL}_G(\pi^\vee)\cong (\rho\circ\mathrm{LL}_G(\pi))^\vee$. 
    \item\label{LLC:para} Let $M\in\mathcal{M}$, $\sigma\in\Pi_2(M)$ and $\pi$ an irreducible subquotient of $\Ind_P^G\sigma$. Then $$\mathrm{LL}_G(\pi) = ({}^LM\longrightarrow {}^LG)\circ \mathrm{LL}_M(\sigma).$$
\end{enumerate}
We assume the existence of maps $\mathrm{LL}_M$ with such properties for all (semi-standard) Levi subgroups $M$ of $G$ in the paper.

Let $\rho:{}^LG\longrightarrow \GL_{V_\rho}(\mathbb{C})$ be a tempered representation, i.e., $\rho(W_F)$ has bounded image. We make the following assumption on $(G,\rho)$.
\begin{enumerate}[label=(G), ref=G]
    \item \label{G}
    There is $\nu=\nu_G\in X^\ast(G)$ such that the dual cocharacter $\nu^\lor:\CC^\times \longrightarrow Z_{G^\vee}(\CC)$ satisfies $\rho\circ \nu^\lor(z)=z$.
\end{enumerate}

\begin{remark}\label{rem:relaxation}
    One only needs the assumption that $\rho\circ \nu^\lor(z)=z^k$ for some $k\in \ZZ_{>0}.$ However, to ease the notation we will work with the normalized choice of $\nu$.
\end{remark}

    Decompose  $\rho\vert_{Z_{G^\vee}^\circ} =\mu_1\oplus\cdots\oplus\mu_n$ as a sum of (rational) characters of $Z_{G^\vee}^\circ$. Let 
\begin{align*}
\sigma_{\rho,G} := \sum_{i=1}^n \mathbb{Z}_{\geq 0}\mu_i\subseteq X^*(Z_{G^\vee}^\circ).
\end{align*}
Then \eqref{G} implies that $\mathbb{R}_{\geq 0}\sigma_{\rho,G}$ is a strongly convex rational cone in $X^*(Z_{G^\vee}^\circ)_{\mathbb{R}}$.

Let $M\in \mathcal{M}.$ Let $\rho_M := \rho\vert_{{}^LM}$ and let $\nu_M:=\nu\vert_{M}$. Then $\rho_M$ is a tempered representation and $\rho_M\circ \nu_M^\lor(z)=z$. Therefore, each $(M,\nu_M,\rho_M)$ satisfies the assumption \eqref{G}. We will often identify $\nu_M$ with $\nu$. We define $\sigma_{\rho,M}:=\sigma_{\rho_M,M}$ in the same way, so $\mathbb{R}_{\geq 0}\sigma_{\rho,M}$ is a strongly convex rational cone in $X^*(Z_{M^\vee}^\circ)_{\mathbb{R}}$.

\begin{definition}\label{def:positivecone}
    A subset $C$ of $\Re\Lambda_M$ is called a \textbf{positive cone} if the following holds:
\begin{enumerate}
    \item [(C1)] There are $\chi_0\in \Re\Lambda_M$ and a (nonempty) open cone $C'$ in $\Re\Lambda_M$ such that $C=\chi_0C'$.
    \item [(C2)] There exists $s_0\in\mathbb{R}$ such that $|\nu|^{s}\in C$ for $s>s_0$.
\end{enumerate}

\end{definition}

\begin{lemma}\label{intertwining:posCone}
   Let $P=MN\in \mathcal{P}$. For $R\in\mathbb{R}$ and $C$ a positive cone in $\Re\Lambda_M$, the intersection
   \begin{align*}
       \left\{\chi\in\Re\Lambda_M: \langle\chi,\alpha^\vee\rangle > R\text{ for all }\alpha\in \Sigma(P)\right\} \cap C
   \end{align*}
   is a nonempty open set.
\end{lemma}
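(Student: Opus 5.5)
Both sets are open: the first is a finite intersection of open half-spaces, and $C=\chi_0C'$ is a translate of an open cone, hence open. So it suffices to show the intersection is nonempty. I will exhibit a point of the form $|\nu|^s\cdot\lambda$, with $s$ large and $\lambda$ a suitable dominant element. Use additive notation via the isomorphism \eqref{XG->LambdaG:bij:rel}, identifying $\Re\Lambda_M$ with $X^\ast(M)_\RR$. Then $C=\chi_0+C'$ with $C'$ an open cone, and by (C2) we have $s\nu\in C$ for all $s>s_0$.

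The key observation is that $\nu\in X^\ast(G)$, restricted to $M$, pairs trivially with every coroot: $\langle\nu,\alpha^\vee\rangle=0$ for all $\alpha\in\Sigma(P)$, since rational characters of $G$ are trivial on the derived group. Thus moving along the ray $s\nu$ does not help with the root conditions, and we must perturb in a direction inside the cone. Fix $s_1>s_0$, so that $s_1\nu\in C$. Because $C$ is open, there is $\epsilon>0$ with $s_1\nu+\epsilon\lambda\in C$ for any fixed $\lambda$ of bounded norm; but $\epsilon$ is small, so this alone does not beat $R$. Instead, use the cone structure. Write $s\nu=\chi_0+c_s$ with $c_s:=s\nu-\chi_0\in C'$ for $s>s_0$. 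Since $C'$ is an open cone, $C'$ is invariant under positive scaling and open. Pick $s_1>s_0$ and a ball $B(c_{s_1},\delta)\subseteq C'$. Scaling by $t>0$ gives $B(tc_{s_1},t\delta)\subseteq C'$. Hence
\begin{align*}
\chi_0+tc_{s_1}+t\delta'\lambda\in C
\end{align*}
for any unit vector $\lambda$ and any $0<\delta'<\delta$.

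Now take $\lambda$ in $X^\ast(M)_\RR$ with $\langle\lambda,\alpha^\vee\rangle>0$ for all $\alpha\in\Sigma(P)$. Such $\lambda$ exists: conjugate $P$ to a standard parabolic and use an element of $X^\ast(M)^+_\RR$ (e.g.\ $\sum_{\alpha\in\Delta(P)}\omega_\alpha$ transported by the Weyl element), since positive roots of $P$ are nonnegative combinations of $\Delta(P)$ with at least one positive coefficient. Normalize $\lambda$ to unit norm. Evaluate the point $\chi_t:=\chi_0+tc_{s_1}+t\delta'\lambda$ against coroots. Since $c_{s_1}=s_1\nu-\chi_0$ and $\nu$ pairs to zero, we get
\begin{align*}
\langle\chi_t,\alpha^\vee\rangle=(1-t)\langle\chi_0,\alpha^\vee\rangle+t\delta'\langle\lambda,\alpha^\vee\rangle.
\end{align*}
This is affine in $t$ with slope $\delta'\langle\lambda,\alpha^\vee\rangle-\langle\chi_0,\alpha^\vee\rangle$, which may be negative. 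So I will choose $\lambda$ more cleverly: replace $\lambda$ by $\lambda+\mu\chi_0$, or better, first shift the base point. Since $\chi_0$ is arbitrary, I will rewrite $C=\chi_0'+C'$ with $\chi_0'=s_1\nu$. This works because $s_1\nu\in C$ and $C$ is an open translated cone, so $C\supseteq s_1\nu+C'$ (indeed $\chi_0+C'+C'\subseteq\chi_0+C'$ as the open cone $C'$ is convex). With this base point the computation becomes $\langle s_1\nu+c,\alpha^\vee\rangle=\langle c,\alpha^\vee\rangle$.

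Convexity of $C'$ is the main point to verify. An ``open cone'' should mean an open convex cone; if the paper does not intend convexity, I would instead argue using $C\ni s\nu$ for all large $s$ together with openness near a single point. Granting convexity, it remains to find $c\in C'$ with $\langle c,\alpha^\vee\rangle>R$. The vector $c=\chi_1-\chi_0$ with $\chi_1\in C$ arbitrary is in $C'$, and nearby vectors $c+\epsilon\lambda\in C'$ for small $\epsilon$. Scaling by large $t$ gives $\langle t(c+\epsilon\lambda),\alpha^\vee\rangle\to\infty$ provided $\langle c+\epsilon\lambda,\alpha^\vee\rangle>0$. The difficulty is that $c$ may have negative pairings. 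To handle this, I will choose $c=s\nu-\chi_0$ with $s$ huge and rely on $C'$ containing a neighborhood of the ray direction $\nu$. Since $(s\nu-\chi_0)/s\to\nu$ and these all lie in the closed cone $\overline{C'}$, we get $\nu\in\overline{C'}$, but only in the closure. This limiting step — showing that some direction $\nu+\epsilon\lambda$ lies in $C'$ itself — is the main obstacle. I would try first to prove it from openness plus convexity: $C'$ has nonempty interior and $\nu\in\overline{C'}$, so $\nu+\epsilon(\kappa-\nu)\in C'$ for interior $\kappa$. Then I would pick $\lambda$ compatible with that.
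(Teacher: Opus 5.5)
Your reduction is fine up to its last step. Granting convexity, $C\supseteq s_1\nu+C'$, so you need some $c\in C'$ with $\langle c,\alpha^\vee\rangle>0$ for all $\alpha\in\Sigma(P)$. That step cannot be completed, and the obstruction is not merely technical. Your proposed fix via $\nu+\epsilon(\kappa-\nu)$ has pairing $\epsilon\langle\kappa,\alpha^\vee\rangle$ with every $\alpha^\vee$, so it only transfers the requirement to the interior point $\kappa$. Moreover, (C1)--(C2) only place $\nu$ in $\overline{C'}$, which does not force $C'$ to meet the open region $\{\langle\cdot,\alpha^\vee\rangle>0\ \forall\alpha\in\Sigma(P)\}$.

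In fact the statement fails for general positive cones. Take $G=\GL_2$, $\nu=\det$, $M=T$, $P=B$, and identify $\Re\Lambda_T\cong X^\ast(T)_\RR=\RR^2$ additively, so that $\nu=(1,1)$ and $\langle(x,y),\alpha^\vee\rangle=x-y$. Let
\begin{align*}
    C'=\{(x,y): y>|x|\},\qquad \chi_0=(0,-1),\qquad C=\chi_0+C'.
\end{align*}
Then $C'$ is a nonempty open, even strongly convex, cone. Also $s\nu\in C$ if and only if $s+1>|s|$, i.e.\ if and only if $s>-\tfrac12$, so $C$ is a positive cone. But every $(x,y)\in C$ satisfies $y+1>|x|\ge x$, i.e.\ $x-y<1$, so the intersection in the lemma is empty once $R\ge 1$. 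No argument using only (C1)--(C2) can succeed.

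What is needed is the stronger property $\nu\in C'$, equivalently $\chi|\nu|^r\in C$ for every $\chi\in\Re\Lambda_M$ and all $r\gg0$. The paper's one-line proof uses exactly this when it asserts that, for an arbitrary $\chi$ with $\langle\chi,\alpha^\vee\rangle>R$, one can find $r>0$ with $\chi|\nu|^r\in C$. The property holds for the cones the lemma is actually applied to, e.g.\ $C_{\rho,M}$, where $\langle\nu,\widetilde{\mu}_j\rangle=1>0$ puts $\nu$ in the interior. It is not, however, a consequence of the definition.

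Once you assume it, none of your convexity or perturbation machinery is needed:
\begin{itemize}
\item Take any $\chi$ with $\langle\chi,\alpha^\vee\rangle>R$ for all $\alpha\in\Sigma(P)$; your dominant-element argument produces one.
\item Write $\chi+r\nu=\chi_0+r\bigl(\nu+r^{-1}(\chi-\chi_0)\bigr)$. This lies in $C$ for $r$ large, because $C'$ is open and stable under positive scaling.
\item The pairings with the $\alpha^\vee$ are unchanged, because $\langle\nu,\alpha^\vee\rangle=0$.
\end{itemize}
You correctly located the crux in the gap between $\nu\in\overline{C'}$ and $\nu\in C'$. The resolution is that the latter must be an extra hypothesis on $C$; it cannot be derived.
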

\begin{proof} It suffices to show the intersection is nonempty. Let $\chi\in\Re\Lambda_M$ with $\langle\chi,\alpha^\vee\rangle>R$ for all $\alpha\in \Sigma(P)$. Since $C$ is a positive cone, we can find $r>0$ so that $\chi|\nu|^r\in C$. As $\nu\in X^*(G)$, it follows that $\langle \nu,\alpha^\vee\rangle = 0$ for all $\alpha\in \Sigma(P)$. Hence $\chi|\nu|^r$ lies in the intersection.
\end{proof}

\subsection{Some remarks on $L$-functions}\label{ssec:LLC}
Let $(G,\nu,\rho)$ satisfy the assumption \eqref{G}. Since $\rho$ is tempered, by \eqref{LLC:temp} we have an induced map
\begin{align*}
    \mathrm{Temp}(G)\xrightarrow{\quad\mathrm{LL}_G\quad}\Phi_t(G)\xrightarrow{\quad\rho\circ-\quad} \Phi_t(\GL_{V_\rho}) = \Phi_t(\GL_{\dim V_\rho}).
\end{align*}
For $\pi\in \Temp(G)$, put
\begin{align}\label{eq:Lgamma:LL}
\begin{split}
    L(s,\pi,\rho)&:=L(s,\rho\circ \mathrm{LL}_G(\pi)),\\
    \gamma(s,\pi,\rho,\psi)&:=\gamma(s,\rho\circ \mathrm{LL}_G(\pi),\psi).
\end{split}
\end{align}
It is well-known that $L(s,\pi,\rho)$ is holomorphic for $\Re(s)>0.$ Since the quotient $X_*(Z_{G^\vee}^\circ)/X_*(Z_{G^\vee}^\circ)^{\Gal_F}$ is torsion-free, the map \eqref{X*G:pair:ZGvee} induces a surjection
\begin{align}\label{X*ZGvee:as:HomX*GZ}
    X^*(Z^\circ_{G^\vee})\twoheadrightarrow \Hom_{\mathbb{Z}}(X_*(Z_{G^\vee}^\circ)^{\Gal_F},\mathbb{Z})=\Hom_{\mathbb{Z}}(X^*(G),\mathbb{Z}).
\end{align}
 Let
\begin{align*}
    \widetilde{\sigma}_{\rho,G} \subseteq \Hom_{\mathbb{Z}}(X^*(G),\mathbb{Z})
\end{align*}
be the image of $\sigma_{\rho,G}$ under \eqref{X*ZGvee:as:HomX*GZ}. As $\sigma_{\rho,G}$ is $\Gal_F$-invariant,  $\mathbb{R}_{\geq 0}\widetilde{\sigma}_{\rho,G}$ is a strongly convex rational cone in $\Hom_{\mathbb{Z}}(X^*(G),\mathbb{Z})_\RR$ by \cite[Lemma 6.11]{DRS}.

 Let $M\in \mathcal{M}$. Write $\rho_{M} = \oplus \rho_j$ where each $\rho_j$ is an irreducible representation of $^LM$. Further write $\rho_j\vert_{Z_{M^\vee}^\circ} = \mu_{j1}\oplus\cdots\oplus\mu_{jn_j}$ where the $\mu_{ji}\in X^*(Z^\circ_{M^\vee})$ are characters. Since $\rho_j$ is $^LM$-irreducible and $(Z^\circ_{M^\vee})^{\Gal_F}\le  Z_{{}^LM},$ all $\mu_{ji}\,(i\in[n_j])$ map to the same element $\widetilde{\mu}_j\in\Hom_\mathbb{Z}(X^*(M),\mathbb{Z})$ under \eqref{X*ZGvee:as:HomX*GZ}. Note that $\{\widetilde{\mu}_j\}_{j}$ generates the cone $\widetilde{\sigma}_{\rho,M}$. By \eqref{LLC:GL} and \eqref{LLC:twist}, for $\chi\in X^*(M)_{\mathbb{C}}$ and $\sigma\in \Pi_2(M)$ one has
\begin{align*}
L(0,\sigma_\chi,\rho_j) = L(\langle\chi,\widetilde{\mu}_j\rangle,\sigma,\rho_j).
\end{align*}
Then 
\begin{align}\label{Lfactor:factorize}
    L(0,\sigma_\chi,\rho_{M}) = \prod_{j}L( \langle\chi,\widetilde{\mu}_j\rangle,\sigma,\rho_j).
\end{align}

Let
\begin{align}\label{CrhoM:dualcone:defn}
    \overline{C}_{\rho,M} := \left\{\chi\in \Re \Lambda_M: \langle\chi,\widetilde{\mu}_j\rangle \ge  0 \textrm{ for all } j\right\} = \left(\RR_{\ge 0}\widetilde{\sigma}_{\rho,M}\right)^\vee
\end{align} 
be the dual cone in $\Re\Lambda_M\cong X^\ast(M)_\RR.$ Let $C_{\rho,M}:=\mathrm{int}(\overline{C}_{\rho,M})$ be its interior. It is an open cone. Since $\langle \nu,\widetilde{\mu}_j\rangle=1$ for all $j$, each $C_{\rho,M}$ is a positive cone. Note that the meromorphic function $\chi\mapsto L(0,\sigma_{\chi}, \rho_{M})$ is holomorphic on $C_{\rho,M}\times \Im \Lambda_M$ for any $\sigma\in \Pi_2(M)$. 

For semi-standard Levi subgroups $M'\leq M$, we have $\iota_{M\ge M'}(C_{\rho,M})\subseteq C_{\rho,M'}$ and $p_{M'\le M}(C_{\rho,M'})=C_{\rho,M}.$ For $\chi\in X^*(M')_\RR$ and $\beta\in\Hom_\ZZ(X^*(M),\ZZ)_\RR$, we write
\begin{align}\label{pairing:bw:Levis}
    \langle \chi,\beta\rangle := \langle \chi|_{M},\beta\rangle.
\end{align}

\begin{lemma}\label{Lcone:dualdual} Let $\mathcal{B}_0\subseteq X^*(M_0)$ be a finite generating set of the cone $\overline{C}_{\rho,M_0}$. For any $M\in \mathcal{M}$,
\begin{align*}
    \widetilde{\sigma}_{\rho,M} \subseteq \{\beta\in\Hom_\ZZ(X^*(M),\ZZ): \langle\tau,\beta\rangle\geq 0\text{ for all }\tau\in\mathcal{B}_0\}.
\end{align*}
Here $\langle\tau,\beta\rangle$ is interpreted as in \eqref{pairing:bw:Levis}.
\end{lemma}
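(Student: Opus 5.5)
We argue by duality of cones, reducing to the single Levi $M_0$. By definition $\widetilde{\sigma}_{\rho,M}$ is generated by the $\widetilde{\mu}_j$ attached to the irreducible constituents $\rho_j$ of $\rho_M$. So it suffices to show $\langle\tau,\widetilde{\mu}_j\rangle\ge 0$ for every $\tau\in\mathcal{B}_0$ and every $j$. Here $\langle\tau,\widetilde{\mu}_j\rangle=\langle p_{M_0\le M}(\tau),\widetilde{\mu}_j\rangle$ by \eqref{pairing:bw:Levis}.

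The first step is to use the compatibility recorded just before the lemma: $p_{M_0\le M}(C_{\rho,M_0})=C_{\rho,M}$. Passing to closures, $p_{M_0\le M}$ is linear and continuous, so it maps $\overline{C}_{\rho,M_0}$ into the closure of $C_{\rho,M}$. Since $C_{\rho,M}$ is the interior of the closed convex cone $\overline{C}_{\rho,M}$, which is nonempty (it contains $\nu$), that closure is $\overline{C}_{\rho,M}$. Hence $p_{M_0\le M}(\tau)\in\overline{C}_{\rho,M}$ for every $\tau\in\mathcal{B}_0\subseteq\overline{C}_{\rho,M_0}$. By the definition \eqref{CrhoM:dualcone:defn} this gives $\langle p_{M_0\le M}(\tau),\widetilde{\mu}_j\rangle\ge 0$ for all $j$. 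Nonnegativity is preserved under $\ZZ_{\ge0}$-combinations, so every $\beta\in\widetilde{\sigma}_{\rho,M}$ satisfies the required inequalities.

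The main obstacle is the claimed identity $p_{M_0\le M}(C_{\rho,M_0})=C_{\rho,M}$, or at least the inclusion $\subseteq$, which is all we need. If it has to be justified from scratch, I would restrict $\rho_M$ further to ${}^LM_0$. Each irreducible constituent $\rho_j$ of $\rho_M$ decomposes into constituents $\rho_{0,k}$ of $\rho_{M_0}$, and their characters $\widetilde{\mu}_{0,k}\in\Hom(X^*(M_0),\ZZ)$ restrict along $\iota_{M\ge M_0}$ to $\widetilde{\mu}_j$. This holds because $Z^\circ_{M^\vee}\subseteq Z^\circ_{M_0^\vee}$, so the weights of $Z^\circ_{M^\vee}$ are restrictions of those of $Z^\circ_{M_0^\vee}$, compatibly with \eqref{X*ZGvee:as:HomX*GZ}.

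With this in hand, for $\tau\in X^*(M_0)_\RR$ the decomposition $X^*(M_0)_\RR=X^*(M)_\RR\oplus X^M_{M_0}$ has $X^M_{M_0}=\ker p_{M_0\le M}$. That kernel pairs trivially with the image of $\Hom(X^*(M),\ZZ)$ under the dual of $p_{M_0\le M}$. Using this, one checks $\langle p_{M_0\le M}(\tau),\widetilde{\mu}_j\rangle=\langle\tau,\widetilde{\mu}_{0,k}\rangle\ge0$. The delicate point is identifying the dual of $p$ with the inclusion of $\widetilde{\mu}_j$ as $\widetilde{\mu}_{0,k}$. I would verify it by working in $X^*(A_{M_0})_\RR$, where the projection is restriction to $A_M$.
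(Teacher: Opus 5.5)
Your main argument is correct and follows the paper's route. The paper observes that $p_{M_0\le M}(\mathcal{B}_0)$ generates $\overline{C}_{\rho,M}$, which is the closed form of $p_{M_0\le M}(C_{\rho,M_0})=C_{\rho,M}$. It then uses double duality to identify the right-hand side over $\RR$ with $\RR_{\ge0}\widetilde{\sigma}_{\rho,M}$. You use only the inclusion $p_{M_0\le M}(\mathcal{B}_0)\subseteq\overline{C}_{\rho,M}$ and the definition $\overline{C}_{\rho,M}=(\RR_{\ge0}\widetilde{\sigma}_{\rho,M})^\vee$, and that is all the stated inclusion requires. One small correction to the closure step: the place where $\nu$ is needed is $\overline{C}_{\rho,M_0}=\mathrm{cl}(C_{\rho,M_0})$, which follows from $\nu\in C_{\rho,M_0}$. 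The inclusion $\mathrm{cl}(C_{\rho,M})\subseteq\overline{C}_{\rho,M}$ on the $M$ side is automatic.

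Your optional from-scratch justification, however, rests on a false identity. You correctly note that $\widetilde{\mu}_{0,k}$ restricts along $\iota_{M\ge M_0}$ to $\widetilde{\mu}_j$. That does not make it equal to the particular lift $\widetilde{\mu}_j\circ p_{M_0\le M}$. In general $\widetilde{\mu}_{0,k}$ does not vanish on $X^M_{M_0}=\ker p_{M_0\le M}$, so $\langle p_{M_0\le M}(\tau),\widetilde{\mu}_j\rangle\neq\langle\tau,\widetilde{\mu}_{0,k}\rangle$.

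For instance, take $G=M=\GL_2$, $M_0=T$, $\rho=\mathrm{std}$ and $\mathcal{B}_0=\{e_1,e_2\}$. Then $p_{T\le G}(e_1)=\tfrac12\det$, so $\langle p_{T\le G}(e_1),\widetilde{\mu}_1\rangle=\tfrac12$, while $\langle e_1,\widetilde{\mu}_{0,k}\rangle\in\{1,0\}$.

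What is true is that $\widetilde{\mu}_j\circ p_{M_0\le M}$ is the average of $w.\widetilde{\mu}_{0,k}$ over $w\in W(M,A_0)$. This holds because the image of $\Hom_\ZZ(X^*(M),\ZZ)_\RR$ under the dual of $p_{M_0\le M}$ is the $W(M,A_0)$-invariant subspace of $\Hom_\ZZ(X^*(M_0),\ZZ)_\RR$. You would also need the $W(M,A_0)$-stability of the set of $\widetilde{\mu}_{0,k'}$ occurring in $\rho_j$. With both facts, $\langle p_{M_0\le M}(\tau),\widetilde{\mu}_j\rangle$ becomes an average of nonnegative numbers, which is what you want. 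Since the paper simply asserts $p_{M'\le M}(C_{\rho,M'})=C_{\rho,M}$, your main proof does not depend on this fallback.
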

\begin{proof} The image $p_{M_0\leq M}(\mathcal{B}_0)\subseteq X^*(M)_\RR$ is a generating set of the cone $\overline{C}_{\rho,M}$. Hence
\begin{align*}
    \{\beta\in\Hom_\ZZ(X^*(M),\ZZ)_\RR: \langle\tau,\beta\rangle\geq 0\text{ for all }\tau\in\mathcal{B}_0\} = (\overline{C}_{\rho,M})^\vee = \mathbb{R}_{\geq 0}\widetilde{\sigma}_{\rho,M}.
\end{align*}
For the last equality, see \cite[p.9 (1)]{Fulton:toric} for example. 
\end{proof}

Recall for $M\in \mathcal{M},$ $\Omega_M=M(F)/M(F)^1$ is identified with the image of $H_M$ in $\Hom_\ZZ(X^\ast(M),\ZZ).$

\begin{lemma}\label{Lfunction:expand:cone} Let $M\in \mathcal{M}$ and $\sigma\in \Pi_2(M)$. For $\chi\in X^*(M)_\CC$ one has
\begin{align*}
    L(0,\sigma_\chi,\rho_M) = \sum_{\beta\in\widetilde{\sigma}_{\rho,M}\cap \Omega_M}c_\beta(\sigma_\chi) = \sum_{\beta\in\widetilde{\sigma}_{\rho,M}\cap \Omega_M}c_\beta(\sigma) q^{-\langle\chi,\beta\rangle},
\end{align*}
and the sum is absolutely convergent for $\chi\in C_{\rho,M}$. Moreover, 
\begin{align*}
    L(0,\sigma_\chi,\rho_M) = \sum_{\alpha\in\widetilde{\sigma}_{\rho,G}\cap \Omega_G}\left(\sum_{\substack{\beta\in\widetilde{\sigma}_{\rho,M}\cap \Omega_M\\\beta\vert_{X^*(G)} = \alpha}}c_{\beta}(\sigma)q^{-\langle \chi,\beta-\alpha\rangle}\right)q^{-\langle\chi,\alpha\rangle},
\end{align*}
and the inner sum is a finite sum for each $\alpha\in\widetilde{\sigma}_{\rho,G}\cap \Omega_G$.
\end{lemma}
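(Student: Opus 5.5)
The plan is to expand each factor in the product \eqref{Lfactor:factorize} separately and then multiply the expansions. Fix $j$. By \eqref{LLC:GL} the $L$-factor $L(s,\sigma,\rho_j)$ is the $L$-factor of a tempered representation of some $\GL_{n}$. Hence it has the form $\prod_k(1-a_kq^{-s})^{-1}$ with $|a_k|\le 1$. Expanding each geometric series gives
\begin{align*}
    L(s,\sigma,\rho_j)=\sum_{m\ge 0} b_{j,m}(\sigma)q^{-ms},
\end{align*}
which converges absolutely for $\Re(s)>0$. Substituting $s=\langle\chi,\widetilde{\mu}_j\rangle$ and using $q^{-m\langle\chi,\widetilde{\mu}_j\rangle}=q^{-\langle\chi,m\widetilde{\mu}_j\rangle}$, the $j$-th factor becomes a sum over $m\widetilde{\mu}_j$ with $m\in\ZZ_{\ge0}$. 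For $\chi\in C_{\rho,M}$ we have $\Re\langle\chi,\widetilde{\mu}_j\rangle>0$ for every $j$, so each factor series converges absolutely.

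Taking the product over $j$ then gives an absolutely convergent multiple series indexed by tuples $(m_j)$. I group the terms according to $\beta=\sum_j m_j\widetilde{\mu}_j\in\widetilde{\sigma}_{\rho,M}$ and set
\begin{align*}
    c_\beta(\sigma)=\sum_{(m_j):\,\sum_j m_j\widetilde{\mu}_j=\beta}\ \prod_j b_{j,m_j}(\sigma).
\end{align*}
This is a finite sum because $\langle\nu,\widetilde{\mu}_j\rangle=1$ forces $\sum_j m_j=\langle\nu,\beta\rangle$, so only finitely many tuples occur. Absolute convergence allows the regrouping.

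It remains to see that only $\beta\in\Omega_M$ contribute, which I expect to be the main obstacle. The value $L(0,\sigma_\chi,\rho_M)$ depends only on $\sigma_\chi$, that is, on the image of $\chi$ in $\Lambda_M$. So I want the expansion to be a function on $\Lambda_M$, whose characters are exactly $\Omega_M$ as in \eqref{LambdaG:poly:supp}. First, $\chi\mapsto L(0,\sigma_\chi,\rho_M)$ is invariant under the kernel of $X^*(M)_\CC\to\Lambda_M$, by \eqref{LLC:twist}. Second, the functions $q^{-\langle\chi,\beta\rangle}$ for distinct $\beta\in\Hom_\ZZ(X^*(M),\ZZ)$ are linearly independent as Fourier modes. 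Averaging over the kernel (a lattice acting by translation in $\Im$-directions) therefore kills every $c_\beta$ with $\beta\notin\Omega_M$, by uniqueness of Fourier coefficients of an absolutely convergent series on a tube domain. This yields the first displayed identity.

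For the second identity, I restrict each $\beta$ to $X^*(G)$ to get $\alpha=\beta|_{X^*(G)}$. This $\alpha$ lies in $\widetilde{\sigma}_{\rho,G}$, since the restriction of each $\widetilde{\mu}_j$ lies there, being the image of a character occurring in $\rho|_{Z^\circ_{G^\vee}}$. It also lies in $\Omega_G$, since $H_M$ followed by restriction is $H_G$ on $M(F)$, and I will take $\Omega_G=H_G(G(F))\supseteq H_G(M(F))$. Next I write $q^{-\langle\chi,\beta\rangle}=q^{-\langle\chi,\beta-\alpha\rangle}q^{-\langle\chi,\alpha\rangle}$, interpreting $\langle\chi,\alpha\rangle$ via $\chi|_G$ as in \eqref{pairing:bw:Levis}, and regroup the series by $\alpha$, which is again allowed by absolute convergence. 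Finally, each inner sum is finite: $\langle\nu,\beta\rangle=\langle\nu,\alpha\rangle$ is fixed, and the elements of $\widetilde{\sigma}_{\rho,M}$ with fixed $\nu$-pairing form a finite set, since $\sum_j m_j$ is then fixed.
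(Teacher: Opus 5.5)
Your proposal is correct and follows essentially the same route as the paper. You expand each factor of \eqref{Lfactor:factorize} as an absolutely convergent power series, multiply and regroup by $\beta=\sum_j m_j\widetilde{\mu}_j$, use invariance under the kernel of $X^*(M)_\CC\to\Lambda_M$ to force $\beta\in\Omega_M$, and then regroup by restriction to $X^*(G)$. The differences are cosmetic: you get finiteness from $\langle\nu,\widetilde{\mu}_j\rangle=1$ rather than from strong convexity, and you phrase the $\Omega_M$-support step via uniqueness of Fourier coefficients instead of the paper's coefficientwise identity combined with Pontryagin duality.
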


\begin{proof} Decompose $\rho_M = \bigoplus\limits_{j=1}^n\rho_j$ into irreducible $^LM$-representations, and let $\widetilde{\mu}_j$ be the common image of characters of $\rho_j|_{Z_{M^\lor}^\circ}$ in $\Hom_\ZZ(X^*(M),\ZZ)$. For $\chi\in X^*(M)_\CC$ recall from \eqref{Lfactor:factorize} that
\begin{align*}
    L(0,\sigma_\chi,\rho_M) = \prod_{j=1}^n L(0,\sigma_\chi,\rho_j) = \prod_{j=1}^n L(\langle\chi,\widetilde{\mu}_j\rangle,\sigma,\rho_j).
\end{align*}
 Write
\begin{align*}
    \rho_j\circ \mathrm{LL}_M(\sigma) = \bigoplus_{k=0}^R \phi_{j,k}(\sigma) \otimes\mathrm{Sym}^k 
\end{align*}
as representations of $W_F\times\mathrm{SL}_2(\CC)$. By allowing $\phi_{j,k}(\sigma)$ to be zero, we can choose $R$ to be uniform for $j\in[n]$. Let $I_F< W_F$ be the inertia subgroup and $\mathrm{Frob}\in W_F$ be any lift of the Frobenius in $\mathrm{Gal}_{\mathbb{F}_q}.$ Let $V_{j,k}$ be the underlying vector space of $\phi_{j,k}(\sigma)$. Then
\begin{align}\label{Lfunction:expand:cone:1}
    \nonumber L(s,\sigma,\rho_j) = \prod_{k=0}^R L(s,\phi_{j,k}(\sigma) \otimes\mathrm{Sym}^k) &= \prod_{k=0}^R \det\left(I - q^{-\frac{k}{2}-s}\phi_{j,k}(\sigma)(\mathrm{Frob})\big|_{V_{j,k}^{I_F}}\right)^{-1}\\
    &=\prod_{k=0}^R \sum_{\ell=0}^\infty \mathrm{Trace}\left(\mathrm{Sym}^\ell\left(\phi_{j,k}(\sigma)(\mathrm{Frob})\big|_{V_{j,k}^{I_F}}\right)\right)q^{-\ell\frac{k}{2}-\ell s}.
\end{align}
By assumption $\rho_j\circ \mathrm{LL}_M(\sigma)$ has bounded image when restricted to $W_F$, so the eigenvalues of $\phi_{j,k}(\sigma)(\mathrm{Frob})\big|_{V_{j,k}^{I_F}}$ all have norm $1$. Thus the expression \eqref{Lfunction:expand:cone:1} is absolutely convergent for $\mathrm{Re}(s)>0$. Also, we have
\begin{align}\label{eq:Lexpansion}
   \nonumber  L(0,\sigma_\chi,\rho_M) &= \prod_{j=1}^n\prod_{k=0}^R\sum_{\ell=0}^\infty  \mathrm{Trace}\left(\mathrm{Sym}^\ell\left(\phi_{j,k}(\sigma)(\mathrm{Frob})\big|_{V_{j,k}^{I_F}}\right)\right)q^{-\ell\frac{k}{2}-\ell \langle\chi,\widetilde{\mu}_j\rangle}\\
    \nonumber &=\sum_{(\ell_1,\ldots,\ell_n)\in\mathbb{Z}_{\geq 0}^n}\prod_{j=1}^n\prod_{k=0}^R  \mathrm{Trace}\left(\mathrm{Sym}^{\ell_j}\left(\phi_{j,k}(\sigma)(\mathrm{Frob})\big|_{V_{j,k}^{I_F}}\right)\right)q^{-\ell_j\frac{k}{2}-\ell_j \langle\chi,\widetilde{\mu}_j\rangle}\\
    \nonumber &=\sum_{(\ell_1,\ldots,\ell_n)\in\mathbb{Z}_{\geq 0}^n}\left(\prod_{j=1}^n\prod_{k=0}^R \mathrm{Trace}\left(\mathrm{Sym}^{\ell_j}\left(\phi_{j,k}(\sigma)(\mathrm{Frob})\big|_{V_{j,k}^{I_F}}\right)\right)q^{-\ell_j\frac{k}{2}}\right)q^{- \langle\chi,\sum_j\ell_j\widetilde{\mu}_j\rangle}\\
    &=\sum_{\beta\in \widetilde{\sigma}_{\rho,M}} \left(\sum_{\substack{(\ell_1,\ldots,\ell_n)\in\mathbb{Z}_{\geq 0}^n\\
    \sum_j\ell_j\widetilde{\mu}_j = \beta}} \prod_{j=1}^n\prod_{k=0}^R  \mathrm{Trace}\left(\mathrm{Sym}^{\ell_j}\left(\phi_{j,k}(\sigma)(\mathrm{Frob})\big|_{V_{j,k}^{I_F}}\right)\right)q^{-\ell_j\frac{k}{2}}\right) q^{-\langle\chi,\beta\rangle}.
\end{align}
The parenthetical sum is a finite sum since $\RR_{\ge 0}\widetilde{\sigma}_{\rho,M}$ is strongly convex. 

Now we show that the sum \eqref{eq:Lexpansion} over $\beta$ is supported on $\widetilde{\sigma}_{\rho,M}\cap \Omega_M$. That is, if we write \eqref{eq:Lexpansion} as
\begin{align*}
    L(0,\sigma_\chi,\rho_M) = \sum_{\beta\in\widetilde{\sigma}_{\rho,M}} c_\beta(\sigma_\chi) =\sum_{\beta\in\widetilde{\sigma}_{\rho,M}} c_\beta(\sigma)  q^{-\langle\chi,\beta\rangle},
\end{align*} 
then $c_\beta(\sigma) = 0$ for $\beta\in \widetilde{\sigma}_{\rho,M} - \Omega_M$. 
To see this, let $D$ be the kernel of the map $X^*(M)_\CC\relbar\joinrel\twoheadrightarrow \Lambda_M$ in \eqref{XG->LambdaG:surj}. For $\chi\in D,$ since $\sigma_{\chi} = \sigma,$  we have
\begin{align*}
    c_\beta(\sigma) = c_\beta(\sigma_\chi) = c_\beta(\sigma) q^{-\langle\chi,\beta\rangle}.
\end{align*}
If $c_\beta(\sigma)\neq 0$, then $q^{-\langle\chi,\beta\rangle} = 1$ for all $\chi\in D$. We must show this forces $\beta\in \Omega_M$. Clearly, $X^*(M)\otimes_\ZZ \tfrac{2\pi i}{\log q}\ZZ$ is a subgroup of $D$. Consider the short exact sequence
\begin{align*}
    1\longrightarrow \widetilde{D}:=\frac{D}{X^*(M)\otimes_\ZZ \tfrac{2\pi i}{\log q}\ZZ}\longrightarrow \Im T_M:=X^*(M)\otimes_\ZZ \frac{i\RR}{\tfrac{2\pi i}{\log q}\ZZ} \longrightarrow \Im\Lambda_M\longrightarrow 1.
\end{align*}
By Pontryagin duality, applying the functor $\Hom_{\textbf{TopGp}}(-,S^1)$ yields a short exact sequence
\begin{align*}
    1\longrightarrow \Omega_M\longrightarrow \Hom_{\textbf{TopGp}}(\Im T_M,S^1)\longrightarrow  \Hom_{\textbf{TopGp}}(\widetilde{D},S^1)\longrightarrow  1.
\end{align*}
View $\beta$ as an element in $\Hom_{\textbf{TopGp}}(\Im T_M,S^1)$ by $\chi\mapsto q^{-\langle\chi,\beta\rangle}$. By assumption we have $q^{-\langle\chi,\beta\rangle}=1$ for all $\chi\in \widetilde{D},$ so $\beta\in \Omega_M$ by exactness. This proves the first assertion.

For the second assertion, we can further write \eqref{eq:Lexpansion} as
\begin{align}\label{eq:Lexpansion:2}
    \sum_{\alpha\in\widetilde{\sigma}_{\rho,G}}\left(\sum_{\substack{\beta\in \widetilde{\sigma}_{\rho,M}\cap \Omega_M\\\beta\vert_{X^*(G)}=\alpha}} \sum_{\substack{(\ell_1,\ldots,\ell_n)\in\mathbb{Z}_{\geq 0}^n\\
    \sum_j\ell_j\widetilde{\mu}_j = \beta}} \prod_{j=1}^n\prod_{k=0}^R  \mathrm{Trace}\left(\mathrm{Sym}^{\ell_j}\left(\phi_{j,k}(\sigma)(\mathrm{Frob})\big|_{V_{j,k}^{I_F}}\right)\right)q^{-\ell_j\frac{k}{2}}q^{-\langle\chi,\beta-\alpha\rangle}\right) q^{-\langle\chi,\alpha\rangle}.
\end{align}
For $\alpha\in\widetilde{\sigma}_{\rho,G}$, since $\mathbb{R}_{\geq 0}\widetilde{\sigma}_{\rho,G}$ is strongly convex and $\widetilde{\mu}_j\vert_{X^*(G)}\in \widetilde{\sigma}_{\rho,G}$, there are only finitely many $(\ell_1,\ldots,\ell_n)\in\mathbb{Z}^n_{\geq 0}$ such that $\sum_j\ell_j\widetilde{\mu}_j\vert_{X^*(G)} = \alpha$. It follows that for a fixed $\alpha\in\widetilde{\sigma}_{\rho,G}$, the parenthetical term is nonzero only for finitely many $\beta\in\widetilde{\sigma}_{\rho,M}$. For the same reason, the sum over $\alpha$ is necessarily supported on $\widetilde{\sigma}_{\rho,G}\cap \Omega_G.$

\end{proof}

\section{Spectral construction of Schwartz spaces }\label{sec:Schwartz:spectral}

\subsection{Asymptotic Schwartz spaces}\label{ssec:ass}  Recall that we assume \eqref{LLC:LCFT}--\eqref{LLC:para} hold for all semi-standard Levi subgroups of $G$. We briefly review the Fourier theory developed in \cite[\S4 and \S5]{DRS}. As mentioned in the introduction, the results   in \cite{DRS} hold for all local fields including those of positive characteristic.

Let $f\in \mathcal{C}(G(F)).$ For $(M,\sigma)\in\widetilde{\Temp}_\Ind(G)$ and $(v,w)\in \Ind_{K_P}^K\sigma\vert_{K_M}\times \Ind_{K_P}^K\sigma^\vee\vert_{K_M},$ the zeta integral
\begin{align*}
     Z_0(\Ind_P^G\sigma,f,v,w):=\int_{G(F)} f(g)\langle (\Ind_P^G\sigma)(g)v,w \rangle dg
\end{align*}
is absolutely convergent by \hyperref[lem:CGconverge]{Lemma \ref{lem:CGconverge}}. Here the subscript $0$ in $Z_0$ is to emphasize that we are only allowing $f$ in $\mathcal{C}(G(F))$ in this notation, and that $Z_0$ is only necessarily defined by absolutely convergent integrals on imaginary axes $\Re \sigma=0$.

\begin{definition}  The \textbf{asymptotic $\rho$-Schwartz space} $\mathcal{S}_\rho^{\mathrm{as}}(G(F))\subseteq \mathcal{C}(G(F))$ is the subspace consisting of functions $f$ such that $\chi\mapsto \dfrac{\HP(f)(M,\sigma_\chi)}{L(\tfrac{1}{2},\sigma_\chi,\rho_M)}$ extends to a polynomial section in $\chi\in\Lambda_M$ for all $(M,\sigma)\in\widetilde{\Temp}_\Ind(G)$. That is, for any $(v,w)\in \Ind_{K_P}^K\sigma\vert_{K_M}\times \Ind_{K_P}^K\sigma^\vee\vert_{K_M},$
\begin{align*}
    \chi\mapsto \left\langle\frac{\HP(f)(M,\sigma_\chi)}{L(\tfrac{1}{2},\sigma_\chi,\rho_M)}v,w\right\rangle_{\Ind_P^G\sigma_\chi}\in \CC[\Lambda_M].
\end{align*}
\end{definition}

Therefore, for $f\in \mathcal{S}_\rho^{\mathrm{as}}(G(F))$, the function $\chi\mapsto Z_0(\Ind_P^G\sigma_\chi,f,v,w),$ originally only defined on $\Im\Lambda_{M}$ by absolutely convergent integrals, extends to a meromorphic function on $\Lambda_M$ by
\begin{align*}
    Z_0(\Ind_P^G\sigma_\chi,f,v,w) := L(\tfrac{1}{2},\sigma_\chi,\rho_M)\cdot\left\langle\frac{\HP(f)(M,\sigma_\chi)}{L(\tfrac{1}{2},\sigma_\chi,\rho_M)}v,w\right\rangle_{\Ind_P^G\sigma_\chi}.
\end{align*}
Then $Z_0(\Ind_P^G\sigma_\chi,f,v,w)$ is a rational function in $\chi$ which is holomorphic when $\Re\chi\in |\nu|^{-\frac{1}{2}}C_{\rho,M}$.

\begin{remark}
    If the integral $\int_{G(F)} f(g)\langle (\Ind_P^G\sigma_\chi)(g)v,w \rangle dg$ is absolutely convergent for $\chi$ in a connected open region $C$ in $\Lambda_M$ for any $v,w$, then it defines a holomorphic function on $C$ as explained in \S \ref{ssec:prep} below. In the case $\Im \Lambda_M\subset C,$ by the identity principle $Z_0(\Ind_P^G\sigma_\chi,f,v,w)$ agrees with the absolutely convergent zeta integral for $\chi\in C$. For instance, for $f\in C^\infty_c(G(F))$, $Z_0(\Ind_P^G\sigma_\chi,f,v,w)$ is defined by absolutely convergent integrals for all $\chi\in \Lambda_M.$
\end{remark}

\begin{lemma}\label{Sas:twist:HC} One has $\chi\mathcal{S}^{\mathrm{as}}_\rho(G(F))\subseteq \mathcal{C}(G(F))$ for $\chi\in \Lambda_G$ with $\Re(\chi)\in |\nu|^{-\frac{1}{2}}C_{\rho,G}$. Furthermore, for $f\in \mathcal{S}^{\mathrm{as}}_\rho(G(F))$ and $\chi\in |\nu|^{-\frac{1}{2}}C_{\rho,G}$
\begin{align*}
    Z_0(\mathrm{Ind}_P^G \sigma_{\lambda\chi},f,v,w)=Z_0(\mathrm{Ind}_P^G \sigma_\lambda,f\chi,v,w)
\end{align*}
as meromorphic functions in $\lambda\in \Lambda_M$ for any $(M,\sigma)\in\widetilde{\Temp}_\Ind(G)$ and $(v,w)\in \Ind_{K_P}^K\sigma\vert_{K_M}\times \Ind_{K_P}^K\sigma^\vee\vert_{K_M}.$
\end{lemma}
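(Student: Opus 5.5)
We prove the first assertion through the Harish-Chandra Plancherel isomorphism (Theorem \ref{HCPlan}), and deduce the identity of zeta functions from it using the identity principle.

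\textbf{Step 1: the twisted spectral section.} Let $f\in\mathcal{S}^{\mathrm{as}}_\rho(G(F))$ and $\chi\in\Lambda_G$ with $\Re(\chi)\in|\nu|^{-\frac12}C_{\rho,G}$. Since $\chi$ is a character of $G(F)$, we have
\begin{align*}
(\Ind_P^G\sigma_\lambda)(g)\chi(g)=(\Ind_P^G\sigma_{\lambda\chi})(g),
\end{align*}
where $\chi$ is restricted to $M$ via $\iota_{G\ge M}$. For $(M,\sigma)\in\widetilde{\Temp}_\Ind(G)$ we then set
\begin{align*}
T(M,\sigma_\lambda):=\HP(f)(M,\sigma_{\lambda\chi}),\qquad \lambda\in\Im\Lambda_M,
\end{align*}
interpreting $\HP(f)$ at $\sigma_{\lambda\chi}$ through the polynomial section $\HP(f)/L(\tfrac12,-,\rho_M)$ multiplied by $L(\tfrac12,\sigma_{\lambda\chi},\rho_M)$.

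Because $\iota_{G\ge M}(C_{\rho,G})\subseteq C_{\rho,M}$, the point $\Re(\lambda\chi)=\Re\chi$ lies in $|\nu|^{-\frac12}C_{\rho,M}$. The $L$-factor is holomorphic there, so $T$ is smooth (indeed real-analytic) on each orbit.

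\textbf{Step 2: $T$ is a Schwartz section.} We must check three things.
\begin{itemize}
\item \emph{Bi-$K_0$-invariance and support.} $T$ is bi-$K_0$-invariant for the same $K_0$ as $f$, since twisting by $\chi$ does not change $K$-types. It is therefore supported on the finitely many components of $\Theta^{K_0}$.
\item \emph{Invariance.} We need $T\in C^\infty(\Theta)^{\mathrm{inv}}$, that is, $T(s.\sigma,P')={}^\circ c_{P'|P}(s,\sigma)T(\sigma,P)$. This holds on $\Im\Lambda_M$ for $\HP(f)$. Both sides are meromorphic in $\lambda$, the operators ${}^\circ c$ are rational, and $s.\chi=\chi$ because $\chi$ is central. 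Analytic continuation therefore transfers the identity to the twisted point.
\end{itemize}
By Theorem \ref{HCPlan}, $T=\HP(h)$ for a unique $h\in\mathcal{C}(G(F))$.

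\textbf{Step 3: identify $h$ with $f\chi$.} This is the main obstacle. The plan is to compute $\HP^{-1}(T)(g)$ via the inversion formula as an integral over $\Im\Lambda_M$ of $\tr\big((\Ind_P^G\sigma_\lambda)(g^{-1})\,\HP(f)(\sigma_{\lambda\chi})\big)$. The variable is then shifted, $\lambda\chi\mapsto\lambda$, by a contour shift in the $X^*(G)_\RR$-direction from $\Re\chi$ back to $0$. This shift is legitimate for three reasons:
\begin{itemize}
\item the integrand is holomorphic on the tube over the segment $[\Re\chi,0]$, which stays inside $|\nu|^{-\frac12}C_{\rho,G}$ together with $0$, the latter since $0\in|\nu|^{-\frac12}C_{\rho,G}$;
\item the orbit $\Im\Lambda_M$ is compact, so shifting an integral of a holomorphic Laurent polynomial times holomorphic factors over a compact torus is just Cauchy's theorem in the central variables;
\item the Plancherel density $\mu(\sigma_\lambda)$ and $j$ are unchanged under twisting by characters of $G$.
\end{itemize}
One then recovers $\chi(g)\HP^{-1}(\HP f)(g)=f(g)\chi(g)$. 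Equivalently: for $f\in C_c^\infty$ the relation $\HP(f\chi)(\sigma_\lambda)=\HP(f)(\sigma_{\lambda\chi})$ is immediate from the integral, and the general case follows by the same trace/inversion computation.

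\textbf{Step 4: the zeta identity.} Since $f\chi=h\in\mathcal{C}(G(F))$, the function $Z_0(\Ind_P^G\sigma_\lambda,f\chi,v,w)$ is defined by absolute convergence on $\Im\Lambda_M$ and equals $\langle T(\sigma_\lambda)v,w\rangle=Z_0(\Ind_P^G\sigma_{\lambda\chi},f,v,w)$ there. Both sides are meromorphic in $\lambda$, the left by the same argument once one notes $f\chi\in\mathcal{S}^{\mathrm{as}}_\rho$ up to a twist, or simply by the identity just obtained. The identity principle therefore gives equality on all of $\Lambda_M$.
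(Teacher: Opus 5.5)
Your proof is correct and follows essentially the route the paper takes when it cites the proof of \cite[Lemma 5.10]{DRS} together with the identity principle: realize $\lambda\mapsto\HP(f)(M,\sigma_{\lambda\chi})$ as an element of $\mathcal{C}(\Temp_\Ind(G))$, identify its $\HP$-preimage with $f\chi$, and then continue the zeta identity meromorphically. Your contour shift is the explicit form of the observation that $\chi\mapsto\HP^{-1}(T_\chi)(g)$ is holomorphic and equals $\chi(g)f(g)$ on $\Im\Lambda_G$.

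One minor precision concerns Step 2. The invariance at the twisted point uses that ${}^\circ c_{P'\mid P}(s,\sigma_{\lambda\chi})={}^\circ c_{P'\mid P}(s,\sigma_\lambda)$ in the compact picture, because intertwining operators commute with twists by $\Lambda_G$; it does not come from $\chi$ being ``central''.
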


\begin{proof}
     This follows from the proof of \cite[Lemma 5.10]{DRS} and the identity principle.
\end{proof}

 \begin{lemma}\label{Sas:FT+constant} The space $\mathcal{S}_\rho^{\mathrm{as}}(G(F))$ is a $G(F)\times G(F)$-submodule of $\mathcal{C}(G(F))$. For $P=MN\in \mathcal{P}$, the constant term map restricts to a $K_M\times K_M$-equivariant map
\begin{align*}
    (-)^{(P)}:\mathcal{S}^{\mathrm{as}}_{\rho}(G(F))\longrightarrow \mathcal{S}^{\mathrm{as}}_{\rho_M}(M(F)).
\end{align*}
\qed
\end{lemma}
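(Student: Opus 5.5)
The plan is to prove both assertions on the spectral side, transporting everything through the Harish-Chandra Plancherel isomorphism $\HP$ of \hyperref[HCPlan]{Theorem \ref{HCPlan}} and the compatibility \eqref{CT:spectral} of constant terms.

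For the $G(F)\times G(F)$-stability, let $f\in\mathcal{S}_\rho^{\mathrm{as}}(G(F))$ and $g,h\in G(F)$. Since $\HP$ is $G(F)\times G(F)$-equivariant, we have
\begin{align*}
\HP(R(g,h)f)(M,\sigma_\chi)=(\Ind_P^G\sigma_\chi)(g)\circ\HP(f)(M,\sigma_\chi)\circ(\Ind_P^G\sigma_\chi)(h^{-1}).
\end{align*}
In the compact picture $\Ind_{K_P}^K\sigma|_{K_M}$, the operator $(\Ind_P^G\sigma_\chi)(g)$ depends polynomially on $\chi\in\Lambda_M$. Indeed, $(\Ind_P^G\sigma_\chi)(g)v(k)=\delta_P^{1/2}\chi(\mathbf{a}_P(kg))\sigma(\mathbf{a}_P(kg))v(\mathbf{k}_P(kg))$. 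On a fixed vector, $k$ ranges over finitely many cosets modulo an open subgroup fixing $v$ and $g^{-1}$-conjugate data, so finitely many values $\chi(\mathbf{a}_P(kg))$ occur, and each is a monomial in $\CC[\Lambda_M]$. The same holds for $h^{-1}$, and the matrix coefficients against $w$ remain finite sums. Hence dividing by $L(\tfrac12,\sigma_\chi,\rho_M)$ still gives a polynomial section, so $R(g,h)f\in\mathcal{S}_\rho^{\mathrm{as}}(G(F))$. This uses only smoothness of $v,w$ and the Iwasawa decomposition.

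For the constant term, let $f\in\mathcal{S}_\rho^{\mathrm{as}}(G(F))$ and $P=MN$. By \eqref{CT:spectral}, $\HP_M(f^{(P)})=\HP_G(f)^{(P)}$, and the spectral map $(-)^{(P)}$ is fibrewise evaluation at $1\in K\times K$ under the identification
\begin{align*}
\mathrm{End}(\Ind_{K_{P'}}^K\sigma)\cong\mathrm{End}(\Ind_{K_P}^K\Ind_{K_{P'\cap M}}^{K_M}\sigma).
\end{align*}
For $(M',\sigma)\in\widetilde{\Temp}_\Ind(M)$ with $M'\le M$, this gives
\begin{align*}
\HP_M(f^{(P)})(M',\sigma_\chi)=\mathrm{ev}_1\bigl(\HP_G(f)(M',\sigma_\chi)\bigr),
\end{align*}
where $\chi\in\Lambda_{M'}$ and $\mathrm{ev}_1$ is a $\chi$-independent linear map. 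The point to check is that the identification is induction in stages. On the compact model, induction in stages is independent of $\chi$ once we restrict to $K$, because the unramified twist is trivial on $K_{M'}$. Consequently, $\mathrm{ev}_1$ carries polynomial sections to polynomial sections.

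Next I would compare $L$-factors. The $L$-factor attached to $(M',\sigma)$ is $L(\tfrac12,\sigma_\chi,\rho|_{{}^LM'})$ in both cases, since $(\rho_M)|_{{}^LM'}=\rho|_{{}^LM'}$. Matrix coefficients of the quotient by $L(\tfrac12,\sigma_\chi,\rho_{M'})$ are therefore polynomial, so $f^{(P)}\in\mathcal{S}^{\mathrm{as}}_{\rho_M}(M(F))$. The triple $(M,\nu_M,\rho_M)$ satisfies \eqref{G}, which makes the target space well-defined. $K_M\times K_M$-equivariance is inherited from the constant term on $\mathcal{C}$.

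The main obstacle I expect is the careful identification in the induction-in-stages step. One must check that the canonical isomorphism and the evaluation at $1$ are compatible with the $U_\sigma$-actions and the ${}^\circ c$-invariance. This was handled in \cite[Lemma 2.14]{DRS}, which I would invoke, together with verifying that no $\chi$-dependent normalization, such as $\delta$-factors, enters when restricting to $K$.
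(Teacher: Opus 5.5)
Your argument is correct. The paper records this lemma without proof (it is imported from \cite{DRS}), and your route is the natural one: $G(F)\times G(F)$-equivariance of $\mathrm{HP}$, plus the fact that $\chi\mapsto(\Ind_P^G\sigma_\chi)(g)$ is a Laurent polynomial in the compact picture because $k\mapsto H_P(kg)$ takes finitely many values on $K$. For constant terms, you use the diagram \eqref{CT:spectral} with a $\chi$-independent evaluation map, together with $(\rho_M)|_{{}^LM'}=\rho|_{{}^LM'}$.

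Two small remarks.

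\emph{Non-standard $P$.} If $P$ is standard and $Q$ is standard in $M$, then $QN\supseteq (P_0\cap M)N=P_0$ is the standard parabolic of $M'$ in $G$, so the hypothesis on $f$ applies directly. If $P$ is only semi-standard, you should first conjugate by a Weyl representative $s\in K$. This uses the first part, the $\chi$-independence of $\lambda(s)$ on compact models, and the Weyl-invariance of the $L$-factors coming from \eqref{LLC:para}.

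\emph{The obstacle you flag.} The $U_\sigma$ and ${}^\circ c$ compatibility is not actually needed. The definition of $\mathcal{S}^{\mathrm{as}}$ is pointwise in $(M',\sigma)$, and \eqref{CT:spectral} is already available.
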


\begin{definition}[{\cite[\S4]{DRS}}] The \textbf{$\rho$-Fourier transform} 
\begin{align*}
    \mathcal{F}_{\rho}=\mathcal{F}_{\rho,\psi}:\mathcal{C}(G(F))\to\mathcal{C}(G(F))
\end{align*}
is defined spectrally via the \hyperref[HCPlan]{Harish-Chandra Plancherel theorem}:
\begin{align*}
    \mathcal{F}_\rho f (g) := \int_{\Temp_\Ind(G)} \gamma(\tfrac{1}{2},\pi,\rho,\psi)\mathrm{Trace}(\pi(g)\circ \pi(f)) d\pi.
\end{align*}
    \end{definition}

The following results can be found in \cite[\S4 and \S5]{DRS}.

\begin{theorem}\label{DRS:HC:FT}
    The Fourier transform $\mathcal{F}_{\rho}=\mathcal{F}_{\rho,\psi}$ enjoys the following properties.
    \begin{enumerate}[label=(F\arabic*), ref=F\arabic*]
        \item\label{HC:FT:unitary} It extends to a unitary operator $\mathcal{F}_{\rho}:L^2(G(F))\longrightarrow L^2(G(F)).$
        \item\label{FT:as}
        It restricts to an automorphism $\mathcal{F}_{\rho}:\mathcal{S}_\rho^{\mathrm{as}}(G(F))\longrightarrow \mathcal{S}_\rho^{\mathrm{as}}(G(F))$.
        \item\label{HC:FT:order} $\mathcal{F}_{\rho,\overline{\psi}}\circ \mathcal{F}_{\rho,\psi}=\mathrm{id}$ and $\mathcal{F}_{\rho,\psi}^4=\mathrm{id}$.
        \item\label{HC:FT:equivariant} For all $g,g'\in G(F),$ $\mathcal{F}_\rho\circ R(g,g')=R(g',g)\circ \mathcal{F}_\rho.$
        \item\label{HC:FT:zeta} For $(M,\sigma)\in\widetilde{\Temp}_\Ind(G)$ and $f\in\mathcal{C}(G(F))$, one has $$(\Ind_P^G\sigma)((\mathcal{F}_\rho f)^\vee) = \gamma(\tfrac{1}{2},\sigma,\rho_M,\psi)(\Ind_P^G\sigma)(f).$$
        \item\label{HC:FT:constant} Let $P=MN\in \mathcal{P}.$ We have a commutative diagram
        \begin{center}
        \begin{tikzcd}[sep=large]  
           \mathcal{S}^{\mathrm{as}}_{\rho}(G(F))\ar[r,"\mathcal{F}_{\rho}"]\ar[d,swap,"(-)^{(P)}"]&  {\mathcal{S}^{\mathrm{as}}_{\rho}(G(F))}\ar[d,"(-)^{(P)}"]\\
                 { \mathcal{S}^{\mathrm{as}}_{\rho_M}(M(F))} \ar[r,"\mathcal{F}_{\rho_M}"]& \mathcal{S}^{\mathrm{as}}_{\rho_M}(M(F)).
        \end{tikzcd}
        \end{center}
    \end{enumerate}
\qed
\end{theorem}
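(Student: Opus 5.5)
The statement collects properties (F1)--(F6) of $\mathcal{F}_\rho$. The paper attributes them to \cite{DRS}, so the plan is to rederive each one spectrally from the \hyperref[HCPlan]{Harish-Chandra Plancherel theorem}. The organizing observation is that, under $\HP$, $\mathcal{F}_\rho$ becomes the composite of two operations on sections $T\in\mathcal{C}(\Temp_\Ind(G))$:
\begin{enumerate}
\item multiplication by the scalar function $\pi\mapsto\gamma(\tfrac12,\pi,\rho,\psi)$;
\item the map $T\mapsto T^\vee$ induced by $f\mapsto f^\vee$, which exchanges $\pi$ with $\pi^\vee$ and transposes endomorphisms.
\end{enumerate}

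\emph{Well-definedness and (F5).} First I check that $\gamma$ is a smooth function on $\widetilde{\Temp}_\Ind(G)$. By \eqref{LLC:para}, for $\sigma\in\Pi_2(M)$ we have $\gamma(\tfrac12,\Ind_P^G\sigma,\rho,\psi)=\gamma(\tfrac12,\sigma,\rho_M,\psi)$. On each $\Im\Lambda_M$-orbit this is a rational function of $\chi$, by \eqref{LLC:twist} and \eqref{Lfactor:factorize}. It has no poles on $\Re\chi=0$, because $L(s,\sigma,\rho_j)$ is holomorphic for $\Re s>0$ and the $\gamma$-factor is $\epsilon\cdot L(1-s,\cdot^\vee)/L(s,\cdot)$, evaluated at $s=\tfrac12$.

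The function is also $W$-invariant, because the parameter $\mathrm{LL}_M(s.\sigma)$ is $s$-conjugate to $\mathrm{LL}_M(\sigma)$ inside ${}^LG$. Hence multiplication by $\gamma$ preserves $C^\infty(\Theta)^{\mathrm{inv}}$, and $\mathcal{F}_\rho f$ lies in $\mathcal{C}(G(F))$. Property (F5) then holds by construction.

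\emph{(F1), (F3), (F4).} Because $\rho$ is tempered, $|\gamma(\tfrac12,\pi,\rho,\psi)|=1$ on unitary $\pi$. This uses the standard identity $\overline{\gamma(\tfrac12,\phi,\psi)}=\gamma(\tfrac12,\phi^\vee,\bar\psi)$ together with $\gamma(\tfrac12,\phi,\psi)\gamma(\tfrac12,\phi^\vee,\psi^{-1})=1$, and \eqref{LLC:dual} identifies $\rho\circ\mathrm{LL}(\pi^\vee)$ with the dual parameter. The Plancherel identity then gives unitarity on $\mathcal{C}(G(F))$, and density yields (F1).

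The relation $\mathcal{F}_{\rho,\bar\psi}\circ\mathcal{F}_{\rho,\psi}=\mathrm{id}$ follows from the same $\gamma$-identity, since the $\vee$ operation swaps $\pi$ and $\pi^\vee$. For $\mathcal{F}^4=\mathrm{id}$, note that $\mathcal{F}^2_{\rho,\psi}$ multiplies by $\gamma(\tfrac12,\pi,\psi)\gamma(\tfrac12,\pi^\vee,\psi)$. This product equals the central-sign factor $\det(\rho\circ\phi)(-1)$, which squares to $1$. Equivariance (F4) comes from $\pi(R(g,g')f)=\pi(g)\pi(f)\pi(g')^{-1}$ and the fact that $\vee$ interchanges the two actions.

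\emph{(F6) and (F2).} For (F6) I use the diagram \eqref{CT:spectral}. The spectral constant term restricts sections to $\Temp_\Ind(M)$ with $\pi=\Ind_{P}^G\tau$, and the $\gamma$-factor of $\Ind_P^G\tau$ for $\rho$ equals that of $\tau$ for $\rho_M$ by \eqref{LLC:para}. So multiplying by $\gamma$ commutes with $(-)^{(P)}$, and so does $\vee$.

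For (F2), take $f\in\mathcal{S}^{\mathrm{as}}_\rho$. Write $\gamma(\tfrac12,\sigma_\chi)=\epsilon\,L(\tfrac12,\sigma_\chi^\vee)/L(\tfrac12,\sigma_\chi)$, where $\epsilon$ is a monomial in $\chi$, hence a unit of $\CC[\Lambda_M]$. Then
\[
\frac{\HP(\mathcal{F}_\rho f)(\sigma^\vee_{\chi^{-1}})}{L(\tfrac12,\sigma^\vee_{\chi^{-1}})}=\epsilon\cdot\left(\frac{\HP(f)(\sigma_\chi)}{L(\tfrac12,\sigma_\chi)}\right)^{\!t},
\]
which is again polynomial. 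Invertibility of $\mathcal{F}_\rho$ on this subspace follows from (F3) applied with $\bar\psi$.

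The main obstacle is the bookkeeping in (F2): one must track how $\vee$ acts on $\Ind_{K_P}^K$-realizations across the intertwining identifications ${}^\circ c$, and verify that the $\epsilon$-factor is a unit monomial in $\chi$ uniformly over each orbit. That is the step I would check first.
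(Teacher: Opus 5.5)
The paper gives no argument here beyond citing \cite{DRS}, and your spectral rederivation is correct and matches that cited construction: writing $\mathcal{F}_\rho f=\big(\HP^{-1}(\gamma(\tfrac12,\cdot,\rho,\psi)\HP(f))\big)^\vee$ reduces (F1)--(F6) to the smoothness, Weyl-invariance and unimodularity of $\gamma(\tfrac12,\cdot)$ on the tempered spectrum together with its functional equation. Weyl-invariance follows from \eqref{LLC:para}, as the paper notes at the end of the proof of Proposition~\ref{LLC7=Fourierstable}; the $\epsilon$-factor in (F2) is a unit of $\CC[\Lambda_M]$ by the Pontryagin-duality argument in the proof of Lemma~\ref{Lfunction:expand:cone}; and $|\gamma(\tfrac12,\pi,\rho,\psi)|=1$ should be asserted for tempered $\pi$, not merely unitary $\pi$.
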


\begin{corollary}\label{Sas:FE:near0} Let $(M,\sigma)\in\widetilde{\Temp}_\Ind(G)$ and $(v,w)\in \Ind_{K_P}^K\sigma\vert_{K_M}\times \Ind_{K_P}^K\sigma^\vee\vert_{K_M}$. For $f\in \mathcal{S}_{\rho}^{\mathrm{as}}(G(F)),$ one has an identity of rational functions in $\chi\in \Lambda_M$
\begin{align*}
    Z_0(\Ind_P^G(\sigma_\chi)^\vee,\mathcal{F}_\rho f,w,v) = \gamma(\tfrac{1}{2},\sigma_\chi,\rho_M,\psi) Z_0(\Ind_P^G\sigma_\chi,f,v,w).
\end{align*}
\end{corollary}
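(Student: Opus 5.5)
The identity follows from property \eqref{HC:FT:zeta} of \hyperref[DRS:HC:FT]{Theorem \ref{DRS:HC:FT}}, first on $\Im\Lambda_M$ and then on all of $\Lambda_M$ by meromorphic continuation.

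\textbf{Step 1: the identity on $\Im\Lambda_M$.} Fix $\chi\in\Im\Lambda_M$. Then $\sigma_\chi\in\Pi_2(M)$, so both zeta integrals are defined by absolutely convergent integrals (\hyperref[lem:CGconverge]{Lemma \ref{lem:CGconverge}}). Put $\pi=\Ind_P^G\sigma_\chi$ and use the canonical identification $\pi^\vee\cong\Ind_P^G(\sigma_\chi)^\vee$. By definition of the pairing,
\begin{align*}
Z_0(\pi^\vee,\mathcal{F}_\rho f,w,v)=\int_{G(F)}\mathcal{F}_\rho f(g)\langle \pi^\vee(g)w,v\rangle\,dg .
\end{align*}
Since $\langle\pi^\vee(g)w,v\rangle=\langle v,\pi^\vee(g)w\rangle=\langle\pi(g^{-1})v,w\rangle$, the substitution $g\mapsto g^{-1}$ (Haar measure is unimodular because $G$ is reductive) turns this into
\begin{align*}
\int_{G(F)}(\mathcal{F}_\rho f)^\vee(g)\langle\pi(g)v,w\rangle\,dg=\langle\pi((\mathcal{F}_\rho f)^\vee)v,w\rangle .
\end{align*}
Now apply \eqref{HC:FT:zeta} to $(M,\sigma_\chi)\in\widetilde{\Temp}_\Ind(G)$. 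This gives $\gamma(\tfrac12,\sigma_\chi,\rho_M,\psi)\langle\pi(f)v,w\rangle=\gamma(\tfrac12,\sigma_\chi,\rho_M,\psi)Z_0(\Ind_P^G\sigma_\chi,f,v,w)$. Throughout, $v,w$ are regarded as elements of the compact pictures $\Ind_{K_P}^K\sigma|_{K_M}$ and $\Ind_{K_P}^K\sigma^\vee|_{K_M}$, and these pictures are independent of $\chi$.

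\textbf{Step 2: continuation.} By \eqref{FT:as}, $\mathcal{F}_\rho f\in\mathcal{S}^{\mathrm{as}}_\rho(G(F))$. Hence both sides extend to rational functions of $\chi\in\Lambda_M$: each is an $L$-factor times an element of $\CC[\Lambda_M]$. For the left side, note that $(\sigma_\chi)^\vee=(\sigma^\vee)_{\chi^{-1}}$ and $\chi\mapsto\chi^{-1}$ is algebraic, so it is also rational in $\chi$.

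For the factor $\gamma(\tfrac12,\sigma_\chi,\rho_M,\psi)$, decompose $\rho_M$ as in \eqref{Lfactor:factorize}. Each factor is then $\gamma(\tfrac12+\langle\chi,\widetilde{\mu}_j\rangle,\sigma,\rho_j,\psi)$, which is a monomial times a ratio of $L$-factors, hence rational in $q^{-\langle\chi,\widetilde{\mu}_j\rangle}$. By the argument of \hyperref[Lfunction:expand:cone]{Lemma \ref{Lfunction:expand:cone}} it descends to a rational function on $\Lambda_M$.

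\textbf{Conclusion and main obstacle.} Two rational functions on the irreducible torus $\Lambda_M$ that agree on the Zariski-dense compact real form $\Im\Lambda_M$ agree everywhere, which proves the identity. The only genuinely delicate point is that the meromorphic continuation of the left side, defined through $\HP(\mathcal{F}_\rho f)$ on the orbit of $\sigma^\vee$, matches the continuation in the variable $\chi$ used here. This is a bookkeeping of the twist $\chi\leftrightarrow\chi^{-1}$ together with the identity principle, and is handled exactly as in the remark following the definition of $\mathcal{S}^{\mathrm{as}}_\rho$.
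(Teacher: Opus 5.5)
Your proposal is correct and is the paper's proof. The paper also derives the identity on $\Im\Lambda_M$ from \eqref{HC:FT:zeta}, using the substitution $g\mapsto g^{-1}$ and the contragredient pairing, and then concludes on all of $\Lambda_M$ by the identity principle; your remarks on the rationality of the $\gamma$-factor and the Zariski density of $\Im\Lambda_M$ just spell out what the paper leaves implicit.
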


\begin{proof} 
Observe that
\begin{align*}
    \gamma(\tfrac{1}{2},\sigma,\rho_M,\psi) Z_0(\Ind_P^G\sigma,f,v,w)&=
    \gamma(\tfrac{1}{2},\sigma,\rho_M,\psi)\langle\HP(f)(M,\sigma)v,w\rangle \\
    &\stackrel{\eqref{HC:FT:zeta}}{=}\langle\HP((\mathcal{F}_\rho f)^\vee)(M,\sigma)v,w\rangle \\
    &= \int_{G(F)} \mathcal{F}_\rho f(g^{-1})\langle(\Ind_P^G \sigma)(g)v,w\rangle dg\\
    &=\int_{G(F)} \mathcal{F}_\rho f(g)\langle v,(\Ind_P^G \sigma^\lor) (g)w\rangle dg\\
    &=Z_0(\Ind_P^G \sigma^\lor, \mathcal{F}_\rho f,w,v). 
\end{align*}
Therefore, the claimed identity holds for all $\chi\in \Im \Lambda_M,$ and the assertion follows from the identity principle.
\end{proof}


Assume $G$ is unramified. The \textbf{basic function} $b_\rho\in \mathcal{S}^{\mathrm{as}}_\rho(G(F)/\!/K)$ is defined in \cite[\S4.2]{DRS} to be the unique function such that 
\begin{align*}
    \HP(b_\rho)(M,\sigma)=L(\tfrac{1}{2},\sigma,\rho_M)\HP(\mathbf{1}_K)(M,\sigma)
\end{align*}
for $(M,\sigma)\in\widetilde{\Temp}_\Ind(G)$. If $\psi$ is unramified, one has $\mathcal{F}_{\rho}(b_\rho) = b_\rho$.

\subsection{$X^*(G)$-eigendecompositions} 

For $\alpha\in \Omega_G= G(F)/G(F)^1$, let $\mathbf{1}_\alpha$ denote the characteristic function of the coset
\begin{align*}
     \alpha G(F)^1 = \big\{g\in G(F): |\chi(g)| = q^{-\langle\chi,\alpha\rangle}\text{ for all }\chi\in X^*(G)\big\}.
\end{align*}
Recall for $\chi\in \Lambda_G,$ $\chi(\alpha)=q^{-\langle\chi,\alpha\rangle}$ is the value of $\chi$ on the coset $\alpha G(F)^1$.

By \cite[Lemma 6.8]{DRS}, each $f\in \mathcal{C}(G(F))$ can be written as a convergent sum $f=\sum_\alpha f\mathbf{1}_\alpha$ in $\mathcal{C}(G(F))$. In particular, $\bigoplus\limits_{\alpha\in\Omega_G}\mathcal{C}(G(F))\mathbf{1}_\alpha$ is dense in $\mathcal{C}(G(F))$. Let 
\begin{align*}
    \mathcal{C}(\Temp_\Ind(G))_\alpha:=\{T\in \mathcal{C}(\Temp_\Ind(G)): T(M,\sigma_\chi) = \chi(\alpha) T(M,\sigma) \text{ for }\chi\in \Im\Lambda_G\}.
\end{align*}
Let
$e_\alpha:\mathcal{C}(\Temp_\Ind(G))\to \mathcal{C}(\Temp_\Ind(G))_\alpha$ be the orthogonal projection. Explicitly, 
\begin{align}\label{ealpha:orthproj}
    e_\alpha T(M,\sigma) = \frac{1}{\vol(\Im \Lambda_G)}\int_{\Im \Lambda_G} \chi(\alpha)^{-1}T(M,\sigma_\chi)d\chi.
\end{align}
For $f\in \mathcal{C}(G(F)),$
\begin{align}\label{HP:orthoproj:alpha}
    \HP(f\mathbf{1}_\alpha) = e_\alpha\HP(f).
\end{align}
In particular, $\supp\HP(f\mathbf{1}_\alpha)\subseteq \supp\HP(f)$. Furthermore, for $(M,\sigma)\in\widetilde{\Temp}_\Ind(G)$,
    \begin{align}\label{eq:restricts:coset}
        Z_0(\Ind_P^G\sigma,f\mathbf{1}_\alpha,v,w) = \frac{1}{\mathrm{vol}(\Im \Lambda_G)}\int_{\Im \Lambda_G} \chi(\alpha)^{-1} Z_0(\Ind_P^G \sigma_\chi,f,v,w)d\chi
    \end{align}
for any $(v,w)\in\Ind_{K_P}^K\sigma\vert_{K_M}\times\Ind_{K_P}^K\sigma^\vee\vert_{K_M}.$

\begin{lemma}\label{alpha:supp:1} Let $f\in\mathcal{S}_\rho^{\mathrm{as}}(G(F))$ and let $K_0\leq K$ be a compact open subgroup such that $f$ is bi-$K_0$-invariant. Let  $\calB_0\subset X^\ast(M_0)$ be a finite generating set of the cone $\overline{C}_{\rho,M_0}$. There exists a constant $r\in\mathbb{Z}$ such that for all $\alpha\in \Omega_G,$   $P=MN\in \mathcal{P}, \sigma\in\Pi_2(M)$ and $(v,w)\in(\Ind_{K_P}^K\sigma\vert_{K_M})^{K_0}\times(\Ind_{K_P}^K\sigma^\vee\vert_{K_M})^{K_0}$, if we write
    \begin{align*}
        Z_0(\Ind_P^G\sigma_\chi,f\mathbf{1}_\alpha,v,w) = \sum_{\beta\in\Omega_M} a_{\beta,v,w} q^{-\langle\chi,\beta\rangle},
    \end{align*}
    then $a_{\beta,v,w}\neq 0$ only if $\beta\vert_{X^*(G)} = \alpha$ and $\langle\tau,\beta\rangle\geq -r$ for all $\tau\in\mathcal{B}_0$. Here the pairing $\langle\tau,\beta\rangle$ is interpreted as in  \eqref{pairing:bw:Levis}.
\end{lemma}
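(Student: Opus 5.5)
The plan is to expand $Z_0(\Ind_P^G\sigma_\chi,f,v,w)$ as a Laurent series in $\chi$ whose exponents lie in a translate $S+\widetilde{\sigma}_{\rho,M}$ of the $L$-cone. Restricting $f$ to the coset $\alpha G(F)^1$ will then keep only the terms with $\beta\vert_{X^*(G)}=\alpha$. The bound $\langle\tau,\beta\rangle\ge -r$ will come from \hyperref[Lcone:dualdual]{Lemma \ref{Lcone:dualdual}} together with a uniform control of the finite set $S$.

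\emph{Step 1: Laurent expansion.} Since $f\in\mathcal{S}^{\mathrm{as}}_\rho(G(F))$, we can write
\begin{align*}
Z_0(\Ind_P^G\sigma_\chi,f,v,w)=L(\tfrac12,\sigma_\chi,\rho_M)\,p_{v,w}(\chi),\qquad p_{v,w}(\chi)=\sum_{\gamma\in S_{v,w}}b_\gamma q^{-\langle\chi,\gamma\rangle},
\end{align*}
with $S_{v,w}\subseteq\Omega_M$ finite. Note that $L(\tfrac12,\sigma_\chi,\rho_M)=L(0,\sigma_{\chi|\nu|^{1/2}},\rho_M)$. By \hyperref[Lfunction:expand:cone]{Lemma \ref{Lfunction:expand:cone}} this equals $\sum_{\beta'\in\widetilde\sigma_{\rho,M}\cap\Omega_M}c_{\beta'}(\sigma)q^{-\langle\nu,\beta'\rangle/2}q^{-\langle\chi,\beta'\rangle}$. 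The series converges absolutely for $\Re\chi\in|\nu|^{-1/2}C_{\rho,M}$.

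This region contains $\Im\Lambda_M$, because $\langle\nu,\widetilde\mu_j\rangle=1>0$ for all $j$. Moreover, the convergence is uniform on the compact torus $\Im\Lambda_M$, since the coefficients are bounded by the expansion at the real point $|\nu|^{1/2}$. Multiplying by $p_{v,w}$ gives an absolutely and uniformly convergent expansion $Z_0=\sum_\beta a'_\beta q^{-\langle\chi,\beta\rangle}$ on $\Im\Lambda_M$, supported in $S_{v,w}+(\widetilde\sigma_{\rho,M}\cap\Omega_M)$.

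\emph{Step 2: projecting to the coset.} Insert this expansion into \eqref{eq:restricts:coset}, applied at the point $\sigma_\lambda$ with $\lambda\in\Im\Lambda_M$. Integrating term by term over $\Im\Lambda_G$ is justified by uniform convergence. Since $q^{-\langle\lambda\chi,\beta\rangle}=q^{-\langle\lambda,\beta\rangle}\chi(\beta\vert_{X^*(G)})$ for $\chi\in\Im\Lambda_G$, orthogonality of characters on $\Im\Lambda_G$ kills every term with $\beta\vert_{X^*(G)}\ne\alpha$.

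The resulting sum is finite by the second part of \hyperref[Lfunction:expand:cone]{Lemma \ref{Lfunction:expand:cone}}, since only finitely many $\beta'\in\widetilde\sigma_{\rho,M}$ have a prescribed restriction to $X^*(G)$ and $S_{v,w}$ is finite. So $Z_0(\Ind_P^G\sigma_\chi,f\mathbf{1}_\alpha,v,w)$ is a Laurent polynomial whose coefficients $a_{\beta,v,w}$ vanish unless $\beta\vert_{X^*(G)}=\alpha$ and $\beta\in S_{v,w}+\widetilde\sigma_{\rho,M}$. By \hyperref[Lcone:dualdual]{Lemma \ref{Lcone:dualdual}}, $\langle\tau,\beta'\rangle\ge0$ for all $\beta'\in\widetilde\sigma_{\rho,M}$ and $\tau\in\calB_0$. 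Hence $\langle\tau,\beta\rangle\ge\min_{\gamma\in S_{v,w}}\langle\tau,\gamma\rangle$.

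\emph{Step 3: uniformity, the main obstacle.} We need a single finite bound $r$, independent of $\alpha$, $P$, $\sigma$ and $v,w$. Independence of $\alpha$ is already built in. For the rest, we use that $\HP(f)$ is supported on $\Theta^{K_0}$, which has finitely many connected components, i.e.\ finitely many $\Im\Lambda_M$-orbits $\mathcal{O}$. Semi-standard $P$ reduce to standard ones via the isomorphisms $\lambda(s)$ and ${}^\circ c$, which preserve the $K_0$-invariants up to conjugation by a finite set of Weyl representatives in $K$.

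On a single orbit $\mathcal O$ with base point $\sigma$, the section $\chi\mapsto\HP(f)(M,\sigma_\chi)/L(\tfrac12,\sigma_\chi,\rho_M)$ restricted to the finite-dimensional spaces of $K_0$-fixed vectors is a matrix of Laurent polynomials. Let $S_{\mathcal O}$ be the union of the supports of its entries; this is a finite set. Any $v,w$ that are $K_0$-fixed are linear combinations of basis vectors, and replacing the base point by $\sigma_\mu$ with $\mu\in\Im\Lambda_M$ only rescales coefficients. Hence $S_{v,w}\subseteq S_{\mathcal O}$ for all choices in that orbit.

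Set $r$ to be the smallest integer with $r\ge-\langle\tau,\gamma\rangle$ for all $\tau\in\calB_0$ and all $\gamma$ in the finite union of the $S_{\mathcal O}$. Such an $r$ exists because the pairings are rational and take finitely many values. This gives the claim.
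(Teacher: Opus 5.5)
Your proposal is correct and follows essentially the same route as the paper. In both, you write $Z_0$ as $L(\tfrac12,\sigma_\chi,\rho_M)$ times a Laurent polynomial, expand via Lemma \ref{Lfunction:expand:cone} to get support in $S+\widetilde{\sigma}_{\rho,M}\cap\Omega_M$ with $S$ uniform over the finitely many components of $\Theta^{K_0}$, project onto the coset $\alpha G(F)^1$ via \eqref{ealpha:orthproj} (equivalently \eqref{eq:restricts:coset}), and finish with strong convexity and Lemma \ref{Lcone:dualdual}.

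Your justification of term-by-term integration and your treatment of non-standard $P$ go slightly beyond what the paper writes. For non-standard $P$, $\lambda(s)$ alone suffices (choose $s$ with $s.P$ standard), and it maps $K_0$-invariants to $K_0$-invariants exactly because it is $K$-equivariant, so no conjugation is involved. You should not invoke ${}^\circ c$: it is only rational in $\chi$ and would not preserve polynomiality.
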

\begin{proof} 
 For $P=MN\in \mathcal{P}, \sigma\in\Pi_2(M)$ with $(\mathrm{Ind}_P^G \sigma)^{K_0}\neq 0,$ choose bases $\mathcal{E}_\sigma$ and $\mathcal{E}_{\sigma^\vee}$ of $(\Ind_{K_P}^K\sigma\vert_{K_M})^{K_0}$ and $(\Ind_{K_P}^K\sigma^\vee\vert_{K_M})^{K_0}$ respectively. By the definition of $\mathcal{S}_\rho^{\mathrm{as}}(G(F)),$ we can write
\begin{align*}
    \HP(f)(M,\sigma_\chi) = \sum_{(v,w)\in \mathcal{E}_\sigma\times \mathcal{E}_{\sigma^\lor}} L(\tfrac{1}{2},\sigma_\chi,\rho_M) p_{\sigma,v,w}(\chi) v\otimes w,
\end{align*}
where $p_{\sigma,v,w}\in \mathbb{C}[\Lambda_M]$. By \eqref{LambdaG:poly:supp} and \hyperref[Lfunction:expand:cone]{Lemma \ref{Lfunction:expand:cone}}, we can write
\begin{align*}
    L(\tfrac{1}{2},\sigma_\chi,\rho_M) p_{\sigma,v,w}(\chi) = \sum_{\beta\in S + \widetilde{\sigma}_{\rho,M}\cap\Omega_M} a_{\beta,\sigma,v,w}q^{-\langle\chi,\beta\rangle}
\end{align*}
for some finite set $S\subseteq\Omega_M.$ Since $\Theta^{K_0}$ consists of finitely many connected components, we can make $S$ uniform in $\sigma\in\Pi_2(M)$ and $v,w$.

 By \eqref{ealpha:orthproj} we have
\begin{align*}
      \HP(f\mathbf{1}_\alpha)(M,\sigma_\chi) &= \sum_{(v,w)\in \mathcal{E}_\sigma\times\mathcal{E}_{\sigma^\lor}} \left(\frac{1}{\mathrm{vol}(\Im\Lambda_G)}\int_{\Im \Lambda_G} \chi'(\alpha)^{-1}L(\tfrac{1}{2},\sigma_{\chi\chi'},\rho_M) p_{\sigma,v,w}(\chi\chi')d\chi'\right)v\otimes w\\
     &=\sum_{(v,w)\in \mathcal{E}_\sigma\times\mathcal{E}_{\sigma^\lor}} \left(\sum\limits_{\substack{\beta\in S+\widetilde{\sigma}_{\rho,M}\cap\Omega_M\\\beta\vert_{X^*(G)} = \alpha}}a_{\beta,\sigma,v,w}q^{-\langle\chi,\beta\rangle}\right)v\otimes w.
\end{align*}
Since $\widetilde{\sigma}_{\rho,M}$ is strongly convex, the sum is finite. The assertion then follows from \hyperref[Lcone:dualdual]{Lemma \ref{Lcone:dualdual}}.
\end{proof}

\begin{corollary}
    For $f\in \mathcal{S}^{\mathrm{as}}_\rho(G(F))$ and $\alpha\in \Omega_G$,  $\HP(f\mathbf{1}_\alpha)$ is a polynomial section.\qed
\end{corollary}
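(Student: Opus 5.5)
The corollary should follow almost directly from the proof of Lemma \ref{alpha:supp:1}; the only real content is checking that its finiteness statement is uniform enough to give a polynomial section. We take $f\in\mathcal{S}^{\mathrm{as}}_\rho(G(F))$ and fix a compact open subgroup $K_0\le K$ with $f$ bi-$K_0$-invariant. Then $f\mathbf{1}_\alpha$ is also bi-$K_0$-invariant, since $G(F)^1$ is normal and contains every compact subgroup. We must show that for each $(M,\sigma)\in\widetilde{\Temp}_\Ind(G)$ and all $(v,w)$, the function $\chi\mapsto\langle\HP(f\mathbf{1}_\alpha)(M,\sigma_\chi)v,w\rangle$ lies in $\CC[\Lambda_M]$.

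First, if $(\Ind_P^G\sigma)^{K_0}=0$, then $\HP(f\mathbf{1}_\alpha)(M,\sigma_\chi)=0$ identically and there is nothing to prove. Otherwise, we choose bases $\mathcal{E}_\sigma,\mathcal{E}_{\sigma^\vee}$ of the $K_0$-fixed vectors, as in the proof of Lemma \ref{alpha:supp:1}. By \eqref{HP:orthoproj:alpha} and \eqref{ealpha:orthproj}, each matrix coefficient of $\HP(f\mathbf{1}_\alpha)(M,\sigma_\chi)$ is the $\Im\Lambda_G$-average against $\chi'(\alpha)^{-1}$ of $L(\tfrac12,\sigma_{\chi\chi'},\rho_M)p_{\sigma,v,w}(\chi\chi')$.

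Next, we expand this product as a Laurent series $\sum_{\beta\in S+\widetilde{\sigma}_{\rho,M}\cap\Omega_M}a_\beta q^{-\langle\chi,\beta\rangle}$, using Lemma \ref{Lfunction:expand:cone} after the harmless shift by $|\nu|^{1/2}$. This series converges absolutely on a translate of the positive cone $C_{\rho,M}$. On that region the integral over the compact group $\Im\Lambda_G$ may be exchanged with the summation. Since $\Im\Lambda_G$ acts on the monomial $q^{-\langle\chi,\beta\rangle}$ through the character $\beta|_{X^*(G)}$, orthogonality of characters keeps exactly the terms with $\beta|_{X^*(G)}=\alpha$.

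The key step, and the only point needing care, is the finiteness of these surviving terms. The set
\[
\{\beta\in S+\widetilde{\sigma}_{\rho,M}:\beta|_{X^*(G)}=\alpha\}
\]
is finite. Indeed, $\RR_{\ge0}\widetilde{\sigma}_{\rho,G}$ is strongly convex and $\widetilde{\mu}_j|_{X^*(G)}\in\widetilde{\sigma}_{\rho,G}$, so only finitely many $(\ell_j)$ satisfy $\sum_j\ell_j\widetilde\mu_j|_{X^*(G)}=\alpha-s|_{X^*(G)}$ for each $s\in S$; this is the argument already given in Lemma \ref{Lfunction:expand:cone}. Therefore the averaged function agrees, on an open set, with a finite Laurent polynomial in $\chi\in\Lambda_M$. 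By the identity principle, the averaged rational function is this polynomial.

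Finally, this computation is carried out on the convergence region, whereas $e_\alpha$ is defined by averaging on $\Im\Lambda_M$. The two agree because $\chi\mapsto\int_{\Im\Lambda_G}\chi'(\alpha)^{-1}(\cdots)(\chi\chi')\,d\chi'$ is holomorphic wherever the integrand is, and the polynomial identity propagates by analytic continuation to $\Im\Lambda_M$. Hence $\HP(f\mathbf{1}_\alpha)$ is a polynomial section.
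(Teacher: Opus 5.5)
Your proposal is correct and is essentially the paper's argument: the corollary is read off from the computation in the proof of Lemma \ref{alpha:supp:1}. There, averaging the expansion of $L(\tfrac12,\sigma_{\chi\chi'},\rho_M)\,p_{\sigma,v,w}(\chi\chi')$ against $\chi'(\alpha)^{-1}$ over $\Im\Lambda_G$ keeps only the finitely many $\beta$ with $\beta|_{X^*(G)}=\alpha$. Your last paragraph is unnecessary: since $|\nu|^{1/2}\in C_{\rho,M}$, the region of absolute convergence $\Re(\chi)\in|\nu|^{-1/2}C_{\rho,M}$ already contains $\Im\Lambda_M$, so you can exchange the sum and the integral directly there.
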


Fix a finite generating set $\calB_G\subseteq X^*(G)$ of the cone $\overline{C}_{\rho,G}$. For $r\in \RR,$ define
\begin{align*}
    S_{G}(r):=S_{G,\calB_G}(r):=\{g\in G(F): |\chi|(g)\le q^{r} \text{ for all }\chi\in\calB_G\}.
\end{align*}

\begin{corollary}\label{Sas:f1alpha:polynomial}  Let $f\in \mathcal{S}_\rho^{\mathrm{as}}(G(F))$. Then $\supp f\subseteq S_G(r)$ for some constant $r$.
\end{corollary}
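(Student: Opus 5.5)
Fix $f\in\mathcal{S}_\rho^{\mathrm{as}}(G(F))$ and choose a compact open subgroup $K_0\le K$ such that $f$ is bi-$K_0$-invariant. Apply \hyperref[alpha:supp:1]{Lemma \ref{alpha:supp:1}} with $M=G$ and $P=G$. In that case $\Omega_M=\Omega_G$, and the condition $\beta|_{X^*(G)}=\alpha$ forces $\beta=\alpha$. So for each $\alpha\in\Omega_G$, each $\sigma\in\Pi_2(G)$ and each pair of $K_0$-fixed vectors $(v,w)$, we get $Z_0(\sigma_\chi,f\mathbf{1}_\alpha,v,w)=a_{\alpha,v,w}q^{-\langle\chi,\alpha\rangle}$. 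This coefficient can be nonzero only if $\langle\tau,\alpha\rangle\ge -r$ for all $\tau\in\mathcal{B}_0$. Here $r$ depends only on $f$, not on $\alpha$, $P$, $\sigma$, $v$ or $w$.

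Now suppose $\alpha$ violates this inequality for some $\tau\in\mathcal{B}_0$. Then the lemma gives that $Z_0(\Ind_P^G\sigma,f\mathbf{1}_\alpha,v,w)=0$ for every $P=MN$, every $\sigma\in\Pi_2(M)$, and every pair of $K_0$-fixed $(v,w)$. This is because the condition $\langle\tau,\beta\rangle\ge -r$ is interpreted through $\beta|_{X^*(G)}=\alpha$ via \eqref{pairing:bw:Levis}, as I argue next.

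The pairing $\langle\tau,\beta\rangle$ means $\langle p_{M_0\le M}(\tau),\beta\rangle$, and this does not directly reduce to a pairing with $\alpha$ unless we project further to $G$. I would avoid this issue by using only the $M=G$ information together with the Plancherel theorem. The operator $\HP(f\mathbf{1}_\alpha)$ is determined by its values on all $(M,\sigma)$. However, $f\mathbf{1}_\alpha$ is supported on the single coset $\alpha G(F)^1$, so its Fourier coefficients already encode the needed constraint. Concretely, $\HP(f\mathbf{1}_\alpha)(M,\sigma_\chi)=\chi(\alpha)\HP(f\mathbf{1}_\alpha)(M,\sigma)$ for $\chi\in\Im\Lambda_G$. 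Lemma \ref{alpha:supp:1} says $\HP(f\mathbf{1}_\alpha)(M,\cdot)$ is a finite sum of monomials $q^{-\langle\chi,\beta\rangle}$ with $\beta|_{X^*(G)}=\alpha$ and $\langle\tau,\beta\rangle\ge -r$ for all $\tau\in\mathcal{B}_0$.

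Restrict $\tau$ to the elements of $\mathcal{B}_G$, which can be taken inside $\mathcal{B}_0$ after enlarging $\mathcal{B}_0$ by $\iota_{G\ge M_0}(\mathcal{B}_G)$. Enlarging is allowed because $\iota(C_{\rho,G})\subseteq C_{\rho,M_0}$, so these elements lie in $\overline{C}_{\rho,M_0}$ and the enlarged set still generates the same cone. For $\tau\in X^*(G)$ we have $\langle\tau,\beta\rangle=\langle\tau,\beta|_{X^*(G)}\rangle=\langle\tau,\alpha\rangle$. Hence, if $\langle\tau,\alpha\rangle<-r$ for some $\tau\in\mathcal{B}_G$, then every coefficient vanishes. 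This gives $\HP(f\mathbf{1}_\alpha)=0$ on all of $\Theta^{K_0}$. By \hyperref[HCPlan]{Theorem \ref{HCPlan}} (injectivity of $\HP$), we conclude $f\mathbf{1}_\alpha=0$.

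Therefore $f$ vanishes on every coset $\alpha G(F)^1$ with $\langle\tau,\alpha\rangle<-r$ for some $\tau\in\mathcal{B}_G$. On the remaining cosets, $|\tau(g)|=q^{-\langle\tau,\alpha\rangle}\le q^{r}$ for all $\tau\in\mathcal{B}_G$. So $\supp f\subseteq S_G(r)$. The only delicate point is choosing $\mathcal{B}_0$ to contain the image of $\mathcal{B}_G$. This is harmless: the constant $r$ from Lemma \ref{alpha:supp:1} is produced for whichever finite generating set is fixed, so we fix the enlarged set from the start.
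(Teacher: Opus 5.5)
Your argument is correct and is the paper's proof spelled out: the paper enlarges $\mathcal{B}_0$ to contain $\mathcal{B}_G$ and applies Lemma~\ref{alpha:supp:1}. As you do, one then uses that $\langle\tau,\beta\rangle=\langle\tau,\alpha\rangle$ for $\tau\in X^*(G)$, so $\HP(f\mathbf{1}_\alpha)\equiv 0$ whenever $\langle\tau,\alpha\rangle<-r$ for some $\tau\in\mathcal{B}_G$, and Theorem~\ref{HCPlan} gives $f\mathbf{1}_\alpha=0$. The only clean-up needed is to delete your second paragraph, whose claim for arbitrary $\tau\in\mathcal{B}_0$ is unjustified (as you note, $\langle\tau,\beta\rangle$ then depends on $\beta$ and not just $\alpha$), together with the $M=G$ detour; neither is used in the final argument.
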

\begin{proof} We may enlarge $\mathcal{B}_0$ so that it contains $\mathcal{B}_G.$ \hyperref[alpha:supp:1]{Lemma \ref{alpha:supp:1}} implies the corollary.
\end{proof}

\subsection{Compatible spectrum} 

We recall a matrical Paley-Wiener theorem for $C_c^\infty(G(F))$ due to Bernstein and Heiermann. Continue to denote by $\mathrm{End}$ the restriction of the bundle $\mathrm{End}\to \Temp_\Ind(G)$ to the open subspace $\Temp_{\Ind,0}(G)$. Let
\begin{align*}
     \mathrm{Poly}(\Temp_{\Ind}(G)) &\subseteq \Gamma_c(\Temp_{\Ind}(G),\mathrm{End}),\\
    \mathrm{Poly}(\Temp_{\Ind,0}(G)) &\subseteq \Gamma_c(\Temp_{\Ind,0}(G),\mathrm{End})
\end{align*}
denote the spaces of polynomial sections of $\mathrm{End}$ with compact support.

\begin{theorem}[\cite{Heiermann:HeckePlan}]\label{BH:PW} The map $\HP$ induces a $\mathbb{C}$-algebra isomorphism
\begin{center}
    \begin{tikzcd}[row sep=tiny]
        C_c^\infty(G(F))\arrow[r]& \mathrm{Poly}(\Temp_{\Ind,0}(G))\\
f \arrow[r,maps to]& \HP(f)\vert_{\Temp_{\Ind,0}(G)}.
    \end{tikzcd}
\end{center}
\qed
\end{theorem}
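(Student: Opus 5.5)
The plan is to prove the two halves separately: first that $f\mapsto \HP(f)\vert_{\Temp_{\Ind,0}(G)}$ is well defined with polynomial, compactly supported image, and injective on $C_c^\infty(G(F))$; then surjectivity, by an explicit wave-packet construction.

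\emph{Well-definedness.} Fix $f\in C_c^\infty(G(F))$, bi-$K_0$-invariant for some compact open $K_0\le K$.
\begin{enumerate}
\item For $(M,\sigma)\in\widetilde{\Temp}_{\Ind,0}(G)$ and $(v,w)$, use the compact picture. By the Iwasawa decomposition, $v_\chi(kg)=\delta_P^{\frac12}(\mathbf{a}_P(kg))\,\chi(\mathbf{a}_P(kg))\,\sigma(\mathbf{a}_P(kg))\,v(\mathbf{k}_P(kg))$.
\item Since $\supp f$ is compact, $H_M(\mathbf{a}_P(kg))$ takes finitely many values for $k\in K$ and $g\in\supp f$.
\item Hence $\chi\mapsto Z_0(\Ind_P^G\sigma_\chi,f,v,w)$ is a finite sum of terms $c_\beta q^{-\langle\chi,\beta\rangle}$, that is, an element of $\CC[\Lambda_M]$ in the sense of \eqref{LambdaG:poly:supp}.
\item Compact support of the section follows because only the finitely many components of $\Theta^{K_0}$ from \eqref{thetaK0} contribute.
\item Compatibility with the gluing isomorphisms ${}^\circ c_{P'|P}(s,\sigma)$ is automatic, since $\HP(f)$ is a section of $\mathrm{End}$ by \hyperref[HCPlan]{Theorem \ref{HCPlan}}.
\end{enumerate}

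\emph{Injectivity.} Suppose $\HP(f)$ vanishes on $\Temp_{\Ind,0}(G)$.
\begin{enumerate}
\item By polynomiality, $(\Ind_P^G\sigma_\chi)(f)=0$ for all $\chi\in\Lambda_M$, not only unitary $\chi$.
\item Every irreducible smooth representation is a subquotient of some $\Ind_P^G\sigma_\chi$ with $\sigma$ supercuspidal. So $\pi(f)=0$ for every irreducible smooth $\pi$.
\item By Bernstein's decomposition, $f$ projects to zero in every block, hence $f=0$. Here I use that the Hecke algebra of each block acts faithfully on the progenerator $\Ind_P^G(\sigma\otimes\CC[\Lambda_M])$, whose fibres are exactly the $\Ind_P^G\sigma_\chi$.
\end{enumerate}

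\emph{Surjectivity.} Given $T\in\mathrm{Poly}(\Temp_{\Ind,0}(G))$, I would decompose it along the finitely many components it meets. On a component attached to $(M,\sigma)$ with $\sigma$ supercuspidal, define the wave packet
\begin{align*}
    f_T(g):=c\int_{\Im\Lambda_M}\tr\big((\Ind_P^G\sigma_\chi)(g^{-1})\circ T(\sigma_\chi)\big)\,d\chi ,
\end{align*}
with $c$ the Plancherel constant $\gamma(G|M)\mu(\sigma)j(\sigma)^{-1}/\#W(G|M)$. This is $\HP^{-1}$ of \hyperref[HCPlan]{Theorem \ref{HCPlan}} applied to the section extended by zero off $\Temp_{\Ind,0}(G)$. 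Two things then need to be checked.
\begin{enumerate}
\item \emph{$f_T$ is compactly supported.} Expand $T(\sigma_\chi)=\sum_\beta q^{-\langle\chi,\beta\rangle}T_\beta$ with finitely many $\beta$. Each term integrates to a matrix coefficient of $\Ind_P^G\sigma$ restricted to a single coset of $M(F)^1$ along $\mathbf{a}_P$. Matrix coefficients of supercuspidal $\sigma$ are compactly supported modulo $A_M$, so each term is compactly supported. I expect this Fourier-support argument to be straightforward.
\item \emph{$\HP(f_T)=T$ on $\Temp_{\Ind,0}(G)$.} This is where the $W$-invariance built into $T$, which defines $C^\infty(\Theta)^{\mathrm{inv}}$ via the operators ${}^\circ c_{P'|P}(s,\sigma)$, must be used. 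The extension by zero lies in $\mathcal{C}(\Temp_\Ind(G))$ because supercuspidal components are open and closed. Then \hyperref[HCPlan]{Theorem \ref{HCPlan}} gives $\HP(f_T)=T$ directly, once one knows $T$ is smooth on $\Im\Lambda_M$ and compatible.
\end{enumerate}

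The main obstacle I anticipate is the compatibility condition at the points where the normalized operators ${}^\circ c_{P'|P}(s,\sigma_\chi)$ are only rational in $\chi$. One must ensure that the polynomial sections one is allowed to prescribe really satisfy the invariance identically on $\Lambda_M$, and that no pole of the intertwining operators, controlled by \hyperref[lem:J]{Lemma \ref{lem:J}}, obstructs this. My first attempt would be to check invariance only on the Zariski-dense set of unitary $\chi$ where ${}^\circ c$ is holomorphic and unitary, and then invoke the identity principle. This is essentially Heiermann's argument via Bernstein's description of the centre.
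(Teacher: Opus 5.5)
Your well-definedness and injectivity arguments are fine. For injectivity you do not even need Bernstein's decomposition: once $(\Ind_P^G\sigma_\chi)(f)=0$ for all $\chi\in\Lambda_M$, $\pi(f)=0$ on every subquotient, so $\HP(f)=0$ on all of $\Temp_\Ind(G)$, and Theorem~\ref{HCPlan} gives $f=0$.

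The gap is in surjectivity. The function $f_T:=\HP^{-1}$ of $T$ extended by zero is in general \emph{not} compactly supported, so it is not the preimage you need. The compactly supported preimage of $T$ usually has nonzero Fourier transform on the non-supercuspidal discrete-series components. Those values are forced by the analytic continuation of $T$ to the non-unitary points where $\Ind\sigma_\lambda$ contains a discrete series; this is exactly condition \eqref{CS2}. It is also why the paper needs Lemma~\ref{lem:comp+poly} and the careful choice of sections in Lemma~\ref{lem:cuspconst}.

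Here is a concrete counterexample. Take $G=\SL_2$, $I$ an Iwahori subgroup, and $T=\HP(\mathbf{1}_I)\vert_{\Temp_{\Ind,0}(G)}$. By injectivity the only compactly supported preimage is $\mathbf{1}_I$. Since $\mathrm{St}^I\neq 0$, your $f_T$ equals $\mathbf{1}_I$ minus a nonzero multiple of $\mathrm{tr}\big(\mathrm{St}(g^{-1})\circ\mathrm{St}(\mathbf{1}_I)\big)$. That subtracted term is an Iwahori-fixed matrix coefficient of the Steinberg representation, which is not compactly supported.

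So your Fourier-support step has to fail, and it does. Fixing the class of $\mathbf{a}_P(kg)$ modulo $M(F)^1$, and using that supercuspidal coefficients are compact modulo the centre, bounds only the $M$-component of $kg$, not $\mathbf{n}_P(kg)$. Each term is therefore supported on a set like $K\,C\,N(F)\,K$, and compactness could only come from cancellation. Separately, $j(\sigma_\chi)^{-1}$ is the Plancherel density and depends on $\chi$. It belongs inside your integral, and then the integrand is not a Laurent polynomial at all.

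The missing idea is Heiermann's inversion formula for $C_c^\infty(G(F))$, sketched in \S\ref{ssec:Supportproof}:
\begin{itemize}
\item Write the polynomial section through $(J_{\overline{P}|P}(\sigma_\chi)^{-1}\otimes\mathrm{id})\,\xi(\sigma_\chi)$ with $\xi$ polynomial.
\item Integrate the resulting matrix coefficients over a contour $\Re(\chi)=\mu$ far inside the positive chamber, not over $\Im\Lambda_M$.
\item Prove compact support from his asymptotic expansion of $\langle(\Ind_P^G\sigma_\chi)(a)J_{\overline{P}|P}(\sigma_\chi)^{-1}\bar v,w\rangle$ on the regions $A_0^+(\theta,t,t')$, together with contour shifts to infinity.
\end{itemize}
Moving that contour back to the unitary axis crosses poles of $J_{\overline{P}|P}(\sigma_\chi)^{-1}$, and the residues are exactly the terms your extension by zero discards. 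The obstacle you single out, compatibility with ${}^\circ c_{P'|P}$, is therefore not the real one: that invariance is part of being a section of $\mathrm{End}$ and is harmless. The real issue is compact support.
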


Clearly, $\HP(f)\in \mathrm{Poly}(\Temp_{\Ind}(G))$ for $f\in C^\infty_c(G(F)).$ However, as demonstrated in \cite[Example 6.2]{DRS}, a section in $\mathrm{Poly}(\Temp_{\Ind}(G))$ needs not arise from a function in $C_c^\infty(G(F)).$ To explain the obstruction, let us recall the \textbf{Bernstein variety}  $\Temp_{\Ind,0}(G)_\CC:$
\begin{align*}
    \Temp_{\Ind,0}(G)_\CC = \bigsqcup\limits_{M\in\mathcal{M}^{\mathrm{std}}} \Pi_0(M)_\CC / W(G|M) = \bigsqcup\limits_{(M,\mathcal{O})} \mathcal{O}/ W(M,\mathcal{O}),
\end{align*}
where the disjoint union is over all $\Lambda_M$-orbits $\mathcal{O}\subseteq \Pi_0(M)_\CC$, and $W(M,\mathcal{O}):= \{s\in W(G|M): s.\mathcal{O}=\mathcal{O}\}$. Let $\mathrm{Irr}(G)$ denote the isomorphism classes of irreducible smooth representations of $G(F)$. There is a finite-to-one surjection
\begin{align*}
    \mathrm{Irr}(G)\longrightarrow \Temp_{\Ind,0}(G)_\CC
\end{align*}
given as follows: For each $\pi\in\mathrm{Irr}(G)$ there exists a unique pair $(M,\mathcal{O})$ such that $\pi$ is a subquotient of $\Ind_P^G\sigma$ for some $\sigma\in \mathcal{O}$. The choice of $\sigma$ is unique up to $W(M,\calO),$ so the assignment $\pi\mapsto (M,\sigma\in \mathcal{O})$ is well-defined.

\begin{definition}\label{def:cs}
    Let $f\in \mathcal{C}(G(F)).$ We say $f$ has \textbf{compatible spectrum} if the following holds:
    \begin{enumerate}[label=(CS\arabic*), ref=CS\arabic*]
    \item \label{CS1} Suppose for each $(M,\sigma)\in \widetilde{\mathrm{Temp}}_{\Ind}(G)$ and $(v,w)\in \mathrm{Ind}_{K_P}^K \sigma|_{K_M}\times \mathrm{Ind}_{K_P}^K \sigma^\lor|_{K_M},$  the local zeta function $Z_0(\mathrm{Ind}_P^G\sigma_\chi,f,v,w),$ originally defined on $\Re(\chi)=0,$ extends to a holomorphic function in $\chi\in C\times \Im\Lambda_M$ for some positive cone $C\subseteq \Re\Lambda_M$ containing $0$.
    \item \label{CS2} Let $(M,\sigma)\in\widetilde{\Temp}_\Ind(G)$ and $(M',\sigma')\in\widetilde{\Temp}_{\Ind,0}(M)$ such that $\sigma\xhookrightarrow{\iota} \mathrm{Ind}_{P'\cap M}^{M} \sigma'_{\lambda}$ for some $\lambda\in \Lambda_{M'}$. Then for $(v,w)\in \Ind_{K_P}^K\sigma\vert_{K_M}\times \Ind_{K_P}^K\sigma^\vee\vert_{K_M}$ and any lift $\tilde{w}\in \mathrm{Ind}_{K_{P'}}^{K}\sigma'^\lor|_{K_{M'}}$ of $w$ under the dual map $\iota^\lor,$
    \begin{align*}
            Z_0(\mathrm{Ind}_P^G \sigma_\chi, f,v,w)=Z_0(\mathrm{Ind}_{P'}^G \sigma'_{\lambda\chi}, f,v,\tilde{w}).
    \end{align*}
    for $\chi\in C'\times \Im\Lambda_{M},$ where $C'\subseteq \Re\Lambda_{M}$ is a positive cone on which both terms are defined via \eqref{CS1}.
    \end{enumerate}
\end{definition}
Let
\begin{align*}
    \mathcal{C}_{\mathrm{cs}}(G(F)) \subseteq\mathcal{C}(G(F))
\end{align*}
denote the subspace consisting of functions with compatible spectrum.  Every function in $\mathcal{S}_{\rho}^{\mathrm{as}}(G(F))$ satisfies \eqref{CS1} but does not necessarily satisfies \eqref{CS2}. Indeed, the pathological example in \cite[Example 6.2]{DRS} is a function $f\in \mathcal{S}_{\rho}^{\mathrm{as}}(G(F))$ such that \eqref{CS2} fails while $\HP(f)\in \mathrm{Poly}(\Temp_{\Ind}(G)).$

\begin{lemma}\label{lem:comp+poly} Let $f\in \mathcal{C}_{\mathrm{cs}}(G(F))$. The following are equivalent.
\begin{enumerate}
    \item $f\in C^\infty_c(G(F))$.
    \item $\mathrm{HP}(f)\in \mathrm{Poly}(\Temp_{\Ind}(G)).$
    \item $\mathrm{HP}(f)\vert_{\Temp_{\Ind,0}(G)}\in  \mathrm{Poly}(\Temp_{\Ind,0}(G)).$
\end{enumerate}
\end{lemma}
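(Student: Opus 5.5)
The plan is to prove the cycle $(1)\Rightarrow(2)\Rightarrow(3)\Rightarrow(1)$. The first two implications are formal. For $(1)\Rightarrow(2)$: if $f\in C^\infty_c(G(F))$, then $f$ is bi-$K_0$-invariant for some $K_0$, so $\HP(f)$ is supported on the finitely many components of $\Theta^{K_0}$. Each matrix coefficient $\chi\mapsto\langle(\Ind_P^G\sigma_\chi)(f)v,w\rangle=\int f(g)\langle(\Ind_P^G\sigma_\chi)(g)v_\chi,w\rangle dg$ is a finite sum over $K_0$-double cosets in $\supp f$. In the compact picture, the integrand depends on $\chi$ only through $\chi(\mathbf{a}_P(kg))$, which takes finitely many values in $\Omega_M$. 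Hence each coefficient lies in $\CC[\Lambda_M]$. The implication $(2)\Rightarrow(3)$ is just restriction to the open and closed subspace $\Temp_{\Ind,0}(G)$.

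The content is in $(3)\Rightarrow(1)$. By \hyperref[BH:PW]{Theorem \ref{BH:PW}} there is a unique $f_0\in C^\infty_c(G(F))$ with $\HP(f_0)|_{\Temp_{\Ind,0}(G)}=\HP(f)|_{\Temp_{\Ind,0}(G)}$. Set $h:=f-f_0$. By $(1)\Rightarrow(2)$ and the fact that $\mathcal{C}_{\mathrm{cs}}(G(F))$ is a vector space containing $C^\infty_c(G(F))$, we have $h\in\mathcal{C}_{\mathrm{cs}}(G(F))$. Note that $f_0$ satisfies (CS1) with $C=\Re\Lambda_M$ by absolute convergence, and satisfies (CS2) because matrix coefficients of a subrepresentation are matrix coefficients of the ambient induced representation, with the lift $\tilde w$. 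Moreover $\HP(h)$ vanishes on $\Temp_{\Ind,0}(G)$. By \hyperref[HCPlan]{Theorem \ref{HCPlan}}, it suffices to show $\HP(h)(M,\sigma)=0$ for every $(M,\sigma)\in\widetilde{\Temp}_\Ind(G)$ with $\sigma$ a non-supercuspidal discrete series, since then $h=0$ and $f=f_0$ is compactly supported.

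Fix such $(M,\sigma)$. By Jacquet's subrepresentation theorem, $\sigma\hookrightarrow\Ind_{P'\cap M}^M\sigma'_\lambda$ with $\sigma'\in\Pi_0(M')$ and $\lambda\in\Lambda_{M'}$, typically non-unitary. Apply (CS2) to $h$: for $\chi$ in a positive cone $C'\times\Im\Lambda_M$,
\[
Z_0(\Ind_P^G\sigma_\chi,h,v,w)=Z_0(\Ind_{P'}^G\sigma'_{\lambda\chi},h,v,\tilde w).
\]
The right-hand side is the holomorphic continuation, provided by (CS1), of $\mu\mapsto Z_0(\Ind_{P'}^G\sigma'_\mu,h,v,\tilde w)$ from $\Im\Lambda_{M'}$ into a tube over a positive cone in $\Re\Lambda_{M'}$. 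On $\Im\Lambda_{M'}$ this function is a matrix coefficient of $\HP(h)(M',\sigma'_\mu)$, which vanishes because $(M',\sigma'_\mu)\in\widetilde{\Temp}_{\Ind,0}(G)$. The tube is connected and contains $\Im\Lambda_{M'}$, which is a totally real submanifold of maximal dimension, so by the identity principle the continuation vanishes identically. In particular it vanishes at $\lambda\chi$ for $\chi$ in the relevant cone. Hence $Z_0(\Ind_P^G\sigma_\chi,h,v,w)=0$ on an open subset of $C'\times\Im\Lambda_M$. This function is holomorphic on a connected tube containing $\Im\Lambda_M$, so by the identity principle again it vanishes at $\chi=1$. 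Since $v,w$ are arbitrary, $\HP(h)(M,\sigma)=0$.

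The main obstacle is checking that the point $\lambda\chi$ actually lies in the domain where (CS1) gives a holomorphic continuation for $(M',\sigma')$; the cone there is only known to contain $0$ and to be positive, whereas $\Re\lambda\neq0$. I would handle this by working on a fixed $\Lambda_G$-direction: positivity gives $|\nu|^s$ in every cone for $s\gg0$, so both sides are defined for $\chi=|\nu|^s\chi_{\mathrm{im}}$ with $s$ large. I would also read (CS2) as asserting exactly that equality on the cone $C'$ where both sides are defined, which is built into the definition. The identity principle on the connected tube over $C'$ then propagates the vanishing back to $\Im\Lambda_M$.
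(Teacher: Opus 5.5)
Your proposal is correct and follows the paper's proof essentially verbatim: use Theorem \ref{BH:PW} to get $f'\in C_c^\infty(G(F))$ agreeing with $f$ on $\Temp_{\Ind,0}(G)$, then apply (CS2) to $f-f'$ and the identity principle twice to show $\HP(f-f')$ vanishes on the non-supercuspidal discrete series components, and conclude with Theorem \ref{HCPlan}; you only spell out what the paper leaves implicit, namely that $f-f'$ still has compatible spectrum and why the continuation on the $\sigma'$ side vanishes. Drop the $|\nu|^s$ fallback in your last paragraph: on its own it does not place $\lambda\chi$ in the (CS1) domain for $\sigma'$ when $\Re\lambda\neq 0$, and it is unnecessary anyway, because, as you also note, (CS2) is stated on a cone where both sides are already defined, which is exactly how the paper uses it.
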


\begin{proof}
    The implications $(1)\Rightarrow (2)\Rightarrow (3)$ are clear. Suppose $\mathrm{HP}(f)\vert_{\Temp_{\Ind,0}(G)}\in  \mathrm{Poly}(\Temp_{\Ind,0}(G)).$ By \hyperref[BH:PW]{Theorem \ref{BH:PW}} there is a function $f'\in C^\infty_c(G(F))$ such that $\mathrm{HP}(f-f')=0$ on $\Temp_{\Ind,0}(G)$. We claim that   $\mathrm{HP}(f-f')(M,\sigma)=0$ for all $(M,\sigma)\in \widetilde{\mathrm{Temp}}_{\Ind}(G).$  Then $f=f'$ by \hyperref[HCPlan]{Harish-Chandra Plancherel theorem}, so $(3)\Rightarrow(1)$.

    Say $\sigma\le \mathrm{Ind}_{P'\cap M}^{M} \sigma'_{\lambda}$ for some $(M',\sigma')\in \widetilde{\mathrm{Temp}}_{\mathrm{Ind,0}}(M)$ and $\lambda\in \Lambda_{M'}.$ By \eqref{CS2} for any $(v,w)\in \Ind_{K_P}^K\sigma\vert_{K_M}\times \Ind_{K_P}^K\sigma^\vee\vert_{K_M}$ and any lift $\tilde{w}\in \mathrm{Ind}_{K_{P'}}^{K}\sigma'^\lor|_{K_{M'}}$ of $w,$ there is a positive cone $C$ in $\Re\Lambda_M$ such that
    \begin{align*}
         Z_0(\mathrm{Ind}_P^G \sigma_{\chi}, f-f',v,w)=Z_0(\mathrm{Ind}_{P'}^G \sigma'_{\lambda\chi}, f-f',v,\tilde{w})=0
    \end{align*}
     for  $\chi\in C\times \Im \Lambda_M$. Here both terms are defined via \eqref{CS1}. By \eqref{CS1} and the identity principle,  $Z_0(\mathrm{Ind}_P^G \sigma_{\chi}, f-f',v,w)=0$ for $\Re(\chi)=0$ for any $v,w$, so $\mathrm{HP}(f-f')(M,\sigma)=0.$
\end{proof}

By \eqref{eq:restricts:coset} we have

\begin{lemma}\label{Ccs:onealpha:stable} For $\alpha\in\Omega_G$, one has $\mathcal{C}_{\mathrm{cs}}(G(F))\mathbf{1}_\alpha\subseteq \mathcal{C}_{\mathrm{cs}}(G(F))$.\qed
\end{lemma}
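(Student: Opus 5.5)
The plan is to check the two conditions \eqref{CS1} and \eqref{CS2} of \hyperref[def:cs]{Definition \ref{def:cs}} directly for $f\mathbf{1}_\alpha$, using the averaging formula \eqref{eq:restricts:coset}. Fix $f\in\mathcal{C}_{\mathrm{cs}}(G(F))$ and $\alpha\in\Omega_G$. First, $f\mathbf{1}_\alpha\in\mathcal{C}(G(F))$: by \cite[Lemma 6.8]{DRS}, $f=\sum_\alpha f\mathbf{1}_\alpha$ converges in $\mathcal{C}(G(F))$. Alternatively, $\mathbf{1}_\alpha$ is bi-$K$-invariant and bounded, so multiplying by it preserves $K\times K$-finiteness and the seminorms $p_d$.

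For \eqref{CS1}, let $(M,\sigma)\in\widetilde{\Temp}_\Ind(G)$ and fix $v,w$. Applying \eqref{eq:restricts:coset} at the representation $\sigma_\lambda$ with $\lambda\in\Im\Lambda_M$ gives
\begin{align*}
Z_0(\Ind_P^G\sigma_\lambda,f\mathbf{1}_\alpha,v,w)=\frac{1}{\vol(\Im\Lambda_G)}\int_{\Im\Lambda_G}\chi(\alpha)^{-1}Z_0(\Ind_P^G\sigma_{\lambda\chi},f,v,w)\,d\chi .
\end{align*}
Here $\chi\in\Im\Lambda_G$ is viewed in $\Im\Lambda_M$ by restriction. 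By \eqref{CS1} for $f$, the integrand extends holomorphically in $\lambda\in C\times\Im\Lambda_M$ for a positive cone $C\ni 0$. Since $\chi$ only moves $\lambda$ within $C\times\Im\Lambda_M$ (the real part is unchanged), the integrand is jointly continuous in $(\lambda,\chi)$ and holomorphic in $\lambda$. The domain of integration $\Im\Lambda_G$ is compact. Hence the right-hand side defines a holomorphic function on $C\times\Im\Lambda_M$, for instance by Morera's theorem or by differentiating under the integral sign. This function extends the left-hand side, so \eqref{CS1} holds with the same cone $C$.

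For \eqref{CS2}, take $\sigma\hookrightarrow\Ind_{P'\cap M}^M\sigma'_{\lambda_0}$ and a lift $\tilde w$ of $w$. The same averaging formula applies to $\Ind_{P'}^G\sigma'_{\lambda_0\lambda}$. Restriction of $\chi\in\Im\Lambda_G$ to $M'$ is compatible with its restriction to $M$, so both $Z_0(\Ind_P^G\sigma_\lambda,f\mathbf{1}_\alpha,v,w)$ and $Z_0(\Ind_{P'}^G\sigma'_{\lambda_0\lambda},f\mathbf{1}_\alpha,v,\tilde w)$ are the $\chi(\alpha)^{-1}$-weighted averages of the corresponding zeta functions of $f$ at $\lambda\chi$. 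These two families of zeta functions agree for $\lambda\chi\in C'\times\Im\Lambda_M$ by \eqref{CS2} for $f$, and $\lambda\chi$ stays in that region whenever $\lambda$ does. So the averages agree on $C'\times\Im\Lambda_M$.

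The main point needing care is justifying the averaging formula off the imaginary axis, i.e.\ after analytic continuation. It is an identity on $\Im\Lambda_M$, and the right-hand side is holomorphic on the tube as shown above. So the continued identity is in fact the definition of the extension, which is legitimate by the identity principle. No other obstacle is expected.
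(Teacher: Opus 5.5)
Your proposal is correct and is essentially the paper's argument: the paper deduces the lemma directly from the averaging identity \eqref{eq:restricts:coset}, and you have supplied the details it leaves implicit. Those details are that twisting by $\Im\Lambda_G$ does not change real parts, so the average over the compact group $\Im\Lambda_G$ of the continued zeta functions of $f$ is holomorphic on the same tube $C\times\Im\Lambda_M$, which gives \eqref{CS1} for $f\mathbf{1}_\alpha$; and \eqref{CS2} passes through the two averages term by term.
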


Define the space of \textbf{almost compactly supported functions} 
\begin{align}\label{eq:ac}
    C_{\mathrm{ac}}^\infty(G(F)) := \left\{f\in C_u^\infty(G(F)) : f\mathbf{1}_\alpha\in C_c^\infty(G(F))\text{ for all }\alpha\in \Omega_G\right\}.
\end{align}

\begin{corollary}\label{Sas+csisac} One has $\mathcal{S}_\rho^{\mathrm{as}}(G(F))\cap \mathcal{C}_{\mathrm{cs}}(G(F))\subseteq C_{\mathrm{ac}}^\infty(G(F))$.
\end{corollary}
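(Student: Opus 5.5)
The plan is to reduce the corollary to \hyperref[lem:comp+poly]{Lemma \ref{lem:comp+poly}} applied coset by coset. Let $f\in \mathcal{S}_\rho^{\mathrm{as}}(G(F))\cap \mathcal{C}_{\mathrm{cs}}(G(F))$ and fix $\alpha\in\Omega_G$. We must show that $f\mathbf{1}_\alpha\in C_c^\infty(G(F))$. Smoothness and $K\times K$-finiteness are inherited from $f$, because $G(F)^1$ is open and normal and contains $K$. Hence $f\mathbf{1}_\alpha\in C_u^\infty(G(F))$, and the only issue is compact support.

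First, by \hyperref[Ccs:onealpha:stable]{Lemma \ref{Ccs:onealpha:stable}}, we have $f\mathbf{1}_\alpha\in\mathcal{C}_{\mathrm{cs}}(G(F))$. Second, the corollary following \hyperref[alpha:supp:1]{Lemma \ref{alpha:supp:1}} shows that $\HP(f\mathbf{1}_\alpha)$ is a polynomial section: on each component it is a finite sum $\sum_\beta a_{\beta,v,w}q^{-\langle\chi,\beta\rangle}$. Concretely, by \eqref{HP:orthoproj:alpha}, $\HP(f\mathbf{1}_\alpha)=e_\alpha\HP(f)$. Writing $\HP(f)=L(\tfrac12,\sigma_\chi,\rho_M)\,p(\chi)$ and expanding the $L$-factor via \hyperref[Lfunction:expand:cone]{Lemma \ref{Lfunction:expand:cone}}, the projection $e_\alpha$ keeps only the terms with $\beta|_{X^*(G)}=\alpha$. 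There are finitely many such terms by strong convexity of $\widetilde{\sigma}_{\rho,M}$.

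One also needs compact support of the section on $\Temp_\Ind(G)$. This is automatic: $f$ is bi-$K_0$-invariant for some $K_0$, so $\HP(f\mathbf{1}_\alpha)$ is supported on $\Theta^{K_0}$, which has finitely many connected components, each compact modulo the imaginary directions. Therefore $\HP(f\mathbf{1}_\alpha)\in\mathrm{Poly}(\Temp_\Ind(G))$. \hyperref[lem:comp+poly]{Lemma \ref{lem:comp+poly}}, the implication $(2)\Rightarrow(1)$, then gives $f\mathbf{1}_\alpha\in C_c^\infty(G(F))$.

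The only point needing care is the polynomiality claim. It requires checking that the finiteness in \hyperref[alpha:supp:1]{Lemma \ref{alpha:supp:1}} holds for every $(M,\sigma)$ in the support and every $K_0$-fixed pair $(v,w)$, so that the result is a genuine polynomial section in $\chi\in\Lambda_M$. This is exactly what that lemma and its corollary provide, so no new estimate is needed.
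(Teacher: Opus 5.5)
Your proposal is correct and is essentially the paper's proof: \hyperref[Ccs:onealpha:stable]{Lemma \ref{Ccs:onealpha:stable}} gives $f\mathbf{1}_\alpha\in\mathcal{C}_{\mathrm{cs}}(G(F))$, \hyperref[alpha:supp:1]{Lemma \ref{alpha:supp:1}} and its corollary show that $\HP(f\mathbf{1}_\alpha)$ is a polynomial section, and \hyperref[lem:comp+poly]{Lemma \ref{lem:comp+poly}} then yields $f\mathbf{1}_\alpha\in C_c^\infty(G(F))$. Your extra remarks on $K\times K$-finiteness and on compact support of the section are fine; the components of $\Theta^{K_0}$ are in fact compact outright, being quotients of the compact tori $\Im\Lambda_M$, and compact support is already built into the definition of $\mathcal{C}(\Temp_\Ind(G))$.
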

\begin{proof} Let $f\in \mathcal{S}_\rho^{\mathrm{as}}(G(F))\cap \mathcal{C}_{\mathrm{cs}}(G(F))$. By \hyperref[alpha:supp:1]{Lemma \ref{alpha:supp:1}} and \hyperref[Ccs:onealpha:stable]{Lemma \ref{Ccs:onealpha:stable}},  $f\mathbf{1}_\alpha$ has compatible spectrum and $\HP(f\mathbf{1}_\alpha)$ is a polynomial section. The corollary follows by \hyperref[lem:comp+poly]{Lemma \ref{lem:comp+poly}}.
\end{proof}

\section{Analytic behaviors of Schwartz functions}\label{sec:Schwartz:analytic}

In this section, we give an analytic definition of the space $\mathcal{S}_\rho^{\mathrm{as}}(G(F))\cap \mathcal{C}_{\mathrm{cs}}(G(F)).$

\subsection{Preparatory analysis}\label{ssec:prep}

\begin{lemma}\label{(i):equivalent:key} Let $f\in C^\infty_u(G(F))$, $P=MN\in \mathcal{P}$, and let $\sigma$ be a smooth representation of $M(F).$ The following are equivalent: 
\begin{enumerate}
    \item For all $v\in \Ind_{K_P }^K\sigma\vert_{K_M }$ and $w\in \Ind_{K_P }^K\sigma^\vee\vert_{K_M }$
    \begin{align*}
        g\mapsto f(g)\langle(\Ind_P^G\sigma)(g)v,w\rangle \in L^1(G(F)).
    \end{align*}
    \item For all $v\in \Ind_{K_P }^K\sigma\vert_{K_M }$ and $w\in \Ind_{K_P }^K\sigma^\vee\vert_{K_M }$
    \begin{align*}
        (g,k)\mapsto f(k^{-1}g)\langle v(g),w(k)\rangle \in L^1(G(F)\times K).
    \end{align*}
\end{enumerate}
\end{lemma}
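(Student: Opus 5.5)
The plan is to rewrite the integral in (1) with the explicit Iwasawa formula for the pairing on the induced representation, and then reduce both directions to statements about finitely many $K$-translates. Since the induced pairing is an integral over $K$,
\begin{align*}
    f(g)\langle(\Ind_P^G\sigma)(g)v,w\rangle = \int_K f(g)\langle v(kg),w(k)\rangle_\sigma\, dk .
\end{align*}
Substituting $g\mapsto k^{-1}g$ (Haar measure on $G(F)$ is unimodular) turns the double integral $\int_{G(F)}\int_K |f(g)\langle v(kg),w(k)\rangle|\,dk\,dg$ into $\int_{G(F)\times K}|f(k^{-1}g)\langle v(g),w(k)\rangle|\,dg\,dk$. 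Thus (2) is an absolute-value bound for the iterated integral, and by Fubini--Tonelli it controls (1) directly.

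\emph{Proof of $(2)\Rightarrow(1)$.} This is immediate from the triangle inequality:
\begin{align*}
    |f(g)\langle(\Ind_P^G\sigma)(g)v,w\rangle|\le \int_K|f(g)\langle v(kg),w(k)\rangle|\,dk,
\end{align*}
and the right-hand side is integrable over $g$ by Tonelli together with the change of variables above.

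\emph{Proof of $(1)\Rightarrow(2)$.} This direction is the main obstacle, because the absolute value sits outside the $K$-integral in (1) but inside it in (2). I will use smoothness and finiteness to write the $K$-integral as a finite sum of terms, each of which is again a matrix-coefficient integral of the form appearing in (1).
\begin{enumerate}
    \item Since $f\in C^\infty_u(G(F))$, it is left $K\times K$-finite, hence left-invariant under some open normal subgroup $K_1\le K$. Since $w$ is smooth, after shrinking $K_1$ we may assume $w$ is right $K_1$-invariant.
    \item Then $k\mapsto f(k^{-1}g)\langle v(g),w(k)\rangle$ is constant on cosets $kK_1$. So for each fixed $g$,
    \begin{align*}
        \int_K|f(k^{-1}g)\langle v(g),w(k)\rangle|\,dk=\mathrm{vol}(K_1)\sum_{k\in K/K_1}|f(k^{-1}g)\langle v(g),w(k)\rangle|.
    \end{align*}
    \item It therefore suffices to show that, for each fixed $k\in K$, the function $g\mapsto f(k^{-1}g)\langle v(g),w(k)\rangle$ lies in $L^1(G(F))$.
    \item To express this single term via (1), let $w_k\in\Ind_{K_P}^K\sigma^\vee|_{K_M}$ be the vector supported on $K_P kK_1$ with $w_k(pk)=\sigma^\vee(p)w(k)$ for $p\in K_P$. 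This is well defined once $K_1$ is chosen so that $w(k)$ is fixed by $K_M\cap kK_1k^{-1}$; this holds after shrinking $K_1$, since $w(k)$ is a smooth vector.
    \item By the displayed formula, $\langle (\Ind_P^G\sigma)(g)v,w_k\rangle=\mathrm{vol}(K_1)\langle v(kg),w(k)\rangle$, using that the integrand is invariant under $K_P$ on the left.
    \item Applying (1) with the vector $w_k$ shows that $g\mapsto f(g)\langle v(kg),w(k)\rangle$ is integrable. Substituting $g\mapsto k^{-1}g$ gives the term in step 3, and summing over the finitely many cosets $K/K_1$ gives (2).
\end{enumerate}

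The only delicate point is checking that the left-invariance of $f$ and the right-invariance of $w$ genuinely allow the passage from $K$ to the finite set $K/K_1$, and that $w_k$ is a legitimate element of $\Ind_{K_P}^K\sigma^\vee|_{K_M}$. If the choice of $w_k$ becomes awkward, the fallback is to use the exact identity: $\langle(\Ind_P^G\sigma)(g)v,w\rangle$ is a finite sum of terms of the form $\langle v(kg),w(k)\rangle$ with positive weights, and $\sigma^\vee$-vectors on the cosets $K_PkK_1$ can be chosen independently.
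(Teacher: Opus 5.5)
The direction $(2)\Rightarrow(1)$ is fine. Step 5 of $(1)\Rightarrow(2)$, however, is false, and everything after it depends on it.

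The function $k'\mapsto\langle v(k'g),w(k')\rangle$ is left $K_P$-invariant, but it is \emph{not} right $K_1$-invariant. Right $K_1$-invariance of $v$ means $v(xk_1)=v(x)$. Here, though, $k_1$ sits between $k$ and $g$, and $v(kk_1g)\neq v(kg)$ in general. This already happens for $\SL_2$, $P=B$, $k=1$ and $g=\mathrm{diag}(\varpi^{-j},\varpi^{j})$ with $j$ large: the Iwasawa component $\mathbf{a}(\bar n g)$ varies with $\bar n\in\overline{N}\cap K_1$. What $w_k=w\mathbf{1}_{K_PkK_1}$ really gives is
\[
\langle(\Ind_P^G\sigma)(g)v,w_k\rangle=\#(K_PkK_1/K_1)\int_{K_1}\langle v(kk_1g),w(k)\rangle\,dk_1 .
\]
Set $F(g):=f(g)\langle v(kg),w(k)\rangle$. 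By left $K_1$-invariance of $f$, condition (1) for $w_k$ only says that the left $K_1$-average $g\mapsto\int_{K_1}F(k_1g)\,dk_1$ is integrable. Step 3 needs $F\in L^1(G(F))$, and $\|\int_{K_1}F(k_1\,\cdot\,)\,dk_1\|_{L^1}\le\vol(K_1)\|F\|_{L^1}$ goes the wrong way. In step 2 the factor $f(k^{-1}g)$ absorbed the $K_1$-translation; in (1) nothing does. Your fallback has the same defect: $\langle(\Ind_P^G\sigma)(g)v,w\rangle$ is a positive combination of such $K_1$-averages, not of point values.

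The gap cannot be closed, because $(1)\Rightarrow(2)$ is false for general $\sigma$. Oscillation of $\sigma$ inside $\int_K\langle v(kg),w(k)\rangle\,dk$ can produce cancellation that (2) ignores. Here is a counterexample.
\begin{itemize}
\item Take $G=\SL_2$, $P=B$, $K=\SL_2(\calO_F)$, and $\sigma=\chi$ unramified unitary with $\chi^2\neq1$. Put $m_j=\mathrm{diag}(\varpi^j,\varpi^{-j})$ and $f=\sum_{j\ge1}q^{-j}j^{-3/2}\mathbf{1}_{Km_jK}$.
\item The normalized Jacquet module of $\Ind_B^G\chi$ is $\chi\oplus\chi^{-1}$, with distinct unitary exponents. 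By Casselman's asymptotics, every matrix coefficient is $O_{v,w}(q^{-j})$ on $Km_jK$, with no logarithmic term.
\item Since $\vol(Km_jK)\asymp q^{2j}$, condition (1) reduces to $\sum_j j^{-3/2}<\infty$, so (1) holds.
\item For the spherical vectors $v=w=v_0$, the integral in (2) equals $\int_{G(F)}|f|\,\Xi\,dg\asymp\sum_j j^{-1/2}=\infty$, because $\Xi(m_j)\asymp jq^{-j}$. So (2) fails.
\end{itemize}

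The paper's own proof makes the same identification. Its last display equates the $L^1$-norm for $w+w_{k'}$ with $2\#(K_Pk'K_0/K_0)\vol(K_0)$ times that of $g\mapsto f(g)\langle v(k'g),w(k')\rangle$, but this holds only as $\le$. The invariance claim before it is valid only for signed integrals. So you did not miss an idea that the paper supplies.

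The equivalence is safe only when no cancellation can occur, for example $P=P_0$ and $\sigma=\chi\in\Re\Lambda_{M_0}$. There $|\langle v(kg),w(k)\rangle|\le\|v\|_\infty\|w\|_\infty\,\chi\delta_{P_0}^{1/2}(\mathbf{a}(kg))$. Condition (1) for $v_0,w_0$ then already gives $f\Xi_\chi\in L^1(G(F))$, which is (2).
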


\begin{proof} One has
\begin{align*}
    \int_{G(F)} \left|f(g) \langle (\Ind_P^G\sigma)(g)v,w\rangle\right| dg &= \int_{G(F)} \left|f(g) \int_K \langle v(kg),w(k)\rangle dk\right| dg\\
    &\leq \int_{G(F)\times K} \left|f(k^{-1}g) \langle v(g),w(k)\rangle\right| dk dg.
\end{align*}
This shows the implication (2)$\,\Rightarrow$(1). Now assume (1). Choose a compact open subgroup $K_0\le K$ such that $f\in C^\infty(G(F)/\!/K_0)$ and $v,w$ are $K_0$-invariant. We claim for $k_0\in K_0$
\begin{align}\label{eq:easyKinvariance}
    \int_{G(F)} f(g) \int_K \langle v(kk_0g),w(k)\rangle dk dg &= \int_{G(F)} f(g)\int_K  \langle v(kg),w(k)\rangle dkdg.
\end{align}
To see this, by the Iwasawa decomposition 
the integral on the left is
\begin{align*}
    &\int_{P(F)\times K} f(k'p) \int_{K} \langle v(kk_0k'p),w(k)\rangle dkdk'd_r p.
\end{align*}
For a given $p$, the integral over $K\times K$ is absolutely convergent, so by the Fubini-Tonelli theorem it is
\begin{align*}
    &\int_{P(F)}\left(\int_{K\times K} f(k'p)  \langle v(kk_0k'p),w(k)\rangle dk dk\right)d_r p\\
    &=\int_{P(F)}\left(\int_{K\times K} f(k'p)  \langle v(kk'p),w(k)\rangle dk dk\right)d_r p
\end{align*}
by changing variables $k'\mapsto k_0^{-1}k'$. This is equal to the integral on the right-hand side of \eqref{eq:easyKinvariance}.

It follows that we can write
\begin{align*}
    &\frac{1}{\vol(K_0,dk)}\int_{G(F)} f(g)\int_K  \langle v(kg),w(k)\rangle dkdg\\
    &=   \int_{G(F)} f(g) \sum_{k\in K/K_0}  \langle v(kg),w(k)\rangle dg\\
    &=\int_{G(F)} f(g) \sum_{k'\in K_P\backslash  K/K_0} \sum_{k\in K_Pk'K_0/K_0} \langle v(kg),w(k)\rangle dg\\
    &=\int_{G(F)} f(g) \sum_{k'\in K_P\backslash K/K_0} \#(K_Pk'K_0/K_0) \langle v(k'g),w(k')\rangle dg.
\end{align*}
For $k'\in K_P\backslash  K/K_0$, define $w_{k'}:K\to \sigma^\vee$ by
\begin{align*}
    w_{k'}(k) = \left\{\begin{array}{ll}
        w(k) &\textrm{if } k\in K_Pk'K_0,  \\
        -w(k) &\textrm{if } k\not\in K_Pk'K_0. 
    \end{array}\right.
\end{align*}
One easily checks this is well-defined and $w_{k'}\in (\Ind_{K_P }^K\sigma^\vee\vert_{K_M })^{K_0}$. Then
\begin{align*}
     &\int_{G(F)} \left|f(g)\langle(\mathrm{Ind}_P^G \sigma)(g)v,w+w_{k'} \rangle\right| dg\\
     &=\int_{G(F)} |f(g)| \left|\int_K \langle v(kg),w(k)+w_{k'}(k)\rangle dk\right|dg \\
     &= 2\#(K_Pk'K_0/K_0)\mathrm{vol}(K_0,dk)\int_{G(F)}|f(g)  \langle v(k'g),w(k')\rangle| dg.
\end{align*}
Therefore, (1) implies (2) by changing variables $g\mapsto k^{\prime-1}g$ and varying $k'\in K_P\backslash  K/K_0$. 
\end{proof}

\begin{lemma} \label{lem:exchange}
Let $f\in C^\infty_u(G(F)).$ Suppose the equivalent conditions in \hyperref[(i):equivalent:key]{Lemma \ref{(i):equivalent:key}} hold for $(P,\sigma)$. Then for any $(v,w)\in \Ind_{K_P }^K\sigma\vert_{K_M }\times\Ind_{K_P }^K\sigma^\vee\vert_{K_M }$ one has
\begin{align*}
    \sup_{\chi\in\Im\Lambda_M}\int_{G(F)}|f(g)\langle (\Ind_P^G\sigma_{\chi})(g)v,w\rangle| dg<\infty.
\end{align*}
In particular, the equivalent conditions in \hyperref[(i):equivalent:key]{Lemma \ref{(i):equivalent:key}} hold for $(P,\sigma_\chi)$ for all $\chi\in \Im \Lambda_M.$
\end{lemma}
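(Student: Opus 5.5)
The plan is to bound the integrand uniformly in $\chi\in\Im\Lambda_M$ by a function that is independent of $\chi$ and integrable by condition (2) of \hyperref[(i):equivalent:key]{Lemma \ref{(i):equivalent:key}}. Write the matrix coefficient using the Iwasawa model:
\begin{align*}
    \langle (\Ind_P^G\sigma_\chi)(g)v,w\rangle = \int_K \langle v_\chi(kg),w(k)\rangle\, dk .
\end{align*}
The first step is to compute $v_\chi$ explicitly. Decompose $kg = p\,k'$ with $p\in P(F)$ and $k'\in K$. Then
\begin{align*}
    v_\chi(kg)=\delta_P^{1/2}(p)\,\chi(p)\,\sigma(p)v(k') = \chi(\mathbf{a}_P(kg))\, v_{1}(kg),
\end{align*}
where $v_1$ denotes the section for $\sigma$ itself. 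This uses that $\chi$ is trivial on $N(F)$ and on $K_M$, so $\chi(p)$ depends only on the coset of $\mathbf{a}_P(kg)$ modulo $M(F)^1$, and is therefore well defined. Since $\chi$ is unitary, $|\chi(\mathbf{a}_P(kg))|=1$. Hence
\begin{align*}
    |\langle v_\chi(kg),w(k)\rangle| = |\langle v(kg),w(k)\rangle|,
\end{align*}
where the right-hand side is the $\sigma$-section evaluated at $kg$.

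The second step is to integrate this bound. We get
\begin{align*}
    \int_{G(F)}|f(g)\langle (\Ind_P^G\sigma_{\chi})(g)v,w\rangle|\, dg \le \int_{G(F)}\int_K |f(g)\langle v(kg),w(k)\rangle|\, dk\, dg .
\end{align*}
After the change of variables $g\mapsto k^{-1}g$ (using unimodularity of $G(F)$ and Tonelli), the right-hand side equals the $L^1(G(F)\times K)$ norm of $(g,k)\mapsto f(k^{-1}g)\langle v(g),w(k)\rangle$. This norm is finite by hypothesis (2), and it does not depend on $\chi$. This gives the uniform supremum bound.

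For the "in particular" clause, the bound just proved yields condition (1) for $(P,\sigma_\chi)$ for every $(v,w)$. Then \hyperref[(i):equivalent:key]{Lemma \ref{(i):equivalent:key}}, applied to the smooth representation $\sigma_\chi$, gives condition (2) as well. Alternatively, one can observe directly that $|\langle v_\chi(g),w(k)\rangle|=|\langle v(g),w(k)\rangle|$, so condition (2) for $\sigma_\chi$ is literally the same integrability statement.

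The main point to handle with care is the first step, namely the well-definedness and unimodularity of the factor $\chi(\mathbf{a}_P(kg))$, since the Iwasawa decomposition is not unique. Ambiguity in $p$ lies in $K_P = K_M K_N$, on which both $\chi$ and $\delta_P$ are trivial, so the identity $v_\chi=\chi(\mathbf{a}_P(\cdot))\,v_1$ holds pointwise. Everything else is a direct application of Tonelli.
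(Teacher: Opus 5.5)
Your proposal is correct and is essentially the paper's proof. The paper likewise uses the pointwise identity $|\langle v_\chi(kg),w(k)\rangle| = |\langle v(kg),w(k)\rangle|$, which follows from $|\chi(\mathbf{a}_P(kg))|=1$. It then bounds the integral by the $\chi$-independent quantity $\int_{G(F)\times K}|f(g)\langle v(kg),w(k)\rangle|\,dk\,dg$, which is finite by condition (2) of Lemma~\ref{(i):equivalent:key} after the substitution $g\mapsto k^{-1}g$. Your extra remarks on the well-definedness of $\chi(\mathbf{a}_P(\cdot))$ and on the ``in particular'' clause are correct; they make explicit what the paper leaves implicit.
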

\begin{proof}
 One has
\begin{align*}
    \left|\langle v_{\chi}(kg),w(k)\rangle \right| &= \left|\chi(\textbf{a}_P(kg))\langle v(kg),w(k)\rangle \right|= \left|\langle v(kg),w(k)\rangle \right|.
\end{align*}
Therefore,
\begin{align*}
    \int_{G(F)} \left|f(g)\right|\left|\langle (\Ind_P^G\sigma_{\chi})(g)v,w\rangle\right| dg &\leq  \int_{G(F)\times K} |f(g)| \left|\langle v_{\chi}(kg),w(k)\rangle \right|dk dg\\
    &=\int_{G(F)\times K} |f(g)| \left|\langle v(kg),w(k)\rangle \right|dk dg.
\end{align*}
By \hyperref[(i):equivalent:key]{Lemma \ref{(i):equivalent:key}} the last integral is finite.
\end{proof}

Recall the notations in \S\ref{subsec:intertwining:recall} for the next lemma.
\begin{lemma}\label{intertwine:W}
Let $f\in C^\infty_{u}(G(F))$. Suppose the equivalent conditions in \hyperref[(i):equivalent:key]{Lemma \ref{(i):equivalent:key}} hold for $(P,\sigma)$. Then they hold for $(s.P,s.\sigma)$ for all $s\in W(G,A_0)$, in which case we have
\begin{align*}
     \int_{G(F)} f(g)\langle (\mathrm{Ind}_P^G\sigma )(g)v,w\rangle dg=\int_{G(F)} f(g)\langle (\Ind_{s.P}^Gs.\sigma)(g)v_s,w_{s}\rangle dg
\end{align*}
for any $(v,w)\in \Ind_{K_P }^K\sigma\vert_{K_M }\times\Ind_{K_P }^K\sigma^\vee\vert_{K_M }$.
\end{lemma}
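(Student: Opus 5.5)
The plan is to transport everything along the $G(F)$-equivariant isomorphism $\lambda(s):\Ind_P^G\sigma\to\Ind_{s.P}^G s.\sigma$, $v\mapsto v_s$, with $v_s(g)=v(s^{-1}g)$, choosing the representative of $s$ in $K$ as in \eqref{leftW}. We will show that $\lambda(s)$ also intertwines the natural pairings, namely
\begin{align*}
    \langle v_s, w_s\rangle_{\Ind_{s.P}^G s.\sigma} = \langle v,w\rangle_{\Ind_P^G\sigma}
\end{align*}
for $(v,w)\in \Ind_{K_P}^K\sigma\vert_{K_M}\times\Ind_{K_P}^K\sigma^\vee\vert_{K_M}$. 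Here the pairing on $s.\sigma\times s.(\sigma^\vee)$ is the same pairing $\langle\cdot,\cdot\rangle_\sigma$, since the underlying spaces coincide.

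The pairing identity is a direct computation using the $K$-integral formula:
\begin{align*}
    \int_K \langle v_s(k),w_s(k)\rangle_\sigma\, dk=\int_K\langle v(s^{-1}k),w(s^{-1}k)\rangle_\sigma\, dk=\int_K\langle v(k),w(k)\rangle_\sigma\, dk,
\end{align*}
using left invariance of $dk$ and $s\in K$. Combining it with $G(F)$-equivariance, $\lambda(s)\circ(\Ind_P^G\sigma)(g)=(\Ind_{s.P}^Gs.\sigma)(g)\circ\lambda(s)$, we get for every $g\in G(F)$
\begin{align*}
    \langle(\Ind_{s.P}^Gs.\sigma)(g)v_s,w_s\rangle=\langle\lambda(s)(\Ind_P^G\sigma)(g)v,\lambda(s)w\rangle=\langle(\Ind_P^G\sigma)(g)v,w\rangle.
\end{align*}
So the two matrix coefficients coincide pointwise as functions on $G(F)$.

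For the transfer of the conditions, observe that $\lambda(s)$ is a bijection from $\Ind_{K_P}^K\sigma\vert_{K_M}$ onto $\Ind_{K_{s.P}}^K s.\sigma\vert_{K_{s.M}}$, and likewise on the dual side. Hence every pair $(v',w')$ for $(s.P,s.\sigma)$ is of the form $(v_s,w_s)$. By the pointwise identity, $g\mapsto f(g)\langle(\Ind_{s.P}^Gs.\sigma)(g)v',w'\rangle$ equals $g\mapsto f(g)\langle(\Ind_P^G\sigma)(g)v,w\rangle$, which is integrable by condition (1) of \hyperref[(i):equivalent:key]{Lemma \ref{(i):equivalent:key}} for $(P,\sigma)$. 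Thus condition (1) holds for $(s.P,s.\sigma)$, and equality of the integrals is immediate.

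The only delicate point, and the main obstacle, is bookkeeping: we must confirm that $v_s$ restricted to $K$ indeed lies in $\Ind_{K_{s.P}}^K s.\sigma\vert_{K_{s.M}}$ and matches the fibre identification used to define the pairing. This holds because $s\in K$ and $v_s(pk)=v(s^{-1}ps\cdot s^{-1}k)$ for $p\in s.P(F)$, together with $\delta_{s.P}(p)=\delta_P(s^{-1}ps)$.
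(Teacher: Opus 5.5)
Your proof is correct and uses the same key computation as the paper: write the matrix coefficient as $\int_K\langle v(kg),w(k)\rangle\,dk$ and substitute $k\mapsto sk$ for a representative $s\in K$. The paper does this inside a formal double integral over $G(F)\times K$ and invokes Lemma~\ref{(i):equivalent:key} to justify it. You instead make the substitution pointwise in $g$, via invariance of the pairing and $G(F)$-equivariance of $\lambda(s)$, so the integrands agree identically and condition (1) transfers without any Fubini argument.
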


\begin{proof} Let $s\in W(G,A_0)$ and realize it as an element in $K$.
    Formally, we have
    \begin{align*}
        \int_{G(F)} f(g)\langle (\Ind_{s.P}^Gs.\sigma)(g)v_s,w_{s}\rangle dg&=\int_{G(F)\times K} f(g)\langle v(s^{-1}kg),w(s^{-1}k)\rangle dkdg\\
        &=\int_{G(F)\times K} f(g)\langle v(kg),w(k)\rangle dkdg\\
        &=\int_{G(F)} f(g)\langle (\mathrm{Ind}_P^G\sigma )(g)v,w\rangle dg.
    \end{align*}
     The rest follows from \hyperref[(i):equivalent:key]{Lemma \ref{(i):equivalent:key}}.
\end{proof}


\begin{lemma}\label{lem:zeta:holomorphic} Let $f\in C^\infty_u(G(F))$, $P=MN\in \mathcal{P}$, and let $\sigma$ be a smooth representation of $M(F).$ Suppose there is an open (connected) subset $C$ of $\Re\Lambda_M$ such that the equivalent conditions in \hyperref[(i):equivalent:key]{Lemma \ref{(i):equivalent:key}} hold for $(P,\sigma_\chi)$ for all $\chi\in C\times \Im\Lambda_M.$ Then for any $(v,w)\in \Ind_{K_P }^K\sigma\vert_{K_M }\times\Ind_{K_P }^K\sigma^\vee\vert_{K_M },$
\begin{align*}
            \int_{G(F)\times K}f(k^{-1}g)\langle v_{\chi}(g),w(k)\rangle dgdk
\end{align*}
is holomorphic as a function in $\chi$ on $C\times \Im\Lambda_M.$
\end{lemma}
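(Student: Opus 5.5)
The idea is to decompose the integral over $K_P\backslash K$ and over $\Omega_M$-cosets, so that it becomes a series of monomials in $\chi$ with locally uniform absolute convergence. Then holomorphy follows from standard facts about locally uniform limits of holomorphic functions, together with Morera/Hartogs in several variables.

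Fix $K_0\le K$ compact open such that $f$ is bi-$K_0$-invariant and $v,w$ are $K_0$-invariant. By the Iwasawa decomposition $G(F)=P(F)K$, write $g=mnk'$ and use $v_\chi(mnk')=\delta_P^{1/2}(m)\chi(m)\sigma(m)v(k')$. The integral becomes
\begin{align*}
\int_{K\times K}\int_{M(F)\times N(F)} f(k^{-1}mnk')\,\delta_P^{-1/2}(m)\chi(m)\langle \sigma(m)v(k'),w(k)\rangle\, dn\, dm\, dk'\, dk .
\end{align*}
Split $M(F)=\bigsqcup_{\beta\in\Omega_M} \beta M(F)^1$. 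Then $\chi(m)=q^{-\langle\chi,\beta\rangle}$ on each coset, so the integral is formally $\sum_{\beta\in\Omega_M} c_\beta\, q^{-\langle\chi,\beta\rangle}$, where
\begin{align*}
c_\beta=\int_{K\times K}\int_{\beta M(F)^1\times N(F)} f(k^{-1}mnk')\,\delta_P^{-1/2}(m)\langle \sigma(m)v(k'),w(k)\rangle\,dn\,dm\,dk'\,dk
\end{align*}
is independent of $\chi$.

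The key point is absolute convergence. The hypothesis, in the form of condition (2) of \hyperref[(i):equivalent:key]{Lemma \ref{(i):equivalent:key}} for $\sigma_\chi$, says
\begin{align*}
\sum_\beta |\chi_{\Re}(\beta)|\int |f(k^{-1}mnk')\,\delta_P^{-1/2}(m)\langle\sigma(m)v(k'),w(k)\rangle|<\infty
\end{align*}
for each $\chi$ in $C\times\Im\Lambda_M$. Indeed, $|\chi(m)|$ depends only on $\Re\chi$, so this is the same computation as in \hyperref[lem:exchange]{Lemma \ref{lem:exchange}}. Consequently $\sum_\beta |c_\beta|\, q^{-\langle\Re\chi,\beta\rangle}<\infty$ for every $\Re\chi\in C$.

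The Laurent-type series $\sum_\beta |c_\beta|\,q^{-\langle x,\beta\rangle}$ has a convex domain of convergence in $x\in\Re\Lambda_M\cong X^*(M)_\RR$, since each term is log-convex in $x$. So for any compact $D\subset C$, the convex hull of $D$ is covered: convergence holds at finitely many points $x_1,\dots,x_N\in C$ whose convex hull contains $D$ in its interior (using that $C$ is open). The bound $q^{-\langle x,\beta\rangle}\le \max_i q^{-\langle x_i,\beta\rangle}$ on the hull then gives uniform convergence on $D\times\Im\Lambda_M$. Each term $q^{-\langle\chi,\beta\rangle}$ is a character of $\Lambda_M$, hence holomorphic, so the locally uniform limit is holomorphic on $C\times\Im\Lambda_M$.

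The main obstacle is this covering step. It is not automatic that $D$ lies inside the convex hull of points of $C$ in the interior sense unless $C$ is open; openness is given, so I would take a small simplex around each point of $D$ inside $C$ and use compactness. A secondary care point is justifying the Fubini rearrangement into the $\beta$-sum, which is handled by the absolute integrability already established.
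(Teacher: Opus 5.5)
Your proof is correct, but it takes a different route from the paper's.

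\textbf{The paper's route.} The paper never expands the integral. It fixes a point, lifts it to coordinates $(z_1,\dots,z_n)$ of $X^*(M)_\CC$, and reduces joint holomorphy to continuity plus holomorphy in each $z_i$ separately (Osgood's lemma). Continuity in $z_i$ comes from dominated convergence, using the two-point majorant $|\langle v_{\chi_{z'}}(g),w(k)\rangle|+|\langle v_{\chi_{z''}}(g),w(k)\rangle|$ valid for $\Re z'\le\Re z\le\Re z''$. Holomorphy in $z_i$ then follows from Morera's theorem, after swapping the contour integral with the $G(F)\times K$ integral by Fubini--Tonelli.

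\textbf{Your route.} You use the Iwasawa decomposition and the cosets $\beta M(F)^1$ to write the integral as a Laurent series $\sum_{\beta\in\Omega_M}c_\beta\,\chi(\beta)$ in the monomials of the torus $\Lambda_M$. Locally uniform absolute convergence then follows from the log-linearity of $x\mapsto q^{-\langle x,\beta\rangle}$. This is the same convexity behind the paper's two-point majorant, but you apply it in all directions at once. So Weierstrass' theorem gives joint holomorphy directly, and you do not need Hartogs, Osgood or Morera at all; their mention in your first paragraph is superfluous.

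\textbf{Comparison.} Your approach yields a bit more than the statement. The domain where $\sum_\beta|c_\beta|q^{-\langle x,\beta\rangle}$ converges (indeed, where the original integral converges absolutely) is convex. Hence the function extends holomorphically to $\mathrm{conv}(C)\times\Im\Lambda_M$. Your expansion is also exactly the kind of $\Omega_M$-expansion the paper uses elsewhere, e.g.\ in Lemma~\ref{alpha:supp:1}. The paper's argument is more generic: it is a standard holomorphic-parameter argument that needs neither the coset decomposition nor the identification of $\chi\mapsto\chi(\beta)$ as characters of $\Lambda_M$.

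\textbf{Your covering step.} This step is fine even when $C$ is not convex. Take finitely many small simplices in $C$ covering the compact set $D$. The convex hull of all their vertices contains $D$ in its interior, and it lies in the convex convergence domain of the series. The series agrees with the integral on $D\subset C$.
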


\begin{proof} 
Fix $\chi\in C\times \Im\Lambda_M.$ Choose a point $(z_1,\ldots,z_n)\in \CC^n\cong X^\ast(M)_\CC$ that maps to $\chi$ under the map \eqref{XG->LambdaG:surj}. By the equivalent definitions of holomorphicity in \cite[\S 0.2]{Complexanalysis}, it suffices to show holomorphicity in each $z_i$. Fix an $i$, and let $U$ be a small open neighborhood of $z_i$ in $\CC$ such that the image of $(z_1,\ldots,z_{i-1},z,z_{i+1},\ldots, z_n)$ in $\Lambda_M,$ denoted as $\chi_z,$ lies in $C\times\Im \Lambda_M$ for all $z\in U$. We first show that
\begin{align*}
    h(z):=\int_{G(F)\times K}f(k^{-1}g)\langle v_{\chi_z}(g),w(k)\rangle dgdk
\end{align*}
is continuous as a function of $z$ in $U$. Let $U'\subseteq U$ be a compact subset. Choose $z',z''\in U'$ such that $\mathrm{Re}(z')\le \mathrm{Re}(z)\le \mathrm{Re}(z'')$ for all $z\in U'.$ Then for $z\in U',$ 
\begin{align*}
     |\langle v_{\chi_z}(g),w(k)\rangle|\le |\langle v_{\chi_{z'}}(g),w(k)\rangle|+|\langle v_{\chi_{z''}}(g),w(k)\rangle| 
\end{align*}
for all $(g,k)\in G(F)\times K.$ It follows that if $(w_j)_{j>0}$ is a sequence in $U$ that converges to $z\in U$, then by the Lebesgue's dominated convergence theorem
\begin{align*}
    \lim_{j\to \infty} h(w_j)= h(z).
\end{align*}

 By Morera's theorem, to prove the lemma it suffices to show
\begin{align*}
    \int_{\gamma} h(z)dz=0
\end{align*}
for any simple closed piecewise $C^1$-curve $\gamma$ in $U$. By the argument above, we have
\begin{align*}
    \int_{\gamma}\int_{G(F)\times K} |f(k^{-1}g)\langle v_{\chi_z}(g),w(k)\rangle| dgdkdz<\infty.
\end{align*}
Therefore, by the Fubini-Tonelli theorem 
\begin{align*}
    \int_{\gamma}h(z)dz=\int_{G(F)\times K}\int_\gamma f(k^{-1}g)\langle v_{\chi_z}(g),w(k)\rangle dz dg dk=0
\end{align*}
since the integrand is holomorphic. This completes the proof.

\end{proof}

\subsection{Absolutely convergent zeta integrals}\label{subsec:L1zeta}
Recall the definition of positive cones in \hyperref[def:positivecone]{Definition \ref{def:positivecone}}. Let $\mathcal{S}^{\infty}(G(F))\subseteq C^\infty_{u}(G(F))$ consist of functions $f$ such that
\begin{enumerate}[label=(S\text{\arabic*}), ref=S\text{\arabic*}]
    \item\label{S1} for every $(M,\sigma)\in\widetilde{\mathrm{Temp}}_\Ind(G)$ there exists a positive cone $C$ in $\Re\Lambda_M$  such that for all $(v,w)\in\Ind_{K_P }^K\sigma\vert_{K_M }\times \Ind_{K_P }^K\sigma^\vee\vert_{K_M }$, 
    \begin{align*}
        g\mapsto f(g)\langle(\Ind_P^G\sigma_\chi)(g)v,w\rangle \in L^1(G(F))
    \end{align*}
    for $\chi\in C$. 
\end{enumerate}
By \S \ref{ssec:prep}, if $f\in \mathcal{S}^\infty(G(F)),$ then for any $P=MN\in \mathcal{P},$  $\sigma\in \Pi_2(M),$ there is a positive cone $C$ in $\Re \Lambda_M$ such that for all $(v,w)\in\Ind_{K_P }^K\sigma\vert_{K_M }\times \Ind_{K_P }^K\sigma^\vee\vert_{K_M }$, the integral
\begin{align*}
    \int_{G(F)} f(g)\langle(\Ind_P^G\sigma_\chi)(g)v,w\rangle dg
\end{align*}
is absolutely convergent and defines a holomorphic function in $\chi$ on $C\times \Im \Lambda_M$.

\begin{lemma}\label{Sinftymodule}
    The space $\mathcal{S}^\infty(G(F))$ is a $G(F)\times G(F)$-module.  
\end{lemma}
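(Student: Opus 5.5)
We show that for $f\in\mathcal{S}^\infty(G(F))$ and $(g_1,g_2)\in G(F)\times G(F)$, the translate $R(g_1,g_2)f:x\mapsto f(g_1^{-1}xg_2)$ again satisfies \eqref{S1}, with the \emph{same} positive cone $C$ for each $(M,\sigma)$. Linearity of $\mathcal{S}^\infty(G(F))$ is not automatic: for $f_1,f_2\in\mathcal{S}^\infty(G(F))$ with cones $C_1,C_2$ we need $f_1+f_2$ to satisfy \eqref{S1} on a positive cone. So the first step is to show that $C_1\cap C_2$ contains a positive cone. Write $C_i=\chi_iC_i'$. By (C2), both $C_1$ and $C_2$ contain $|\nu|^s$ for $s$ large. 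Since each $C_i$ is open, a small open cone around the ray $\RR_{>0}\nu$, shifted by $|\nu|^{s}$ for $s$ large, lies in both $C_1$ and $C_2$. Concretely, we take $C'$ to be an open cone around $\nu$ and $\chi_0=|\nu|^{s_0}$, and check that $\chi_0C'\subseteq C_1\cap C_2$ after shrinking $C'$ and enlarging $s_0$. This is an elementary convexity argument in $\Re\Lambda_M\cong X^*(M)_\RR$, but I expect it to be the one step needing care. If $C_i'$ is not convex, we first replace it by an open convex subcone containing $\nu$ in its interior direction, which is possible because $|\nu|^s\in C_i$ for all large $s$ and $C_i$ is open.

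For the translation step, fix $(M,\sigma)$, $\chi\in C$, and $(v,w)$. Substituting $x\mapsto g_1xg_2^{-1}$ in the Haar integral (Haar measure is bi-invariant since $G(F)$ is unimodular) gives
\begin{align*}
\int_{G(F)}\bigl|f(g_1^{-1}xg_2)\langle(\Ind_P^G\sigma_\chi)(x)v,w\rangle\bigr|dx
=\int_{G(F)}\bigl|f(x)\langle(\Ind_P^G\sigma_\chi)(x)\,\pi(g_2^{-1})v,\pi^\vee(g_1^{-1})w\rangle\bigr|dx,
\end{align*}
where $\pi=\Ind_P^G\sigma_\chi$. The vectors $\pi(g_2^{-1})v$ and $\pi^\vee(g_1^{-1})w$ lie in $\Ind_P^G\sigma_\chi$ and its dual, which we identify with $\Ind_{K_P}^K\sigma|_{K_M}\times\Ind_{K_P}^K\sigma^\vee|_{K_M}$ by restriction to $K$. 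The hypothesis \eqref{S1} for $f$ holds for \emph{all} pairs in this space, so the right-hand side is finite. Hence $R(g_1,g_2)f$ satisfies \eqref{S1} with the same cone $C$.

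The translated vectors depend on $\chi$, since the identification with $\Ind_{K_P}^K\sigma|_{K_M}$ involves $\chi(\mathbf{a}_P(\cdot))$. This does not matter, because \eqref{S1} is a pointwise-in-$\chi$ condition quantified over all $(v,w)$. Finally, $R(g_1,g_2)f$ is again smooth and $K\times K$-finite, since $C_u^\infty(G(F))$ is stable under translations. This completes the plan.
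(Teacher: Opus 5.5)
Your translation step is correct. It is a more direct form of the paper's argument. The paper bounds $\chi(\mathbf{a}_P(gh))/\chi(\mathbf{a}_P(g))$ uniformly in $g$ and then invokes Lemma~\ref{(i):equivalent:key}. You instead observe that $(\Ind_P^G\sigma_\chi)(g_2^{-1})v_\chi$ and the contragredient translate of $w$ are again of the form $v''_\chi$ and $w''$, with $(v'',w'')$ in the compact picture, so \eqref{S1} applies at the same $\chi$. Either way, one cone $C$ serves all translates of $f$, and hence all finite linear combinations of translates of a single $f$. That is all the paper proves or later needs.

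Your linearity step, however, fails. From $|\nu|^s\in C_i$ for $s\gg0$ and openness of $C_i$ you only get balls of \emph{bounded} radius along the ray, not a conical neighbourhood of it, because $\nu$ may lie on the boundary of $C_i'$.

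For a counterexample, take $T=\mathbb{G}_m^2$ with $\nu(t_1,t_2)=t_1$; this satisfies \eqref{G} for $\rho$ with $T^\vee$-weights $(1,0),(1,1)$. Write $\chi\in\Re\Lambda_T$ as $(x,y)$, so $\chi(t)=|t_1|^x|t_2|^y$. Then $C_1=\{x>0,\ y>-1\}$ and $C_2=\{x>0,\ y<1\}$ are positive cones in the sense of Definition~\ref{def:positivecone}: they are translated open quadrants containing $|\nu|^s=(s,0)$ for all $s>0$. Yet $C_1\cap C_2$ is a half-strip and contains no translate of a nonempty open cone.

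This breaks additivity itself, not only your argument. Put $a=\mathrm{val}(t_1)$ and $b=\mathrm{val}(t_2)$, and take $f_1=q^{-b}\mathbf{1}_{\{a\geq0,\,b\geq0\}}$ and $f_2=q^{b}\mathbf{1}_{\{a\geq0,\,b\leq0\}}$. These have $C_{f_i}=C_i$, so both lie in $\mathcal{S}^\infty(T(F))$. But $f_1+f_2\geq0$ has $C_{f_1+f_2}=C_1\cap C_2$, so it does not lie in $\mathcal{S}^\infty(T(F))$.

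So, with the definition as written, you should drop the linearity claim and prove only stability under $R(g_1,g_2)$, as the paper does. Your intersection argument does become correct under the stronger requirement $\nu\in C'$, equivalently $\chi|\nu|^s\in C$ for every $\chi$ and all $s\gg0$. The paper tacitly uses exactly this property elsewhere, for example in the proof of Lemma~\ref{intertwining:posCone}. Under it, replace each $C_i'$ by the open cone generated by a small ball around $\nu$, intersect the two, and translate by $|\nu|^{s_0}$ with $s_0$ large.
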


\begin{proof}
    Let $(M,\sigma)\in\widetilde{\mathrm{Temp}}_\Ind(G).$ For $f\in \mathcal{S}^\infty(G(F)),$ there is a positive cone $C$ in $\Re\Lambda_M$ such that
    \begin{align*}
        g\mapsto f(g)\langle(\Ind_P^G\sigma_\chi)(g)v,w\rangle \in L^1(G(F))
    \end{align*}
    for all $\chi\in C$ and  $(v,w)\in \Ind_{K_P }^K\sigma\vert_{K_M }\times \Ind_{K_P }^K\sigma^\vee\vert_{K_M }$. 
    Let $h\in G(F).$ Then
    \begin{align*}
        \langle v_\chi(gh),w(k)\rangle=\chi(\mathbf{a}_P(gh))\langle v(gh),w(k)\rangle.
    \end{align*}
    Since $\textbf{a}_P(g)^{-1}\textbf{a}_P(gh)\in K_M\mathbf{a}_P(\mathbf{k}_P(g)h)$ and $Kh$ is compact, one has 
    \begin{align*}
        \sup_{g\in G(F)}\left|\frac{\chi(\mathbf{a}_P(gh))}{\chi(\mathbf{a}_P(g))}\right|<\infty.
    \end{align*}
     Now viewing $v'(\cdot):=v(\cdot h)$ as an element in $\Ind_{K_{P}}^K \sigma|_{K_M},$ we have
    \begin{align*}
        \int_{G(F)} \left|f(gh^{-1})\langle (\Ind_{P}^G \sigma_\chi)(g)v,w\rangle\right| dg&\le \int_{K\times G(F)} \left|f(k^{-1}g)\langle v_\chi(gh),w(k)\rangle\right| dgdk\\
        & \ll \int_{K\times G(F)} \left|f(k^{-1}g)\chi(\mathbf{a}(g))\langle v'(g),w(k)\rangle\right|dgdk\\
        &= \int_{K\times G(F)} \left|f(k^{-1}g)\langle v'_\chi(g),w(k)\rangle\right|dgdk.
    \end{align*}
By \hyperref[(i):equivalent:key]{Lemma \ref{(i):equivalent:key}} the integral is finite for $\chi\in C$. This shows $\mathcal{S}^\infty(G(F))$ is stable under right translation by $G(F)$. The proof is similar for left translation, by noting $\langle(\Ind_P^G\sigma_\chi)(h)v,w\rangle=\langle v,(\Ind_P^G(\sigma_\chi)^\lor)(h^{-1})w\rangle$.
\end{proof}

It follows from the proof above that the positive cone $C$ only depends on $\sigma$ and the $G(F)\times G(F)$-submodule generated by $f$. Recall the zonal spherical function $\Xi_\chi$ \eqref{defn:zonal:spherical}. For $f\in C^\infty_u(G(F)),$ let
\begin{align*}
    C_f:=\left\{\chi \in \Re\Lambda_{M_0}: f\Xi_\chi\in L^1(G(F))\right\}.
\end{align*}
It follows from 
\hyperref[(i):equivalent:key]{Lemma \ref{(i):equivalent:key}} and the identity
    \begin{align*}
       |\langle (v_0)_\chi(g),v_0(k)\rangle|=\chi\delta_{P_0}^{1/2}(\mathbf{a}(g))
    \end{align*}
that
\begin{align*}
        C_f&=\left\{\chi\in \Re\Lambda_{M_0}: (g,k)\mapsto f(k^{-1}g)\chi\delta_{P_0}^{1/2}(\textbf{a}(g)) \in L^1(G(F)\times K)\right\}.
    \end{align*}

\begin{lemma}\label{lem:rapiddecay}
    Let $f\in C^\infty_u(G(F)).$ Then for any $(M,\sigma)\in \widetilde{\mathrm{Temp}}_{\Ind,0}(G)$ and $(v,w)\in \Ind_{K_P}^K\sigma\vert_{K_M}\times \Ind_{K_P}^K\sigma^\vee\vert_{K_M}$, one has \begin{align*}
    g\mapsto f(g)\langle(\Ind_P^G\sigma_\chi)(g)v,w\rangle\in L^1(G(F))
    \end{align*} 
    for $\chi\in C_f\cap \Re\Lambda_M$.
\end{lemma}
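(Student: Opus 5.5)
By \hyperref[(i):equivalent:key]{Lemma \ref{(i):equivalent:key}}, it suffices to prove that
\begin{align*}
    (g,k)\mapsto f(k^{-1}g)\langle v_\chi(g),w(k)\rangle \in L^1(G(F)\times K)
\end{align*}
for $\chi\in C_f\cap\Re\Lambda_M$. The plan is to dominate this integrand by a constant multiple of $|f(k^{-1}g)|\,\chi\delta_{P_0}^{1/2}(\mathbf{a}(g))$, whose integrability over $G(F)\times K$ is exactly the description of $C_f$ recorded just before the lemma. Here $\chi\in\Re\Lambda_M$ is viewed in $\Re\Lambda_{M_0}$ via $\iota_{M\ge M_0}$, so $\chi(\mathbf{a}_P(g))=\chi(\mathbf{a}(g))$ up to a unit, and both $v$ and $w$ are fixed by some compact open $K_0\le K$.

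First I will write $g=\mathbf{a}_P(g)\mathbf{n}_P(g)\mathbf{k}_P(g)$, so that
\begin{align*}
    \langle v_\chi(g),w(k)\rangle=\chi\delta_P^{1/2}(\mathbf{a}_P(g))\,\langle\sigma(\mathbf{a}_P(g))v(\mathbf{k}_P(g)),w(k)\rangle .
\end{align*}
As $k$ and $\mathbf{k}_P(g)$ range over $K$, the vectors $v(\mathbf{k}_P(g))$ and $w(k)$ range over finite sets, being $K_0\cap M$-smooth functions modulo $K_P$. So the second factor is bounded by finitely many matrix coefficients $|\langle\sigma(m)v_i,w_j\rangle|$ of $\sigma$, evaluated at $m=\mathbf{a}_P(g)$.

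The key input is that $\sigma$ is supercuspidal and unitary, so its matrix coefficients are compactly supported modulo $Z_M$, and in fact modulo $A_M(F)$. Consequently each $\langle\sigma(m)v_i,w_j\rangle$ is bounded and vanishes unless $m\in A_M(F)\,\Omega_c$ for a fixed compact set $\Omega_c\subseteq M(F)$.

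For $m=a c$ with $a\in A_M(F)$ and $c\in\Omega_c$, I then need to compare $\delta_P^{1/2}(m)$ with $\delta_{P_0}^{1/2}(\mathbf{a}(g))$. Write $c=\mathbf{a}_{P_0\cap M}(c)\,n'\,k'$ via the Iwasawa decomposition in $M$. Since $\mathbf{a}(g)\in \mathbf{a}_P(g)\,\mathbf{a}_{P_0\cap M}(\ldots)K_{M_0}$, we get $\delta_{P_0}=\delta_P\cdot\delta_{P_0\cap M}$ on $M_0$. The factor $\delta_{P_0\cap M}$ of the $M_0$-part of $c$ is bounded above and below because $c$ lies in a compact set, and the corresponding $\chi$-values agree up to bounded factors. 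Putting these together gives
\begin{align*}
    |\langle v_\chi(g),w(k)\rangle|\ll \chi\delta_{P_0}^{1/2}(\mathbf{a}(g)),
\end{align*}
and the conclusion follows from $\chi\in C_f$.

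The main obstacle is this last comparison: tracking the Iwasawa components $\mathbf{a}(g)$ versus $\mathbf{a}_P(g)$ through the compactness-modulo-center of supercuspidal coefficients, and checking that the $A_M$-part contributes identically to both sides. Everything else is routine.
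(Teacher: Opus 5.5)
Your proposal is correct and follows essentially the same route as the paper's proof. You reduce via Lemma \ref{(i):equivalent:key} to the integral over $G(F)\times K$, use that supercuspidal matrix coefficients are bounded with compact support modulo the center, and use that $\delta_P$ and $\delta_{P_0}$ agree on the center of $M$ to dominate the integrand by $|f(k^{-1}g)|\,\chi\delta_{P_0}^{1/2}(\mathbf{a}(g))$. The only difference is cosmetic: the paper works directly with the $P_0$-Iwasawa decomposition, writing $v_\chi(g)=\chi\delta_P^{1/2}(\mathbf{a}(g))\,\sigma(\mathbf{a}(g)\widetilde{\mathbf{n}}(g))v(\mathbf{k}(g))$, whereas you decompose with respect to $P$ and then refine inside $M$ (the intended relation is $\mathbf{a}(g)\in\mathbf{a}_{P_0\cap M}(\mathbf{a}_P(g))K_{M_0}$). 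Your explicit check that $\delta_{P_0\cap M}$ is bounded on the compact part spells out a step the paper leaves implicit.
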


\begin{proof}
 For $\chi\in \Re\Lambda_M$
    \begin{align*}
        \int_{G\times K} |f|(k^{-1}g)|\langle v_\chi(g),w(k)\rangle|dkdg=\int_{G\times K} |f|(k^{-1}g)\chi\delta_{P}^{1/2}(\mathbf{a}(g))|\langle \sigma\big(\mathbf{a}(g)\widetilde{\mathbf{n}}(g)\big)v(\mathbf{k}(g)),w(k)\rangle |dkdg,
    \end{align*}
    where $\widetilde{\mathbf{n}}$ is the composite $G(F)\xrightarrow{\mathbf{n}} N_0(F)\to N_0(F)/N_P(F).$
    Since a matrix coefficient of $\sigma$ has compact support modulo center, and $\delta_P$ and $\delta_{P_0}$ agree on $Z_M(F)$, the integral above converges if and only if
    \begin{align*}
       \int_{G\times K} |f|(k^{-1}g)\chi\delta_{P_0}^{1/2}(\mathbf{a}(g))|\langle \sigma(\mathbf{a}(g)\widetilde{\mathbf{n}}(g))v(\mathbf{k}(g),w(k)\rangle |dkdg<\infty.
    \end{align*}
    This integral is bounded above by
    \begin{align*}
         \left(\int_{G\times K} |f|(k^{-1}g)\chi\delta_{P_0}^{1/2}(\mathbf{a}(g))dkdg\right) \sup_{(k,g)\in K\times G(F)}|\langle \sigma(\mathbf{a}(g)\widetilde{\mathbf{n}}(g))v(\mathbf{k}(g)),w(k)\rangle|,
    \end{align*}
    which is finite if $\chi\in C_f.$    
\end{proof}

\begin{proposition}\label{Cfpositive} Let $f\in C^\infty_u(G(F)).$ Then $f\in \mathcal{S}^\infty(G(F))$ if and only if $C_f$ contains a positive cone in $\Re\Lambda_{M_0}$. 
\end{proposition}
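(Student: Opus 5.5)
The plan is to prove the two implications separately. The forward implication is a specialization of \eqref{S1}. The reverse implication dominates matrix coefficients of all discrete series by $\chi\delta_{P_0}^{1/2}(\mathbf{a}(g))$-type majorants, in the spirit of \hyperref[lem:rapiddecay]{Lemma \ref{lem:rapiddecay}}.

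\emph{($\Rightarrow$)} Suppose $f\in\mathcal{S}^\infty(G(F))$. I apply \eqref{S1} to $(M_0,\mathbf{1})\in\widetilde{\Temp}_\Ind(G)$; the trivial character lies in $\Pi_2(M_0)$ because $M_0$ is anisotropic modulo $A_0$. This gives a positive cone $C\subseteq\Re\Lambda_{M_0}$ such that $g\mapsto f(g)\langle(\Ind_{P_0}^G\chi)(g)v_0,v_0\rangle$ is integrable for $\chi\in C$. This integrand is exactly $f\,\Xi_\chi$ by \eqref{defn:zonal:spherical}, so $C\subseteq C_f$.

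\emph{($\Leftarrow$)} Suppose $C=\chi_0C'\subseteq C_f$ is a positive cone. Fix $(M,\sigma)$ with $\sigma\in\Pi_2(M)$. By Jacquet's subrepresentation theorem there are $(M',\sigma')\in\widetilde{\Temp}_{\Ind,0}(M)$ and $\lambda\in\Re\Lambda_{M'}$ (the imaginary part is absorbed into $\sigma'$) with $\sigma\hookrightarrow\Ind_{P'\cap M}^M\sigma'_\lambda$. By transitivity of induction, every matrix coefficient of $\Ind_P^G\sigma_\chi$ is a matrix coefficient of $\Ind_{P'}^G\sigma'_{\lambda\chi}$, using a lift $\tilde w$ of $w$. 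The bound in the proof of \hyperref[(i):equivalent:key]{Lemma \ref{(i):equivalent:key}} and \hyperref[lem:rapiddecay]{Lemma \ref{lem:rapiddecay}} then reduces integrability to finiteness of
\begin{align*}
\int_{G(F)\times K}|f|(k^{-1}g)\,\lambda\chi\delta_{P_0}^{1/2}(\mathbf{a}(g))\,dk\,dg .
\end{align*}
Here the supercuspidal matrix coefficients of $\sigma'$ are bounded because they are compactly supported modulo the center, and $\delta_{P'}$ agrees with $\delta_{P_0}$ on $Z_{M'}(F)$. This integral is finite whenever $\lambda\chi\in C_f$. Since only $|\chi|$ enters, the imaginary part of $\chi$ is irrelevant, as in \hyperref[lem:exchange]{Lemma \ref{lem:exchange}}. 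It therefore suffices to show that $\{\chi\in\Re\Lambda_M:\lambda\chi\in C\}$ contains a positive cone of $\Re\Lambda_M$; here $\Re\Lambda_M\hookrightarrow\Re\Lambda_{M'}\hookrightarrow\Re\Lambda_{M_0}$ via $\iota$.

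This geometric step is the main obstacle. I would argue as follows. By (C2), $|\nu|^{s}\in\chi_0C'$ for all $s>s_0$. Since $C'$ is an open cone, $\chi_0C'$ contains a ball $B(|\nu|^{s_1},\varepsilon)$ for some $s_1>s_0$. Scaling $C'$ (in additive notation $X^*(M_0)_\RR$) shows that $\chi_0C'$ contains $|\nu|^{t s_1}\cdot(\text{ball of radius }t\varepsilon)$ up to the fixed shift $\chi_0^{1-t}$. Hence for $t\gg0$ it contains $\lambda\cdot B(|\nu|^{ts_1},t\varepsilon/2)$, since $\lambda$ and $\chi_0$ are fixed. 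The union over large $t$ of $B(|\nu|^{ts_1},t\varepsilon/2)\cap\Re\Lambda_M$ contains a set of the form $\chi_1\cdot(\text{open cone around }\nu\text{ in }X^*(M)_\RR)$, which is a positive cone in $\Re\Lambda_M$; (C2) holds because $\nu\in X^*(G)\subseteq X^*(M)$. On this cone \eqref{S1} holds for $(M,\sigma)$, and hence $f\in\mathcal{S}^\infty(G(F))$.
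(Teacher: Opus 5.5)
Your forward implication is the paper's, and your converse follows the paper's route: the Jacquet embedding $\sigma\hookrightarrow\Ind_{P'\cap M}^M\sigma'_\lambda$, then Lemma~\ref{lem:rapiddecay}. The paper settles the remaining step with the one-line assertion that $\lambda|\nu|^r\in C_f$ for $r$ large.

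\textbf{The gap is in your justification of that step.} In additive notation on $X^*(M_0)_\RR$, from $B(s_1\nu,\varepsilon)\subseteq\chi_0+C'$, scaling only yields $\chi_0+C'\supseteq B\bigl(ts_1\nu-(t-1)\chi_0,\,t\varepsilon\bigr)$. The shift $\chi_0^{1-t}$ is not fixed: it grows linearly in $t$, at the same rate as the radius. So $\lambda+B(ts_1\nu,t\varepsilon/2)\subseteq C$ would require $|\lambda+(t-1)\chi_0|\le t\varepsilon/2$ for large $t$, i.e.\ essentially $|\chi_0|\le\varepsilon/2$. Nothing forces this, because (C2) allows $\nu\in\partial C'$, and then $\varepsilon$ stays bounded however large $s_1$ is.

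In fact the statement you reduce to is false. Take $G=\GL_2$, $\nu=\det$, $\alpha$ the positive root. Then $C=\tfrac12\alpha+\{a\nu+b\alpha: a>0,\ b<0\}=\{a\nu+b\alpha: a>0,\ b<\tfrac12\}$ is a positive cone. But $\sigma=\mathrm{St}\hookrightarrow\Ind_B^G\delta_B^{1/2}$ forces $\lambda=\tfrac12\alpha$, and $\{\chi\in\Re\Lambda_G:\lambda\chi\in C\}=\emptyset$.

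So you must use more about $C_f$ than that it contains one positive cone. Two properties are available:
\begin{itemize}
\item $C_f$ is convex, since $\Xi_{\chi_1^t\chi_2^{1-t}}\le t\,\Xi_{\chi_1}+(1-t)\,\Xi_{\chi_2}$ pointwise (AM--GM inside $\int_K$).
\item $C_f$ is $W(G,A_0)$-stable, since $\Xi_{w.\chi}=\Xi_\chi$.
\end{itemize}
In the $\GL_2$ example these give $C_f\supseteq\mathrm{conv}(C\cup w.C)\ni\lambda|\nu|^r$.

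When $A_G$ has rank at least $2$, however, even $C_f$ need not contain any $\lambda|\nu|^r$, so the paper's one-line assertion is not valid in general either. Take $\GL_2\times\GG_m$ with $\nu=\det$, and let $f$ be the bi-$K$-invariant function equal to $q^{-(1/2+\epsilon)(m_1-m_2)}$ on $K\bigl(\mathrm{diag}(\varpi^{m_1},\varpi^{m_2}),\varpi^{m_2-m_1}\bigr)K$ for $m_1\ge m_2$, $m_1+m_2\ge0$, and $0$ elsewhere. Then $C_f=\{\chi_u\nu+\chi_v\alpha+\chi_3e_3:\chi_u>0,\ |\chi_v|+\chi_3<\epsilon\}$, where $e_3$ is the identity character of $\GG_m$. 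This is a positive cone, yet it misses $\tfrac12\alpha+r\nu$ for every $r$ when $\epsilon<\tfrac12$. Meanwhile $g\mapsto f(g)|\nu(g)|^s\langle\mathrm{St}(g)v,w\rangle$ is integrable for $s>0$, so the proposition itself survives.

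\textbf{The robust fix is to avoid the shift by $\lambda$} that Lemma~\ref{lem:rapiddecay} introduces.
\begin{itemize}
\item Bound the matrix coefficients of $\sigma\in\Pi_2(M)$ by $\Xi_M$, the Harish-Chandra function of $M(F)$; this holds for discrete series with unitary central character.
\item By transitivity of induction, this gives $|\langle(\Ind_P^G\sigma_\chi)(g)v,w\rangle|\ll\Xi_\chi(g)$ for $\chi\in\Re\Lambda_M$, with no shift.
\item Average $C$ over $W(M,A_0)$ and use convexity to get $p_{M_0\le M}(C)\subseteq C_f\cap\Re\Lambda_M$.
\item Finally, $p_{M_0\le M}(C)=p_{M_0\le M}(\chi_0)\,p_{M_0\le M}(C')$ is a positive cone in $\Re\Lambda_M$, because a linear projection of an open cone is an open cone and $p_{M_0\le M}$ fixes $\nu$.
\end{itemize}
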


\begin{proof}
    If $f\in \mathcal{S}^\infty(G(F))$, by \eqref{S1} $C_f$ contains a positive cone in $\Re\Lambda_{M_0}$. For the converse, suppose $C_f$ contains a positive cone in $\Re\Lambda_{M_0}$. Let $P=MN\in \mathcal{P}^{\mathrm{std}}$ and $\sigma\in \Pi_2(M).$ There exist a standard parabolic subgroup $P'\le P,$ $\sigma'\in \Pi_0(M')$ and $\lambda\in \Lambda_{M'}$ such that $\sigma$ is a subrepresentation of $\Ind_{P'\cap M}^{M}\sigma'_\lambda$, and hence $\mathrm{Ind}_{P}^G\sigma_\chi$ is a subrepresentation of $\Ind_{P'}^G\sigma'_{\lambda\chi}.$ Since $C_f$ contains a positive cone, $\lambda|\nu|^r\in C_f$ for $r\in \RR$ large. By \hyperref[lem:rapiddecay]{Lemma \ref{lem:rapiddecay}}, we have for all $(v',w')\in\Ind_{K_{P'} }^K\sigma'\vert_{K_{M'} }\times\Ind_{K_{P'} }^K\sigma'^\vee\vert_{K_{M'}}$ that
    \begin{align*}
    g\mapsto f(g)\langle(\Ind_{P'}^G\sigma'_{\lambda\chi})(g)v',w'\rangle\in L^1(G(F))
    \end{align*} 
    for $\lambda\chi\in C_f\cap \Re\Lambda_{M'}$. In particular, for all $(v,w)\in\Ind_{K_P }^K\sigma\vert_{K_M }\times\Ind_{K_P }^K\sigma^\vee\vert_{K_M}$ we have
    \begin{align*}
    g\mapsto f(g)\langle(\Ind_{P}^G\sigma_{\chi})(g)v,w\rangle\in L^1(G(F))
    \end{align*} 
    for $\chi$ in a positive cone in $\Re \Lambda_M$.
\end{proof}

Recall for a compact open subgroup $K_0\leq K,$
\begin{align*}
    \Theta^{K_0} = \{(M,\sigma)\in\widetilde{\Temp}_\Ind(G): (\Ind_P^G\sigma)^{K_0}\neq 0\}
\end{align*}
consists of finitely many connected components. This fact together with \hyperref[Cfpositive]{Proposition \ref{Cfpositive}} implies the following.

\begin{corollary}\label{cor:Cfpositive}
    Let $f\in \mathcal{S}^\infty(G(F))$. For any compact open subgroup $K_0\leq K,$ there exists a positive cone $C$ in $\Re\Lambda_{M_0}$ such that for every $M\in\mathcal{M}^{\mathrm{std}}$, the intersection $C\cap \Re\Lambda_M$ is a positive cone, and for $(M,\sigma)\in \Theta^{K_0}$ and $(v,w)\in\Ind_{K_P }^K\sigma\vert_{K_M }\times\Ind_{K_P }^K\sigma^\vee\vert_{K_M }$,
\begin{align*}
        g\mapsto f(g)\langle(\Ind_P^G\sigma_\chi)(g)v,w\rangle \in L^1(G(F))
    \end{align*}
for $\chi\in C\cap \Re\Lambda_{M}$. \qed
\end{corollary}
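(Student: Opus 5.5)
The corollary follows from Proposition~\ref{Cfpositive} together with the finiteness of the connected components of $\Theta^{K_0}$. The aim is a single cone in $\Re\Lambda_{M_0}$ that works uniformly for all the relevant $(M,\sigma)$. The essential point is that convergence for $\Ind_P^G\sigma_\chi$ is controlled by the one scalar condition $\lambda\chi\in C_f$, where $\lambda$ is the exponent of a cuspidal support of $\sigma$.

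First, since $f\in\mathcal{S}^\infty(G(F))$, Proposition~\ref{Cfpositive} gives a positive cone $C_0=\chi_0C'\subseteq C_f$ in $\Re\Lambda_{M_0}$. Next note that $C_f$ is convex, because $\chi\mapsto\chi\delta_{P_0}^{1/2}(\mathbf{a}(g))$ is log-linear in $\chi$ and H\"older's inequality applies.

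For each $(M,\sigma)\in\Theta^{K_0}$, the proof of Proposition~\ref{Cfpositive} supplies a standard $P'=M'N'\le P$, a $\sigma'\in\Pi_0(M')$ and a $\lambda\in\Re\Lambda_{M'}$ with $\Ind_P^G\sigma_\chi\hookrightarrow\Ind_{P'}^G\sigma'_{\lambda\chi}$. Here only $\Re\lambda$ matters, since $\Im\Lambda$-twists preserve $\Pi_0(M')$. Matrix coefficients of $\Ind_P^G\sigma_\chi$ are then matrix coefficients of the larger induced representation, so by Lemma~\ref{lem:rapiddecay} integrability holds whenever $\lambda\chi\in C_f$. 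Since $\Theta^{K_0}$ has finitely many components and $\Re(\lambda)$ is constant along $\Im\Lambda_M$-orbits, only finitely many exponents $\lambda_1,\dots,\lambda_m$ occur, each attached to some $M_i$.

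Now I would set
\begin{align*}
C:=\bigcap_{i}\left(\lambda_i^{-1}C_0\right)\cap C_0,
\end{align*}
enlarged if necessary by shrinking $C'$ to a smaller open subcone and translating $\chi_0$ by $|\nu|^{s}$ with $s$ large. Each $\lambda_i^{-1}C_0=(\lambda_i^{-1}\chi_0)C'$ is a translate of the same open cone. A finite intersection of translates of one open cone $C'$ contains a translate $\chi_1C'$. Property (C2) holds because $|\nu|^s$ lies in the interior direction: $\nu$ is positive on all the relevant cones, so $|\nu|^s$ eventually enters every translate. Hence $C$ is a positive cone. For $\chi\in C\cap\Re\Lambda_M$ we get $\lambda\chi\in C_0\subseteq C_f$ for the exponent $\lambda$ attached to $M$, which gives the integrability. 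By Lemma~\ref{lem:exchange}, imaginary twists are then harmless.

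The main obstacle is checking that $C\cap\Re\Lambda_M$ is itself a positive cone, i.e. that the open cone meets the subspace $\Re\Lambda_M$ in a nonempty open subset of it. I would handle this by choosing $C'$ first: take $C'$ to contain a neighbourhood of the ray $\RR_{>0}|\nu|$, which is possible because $C_0$ contains $|\nu|^s$ for large $s$ and is open. Since $|\nu|\in\Re\Lambda_G\subseteq\Re\Lambda_M$ for every $M$, the intersection $\chi_1C'\cap\Re\Lambda_M$ contains $|\nu|^s$ for large $s$ together with a relatively open neighbourhood of it. More precisely, after further translating by $|\nu|^{s}$ we may assume $\chi_1\in\Re\Lambda_G$-direction dominated, and then the intersection is a translate of the open cone $C'\cap\Re\Lambda_M$ in $\Re\Lambda_M$. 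This gives (C1) and (C2) for each of the finitely many standard $M$.
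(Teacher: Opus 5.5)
Your outline is the paper's own (Proposition~\ref{Cfpositive} together with finiteness of $\Theta^{K_0}$), but the step that carries all the weight is false in general.

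Condition (C2) for $C_0=\chi_0C'$ only forces $\nu\in\overline{C'}$, not $\nu\in C'$. So neither ``$|\nu|^s$ eventually enters every translate $\lambda_i^{-1}C_0$'' (i.e.\ $\lambda_i|\nu|^s\in C_0$ for $s\gg0$) nor ``choose $C'$ containing a neighbourhood of the ray $\RR_{>0}\nu$'' follows, and both fail for honest $C_f$.

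Here is an example. Take $G=\SL_2\times\GG_m^2$ with $\nu$ the first $\GG_m$-coordinate. Write $\chi=(a,x,y)\in\Re\Lambda_{M_0}$ with $a$ the coefficient of $\eta_G$, and put $K_1=\SL_2(\mathcal{O}_F)$, $m_n=\mathrm{diag}(\varpi^n,\varpi^{-n})$ and
\begin{align*}
f=\sum_{n\geq 0}\mathbf{1}_{K_1m_nK_1}\otimes\left(q^{-2n}\,\mathbf{1}_{\mathcal{O}_F^\times}\otimes\mathbf{1}_{\varpi^n\mathcal{O}_F^\times}+q^{-n}\,\mathbf{1}_{\varpi^n\mathcal{O}_F^\times}\otimes\mathbf{1}_{\mathcal{O}_F^\times}\right).
\end{align*}
Use $\vol(K_1m_nK_1)\asymp q^{2n}$ and $\Xi_{(a,0,0)}(m_n)\asymp q^{n(|a|-1)}$ (times $n$ at $a=0$). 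One finds $C_f=\{|a|<1+y,\ |a|<x\}$, which is a positive cone. Moreover $f\in\mathcal{S}^\infty(G(F))$, because Steinberg matrix coefficients are $O(q^{-2n})$ on $m_n$.

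But $\mathrm{St}\hookrightarrow\Ind_B^{\SL_2}\delta_B^{1/2}$ has $\lambda=\pm\eta_G$, and $\lambda|\nu|^s=(\pm1,s,0)\notin C_f$ for every $s$. The set $\{\chi\in\Re\Lambda_G:\lambda\chi\in C_f\}=\{x>1,\ y>0\}$ contains no $|\nu|^s$. So no choice of $C_0\subseteq C_f$ makes your $C$, or $C\cap\Re\Lambda_G$, satisfy (C2), although the corollary does hold here with $C=C_f$. The proof of Proposition~\ref{Cfpositive} asserts the same thing (``$\lambda|\nu|^r\in C_f$ for $r$ large''), so the paper's one-line deduction shares this weak point.

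The missing idea is to avoid the cuspidal shift altogether. For $\sigma\in\Pi_2(M)$, use Harish-Chandra's bound $|\langle\sigma(m)v',w'\rangle|\ll\Xi_M(m)$, where $\Xi_M$ is the Harish-Chandra function of $M(F)$, applied to the finitely many values of $v,w$ on $K$. Combine it with transitivity of induction:
\begin{align*}
\int_K\chi\delta_P^{1/2}(\mathbf{a}_P(kg))\,\Xi_M(\mathbf{a}_P(kg))\,dk=\Xi_\chi(g)\qquad\text{for }\chi\in\Re\Lambda_M.
\end{align*}
This gives $|\langle(\Ind_P^G\sigma_\chi)(g)v,w\rangle|\ll\Xi_\chi(g)$, hence integrability on all of $C_f\cap\Re\Lambda_M$ with no $\lambda$ at all.

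Next, use two properties of $C_f$. It is convex (your H\"older remark, so you may take $C'$ convex), and it is $W(G,A_0)$-invariant, since $\Xi_{s.\chi}=\Xi_\chi$. Work additively in $X^*(M_0)_\RR$ and suppose $\chi_0+C'\subseteq C_f$ is positive. Put $\bar\chi_0=\tfrac{1}{\#W(G,A_0)}\sum_s s.\chi_0$, which lies in $X^*(G)_\RR$, and let $D=\sum_s s.C'$ (Minkowski sum). Then:
\begin{itemize}
\item $\bar\chi_0+D\subseteq C_f$, by convexity;
\item $r\nu-\bar\chi_0\in D$ for $r$ large, by averaging $r\nu-\chi_0\in C'$ over $W(G,A_0)$;
\item since $\bar\chi_0\in X^*(M)_\RR$, the intersection with $X^*(M)_\RR$ is $\bar\chi_0+(D\cap X^*(M)_\RR)$, a positive cone for every $M\in\mathcal{M}^{\mathrm{std}}$.
\end{itemize}
This proves the corollary, without even using finiteness of $\Theta^{K_0}$.
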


\begin{lemma}\label{lemma:constant} Let $f\in \mathcal{S}^\infty(G(F))$ and $P=MN\in \mathcal{P}$.
For each $m\in M(F)$, one has
\begin{align*}
    n\mapsto f(mn) \in L^1(N(F)).
\end{align*}    
Furthermore, the map $(-)^P$ induces a morphism
\begin{align*}
    (-)^{(P)}:\mathcal{S}^\infty(G(F))\longrightarrow\mathcal{S}^\infty(M(F)).
\end{align*}
\end{lemma}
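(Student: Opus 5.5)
We prove both assertions by reducing everything to the spherical test function $\Xi_\chi$ and the description of $\mathcal{S}^\infty$ in \hyperref[Cfpositive]{Proposition \ref{Cfpositive}}.

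\emph{Step 1: integrability along $N(F)$.} By \hyperref[Cfpositive]{Proposition \ref{Cfpositive}}, $C_f$ contains a positive cone, so we may fix $\chi\in\Re\Lambda_{M_0}$ with
\begin{align*}
(g,k)\mapsto f(k^{-1}g)\chi\delta_{P_0}^{1/2}(\mathbf{a}(g))\in L^1(G(F)\times K).
\end{align*}
Write $g=m'nk'$ with $m'\in M(F)$, $n\in N(F)$, $k'\in K$, so that $dg=\delta_P^{-1}(m')\,dm'\,dn\,dk'$. Since $\mathbf{a}(m'n)$ depends only on $m'$ up to $K_{M_0}$, Fubini--Tonelli shows that
\begin{align*}
n\mapsto |f|(k^{-1}m'nk')
\end{align*}
is integrable for almost every $(k,m',k')$.

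To pass from almost every $m$ to every $m$, we use that $f$ is bi-$K_0$-invariant. Then $m\mapsto\int_N|f(mn)|\,dn$ is invariant under a compact open subgroup of $M(F)$: $K_{0}\cap M$ normalizes $N_{K_0}$, and we conjugate $n$ by it with modulus $1$. Hence this function is locally constant where finite, and finiteness almost everywhere gives finiteness everywhere. This proves the first claim.

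\emph{Step 2: $f^{(P)}$ lies in $\mathcal{S}^\infty(M(F))$.} Smoothness and $K_M\times K_M$-finiteness of $f^{(P)}$ are clear. By \hyperref[Cfpositive]{Proposition \ref{Cfpositive}} applied to $M$, it suffices to show that $C^M_{f^{(P)}}$ contains a positive cone in $\Re\Lambda_{M_0}$. The key identity is the standard transitivity of induction for spherical functions. For $\chi\in\Re\Lambda_{M_0}$, using
\begin{align*}
\Ind_{P_0}^G\chi=\Ind_P^G\Ind_{P_0\cap M}^M\chi
\end{align*}
and the formula $\int_{G(F)}h(g)\,dg=\int_{K}\int_{M(F)}\int_{N(F)}h(mnk)\delta_P^{-1}(m)\,dn\,dm\,dk$, we obtain
\begin{align*}
\int_{M(F)}|f|^{(P)}(m)\,\Xi^M_{\chi}(m)\,dm=\int_{G(F)}\Big(\int_K|f|(kg)\,dk\Big)\Xi^G_\chi(g)\,dg,
\end{align*}
where $\Xi_\chi^M$ denotes the zonal spherical function of $\Ind_{P_0\cap M}^M\chi$ with the $\delta_P^{1/2}$ normalization absorbed. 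The right side is finite for $\chi\in C_f$, because bi-$K$-averaging does not change integrability against the bi-$K$-invariant function $\Xi_\chi$. This is the content of \hyperref[(i):equivalent:key]{Lemma \ref{(i):equivalent:key}} for $v_0$. Since $|f^{(P)}|\le|f|^{(P)}$, we conclude $C_f\subseteq C^M_{f^{(P)}}$, and the latter therefore contains a positive cone (note $\nu_M=\nu|_M$, so the condition (C2) is preserved).

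\emph{Main obstacle.} The delicate point is justifying the integral identity above with absolute values, that is, checking the modular characters and that $\Xi^G_\chi(mn)$ restricted appropriately gives $\delta_P^{1/2}\Xi^M_\chi$. To handle it, we would evaluate both sides using $v_0$ and the Iwasawa decomposition $G=P_0K$ with $P_0=(P_0\cap M)N$, reducing both sides to
\begin{align*}
\int |f|(k^{-1}g)\,\chi\delta_{P_0}^{1/2}(\mathbf{a}(g))
\end{align*}
via Tonelli, which needs no convergence assumption. A smaller point is the "every $m$" upgrade in Step 1, which is handled by the local constancy argument.
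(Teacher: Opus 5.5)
Your Step 1 is correct and is essentially the paper's argument. The paper weights by $|\nu|^r$ instead of $\chi\delta_{P_0}^{1/2}(\mathbf{a}(g))$, then uses Iwasawa, Tonelli and the local constancy coming from $f\in C^\infty_u(G(F))$. One small correction: for $g=mnk$ one has $dg=dm\,dn\,dk$; the factor $\delta_P^{-1}(m)$ belongs to the ordering $g=nmk$.

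Step 2, however, rests on an identity that is false. By left $K$-invariance of $\Xi^G_\chi$, its right-hand side is just $\int_{G(F)}|f|\,\Xi^G_\chi\,dg$, which sees $f$ on all of $G(F)$. The left-hand side $\int_{M(F)}|f|^{(P)}\Xi^M_\chi\,dm$ depends only on $f|_{P(F)}$. If $f$ is the characteristic function of a small compact open set disjoint from $P(F)$, the left side vanishes and the right side is positive. Correspondingly, the unwinding you sketch does not close. The right side becomes $\int_{G(F)\times K}|f|(k^{-1}g)\chi\delta_{P_0}^{1/2}(\mathbf{a}(g))$, whereas the left side becomes the same integrand integrated only over $P(F)\times K_M$ (with $d_\ell p\,dk$). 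Your remark that bi-$K$-averaging preserves integrability against $\Xi_\chi$ is true. But the constant term does not commute with the bi-$K$-average, so the remark does not apply to $|f|^{(P)}$.

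What is missing is an inequality coming from bi-$K_0$-invariance, which you use in Step 1 but not in Step 2. The paper argues as follows. For $\chi\in C_f$, the function $k\mapsto\int_{G(F)}|f|(k^{-1}g)\chi\delta_{P_0}^{1/2}(\mathbf{a}(g))\,dg$ is right $K_0$-invariant with finite integral over $K$, hence finite everywhere, so its integral over $K_M$ is finite. Writing $g=mnk'$ turns that integral into $\int_K\int_{M(F)\times K_M}(R(1,k')|f|)^{(P)}(k^{-1}m)\,\chi\delta_{P_0\cap M}^{1/2}(\mathbf{a}(m))\,dm\,dk\,dk'$. Right $K_0$-invariance in $k'$ then isolates the term $k'=1$.

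Alternatively, you can keep your own idea. Apply the identity to $F(g)=\int_{K\times K}|f|(k_1gk_2)\,dk_1\,dk_2$, for which it is correct (bi-$K$-invariance plus Tonelli and Iwasawa). Combine it with the pointwise bound $F\ge \mathrm{vol}(K_0)^2|f|$. This gives $\int_{M(F)}|f^{(P)}|\,\Xi^M_\chi\le \mathrm{vol}(K_0)^{-2}\int_{G(F)}|f|\,\Xi^G_\chi$.

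Either way you get $C_f\subseteq C^M_{f^{(P)}}$, and you can conclude with Proposition \ref{Cfpositive} as planned.
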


\begin{proof} 
  By \hyperref[lem:exchange]{Lemma \ref{lem:exchange}} we may assume $P$ is standard. Since $|\nu|^r$ is a subrepresentation of $\mathrm{Ind}_{P_0}^G |\nu|^r\lambda$ for some $\lambda\in \Lambda_{M_0}$ for all $r\in \RR,$ by \eqref{S1} for $r$ large
  \begin{align*}
      \int_{G(F)} |f(g)||\nu|^r(g) dg <\infty.
  \end{align*}
  By the Iwasawa decomposition and the Fubini-Tonelli theorem, for $(m,k)\in M(F)\times K$ outside a measure zero set $U,$
  \begin{align*}
      n\mapsto f(mnk)\in L^1(N(F)).
  \end{align*}
  Since $f\in C^\infty_u(G(F)),$ if $U$ is nonempty, then it must have positive measure. This proves the first assertion.

  For $\chi\in C_f,$ the integral 
    \begin{align*}
        \int_{G(F)\times K_M} f(k^{-1}g)\chi\delta_{P_0}^{1/2}(\mathbf{a}(g))dgdk
    \end{align*}
    converges absolutely. By the Iwasawa decomposition, the integral equals
    \begin{align*}
        &\int_{M(F)\times K_M}\chi\delta_{P_0}^{1/2}(\mathbf{a}(m)) \int_K\int_{N(F)} f(k^{-1}mnk')dndk'dmdk\\
        &=\int_{K}\int_{M(F)\times K_M} (R(1,k')f)^{(P)}(k^{-1}m)\chi\delta_{P_0\cap M}^{1/2}(\mathbf{a}(m)) dmdkdk'.
    \end{align*}
    In particular, 
    \begin{align*}
        \int_{M(F)\times K_M}f^{(P)}(k^{-1}m)\chi\delta_{P_0\cap M}^{1/2}(\mathbf{a}(m))dmdk
    \end{align*}
    converges absolutely for $\chi\in C_f.$ The second assertion follows from \hyperref[Cfpositive]{Proposition \ref{Cfpositive}}.
\end{proof}

The following is a straightforward computation using the Iwasawa decomposition.

\begin{lemma}\label{constant:zeta:identity} Let $f\in \mathcal{S}^\infty(G(F))$. Let $K_0\leq K$ be a compact open subgroup such that $f$ is bi-$K_0$-invariant.  For $(M,\sigma)\in\widetilde{\Temp}_\Ind(G)$ and $(v,w)\in( \Ind_{K_P}^K\sigma\vert_{K_M})^{K_0}\times (\Ind_{K_P}^K\sigma^\vee\vert_{K_M})^{K_0}$ one has
\begin{align}\label{eq:descendzeta}
    Z(\Ind_P^G\sigma_\chi,f,v,w) = \vol(K_0,dg)^2\sum_{k,k'\in K/K_0} Z(\sigma_\chi,(R(k,k')f)^{(P)},v(k'),w(k))
\end{align}
for $\Re(\chi)$ in a positive cone in $\Re\Lambda_M$.\qed
\end{lemma}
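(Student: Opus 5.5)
The plan is to compute $Z(\Ind_P^G\sigma_\chi,f,v,w)$ directly with the integral formula for the pairing on induced representations, then unfold along the Iwasawa decomposition $G(F)=K\,M(F)N(F)\,K$. Absolute convergence for $\Re(\chi)$ in a positive cone will be secured first, so that Fubini--Tonelli justifies every rearrangement. By \hyperref[Cfpositive]{Proposition \ref{Cfpositive}} and \hyperref[cor:Cfpositive]{Corollary \ref{cor:Cfpositive}}, there is a positive cone $C$ such that $(g,k)\mapsto f(k^{-1}g)\langle v_\chi(g),w(k)\rangle$ lies in $L^1(G(F)\times K)$ for $\Re(\chi)\in C\cap\Re\Lambda_M$; this uses \hyperref[(i):equivalent:key]{Lemma \ref{(i):equivalent:key}}. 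By \hyperref[lemma:constant]{Lemma \ref{lemma:constant}}, all constant terms $(R(k,k')f)^{(P)}$ lie in $\mathcal{S}^\infty(M(F))$. Hence the right-hand side of \eqref{eq:descendzeta} is also absolutely convergent on a positive cone, after possibly shrinking $C$, since only finitely many pairs $(k,k')\in K/K_0\times K/K_0$ occur.

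For the computation itself, start from
\begin{align*}
Z(\Ind_P^G\sigma_\chi,f,v,w)=\int_{G(F)}\int_K f(g)\langle v_\chi(kg),w(k)\rangle\,dk\,dg=\int_K\int_{G(F)} f(k^{-1}g)\langle v_\chi(g),w(k)\rangle\,dg\,dk .
\end{align*}
Next, write $g=mnk'$ with $dg=\delta_P^{-1}(m)\,dm\,dn\,dk'$ under the normalizations of \S\ref{subsec:Cartan}, and use $v_\chi(mnk')=\delta_P^{1/2}(m)\sigma_\chi(m)v(k')$. Integrating over $n$ first then yields
\begin{align*}
\int_{K\times K}\int_{M(F)} \delta_P^{1/2}(m)\Big(\int_{N(F)} f(k^{-1}mnk')\,dn\Big)\langle \sigma_\chi(m)v(k'),w(k)\rangle\,dm\,dk\,dk'.
\end{align*}
The inner expression is exactly $(R(k,k')f)^{(P)}(m)$, because $R(k,k')f(x)=f(k^{-1}xk')$. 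The integral over $M(F)$ is therefore $Z(\sigma_\chi,(R(k,k')f)^{(P)},v(k'),w(k))$.

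Finally, I would discretize the $K\times K$ integral. Bi-$K_0$-invariance of $f$ together with $K_0$-invariance of $v$ and $w$ makes the integrand invariant under $(k,k')\mapsto(kk_0,k'k_0')$. This holds because $R(kk_0,k'k_0')f=R(k,k')f$ and $v(k'k_0')=v(k')$, $w(kk_0)=w(k)$. The integral therefore becomes $\vol(K_0)^2\sum_{k,k'\in K/K_0}$, with $\vol(K_0,dk)=\vol(K_0,dg)$ since $\vol(K)=1$ for both measures. The main point needing care is the Fubini step. I would handle it by applying the same unfolding to $|f|$ and to $|\langle\cdot,\cdot\rangle|$, dominating the matrix coefficient by $\chi\delta_P^{1/2}(m)$ times a bounded function as in \hyperref[lem:rapiddecay]{Lemma \ref{lem:rapiddecay}}. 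This reduces absolute convergence of the unfolded integral to the already established $L^1$ statement on $G(F)\times K$.
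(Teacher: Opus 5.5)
Your proposal is correct and is essentially the paper's argument, which the paper only indicates as a straightforward Iwasawa computation: write the pairing as an integral over $K$, substitute $g\mapsto k^{-1}g$, decompose $g=mnk'$ so that the $N(F)$-integral produces $(R(k,k')f)^{(P)}(m)$, and discretize over $K/K_0\times K/K_0$ by bi-$K_0$-invariance, with Fubini justified on the positive cone supplied by \eqref{S1} and Lemma \ref{(i):equivalent:key}. One correction: with $\vol(K_P)=1$ the measure in the coordinates $g=mnk'$ is $dg=dm\,dn\,dk'$ (the factor $\delta_P^{-1}(m)$ belongs to the ordering $g=nmk'$), and only this gives the factor $\delta_P^{1/2}(m)$ you display; also, Tonelli under the same substitution already identifies the absolutely unfolded integral with the $L^1(G(F)\times K)$-norm from Lemma \ref{(i):equivalent:key}, so no domination via Lemma \ref{lem:rapiddecay} is needed.
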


\subsection{Schwartz spaces}\label{defn:Schwartz}

Consider functions $f\in \mathcal{S}^\infty(G(F))$ that satisfy in addition the following properties: 
\begin{enumerate}[label=(S\text{\arabic*}), ref=S\text{\arabic*}] \setcounter{enumi}{1}
    \item\label{S2} For every $(M,\sigma)\in\widetilde{\mathrm{Temp}}_\Ind(G)$
 and $(v,w)\in\Ind_{K_P }^K\sigma\vert_{K_M }\times \Ind_{K_P }^K\sigma^\vee\vert_{K_M },$ the integral
    \begin{align*}
        Z(\mathrm{Ind}_P^G \sigma_\chi,f,v,w):= \int_{G(F)} f(g)\langle(\Ind_P^G\sigma_\chi)(g)v,w\rangle  dg,
    \end{align*}
    originally defined for $\mathrm{Re}(\chi)$ in some positive cone in $\Re\Lambda_M$, extends to a meromorphic function on $\Lambda_M$ such that 
    \begin{align*}
        \chi\mapsto \dfrac{Z(\mathrm{Ind}_P^G \sigma_\chi,f,v,w)}{L(0,\sigma_{\chi},\rho_M)} \in \mathbb{C}[\Lambda_M].
    \end{align*}
\end{enumerate}
By the definition of positive cones, the intersection of a positive cone in $\Re \Lambda_M$ and $C_{\rho,M}$ is a nonempty open set. Hence the statement regarding extension makes sense.

\begin{lemma} \label{lem:constant}
Let $f\in \mathcal{S}^\infty(G(F))$ satisfy \eqref{S2}. Let $P=MN\in \mathcal{P}^{\mathrm{std}}.$ Then $f^{(P)}\in \mathcal{S}^\infty(M(F))$ also satisfies \eqref{S2}. Furthermore, the identity \eqref{eq:descendzeta} holds as meromorphic functions in $\chi\in \Lambda_M.$
\end{lemma}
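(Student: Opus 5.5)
We derive \eqref{S2} for $f^{(P)}$ from \eqref{S2} for $f$ by inverting \eqref{eq:descendzeta}. By \hyperref[lemma:constant]{Lemma \ref{lemma:constant}}, $f^{(P)}\in\mathcal{S}^\infty(M(F))$. By \hyperref[Sinftymodule]{Lemma \ref{Sinftymodule}}, each $R(k,k')f$ lies in $\mathcal{S}^\infty(G(F))$. Since $R(k,k')f$ is obtained from $f$ by translation, its zeta integrals are those of $f$ paired against translated vectors $\sigma_\chi$-inductions applied to $v,w$; hence $R(k,k')f$ also satisfies \eqref{S2}. Fix $K_0$ so that $f$ is bi-$K_0$-invariant, and fix $\sigma\in\Pi_2(M')$ for a standard Levi $M'\le M$, viewed inside $\widetilde{\Temp}_\Ind(M)$. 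We must show that for $(v_M,w_M)$ in $\Ind^{K_M}_{K_{P'\cap M}}\sigma|_{K_{M'}}\times \Ind^{K_M}_{K_{P'\cap M}}\sigma^\vee|_{K_{M'}}$ the quotient $Z(\Ind_{P'\cap M}^M\sigma_\chi,f^{(P)},v_M,w_M)/L(0,\sigma_\chi,\rho_{M'})$ is a polynomial on $\Lambda_{M'}$. By transitivity of induction, $\Ind_{P'}^G\sigma\cong \Ind_P^G\Ind_{P'\cap M}^M\sigma$, so we may apply \eqref{eq:descendzeta} with $\Ind_{P'\cap M}^M\sigma$ in place of $\sigma$. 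The argument of the lemma only uses the Iwasawa decomposition and holds for any smooth $\sigma$ on the convergence region.

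The key step is to isolate the single term $Z(\sigma_\chi,f^{(P)},v_M,w_M)$ on the right of \eqref{eq:descendzeta}. Choose $K_1\le K_0$ small and normal in $K$ (if needed, replace $K_0$ by a smaller normal subgroup), with $v_M,w_M$ fixed by $K_1\cap M$. Let $v\in(\Ind_{K_{P'}}^K\sigma|_{K_{M'}})^{K_1}$ be supported on $K_PK_1$ with $v(pk_1)=\sigma(p)v_M$ in the appropriate sense, i.e. $v|_{K_P}$ is $v_M$ extended trivially across $K_N$. Define $w$ similarly. Then $v(k')$ and $w(k)$ vanish unless $k,k'\in K_PK_1$. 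For those $k,k'$ we can write $k=k_Mk_N k_1$, and left $K_N$-invariance of the constant term gives $(R(k,k')f)^{(P)}=R(k_M,k'_M)f^{(P)}$ up to $K_1$-translations, which act trivially. The right side of \eqref{eq:descendzeta} thus collapses to a positive constant times an average over $K_M/(K_M\cap K_1)$ of $Z(\sigma_\chi,R(k_M,k'_M)f^{(P)},v(k'_M),w(k_M))$. Unfolding the $K_M$-translations, each summand equals $Z(\sigma_\chi,f^{(P)},v_M,w_M)$, so the sum is a nonzero multiple of the desired zeta integral.

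It follows that on a positive cone, $Z(\sigma_\chi,f^{(P)},v_M,w_M)$ equals a constant times $Z(\Ind_{P'}^G\sigma_\chi,f,v,w)$. The latter extends meromorphically and is a polynomial multiple of $L(0,\sigma_\chi,\rho_{M'})$ by \eqref{S2} for $f$, where $\rho_{M'}=(\rho_M)_{M'}$. This proves \eqref{S2} for $f^{(P)}$. Now both sides of \eqref{eq:descendzeta} are meromorphic on $\Lambda_M$: the left by \eqref{S2} for $f$, and the right because each term satisfies \eqref{S2}, as just shown for the translates $R(k,k')f$. They agree on the open set $\Re\chi\in$ positive cone, so by the identity principle they agree as meromorphic functions on $\Lambda_M$.

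The main obstacle is the bookkeeping in the isolation step. One must check that the chosen $v,w$ are genuinely $K_1$-invariant elements of the induced spaces, and that the $K_N$-parts act trivially on the constant term. The latter requires $k_N$ on the left, together with $\delta_P$-compatibility; for the right factor, one uses normality of $N$ in $P$ and changes variables in $N$. I would handle this by taking $K_1$ with an Iwahori factorization with respect to $P$, as in \S\ref{subsec:Cartan}.
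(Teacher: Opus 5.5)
Your proposal is correct and follows essentially the same route as the paper. The paper also isolates a single term on the right of \eqref{eq:descendzeta} as a combination of zeta integrals $Z(\Ind_P^G\sigma_\chi,f,v_i,w_i)$, using coset-supported vectors as in Lemma \ref{(i):equivalent:key}, and then concludes from \eqref{S2} for $f$ and the identity principle. Your reduction of a general $(M',\sigma')\in\widetilde{\Temp}_\Ind(M)$ to $\Ind_P^G\Ind_{P'\cap M}^M\sigma$ by transitivity of induction spells out a step the paper leaves implicit. One small slip: the nonzero terms are indexed by $(K_PK_1/K_1)^2$, not by $K_M/(K_M\cap K_1)$ alone, but this does not affect the conclusion.
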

\begin{proof} By an analogous argument as in \hyperref[(i):equivalent:key]{Lemma \ref{(i):equivalent:key}}, we can extract a single term on the right-hand side of \eqref{eq:descendzeta} by writing
\begin{align*}
    Z(\sigma_\chi, (R(k,k')f)^{(P)}, v(k'), w(k))=\sum_{i\in I} c_iZ(\mathrm{Ind}_P^G \sigma_\chi,f,v_i,w_i)  
\end{align*}
as a finite sum for some $c_i\in \CC$ and $(v_i,w_i)\in  \Ind_{K_P}^K\sigma\vert_{K_M}\times \Ind_{K_P}^K\sigma^\vee\vert_{K_M}$ depending on $k,k'.$ The lemma follows by the definition of \eqref{S2}.
\end{proof}

Define the \textbf{$\rho$-Schwartz space} 
\begin{align*}
    \mathcal{S}_\rho(G(F)):=\{ f\in \mathcal{S}^\infty(G(F)): f \textrm{ satisfies } \eqref{S2} \textrm{ and } |\nu|^rf\in \mathcal{C}(G(F)) \,\,\forall r>0 \textrm{ sufficiently large}\}.
\end{align*}

\begin{proposition} \label{Snur:HCSchwartz}
One has the equality
\begin{align*}
    |\nu|^{1/2}\mathcal{S}_\rho(G(F)) = \mathcal{S}_\rho^{\mathrm{as}}(G(F))\cap \mathcal{S}^\infty(G(F)).
\end{align*}
In addition, for any $(M,\sigma)\in \widetilde{\Temp}_{\Ind}(G)$ and $(v,w)\in  \Ind_{K_P}^K\sigma\vert_{K_M}\times \Ind_{K_P}^K\sigma^\vee\vert_{K_M},$ we have an identity of meromorphic functions 
     \begin{align}\label{eq:Z=Z0}
         Z(\mathrm{Ind}_P^G\sigma,|\nu|^{1/2}f,v,w)=Z_0(\mathrm{Ind}_P^G\sigma, |\nu|^{1/2}f,v,w).
     \end{align}
\end{proposition}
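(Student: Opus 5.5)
We prove the two inclusions separately, using the twisting identity $Z(\Ind_P^G\sigma_\chi,|\nu|^{1/2}f,v,w)=Z(\Ind_P^G\sigma_{\chi|\nu|^{1/2}},f,v,w)$. This identity holds wherever either side converges absolutely, since $|\nu|^{1/2}$ restricts to an unramified character of $M(F)$ and $\langle(\Ind_P^G\sigma_\chi)(g)v,w\rangle|\nu(g)|^{1/2}=\langle(\Ind_P^G\sigma_{\chi|\nu|^{1/2}})(g)v,w\rangle$. We also use that $L(\tfrac12,\sigma_\chi,\rho_M)=L(0,\sigma_{\chi|\nu|^{1/2}},\rho_M)$, because $\langle\nu,\widetilde\mu_j\rangle=1$ for all $j$.

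\emph{Step 1: identity \eqref{eq:Z=Z0}.} Let $f\in\mathcal{S}_\rho(G(F))$, so that $|\nu|^rf\in\mathcal{C}(G(F))$ for all large $r$; take $r\ge 1/2$. By \hyperref[lem:CGconverge]{Lemma \ref{lem:CGconverge}} the integral of $|\nu|^rf$ against matrix coefficients of $\Ind_P^G\sigma_\chi$ converges for $\Re\chi=0$. After twisting, the integral for $|\nu|^{1/2}f$ converges absolutely on the imaginary axis shifted by $|\nu|^{r-1/2}$. On the other hand, $|\nu|^{1/2}f\in\mathcal{S}^\infty(G(F))$ by \eqref{S1}, because a $\Lambda_G$-twist carries a positive cone to a positive cone. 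The set of $\chi$ where the integral converges absolutely is convex in $\Re\Lambda_M$; one sees this from the formulation in \hyperref[(i):equivalent:key]{Lemma \ref{(i):equivalent:key}}(2) and log-convexity of $\chi\mapsto\chi(\mathbf{a}_P(g))$, possibly after replacing $v,w$ by vectors with absolute-value majorants. Hence there is a connected open region of absolute convergence that contains both $\Im\Lambda_M|\nu|^{r-1/2}$ and a positive cone. Holomorphy there follows from \hyperref[lem:zeta:holomorphic]{Lemma \ref{lem:zeta:holomorphic}}.

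If $|\nu|^{1/2}f\in\mathcal{C}(G(F))$ itself (which is the situation needed for $Z_0$ to be defined), then $Z_0$ is the absolutely convergent integral on $\Re\chi=0$, and the identity principle extends the equality to all of $\Lambda_M$ as meromorphic functions. Both sides are then rational; the meromorphic continuation of $Z_0$ for $\mathcal{S}^{\mathrm{as}}_\rho$ functions is the one given after the definition of $\mathcal{S}^{\mathrm{as}}_\rho$.

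\emph{Step 2: $|\nu|^{1/2}\mathcal{S}_\rho\subseteq\mathcal{S}^{\mathrm{as}}_\rho\cap\mathcal{S}^\infty$.} We first need $|\nu|^{1/2}f\in\mathcal{C}(G(F))$. We know $|\nu|^rf\in\mathcal{C}$ for large $r$, and \eqref{S2} together with holomorphy of $L(0,\cdot)$ on $C_{\rho,M}$ gives holomorphic continuation of all zeta integrals of $|\nu|^{1/2}f$ to $\Re\chi\in|\nu|^{-1/2}C_{\rho,M}$, a region that contains $\Re\chi=0$.

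To turn this into Harish-Chandra Schwartz bounds, decompose $f=\sum_\alpha f\mathbf{1}_\alpha$. Each summand has spectral transform given by \eqref{ealpha:orthproj}, applied to $\HP(|\nu|^rf)$, which is polynomial after division by $L$. This polynomiality forces $\supp f\subseteq S_G(r')$ as in \hyperref[Sas:f1alpha:polynomial]{Corollary \ref{Sas:f1alpha:polynomial}}, and on $S_G(r')$ the factor $|\nu|^{1/2-r}$ is bounded by a geometric function of $\alpha$ in the cone direction. We then use the fact that $|\nu|^{-s}$ maps $\mathcal{S}^{\mathrm{as}}_\rho$ into $\mathcal{C}$ for $|\nu|^{-s}\in|\nu|^{-1/2}C_{\rho,G}$, i.e.\ $s<1/2$. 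This follows from \hyperref[Sas:twist:HC]{Lemma \ref{Sas:twist:HC}} applied to $|\nu|^{r}f$, suitably shifted, once we know $|\nu|^rf\in\mathcal{S}^{\mathrm{as}}_{\rho}$ up to a twist. Iterating in steps smaller than $1/2$ brings $r$ down to $1/2$. Once $|\nu|^{1/2}f\in\mathcal{C}$, Step 1 shows that $\HP(|\nu|^{1/2}f)(M,\sigma_\chi)/L(\tfrac12,\sigma_\chi,\rho_M)$ equals $Z(\Ind_P^G\sigma_{\chi|\nu|^{1/2}},f)/L(0,\sigma_{\chi|\nu|^{1/2}},\rho_M)$, which is polynomial by \eqref{S2}. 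Hence $|\nu|^{1/2}f\in\mathcal{S}^{\mathrm{as}}_\rho$.

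\emph{Step 3: the reverse inclusion.} Let $h\in\mathcal{S}^{\mathrm{as}}_\rho\cap\mathcal{S}^\infty$ and set $f=|\nu|^{-1/2}h$. Then $f\in\mathcal{S}^\infty$ by the twisting remark. For $r>1/2$ the character $|\nu|^{r-1/2}$ lies in $|\nu|^{-1/2}C_{\rho,G}$, so \hyperref[Sas:twist:HC]{Lemma \ref{Sas:twist:HC}} gives $|\nu|^rf\in\mathcal{C}(G(F))$. For \eqref{S2}: the absolutely convergent $Z(\cdot,h)$ on a positive cone and $Z_0(\cdot,h)$ agree by the convexity and identity-principle argument of Step 1. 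Since $Z_0(\cdot,h)$ is $L(\tfrac12)$ times a polynomial, shifting by $|\nu|^{1/2}$ gives \eqref{S2} for $f$.

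The main obstacle is the first half of Step 2, namely showing that $|\nu|^{1/2}f$ is Harish-Chandra Schwartz knowing only $|\nu|^rf\in\mathcal{C}$ for large $r$ together with the spectral information. Related to it is the convexity of the region of absolute convergence used to connect the cone with the imaginary axis.
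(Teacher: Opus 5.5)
\paragraph{Gap in Step 2.} Step 2 fails at the step you flagged as the main obstacle. Lemma \ref{Sas:twist:HC} is a statement about $\mathcal{S}^{\mathrm{as}}_\rho(G(F))$, i.e.\ about functions whose spectral transform is $L(\tfrac12,\sigma_\chi,\rho_M)$ times a polynomial. For $h=|\nu|^rf$ you only know $\HP(h)(M,\sigma_\chi)=L(r,\sigma_\chi,\rho_M)\,p(\chi)$ with $p$ polynomial. In general $L(r,\cdot)/L(\tfrac12,\cdot)$ is not a polynomial; for a torus with $\rho$ a character it equals $(1-q^{-1/2}\chi(\varpi))/(1-q^{-r}\chi(\varpi))$. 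So $h\notin\mathcal{S}^{\mathrm{as}}_\rho(G(F))$ in general.

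Writing $h=|\nu|^{r-1/2}\cdot|\nu|^{1/2}f$ to make $h$ lie in $\mathcal{S}^{\mathrm{as}}_\rho$ ``up to a twist'' presupposes $|\nu|^{1/2}f\in\mathcal{C}(G(F))$, which is what you are trying to prove. The iteration therefore never starts. What you need is a shifted version of Lemma \ref{Sas:twist:HC}, and proving it is the entire content of this step. Once you have it, a single step reaches every $\Re s>0$, so no iteration is needed.

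The $f\mathbf{1}_\alpha$ heuristic does not help either. On $S_G(r')$ the factor $|\nu|^{1/2-r}$ grows geometrically as $\alpha$ moves into the cone, and nothing in the hypotheses gives matching decay of the seminorms of $(|\nu|^rf)\mathbf{1}_\alpha$.

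\paragraph{What the paper does.}
\begin{itemize}
\item Take $K_0$ with $f$ bi-$K_0$-invariant, and $r_0$ such that for all $r\ge r_0$ both $|\nu|^rf\in\mathcal{C}(G(F))$ and $|\nu|^r$ lies in the positive cone of Corollary \ref{cor:Cfpositive}.
\item For $\Re s\ge r_0$ and $\chi\in\Im\Lambda_M$ one has $\langle\HP(|\nu|^sf)(M,\sigma_\chi)v,w\rangle=Z(\Ind_P^G\sigma_{|\nu|^s\chi},f,v,w)$.
\item By \eqref{S2}, and since $L(0,\cdot,\rho_M)$ is holomorphic on $C_{\rho,M}\times\Im\Lambda_M$ with $|\nu|^s\in C_{\rho,M}$ for $\Re s>0$, this extends to a smooth function on $\CC_{>0}\times\Im\Lambda_M$ that is holomorphic in $s$. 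Only the finitely many components of $\Theta^{K_0}$ are involved.
\item Hence $s\mapsto\HP(|\nu|^sf)$ extends holomorphically to $\Re s>0$ with values in $\mathcal{C}(\Temp_\Ind(G))$.
\item Apply the topological isomorphism $\HP^{-1}$, evaluate at each $g$, and use the identity principle in $s$. This gives $|\nu|^sf\in\mathcal{C}(G(F))$ for all $\Re s>0$, in particular for $s=\tfrac12$.
\end{itemize}

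\paragraph{Steps 1 and 3.} Your H\"older/convexity argument is correct. However, it only places $\Im\Lambda_M$ on the boundary of the open region of absolute convergence. Applying the identity principle there therefore needs an extra boundary-uniqueness argument, namely continuity up to the edge plus reflection.

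It is simpler, as in the paper, to compare on $|\nu|^t\Im\Lambda_M$ for $t$ large. By (C2) this set lies inside the positive cone of absolute convergence. There, $\langle\HP(|\nu|^tf)(M,\sigma_\chi)v,w\rangle$ and $Z(\Ind_P^G\sigma_{|\nu|^t\chi},f,v,w)$ are literally the same absolutely convergent integral, and the identity principle applies directly. For the reverse inclusion, combine this with the identity of meromorphic functions in Lemma \ref{Sas:twist:HC}. With Step 2 replaced by the argument above, your Steps 1 and 3 go through.
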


\begin{proof} Let $f\in \mathcal{S}^\infty(G(F))$ and let $K_0\leq K$ be a compact open subgroup such that $f\in C^\infty(G(F)/\!/K_0)$. Note that both terms in \eqref{eq:Z=Z0} are the zero function if $(M,\sigma)\not\in \Theta^{K_0}$ whenever they are defined. Thus in the proof below we only need to discuss $(M,\sigma)\in \Theta^{K_0}.$ By \hyperref[cor:Cfpositive]{Corollary \ref{cor:Cfpositive}} there is a positive cone $C$ in $\Re\Lambda_{M_0}$ such that for all $(M,\sigma)\in\Theta^{K_0}$,
\begin{align*}
    g\mapsto f(g)\langle(\Ind_P^G\sigma_\chi)(g)v,w\rangle \in L^1(G(F))
\end{align*}
for $\chi\in (C\cap \Re\Lambda_M)\times\Im\Lambda_M$ and $(v,w)\in  \Ind_{K_P}^K\sigma\vert_{K_M}\times \Ind_{K_P}^K\sigma^\vee\vert_{K_M}$. 

Suppose $f$ satisfies \eqref{S2} and there is $r_0>0$ such that $|\nu|^rf\in \mathcal{C}(G(F))$ for $r\geq r_0$. By enlarging $r_0,$ we may assume $|\nu|^r\in C$ for $r\ge r_0.$ We claim $|\nu|^s f \in \mathcal{C}(G(F))$ for $s\in \CC$ with $\Re(s)>0$. By the \hyperref[HCPlan]{Harish-Chandra Plancherel theorem}, for $\Re(s)\geq r_0$
\begin{align*}
    |\nu|^s f = \HP^{-1}(\HP(|\nu|^s f)).
\end{align*}
 Since $|\nu|^s f$ is holomorphic in $s\in \CC$ and $\HP$ is a topological isomorphism, by the identity principle it suffices to show $\HP(|\nu|^s f)$ extends to a holomorphic function on $\CC_{>0}:=\{s\in \CC: \Re(s)>0\}$. That is, we need to show for $(M,\sigma)\in \Theta^{K_0}$ and $(v,w)\in \Ind_{K_P}^K\sigma\vert_{K_M}\times \Ind_{K_P}^K\sigma^\vee\vert_{K_M},$ the function
\begin{align*}
  (s,\chi)\mapsto\langle\HP(|\nu|^s f)(M,\sigma)v,w\rangle_{\Ind_P^G\sigma_\chi}
\end{align*}
extends to a smooth function on $\CC_{>0}\times \Im\Lambda_M$ that is holomorphic in $s$.  For $\mathrm{Re}(s)>r_0$, one has
\begin{align*}
    \langle\HP(|\nu|^s f)(M,\sigma_\chi)v,w\rangle_{\Ind_P^G\sigma_\chi} = Z(\mathrm{Ind}_P^G\sigma_\chi,|\nu|^{s}f,v,w) = Z(\mathrm{Ind}_P^G\sigma_{|\nu|^s\chi},f,v,w).
\end{align*}
By \eqref{S2} $\chi\mapsto Z(\mathrm{Ind}_P^G\sigma_{\chi},f,v,w)$ extends to a holomorphic function on $C_{\rho,M}\times\Im\Lambda_M$. By \eqref{G} we have $|\nu|^{s}\in C_{\rho,M}$ for $\Re(s)>0$. This proves the claim, and \eqref{eq:Z=Z0} holds by the identity principle. The inclusion $|\nu|^{\frac{1}{2}}\mathcal{S}_\rho(G(F))\subseteq \mathcal{S}_\rho^{\mathrm{as}}(G(F))\cap \mathcal{S}^\infty(G(F))$ follows by the definition of \eqref{S2}. 

For the containment $\supseteq$, suppose in addition $f\in \mathcal{S}_\rho^{\mathrm{as}}(G(F))$. By \hyperref[Sas:twist:HC]{Lemma \ref{Sas:twist:HC}} we have $|\nu|^rf\in\mathcal{C}(G(F))$ for $r>-\frac{1}{2}$, so it remains to show $|\nu|^{-\frac{1}{2}}f$ satisfies \eqref{S2}. Let $(M,\sigma)\in \Theta^{K_0}$ and $(v,w)\in \Ind_{K_P}^K\sigma\vert_{K_M}\times \Ind_{K_P}^K\sigma^\vee\vert_{K_M}.$ For $r\ge r_0$ large, one has for $\chi\in \Im \Lambda_M$  
\begin{align*}
    Z_0(\Ind_P^G\sigma_\chi, (|\nu|^{-\frac{1}{2}}f)|\nu|^r,v,w)=\left\langle\HP((|\nu|^{-\frac{1}{2}}f)|\nu|^r)(M,\sigma) v,w\right\rangle_{\Ind_P^G\sigma_\chi} = Z(\Ind_P^G\sigma_{|\nu|^r\chi},|\nu|^{-\frac{1}{2}}f,v,w).
\end{align*}
On the other hand, by \hyperref[Sas:twist:HC]{Lemma \ref{Sas:twist:HC}} we have an identity of meromorphic functions $$Z_0(\Ind_P^G\sigma_\chi, (|\nu|^{-\frac{1}{2}}f)|\nu|^r,v,w)=Z_0(\Ind_P^G\sigma_{|\nu|^r\chi}, |\nu|^{-\frac{1}{2}}f,v,w).$$
It follows by the identity principle that $ Z(\Ind_P^G\sigma_{\chi},|\nu|^{-\frac{1}{2}}f,v,w),$ originally a holomorphic function on $(C\cap \Re \Lambda_M)\times \Im \Lambda_M,$ extends meromorphically to the function $Z_0(\Ind_P^G\sigma_{\chi},|\nu|^{-\frac{1}{2}}f,v,w)$. By the definition of  $\mathcal{S}_\rho^{\mathrm{as}}(G(F)),$ we have
\begin{align*}
    \chi\longmapsto \frac{Z(\Ind_P^G\sigma, |\nu|^{-\frac{1}{2}}f,v,w)}{L(0,\sigma_{\chi},\rho_M)}=\frac{Z_0(\Ind_P^G\sigma, |\nu|^{-\frac{1}{2}}f,v,w)}{L(0,\sigma_{\chi},\rho_M)}\in \CC[\Lambda_M].
\end{align*}
This completes the proof.
\end{proof}

\begin{corollary}\label{Srho:in:Sas} One has $\mathcal{S}_\rho(G(F))\chi \subseteq \mathcal{C}(G(F))$ for $\Re(\chi)\in C_{\rho,G}$. 
\end{corollary}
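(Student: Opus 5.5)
The plan is to reduce Corollary \ref{Srho:in:Sas} to Lemma \ref{Sas:twist:HC} via Proposition \ref{Snur:HCSchwartz}.

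Let $f\in\mathcal{S}_\rho(G(F))$ and put $f':=|\nu|^{1/2}f$. By Proposition \ref{Snur:HCSchwartz}, $f'\in\mathcal{S}_\rho^{\mathrm{as}}(G(F))$. For $\chi\in\Lambda_G$ with $\Re(\chi)\in C_{\rho,G}$ we write
\begin{align*}
    f\chi = f'\cdot\big(|\nu|^{-1/2}\chi\big).
\end{align*}
The real part of $|\nu|^{-1/2}\chi$ is $|\nu|^{-1/2}\Re(\chi)$, and this lies in $|\nu|^{-1/2}C_{\rho,G}$. Lemma \ref{Sas:twist:HC} then gives $f'\cdot(|\nu|^{-1/2}\chi)\in\mathcal{C}(G(F))$, which is the claim.

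The only point needing care is the bookkeeping of the shift: the twisting region in Lemma \ref{Sas:twist:HC} is the translate $|\nu|^{-1/2}C_{\rho,G}$, written multiplicatively in $\Re\Lambda_G$. Multiplying by $|\nu|^{-1/2}$ carries $C_{\rho,G}$ bijectively onto exactly this region, so no additional hypothesis is needed. I expect no step here to be a genuine obstacle.

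For completeness one can also see the underlying mechanism from the proof of Proposition \ref{Snur:HCSchwartz}. There $\mathrm{HP}(f\chi)$ is computed on the tempered spectrum as $Z(\mathrm{Ind}_P^G\sigma_{\chi\lambda},f,v,w)$ for $\lambda\in\Im\Lambda_M$. By \eqref{S2} and the holomorphy of $L(0,\sigma_\chi,\rho_M)$ on $C_{\rho,M}\times\Im\Lambda_M$, this is a smooth polynomial-times-$L$ section, using $\iota_{M\ge G}(C_{\rho,G})\subseteq C_{\rho,M}$. Theorem \ref{HCPlan} then puts $f\chi$ in $\mathcal{C}(G(F))$. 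The citation of Lemma \ref{Sas:twist:HC} packages exactly this argument.
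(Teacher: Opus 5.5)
Your argument is correct and is exactly the paper's: write $f\chi=(|\nu|^{1/2}f)\cdot(|\nu|^{-1/2}\chi)$, use Proposition \ref{Snur:HCSchwartz} to place $|\nu|^{1/2}f$ in $\mathcal{S}^{\mathrm{as}}_\rho(G(F))$, note $\Re(|\nu|^{-1/2}\chi)\in|\nu|^{-1/2}C_{\rho,G}$, and apply Lemma \ref{Sas:twist:HC}. Your supplementary ``mechanism'' paragraph is not needed. As written it would also need an identity-principle step identifying $\mathrm{HP}^{-1}$ of the twisted section with $f\chi$.
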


\begin{proof} This follows from \hyperref[Sas:twist:HC]{Lemma \ref{Sas:twist:HC}} and \hyperref[Snur:HCSchwartz]{Proposition \ref{Snur:HCSchwartz}}.
\end{proof}

\begin{corollary}\label{Srhomodule}
    The space $\mathcal{S}_\rho(G(F))$ is a $G(F)\times G(F)$-module.
\end{corollary}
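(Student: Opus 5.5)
We must show $\mathcal{S}_\rho(G(F))$ is stable under $R(h,h')$ for all $h,h'\in G(F)$. By definition, $\mathcal{S}_\rho(G(F))$ is cut out inside $\mathcal{S}^\infty(G(F))$ by two conditions: \eqref{S2}, and $|\nu|^rf\in\mathcal{C}(G(F))$ for all sufficiently large $r>0$. We will check each defining condition separately for $R(h,h')f$, where $f\in \mathcal{S}_\rho(G(F))$.

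Membership in $\mathcal{S}^\infty(G(F))$ is exactly \hyperref[Sinftymodule]{Lemma \ref{Sinftymodule}}, so $R(h,h')f\in\mathcal{S}^\infty(G(F))$.

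For the growth condition, $\nu$ is a rational character, so $|\nu|(h^{-1}gh')=|\nu|(h)^{-1}|\nu|(g)|\nu|(h')$. Hence
\begin{align*}
|\nu|^r R(h,h')f = |\nu|(h)^{-r}|\nu|(h')^{r}\, R(h,h')(|\nu|^rf).
\end{align*}
Since $\mathcal{C}(G(F))$ is stable under the regular action, $|\nu|^rR(h,h')f\in\mathcal{C}(G(F))$ for all $r$ large. Alternatively, one can use \hyperref[Snur:HCSchwartz]{Proposition \ref{Snur:HCSchwartz}}: $|\nu|^{1/2}f\in\mathcal{S}^{\mathrm{as}}_\rho(G(F))\cap\mathcal{S}^\infty(G(F))$, both pieces are $G(F)\times G(F)$-modules by \hyperref[Sas:FT+constant]{Lemma \ref{Sas:FT+constant}} and \hyperref[Sinftymodule]{Lemma \ref{Sinftymodule}}, and $|\nu|^{1/2}R(h,h')f$ differs from $R(h,h')(|\nu|^{1/2}f)$ by a nonzero constant. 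This second route gives the whole statement at once, and we would use it as the main argument.

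To double-check \eqref{S2} directly, take $\chi$ in the positive cone where the zeta integral converges absolutely. A change of variables gives
\begin{align*}
Z(\Ind_P^G\sigma_\chi,R(h,h')f,v,w)=Z\big(\Ind_P^G\sigma_\chi,f,(\Ind_P^G\sigma_\chi)(h')v,(\Ind_P^G\sigma_\chi^\vee)(h)w\big).
\end{align*}
Under the identification with $\Ind_{K_P}^K\sigma|_{K_M}$, the vector $(\Ind_P^G\sigma_\chi)(h')v$ depends on $\chi$. To handle this, fix $K_0$ such that everything in sight is $K_0$-invariant, and expand in a basis of the finite-dimensional space of $K_0$-invariants. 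The coefficients are polynomial in $\chi$, that is, they lie in $\CC[\Lambda_M]$, since $v_\chi(kh')=\chi\delta_P^{1/2}(\mathbf{a}_P(kh'))\,\sigma(\cdots)v(\mathbf{k}_P(kh'))$. Therefore the meromorphic continuation, divided by $L(0,\sigma_\chi,\rho_M)$, remains in $\CC[\Lambda_M]$.

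The only mildly delicate point is this polynomial dependence of translated vectors in the compact picture; it is standard, and it is avoided altogether by the route through \hyperref[Snur:HCSchwartz]{Proposition \ref{Snur:HCSchwartz}}.
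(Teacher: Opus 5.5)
Your main argument is correct and is exactly the paper's proof: use Proposition \ref{Snur:HCSchwartz} to write $|\nu|^{1/2}\mathcal{S}_\rho(G(F))=\mathcal{S}^{\mathrm{as}}_\rho(G(F))\cap\mathcal{S}^\infty(G(F))$, get stability of each factor from Lemma \ref{Sas:FT+constant} and Lemma \ref{Sinftymodule}, and observe that $R(h,h')(|\nu|^{1/2}f)=|\nu|(h)^{-1/2}|\nu|(h')^{1/2}\,|\nu|^{1/2}R(h,h')f$. (In your optional direct check, the change of variables actually produces the translated vectors $(\Ind_P^G\sigma_\chi)(h'^{-1})v$ and $(\Ind_P^G\sigma_\chi^\vee)(h^{-1})w$, not $h'$ and $h$; this slip does not affect the main route.)
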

\begin{proof}
    This follows from \hyperref[Sas:FT+constant]{Lemma \ref{Sas:FT+constant}}, \hyperref[Sinftymodule]{Lemma \ref{Sinftymodule}} and \hyperref[Snur:HCSchwartz]{Proposition \ref{Snur:HCSchwartz}}.
\end{proof}

\begin{corollary}
    For $P=MN\in \mathcal{P},$ the map $(-)^P$ induces a morphism
\begin{align*}
    (-)^{(P)}:\mathcal{S}_\rho(G(F))\longrightarrow\mathcal{S}_{\rho_M}(M(F)).
\end{align*}
\end{corollary}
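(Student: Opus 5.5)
The plan is to check the three defining conditions of $\mathcal{S}_{\rho_M}(M(F))$ for $f^{(P)}$ separately, using the results already established for $\mathcal{S}^\infty$, for $\mathcal{S}^{\mathrm{as}}_\rho$, and for the Harish-Chandra Schwartz space.

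First, reduce to $P$ standard. If $P=MN\in\mathcal{P}$, choose $s\in W(G,A_0)$ with a representative in $K$ such that $s.P$ is standard. Since $\mathcal{S}_\rho(G(F))$ is a $G(F)\times G(F)$-module (Corollary \ref{Srhomodule}), conjugation by $s$ identifies $f^{(P)}$ with $(R(s,s)f)^{(s.P)}$ transported to $M(F)$. This transport respects both $\rho_M$ and $\rho_{s.M}$, because the $L$-factors are defined through the local Langlands correspondence and $s$ induces an isomorphism of $L$-groups. So we may assume $P$ is standard.

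Now let $f\in\mathcal{S}_\rho(G(F))$; we verify the three conditions.

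\begin{enumerate}
\item \emph{$f^{(P)}\in\mathcal{S}^\infty(M(F))$.} This is Lemma \ref{lemma:constant}.
\item \emph{$f^{(P)}$ satisfies \eqref{S2} for $\rho_M$.} This is Lemma \ref{lem:constant}. Its proof expresses each zeta integral $Z(\sigma_\chi,(R(k,k')f)^{(P)},v(k'),w(k))$ as a finite linear combination of the $G$-zeta integrals $Z(\Ind_P^G\sigma_\chi,f,v_i,w_i)$. The factor $L(0,\sigma_\chi,\rho_M)$ governing both sides is the same, since $(\rho_M)|_{{}^LM}=\rho_M$. Hence the quotient by $L(0,\sigma_\chi,\rho_M)$ is a polynomial on $\Lambda_M$. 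For $\sigma\in\Pi_2(M)$ this is all that is needed. A general $(M',\sigma')\in\widetilde{\Temp}_\Ind(M)$ with $M'\le M$ is handled by the transitivity $(f^{(P)})^{(P'\cap M)}=f^{(P')}$ for the standard parabolic $P'=M'N'$, which reduces to the case just treated. I expect this bookkeeping with Levi subgroups to be the only mildly delicate point.
\item \emph{The twists $|\nu|^r f^{(P)}$ lie in $\mathcal{C}(M(F))$ for $r$ large.} Because $\nu\in X^*(G)$ is trivial on $N(F)$, we have $|\nu|^r f^{(P)}=(|\nu|^r f)^{(P)}$. By hypothesis $|\nu|^r f\in\mathcal{C}(G(F))$ for $r$ large, and the constant term map sends $\mathcal{C}(G(F))$ into $\mathcal{C}(M(F))$ (\S\ref{ssec:constant}). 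Hence $|\nu|^r f^{(P)}\in\mathcal{C}(M(F))$. Here $|\nu|$ is read as $|\nu_M|$, consistent with identifying $\nu_M$ with $\nu$.
\end{enumerate}

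Finally, the $K_M\times K_M$-equivariance of $(-)^{(P)}$ is clear from its defining formula. This completes the plan.
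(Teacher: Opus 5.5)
Your proof is correct, but it follows a different route from the paper. The paper does not check the defining conditions of $\mathcal{S}_{\rho_M}(M(F))$ one at a time. It uses Proposition \ref{Snur:HCSchwartz}, applied to both $G$ and $M$, to rewrite $|\nu|^{1/2}\mathcal{S}_\rho=\mathcal{S}^{\mathrm{as}}_\rho\cap\mathcal{S}^\infty$. It then combines Lemma \ref{Sas:FT+constant} (constant terms preserve $\mathcal{S}^{\mathrm{as}}$) with Lemma \ref{lemma:constant} (constant terms preserve $\mathcal{S}^\infty$), noting that $|\nu|^{1/2}$ commutes with $(-)^{(P)}$.

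The spectral input behind Lemma \ref{Sas:FT+constant} is the compatibility \eqref{CT:spectral} between $\HP_G$ and $\HP_M$. It handles every $(M',\sigma')\in\widetilde{\Temp}_\Ind(M)$ at once and is stated for all $P\in\mathcal{P}$. So on that route you need neither your reduction to standard $P$ nor your transitivity bookkeeping.

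Your route stays on the analytic side. You get \eqref{S2} from Lemma \ref{lem:constant} via the zeta-integral identity \eqref{eq:descendzeta}. You get the tempered-growth condition from $(-)^{(P)}:\mathcal{C}(G(F))\to\mathcal{C}(M(F))$ together with $|\nu|^rf^{(P)}=(|\nu|^rf)^{(P)}$. This avoids the $\mathcal{S}^{\mathrm{as}}$ characterization entirely. Your remark on non-maximal $(M',\sigma')$, reducing via $(f^{(P)})^{(P'\cap M)}=f^{(P')}$ and $K_M\times K_M$-equivariance, also fills in a point that the paper's brief proof of Lemma \ref{lem:constant} passes over.

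The cost is that Lemma \ref{lem:constant} is stated only for standard $P$, so you need the Weyl-conjugation reduction. There, the equality of $L$-factors for $(M,\sigma)$ and $(s.M,s.\sigma)$ really comes from \eqref{LLC:para}: the parameters $\Ind_P^G\sigma\cong\Ind_{s.P}^Gs.\sigma$ share subquotients, hence have the same parameter in ${}^LG$, as the paper uses at the end of the proof of Proposition \ref{LLC7=Fourierstable}. It does not follow merely from $s$ inducing an isomorphism of $L$-groups.
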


\begin{proof}
    This follows from \hyperref[Sas:FT+constant]{Lemma \ref{Sas:FT+constant}}, \hyperref[lemma:constant]{Lemma \ref{lemma:constant}} and \hyperref[Snur:HCSchwartz]{Proposition \ref{Snur:HCSchwartz}}.
\end{proof}

\begin{corollary}\label{Sinfty:comp}
    We have $|\nu|^{1/2}\mathcal{S}_\rho(G(F))\subseteq \mathcal{S}_\rho^{\mathrm{as}}(G(F))\cap \mathcal{C}_{\mathrm{cs}}(G(F)).$
\end{corollary}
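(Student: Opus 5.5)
We need to show that if $f\in\mathcal{S}_\rho(G(F))$, then $F:=|\nu|^{1/2}f$ lies in $\mathcal{S}_\rho^{\mathrm{as}}(G(F))$ and has compatible spectrum. By Proposition \ref{Snur:HCSchwartz}, $F\in\mathcal{S}_\rho^{\mathrm{as}}(G(F))\cap\mathcal{S}^\infty(G(F))$. Moreover, for every $(M,\sigma)$ the function $Z_0(\Ind_P^G\sigma_\chi,F,v,w)$ agrees with the absolutely convergent integral $Z(\Ind_P^G\sigma_\chi,F,v,w)$ wherever the latter converges. So it remains to verify \eqref{CS1} and \eqref{CS2} for $F$.

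For \eqref{CS1}, we use the fact that $F\in\mathcal{S}^{\mathrm{as}}_\rho(G(F))$, so $Z_0(\Ind_P^G\sigma_\chi,F,v,w)$ is $L(\tfrac12,\sigma_\chi,\rho_M)$ times a polynomial in $\chi$. This is holomorphic for $\Re\chi\in |\nu|^{-1/2}C_{\rho,M}$. This region is a translate of the open cone $C_{\rho,M}$, it contains $0$ (since $0\in|\nu|^{-1/2}C_{\rho,M}$ because $|\nu|^{1/2}\in C_{\rho,M}$), and it contains $|\nu|^s$ for $s>-1/2$. Hence it is a positive cone containing $0$, and \eqref{CS1} holds.

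For \eqref{CS2}, let $\sigma\hookrightarrow\Ind_{P'\cap M}^M\sigma'_\lambda$ with $\sigma'$ supercuspidal, and lift $w$ to $\tilde w$ via $\iota^\vee$. By transitivity of induction, $\Ind_P^G\sigma_\chi\hookrightarrow\Ind_{P'}^G\sigma'_{\lambda\chi}$, and matrix coefficients satisfy
\[
\langle(\Ind_P^G\sigma_\chi)(g)v,w\rangle=\langle(\Ind_{P'}^G\sigma'_{\lambda\chi})(g)\iota v,\tilde w\rangle
\]
pointwise in $g$. The key point is to find a positive cone on which both zeta integrals converge absolutely. The right-hand side converges absolutely by Lemma \ref{lem:rapiddecay} whenever $\lambda\chi\in C_f$, where we use the analogue for $F$, i.e.\ $C_F$. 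Since $F\in\mathcal{S}^\infty$, Proposition \ref{Cfpositive} shows that $C_F$ contains a positive cone in $\Re\Lambda_{M_0}$. Intersecting $\lambda^{-1}C_F$ with $\Re\Lambda_M$, and using that $|\nu|^r$ lies in it for large $r$ (as in the proof of Proposition \ref{Cfpositive}), we obtain a positive cone $C'\subseteq\Re\Lambda_M$. The intersection of $C'$ with the cone from \eqref{CS1} is again open and nonempty, since both contain $|\nu|^r$ for large $r$ and both are open. To make this a genuine positive cone, we shrink to a translate of a small open cone around the ray $|\nu|^r$.

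On this cone, the integrands agree pointwise and both integrals are absolutely convergent. Hence $Z(\Ind_P^G\sigma_\chi,F,v,w)=Z(\Ind_{P'}^G\sigma'_{\lambda\chi},F,\iota v,\tilde w)$. By \eqref{eq:Z=Z0} and the identity principle, each side equals the corresponding $Z_0$ extension from \eqref{CS1}, which is the required identity.

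The main subtlety is this last identification. The extension in \eqref{CS1} for the pair $(M',\sigma')$ at the nonunitary point $\lambda\chi$ must be matched with the absolutely convergent integral. One must check that the domain of convergence and the domain of holomorphy of $Z_0$ overlap in a connected open set meeting $\Im\Lambda_{M'}$ after translation, so that the identity principle applies. Using that all the domains contain the rays $|\nu|^r$ and are convex handles this.
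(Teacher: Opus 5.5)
Your proposal is correct and follows the paper's route. The paper deduces the corollary from Proposition \ref{Snur:HCSchwartz} (which gives membership in $\mathcal{S}_\rho^{\mathrm{as}}(G(F))$, hence \eqref{CS1}, and the identification $Z=Z_0$) together with the argument of Proposition \ref{Cfpositive} and Lemma \ref{lem:cs:subquo}: matrix coefficients of $\Ind_P^G\sigma_\chi$ are matrix coefficients of $\Ind_{P'}^G\sigma'_{\lambda\chi}$, both zeta integrals converge absolutely on a common open region, and the identity principle finishes, exactly as you spell out.

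Your final step of shrinking to a genuine positive cone is not needed. Both sides of \eqref{CS2} are rational functions in $\chi$, so agreement on any nonempty open set already forces equality.
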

\begin{proof}
    This follows from  \hyperref[Snur:HCSchwartz]{Proposition \ref{Snur:HCSchwartz}} and the proof of \hyperref[Cfpositive]{Proposition \ref{Cfpositive}} (see also \hyperref[lem:cs:subquo]{Lemma \ref{lem:cs:subquo}} below).
\end{proof}

Choose a finite $W(G,A_0)$-stable generating set $\calB_0\subset X^\ast(M_0)$ of the cone $\overline{C}_{\rho,M_0}$ that contains $\calB_G.$ For $r\in \RR,$ define
\begin{align*}
    S_0(r):=K\left\{ a\in M_0(F): |\tau|(a)\le q^r \textrm{ for all } \tau\in \calB_0\right\}K.
\end{align*}

\begin{lemma}\label{supporttrunc}
    Let $f\in \mathcal{C}(G(F)).$ For any $r\in \RR,$ $\mathbf{1}_{S_0(r)}f\in \mathcal{S}^\infty(G(F)).$ 
\end{lemma}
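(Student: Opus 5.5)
By \hyperref[Cfpositive]{Proposition \ref{Cfpositive}}, it suffices to show two things for $f':=\mathbf{1}_{S_0(r)}f$. First, $f'\in C^\infty_u(G(F))$. Second, the set $C_{f'}=\{\chi\in\Re\Lambda_{M_0}: f'\Xi_\chi\in L^1(G(F))\}$ contains a positive cone. I would take the candidate cone to be $C_{\rho,M_0}$, which is a positive cone by the discussion after \eqref{CrhoM:dualcone:defn}.

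The first point is immediate. By definition $S_0(r)$ is a union of double cosets $KaK$. Such double cosets are open and closed, so $\mathbf{1}_{S_0(r)}$ is bi-$K$-invariant and locally constant. Hence $f'$ is smooth and $K\times K$-finite whenever $f$ is.

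For the integrability I would use the Cartan decomposition \eqref{decomp:group:Cartan} together with the volume bound \eqref{Cartan:measure}. This gives
\begin{align*}
\int_{G(F)}|f'(g)|\Xi_\chi(g)\,dg \ll \sum_{\substack{a\in\Omega^+\\ |\tau|(a)\le q^r\ \forall\tau\in\calB_0}} \delta_{P_0}^{-1}(a)\sup_{g\in KaK}|f(g)|\,\Xi_\chi(g).
\end{align*}
Since $f\in\mathcal{C}(G(F))$, for every $d$ we have $|f(g)|\ll_d \Xi(g)\sigma(g)^{-d}$. Both $\Xi$ and $\Xi_\chi$ are bi-$K$-invariant, and $\sigma$ is bi-$K$-invariant by our choice of $\iota$. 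So the supremum over $KaK$ reduces to evaluation at $a$.

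Next I would invoke the standard estimates for zonal spherical functions with real $\chi$. For $a\in\Omega^+$ these are
\begin{align*}
\Xi(a)\ll\delta_{P_0}^{1/2}(a)\sigma(a)^{D},\qquad \Xi_\chi(a)\ll \delta_{P_0}^{1/2}(a)\sigma(a)^{D}\max_{w\in W(G,A_0)}(w\chi)(a),
\end{align*}
for a constant $D$ depending only on $G$. They follow from Macdonald's formula, or from Casselman's asymptotics of $\Ind_{P_0}^G\chi$ locally uniformly in $\chi$. Obtaining them uniformly on the relevant range of $\chi$ is the step I expect to be the main technical point; if necessary I would restrict to a compact neighbourhood of each fixed $\chi$. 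With these bounds the modular factors cancel, and the sum is bounded by
\begin{align*}
\sum_{a}\sigma(a)^{2D-d}\max_w (w\chi)(a),
\end{align*}
with $a$ ranging over the same set as before.

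It remains to bound $\max_w(w\chi)(a)$ uniformly on this set, and here the hypothesis that $\calB_0$ is $W(G,A_0)$-stable enters. Let $\chi\in C_{\rho,M_0}$. Since $\calB_0$ generates $\overline{C}_{\rho,M_0}$ and is $W$-stable, each $w\chi$ can be written as $\sum_{\tau\in\calB_0}c_{w,\tau}\tau$ with $c_{w,\tau}\ge 0$. Then
\begin{align*}
(w\chi)(a)=\prod_\tau |\tau|(a)^{c_{w,\tau}}\le q^{r\sum_\tau c_{w,\tau}},
\end{align*}
which is a constant independent of $a$. Finally, the number of $a\in\Omega^+$ with $\sigma(a)\le n$ grows polynomially in $n$. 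Taking $d$ large therefore makes the sum converge, so $C_{\rho,M_0}\subseteq C_{f'}$, which proves the lemma.
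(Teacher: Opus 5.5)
Your proof is correct and is essentially the paper's argument. Both use the reduction via Proposition \ref{Cfpositive}, the Cartan decomposition with \eqref{Cartan:measure}, the bound $\Xi(a)\ll\delta_{P_0}^{1/2}(a)\sigma(a)^{D}$, a Macdonald-type bound for $\Xi_\chi$, and the $W(G,A_0)$-stability of $\calB_0$ to make the exponential factor bounded on $S_0(r)$, which leaves a polynomially convergent sum.

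The only difference is the range of $\chi$. The paper works with regular dominant $\chi\in C_{\rho,M_0}\cap X^*(M_0)^+_\RR$, where Macdonald's formula has finite $c$-functions, whereas you want all of $C_{\rho,M_0}$. The step you flagged as the main technical point is easy. No uniformity in $\chi$ is needed, since membership in $C_{f'}$ is checked pointwise. Moreover, Hölder's inequality applied to $\Xi_\chi(g)=\int_K(\chi\delta_{P_0}^{1/2})(\mathbf{a}(kg))\,dk$ gives $\Xi_{\chi_1^t\chi_2^{1-t}}\le\Xi_{\chi_1}^t\Xi_{\chi_2}^{1-t}$, so $C_{f'}$ is convex and the regular $\chi$ suffice. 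Covering all of $C_{\rho,M_0}$ is genuinely worth doing: $\nu$ pairs to zero with every coroot, so the ray $|\nu|^s$ required in (C2) of Definition \ref{def:positivecone} lies outside $X^*(M_0)^+_\RR$ as soon as $\Delta\neq\emptyset$.
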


\begin{proof}
    By \hyperref[Cfpositive]{Proposition \ref{Cfpositive}} it suffices to show for $d>0$ large
    \begin{align*}
        \int_{S_0(r)} \Xi(g)\Xi_\chi(g)\sigma(g)^{-d}dg
    \end{align*}
    is convergent for $\chi$ in a positive cone in $\Re\Lambda_{M_0}.$ Assume $\chi\in X^\ast(M_0)^+_\RR$ so that it is regular, i.e., $s.\chi\neq \chi$ for all $1\neq s\in W(G,A_0).$ Let $c(\chi):=\prod_{\substack{\alpha\in \Sigma}} c_\alpha(\chi),$ where $c_\alpha(\chi)$ is the Harish-Chandra $c$-function
\begin{align*}
    c_\alpha(\chi):=\frac{\big(1-q_{\alpha/2}^{-1/2}q_\alpha^{-1}q^{-\langle \chi,\alpha^\lor\rangle}\big)\big(1+q^{-1/2}_{\alpha/2}q^{-\langle \chi,\alpha^\lor\rangle}\big)}{1-q^{-2\langle \chi,\alpha^\lor\rangle}}.
\end{align*}
We refer one to \cite[\S 1]{Casselman:unramifiedI} for the definition of $q_{\alpha/2}$ and $q_{\alpha}.$ By \cite[Theorem 4.2]{Casselman:unramifiedI}, for $m\in \Omega^+$
\begin{align*}
    \Xi_{\chi}(m)=\frac{1}{\gamma(G|M_0)}q^{-\langle \eta_G,m\rangle}\sum_{s\in W(G,A_0)} c(s.\chi)q^{-\langle s.\chi,m\rangle}.
\end{align*}
For $\chi\in X^\ast(M_0)_\RR^+,$ we have
\begin{align*}
    \Xi_\chi(m)\ll  q^{-\langle\eta_G,m\rangle}\max_{s\in W(G,A_0)}q^{-\langle s.\chi,m\rangle}.
\end{align*}

 By \cite[Lemme II.1.1]{Waldspurger:Plancherel}, there is $d'>0$ such that for all $m\in \Omega^+,$ $\Xi(m)\ll \delta^{1/2}_{P_0}(m)\sigma(m)^{d'}.$ Using the Cartan decomposition, we have
    \begin{align*}
         \int_{S_0(r)} \Xi(g)\Xi_\chi(g)\sigma(g)^{-d}dg&\ll \sum_{\substack{m\in \Omega^+\\
        \langle \tau,m\rangle\ge -r, \,\forall\tau\in \calB_0}} \max_{s\in W(G,A_0)}q^{-\langle s.\chi,m\rangle}\sigma(m)^{-d+d'}\\
        &\le \sum_{\substack{m\in \Omega\\
        \langle \tau,m\rangle\ge -r, \,\forall\tau\in \calB_0}} q^{-\langle \chi,m\rangle}\sigma(m)^{-d+d'}.
    \end{align*}
     Since $\calB_0$ generates $\overline{C}_{\rho,M_0},$ for $d>d'$ large, the sum converges for $\chi\in C_{\rho,M_0}$. Note that $C_{\rho,M_0}\cap X^\ast(M_0)_\RR^+$ is nonempty by \hyperref[intertwining:posCone]{Lemma \ref{intertwining:posCone}}. As both $C_{\rho,M_0}$ and $X^\ast(M_0)_\RR^+$ are open cones that are positive, we conclude that $C_{\rho,M_0}\cap X^\ast(M_0)_\RR^+$ is an open cone that is positive. 
\end{proof}

\begin{theorem}\label{Schwartz=as+cs}
   We have 
\begin{align*}|\nu|^{1/2}\mathcal{S}_\rho(G(F))=\mathcal{S}_{\rho}^{\mathrm{as}}(G(F))\cap  \mathcal{C}_{\mathrm{cs}}(G(F))=\mathcal{S}_\rho^{\mathrm{as}}(G(F))\cap C^\infty_{\mathrm{ac}}(G(F)).
   \end{align*}
   Furthermore, for $f\in \mathcal{S}_\rho(G(F)),$ $\mathrm{supp}(f)\subseteq S_0(r)$ for some $r.$
\end{theorem}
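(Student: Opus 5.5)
We prove the chain of inclusions
\begin{align*}
|\nu|^{1/2}\mathcal{S}_\rho(G(F))\subseteq \mathcal{S}_{\rho}^{\mathrm{as}}(G(F))\cap \mathcal{C}_{\mathrm{cs}}(G(F))\subseteq \mathcal{S}_\rho^{\mathrm{as}}(G(F))\cap C^\infty_{\mathrm{ac}}(G(F))\subseteq |\nu|^{1/2}\mathcal{S}_\rho(G(F)).
\end{align*}
The first inclusion is Corollary \ref{Sinfty:comp}. The second is Corollary \ref{Sas+csisac}. By Proposition \ref{Snur:HCSchwartz}, the third reduces to showing that
\[
f\in \mathcal{S}_\rho^{\mathrm{as}}(G(F))\cap C^\infty_{\mathrm{ac}}(G(F))\ \Longrightarrow\ f\in \mathcal{S}^\infty(G(F)).
\]
The support statement will come out of this third step. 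Concretely, I will show that such $f$ satisfies $f=\mathbf{1}_{S_0(r)}f$ for some $r$. Then Lemma \ref{supporttrunc} (applicable since $f\in\mathcal{C}(G(F))$) gives $f\in\mathcal{S}^\infty(G(F))$. Also $|\nu|^{-1/2}$ preserves $S_0(r)$ up to shifting $r$, since $\nu$ lies in the cone spanned by $\mathcal{B}_0$ up to scaling.

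So the key step is a support bound. Let $f\in\mathcal{S}_\rho^{\mathrm{as}}(G(F))\cap C^\infty_{\mathrm{ac}}(G(F))$ be bi-$K_0$-invariant. I will show there is an $r$, uniform in $\alpha\in\Omega_G$, such that $\supp(f\mathbf{1}_\alpha)\subseteq S_0(r)$. Each $f_\alpha:=f\mathbf{1}_\alpha$ lies in $C_c^\infty(G(F))$, and $\HP(f_\alpha)=e_\alpha\HP(f)$ is a polynomial section. By Lemma \ref{alpha:supp:1}, its matrix coefficients
\[
Z_0(\Ind_P^G\sigma_\chi,f_\alpha,v,w)=\sum_\beta a_{\beta,v,w}q^{-\langle\chi,\beta\rangle}
\]
only involve exponents $\beta$ with $\langle\tau,\beta\rangle\ge -r$ for all $\tau\in\mathcal{B}_0$, with $r$ independent of $\alpha$. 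This is a "spectral support" bound. I then have to convert it into a geometric support bound for $f_\alpha$, i.e. a Paley--Wiener statement with control on support.

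I would mimic Heiermann's proof of Theorem \ref{BH:PW}. Since $f_\alpha$ is compactly supported, its zeta integrals against supercuspidally induced representations converge for all $\chi$, so no continuation issue arises. I would then recover $f_\alpha$ on a Cartan cell $KmK$ with $m\in\Omega^+$ via constant terms. By Casselman's theory and \eqref{CT:spectral}, the asymptotics of $f_\alpha$ along $m\in\Omega^+$ are governed by $f_\alpha^{(P_0)}$ and, more generally, by $f_\alpha^{(P)}$ restricted to $A_M^+$-directions. The constant terms of a compactly supported function are compactly supported modulo nothing. By Lemma \ref{constant:zeta:identity}, the zeta integrals of $f_\alpha^{(P)}$ against cuspidal $\sigma_\chi$ are finite combinations of those of $f_\alpha$. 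For a cuspidal datum on $M$, the Fourier coefficients in $\chi$ are exactly integrals of $f^{(P)}_\alpha$ against a fixed matrix coefficient of $\sigma$ over $\Omega_M$-cosets. The exponent bound therefore gives: $f_\alpha^{(P)}(m)$ paired with cuspidal matrix coefficients vanishes unless $\langle\tau,H_M(m)\rangle\ge -r$.

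The main obstacle is the passage from these spectral vanishing statements to pointwise vanishing of $f_\alpha$ on $KmK$ when $m$ is outside $S_0(r)$. The Bernstein decomposition of $C_c^\infty(G(F)/\!/K_0)$ reduces this to finitely many cuspidal blocks. Within a block, I would use the Plancherel-type inversion $f_\alpha(g)=\sum_{(M,\mathcal{O})}\int \tr(\cdots)$ along the unitary axes of the Bernstein components. This is Heiermann's inversion formula. Shifting the contour to $\Re\chi$ deep in $C_{\rho,M}\cap X^*(M)^+_{\mathbb{R}}$ (nonempty by Lemma \ref{intertwining:posCone}) is allowed because everything is polynomial in $\chi$. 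The residues from poles of the intertwining operators and Plancherel density lie on hyperplanes as in Lemma \ref{lem:J}, and they must be tracked; I expect this bookkeeping to be the hardest point. After the shift, the integrand on $KmK$ is bounded by $q^{-\langle\chi,m\rangle}$ times polynomial data. Letting $\Re\chi\to\infty$ inside the cone then forces vanishing whenever $\langle\tau,m\rangle<-r$ for some $\tau$, using the $W(G,A_0)$-stability of $\mathcal{B}_0$ to handle all Weyl conjugates of exponents. This yields $\supp f\subseteq S_0(r)$, completing the proof.
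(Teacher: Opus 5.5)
Your reduction is the paper's: the first two inclusions are Corollaries \ref{Sinfty:comp} and \ref{Sas+csisac}, and the third follows from a support bound uniform in $\alpha$ (Proposition \ref{prop:key}) via Lemma \ref{supporttrunc} and Proposition \ref{Snur:HCSchwartz}. Incidentally, multiplying by $|\nu|^{-1/2}$ does not change supports, so no shift of $r$ is needed. That support bound, however, is the whole content of the theorem, and the mechanism you sketch for it does not work.

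In Heiermann's inversion formula, the integrand attached to $a\in A_0^+$ is $\chi(\alpha)^{-1}p(\chi)\langle(\Ind_P^G\sigma_\chi)(a)J_{\overline{P}|P}(\sigma_\chi)^{-1}\bar v,w\rangle$. This is rational, not polynomial, in $\chi$. Starting from $\Re\chi=\mu\gg_P 0$, you can push the whole contour to infinity without meeting singular hyperplanes essentially only in $P$-dominant directions $\tilde\tau$. Along such a ray the matrix coefficient is not of size $q^{-\langle\chi,m\rangle}$. It is controlled by the worst of the exponents $s.(\chi\tilde\tau^n)$ of $\Ind_P^G\sigma_{\chi\tilde\tau^n}$, roughly $q^{-n\min_s\langle s.\tilde\tau,H_0(a)\rangle}$. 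For $a$ deep in the chamber this grows (e.g.\ the $w_0$-term of a principal series), so letting $n\to\infty$ gives no vanishing at all.

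What you need is an upper bound on $\max_{\tau\in\calB_0}\langle\tau,H_0(a)-\alpha\rangle$, i.e.\ decay of every Weyl term simultaneously. The $W(G,A_0)$-stability of $\calB_0$ cannot give this; it only helps with the exponents of $p(s^{-1}.\chi)$ once the Weyl terms have been separated. Crossing the poles and ``tracking residues'' is not bookkeeping either: the residues are new functions of $a$ whose supports you would have to control all over again.

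The missing idea is the one in Lemma \ref{lemma:support}, and it avoids residues entirely.
\begin{enumerate}
\item Split $A_0^+$ into the regions $A_0^+(\theta,t,h_\theta(t))$.
\item On each region, Heiermann's asymptotic formula \cite[Th\'eor\`eme 1.3.1]{Heiermann:HeckePlan} writes the matrix coefficient as a sum over $s\in W$ of terms $c_{P'|P}(\sigma_\chi,s)(\cdots)(a)$. After $\chi\mapsto s^{-1}.\chi$, each term is $\chi(a_\theta)$ times $p(s^{-1}.\chi)r_a(s^{-1}.\chi)$, where $r_a$ is rational and independent of $a_\theta$.
\item By Lemma \ref{lem:J} and \cite[Lemme 1.3.2]{Heiermann:HeckePlan}, the poles of $r_a$ lie on hyperplanes $\langle\chi,\gamma^\vee\rangle=R$ with $\gamma\in\Sigma(P)\cap\Sigma(P'_s)$. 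This pole control lets you move each Weyl term separately, in the twisted variable, by $n\tilde\tau$ with $\tilde\tau\in\mathrm{conv}(\calB_0)\cap X^*(M')^+_\RR$, without crossing any pole. The result is vanishing unless $\langle\tilde\tau,H_0(a_\theta)-\alpha\rangle\le c$.
\item The elementary projection lemma in that proof passes from $\tilde\tau\in X^*(M')^+_\RR$ to all of $\mathrm{conv}(\calB_0)$.
\item A finite induction over $\theta\subseteq\Delta$ covers the neighbourhoods of walls, where the asymptotic formula is unavailable.
\item The a priori bound \eqref{support:alpha:bound} absorbs the compact remainder $A_0^+(\Delta,t_1)$, keeping every constant independent of $\alpha$.
\end{enumerate}
None of this appears in your sketch, so the inclusion $\mathcal{S}_\rho^{\mathrm{as}}(G(F))\cap C^\infty_{\mathrm{ac}}(G(F))\subseteq|\nu|^{1/2}\mathcal{S}_\rho(G(F))$ and the support statement remain unproved.
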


The inclusions 
\begin{align*}
|\nu|^{1/2}\mathcal{S}_\rho(G(F))\subseteq\mathcal{S}_{\rho}^{\mathrm{as}}(G(F))\cap  \mathcal{C}_{\mathrm{cs}}(G(F))\subseteq\mathcal{S}_\rho^{\mathrm{as}}(G(F))\cap C^\infty_{\mathrm{ac}}(G(F))
\end{align*}
follow from \hyperref[Sinfty:comp]{Corollary \ref{Sinfty:comp}} and \hyperref[Sas+csisac]{Corollary \ref{Sas+csisac}}, respectively. We will prove \hyperref[Schwartz=as+cs]{Theorem \ref{Schwartz=as+cs}} by showing that if $f\in\mathcal{S}_{\rho}^{\mathrm{as}}(G(F))\cap  C^\infty_{\mathrm{ac}}(G(F)),$ then $\mathrm{supp}(f)\subseteq S_0(r)$ for some $r$. It then follows from \hyperref[supporttrunc]{Lemma \ref{supporttrunc}} that $f\in \mathcal{S}^\infty(G(F))$ and hence $|\nu|^{-1/2}f\in \mathcal{S}_\rho(G(F))$ by \hyperref[Snur:HCSchwartz]{Proposition \ref{Snur:HCSchwartz}}. We will defer the proof to \S \ref{ssec:Supportproof}.

\begin{corollary}
   Suppose $G$ is unramified. Then $|\nu|^{-\frac{1}{2}}b_\rho\in\mathcal{S}_\rho(G(F))$. 
\end{corollary}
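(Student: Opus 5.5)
The plan is to reduce to \hyperref[Schwartz=as+cs]{Theorem \ref{Schwartz=as+cs}}. By construction $b_\rho\in\mathcal{S}^{\mathrm{as}}_\rho(G(F)/\!/K)$, so it suffices to show $b_\rho\in C^\infty_{\mathrm{ac}}(G(F))$, that is, $b_\rho\mathbf{1}_\alpha\in C_c^\infty(G(F))$ for every $\alpha\in\Omega_G$. The theorem then gives $b_\rho\in|\nu|^{1/2}\mathcal{S}_\rho(G(F))$. Since $\mathbf{1}_\alpha$ is bi-$K$-invariant, each $b_\rho\mathbf{1}_\alpha$ is again bi-$K$-invariant. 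By \eqref{HP:orthoproj:alpha} we have $\HP(b_\rho\mathbf{1}_\alpha)=e_\alpha\HP(b_\rho)$, and by the corollary to \hyperref[alpha:supp:1]{Lemma \ref{alpha:supp:1}} this is a polynomial section.

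The first step is to determine where $\HP(b_\rho\mathbf{1}_\alpha)$ is supported. We have $\HP(b_\rho)(M,\sigma)=L(\tfrac12,\sigma,\rho_M)\HP(\mathbf{1}_K)(M,\sigma)$, and $\HP(\mathbf{1}_K)(M,\sigma)$ is the projection onto $(\Ind_P^G\sigma)^K\cong(\sigma)^{K_M}$. Hence the section is supported on $\Theta^K$. I claim that $\Theta^K$ consists only of the components $(M_0,\chi)$ with $\chi\in\Im\Lambda_{M_0}$ unramified; note that $M_0$ is a torus because $G$ is quasi-split. To prove the claim, suppose $\sigma\in\Pi_2(M)$ has a nonzero $K_M$-fixed vector. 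Then $\sigma$ is a subquotient of an unramified principal series of $M(F)$. Its matrix coefficient against the spherical vector is a zonal spherical function $\Xi_{M,\lambda}$, and by Macdonald's formula (\cite[Theorem 4.2]{Casselman:unramifiedI}, used as in the proof of \hyperref[supporttrunc]{Lemma \ref{supporttrunc}}) it is not square integrable modulo the center unless $M$ has semisimple rank $0$. Alternatively, one can phrase this via Casselman's criterion applied to the exponents of $\sigma$. This step, ruling out spherical discrete series on non-minimal Levis, is where I expect the main care to be needed, in particular handling the $\Im\Lambda_M$-twists and non-reduced root systems in the $c$-function.

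Granting the claim, the restriction of $\HP(b_\rho\mathbf{1}_\alpha)$ to the $M_0$-component is the function $\chi\mapsto e_\alpha\big(L(\tfrac12,\chi,\rho_{M_0})\big)$ times the rank-one projection. This function is $W(G,A_0)$-invariant and, by \hyperref[alpha:supp:1]{Lemma \ref{alpha:supp:1}}, is a finite sum of monomials, hence lies in $\CC[\Lambda_{M_0}]^{W(G,A_0)}$. By the Satake isomorphism there is $f'\in C_c^\infty(G(F)/\!/K)$ with the same Satake transform. Then $\HP(b_\rho\mathbf{1}_\alpha-f')$ vanishes on the $M_0$-components, and it vanishes on the rest of $\Theta^K$ because that set is empty. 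Off $\Theta^K$ it vanishes as well, since both functions are bi-$K$-invariant. By the \hyperref[HCPlan]{Harish-Chandra Plancherel theorem} we conclude $b_\rho\mathbf{1}_\alpha=f'\in C_c^\infty(G(F))$.

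Therefore $b_\rho\in\mathcal{S}^{\mathrm{as}}_\rho(G(F))\cap C^\infty_{\mathrm{ac}}(G(F))$, and \hyperref[Schwartz=as+cs]{Theorem \ref{Schwartz=as+cs}} yields $|\nu|^{-1/2}b_\rho\in\mathcal{S}_\rho(G(F))$.
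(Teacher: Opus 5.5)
Your proposal is correct and follows the paper's route. The paper deduces the corollary from Theorem \ref{Schwartz=as+cs} once $b_\rho\in\mathcal{S}^{\mathrm{as}}_\rho(G(F))\cap C^\infty_{\mathrm{ac}}(G(F))$ is known, and simply cites \cite[Proposition 6.16]{DRS} for that membership, whereas you reprove it via the Satake isomorphism.

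The step you flag (no $K_M$-spherical discrete series on a proper Levi $M$) is a classical fact and need not be redone through Macdonald's formula. It is what underlies the identification $\mathcal{C}(\Temp_\Ind(G))^{K\times K}\cong C^\infty(\Im\Lambda_{M_0})^{W(G,A_0)}$ from \cite[Lemma 4.11]{DRS}, which the paper uses in Lemma \ref{Satake}. Equivalently, it is the absence of a discrete part in Macdonald's spherical Plancherel formula.
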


\begin{proof}
    This follows from \cite[Proposition 6.16]{DRS} which shows  $b_\rho\in \mathcal{S}_\rho^{\mathrm{as}}(G(F))\cap C^\infty_{\mathrm{ac}}(G(F)).$ Alternatively $b_\rho\in \mathcal{S}_\rho^{\mathrm{as}}(G(F))\cap \mathcal{C}_{\mathrm{cs}}(G(F))$ by the classification of irreducible unramified representations of $G(F)$. 
\end{proof}

\begin{remark}\label{remark:topology}
    While it is common not to equip Schwartz spaces with any topology for non-Archimedean local fields\footnote{It is also customary to simply give Schwartz spaces the discrete topology so that every topological statement regarding continuity becomes trivial. However, discrete topology is not the correct topology in the functional analysis perspective.}, we point out that there is a natural topology on $\mathcal{S}_{\rho}^{\mathrm{as}}(G(F))\cap  \mathcal{C}_{\mathrm{cs}}(G(F))$  given as follows.
    
    First, we equip $\mathcal{S}_\rho^{\mathrm{as}}(G(F))$ with a topology. Let $K_0\le K$ be a compact open subgroup. For a function $f\in \mathcal{S}_\rho^{\mathrm{as}}(G(F)/\!/K_0),$ $\mathrm{HP}(f)$ is supported on $\Theta^{K_0}.$  Choose a point $(M,\sigma)$ in each connected component $\mathcal{O}$ of $\Theta^{K_0}.$ By \eqref{LambdaG:poly:supp} we may write
    \begin{align*}
        \frac{Z_0(\Ind^G_P \sigma_\chi, f,v,w)}{L(\frac{1}{2},\sigma_\chi,\rho_M)} = \sum_{\alpha\in S} c_\alpha(\sigma) q^{-\langle\chi,\alpha\rangle} 
    \end{align*}
    for some finite subset $S\subseteq \Omega_{M}$. In this case we say $\frac{Z_0(\Ind^G_P \sigma_\chi, f,v,w)}{L(\frac{1}{2},\sigma_\chi,\rho_M)}$ is supported on $S$. Then
    we can write $\mathcal{S}_\rho^{\mathrm{as}}(G(F)/\!/K_0)$ as a countable union of finite-dimensional vector spaces
    \begin{align*}
        \bigcup_{\mathcal{O}\subseteq \Theta^{K_0}}\bigcup_{\substack{S\subseteq \Omega_{M}\\\#S<\infty}} \bigcup_{v,w}\left\{ f\in \mathcal{S}_\rho^{\mathrm{as}}(G(F)/\!/K_0):  \frac{Z_0(\Ind^G_P \sigma_\chi, f,v,w)}{L(\frac{1}{2},\sigma_\chi,\rho_M)}\in \CC[\Lambda_M] \textrm{ is supported on $S$}\right\},
    \end{align*}
    where $v,w$ range over fixed (finite) bases of $\left(\mathrm{Ind}_{K_P}^K \sigma|_{K_M}\right)^{K_0}$ and $\left(\mathrm{Ind}_{K_P}^K \sigma^\lor|_{K_M}\right)^{K_0}$ respectively.
Therefore, we can equip $\mathcal{S}_\rho^{\mathrm{as}}(G(F)/\!/K_0)$ with the inductive limit topology. This topology is independent of the choices of the base point of each orbit. The space $\mathcal{S}_\rho^{\mathrm{as}}(G(F)/\!/K_0)$ is an algebraic topological space in the sense of \cite[Definition 4.1.1]{Li:zeta}.

Therefore, $\mathcal{S}_{\rho}^{\mathrm{as}}(G(F))=\bigcup_{K_0\le K} \mathcal{S}_\rho^{\mathrm{as}}(G(F)/\!/K_0)$ with the inductive limit topology is also an algebraic topological space. We equip $\mathcal{S}_\rho^{\mathrm{as}}(G(F))\cap \mathcal{C}_{\mathrm{cs}}(G(F))$ with the subspace topology inherited from $\mathcal{S}_{\rho}^{\mathrm{as}}(G(F))$. It is a closed subspace of $\mathcal{S}_{\rho}^{\mathrm{as}}(G(F))$. Note that this topology is strictly finer than the subspace topology inherited from $\mathcal{C}(G(F)).$ For instance, $C^\infty_c(G(F))$ is a closed algebraic topological subspace of $\mathcal{S}_\rho^{\mathrm{as}}(G(F))\cap \mathcal{C}_{\mathrm{cs}}(G(F)).$
\end{remark}

Due to \hyperref[Snur:HCSchwartz]{Proposition \ref{Snur:HCSchwartz}} and \hyperref[Schwartz=as+cs]{Theorem \ref{Schwartz=as+cs}}, in the rest of the paper, except for \S \ref{ssec:Supportproof}, we will constantly identify $Z_0$ with $Z$ for $f\in \mathcal{S}_{\rho}^{\mathrm{as}}(G(F))\cap  \mathcal{C}_{\mathrm{cs}}(G(F))=\mathcal{S}_\rho^{\mathrm{as}}(G(F))\cap C^\infty_{\mathrm{ac}}(G(F))$.

\subsection{Fourier transforms}

In this paper, we do not prove the Schwartz space $\mathcal{S}_\rho^{\mathrm{as}}(G(F))\cap C^\infty_{\mathrm{ac}}(G(F))$ is stable under the Fourier transform $\mathcal{F}_\rho$. Instead, we show that being stable under $\mathcal{F}_\rho$ is equivalent to the following natural assumption on $\gamma$-factors of non-supercuspidal discrete series and $\gamma$-factors of their supercuspidal supports.

\begin{enumerate}[label=(LLC\text{\arabic*}), ref=LLC\text{\arabic*}] \setcounter{enumi}{6}
    \item\label{LLC:gamma} For $P=MN\in\mathcal{P}$, $\sigma\in\Pi_0(M)$ and $\pi\in\Pi_2(G)$ such that $\pi\leq \Ind_P^G(\sigma_\lambda)$ for some $\lambda\in \Lambda_M$, one has $\Lambda_M\ni\chi\mapsto \gamma(\tfrac{1}{2},\sigma_\chi,\rho_M,\psi)$ is holomorphic at $\chi = \lambda$ and 
    \begin{align*}
        \gamma(\tfrac{1}{2},\pi,\rho,\psi) = \gamma(\tfrac{1}{2},\sigma_\chi,\rho_M,\psi)\vert_{\chi = \lambda}.
    \end{align*}
\end{enumerate}

\begin{lemma}[{\cite[Proposition 3.4]{DRS}}] \label{lem:LLC7:subquo}   Assume \eqref{LLC:gamma} holds for $G$. Then for $P=MN\in\mathcal{P},\,\sigma\in\Pi_0(M)$  and $\pi\in\Pi_2(G)$ such that $\pi$ is a subquotient of $\Ind_P^G\sigma_\lambda$ for some $\lambda\in \Lambda_M$, one has $\Lambda_M\ni\chi\mapsto \gamma(\tfrac{1}{2},\sigma_\chi,\rho_M,\psi)$ is holomorphic at $\chi = \lambda$ and 
    \begin{align*}
        \gamma(\tfrac{1}{2},\pi,\rho,\psi) = \gamma(\tfrac{1}{2},\sigma_\chi,\rho_M,\psi)\vert_{\chi = \lambda}.
    \end{align*}
\end{lemma}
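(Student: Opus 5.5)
Our plan is to reduce the subquotient case to the subrepresentation case \eqref{LLC:gamma}, using the Langlands/Zelevinsky-type fact that every irreducible subquotient of $\Ind_P^G\sigma_\lambda$ embeds into some $\Ind_{P'}^G\sigma'_{\lambda'}$ with the same cuspidal support up to the Weyl group. Concretely, if $\pi\in\Pi_2(G)$ is a subquotient of $\Ind_P^G\sigma_\lambda$, then by Bernstein's theory of cuspidal supports (and the discussion of the Bernstein variety before \hyperref[def:cs]{Definition \ref{def:cs}}) there exist $s\in W(G,A_0)$ and a semi-standard parabolic $P'=s.M\,N'$ such that $\pi\hookrightarrow \Ind_{P'}^G (s.\sigma)_{s.\lambda}$. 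Here we use that the Jacquet module $r_{P'}(\pi)$ is nonzero for a suitable $P'$ with Levi $s.M$, together with the Geometric lemma, which shows that its cuspidal constituents are Weyl conjugates of $\sigma_\lambda$. Frobenius reciprocity then gives the embedding. Conjugating $P'$ to a standard parabolic by an element of $W(G,A_0)$ if needed, \eqref{LLC:gamma} applies to $(s.M, s.\sigma, s.\lambda)$. It yields holomorphy of $\chi\mapsto\gamma(\tfrac12,(s.\sigma)_\chi,\rho_{s.M},\psi)$ at $\chi=s.\lambda$, together with the identity $\gamma(\tfrac12,\pi,\rho,\psi)=\gamma(\tfrac12,(s.\sigma)_{s.\lambda},\rho_{s.M},\psi)$.

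It then remains to show Weyl invariance of $\gamma$-factors:
\begin{align*}
\gamma(\tfrac12,(s.\sigma)_{s.\chi},\rho_{s.M},\psi)=\gamma(\tfrac12,\sigma_\chi,\rho_M,\psi)
\end{align*}
as meromorphic functions of $\chi\in\Lambda_M$. With this identity, holomorphy at $\lambda$ and the value both transfer back. To prove it, we use \eqref{LLC:para} twice: the representation $\Ind_P^G\sigma_\chi\cong\Ind_{s.P}^G (s.\sigma)_{s.\chi}$ (via $\lambda(s)$), and for generic $\chi$ its irreducible subquotients have $L$-parameter $({}^LM\to{}^LG)\circ\mathrm{LL}_M(\sigma_\chi)$ as well as $({}^L(s.M)\to{}^LG)\circ \mathrm{LL}_{s.M}((s.\sigma)_{s.\chi})$. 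Here we apply \eqref{LLC:para} to the tempered case at $\Re\chi=0$ and then extend by \eqref{LLC:twist}. The two parameters into ${}^LG$ are therefore equivalent, so composing with $\rho$ gives isomorphic representations of $W_F\times\SL_2$, and the $\gamma$-factors agree for $\chi\in\Im\Lambda_M$. Since both sides are rational in $\chi$ by \eqref{Lfactor:factorize}-type expansions, the identity principle finishes this step.

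The main obstacle is the first step: producing an embedding rather than only a subquotient relation while keeping control of the exponent $\lambda$. We would try the Jacquet-module/Frobenius reciprocity argument first. Alternatively, since this is quoted from \cite[Proposition 3.4]{DRS}, one can follow their argument, which runs along the same lines. A minor point is that \eqref{LLC:para} as stated concerns subquotients of unitary inductions; we handle this via twisting by $\Lambda_M$ and rationality.
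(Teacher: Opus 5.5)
Your argument is correct and rests on the same idea as the paper: upgrade the subquotient relation to an embedding into an induction with the same cuspidal support, then invoke \eqref{LLC:gamma}. The difference is a detour you do not need.

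The paper cites \cite[Corollaire VI.5.4]{Representation-p-adic} to get some $Q\in\mathcal{P}(M)$ with $\pi\hookrightarrow\Ind_Q^G\sigma_\lambda$, with the same Levi $M$ and the same $\sigma_\lambda$. Since \eqref{LLC:gamma} is formulated for all semi-standard $P\in\mathcal{P}$, it applies verbatim and the proof ends there.

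Your first step already yields this. Composing $\pi\hookrightarrow\Ind_{P'}^G(s.\sigma)_{s.\lambda}$ with the isomorphism $\lambda(s^{-1})$ gives $\pi\hookrightarrow\Ind_{s^{-1}.P'}^G\sigma_\lambda$, and $s^{-1}.P'\in\mathcal{P}(M)$. It is only your decision to move to a \emph{standard} parabolic that forces the second step on Weyl invariance of $\gamma$-factors.

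That second step is sound. You compare both parameters with $\mathrm{LL}_G$ of a common tempered subquotient at unitary $\chi$ via \eqref{LLC:para}, then pass to all $\chi$ by \eqref{LLC:twist} and rationality. The paper asserts the same invariance, citing \eqref{LLC:para}, at the end of the proof of Proposition \ref{LLC7=Fourierstable}.

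What your route buys is a slightly stronger statement: \eqref{LLC:gamma} need only be assumed for standard parabolics. The cost is an extra appeal to \eqref{LLC:para} and \eqref{LLC:twist} that the paper's two-line proof avoids. Your derivation of the embedding from Jacquet's subrepresentation theorem plus the Geometric lemma simply reproves the cited corollary: choose $P'$ so that $r_{P'}\pi\neq 0$ has only supercuspidal constituents, take an irreducible quotient, and apply Frobenius reciprocity.
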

\begin{proof}  By \cite[Corollaire VI.5.4]{Representation-p-adic}, there exists $Q\in \mathcal{P}(M)$ such that $\pi$ is a subrepresentation of $\Ind_{Q}^G \sigma_\lambda$. Hence the claim follows from \eqref{LLC:gamma}.
\end{proof}

\begin{lemma}\label{lem:cs:subquo} Let $f\in \mathcal{S}_\rho(G(F))$. Let $M'\le M\in\mathcal{M}^{\mathrm{std}}$ and $(\sigma',\sigma)\in\Pi_0(M')\times \Pi_2(M)$ such that $\sigma$ is a subquotient of $\mathrm{Ind}_{P'\cap M}^{M} \sigma'_{\lambda}$ for some $\lambda\in \Lambda_{M'}$. Let $E$ be any $M(F)$-submodule of $ \mathrm{Ind}_{P'\cap M}^{M} \sigma_{\lambda}'$ such that there is a quotient map $E\to \sigma.$ View $E$ as a $K_M$-submodule of $\mathrm{Ind}_{K_{P'\cap M}}^{K_M} \sigma'|_{K_{M'}}.$ Then for $(v,w)\in \Ind_{K_P}^K\sigma\vert_{K_M}\times \Ind_{K_P}^K\sigma^\vee\vert_{K_M},$ and any lift $(v',w')\in \mathrm{Ind}_{K_P}^{K} E\times \mathrm{Ind}_{K_{P'}}^K \sigma'^\vee|_{K_M'}$ of $(v,w)$, one has an identity of meromorphic functions
    \begin{align*}
            Z(\mathrm{Ind}_P^G \sigma_\chi, f,v,w)=Z(\mathrm{Ind}_{P'}^G \sigma'_{\lambda\chi}, f,v',w').
    \end{align*}
\end{lemma}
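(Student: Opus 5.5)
The plan is to verify the identity first on a positive cone of $\Re\chi$ where both zeta integrals converge absolutely, and then extend it by the identity principle. Both sides are meromorphic functions on $\Lambda_M$ by \eqref{S2}, applied to $(M,\sigma)$ on the left and to $(M',\sigma')$ on the right.

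For convergence, recall that $f\in\mathcal{S}^\infty(G(F))$. By \hyperref[Cfpositive]{Proposition \ref{Cfpositive}}, $C_f$ contains a positive cone in $\Re\Lambda_{M_0}$. As in its proof, we have $\lambda\chi\in C_f\cap\Re\Lambda_{M'}$ for $\Re\chi$ in some positive cone $C\subseteq \Re\Lambda_M$; one takes $\chi=|\nu|^r\chi'$ with $r$ large, which is allowed since $\nu$ is central and so $|\nu|^r$ lies in every $\Re\Lambda_{M}$. \hyperref[lem:rapiddecay]{Lemma \ref{lem:rapiddecay}} then shows that
\begin{align*}
g\mapsto f(g)\langle(\Ind_{P'}^G\sigma'_{\lambda\chi})(g)u,u'\rangle
\end{align*}
is in $L^1(G(F))$ for all vectors $u,u'$. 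Shrinking $C$ if necessary, we may also assume that the left-hand side converges absolutely by \eqref{S1}.

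Now I use transitivity of induction. We have $\Ind_P^G E_\chi\subseteq \Ind_P^G\Ind_{P'\cap M}^M\sigma'_{\lambda\chi}=\Ind_{P'}^G\sigma'_{\lambda\chi}$, compatibly with the $K$-models, and $\Ind_P^G E_\chi$ surjects $G(F)$-equivariantly onto $\Ind_P^G\sigma_\chi$ via the quotient $E\to\sigma$. Dually, $w'$ restricts to a functional on $\Ind_P^G E_\chi$, and the statement that $w'$ lifts $w$ means exactly that this restriction is the pullback of $w$ along that quotient. Hence for $g\in G(F)$,
\begin{align*}
\langle(\Ind_{P'}^G\sigma'_{\lambda\chi})(g)v',w'\rangle=\langle(\Ind_P^G\sigma_\chi)(g)v,w\rangle,
\end{align*}
because $v'\in\Ind_P^G E$ maps to $v$ and $g$ preserves the subspace $\Ind_P^G E_\chi$. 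The point to check is that these identifications are independent of $\chi$ in the compact models. They are: twisting by $\chi\in\Lambda_M$ acts on $E$ by the scalar character $\chi$, and $\chi|_{M'}$ is exactly the twist on the $\sigma'$ side, so the restriction-to-$K$ isomorphisms are compatible with the inclusion $E\subseteq\Ind_{K_{P'\cap M}}^{K_M}\sigma'|_{K_{M'}}$.

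Integrating the pointwise identity of matrix coefficients against $f$ on the cone $C\times\Im\Lambda_M$ gives the equality of absolutely convergent zeta integrals there. The identity principle then extends it to an identity of meromorphic functions on $\Lambda_M$.

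The main obstacle is the bookkeeping in the second paragraph. One must check that the lift condition on $(v',w')$ together with the twist-independence of the $K$-models gives the pointwise equality of matrix coefficients for every $\chi$ in the cone and not merely at $\chi=1$. One must also confirm that the positive cone can be chosen uniformly, using that $\Re\Lambda_M\hookrightarrow\Re\Lambda_{M'}$ sends positive cones into sets containing $|\nu|^r$ for $r$ large. If $E$ is not $K_M$-stable in a way compatible with the twists, I would instead replace $E$ by the finite-length $M(F)$-module it generates, which is still twist-equivariant.
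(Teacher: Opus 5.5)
Your proposal follows the paper's proof: the lift condition gives the pointwise identity $\langle(\Ind_P^G\sigma_\chi)(g)v,w\rangle=\langle(\Ind_{P'}^G\sigma'_{\lambda\chi})(g)v',w'\rangle$ for all $g$ and $\chi$, so the two zeta integrals agree on a positive cone where both converge absolutely, and the identity principle extends this to an identity of meromorphic functions. Because the two integrands coincide pointwise, absolute convergence of the right-hand side on that cone already follows from \eqref{S1} for $(M,\sigma)$, so you do not need the detour through $C_f$ and Lemma \ref{lem:rapiddecay} for that step.
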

\begin{proof} 

Let $v'\in \mathrm{Ind}_{K_P}^K E$ be a lift of $v$ and $w'\in \mathrm{Ind}_{K_{P'}}^K \sigma'^\vee|_{K_M'}$ be a lift of $w$ via
\begin{align*}
    \Ind_{K_{P'}}^K\sigma'^\lor|_{K_{M'}}\longrightarrow (\mathrm{Ind}_{K_P}^K E)^\lor \hookleftarrow \mathrm{Ind}_{K_P}^K \sigma^\lor|_{K_M}.
\end{align*}
Note that the natural pairing between $\mathrm{Ind}_{K_{P'}}^K\sigma'|_{K_{M'}}$ and $\mathrm{Ind}_{K_{P'}}^K\sigma'^\lor|_{K_{M'}}$ is perfect. Therefore
\begin{align*}
    \langle (\mathrm{Ind}_P^G\sigma_\chi)(g) v,w\rangle=\langle (\mathrm{Ind}_{P'}^G\sigma'_{\chi\lambda'})(g)v',w'\rangle
\end{align*}
for all $g\in G(F),$ so
   \begin{align*}
            Z(\mathrm{Ind}_P^G \sigma_\chi, f,v,w)=Z(\mathrm{Ind}_{P'}^G \sigma'_{\lambda\chi}, f,v',w')
    \end{align*}
for $\mathrm{Re}(\chi)$ in a positive cone $C$ in $\Re\Lambda_M$, where both integrals are absolutely convergent. The assertion then follows from the identity principle.
\end{proof}

\begin{proposition}\label{LLC7=Fourierstable}
The following are equivalent.
\begin{enumerate}
    \item \eqref{LLC:gamma} holds for all semi-standard Levi subgroups $M$ of $G$.
    \item $\mathcal{S}^{\mathrm{as}}_{\rho}(G(F))\cap \mathcal{C}_{\mathrm{cs}}(G(F))$ is stable under $\mathcal{F}_\rho$.
\end{enumerate}
\end{proposition}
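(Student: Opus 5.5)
We prove both implications by testing \eqref{CS2} against the Fourier transform. Since $\mathcal{F}_\rho$ already preserves $\mathcal{S}^{\mathrm{as}}_\rho(G(F))$ by \eqref{FT:as}, and every function there satisfies \eqref{CS1}, the question is exactly whether $\mathcal{F}_\rho f$ again satisfies \eqref{CS2} when $f\in \mathcal{S}^{\mathrm{as}}_\rho(G(F))\cap\mathcal{C}_{\mathrm{cs}}(G(F))$.

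\emph{(1)$\Rightarrow$(2).} Fix $(M,\sigma)\in\widetilde{\Temp}_\Ind(G)$ and $(M',\sigma')\in\widetilde{\Temp}_{\Ind,0}(M)$ with $\sigma\hookrightarrow \Ind_{P'\cap M}^M\sigma'_\lambda$, together with $v,w$ and a lift $\tilde w$.

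1. Apply \hyperref[Sas:FE:near0]{Corollary \ref{Sas:FE:near0}} on both sides, with the contragredient roles as in that corollary (taking duals reverses the subrepresentation/quotient roles, which \hyperref[lem:cs:subquo]{Lemma \ref{lem:cs:subquo}}-type arguments with lifts accommodate). This writes the zeta integrals of $\mathcal{F}_\rho f$ against $\Ind_P^G(\sigma_\chi)^\vee$ and against $\Ind_{P'}^G(\sigma'_{\lambda\chi})^\vee$ as $\gamma(\tfrac12,\sigma_\chi,\rho_M,\psi)\,Z_0(\Ind_P^G\sigma_\chi,f,v,w)$ and $\gamma(\tfrac12,\sigma'_{\lambda\chi},\rho_{M'},\psi)\,Z_0(\Ind_{P'}^G\sigma'_{\lambda\chi},f,v,\tilde w)$, respectively.

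2. By \eqref{CS2} for $f$, extended to an identity of meromorphic functions by the identity principle, the two $Z_0$ factors agree.

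3. It remains to compare the $\gamma$-factors as functions of $\chi\in\Lambda_M$. Apply \eqref{LLC:gamma} to the Levi $M$ and the discrete series $\sigma_\chi$, whose supercuspidal support is $\sigma'_{\lambda\chi}$. Then use \eqref{LLC:twist}: twisting by $\chi$ factors through the central characters on both sides, so the equality $\gamma(\tfrac12,\sigma_\chi,\rho_M,\psi)=\gamma(\tfrac12,\sigma'_{\lambda\chi},\rho_{M'},\psi)$ holds for all $\chi\in\Lambda_M$, not only at $\chi=1$.

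4. Hence \eqref{CS2} holds for $\mathcal{F}_\rho f$. Apply the same argument to $\mathcal{F}_{\rho,\bar\psi}$ and use \eqref{HC:FT:order} to get that $\mathcal{F}_\rho$ is an automorphism, not just an endomorphism.

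\emph{(2)$\Rightarrow$(1).} Fix $M$, $\sigma\in\Pi_0(M)$ and $\pi\in\Pi_2(G)$ with $\pi\le\Ind_P^G\sigma_\lambda$. Reading \eqref{CS2} at the level $G=M$ gives the needed $\gamma$-identity once we exhibit a suitable test function.

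1. Choose $f\in C_c^\infty(G(F))$ with $\pi(f)\neq0$ that kills all other components $(M'',\sigma'')$ of $\Theta^{K_0}$ meeting $\pi$'s support. This is possible by \hyperref[BH:PW]{Theorem \ref{BH:PW}}: take a polynomial section on the Bernstein component of $\sigma$ that is generic near $\lambda$. Such an $f$ lies in $\mathcal{C}_{\mathrm{cs}}(G(F))$, since $C_c^\infty$ functions have entire zeta integrals and compatibility follows from \hyperref[lem:cs:subquo]{Lemma \ref{lem:cs:subquo}}.

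2. Pass to $L(\tfrac12,\cdot)$-multiples so that $f$ lies in $\mathcal{S}^{\mathrm{as}}_\rho(G(F))$. For example, multiply the section by $L(\tfrac12,\sigma_\chi,\rho_M)$ divided by a polynomial denominator, or simply note $C^\infty_c\subseteq\mathcal{S}^{\mathrm{as}}_\rho$ since the $L$-factors have no zeros.

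3. Apply (2): $\mathcal{F}_\rho f$ satisfies \eqref{CS2}, and \hyperref[Sas:FE:near0]{Corollary \ref{Sas:FE:near0}} gives
\[\gamma(\tfrac12,\pi,\rho,\psi)\,Z_0(\pi,f,v,w)=\gamma(\tfrac12,\sigma_{\lambda},\rho_M,\psi)\,Z_0(\Ind_P^G\sigma_\lambda,f,v,\tilde w),\]
where the right side is understood as a limit $\chi\to\lambda$ along $\Lambda_G$-twists. Using $Z_0(\pi,f,v,w)=Z_0(\Ind\sigma_\lambda,f,v,\tilde w)\ne0$, this yields both the holomorphy and the equality in \eqref{LLC:gamma}.

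\emph{Main obstacle.} The delicate step is the last one, the holomorphy at $\chi=\lambda$. \eqref{CS2} only asserts equality along the twists $\chi\in\Lambda_G$ (more precisely $\Lambda_{M}$ for the discrete series at level $G$). Holomorphy of $\gamma(\tfrac12,\sigma_\chi,\rho_M,\psi)$ in the full $\Lambda_M$ variable must therefore come from varying $f$. I would use \hyperref[BH:PW]{Theorem \ref{BH:PW}} to produce $f$ whose section on the Bernstein component is an arbitrary polynomial near $\lambda$. Then $\gamma\cdot(\text{polynomial})$ must be in $\CC[\Lambda_M]$ times $L$-factors, because $\mathcal{F}_\rho f\in\mathcal{S}^{\mathrm{as}}_\rho$. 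Combined with \eqref{CS2} at $\lambda$, this should force regularity of $\gamma$ at $\lambda$. Handling the reducibility of $\Ind_P^G\sigma_\lambda$, where several constituents share $\lambda$, will need care.
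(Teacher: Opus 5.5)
Your overall plan (pass between $f$ and $\mathcal{F}_\rho f$ with Corollary~\ref{Sas:FE:near0}, compare zeta integrals, then compare $\gamma$-factors) is the paper's, but two concrete problems remain. The first is the sub/quotient orientation. \eqref{CS2} only concerns a discrete series embedded as a \emph{sub}representation. Corollary~\ref{Sas:FE:near0}, however, pairs the function on the other side with the contragredient, which is a \emph{quotient} of the dual induced representation. In (1)$\Rightarrow$(2), checking \eqref{CS2} for $\mathcal{F}_\rho f$ at $\sigma\hookrightarrow\Ind_{P'\cap M}^M\sigma'_\lambda$ forces you to apply the corollary to $\sigma^\vee$. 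That lands on $Z(\Ind_P^G(\sigma_\chi)^\vee,f,w,v)$, where $(\sigma_\chi)^\vee$ is a quotient of $\Ind(\sigma'^\vee)_{(\lambda\chi)^{-1}}$. You then need two things. First, compatibility of $f$ for this quotient: \eqref{CS2} does not give it, but Lemma~\ref{lem:cs:subquo} does, since $f\in|\nu|^{1/2}\mathcal{S}_\rho(G(F))$ by Theorem~\ref{Schwartz=as+cs}. Second, the $\gamma$-identity for the contragredient realized as a quotient: this is Lemma~\ref{lem:LLC7:subquo} (embed the subquotient into some $\Ind_Q$), not \eqref{LLC:gamma} applied to $\sigma_\chi$ as in your step 3. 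As written, your steps 1--3 prove the identity for the pair $\bigl((\sigma_\chi)^\vee,\Ind(\sigma'_{\lambda\chi})^\vee\bigr)$, which is not \eqref{CS2} for $\mathcal{F}_\rho f$. The same issue affects your step 3 of (2)$\Rightarrow$(1). That identity needs $\mathcal{F}_\rho f$ to be compatible with $\pi^\vee$ as a quotient of $\Ind\sigma^\vee_{\lambda^{-1}}$. The paper obtains this by applying Theorem~\ref{Schwartz=as+cs} and Lemma~\ref{lem:cs:subquo} to $\mathcal{F}_\rho f$. Alternatively, use \eqref{CS2} as it stands, get the identity for the contragredient, and dualize via the embedding argument of Lemma~\ref{lem:LLC7:subquo}.

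The second problem is that your ``main obstacle'' is misdiagnosed, and your remedy cannot work. Holomorphy of $\chi\mapsto\gamma(\tfrac12,\sigma_\chi,\rho_M,\psi)$ at $\lambda$ on all of $\Lambda_M$ is false in general. Take $G=\GL_2$, $\rho=\mathrm{std}$, $\pi=\mathrm{St}\subset\Ind_B^G(|\cdot|^{1/2}\otimes|\cdot|^{-1/2})$, $\psi$ unramified and $\chi=|\cdot|^{s_1}\otimes|\cdot|^{s_2}$. Then
\[
\gamma(\tfrac12,\sigma_{\lambda\chi},\rho_T,\psi)=\gamma(1+s_1,1,\psi)\,\gamma(s_2,1,\psi)=\frac{(1-q^{-1-s_1})(1-q^{-s_2})}{(1-q^{s_1})(1-q^{s_2-1})},
\]
which is indeterminate at $s_1=s_2=0$. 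It is regular only along the twists $s_1=s_2$, where it tends to $-1=\gamma(\tfrac12,\mathrm{St},\mathrm{std},\psi)$. No test function can establish the full-variable statement. Moreover, the constraint you propose to exploit is empty, since $\mathcal{F}_\rho f\in\mathcal{S}^{\mathrm{as}}_\rho(G(F))$ holds unconditionally by \eqref{FT:as}. What (1)$\Rightarrow$(2) actually uses from \eqref{LLC:gamma} is an identity of meromorphic functions in the twist variable of the Levi carrying the discrete series. Correspondingly, what the paper's proof establishes is regularity and equality along those unramified twists. This follows at once from your cancellation: the zeta integral of $f\in C_c^\infty(G(F))$ is a nonzero polynomial in the twist, and $\gamma(\tfrac12,\pi_\chi,\rho,\psi)$ is holomorphic near unitary $\chi$. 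Finally, (1) concerns discrete series of every Levi, so run the argument for general $(M,\sigma)\in\widetilde{\Temp}_\Ind(G)$ using $\Ind_P^G\sigma$. Pass to semi-standard Levis by the Weyl invariance coming from \eqref{LLC:para}. There is also no need for $f$ to kill other components: any $f\in C_c^\infty(G(F))$ with $(\Ind_P^G\sigma)(f)\neq 0$ works.
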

\begin{proof} By \hyperref[Schwartz=as+cs]{Theorem \ref{Schwartz=as+cs}} $|\nu|^{1/2}\mathcal{S}_\rho(G(F))=\mathcal{S}^{\mathrm{as}}_{\rho}(G(F))\cap \mathcal{C}_{\mathrm{cs}}(G(F)),$ so we can apply \hyperref[lem:cs:subquo]{Lemma \ref{lem:cs:subquo}} to functions in $\mathcal{S}^{\mathrm{as}}_{\rho}(G(F))\cap \mathcal{C}_{\mathrm{cs}}(G(F))$. We also identify the functionals $Z$ and $Z_0$.

Assume (1). Let $f\in \mathcal{S}^{\mathrm{as}}_{\rho}(G(F))\cap \mathcal{C}_{\mathrm{cs}}(G(F)).$ Let $(M,\sigma)\in\widetilde{\Temp}_\Ind(G)$ and $(M',\sigma')\in\widetilde{\Temp}_{\Ind,0}(M)$ such that $\sigma\le \mathrm{Ind}_{P'\cap M}^{M} \sigma'_{\lambda}$ for some $\lambda\in \Lambda_{M'}$. By \hyperref[Sas:FE:near0]{Corollary \ref{Sas:FE:near0}}, we have identities of meromorphic functions on $\Lambda_M$
\begin{align*}
    Z(\mathrm{Ind}_P^G \sigma_\chi, \mathcal{F}_\rho f,v,w) &= \gamma(\tfrac{1}{2},(\sigma_\chi)^\vee,\rho_M,\psi)Z(\mathrm{Ind}_P^G (\sigma_\chi)^\vee, f,w,v)\\
    &= \gamma(\tfrac{1}{2},(\sigma^\vee)_{\chi^{-1}},\rho_M,\psi)Z(\mathrm{Ind}_P^G (\sigma^\vee)_{\chi^{-1}}, f,w,v)\\
    &=\gamma(\tfrac{1}{2},(\sigma'^\vee)_{\chi^{-1}\lambda^{-1}},\rho_{M'},\psi)Z(\mathrm{Ind}_{P'}^G (\sigma'^\vee)_{\chi^{-1}\lambda^{-1}}, f,w',v)\\
    &=\gamma(\tfrac{1}{2},(\sigma'_{\chi\lambda})^\vee,\rho_{M'},\psi)Z(\mathrm{Ind}_{P'}^G (\sigma'_{\chi\lambda})^\vee, f,w',v)\\
    &= Z(\Ind_{P'}^G\sigma'_{\chi\lambda},\mathcal{F}_\rho f,v,w').
\end{align*}
Here in the third equality we apply \hyperref[lem:LLC7:subquo]{Lemma \ref{lem:LLC7:subquo}} and \hyperref[lem:cs:subquo]{Lemma \ref{lem:cs:subquo}} to the contragredient $\sigma^\vee$ realized as a quotient of $\Ind_{P'\cap M}^M(\sigma'^\vee)_{\lambda^{-1}}$. This shows $\mathcal{F}_\rho f$ satisfies \eqref{CS2}, so that $\mathcal{F}_\rho f\in \mathcal{S}^{\mathrm{as}}_{\rho}(G(F))\cap \mathcal{C}_{\mathrm{cs}}(G(F))$. This proves (2).

Conversely, assume (2) holds. Let $M\in \mathcal{M}^{\mathrm{std}},$ $\sigma\in \Pi_2(M)$ and $(M',\sigma')\in\widetilde{\Temp}_\Ind(M)$ such that $\sigma\leq \Ind_{P'\cap M}^M\sigma'_\lambda$ for some $\lambda\in\Lambda_{M'}$. Pick $f\in C_c^\infty(G(F))$ such that $(\Ind_P^G\sigma)(f)\neq 0.$ Then there exists $v,w$ such that $Z(\Ind_P^G\sigma_\chi,f,v,w)$ is a nonzero function. Then by \eqref{CS2}, $Z(\Ind_{P'}^G\sigma'_{\chi\lambda},f,v,w')$ is a nonzero function for any lift $w'$ of $w$. By assumption $\mathcal{F}_\rho f\in\mathcal{S}^{\mathrm{as}}_{\rho}(G(F))\cap \mathcal{C}_{\mathrm{cs}}(G(F))$, so we have identities of meromorphic functions on $\Lambda_M$
\begin{align*}
    Z(\Ind_P^G(\sigma_\chi)^\vee,\mathcal{F}_\rho f,w,v) = Z(\Ind_{P'}^G(\sigma'_{\chi\lambda})^\vee,\mathcal{F}_\rho f,w',v)=\gamma(\tfrac{1}{2},\sigma'_{\chi\lambda},\rho_{M'},\psi)Z(\Ind_{P'}^G\sigma'_{\chi\lambda},f,v,w'),
\end{align*}
and
\begin{align*}
    Z(\Ind_P^G(\sigma_\chi)^\vee,\mathcal{F}_\rho f,w,v) = \gamma(\tfrac{1}{2},\sigma_\chi,\rho_M,\psi)Z(\Ind_P^G\sigma_\chi,f,v,w) = \gamma(\tfrac{1}{2},\sigma_\chi,\rho_M,\psi)Z(\Ind_{P'}^G\sigma'_{\chi\lambda},f,v,w')
\end{align*}
by \hyperref[Sas:FE:near0]{Corollary \ref{Sas:FE:near0}} and \hyperref[lem:cs:subquo]{Lemma \ref{lem:cs:subquo}}. We conclude that $\gamma(\tfrac{1}{2},\sigma_\chi,\rho_M,\psi) = \gamma(\tfrac{1}{2},\sigma'_{\chi\lambda},\rho_{M'},\psi)$ as meromorphic functions, and they are holomorphic in a neighborhood of $\chi=1$. This implies \eqref{LLC:gamma} using the fact that for general $M\in \mathcal{M},$ $\sigma\in \Pi_2(M)$ and $s\in W(G,A_0),$ $\gamma(\tfrac{1}{2},\sigma_\chi,\rho_M,\psi)=\gamma(\tfrac{1}{2},s.\sigma_\chi,\rho_{s.M},\psi)$ by \eqref{LLC:para}.
\end{proof}

Using the Bernstein-Zelevinsky classification, it is shown in \cite[Proposition 3.3]{DRS} that \eqref{LLC:gamma} holds for general linear groups. One can probably establish \eqref{LLC:gamma} for quasi-split classical groups by the study of cuspidal supports of discrete series in \cite{Moeglin:classicaldiscrete, Moeglin-Tadic,Mok:quasisplitunitary,Xu:quasisplitclassical}. We leave this for future work.

\begin{remark}
    Assume \eqref{LLC:gamma}. It follows from \hyperref[Sas:FE:near0]{Corollary \ref{Sas:FE:near0}} that $\mathcal{F}_\rho$ is a continuous automorphism of $\mathcal{S}^{\mathrm{as}}_{\rho}(G(F))\cap \mathcal{C}_{\mathrm{cs}}(G(F))$ with respect to the topology introduced in \hyperref[remark:topology]{Remark \ref{remark:topology}}.
\end{remark}

\subsection{Proof of Theorem \ref{Schwartz=as+cs}}\label{ssec:Supportproof}

Let $f\in\mathcal{S}_{\rho}^{\mathrm{as}}(G(F))\cap  C^\infty_{\mathrm{ac}}(G(F)).$ Let $K_0\le K$ be a compact open subgroup such that $f$ is bi-$K_0$-invariant. Recall 
\begin{align*}
    \Theta^{K_0} = \{(M,\sigma)\in\widetilde{\Temp}_\Ind(G): (\Ind_P^G\sigma)^{K_0}\neq 0\}
\end{align*}
has finitely many connected components. Let $\mathcal{B}_0\subset X^*(M_0)$ be a finite $W(G,A_0)$-invariant generating set of the cone $\overline{C}_{\rho,M_0}$. Recall in \S\ref{subsec:Cartan} we view $\Omega=M_0(F)/M_0(F)^1$ as a subset of $\Hom_\ZZ(X^*(M_0),\ZZ)$. To prove \hyperref[Schwartz=as+cs]{Theorem \ref{Schwartz=as+cs}}, we need to show

\begin{proposition}\label{prop:key} There exists $r\in \RR$ such that for $\alpha\in\Omega_G$ and $g\in KmK$ with $m\in\Omega$, one has $f\mathbf{1}_\alpha(g)\neq 0$ only if
\begin{align*}
   \langle \tau ,m\rangle\ge -r\quad \forall\tau\in \calB_0.
\end{align*}
\end{proposition}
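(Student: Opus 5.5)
The idea is to recover $f\mathbf{1}_\alpha$ from its spectral data and read off support information from the exponents that can occur. Fix $\alpha\in\Omega_G$. Since $f\in C^\infty_{\mathrm{ac}}(G(F))$, the function $f_\alpha:=f\mathbf{1}_\alpha$ lies in $C_c^\infty(G(F)/\!/K_0)$. Hence all its zeta integrals $Z_0(\Ind_P^G\sigma_\chi,f_\alpha,v,w)$ converge absolutely for every $\chi\in\Lambda_M$ and are Laurent polynomials in $\chi$. By \hyperref[alpha:supp:1]{Lemma \ref{alpha:supp:1}}, there is an $r$, independent of $\alpha$, $\sigma$ and $(v,w)\in(\cdot)^{K_0}$, such that the exponents $\beta\in\Omega_M$ occurring satisfy $\beta|_{X^*(G)}=\alpha$ and $\langle\tau,\beta\rangle\ge -r$ for all $\tau\in\calB_0$.

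We first pass to the minimal Levi via constant terms. Using \eqref{eq:descendzeta} for compactly supported $f_\alpha$, we express the zeta integrals of $f_\alpha^{(P)}$ against $\sigma\in\Pi_2(M)$ in terms of those of $f_\alpha$. Specializing to $M=M_0$ and to the components of $\Theta^{K_0}$ with $M=M_0$, the Laurent polynomials $Z(\sigma_\chi,(R(k,k')f_\alpha)^{(P_0)},v,w)$ on $\Lambda_{M_0}$ inherit the same exponent bound. For $M_0$, whose group is compact modulo $A_0$, a smooth compactly supported function $h$ on $M_0(F)$ is determined by these integrals: its support in $\Omega$ is exactly the set of exponents $\beta$ that appear in $\int h(m)\sigma_\chi(m)\,dm$ as $\sigma$ ranges over $\Pi_2(M_0)=\Pi_0(M_0)$, by Fourier analysis on the finitely generated abelian group $\Omega$. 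It follows that $(R(k,k')f_\alpha)^{(P_0)}$ is supported on $\{m\in\Omega:\langle\tau,m\rangle\ge -r\ \forall\tau\in\calB_0\}$, where the pairing is as in \eqref{pairing:bw:Levis}.

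The main obstacle is to go from constant terms back to $f_\alpha$ on the double coset $KmK$, since constant terms do not directly control values on the Cartan cells. The first thing to try is the positivity and triangularity of the Satake/Iwasawa transform on compactly supported functions. Write $f_\alpha$ as a finite combination of translates of $\mathbf{1}_{K_0mK_0}$. Use the Iwahori factorization of $K_0$ and the fact that for dominant $m$ and $|\ \cdot\ |$ large, the constant term $(\mathbf{1}_{K_0 m K_0})^{(P_0)}$ is supported on $\{m'\le m\}$ with the extremal term $m$ appearing with nonzero coefficient (up to $\delta^{1/2}$). The exponents in the Weyl orbit $W.m$ are extremal. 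Choose an $m_{\max}$ in the support of $f_\alpha$ for which some $\langle\tau,m\rangle$ is most negative. Because $\calB_0$ is $W(G,A_0)$-invariant, we may replace $m$ by a Weyl conjugate, using \hyperref[intertwine:W]{Lemma \ref{intertwine:W}}, so that the extremal exponent cannot be cancelled in the constant term along a suitable translate $R(k,k')$, using the Iwahori decomposition of $K$. This extremal exponent then appears in some constant term. The previous paragraph forces $\langle\tau,m\rangle\ge -r$, up to a constant depending only on $K_0$ coming from the $K_0$-coset shifts. Since $r$ came from \hyperref[alpha:supp:1]{Lemma \ref{alpha:supp:1}} uniformly in $\alpha$, the resulting bound is uniform in $\alpha$.

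If the cancellation argument is delicate, the fallback is to replace it with the Bernstein–Heiermann inversion (\hyperref[BH:PW]{Theorem \ref{BH:PW}}). In that version we write $f_\alpha(g)$ via the Plancherel inverse restricted to supercuspidal-induced data. The exponent bounds on all polynomial sections, transported through \hyperref[lem:cs:subquo]{Lemma \ref{lem:cs:subquo}}-type identities, then yield the support bound through the explicit Casselman asymptotics of matrix coefficients on $KmK$.
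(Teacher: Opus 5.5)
Your first two steps are sound. Since $f_\alpha=f\mathbf{1}_\alpha\in C_c^\infty(G(F)/\!/K_0)$, Lemma~\ref{alpha:supp:1} together with the single-term extraction used in Lemma~\ref{lem:constant} does show that every $(R(k,k')f_\alpha)^{(P_0)}$ is supported in $\{m\in M_0(F):\langle\tau,H_{M_0}(m)\rangle\ge -r\ \forall\tau\in\calB_0\}$.

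The gap is the passage from these constant terms back to $f_\alpha$ on Cartan cells. By \eqref{CT:spectral}, the constant terms $(R(k,k')f_\alpha)^{(P_0)}$, $k,k'\in K$, depend only on $\HP(f_\alpha)$ at the points $(M_0,\sigma)$ with $\sigma\in\Pi_2(M_0)$. Hence they vanish identically on the component of $f_\alpha$ in any Bernstein block whose cuspidal support lies on a Levi not conjugate to $M_0$, and such components are again in $C_c^\infty(G(F)/\!/K_0)$. For those components no ``extremal exponent'' ever appears in a constant term, so your argument says nothing about their support.
\begin{itemize}
\item For a supercuspidal block of $G$ one can still conclude: that component lies in a fixed compact set modulo the center, and the a priori bound \eqref{support:alpha:bound} on $\alpha$ finishes the job. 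You never invoke that bound.
\item For a block with cuspidal support on an intermediate Levi $M$ (already $\GL_2\times\GL_1\subset\GL_3$), functions are not confined to a compact set modulo the center. Nothing in your argument bounds the $A_M$-part of their support.
\item Even on the principal-series part, the triangularity you invoke is a property of the Satake transform on bi-$K$-invariant functions; this is exactly how the paper treats $b_\rho$ in \S\ref{sec:Lmonoids}. For bi-$K_0$-invariant functions you give no reason why the leading term of $(\mathbf{1}_{K_0mK_0})^{(P_0)}$ cannot be cancelled by other double cosets. Also, Lemma~\ref{intertwine:W} is a statement about zeta integrals, not about supports.
\end{itemize}

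Your fallback names the right tool but contains none of the mechanism. Here is what the paper actually does.
\begin{itemize}
\item Via \cite{Heiermann:HeckePlan}, it writes $f\mathbf{1}_\alpha$ as a finite sum of contour integrals \eqref{eq:falphaPO} over $\Re(\chi)=\mu\gg_P0$. This reduces the problem to $a\mapsto\int\chi(\alpha)^{-1}p(\chi)\langle(\Ind_P^G\sigma_\chi)(a)J_{\overline P|P}(\sigma_\chi)^{-1}\bar v,w\rangle\,d\Im\chi$, where $p$ carries the exponent bound of Lemma~\ref{alpha:supp:1}.
\item On each region $A_0^+(\theta,t,h_\theta(t))$ it applies Heiermann's Th\'eor\`eme 1.3.1. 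This is an identity of rational functions in $\chi$, valid on the whole region once $h_\theta(t)$ is large. In it, the dependence on $a_\theta\in A_{M_\theta}(F)$ is through the character $\chi(a_\theta)$. The remaining factor runs over finitely many rational functions whose poles lie on hyperplanes $\langle\chi,\gamma^\vee\rangle=R$ with $\gamma\in\Sigma(P)\cap\Sigma(P'_s)$.
\item This pole control is what lets one push the contour to infinity in a direction $\tilde\tau\in\mathrm{conv}(\calB_0)\cap X^*(M_\theta)^+_\RR$. The integral then vanishes as soon as $\langle\tilde\tau,H_0(a_\theta)-\alpha\rangle>c$.
\item One still needs the convex-geometry lemma to pass from such $\tilde\tau$ to all $\tau\in\calB_0$.
\item Then comes the induction over $\theta$ through the functions $h_\theta$.
\item Finally, \eqref{support:alpha:bound} handles the remaining compact region $A_0^+(\Delta,t_1)$.
\end{itemize}
Without these steps the proposal does not prove the proposition.
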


The proof of the proposition will occupy this subsection. Our proof is a modification of the proof of \cite[Proposition 2.1]{Heiermann:HeckePlan}. Note that we have a priori bound for $\alpha$: By \hyperref[Sas:f1alpha:polynomial]{Corollary \ref{Sas:f1alpha:polynomial}} and the fact that $p_{M_0\leq G}(\mathcal{B}_0)\subseteq \overline{C}_{\rho,G}$, there exists a constant $h_0$ such that $f\mathbf{1}_\alpha\neq 0$ only if
\begin{align}\label{support:alpha:bound}
    \langle\tau,\alpha\rangle\geq h_0\,\,\,\,\forall \tau\in\mathcal{B}_0.
\end{align}

To pin down notations, we start with a review of \cite{Heiermann:HeckePlan}. For $P=MN\in \mathcal{P},$ a $\Lambda_M$-orbit $\mathcal{O}$ in $\Pi_0(M)_\CC$ and $\sigma\in \mathcal{O},$ we put
\begin{align*}
    \varphi_{\alpha,P,\calO}(\sigma):=(\mathrm{Ind}_P^G\sigma) (f\mathbf{1}_\alpha).
\end{align*}
Then $\varphi_{\alpha,P,\calO}$ is a polynomial section on $\mathcal{O}$ with coefficients in $\mathrm{End}_\CC( \Ind_{K_P}^K \sigma|_{K_M})$. 
By \cite[Part B]{Heiermann:HeckePlan}, specifically the proof of Lemma 4.3 and \S 5 in loc.~cit., we can identify $\varphi_{\alpha, P,\mathcal{O}}$ as a polynomial section with coefficients in $\mathrm{Ind}_{K_{\overline{P}}}^K \sigma|_{K_M}\otimes \mathrm{Ind}_{K_{P}}^K \sigma^\lor |_{K_M}.$ Explicitly, choose $K'\le K_0$ that admits an Iwahori factorization with respect to $P(F)$. Then there exists a polynomial section $\xi_0$ on $\calO$ with coefficients in $\mathrm{Ind}_{K_{\overline{P}}}^K \sigma|_{K_M}\otimes \mathrm{Ind}_{K_{P}}^K \sigma^\lor |_{K_M}$ such that
\begin{align*}
    \varphi_{\alpha, P,\mathcal{O}}(\sigma_\chi)=\sum_{s\in W(M,\mathcal{O})} \bigg((J_{P|\overline{s.P}}(\sigma_\chi)\circ \lambda(s))\otimes (J_{P|s. P}(\sigma_\chi^\lor)\circ \lambda(s))\bigg)\bigg((\mathrm{Ind}_{\overline{P}}^G (s^{-1}.\sigma_\chi))(f\mathbf{1}_\alpha)\otimes \mathrm{id}\bigg)\xi_0(s^{-1}.\sigma_\chi),
\end{align*}
where $W(M,\mathcal{O})=\{s\in W(G|M):s.\mathcal{O}=\mathcal{O}\}$. The section $\xi_0$ only depends on $\calO$ and $K'.$ 

Let 
\begin{align*}
\xi(\sigma_\chi):=\bigg((\mathrm{Ind}_{\overline{P}}^G \sigma_\chi)(f\mathbf{1}_\alpha)\otimes \mathrm{id}\bigg)\xi_0(\sigma_\chi).
\end{align*}
Define a linear map
\begin{align*}
E   ^{G}_{P,\sigma_\chi}:\mathrm{Ind}_{K_P}^K \sigma|_{K_M}\otimes \mathrm{Ind}_{K_{P}}^K \sigma^\lor |_{K_M}&\longrightarrow C^\infty(G(F))\\
    v\otimes w&\longmapsto \left(g\mapsto \langle (\mathrm{Ind}_P^G\sigma_\chi)(g)v,w\rangle\right).
\end{align*}
Consider the function on $G(F)$ given by
\begin{align}\label{eq:falphaPO}
    f_{\varphi_{\alpha,P,\calO}}(g):=\int_{\Re(\chi)=\mu\gg_P \,0} E^{G}_{P,\sigma_\chi}\big((J_{\overline{P}|P}(\sigma_\chi)^{-1}\otimes \mathrm{id})\xi(\sigma_\chi)\big)(g^{-1}) d\Im\chi.
\end{align}
Here $\mu\gg_P 0$ indicates that we have chosen $\mu\in X^\ast(M)_\RR^+$ sufficiently positive with respect to $P$ so that $\langle \mu, \gamma^\lor\rangle>R$ for some large $R$ for all $\gamma\in \Sigma(P).$ By \hyperref[lem:J]{Lemma \ref{lem:J}} and the residue theorem, the integral is independent of the choice of $\mu,$ so the function $f_{\varphi_{\alpha,P,\calO}}$ is well-defined. It is shown in \cite{Heiermann:HeckePlan} that this integral defines a function in $C^\infty_c(G(F))$ and only depends on $\varphi_{\alpha,P,\mathcal{O}}$. Let $[\mathcal{O}]:=\bigcup_{s\in W(G,A_0)}s.\mathcal{O}.$ There exist explicit constants $c([\mathcal{O}]),$ defined in \cite[\S 3.2]{Heiermann:HeckePlan}, such that
\begin{align*}
    f\mathbf{1}_\alpha=\sum_{[\mathcal{O}]}c([\mathcal{O}])\sum_{(P',\mathcal{O'})}f_{\varphi_{\alpha,P',\calO'}}.
\end{align*}
Here the inner sum is taken over all pairs $(P',\calO')$ such that $ \mathcal{O'}=s.\mathcal{O}$ and $P\in \mathcal{P}(s.M)$ for some $s\in W(G,A_0).$ Note that $f_{\varphi_\alpha,P',\mathcal{O'}} = 0$ for all but finitely many $[\mathcal{O}]$, so the sum is essentially finite. 

To prove \hyperref[prop:key]{Proposition \ref{prop:key}}, it suffices to prove the claimed support for each function $f_{\varphi_\alpha,P',\mathcal{O}'}$. By \hyperref[alpha:supp:1]{Lemma \ref{alpha:supp:1}.(ii)}, we may assume that
\begin{align*}
    \xi(\sigma_\chi)=p(\chi)\bar{v}\otimes w
\end{align*}
for some $(\bar{v},w)\in (\mathrm{Ind}_{K_{\overline{P}}}^K\sigma|_{K_M})^{K'}\times (\mathrm{Ind}_{K_P}^K\sigma^\lor |_{K_M})^{K'},$ and
\begin{align}\label{eq:p:expansion}
    p(\chi)=\sum_{\beta\in \Omega_M} c_\beta q^{-\langle \chi,\beta\rangle}
\end{align}
is a polynomial with $c_\beta\neq 0$ only if $\beta\vert_{X^*(G)} = \alpha$ and $  \langle \tau,\beta\rangle\geq -c$ for all $\tau\in \calB_0$ for some $c\in \ZZ$. 

 We need to show that there exists a constant $h$, independent of $\alpha$, such that the support of the integral \eqref{eq:falphaPO} in  $\alpha G(F)^1$ is contained in
\begin{align*}
    K \{m\in M_0(F)\cap \alpha G (F)^1: \langle \tau,H_{M_0}(m)\rangle\ge h\quad  \forall \tau\in \calB_0 \}K.
\end{align*}
For ease of notation, we write $H_0$ for $H_{M_0}$ in below. Note that $A_0(F)M_0(F)^1\le M_0(F)$ and $G(F)^1Z_G(F)\le G(F)$ are of finite index. Using this fact together with $K$-finiteness and the Cartan decomposition, it suffices to consider $\alpha\in Z_G(F)$ and show the support of the function defined on $A_0^+:=A_0(F)\cap M_0(F)^+\cap G(F)^1$ by 
\begin{align}\label{eq:cptsupport1}
    a\mapsto \int_{\mathrm{{Re}}(\chi)= \mu\gg_P \, 0} \chi(\alpha)^{-1}p(\chi)\langle (\Ind_P^G\sigma_\chi)(a)J_{\overline{P}|P}(\sigma_\chi)^{-1}\bar{v},w\rangle d\Im\chi
\end{align}
is contained in 
 \begin{align}\label{eq:criterion}
     \{a\in A_0^+: \langle \tau,\alpha +H_0(w_0.a^{-1})\rangle\ge h\quad  \forall \tau\in \calB_0 \},
 \end{align}
where $w_0\in W(G,A_0)$ is the long Weyl element.

For $\theta\subseteq \Delta$ and $t,t'\in \ZZ_{\ge 0},$ define
\begin{align*}
    A_0^+(\theta,t,t'):=\left\{ a\in A_0^+: \substack{\langle \gamma,H_0(a)\rangle\le t \quad\forall\gamma\in\theta,\quad\\ \langle \gamma,H_0(a)\rangle >t'\quad\forall\gamma\in \Delta-\theta}\right\}.
\end{align*}
Let $A_0^+(\theta,t):=A_0^+(\theta,t,t).$ Then we have $A_0^+=\bigcup\limits_{\theta\subseteq\Delta}A_0^+(\theta,t)$ for any $t\ge 0$. 


\begin{lemma}\label{lemma:support} For each $\theta\subseteq\Delta$, there exists a function $h_\theta:\mathbb{Z}_{\geq 0}\to \ZZ_{\ge 0}$ such that
\begin{enumerate}
    \item $h_\theta(t)\geq t$ for $t\in\mathbb{Z}_{\geq 0}$, and 
    \item there exists a function $h_\theta':\mathbb{Z}_{\geq 0}\to\mathbb{R}$ such that if $a\in A_0^+(\theta,t,h_\theta(t))$, then \eqref{eq:cptsupport1} is nonzero only if 
\begin{align*}
    \langle \tau,\alpha+ H_0(w_0.a^{-1})\rangle\ge h'_{\theta}(t)\quad  \forall \tau\in \calB_0.
\end{align*}
\end{enumerate}
Furthermore, the functions $h_\theta$ and $h_\theta'$ are independent of $\alpha$.
\end{lemma}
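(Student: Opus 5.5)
The plan is to follow the structure of the proof of \cite[Proposition 2.1]{Heiermann:HeckePlan}. The one new ingredient is that the contour in \eqref{eq:cptsupport1} will be pushed to infinity in directions of the cone $C_{\rho,M}$, rather than in arbitrary dominant directions. Fix $\theta\subseteq\Delta$, and write $P_\theta=M_\theta N_\theta$ for the standard parabolic attached to $\theta$.

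\textbf{Step 1 (Casselman asymptotics, uniform in $\chi$).} All vectors involved are $K'$-fixed, and $\calO$ is a single $\Lambda_M$-orbit. By Casselman's theorem, in the form used in \cite{Heiermann:HeckePlan}, there should be a function $h_\theta(t)\ge t$, depending only on $K'$ and $t$ and not on $\chi$ or $\alpha$, such that the following holds for $a\in A_0^+(\theta,t,h_\theta(t))$. The matrix coefficient
\begin{align*}
\langle (\Ind_P^G\sigma_\chi)(a)J_{\overline{P}|P}(\sigma_\chi)^{-1}\bar v,w\rangle
\end{align*}
equals $\delta_{P_\theta}^{1/2}(a)$ times a matrix coefficient of the Jacquet module $(\Ind_P^G\sigma_\chi)_{N_\theta}$, evaluated at $a$. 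I would then apply the Geometric lemma. For $\chi$ outside finitely many hyperplanes, this Jacquet module splits as a direct sum over $s\in W^{M_\theta,M}$ of the representations $\Ind^{M_\theta}_{M_\theta\cap s.P}(s.\sigma_\chi)$. The projections onto these summands are rational in $\chi$, and by \hyperref[lem:J]{Lemma \ref{lem:J}} their poles lie on finitely many hyperplanes $\langle\chi,\alpha_i^\vee\rangle=R_i$.

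Next I would decompose $a$. Since $\langle\gamma,H_0(a)\rangle\le t$ for $\gamma\in\theta$, we can write $a=za'$ with $z\in A_{M_\theta}(F)$ and $a'$ ranging over a finite set modulo $A_{M_\theta}(F)\cap G(F)^1$, this finite set depending only on $t$. The $s$-summand is then $q^{-\langle s.\chi,H_0(z)\rangle}R_{s,a'}(\chi)$, where $R_{s,a'}$ is rational in $\chi$ with the same controlled pole set and has at most polynomial growth in $\Im\chi$.

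\textbf{Step 2 (contour shift).} Substitute this expansion into \eqref{eq:cptsupport1}. Fix $\tau\in\calB_0$, and let $\tau'$ be its projection $p_{M_0\le M}$ to $X^*(M)_\RR$. By \hyperref[intertwining:posCone]{Lemma \ref{intertwining:posCone}}, the set $C_{\rho,M}\cap\{\langle\mu,\gamma^\vee\rangle>R\ \forall\gamma\in\Sigma(P)\}$ is a nonempty open cone. Hence we can move $\mu$ to infinity inside it, along directions approaching $\tau'$, without crossing any pole hyperplane. Using \eqref{eq:p:expansion}, each $s$-term becomes a finite sum of exponentials of the form
\begin{align*}
q^{-\langle\chi,\beta-\alpha+s^{-1}.H_0(z)+\beta_0\rangle},
\end{align*}
multiplied by a rational factor. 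Here $\beta_0$ runs through a finite set coming from the numerators of $R_{s,a'}$, and this set depends only on $t$.

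Letting $\mu\to\infty$ shows that a term can survive only if
\begin{align*}
\langle\tau,\beta-\alpha+s^{-1}.H_0(z)\rangle\le c'(t)
\end{align*}
for a constant $c'(t)$ independent of $\alpha$. For the surviving terms, combine this with $\langle\tau,\beta\rangle\ge -c$ and with the fact that $a'$ is bounded in terms of $t$. The $W(G,A_0)$-invariance of $\calB_0$, together with the dominance of $H_0(a)$, should then give $\langle\tau,\alpha+H_0(w_0.a^{-1})\rangle\ge h'_\theta(t)$. The comparison needed is that $s^{-1}.H_0(a)$ lies above $w_0.H_0(a)$ in the dominance order, and that pairing with $\tau$ respects this after the $W$-symmetrization. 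This would prove part (2) of the lemma, and the constants depend only on $t$, $K'$ and $\calO$, not on $\alpha$.

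\textbf{Main obstacle.} I expect the hardest step to be justifying the contour shift to infinity. This requires uniform polynomial bounds in $\Im\chi$ on the operators $J_{\overline{P}|P}(\sigma_\chi)^{-1}$ and on the Jacquet-module projections along the shifted region. It also requires checking that moving $\mu$ within $C_{\rho,M}$, rather than in the fully dominant directions used by Heiermann, still avoids all pole hyperplanes. The fallback is to move first in the direction of $|\nu|$, which pairs trivially with every coroot, and then to perturb within the open cone.
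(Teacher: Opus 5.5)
Your outline (Casselman--Heiermann asymptotics on $A_0^+(\theta,t,h_\theta(t))$, splitting into $s$-terms, pushing the contour to infinity, and using $\langle\tau,\beta\rangle\ge -c$) follows the paper. However, Step 2 has a genuine gap: the contour shift you propose is either illegitimate or gives no information.

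\emph{Why the shift fails.} You keep $\mu$ in $C_{\rho,M}\cap\{\langle\mu,\gamma^\vee\rangle>R\ \forall\gamma\in\Sigma(P)\}$, whose recession directions are all $P$-dominant. Yet you want to move toward $\tau'$ for every $\tau\in\calB_0$. Since $\calB_0$ is $W(G,A_0)$-stable, many $\tau'$ are not $P$-dominant, and moving toward them crosses poles of $J_{\overline{P}|P}(\sigma_\chi)^{-1}$. Moving along $|\nu|$ does not help, because the exponent pairs boundedly with $\nu$: $\beta|_{X^*(G)}=\alpha$ and $a\in G(F)^1$. If instead you restrict honestly to $P$-dominant directions $d$, the $s$-term with exponent $q^{-\langle s.\chi,H_0(a_\theta)\rangle}$ gives nothing for $s\neq 1$, since $\langle s.d,H_0(a_\theta)\rangle$ need not be large. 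Already for $G=\GL_2$, $\rho=\mathrm{std}$, $M=T$, $\calB_0=\{e_1,e_2\}$, write $H_0(a)=(k_1,k_2)$ with $k_1\ge k_2$ and $k_1+k_2=0$. The nontrivial Weyl term then controls only $d_2k_1+d_1k_2=(d_2-d_1)k_1\le 0$ for $d_1\ge d_2\ge 0$, whereas you need an upper bound on $k_1$. Your closing appeal to the dominance order does not repair this, since pairing with a non-dominant $\tau\in\calB_0$ is not monotone for that order.

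\emph{What is missing.} You need the finer pole structure of each $s$-term. The coefficient $r_a$ coming from \cite[Th\'eor\`eme 1.3.1]{Heiermann:HeckePlan} contains $J_{P'_s|P}(\sigma_\chi)\circ J_{\overline{P}|P}(\sigma_\chi)^{-1}$. By \cite[Lemme 1.3.2]{Heiermann:HeckePlan}, its poles lie only on hyperplanes $\langle\chi,\gamma^\vee\rangle=R$ with $\gamma\in\Sigma(P)\cap\Sigma(P'_s)$, not on all of $\Sigma(P)$. In the $\GL_2$ example the $w$-term has no poles at all, so one may shift toward $w^{-1}.e_1=e_2$.

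After the substitution $\chi\mapsto s^{-1}.\chi$, each such $\gamma$ is either a root of $M_\theta$ or restricts to a root of $P_\theta$. Hence one may add $n\tilde\tau$ for any $\tilde\tau\in\mathrm{conv}(\calB_0)\cap X^*(M_\theta)^+_\RR$. $W$-stability of $\calB_0$ gives $\langle\tilde\tau,s.\beta\rangle\ge -c$, so nonvanishing forces $\langle\tilde\tau,H_0(a_\theta)-\alpha\rangle\le c$ — but only for these $P_\theta$-dominant $\tilde\tau$.

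You then still need an argument to reach every $\tau\in\calB_0$. The paper observes that $\langle\cdot,H_0(a_\theta)-\alpha\rangle$ factors through the projection to $X^*(M_\theta)_\RR$. On the $W$-stable set $\mathrm{conv}(\calB_0)$ it attains its maximum at points with $P_\theta$-dominant projection. An elementary convexity lemma then compares that supremum with the one over $\mathrm{conv}(\calB_0)\cap X^*(M_\theta)^+_\RR$, up to a constant $c'>0$.

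Note also that $C_{\rho,M}$ plays no role in deciding where you may shift. The integrand is a polynomial times a rational function with no $L$-factor, and $\rho$ enters only through the support condition on $p$.
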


Assuming the lemma, we continue to finish the proof of \hyperref[prop:key]{Proposition \ref{prop:key}}. Let $t\in\mathbb{Z}_{\geq 0}$ be fixed. If $a\in A_0^+(\theta,t)$ such that \eqref{eq:cptsupport1} is nonzero, then either
\begin{itemize}
    \item $a\in A_0^+(\theta,t,h_\theta(t))$, in which case by \hyperref[lemma:support]{Lemma \ref{lemma:support}.(2)} we have  $\langle\tau,\alpha+H_0(w_0.a^{-1})\rangle\geq h_\theta'(t)$ for all $\tau\in\mathcal{B}_0$, or
    \item $a\in A_0^+(\theta,t) - A_0^+(\theta,t,h_\theta(t))$, in which case the set $\theta':=\{\gamma\in\Delta : \langle\gamma,H_0(a)\rangle\leq h_\theta(t)\}$ contains $\theta$ as a proper subset, and $a\in A_0^+(\theta',h_\theta(t))$ by \hyperref[lemma:support]{Lemma \ref{lemma:support}.(1)}.
\end{itemize}
Therefore,
\begin{align*}
    \supp\eqref{eq:cptsupport1}\subseteq \left(\bigcup_{\theta\subseteq\Delta}\bigcup_{\substack{\theta'\subseteq\Delta\\\theta\subsetneq\theta'}}A_0^+(\theta',h_\theta(t))\right) \cup \{a\in A_0^+: \langle \tau,\alpha+H_0(w_0.a^{-1})\rangle\ge \min_\theta h'_{\theta}(t)\quad  \forall \tau\in \calB_0 \}.
\end{align*}
Repeating the argument, we find constants $t_1,h_1$, independent of $\alpha$, such that
\begin{align*}
    \supp\eqref{eq:cptsupport1} \subseteq A_0^+(\Delta,t_1) \cup \{a\in A_0^+: \langle \tau,\alpha+H_0(w_0.a^{-1})\rangle\ge h_1\quad  \forall \tau\in \calB_0 \}.
\end{align*}
Since $\mathcal{B}_0$ is finite and $A_0^+(\Delta,t_1)$ is compact, we can find $h_2\in\RR$ such that
\begin{align*}
    A_0^+(\Delta,t_1) &\subseteq \{a\in A_0^+ : \langle\tau,H_0(w_0.a^{-1})\rangle \geq h_2\,\,\,\,\forall\tau\in\mathcal{B}_0\}\\
    &\!\!\!\!\overset{\eqref{support:alpha:bound}}{\subseteq}\{a\in A_0^+ : \langle\tau,\alpha+H_0(w_0.a^{-1})\rangle \geq h_0+h_2\,\,\,\,\forall\tau\in\mathcal{B}_0\}.
\end{align*}
In conclusion
\begin{align*}
    \supp\eqref{eq:cptsupport1} \subseteq \{a\in A_0^+ : \langle\tau,\alpha+H_0(w_0.a^{-1})\rangle \geq \min\{h_0+h_2,h_1\}\,\,\,\,\forall\tau\in\mathcal{B}_0\}.
\end{align*}
This justifies \eqref{eq:criterion}, whence finishing the proof of \hyperref[prop:key]{Proposition \ref{prop:key}}.\qed

\begin{proof}[Proof of Lemma \ref{lemma:support}] Fix $\theta$. Let $(P',M'):=(P_\theta,M_\theta)$ so that $\Delta_{M_\theta}=\theta$ and $P_\theta$ is standard. We can choose a compact subset $U_\theta^{-1}\subset A_0^+$ such that $A_0^+\subset A_{M'}(F)(A_{0}\cap M'^{\mathrm{der}})(F)U_\theta;$ we will write for $a\in A_0^+,$ $a=a_\theta a'c_a.$ Note that 
\begin{align*}
    \langle \gamma,H_0(a')\rangle\le 0\quad \forall \gamma\in \Delta-\theta,
\end{align*}
and if $a\in A_0^+(\theta,t,t')$ then $a_\theta\in A_{M'}(F)\cap A_0^+(\theta,t,t').$ For a given $t\ge 0$ and  $a\in A_0^+(\theta,t)$, for $\gamma\in \theta$
\begin{align*}
    \langle\gamma, H_0(a'c_a)\rangle=\langle\gamma,H_0(aa_\theta^{-1})\rangle=\langle \gamma,H_0(a)\rangle\in [0,t].
\end{align*}
Therefore, the set $\{ a'c_a:a\in A_0^+(\theta,t)\}$ is compact.

Let 
\begin{align*}
   W:=W(M',A_0)\backslash \{s\in W(G,A_0): s. M\le M'\}.
\end{align*}
We identity $W$ as a subset of $W(G,A_0)$ by choosing for each coset the representative of minimal length. For $s\in W$ (if nonempty), define $P'_s,\widetilde{P}'_s\in \mathcal{P}(M)$ by
\begin{align*}
    P'_s:=((s^{-1}. M')\cap P)(s^{-1}.N'),\quad\quad \widetilde{P}'_s:=((s^{-1}.M')\cap P)(s^{-1}.\overline{N}').
\end{align*}
For $(v,w)\in \mathrm{Ind}_{K_{P}}^K\sigma|_{K_M}\times \mathrm{Ind}_{K_P}^K\sigma^\lor |_{K_M}$ and $a\in A_0(F),$ define the function on $\chi\in \Lambda_M$
\begin{align*}
    c_{P'|P}(\sigma_\chi,s)(v\otimes w)(a):=\bigg\langle (\mathrm{Ind}_{s.P\cap M'}^{M'}s.\sigma_\chi)(a)\bigg(\lambda(s)J_{P'_{s}|P}(\sigma_\chi)v\bigg)\bigg|_{M'},\bigg(\lambda(s)J_{\widetilde{P}'_s|P}((\sigma_\chi)^\lor)w\bigg)\bigg|_{M'}\bigg\rangle.
\end{align*}
By \hyperref[lem:J]{Lemma \ref{lem:J}}, this is a rational function on $\Lambda_M.$ Furthermore, there are finitely many $R_i\in\CC$ such that for any $a,$ poles of $c_{P'|P}(\sigma_\chi,s)(v\otimes w)(a)$ are contained in the image of the hyperplanes $\langle \chi,\gamma_i^\lor\rangle =R_i$ where $\gamma_i^\lor\in \Sigma(P)\cap \Sigma(P'_s).$ By \cite[Th\'eor\`eme 1.3.1]{Heiermann:HeckePlan}, for $t'=h_\theta(t)$ sufficiently large and $a\in A_0^+(\theta,t,h_\theta(t))$
\begin{align*}
    &\langle (\Ind_P^G\sigma_\chi)(a)J_{\overline{P}|P}(\sigma_\chi)^{-1}\bar{v},w\rangle=\gamma(G|M')^{-1}\delta_{P'}^{1/2}(a)\sum_{s\in W} c_{P'|P}(\sigma_\chi,s)(J_{\overline{P}|P}(\sigma_\chi)^{-1}\bar{v}\otimes w)(a).
\end{align*}

Let $a\in A_0^+(\theta,t,h_\theta(t))$.  The identity implies  \eqref{eq:cptsupport1} is zero if $W$ is empty. Hence we assume $W$ is nonempty and fix $s\in W.$ Then we are reduced to study the support of the function
\begin{align}\label{eq:cptsupport2}
    a\mapsto \int_{\mathrm{{Re}}(\chi)= \mu\gg_P0} \chi(\alpha)^{-1}p(\chi)c_{P'|P}(\sigma_\chi,s)(J_{\overline{P}|P}(\sigma_\chi)^{-1}\bar{v}\otimes w)(a)d\Im\chi.
\end{align}
Define a function $r_a$ on $\Lambda_M$ by
\begin{align*}
    r_a(\chi):=c_{P'|P}(\sigma_\chi,s)(J_{\overline{P}|P}(\sigma_\chi)^{-1}\bar{v}\otimes w)(a'c_a).
\end{align*}
Since the set $\{ a'c_a:a\in A_0^+(\theta,t)\}$ is compact, the set $\{r_a:a\in  A_0^+(\theta,t,h_\theta(t))\}$ is a finite set of rational functions on $\Lambda_M$. It follows from  \hyperref[lem:J]{Lemma \ref{lem:J}} and \cite[Lemme 1.3.2]{Heiermann:HeckePlan} that poles of $r_a(\chi)$ lie on the image of finitely many hyperplanes of the form $\langle \chi,\gamma^\lor\rangle=R$ where $\gamma\in \Sigma(P)\cap \Sigma(P'_s).$ 

Let $\chi_\sigma$ be the central character of $\sigma$. By a change of variable $\chi\mapsto s^{-1}.\chi$,  we can write \eqref{eq:cptsupport2} as
\begin{align}\label{eq:cptsupport3}
    a\mapsto (s.\chi_{\sigma})(a_\theta)\int_{\Re(\chi)=s.\mu} \chi(a_\theta)\chi(\alpha)^{-1}p(s^{-1}.\chi)r_a(s^{-1}.\chi) d\Im\chi.
\end{align}
Consider the decomposition $\Re\Lambda_{s.M}=\Re\Lambda_{M'}\times\Re\Lambda^{M'}_{s. M}.$ Let $\mu'\in X^\ast(M')_\RR^+$. Let $\gamma\in \Sigma(s.P)\cap \Sigma(s. P'_s)$. Then either $\gamma\in \Sigma((s. P)\cap M')$ or $\gamma|_{A_{M'}}\in \Sigma(P').$ In the former case, $\langle \mu',\gamma^\lor\rangle=0;$ in the latter case, $\langle \mu',\gamma^\lor\rangle>0.$ Thus we can shift the contour, and the value \eqref{eq:cptsupport3} remains the same for $a\in A_0^+(\theta,t,h_\theta(t))$ if one replaces $s.\mu$ with $s.\mu+\mu'$ (c.f. \cite[\S1.2.2]{Heiermann:HeckePlan}).

Let $\mathrm{conv}(\mathcal{B}_0)\subseteq X^*(M_0)_\RR$ denote the convex hull of $\mathcal{B}_0$. Let $\tilde{\tau}\in \mathrm{conv}(\mathcal{B}_0)\cap X^*(M')_\RR^+$. For $\beta\in \Omega_M$ in the support of \eqref{eq:p:expansion}, one has $ \langle \tilde{\tau},s.\beta-\alpha\rangle\ge -c-\langle \tilde{\tau},\alpha\rangle.$ Therefore, if $\langle \tilde{\tau},H_0(a_\theta)\rangle>  \langle \tilde{\tau},\alpha\rangle+c,$ then $\langle \tilde{\tau},H_0(a_\theta)+s.\beta-\alpha\rangle>0.$ In this case, 
\begin{align*}
    \lim_{n\to \infty}\sup_{\Re(\chi)=s.\mu} \left|\chi \tilde{\tau}^n(a_\theta)\chi\tilde{\tau}^n(\alpha)^{-1}p\big(s^{-1}.(\chi\tilde{\tau}^n)\big)\right|=0.
\end{align*}
Since $r_a$ is independent of $a_\theta,$ \eqref{eq:cptsupport3} is equal to
\begin{align*}
    \lim_{n\to \infty}(s.\chi_\sigma)(a_\theta)\int_{\Re(\chi)=s.\mu+n\tilde{\tau}} \chi(a_\theta)\chi(\alpha)^{-1}p(s^{-1}.\chi)r_a(s^{-1}.\chi) d\Im\chi=0.
\end{align*}
Thus for $a\in A_0^+(\theta,t,h_
\theta(t)),$ \eqref{eq:cptsupport3} is nonvanishing only if for any such $\tilde{\tau},$ $c\geq \langle \tilde{\tau},-\alpha+H_0(a_\theta)\rangle,$ i.e.,
\begin{align*}
    c\geq \sup_{\substack{\tilde{\tau}\in\mathop{\mathrm{conv}}(\mathcal{B}_0)\\\tilde{\tau}\in X^*(M')^+_\mathbb{R}}}\langle \tilde{\tau},-\alpha+H_0(a_\theta)\rangle.
\end{align*}
We claim there is a constant $c'>0$ depending only on $\theta$ and $\mathcal{B}_0$ such that
\begin{align*}
     \sup_{\substack{\tilde{\tau}\in\mathop{\mathrm{conv}}(\mathcal{B}_0)\\\tilde{\tau}\in X^*(M')^+_\mathbb{R}}}\langle \tilde{\tau},-\alpha+H_0(a_\theta)\rangle\ge c'\sup_{\tilde{\tau}\in\mathop{\mathrm{conv}}(\mathcal{B}_0)}\langle \tilde{\tau},-\alpha+H_0(a_\theta)\rangle.
\end{align*}
To see this, since $a_\theta\in A_{M'}\cap A_0^+(\theta,t,h_\theta(t))$, one has 
\begin{align*}
    \sup_{\tilde{\tau}\in\mathop{\mathrm{conv}}(\mathcal{B}_0)}\langle \tilde{\tau},-\alpha+H_0(a_\theta)\rangle = \sup_{\substack{\tilde{\tau}\in\mathop{\mathrm{conv}}(\mathcal{B}_0)\\\tilde{\tau}\vert_{M'}\in X^*(M')^+_\RR}}\langle \tilde{\tau},-\alpha+H_0(a_\theta)\rangle.
\end{align*}

\begin{lemma} Let $V$ be a finite dimensional real inner product space. Let $W\leq V$ be a linear subspace, and let $\mathrm{pr}:V\to W$ be the orthogonal projection. Let $U\subseteq V$ be a compact neighborhood of $0.$ Suppose $W^+\subseteq W$ is a subset closed under multiplication by $\RR_{>0}.$ Then there exists a constant $c'>0$ such that
\begin{align*}
    \sup_{x\in U\cap W^+} f(x) \geq c' \sup_{x\in U,\,\mathrm{pr}(x)\in W^+}f(x)
\end{align*}
for any $f\in \Hom_\RR(V,\RR)$ such that $f(x) = f(\mathrm{pr}(x))$ for all $x\in V$. 
\end{lemma}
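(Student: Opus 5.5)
The idea is to rescale: because $f$ depends only on $\mathrm{pr}(x)$ and is linear, and $W^+$ is a cone, any competitor $x$ on the right-hand side can be replaced by a shrunken copy of $\mathrm{pr}(x)$, which lies in $U\cap W^+$ and changes $f$ only by a fixed positive factor. So $c'$ can be taken to be a ratio of two radii attached to $U$.

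First fix the constant. Since $U$ is a compact neighborhood of $0$ in the finite dimensional inner product space $V$, there are radii $0<\varepsilon\le R$ with
\begin{align*}
    \{x\in V:\|x\|\le \varepsilon\}\subseteq U\subseteq \{x\in V: \|x\|\le R\}.
\end{align*}
Set $c':=\varepsilon/R>0$. This depends only on $U$, and not on $f$ or $W^+$. Since $\mathrm{pr}$ is an orthogonal projection, $\|\mathrm{pr}(x)\|\le\|x\|\le R$ for every $x\in U$.

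Next compare the two suprema; write $S$ for the right-hand supremum. If the right-hand side is taken over the empty set there is nothing to prove, so assume $W^+\neq\emptyset$.

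Suppose first $S\le 0$. Pick any $y\in W^+$; then $ty\in W^+$ for all $t>0$, and $ty\in U$ for $t$ small. Since $f(ty)=tf(y)\to 0$, the left-hand supremum is $\ge 0\ge c'S$.

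Suppose now $S>0$, and take $x\in U$ with $\mathrm{pr}(x)\in W^+$ and $f(x)>0$. Put $y:=(\varepsilon/R)\,\mathrm{pr}(x)$. Then $y\in W^+$ because $W^+$ is stable under $\RR_{>0}$, and $\|y\|\le\varepsilon$, so $y\in U$. Linearity together with $f=f\circ\mathrm{pr}$ gives $f(y)=(\varepsilon/R)f(\mathrm{pr}(x))=c'f(x)$. Taking the supremum over such $x$ yields the claimed inequality with $c'=\varepsilon/R$.

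There is no real obstacle here. The only points needing care are the sign case $S\le 0$, where multiplying by $c'$ does not simply transfer a near-maximizer, and the check that the rescaled point remains inside both $U$ and $W^+$. The first is handled by shrinking an element of $W^+$ towards $0$, and the second by the choice $\varepsilon/R$ together with $\|\mathrm{pr}(x)\|\le\|x\|$.

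In the application one takes $V=X^*(M_0)_\RR$, $W=X^*(M')_\RR$, $W^+=X^*(M')^+_\RR$, $U$ a compact neighborhood of $0$ containing $\mathrm{conv}(\calB_0)$ (e.g.\ $\mathrm{conv}(\calB_0\cup-\calB_0)$, enlarged if needed), and $f=\langle\cdot,-\alpha+H_0(a_\theta)\rangle$, which factors through restriction to $M'$.
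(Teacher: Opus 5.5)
Your proof is correct and is essentially the paper's: $c'$ is the ratio of an inner to an outer radius of $U$, and each admissible $x$ is replaced by $c'\,\mathrm{pr}(x)\in U\cap W^+$, using $\|\mathrm{pr}(x)\|\le\|x\|$ and $f=f\circ\mathrm{pr}$. Your separate case $S\le 0$ is unnecessary, since $c'f(x)\le\sup_{U\cap W^+}f$ holds for every admissible $x$ whatever its sign, and $c'>0$ lets you take the supremum directly.

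Your closing remark about the application, although outside the lemma, is off. Enlarging $U$ to a symmetric set like $\mathrm{conv}(\calB_0\cup-\calB_0)$ only bounds the supremum over $U\cap W^+$. The contour-shift argument, however, only controls $\langle\tilde\tau,\cdot\rangle$ for $\tilde\tau\in\mathrm{conv}(\calB_0)\cap X^*(M')^+_\RR$, which is the supremum actually needed there.
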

\begin{proof} 
Let $f\in \Hom_\RR(V,\RR)$ such that $f(x) = f(\mathrm{pr}(x))$ for all $x\in V$. Let $\norm{\cdot}$ be the norm on $V$ induced by the inner product. By assumption, there are $R>\delta>0$ such that $\{\norm{x}<\delta\}\subseteq U\subseteq \{\norm{x}<R\}$. Let $c' = \frac{\delta}{R}$. For $x\in U$ with $\mathrm{pr}(x)\in W^+$, put $x':=c'\mathrm{pr}(x)\in W^+$. Since $\norm{\mathrm{pr}(x)}\leq\norm{x}$, this implies $x'\in U$. Thus
\begin{align*}
    c'f(x) = f(c'\mathrm{pr}(x)) =  f(x') \leq  \sup_{z\in U\cap W^+} f(z),
\end{align*}
and the lemma follows.
\end{proof}

Apply the lemma to $(V,U,W,W^+) = (X^*(M_0)_\RR,\mathrm{conv}(\mathcal{B}_0),X^*(M')_\RR,X^*(M')^+_\RR).$ Let $c'$ be a constant satisfying the assertion of the lemma. Since the functional $\langle\cdot,-\alpha+H_0(a_\theta)\rangle$ is trivial on $X_{M_0}^{M'},$ it follows that
\begin{align*}
     \sup_{\substack{\tilde{\tau}\in\mathop{\mathrm{conv}}(\mathcal{B}_0)\\\tilde{\tau}\in X^*(M')^+_\mathbb{R}}}\langle \tilde{\tau},-\alpha+H_0(a_\theta)\rangle\ge c'\sup_{\tilde{\tau}\in\mathop{\mathrm{conv}}(\mathcal{B}_0)}\langle \tilde{\tau},-\alpha+H_0(a_\theta)\rangle
\end{align*}
as claimed. As 
\begin{align*}
    \sup_{\tilde{\tau}\in\mathop{\mathrm{conv}}(\mathcal{B}_0)}\langle \tilde{\tau},-\alpha+H_0(a_\theta)\rangle =  \sup_{\tilde{\tau}\in\partial(\mathop{\mathrm{conv}}(\mathcal{B}_0))}\langle \tilde{\tau},-\alpha+H_0(a_\theta)\rangle = \max_{\tilde{\tau}\in\mathcal{B}_0}\langle \tilde{\tau},-\alpha+H_0(a_\theta)\rangle, 
\end{align*}
we conclude for $a\in A_0^+(\theta,t,h_
\theta(t))$ that \eqref{eq:cptsupport3} is nonvanishing only if 
\begin{align*}
    cc'^{-1}\geq \max_{\tilde{\tau}\in\mathcal{B}_0}\langle \tilde{\tau},-\alpha+H_0(a_\theta)\rangle,
\end{align*}
or equivalently
\begin{align*}
    -cc'^{-1}\le  \min_{\tau\in \calB_0} \langle \tau,\alpha+H_0( w_0.a_\theta^{-1})\rangle.
\end{align*}
Since $\{ a'c_a:a\in A_0^+(\theta,t)\}$ is compact, we have \eqref{eq:cptsupport3} is nonvanishing only if
\begin{align*}
    h_{\theta}'(t)\le\min_{\tau\in \calB_0} \langle \tau,\alpha+H_0( w_0.a^{-1})\rangle
\end{align*}
for some $h_{\theta}'$ independent of $\alpha$. This completes the proof.
\end{proof}

\section{General functional equations and gcd}\label{sec:FunctEq}

For $M\in \mathcal{M}^{\mathrm{std}}$ and $\sigma\in \Pi_2(M),$ define
    \begin{align*}
         I_{(M,\sigma)}:=\mathrm{Span}_\CC\left\{\frac{Z(\mathrm{Ind}_P^G \sigma_\chi,f,v,w)}{L(0,\sigma_\chi,\rho_M)}: f\in \mathcal{S}_\rho(G(F)),(v,w)\in \Ind_{K_P}^K\sigma\vert_{K_M}\times \Ind_{K_P}^K\sigma^\vee\vert_{K_M}\right\}.
    \end{align*}
It is by definition a vector subspace of $\CC[\Lambda_M].$  The section is devoted to proving 
\begin{theorem}\label{multigcd}
    Let $M\in \mathcal{M}^{\mathrm{std}}$ and $\sigma\in \Pi_0(M).$ We have
    \begin{align*}
        I_{(M,\sigma)}=\CC[\Lambda_M].
    \end{align*}
    If in addition, \eqref{LLC:gamma} holds for all $M\in \mathcal{M}$, then the above holds for $\sigma\in \Pi_2(M)$. 
\end{theorem}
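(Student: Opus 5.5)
The plan is to build enough functions in $\mathcal{S}_\rho(G(F))$ spectrally to fill out all of $\CC[\Lambda_M]$, using \hyperref[Schwartz=as+cs]{Theorem \ref{Schwartz=as+cs}}. That theorem says that any $f\in\mathcal{C}(G(F))$ with $f\in\mathcal{S}^{\mathrm{as}}_\rho(G(F))\cap\mathcal{C}_{\mathrm{cs}}(G(F))$ gives $|\nu|^{-1/2}f\in\mathcal{S}_\rho(G(F))$. By \eqref{eq:Z=Z0}, for such $f$ the ratio $Z(\Ind_P^G\sigma_\chi,|\nu|^{-1/2}f,v,w)/L(0,\sigma_\chi,\rho_M)$ equals $\langle \HP(f)(M,\sigma_\chi)v,w\rangle/L(\tfrac12,\sigma_\chi,\rho_M)$. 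It is clear that $I_{(M,\sigma)}\subseteq\CC[\Lambda_M]$ by \eqref{S2}. Moreover, $I_{(M,\sigma)}$ is a $\CC[\Lambda_M]$-submodule: multiplying $f$ by $\mathbf 1_\alpha$ for $\alpha\in\Omega_G$ and translating by elements of $Z_M$ inside the matrix coefficient (via the $G(F)\times G(F)$-module structure, \hyperref[Srhomodule]{Corollary \ref{Srhomodule}}) multiplies by monomials. So it suffices to show that $I_{(M,\sigma)}$ is an ideal containing a nonvanishing function on all of $\Lambda_M$, or more simply that it contains $1$.

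For $\sigma\in\Pi_0(M)$ supercuspidal, I will work on the Bernstein component of $\mathcal{O}=\Lambda_M\sigma$. I will pick a section $T\in\mathrm{Poly}(\Temp_{\Ind,0}(G))$ supported on the component $[\mathcal{O}]$. It should equal $L(\tfrac12,\sigma_\chi,\rho_M)$ times a fixed rank-one endomorphism at $(M,\sigma_\chi)$, made $W(M,\mathcal{O})$-invariant via the normalized operators ${}^\circ c$. The problem is that $L$ is not polynomial, so the plan is to use the expansion of \hyperref[Lfunction:expand:cone]{Lemma \ref{Lfunction:expand:cone}} and truncate it.

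Concretely, I will write $L(\tfrac12,\sigma_\chi,\rho_M)=\sum_{\beta}c_\beta(\sigma)q^{-\langle\chi,\beta\rangle}$ and decompose it by $\alpha=\beta|_{X^*(G)}$. Each $\alpha$-piece is a finite sum, so it is a polynomial section. By \hyperref[BH:PW]{Theorem \ref{BH:PW}} it equals $\HP(f_\alpha)$ for a unique $f_\alpha\in C_c^\infty(G(F))$ supported on $\alpha G(F)^1$. Then $f:=\sum_\alpha f_\alpha$ converges pointwise because the supports are disjoint. By construction $f\in C^\infty_{\mathrm{ac}}$, and its spectral transform is $L(\tfrac12,\cdot)$ times a polynomial. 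Applying this to the $W$-symmetrization, I will also need to check that $f\in\mathcal{C}(G(F))$, which is the main obstacle.

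For that step I would use a Harish-Chandra seminorm estimate on the $f_\alpha$ uniform in $\alpha$, coming from the inversion formula in \hyperref[HCPlan]{Theorem \ref{HCPlan}}. Integrating over $\Re\chi$ shifted into $|\nu|^{-1/2}C_{\rho,M}$ gives exponential decay in $\alpha$ of the form $q^{-\epsilon\langle\nu,\alpha\rangle}$ times polynomial factors. Alternatively, I would define $f$ directly as $\HP^{-1}$ of the smooth section $L(\tfrac12,\sigma_\chi,\rho_M)\cdot(\text{rank one})$ on $\Im\Lambda_M$. That section is smooth since $L$ is holomorphic there, so $f\in\mathcal{C}(G(F))$ automatically. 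By \eqref{HP:orthoproj:alpha}, $f\mathbf{1}_\alpha=f_\alpha$ is compactly supported, so $f\in\mathcal{S}^{\mathrm{as}}_\rho\cap C^\infty_{\mathrm{ac}}$; I prefer this route. On the other components the section is zero, and on $\Temp_{\Ind}(G)$ the non-supercuspidal pieces are determined by compatibility, which is automatic here since each $f_\alpha$ is compactly supported. Choosing $v,w$ with nonzero pairing against the rank-one endomorphism gives $1\in I_{(M,\sigma)}$.

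For $\sigma\in\Pi_2(M)$ under \eqref{LLC:gamma}, I would reduce to the cuspidal case. Write $\sigma\hookrightarrow\Ind^M_{P'\cap M}\sigma'_\lambda$ with $\sigma'\in\Pi_0(M')$. By \hyperref[lem:cs:subquo]{Lemma \ref{lem:cs:subquo}}, $Z(\Ind_P^G\sigma_\chi,f,v,w)=Z(\Ind_{P'}^G\sigma'_{\lambda\chi},f,v,\tilde w)$. The construction above then gives numerators equal to $L(0,\sigma'_{\lambda\chi},\rho_{M'})$ times polynomials in $\lambda\chi$, so I need to compare $L(0,\sigma'_{\lambda\chi},\rho_{M'})$ with $L(0,\sigma_\chi,\rho_M)$. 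This is where \eqref{LLC:gamma} enters: combined with \hyperref[LLC7=Fourierstable]{Proposition \ref{LLC7=Fourierstable}}, Fourier stability together with the functional equation (\hyperref[Sas:FE:near0]{Corollary \ref{Sas:FE:near0}}) lets me apply $\mathcal{F}_\rho$ to the constructed functions. Writing $\gamma=\epsilon\,L(1-s,\cdot^\vee)/L(s,\cdot)$, I would show that the ratio of $L$-factors is cancelled, so that $1\in I_{(M,\sigma)}$. I expect the precise $L$-factor bookkeeping in this last step to be delicate.
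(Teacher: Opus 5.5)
Your proposal has a genuine gap, and it is in the cuspidal construction you said you prefer.

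\textbf{Supercuspidal case.} The step that fails is the claim ``$f\mathbf{1}_\alpha=f_\alpha$ \dots\ compatibility is automatic''. Theorem \ref{BH:PW} only prescribes $\HP(f_\alpha)$ on $\Temp_{\Ind,0}(G)$. The same Bernstein block also contains the components $(M_i,\mathcal{O}_{\sigma_i})$ of non-supercuspidal discrete series $\sigma_i\hookrightarrow\Ind^{M_i}_{P\cap M_i}\sigma_{\chi_i}$ of intermediate Levi subgroups. By \eqref{CS2}, the compactly supported $f_\alpha$ has on $(M_i,\mathcal{O}_{\sigma_i})$ the transform obtained by restricting $c_\alpha T$ to the non-unitary torus $\chi_i\Lambda_{M_i}$ and to the subspace $\Ind_{K_{P_i}}^K\sigma_i|_{K_{M_i}}$. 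For a generic rank-one $T$ this is nonzero. This leaves you two options, and both fail.

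\emph{If $\HP(f)$ is zero on these components:} then $e_\alpha\HP(f)\neq\HP(f_\alpha)$. By uniqueness in Theorem \ref{BH:PW}, $f\mathbf{1}_\alpha\notin C_c^\infty(G(F))$, so $f\notin C^\infty_{\mathrm{ac}}(G(F))$. This is exactly the pathology of \cite[Example 6.2]{DRS}.

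\emph{If $\HP(f)$ takes the compatible values:} then you must evaluate $L(\tfrac12,\sigma_\chi,\rho_M)$ on $\chi_i\Im\Lambda_{M_i}$, where it can have poles. For example, take $G=\GL_2$, $\rho=\mathrm{std}$, $\sigma=1$ on $T$, and $\mathrm{St}\subset\Ind_B^G\delta_B^{1/2}$. Along $\chi=\delta_B^{1/2}|\det|^{it}$ you get $\zeta_F(1+it)\zeta_F(it)$, which has a pole at $t=0$, whereas $L(\tfrac12,\mathrm{St}\otimes|\det|^{it},\mathrm{std})=\zeta_F(1+it)$. So $f\notin\mathcal{C}(G(F))$. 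For the same reason your first route $f=\sum_\alpha f_\alpha$ cannot be rescued by uniform seminorm bounds: the obstruction is a pole, not growth.

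The missing idea is to force the endomorphism part to annihilate $\Ind_{K_{P_i}}^K\sigma_{i,1}|_{K_{M_i}}$ along $\chi_i\Lambda_{M_i}$ for every $i$. Then the compatible extension to each $(M_i,\mathcal{O}_{\sigma_i})$ is zero (Lemma \ref{lem:cuspconst}). You must then show such sections still generate. Lemma \ref{lem:cuspnonzero} (Casselman's criterion plus the Geometric lemma) only guarantees a nonzero common annihilator on strata where the tori $\tau_i\chi_i\Lambda_{M_i}$ meet. So the paper builds sections stratum by stratum over the $\kappa_S$ and patches them with the Nullstellensatz (Lemma \ref{lem:varying}), rather than exhibiting a single element equal to $1$. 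Separately, your justification of the ideal property is off: multiplying by $\mathbf{1}_\alpha$ projects rather than multiplies by a monomial, and $Z_M$ does not act by scalars on $\Ind_P^G\sigma_\chi$. The ideal property goes through the constant term, as in Lemma \ref{lem:ideal}.

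\textbf{Non-supercuspidal case.} For $\sigma\in\Pi_2(M)-\Pi_0(M)$, the reduction through Lemma \ref{lem:cs:subquo} produces nothing. Once the sections on the cuspidal block of $\sigma'$ are made legitimate as above, they annihilate the $\sigma$-isotypic subspace (which is one of the $\sigma_i$), so they give zero against $\sigma$. If you do not kill that subspace, the compatible values carry the extra poles of $L(0,\sigma'_{\lambda\chi},\rho_{M'})$ relative to $L(0,\sigma_\chi,\rho_M)$ (Lemma \ref{lem:divides}), and the function is not Schwartz.

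Nor does \eqref{LLC:gamma} with the functional equation close the gap by itself. As in Lemma \ref{lem:naivegen}, it only shows that $I_{(M,\sigma)}$ contains $L(0,\sigma_\chi,\rho_M)^{-1}$ and $L(1,(\sigma_\chi)^\vee,\rho_M)^{-1}$. Their common zero locus has codimension $\geq 2$, which gives the unit ideal only when $\dim\Lambda_M=1$.

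The paper instead uses \eqref{LLC:gamma} to prove Lemma \ref{lem:divides}. It then builds, in Lemma \ref{lem:auxL}, a rational function on the cuspidal torus that is:
\begin{itemize}
\item a multiple of the cuspidal $L$-factor;
\item a multiple of $L(\tfrac12,(\sigma_i)_\chi,\rho_{M_i})$ along the tori of those intermediate discrete series whose parabolic induction has $\sigma$ as a subquotient;
\item a unit multiple of $L(\tfrac12,\sigma_\chi,\rho_M)$ along the torus of $\sigma$.
\end{itemize}
It multiplies this function into sections that kill the remaining $\sigma_i$-subspaces but not the $\sigma$-isotypic one (Lemmas \ref{lem:discretenonzero} and \ref{lem:varying:discrete}).
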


We start with an easy observation.
\begin{lemma}\label{lem:ideal}
      Let $M\in \mathcal{M}^{\mathrm{std}}$ and $\sigma\in \Pi_2(M).$ The set $I_{(M,\sigma)}$ is an ideal of $\CC[\Lambda_M],$ and it is equal to
    \begin{align}\label{eq:ideal:alternative}
        \mathrm{Span}_\CC\left\{\frac{Z(\sigma_\chi,f^{(P)},v,w) }{L(0,\sigma_\chi,\rho_M)}: f\in \mathcal{S}_\rho(G(F)),(v,w)\in \sigma \times \sigma^\vee\right\}.
    \end{align}
\end{lemma}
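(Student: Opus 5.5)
The proof splits into two parts. First we show that $I_{(M,\sigma)}$ is an ideal, which reduces to closure under multiplication by the monomials $\chi\mapsto\chi(\beta)=q^{-\langle\chi,\beta\rangle}$ for $\beta\in\Omega_M$. Then we prove that $I_{(M,\sigma)}$ equals the span \eqref{eq:ideal:alternative}, using \hyperref[constant:zeta:identity]{Lemma \ref{constant:zeta:identity}} and its meromorphic extension \hyperref[lem:constant]{Lemma \ref{lem:constant}}.

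For the ideal property, fix $\beta\in\Omega_M$ and choose $m\in M(F)$ with $H_M(m)=\beta$; we may take $m\in Z_M(F)$ after passing to a finite-index sublattice and using that $\CC[\Lambda_M]$ is generated as a module over such a sublattice's monomials by finitely many monomials. A cleaner route is to work directly with the alternative description. Granting \eqref{eq:ideal:alternative}, for $f\in\mathcal{S}_\rho(G(F))$ and $m\in M(F)$ we consider the right translate $R(1,m)f$, which lies in $\mathcal{S}_\rho(G(F))$ by \hyperref[Srhomodule]{Corollary \ref{Srhomodule}}. Its constant term satisfies
\begin{align*}
(R(1,m)f)^{(P)}=\delta_P^{-1/2}(m)\,R(1,m)\big(f^{(P)}\big).
\end{align*}
This holds because $N(F)$ is normalized by $m$, and the Jacobian of conjugation by $m$ on $N(F)$ cancels against the $\delta_P^{1/2}$ factors. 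It follows that
\begin{align*}
Z(\sigma_\chi,(R(1,m)f)^{(P)},v,w)=\delta_P^{-1/2}(m)\,\chi(m)^{-1}\,Z(\sigma_\chi,f^{(P)},\sigma(m)^{-1}v,w),
\end{align*}
up to the sign convention of $R$. Since $\chi(m)^{\pm1}$ runs over all monomials $q^{\mp\langle\chi,\beta\rangle}$, the span \eqref{eq:ideal:alternative} is stable under multiplication by every monomial, and is therefore an ideal of $\CC[\Lambda_M]$.

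For the equality of the two spans, the inclusion $I_{(M,\sigma)}\subseteq\eqref{eq:ideal:alternative}$ is immediate from \eqref{eq:descendzeta}, holding as meromorphic functions by \hyperref[lem:constant]{Lemma \ref{lem:constant}}. Each $Z(\Ind_P^G\sigma_\chi,f,v,w)$ is a finite sum of terms $Z(\sigma_\chi,(R(k,k')f)^{(P)},v(k'),w(k))$, and $R(k,k')f\in\mathcal{S}_\rho(G(F))$. For the reverse inclusion, we use the extraction argument from the proof of \hyperref[lem:constant]{Lemma \ref{lem:constant}}, which is modeled on \hyperref[(i):equivalent:key]{Lemma \ref{(i):equivalent:key}}. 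Given $f$ and $(v_1,w_1)\in\sigma\times\sigma^\vee$, choose $K_0$ fixing $f$ and small enough that $v_1,w_1$ are $K_{0}\cap M$-fixed. Take $v,w$ supported on $K_PK_0$ with $v(1)=v_1$ and $w(1)=w_1$. The double sum in \eqref{eq:descendzeta} then collapses, up to a nonzero volume constant, to terms with $k,k'\in K_PK_0$. By $K_M$-equivariance of the constant term these terms reduce to $Z(\sigma_\chi,f^{(P)},v_1,w_1)$, possibly after averaging over $K_M\cap K_0$, which does not affect fixed vectors. So each generator of \eqref{eq:ideal:alternative} lies in $I_{(M,\sigma)}$.

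The main obstacle is making the support and collapse in the last step precise. The function $v$ must be a well-defined element of $\Ind_{K_P}^K\sigma|_{K_M}$, which requires $K_P\cap K_0$-compatibility, ensured by shrinking $K_0$ so that $v_1$ is fixed. The $K_N$-integration must be absorbed correctly, using $K_P=K_MK_N$ and the left $N$-invariance implicit in the constant term.
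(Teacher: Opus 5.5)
Your proposal is correct and follows the paper's proof. The equality with \eqref{eq:ideal:alternative} comes from \eqref{eq:descendzeta} together with extracting a single coset term; your choice of $v,w$ supported on $K_PK_0$ is a slightly more direct version of the sign trick in Lemma \ref{(i):equivalent:key}. The ideal property comes from translating $f$ by $m\in M(F)$ on the alternative description and using that $\mathcal{S}_\rho(G(F))$ is a $G(F)\times G(F)$-module (the paper translates on the left, you on the right). One harmless slip: the correct identity is $(R(1,m)f)^{(P)}=\delta_P^{1/2}(m)\,R(1,m)(f^{(P)})$, not $\delta_P^{-1/2}(m)$, but only the nonvanishing of this constant matters.
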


\begin{proof}
   As explained in the proof of \hyperref[lem:constant]{Lemma \ref{lem:constant}}, by an analogous argument as in \hyperref[(i):equivalent:key]{Lemma \ref{(i):equivalent:key}}, one has $I_{(M,\sigma)}$ is equal to \eqref{eq:ideal:alternative}. For $m_0\in M(F)$
    \begin{align*}
        &\chi(m_0)Z(\sigma_\chi,f^{(P)},v,w)\\
        &=\int_{M(F)}R(m_0,1)(f)^{(P)}(m)\langle \sigma_\chi(m)v,\sigma^\lor(m_0)w\rangle dm\\
        &=\delta_{P}^{1/2}(m_0)\int_{M(F)}(R(m_0,1)f)^{(P)}(m)\langle \sigma_\chi(m)v,\sigma^\lor(m_0)w\rangle dm\\
        &=\delta_{P}^{1/2}(m_0)Z(\sigma_\chi,(R(m_0,1)f)^{(P)},v,\sigma^\lor(m_0)w).
    \end{align*}
    Since $\mathcal{S}_\rho(G(F))$ is a $G(F)\times G(F)$-module by \hyperref[Srhomodule]{Corollary \ref{Srhomodule}}, this implies $I_{(M,\sigma)}$ is an ideal. 
\end{proof}


By \hyperref[lem:ideal]{Lemma \ref{lem:ideal}} to prove \hyperref[multigcd]{Theorem \ref{multigcd}}, it suffices to construct elements in $I_{(M,\sigma)}$ that generate the unit ideal. Thanks to \hyperref[Schwartz=as+cs]{Theorem \ref{Schwartz=as+cs}}, we can construct $f\in \mathcal{S}_\rho(G(F))$ by specifying sections $\HP(|\nu|^{-1/2}f).$ The technicality is that we need to choose the sections in a compatible manner.

To orient the reader, we outline the structure of this section. The proof of \hyperref[multigcd]{Theorem \ref{multigcd}} is divided into the supercuspidal and non-supercuspidal discrete series cases, which are treated in \S \ref{ssec:multigcd:cuspidal} and \S \ref{ssec:multigcd:noncuspidal}, respectively. The proofs of the two cases are largely the same, except that when $\sigma\in \Pi_2(M)-\Pi_0(M)$, we require the additional input of \eqref{LLC:gamma} to understand the relation between $L(s,\sigma,\rho_M)$ and the $L$-function of its supercuspidal support (see \hyperref[lem:divides]{Lemma \ref{lem:divides}}).

We study the consequences of \eqref{LLC:gamma} in \S \ref{sec:LLC7}. The technical result that will be used crucially in \S \ref{ssec:multigcd:noncuspidal} is \hyperref[lem:auxL]{Lemma \ref{lem:auxL}}. We recommend the reader to assume \hyperref[lem:auxL]{Lemma \ref{lem:auxL}} and start from \S \ref{ssec:rem:discrete}, in which we review Casselman's criterion for square-integrability and the Geometric lemma. We apply these and the matrical Paley-Wiener theorem (\hyperref[BH:PW]{Theorem \ref{BH:PW}}) to carefully construct functions $f\in \mathcal{S}_\rho^{\mathrm{as}}(G(F))\cap C^\infty_{\mathrm{ac}}(G(F))$ such that $Z(\mathrm{Ind}_P^G \sigma_\chi,f,v,w)$ is a nonzero function for some $v,w$ and $f$ has the smallest possible spectral supports in both \S \ref{ssec:multigcd:cuspidal} and \S \ref{ssec:multigcd:noncuspidal}. Finally, in \S \ref{ssec:Tate} we state the complete version of \hyperref[BKN:conj]{Theorem \ref{BKN:conj}}, whose proof is a simple modification of that of \hyperref[multigcd]{Theorem \ref{multigcd}}.

\subsection{Consequences of \eqref{LLC:gamma}}\label{sec:LLC7}
In this subsection assume \eqref{LLC:gamma} holds for all $M\in \mathcal{M}$. By \hyperref[LLC7=Fourierstable]{Proposition \ref{LLC7=Fourierstable}}, the $\rho$-Fourier transform then defines an automorphism
\begin{align*}
    \mathcal{F}_\rho: \mathcal{S}_\rho^{\mathrm{as}}(G(F))\cap C^{\infty}_{\mathrm{ac}}(G(F))\longrightarrow \mathcal{S}_\rho^{\mathrm{as}}(G(F))\cap C^{\infty}_{\mathrm{ac}}(G(F)).
\end{align*}

We first prove general functional equations for zeta integrals of an irreducible smooth representation $\pi$ of $G(F)$. Recall that a Langlands triple $(P,\sigma,\chi)$ for $G(F)$ consists of 
\begin{enumerate}
    \item a standard parabolic $P=MN\in \mathcal{P}^{\mathrm{std}}$,
    \item an irreducible tempered representation $\sigma$ of $M(F),$ and
    \item a quasi-character $\chi\in \Lambda_M$ with $\Re(\chi)\in X^\ast(M)^+_\RR.$  
\end{enumerate}
By the Langlands classification \cite{Silb:classification, BW:book, Representation-p-adic}, for a Langlands triple $(P,\sigma,\chi)$ 
\begin{align*}\Hom_{G(F)}(\mathrm{Ind}_P^G \sigma_\chi, \mathrm{Ind}_{\overline{P}}^G\sigma_\chi)=\CC J_{\overline{P}|P}(\sigma_\chi).
\end{align*}
Furthermore, $\mathrm{Ind}_P^G \sigma_\chi$ admits a unique irreducible quotient $J(P,\sigma_\chi),$ which is isomorphic to the image of $J_{\overline{P}|P}(\sigma_\chi).$ Conversely, every irreducible smooth  representation $\pi$ of $G(F)$ is isomorphic to $J(P,\sigma_\chi)$ for some Langlands triple $(P,\sigma, \chi),$ and $J(P,\sigma_\chi)\cong J(P',\sigma'_{\chi'})$ if and only if $P=P'$ and $\sigma_\chi=\sigma'_{\chi'}.$ Therefore, if we restrict to $\chi\in X^\ast(M)_\RR^+,$ we have a one-to-one correspondence between $\mathrm{Irr}(G)$ and Langlands triples.

\begin{proposition}\label{prop:generalFE}
     Let $f\in \mathcal{S}_\rho^{\mathrm{as}}(G(F))\cap C^{\infty}_{\mathrm{ac}}(G(F))$. Let $\pi\in \mathrm{Irr}(G)$ with Langlands triple $(P,\sigma,\lambda)$. For any $(v,w)\in \pi\times \pi^\lor,$ the integral 
     \begin{align*}
         Z(\pi_\chi,f,v,w):=\int_{G(F)} f(g)\langle \pi_\chi(g)v,w\rangle dg
     \end{align*}
     is absolutely convergent for $\Re(\chi)$ in a positive cone in $\Re\Lambda_G$. Moreover, one has a functional equation
        \begin{align*}
            Z((\pi_\chi)^\lor,\mathcal{F}_\rho f,w,v)=\gamma(\tfrac{1}{2},\sigma_{\lambda\chi},\rho_M,\psi)Z(\pi_{\chi},f,v,w).
        \end{align*}
        as rational functions on $\Lambda_G.$
\end{proposition}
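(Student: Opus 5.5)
Since $\pi\cong J(P,\sigma_\lambda)$ is the image of $J_{\overline{P}|P}(\sigma_\lambda)\colon \Ind_P^G\sigma_\lambda\to\Ind_{\overline{P}}^G\sigma_\lambda$, the plan is to express every matrix coefficient of $\pi$ as a matrix coefficient of the standard module. Write $\tilde v\in\Ind_P^G\sigma_\lambda$ for a lift of $v$ and view $w\in\pi^\vee\subseteq(\Ind_{\overline{P}}^G\sigma_\lambda)^\vee$. Then
\begin{align*}
\langle\pi(g)v,w\rangle=\langle(\Ind_P^G\sigma_\lambda)(g)\tilde v,\,J_{\overline{P}|P}(\sigma_\lambda)^\vee w\rangle .
\end{align*}
For $\chi\in\Lambda_G$ the twist commutes with everything, since $J_{\overline{P}|P}(\sigma_{\lambda\chi})=J_{\overline{P}|P}(\sigma_\lambda)$ under the restriction identification, and $\chi$ is trivial on unipotent radicals. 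Hence $\langle\pi_\chi(g)v,w\rangle=\langle(\Ind_P^G\sigma_{\lambda\chi})(g)\tilde v,\tilde w\rangle$ with $\tilde w:=J_{\overline{P}|P}(\sigma_\lambda)^\vee w$ independent of $\chi$.

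\textbf{Convergence.} $\sigma$ is tempered, so it embeds in some $\Ind^M\tau$ with $\tau$ a unitary discrete series of a smaller Levi, and by transitivity of induction $\Ind_P^G\sigma_{\lambda\chi}$ embeds in some $\Ind_{P''}^G\tau_{\lambda\chi}$. Every matrix coefficient of a subrepresentation is a matrix coefficient of the larger induced module, so the question reduces to $(M'',\tau)\in\widetilde{\Temp}_\Ind(G)$. By Theorem \ref{Schwartz=as+cs}, $|\nu|^{-1/2}f\in\mathcal{S}_\rho(G(F))\subseteq\mathcal{S}^\infty(G(F))$. Condition \eqref{S1} then gives absolute convergence of $Z(\Ind\tau_{\mu},|\nu|^{-1/2}f,\cdot,\cdot)$ for $\Re\mu$ in a positive cone $C\subseteq\Re\Lambda_{M''}$.

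We need $\Re(\lambda\chi)\in C$. Write $\Re\lambda$ as a fixed element of $\Re\Lambda_{M''}$. By (C1) and (C2), $\Re(\lambda)|\nu|^{s}\in C$ for $s\gg0$, and this remains true on an open cone neighbourhood of $|\nu|$-directions in $\Re\Lambda_G$. Multiplying $f$ by $|\nu|^{-1/2}$ only shifts that cone by $|\nu|^{1/2}$. So $Z(\pi_\chi,f,v,w)$ converges absolutely for $\Re\chi$ in a positive cone of $\Re\Lambda_G$. Since the induced zeta function is rational (by \eqref{S2}), the same identification shows $Z(\pi_\chi,f,v,w)=Z(\Ind_P^G\sigma_{\lambda\chi},f,\tilde v,\tilde w)$ extends to a rational function on $\Lambda_G$.

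\textbf{Functional equation.} Apply Corollary \ref{Sas:FE:near0}, which is valid for tempered $\sigma$ by first passing to the discrete series $\tau$ and using \eqref{LLC:para}, so that $\gamma(\cdot,\sigma,\rho_M)=\gamma(\cdot,\tau,\rho_{M''})$. Restrict the resulting identity of rational functions from $\Lambda_M$ to the subtorus $\lambda\Lambda_G$; it reads
\begin{align*}
Z(\Ind_P^G(\sigma_{\lambda\chi})^\vee,\mathcal{F}_\rho f,\tilde w,\tilde v)=\gamma(\tfrac12,\sigma_{\lambda\chi},\rho_M,\psi)\,Z(\Ind_P^G\sigma_{\lambda\chi},f,\tilde v,\tilde w).
\end{align*}
The right side is $\gamma(\tfrac12,\sigma_{\lambda\chi},\rho_M,\psi)Z(\pi_\chi,f,v,w)$.

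For the left side, identify it with $Z((\pi_\chi)^\vee,\mathcal{F}_\rho f,w,v)$. Note $(\pi_\chi)^\vee$ is a subrepresentation of $(\Ind_P^G\sigma_{\lambda\chi})^\vee=\Ind_P^G(\sigma_{\lambda\chi})^\vee$. Under this embedding $w$ corresponds to $\tilde w$, and $v$ defines a functional on it via the quotient map, which is represented by $\tilde v$. So the matrix coefficients agree pointwise.

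This uses that $\mathcal{F}_\rho f\in\mathcal{S}_\rho^{\mathrm{as}}\cap C^\infty_{\mathrm{ac}}$, by Proposition \ref{LLC7=Fourierstable} under \eqref{LLC:gamma}. Therefore $Z(\cdot,\mathcal{F}_\rho f,\cdot,\cdot)$ is also given by absolutely convergent integrals on a positive cone, and equals the meromorphic continuation there by Proposition \ref{Snur:HCSchwartz} and \eqref{eq:Z=Z0}. This rules out any pathology from $Z$ versus $Z_0$.

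\textbf{Main obstacle.} The delicate step is justifying the restriction of the rational identity to the non-unitary slice $\lambda\Lambda_G$, where the functions are defined by convergent integrals rather than on $\Im\Lambda_M$. The remedy is the identity principle: both sides are rational on all of $\Lambda_M$, so they agree identically, and the restriction is legitimate provided $\lambda\Lambda_G$ is not contained in a polar divisor. That is guaranteed because the convergence region for $\chi$ is a nonempty open set of $\Lambda_G$, on which the functions are holomorphic.
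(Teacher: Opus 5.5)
Your proposal is correct and follows the paper's proof. You realize $\pi$ as the Langlands quotient of $\Ind_P^G\sigma_\lambda$, lift $v$ to $\tilde v$, and embed $w$ into $\Ind_P^G(\sigma_\lambda)^\vee$; your $J_{\overline P|P}(\sigma_\lambda)^\vee w$ is exactly the paper's dual embedding $\pi^\vee\hookrightarrow\Ind_P^G(\sigma_\lambda)^\vee$. You then get convergence from \eqref{S1} and transfer the functional equation from the induced case (Corollary~\ref{Sas:FE:near0} with \eqref{LLC:para}) by the identity principle.

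Your additional care makes explicit two points the paper leaves implicit: the reduction from tempered $\sigma$ to a discrete series $\tau$, and the use of \eqref{LLC:gamma} to ensure that $\mathcal{F}_\rho f$ again has convergent zeta integrals.
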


\begin{proof}
    We can assume $\pi$ has unitary central character. When $\pi\in\Temp(G)\cup\Temp_\Ind(G)$, the assertion follows from \hyperref[Sas:twist:HC]{Lemma \ref{Sas:twist:HC}} and \hyperref[Sas:FE:near0]{Corollary \ref{Sas:FE:near0}}. Note \eqref{LLC:para} is used here so that $\gamma(\tfrac{1}{2},\pi,\rho,\psi) = \gamma(\tfrac{1}{2},\sigma,\rho_M,\psi)$ if $(M,\sigma)\in\widetilde{\Temp}_\Ind(G)$ and $\pi$ is a subrepresentation of $\Ind_P^G\sigma$. 
    

 For general $\pi\in\mathrm{Irr}(G)$,  we identify $\pi^\lor$ as the unique irreducible subrepresentation of $\mathrm{Ind}_P^G(\sigma_\lambda)^\lor$. Choose $\tilde{v}$ in $\mathrm{Ind}_P^G \sigma_\lambda$ whose image in $\pi$ is $v$. Then $\langle \pi_\chi(g)v,w\rangle=\langle (\Ind_{P}^G \sigma_{\lambda \chi})(g)\tilde{v},w\rangle$ for all $g\in G(F)$ and $\chi\in \Lambda_G$. By \eqref{S1}
    \begin{align*}
        g\mapsto  f(g)\langle(\Ind_P^G\sigma_{\lambda\chi})(g)\tilde{v},w\rangle \in L^1(G(F))
    \end{align*}
    for $\Re(\chi)$ in a positive cone in $\Re \Lambda_G$. This proves the first assertion. In addition, by the previous case we have
    \begin{align*}
        Z((\pi_\chi)^\lor,\mathcal{F}_\rho f,w,v) &= Z(\Ind_P^G(\sigma_{\lambda\chi})^\vee,\mathcal{F}_\rho f,w,\tilde{v})\\
        &=\gamma(\tfrac{1}{2},\sigma_{\lambda\chi},\rho_M,\psi)Z(\Ind_P^G\sigma_{\lambda\chi},f,\tilde{v},w)\\
        &=\gamma(\tfrac{1}{2},\sigma_{\lambda\chi},\rho_M,\psi)Z(\pi_\chi,f,v,w).
    \end{align*}
\end{proof}

\begin{lemma}\label{lem:naivegen}
    Let $(M,\sigma)\in \widetilde{\Temp}_\Ind(G).$ Then $I_{(M,\sigma)}$ contains $L(0,\sigma_\chi,\rho_M)^{-1}$ and  $L(1,(\sigma_{\chi})^\lor,\rho_M)^{-1}.$ The vanishing locus of $I_{(M,\sigma)}$ in $\Lambda_M$ has codimension at least $2$. In particular, if $\dim_\CC\Lambda_M=1,$ then $I_{(M,\sigma)}=\CC[\Lambda_M].$
\end{lemma}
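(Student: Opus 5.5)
The first generator comes from compactly supported functions, the second from their Fourier transforms, and the codimension claim from comparing the two divisors.

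\emph{First generator.} Take $f\in C^\infty_c(G(F))$. Then $|\nu|^{-1/2}f$ lies in $\mathcal{S}_\rho(G(F))$. Indeed, $f$ is in $\mathcal{S}_\rho^{\mathrm{as}}(G(F))\cap C^\infty_{\mathrm{ac}}(G(F))$ because $\HP(f)$ is a polynomial section and $L(\tfrac12,\sigma_\chi,\rho_M)^{-1}$ is a polynomial, and then Theorem \ref{Schwartz=as+cs} applies. The zeta integrals $Z(\Ind_P^G\sigma_\chi,|\nu|^{-1/2}f,v,w)$ are Laurent polynomials in $\chi$. Choose $f$ and $(v,w)$ so that the section $\chi\mapsto\langle(\Ind_P^G\sigma_\chi)(f)v,w\rangle$ is a nonzero constant near a base point; for instance take $f$ supported in a small neighbourhood of the identity, approximating a delta function, with $v,w$ fixed by a small compact open subgroup. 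Since $I_{(M,\sigma)}$ is an ideal by Lemma \ref{lem:ideal}, we obtain $L(0,\sigma_\chi,\rho_M)^{-1}\cdot p\in I_{(M,\sigma)}$ for a polynomial $p$ that we can make a unit. Concretely, Lemma \ref{lem:ideal} lets us work with $Z(\sigma_\chi,f^{(P)},v,w)$ on $M(F)$. There we pick $f$ so that $f^{(P)}$ is a small bump near $1$ times a matrix coefficient cut off to $M(F)^1$. The resulting integral is $\chi$-independent and nonzero, since $\chi$ is trivial on $M(F)^1$. This yields exactly $L(0,\sigma_\chi,\rho_M)^{-1}\cdot\mathrm{const}$.

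\emph{Second generator.} Apply the Fourier transform. For $f\in C^\infty_c(G(F))$, the function $\mathcal{F}_\rho f$ lies in $\mathcal{S}^{\mathrm{as}}_\rho(G(F))$ by Theorem \ref{DRS:HC:FT}\eqref{FT:as}, so we must justify $\mathcal{F}_\rho f\in\mathcal{S}_\rho^{\mathrm{as}}\cap C^\infty_{\mathrm{ac}}$. This is where I would either invoke Proposition \ref{LLC7=Fourierstable} under \eqref{LLC:gamma}, or check \eqref{CS2} directly. The direct check works when $\sigma$ is supercuspidal or when only the relevant component matters: I would build $f$ with $\HP(f)$ supported on the single Bernstein component of $(M,\sigma)$, so that the compatibility conditions for $\mathcal{F}_\rho f$ reduce to those for $\sigma$ itself. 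Corollary \ref{Sas:FE:near0} gives
\[
Z(\Ind_P^G(\sigma_\chi)^\vee,\mathcal{F}_\rho f,w,v)=\gamma(\tfrac12,\sigma_\chi,\rho_M,\psi)Z(\Ind_P^G\sigma_\chi,f,v,w).
\]
Write $\gamma(\tfrac12,\cdot)=\epsilon\cdot L(\tfrac12,(\sigma_\chi)^\vee,\rho_M)/L(\tfrac12,\sigma_\chi,\rho_M)$ and shift by $|\nu|^{-1/2}$. Dividing by $L(0,\ldots)$ of the dual, the same $f$ as before produces the element $L(1,(\sigma_\chi)^\vee,\rho_M)^{-1}$, up to a unit, in $I_{(M,\sigma^\vee)}$. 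Replacing $\sigma$ by $\sigma^\vee$ gives the claim for $I_{(M,\sigma)}$, since the $\epsilon$-factor is a unit in $\CC[\Lambda_M]$.

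\emph{Codimension.} Both elements are products over $j$ of inverse $L$-factors $L(\langle\chi,\widetilde{\mu}_j\rangle,\cdot,\rho_j)^{-1}$, by \eqref{Lfactor:factorize}. Their zero divisors are unions of translates of subtori $\{q^{-\langle\chi,\widetilde\mu_j\rangle}=z\}$. A zero of $L(0,\sigma_\chi,\rho_j)^{-1}$ has $|z|=q^{-k/2}$ with $k\ge0$, giving $\Re\langle\chi,\widetilde\mu_j\rangle\le0$. A zero of $L(1,(\sigma_\chi)^\vee,\rho_{j'})^{-1}$ forces $\Re\langle\chi,\widetilde\mu_{j'}\rangle\geq 1$. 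For the two hypersurfaces to share a component we would need $\widetilde\mu_j$ and $\widetilde\mu_{j'}$ proportional with compatible signs. Both pair to $1$ with $\nu$, so they would be equal, which is impossible given the sign conditions. Hence the two polynomials have no common irreducible factor, and their common zero locus has codimension $\ge2$. When $\dim\Lambda_M=1$ this locus is empty, so $I_{(M,\sigma)}=\CC[\Lambda_M]$ by the Nullstellensatz.

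The main obstacle is the second step: certifying that $\mathcal{F}_\rho f$ lies in $\mathcal{S}_\rho$, i.e.\ the compatibility condition \eqref{CS2}, without assuming \eqref{LLC:gamma} in general.
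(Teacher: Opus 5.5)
Your proposal is correct and is essentially the paper's proof. The paper takes $f=\mathbf{1}_{K_0}$ with $v_0$ fixed by $K_0$, so the zeta integral is the constant $\mathrm{vol}(K_0)$ and your constant-term detour is unnecessary; it applies Corollary \ref{Sas:FE:near0} to $|\nu|^{-1/2}\mathcal{F}_\rho|\nu|^{1/2}\mathbf{1}_{K_0}$ for the second generator, and it compares the two divisors using $\langle\nu,\widetilde{\mu}_j\rangle=1$ exactly as you do. The obstacle you flag does not arise: this lemma sits in the subsection where \eqref{LLC:gamma} is a standing hypothesis, so Proposition \ref{LLC7=Fourierstable} already places the Fourier transform in $\mathcal{S}_\rho(G(F))$.
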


\begin{proof}
     Choose $(v_0,w_0)\in \Ind_{K_P}^K \sigma|_{K_M}\times \Ind_{K_P}^K \sigma^\lor|_{K_M}$ such that $\langle v_0,w_0\rangle=1.$ Let $K_0$ be a compact open subgroup such that $v_0$ is fixed by $K_0.$ Then $Z(\mathrm{Ind}_P^G \sigma_\chi,\mathbf{1}_{K_0},v_0,w_0)=\mathrm{vol}(K_0)$ and thus by \hyperref[Sas:FE:near0]{Corollary \ref{Sas:FE:near0}}
    \begin{align*}
        Z(\mathrm{Ind}_P^G (\sigma_\chi)^\lor,|\nu|^{-1/2}\mathcal{F}_\rho |\nu|^{1/2}\mathbf{1}_{K_0},w_0,v_0)=\mathrm{vol}(K_0)\gamma(0,\sigma_\chi,\rho_M,\psi).
    \end{align*}
    Therefore, $I_{(M,\sigma)}$ contains $L(0,\sigma_\chi,\rho_M)^{-1}$ and  $L(1,(\sigma_{\chi})^\lor,\rho_M)^{-1}$. 
    
    Retain the notation in the proof of \hyperref[Lfunction:expand:cone]{Lemma \ref{Lfunction:expand:cone}}. Then
\begin{align*}
    L(0,\sigma_\chi,\rho_M)^{-1} = \prod_{j=1}^n\prod_{k=0}^R \det\left(I - q^{-\frac{k}{2}-\langle \chi,\widetilde{\mu}_j\rangle}\phi_{j,k}(\sigma)(\mathrm{Frob})\big|_{V_{j,k}^{I_F}}\right).
\end{align*}
Since eigenvalues of $\phi_{j,k}(\sigma)(\mathrm{Frob})\big|_{V_{j,k}^{I_F}}$ have norm $1$, it follows that
\begin{align*}
    L(0,\sigma_\chi,\rho_M)^{-1} = \prod_{j=1}^n\prod_{k=0}^R \prod_{l=1}^{\dim V_{j,k}^{I_F}} (1-q^{-\frac{k}{2}-\langle \chi,\widetilde{\mu}_j\rangle+\lambda_{j,k,l}})
\end{align*}
for some $\lambda_{j,k,l}\in i\RR$. Similarly,
\begin{align*}
    L(1,(\sigma_\chi)^\vee,\rho_M)^{-1} = \prod_{j=1}^n\prod_{k=0}^R \prod_{l=1}^{\dim V_{j,k}^{I_F}} (1-q^{-\frac{k}{2}-1+\langle \chi,\widetilde{\mu}_j\rangle-\lambda_{j,k,l}}).
\end{align*}
We claim for all $(j,k,\ell),(j',k',\ell')$, the intersection
\begin{align*}
    \{\chi\in X^*(M)_\CC:1 - q^{-\frac{k'}{2}-\langle \chi,\widetilde{\mu}_{j'}\rangle+\lambda_{j',k',l'}}=0\} \cap \{\chi\in X^*(M)_\CC :1 - q^{-\frac{k}{2}-1+\langle \chi,\widetilde{\mu}_j\rangle-\lambda_{j,k,l}}= 0\}.
\end{align*}
must have codimension at least $2$. This is clear if $\widetilde{\mu}_{j'}$ and $\widetilde{\mu}_j$ are not parallel. If $\widetilde{\mu}_{j'} = c \widetilde{\mu}_{j}$ for some $c\in \CC^\times$, then by \eqref{G}
\begin{align*}
    1 = \langle\nu,\widetilde{\mu}_{j'}\rangle = c\langle\nu,\widetilde{\mu}_{j}\rangle = c,
\end{align*}
so $\widetilde{\mu}_{j'} = \widetilde{\mu}_{j}$. But $\frac{k}{2} + 1 +\lambda_{j,k,l} \not\equiv -\frac{k'}{2} - \lambda_{j',k',l'}\pmod{\frac{2\pi i}{\log q}\ZZ}$, so the claim follows.
In particular, the common zeros of $L(0,\sigma_\chi,\rho_M)^{-1}$ and $L(1,(\sigma_{\chi})^\lor,\rho_M)^{-1}$ in $\Lambda_M$ have codimension at least $2$.
\end{proof}

\begin{lemma}\label{lem:divides}
    Let $\pi\in \Pi_2(G).$ Suppose $\pi$ is a subquotient of $\mathrm{Ind}_P^G\sigma_\lambda$ for some $(M,\sigma)\in\widetilde{\Temp}_{\Ind}(G)$ and $\lambda\in \Lambda_M$. Then 
    \begin{align*}
        \chi\mapsto \frac{L(0,\pi_\chi,\rho)}{L(0,\sigma_{\lambda\chi},\rho_M)}\in \CC[\Lambda_G].
    \end{align*}
\end{lemma}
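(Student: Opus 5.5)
The plan is to compare the two $L$-factors through the $\gamma$-factor identity supplied by \eqref{LLC:gamma}. We then use that, on the tempered side, $L$-factors are recovered from $\gamma$-factors by their pole structure.

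First, reduce to the case where $\sigma$ is supercuspidal. If $\sigma\in\Pi_2(M)$ is itself a subquotient of some $\Ind\sigma'_{\lambda'}$ with $\sigma'$ supercuspidal, then $\pi$ is a subquotient of $\Ind_{P'}^G\sigma'_{\lambda'\lambda}$. So it suffices to prove the claim for $(M',\sigma')$ together with the claim for $\sigma$ relative to $\sigma'$ on $M$, and then compose. The latter reduction requires knowing that the quotient $L(0,\sigma_{\lambda\chi},\rho_M)/L(0,\sigma'_{\lambda'\lambda\chi},\rho_{M'})$ is a polynomial; we therefore run an induction on the semisimple rank, applying the statement to Levi subgroups. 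Concretely, we would prove the lemma first with $\sigma\in\Pi_0(M)$ and $\Lambda_G$ replaced by the larger torus $\Lambda_M$ where useful.

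Next, use \eqref{LLC:gamma} via \hyperref[lem:LLC7:subquo]{Lemma \ref{lem:LLC7:subquo}}: for $\chi\in\Lambda_G$, we have $\gamma(\tfrac12,\pi_\chi,\rho,\psi)=\gamma(\tfrac12,\sigma_{\lambda\chi},\rho_M,\psi)$. We get equality at $\chi$ by \eqref{LLC:twist}, since twisting by $\Lambda_G$ commutes with the subquotient relation. After the shift to $s=0$, write
\begin{align*}
\gamma(0,\pi_\chi,\rho,\psi)=\epsilon\cdot\frac{L(1,(\pi_\chi)^\vee,\rho)}{L(0,\pi_\chi,\rho)},
\end{align*}
and similarly for $\sigma_{\lambda\chi}$, with $\epsilon$-factors that are monomials (units in $\CC[\Lambda_G]$). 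Hence
\begin{align*}
\frac{L(0,\pi_\chi,\rho)}{L(0,\sigma_{\lambda\chi},\rho_M)}=u(\chi)\,\frac{L(1,(\pi_\chi)^\vee,\rho)}{L(1,(\sigma_{\lambda\chi})^\vee,\rho_M)}
\end{align*}
for a unit $u$. Since $\pi$ is unitary discrete series (tempered), $L(0,\pi_\chi,\rho)$ has poles only where $\Re\chi$ lies outside the open cone $C_{\rho,G}$, and $L(1,(\pi_\chi)^\vee,\rho)$ has poles only where $\Re\chi\in -|\nu|^{-1}\overline{C}_{\rho,G}$ roughly. These pole regions are disjoint, as in the codimension argument of \hyperref[lem:naivegen]{Lemma \ref{lem:naivegen}} using \eqref{G}.

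The main step, and the main obstacle, is to show that every pole of $L(0,\pi_\chi,\rho)$ is cancelled by a pole of $L(0,\sigma_{\lambda\chi},\rho_M)$ with multiplicity. Both sides are products of factors $(1-q^{-a}\chi(\cdot))^{-1}$, so the quotient is a rational function of $\chi$. Its poles on the left side of the displayed identity come from zeros of $L(0,\sigma_{\lambda\chi},\rho_M)^{-1}$ that are not cancelled, plus poles of $L(0,\pi_\chi,\rho)$. On the right side, its poles come from $L(1,(\pi_\chi)^\vee,\rho)$ and from zeros of $L(1,(\sigma_{\lambda\chi})^\vee,\rho_M)^{-1}$. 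I would first check a one-variable restriction: set $\chi=|\nu|^s$ and compare real parts. The left side can only have poles where $\Re s\le 0$ (coming from $L(0,\pi_{|\nu|^s},\rho)$, which is tempered), while the right side can only have poles where $\Re s\ge 1$ minus the real shifts coming from $\lambda$. The worry is that $\lambda$ is nonunitary, so the shifts from $\Re\lambda$ could move poles of the $\sigma$-factors across. Here I would use Casselman's criterion: $\Re\lambda$ lies in the negative obtuse cone, which pushes the poles of $L(0,\sigma_{\lambda\chi},\rho_M)$ only into the region where they can cancel, i.e., poles with $\Re s\le0$. This forces the quotient to have no poles at all, hence to be a Laurent polynomial, i.e., an element of $\CC[\Lambda_G]$, using \hyperref[Lfunction:expand:cone]{Lemma \ref{Lfunction:expand:cone}} to see that the exponents lie in $\Omega_G$.
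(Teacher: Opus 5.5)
Your $\gamma$-factor route is genuinely different from the paper's and can be made to work. As written, however, the decisive step does not go through, for two reasons.

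\emph{Pole bookkeeping.} Dividing by $L(0,\sigma_{\lambda\chi},\rho_M)$ or by $L(1,(\sigma_{\lambda\chi})^\vee,\rho_M)$ multiplies by a polynomial, so the $\sigma$-factors contribute only zeros. Hence the poles of $R(\chi):=L(0,\pi_\chi,\rho)/L(0,\sigma_{\lambda\chi},\rho_M)$ lie in the polar set of $L(0,\pi_\chi,\rho)$. By your displayed identity they also lie in the polar set of $L(1,(\pi_\chi)^\vee,\rho)$. So $\lambda$ plays no role, and the appeal to Casselman's criterion is unnecessary.

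\emph{The pole sets are not disjoint.} This is the more serious problem. The two polar sets are disjoint when all $\widetilde{\mu}_j$ coincide (e.g.\ $\dim\Lambda_G=1$, by \eqref{G}), but not in general. Then comparing real parts along $\chi=|\nu|^s$ or its translates $\chi_0|\nu|^s$ cannot conclude. Take $G=\GL_2\times\GG_m$ with $\nu$ the projection to $\GG_m$, $\rho(g,z)=z\cdot(g\oplus 1)$ on $\GL_2(\CC)\times\CC^\times$, $\pi=\mathrm{St}\boxtimes 1$ and $\chi=|\det|^x|\nu|^y$. Then
\begin{align*}
L(0,\pi_\chi,\rho)=\zeta_F(x+y+\tfrac12)\zeta_F(y),\qquad L(1,(\pi_\chi)^\vee,\rho)=\zeta_F(\tfrac32-x-y)\zeta_F(1-y),
\end{align*}
and on the line $(x,y)=(\tfrac32,s)$ both have a pole at $s=0$.

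What the computation in Lemma \ref{lem:naivegen} actually gives is weaker than disjointness: $P_1:=L(0,\pi_\chi,\rho)^{-1}$ and $P_2:=L(1,(\pi_\chi)^\vee,\rho)^{-1}$ have no common irreducible factor in the Laurent polynomial ring $\CC[\Lambda_G]$. The missing step is then an algebraic Hartogs argument. Both $RP_1=L(0,\sigma_{\lambda\chi},\rho_M)^{-1}$ and $RP_2=u\,L(1,(\sigma_{\lambda\chi})^\vee,\rho_M)^{-1}$ are regular. So the reduced denominator of $R$ divides the coprime elements $P_1,P_2$ and is a unit. This replaces the paper's Bertini step.

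Your reduction to supercuspidal $\sigma$ also runs the wrong way. Suppose you know that $L(0,\pi_\chi,\rho)/L(0,\sigma'_{\lambda'\lambda\chi},\rho_{M'})$ and $L(0,\sigma_{\lambda\chi},\rho_M)/L(0,\sigma'_{\lambda'\lambda\chi},\rho_{M'})$ are polynomials. That only exhibits $R$ as a quotient of two polynomials, and the second is typically a non-unit: e.g.\ $L(s,\mathrm{St}_2)/L(s,|\cdot|^{1/2}\otimes|\cdot|^{-1/2})=1-q^{1/2-s}$.

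No induction is needed; compose equalities of $\gamma$-factors instead. Apply Lemma \ref{lem:LLC7:subquo} on $G$ and on $M$ to the common supercuspidal support; this is legitimate because \eqref{LLC:gamma} is assumed for all Levi subgroups. Do this first for unitary twists, then extend to all twists by Zariski density of $\Im\Lambda$ and rationality via \eqref{LLC:twist}. This gives $\gamma(\tfrac12,\pi_\chi,\rho,\psi)=\gamma(\tfrac12,\sigma_{\lambda\chi},\rho_M,\psi)$ directly for the given $\sigma\in\Pi_2(M)$, and the Hartogs argument above then applies verbatim.

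So repaired, your proof stays entirely on the Galois side, using only \eqref{LLC:twist}, \eqref{LLC:gamma} and temperedness of $\pi$. The paper has the same skeleton but routes it through zeta integrals:
\begin{itemize}
\item Lemma \ref{lem:naivegen} puts $P_1,P_2$ into $I_{(G,\pi)}$ using $\mathbf{1}_{K_0}$ and its Fourier transform. The latter lies in $\mathcal{S}_\rho(G(F))$ by Proposition \ref{LLC7=Fourierstable}, which is where \eqref{LLC:gamma} enters.
\item Lemma \ref{lem:cs:subquo} together with \eqref{S2} makes every zeta integral of $\pi$ divisible by $L(0,\sigma_{\lambda\chi},\rho_M)$.
\item Bertini handles the codimension-$2$ locus.
\end{itemize}
That route is heavier, but it exhibits the gcd mechanism the paper reuses later.
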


\begin{proof}
By the proof of \hyperref[lem:naivegen]{Lemma \ref{lem:naivegen}}, the set of common zeros of $L(0,\pi_\chi,\rho)^{-1}$ and  $L(1,(\pi_{\chi})^\lor,\rho)^{-1},$ denoted by $Y$, is a codimension $2$ algebraic subscheme of $\Lambda_G$. When $\dim_\CC\Lambda_G=1,$ $Y$ is empty.

    Let $C$ be a smooth irreducible affine algebraic curve in $\Lambda_G-Y$. Then $L(0,\pi_{\chi},\rho)^{-1}|_C$ and $L(1,(\pi_{\chi})^\lor,\rho)^{-1}|_C$ as functions on $C$ do not have common zeros, so they generate $\Gamma(C,\mathcal{O}_C).$ Therefore, by \hyperref[lem:naivegen]{Lemma \ref{lem:naivegen}} there exist finitely many $f_i,v_i,w_i$ such that
    \begin{align*}
\sum_{i}Z(\pi_{\chi},f_i,v_i,w_i)|_C=L(0,\pi_{\chi},\rho)|_C.
    \end{align*}
    On the other hand, since $\pi$ is a subquotient of $\mathrm{Ind}_P^G\sigma_\lambda,$ as in \hyperref[lem:cs:subquo]{Lemma \ref{lem:cs:subquo}} for a proper lift $(v_i',w_i')$ of $(v_i,w_i)$
    \begin{align*}
        \sum_{i}Z(\pi_{\chi},f_i,v_i,w_i)=\sum_{i}Z(\mathrm{Ind}_P^G \sigma_{\lambda\chi},f_i,v_i',w_i')\in L(0,\sigma_{\lambda\chi},\rho_M)\CC[\Lambda_G].
    \end{align*}
   It follows that
    \begin{align*}
        \frac{L(0,\pi_\chi,\rho)}{L(0,\sigma_{\lambda\chi},\rho_M)}\bigg|_C\in \Gamma(C,\mathcal{O}_C).
    \end{align*}

    When $\dim_\CC \Lambda_G=1,$ we can take $C=\Lambda_G$ and we are done. Suppose $\dim_\CC \Lambda_G\ge 2.$ Suppose on the contrary that $\tfrac{L(0,\pi_\chi,\rho)}{L(0,\sigma_{\lambda\chi},\rho_M)}$ is not entire. Since $Y$ has codimension $2$, by Bertini's theorem \cite[Theorem 6.3, Remark 6.3.18]{Jean-Pierre:Bertini} we can find a $C\subset \Lambda_G-Y$ such that $\tfrac{L(0,\pi_\chi,\rho)}{L(0,\sigma_{\lambda\chi},\rho_M)}\big|_C$ is not holomorphic, a contradiction.
\end{proof}

Let $M\in \mathcal{M}^{\mathrm{std}}$ and $\sigma\in \Pi_0(M)_\CC.$ Recall $U_\sigma=\{\tau\in \Im \Lambda_M: \sigma\cong \sigma_\tau\}$ is a finite group, and the $\Lambda_M$-orbit of $\sigma$ is a complex algebraic torus $\mathcal{O}_{\sigma\CC}:=\Lambda_M\sigma\cong \Lambda_M/U_\sigma.$ Let $W(M,\mathcal{O}_{\sigma\CC})=\{s\in W(G|M):s.\mathcal{O}_{\sigma\CC}=\mathcal{O}_{\sigma\CC}\}.$ Thus $W(M,\mathcal{O}_{\sigma\CC})$ naturally acts on $\mathcal{O}_{\sigma\CC}.$

\begin{lemma}\label{lem:auxL}
Let $\pi\in \Pi_2(G)-\Pi_0(G)$ be a subrepresentation of $\mathrm{Ind}_{P}^G \sigma$ for some $\sigma\in \Pi_0(M)_\CC,$ where $M\in \mathcal{M}^{\mathrm{std}}.$  There exists a rational function $L\in \CC(\mathcal{O}_{\sigma\CC})^{W(M,\mathcal{O}_{\sigma\CC})}$ such that the following holds.
\begin{enumerate}
        \item Suppose $P'=M'N'\in \mathcal{P}^{\mathrm{std}}$ and $P'\ge P$ such that $\pi$ is a subquotient of $\Ind_{P'}^G \sigma'$ for some $\sigma'\in \Pi_2(M')_\CC.$ Then there exists $\chi'\in \Lambda_M$ such that $\sigma'$ is a subquotient of $\mathrm{Ind}_{P\cap M'}^{M'} \mathrm{\sigma}_{\chi'}.$ Furthermore,  
        \begin{align*}
                \Lambda_{M'}\ni \chi \mapsto \frac{L(\sigma_{\chi'\chi})}{L(0,\sigma'_\chi,\rho_{M'})} \in \CC[\Lambda_{M'}].
        \end{align*}
        \item In addition, 
       \begin{align*}
                \Lambda_{G}\ni \chi \mapsto \frac{L(\sigma_{\chi})}{L(0,\pi_\chi,\rho)} \in \CC[\Lambda_{G}]^\times.
        \end{align*}
\end{enumerate}

\end{lemma}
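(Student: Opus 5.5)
The natural candidate is to take $L$ to be the least common multiple, over all intermediate discrete series, of the $L$-factors attached to them and pulled back to the cuspidal orbit. Concretely, for each standard $P'=M'N'\ge P$ and each $\sigma'\in\Pi_2(M')_\CC$ that is a subquotient of $\Ind_{P\cap M'}^{M'}\sigma_{\chi'}$ for some $\chi'\in\Lambda_M$, consider the function $\sigma_{\chi'\chi}\mapsto L(0,\sigma'_\chi,\rho_{M'})$ on the subtorus $\chi'\Lambda_{M'}\sigma\subseteq\mathcal{O}_{\sigma\CC}$. The difficulty is that these live on different subtori. I would therefore replace the lcm by a single function on $\mathcal{O}_{\sigma\CC}$, namely
\begin{align*}
L(\sigma_\chi):=\prod_{s\in W(M,\mathcal{O}_{\sigma\CC})}\ \prod_{\text{pairs }(M',\sigma')} \text{(finite product of } L\text{-type factors)}.
\end{align*}
The simplest $W$-invariant choice that dominates everything is $L(\sigma_\chi):=L(0,\sigma_\chi,\rho_M)\cdot E(\chi)^{-1}$, where $E\in\CC[\mathcal{O}_{\sigma\CC}]^{W}$ is chosen so that the quotient has the required divisibility. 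I would instead set it up directly via Lemma~\ref{lem:divides} as follows.

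The first assertion of (1), the existence of $\chi'$, follows from uniqueness of cuspidal support. $\pi$ is a subquotient of $\Ind_P^G\sigma$ and of $\Ind_{P'}^G\sigma'$. Taking the cuspidal support $(M'',\sigma'')$ of $\sigma'$ in $M'$ gives that $\pi$ has cuspidal support both $(M,\sigma)$ and $(M'',\sigma'')$. By \cite[Th\'eor\`eme VII.2.6]{Representation-p-adic}-type uniqueness these are $W(G,A_0)$-conjugate. Since $P'\ge P$ and we work up to $W(M',A_0)$, we may arrange $M''=M$ and $\sigma''=\sigma_{\chi'}$, possibly after replacing $\sigma$ by a $W(M,\mathcal{O}_{\sigma\CC})$-translate. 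This is exactly why $L$ must be $W(M,\mathcal{O}_{\sigma\CC})$-invariant.

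For the divisibility I would apply Lemma~\ref{lem:divides} to $M'$ in place of $G$ (it only uses Lemma~\ref{lem:naivegen}, valid for any Levi). This gives that $L(0,\sigma'_\chi,\rho_{M'})/L(0,\sigma_{\chi'\chi},\rho_M)$ is regular on $\Lambda_{M'}$, and by (LLC7) via Lemma~\ref{lem:LLC7:subquo} the $\gamma$-factors agree along the subtorus. Hence the natural definition is
\begin{align*}
L(\sigma_\chi):=\prod_{s\in W(M,\mathcal{O}_{\sigma\CC})} L(0,s.\sigma_{\chi},\rho_M)^{1/\#W}
\end{align*}
interpreted as a product of the distinct irreducible polar divisors of the $W$-orbit, each to maximal multiplicity. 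By (LLC6) each $L(0,s.\sigma_\chi,\rho_M)=L(0,\sigma_\chi,\rho_M)$, so in fact $L(\sigma_\chi):=L(0,\sigma_\chi,\rho_M)$ is already $W$-invariant. I would then need the reverse inequality for (1): $L(0,\sigma_{\chi'\chi},\rho_M)$ divides $L(0,\sigma'_\chi,\rho_{M'})$ is the wrong direction. So the statement forces $L$ to be smaller than $L(0,\sigma,\rho_M)$ on each subtorus, namely dividing every $L(0,\sigma'_\chi,\rho_{M'})$.

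I would therefore define $L$ as the $W$-invariant gcd, in the UFD $\CC[\mathcal{O}_{\sigma\CC}]$ localized appropriately, of the poles common to all these intermediate $L$-factors. Concretely, $L^{-1}$ is the lcm of polynomials $p$ such that $p\cdot L(0,\sigma'_\chi,\rho_{M'})$ is regular on every relevant subtorus. Then (1) holds by construction. The content is (2): restricted to the subtorus $\Lambda_G\sigma_{\lambda}$ through $\pi$, $L$ must agree with $L(0,\pi_\chi,\rho)$ up to a unit. Divisibility of $L(0,\pi_\chi,\rho)$ by $L|$ follows from (1) applied with $P'=G$. For the converse I would use Lemma~\ref{lem:divides} on each intermediate $\sigma'$ together with the $\gamma$-factor identity (LLC7), which pins down $L$-factors of discrete series from their supports. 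Together these show that every pole of $L(0,\pi_\chi,\rho)$ comes from a polar divisor shared by all intermediate $L(0,\sigma'_\chi,\rho_{M'})$.

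I expect this last step to be the main obstacle: showing that the poles of $L(0,\pi_\chi,\rho)$ persist in every intermediate discrete series $L$-factor, and that they extend from the line $\Lambda_G$ to genuine divisors on $\mathcal{O}_{\sigma\CC}$. I would attack it with the Bertini-curve argument of Lemma~\ref{lem:divides}, run on curves in $\Lambda_{M'}$ passing through $\Lambda_G\sigma_\lambda$.
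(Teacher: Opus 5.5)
Your argument for the existence of $\chi'$ via uniqueness of cuspidal support is fine and is what the paper does. The gap is that the proposal never actually produces $L$.

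\textbf{Why the proposed definition does not work.} The intermediate factors $L(0,\sigma'_\chi,\rho_{M'})$ are functions on the subtori $\chi'\Lambda_{M'}\sigma\subseteq\mathcal{O}_{\sigma\CC}$. For $M'\neq M$ their polar loci have codimension $\dim\Lambda_M-\dim\Lambda_{M'}+1\geq 2$ in $\mathcal{O}_{\sigma\CC}$. They are therefore not divisors on $\mathcal{O}_{\sigma\CC}$, and there is no ``$W$-invariant gcd of common polar divisors'' in $\CC[\mathcal{O}_{\sigma\CC}]$ to take.

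Your concrete recipe also points the wrong way. The polynomials $p$ with $p\cdot L(0,\sigma'_\chi,\rho_{M'})$ regular on every relevant subtorus form an ideal. For any such $p$, the function $p^{-1}$ has at least the \emph{union} of all intermediate poles, which is exactly what (1) forbids. If you meant a gcd of the minimal denominators $p_{\sigma'}$ instead, that is still undefined: each $p_{\sigma'}$ lives only on its own subtorus, and its extension to $\mathcal{O}_{\sigma\CC}$ is highly non-unique.

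\textbf{The step you call the main obstacle.} Half of it is immediate. The poles of $L(0,\pi_\chi,\rho)$ persist in each $L(0,\sigma'_\chi,\rho_{M'})|_{\Lambda_G}$ by Lemma~\ref{lem:divides} applied to $\pi$ as a subquotient of $\Ind_{P'}^G\sigma'$; this is the polynomial $h_{P',\sigma',\chi'}$ in \eqref{eq:intermediate2}. (LLC7) enters only through such one-sided divisibilities and does not determine $L$-factors from cuspidal supports. The other half, extending these loci to divisors on $\mathcal{O}_{\sigma\CC}$, is the real issue. It has no canonical answer, and the Bertini-curve argument only certifies regularity of a \emph{given} ratio; it cannot manufacture the function.

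\textbf{What the paper does instead: a construction plus a patching step.}
\begin{enumerate}
\item It keeps the function you considered and dropped, $L(0,\sigma_\chi,\rho_M)$. This is $W(M,\mathcal{O}_{\sigma\CC})$-invariant and, by Lemma~\ref{lem:divides}, has too many poles on each subtorus.
\item It \emph{multiplies} this by a polynomial correction, rather than dividing by $E$. For each irreducible component $X$ of the intermediate polar loci it chooses a coset $H_X$ of a corank-one subtorus of $T_M$ containing a lift of $X$, compatibly with $U_\sigma$ and $W(M,\mathcal{O}_{\sigma\CC})$, with associated polynomial $h_X$.
\item It sets $L=L(0,\sigma_\chi,\rho_M)\prod h_X^{\widetilde m_X}\prod_{X\in A_\emptyset}h_X^{n}$. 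The exponents are fixed by a telescoping count over the inclusion order of components. This makes (1) hold and, via \eqref{eq:sumequal}, makes the multiplicities come out right along the polar components of $L(0,\pi_\chi,\rho)$.
\item A fixed $H_X$ can meet $\Lambda_G$ away from $X$, so a single such $L$ may give $L/L(0,\pi_\chi,\rho)$ spurious zeros on $\Lambda_G$, and (2) can fail for it. The paper therefore varies the choices of $H_X$ so that, at each $\chi_0\in\Lambda_G$, some member of the resulting family has $L/L(0,\pi_\chi,\rho)$ nonvanishing at $\chi_0$.
\item It then uses the Nullstellensatz to form a combination $\sum_i a_iL_i$, with $W$-invariant polynomial coefficients, whose ratio with $L(0,\pi_\chi,\rho)$ is a unit. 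Such combinations still satisfy (1).
\end{enumerate}
Your plan has no substitute for either the hypersurface extension or this patching, and that is where the content of (2) lies.
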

\begin{proof} 
Let $P'=M'N'\in \mathcal{P}$ and $M'\ge M$ such that $\pi$ is a subquotient of $\Ind_{P'}^G \sigma'$ for some $\sigma'\in \Pi_2(M')_\CC.$ By \cite[Th\'eor\`eme VI.5.4]{Representation-p-adic} there exists a parabolic subgroup $Q$ of $M'$ with Levi component $M$ and $\chi'\in \Lambda_M$ such that $\sigma'$ is a subquotient of $\mathrm{Ind}_{Q}^{M'} \mathrm{\sigma}_{\chi'}.$ Furthermore, there is $s\in W(M,\mathcal{O}_{\sigma\CC})$ such that $s.\sigma_{\chi'}\cong \sigma.$ Consider the (finite) set $S$ of all such triples $(P'=M'N',\sigma',\chi').$ 

The set $S$ admits an action of $U_\sigma:$ for $\tau\in U_\sigma$ and $(P',\sigma',\chi')\in S$
\begin{align*}
   \tau. (P',\sigma',\chi'):=(P',\sigma',\tau\chi').
\end{align*}
The quotient $S/U_\sigma$ admits an action of $W(M,\mathcal{O}_{\sigma\CC}):$ for $s\in W(M,\mathcal{O}_{\sigma\CC})$ and $(P',\sigma',U_\sigma\chi')\in S/U_\sigma$
\begin{align*}
   s. (P',\sigma',U_\sigma\chi'):=(s.P',s.\sigma',U_\sigma s.\chi').
\end{align*}

 Let $(P',\sigma',\chi')\in S.$ By \hyperref[lem:divides]{Lemma \ref{lem:divides}}, we have
\begin{align}\label{eq:intermediate}
     \Lambda_{M'}\ni\chi \mapsto f_{P',\sigma',\chi'}(\chi):=\frac{L(0,\sigma'_\chi,\rho_{M'})}{L(0,\sigma_{\chi'\chi},\rho_M)}\in\CC[\Lambda_{M'}],
\end{align}
and
\begin{align}\label{eq:intermediate2}
    \Lambda_{G}\ni\chi \mapsto h_{P',\sigma',\chi'}(\chi):=\frac{L(0,\pi_{\chi},\rho)}{L(0,\sigma'_\chi,\rho_{M'})} \in \CC[\Lambda_{G}].
\end{align}
Note that $\mathcal{O}_{\sigma\CC}\ni \sigma_\chi\mapsto L(0,\sigma_\chi,\rho_M)\in \CC(\mathcal{O}_{\sigma\CC})^{W(M,\mathcal{O}_{\sigma\CC})}.$ Therefore, as functions on $\Lambda_G$
\begin{align*}
    f_{G,\pi,1}(\chi) = h_{P',\sigma',\chi'}(\chi)f_{P',\sigma',\chi'}(\chi).
\end{align*}

For $(P',\sigma',\chi')\in S$, we let $Z_{(P',\sigma',\chi')}\subseteq \Lambda_{M'}$ denote the zero locus of $L(0,\sigma'_\chi,\rho_{M'})^{-1}$ in  $\Lambda_{M'}$. Let $S':=\{ (P',\sigma',\chi')\in S: M\lneq P'\lneq  G\},$ and consider the following subsets of $\Lambda_M:$
\begin{align*}
    Z_\sigma&:=Z_{(P,\sigma,1)},\\
    Z &:= \bigcup\limits_{(P',\sigma',\chi')\in S'} \chi 'Z_{(P',\sigma',\chi')}\subseteq \Lambda_{M},\\
    Z_{\pi} &:= \bigcup _{(G,\pi,\chi')\in S} \chi' Z_{(G,\pi,\chi')}=W(M,\mathcal{O}_{\sigma\CC}).(U_\sigma Z_{(G,\pi,1)}).
\end{align*}
By \eqref{eq:intermediate} $Z\subseteq Z_\sigma.$ In addition, if $Z$ is nonempty, then $Z_{\pi}\subseteq Z$  by \eqref{eq:intermediate2}.
Note that $Z_\sigma, Z,Z_\pi$ are stable under $U_\sigma,$ and their quotients by $U_\sigma$ are stable under $W(M,\mathcal{O}_{\sigma\CC}).$ In particular, $U_\sigma$ permutes irreducible components of $Z_\sigma,Z,Z_\pi,$ and $W(M,\mathcal{O}_{\sigma\CC})$ permutes the $U_\sigma$-orbits of their irreducible components.

Let $T_M:=X^*(M)\otimes_\ZZ \frac{\CC}{\frac{2\pi i}{\log q}\ZZ}$, and let $p_M:T_M\to \Lambda_M$ be the map induced by \eqref{XG->LambdaG:surj}. Let $\widetilde{D}:=\ker p_M,$ which is a finite subgroup of $X^*(M)\otimes_\ZZ \frac{i\RR}{\frac{2\pi i}{\log q}\ZZ}$. For $(P',\sigma',\chi')\in S$, let $A_{(P',\sigma',\chi')}$ denote the set of irreducible components of $ \chi'Z_{(P',\sigma',\chi')}.$ For $X\in A_{(P',\sigma',\chi')},$ let $\widetilde{X}$ be an irreducible/connected component of $p_M^{-1}(X).$  Then $\pi_0(p_M^{-1}(X))=\widetilde{D}\widetilde{X}.$ By the proof of \hyperref[lem:naivegen]{Lemma \ref{lem:naivegen}},  $\widetilde{X}$ is a coset of a subtorus of $T_M$ of corank $\dim \Lambda_M-\dim\Lambda_{M'}+1$. Note that $T_{M'}\to T_M$ is injective, and we choose $\widetilde{X}$ that is contained in the image of $T_{M'}.$

    Let $A := \bigcup\limits_{(P',\sigma',\chi')\in S'} A_{(P',\sigma',\chi')},$ $A_\emptyset:=\{ X\in A: X\cap Z_\pi= \emptyset\}$ and $A_\pi:=\bigcup_{s\in W(M,\mathcal{O}_{\sigma\CC})} s.U_\sigma A_{(G,\pi,1)}.$ These sets are $U_\sigma$-invariant, and their quotients by $U_\sigma$ are $W(M,\mathcal{O}_{\sigma\CC})$-invariant. For each $X\in A\cup A_\pi,$ choose a coset $H_{X}$ of a subtorus of $T_M$ of corank $1$ such that $\widetilde{X} \subseteq H_{X}$ and the following holds:
    \begin{itemize}
        \item $\bigcup\limits_{X\in A}\widetilde{D} H_X$ and $\bigcup\limits_{X\in A_\pi}\widetilde{D} H_X$  are stable under $U_\sigma.$
        \item $\bigcup\limits_{X\in A}\widetilde{D} H_X/U_\sigma$ and $\bigcup\limits_{X\in A_\pi}\widetilde{D} H_X/U_\sigma$ are stable under $W(M,\mathcal{O}_{\sigma\CC}).$
    \end{itemize}
    Write $H_{X} = \{\chi\in T_M:1 - c_Xq^{-\langle\chi,\alpha_X\rangle}=0\}$ for some $c_X\in \CC^\times$ and $\alpha_X\in \Hom_\ZZ(X^*(M),\ZZ)$ that is primitive. Choose $\chi'_X\in T_M$ such that $p_M(\chi'_X)=\chi',$ and let
    \begin{align*}
        h_X(\chi) := \prod_{\tau\in \widetilde{D}}\left(1 - c_Xq^{-\langle\tau\chi'^{-1}_X\chi,\alpha_X\rangle}\right) \in \CC[T_M]^{\widetilde{D}} = \CC[\Lambda_M].
    \end{align*}
    Then $h_X$ is independent of the choice of $\chi'_X,$ and the zero locus of $h_X$ is $\widetilde{D}\chi'_X H_X.$ We may and do choose $\{(c_X,\alpha_X):X\in A\cup A_\pi\}$ so that for any $X\in A\cup A_\pi$
    \begin{align*}
        \prod_{X'\in W(M,\mathcal{O}_{\sigma\CC}).(U_\sigma X)} h_{X'}(\chi) \in \CC[\mathcal{O}_{\sigma\CC}]^{W(M,\mathcal{O}_{\sigma\CC})}.
    \end{align*}

    Let $X\in A-A_\emptyset.$ Put
    \begin{align*}
         m_X:=  \max_{\substack{(P',\sigma',\chi')\\ X\in Z_{(P',\sigma',\chi')}}} \text{ multiplicity of $\widetilde{X}$ in $\left.\dfrac{L(0,\sigma'_\chi,\rho_{M'})}{L(0,\sigma_{\chi'\chi},\rho_M)}\right|_{\chi\in T_{M'}}$ as a divisor},
    \end{align*}
    If $X$ is a maximal subset among $A-A_\emptyset,$ let $\widetilde{m}_{X}:=m_X$. Otherwise, let
    \begin{align*}
        \widetilde{m}_{X}:=\max_{X\subsetneq X'\in A-A_\emptyset}(0,m_X-m_{X'}).
    \end{align*}

    Let $X\in A_\pi.$ There is $s\in W(M,\mathcal{O}_{\sigma\CC})$ and $\tau\in U_\sigma$ such that $s.X\subseteq \tau^{-1} Z_{G,\pi,1}.$ Let 
    \begin{align*}
        m_X:=m_{s.X}:=\text{ multiplicity of $\widetilde{\tau(s.X)}$ in $\left.\dfrac{L(0,\pi_{\chi},\rho)}{L(0,\sigma_{\chi},\rho_M)}\right|_{\chi\in T_{G}}$ as a divisor}.
    \end{align*}
    This is well-defined, i.e., independent of the choices of $s$ and $\tau$ because $U_\sigma\cap \Lambda_G \subseteq U_\pi$. By \eqref{eq:intermediate2}, we have
    \begin{align*}
        \widetilde{m}_X:=m_X-\max_{X\subsetneq X'\in A-A_\emptyset} m_{X'}\ge 0.
    \end{align*}
    In other words,
    \begin{align}\label{eq:sumequal}
        m_X=\widetilde{m}_X+\sum_{X\subsetneq X'\in A-A_\emptyset} \widetilde{m}_{X'}.
    \end{align}
    Note that the function $A_\pi \cup A-A_\emptyset\ni X\mapsto \widetilde{m}_X$ is invariant under $U_\sigma$ and $W(M,\mathcal{O}_{\sigma\CC}).$ 
    
    Choose $n\in \ZZ_{\ge 0}$ and define
    \begin{align*}
        L(\sigma_\chi) := L(0,\sigma_\chi,\rho_M) \prod_{X\in A_\pi \cup A-A_\emptyset} h_X(\chi)^{\widetilde{m}_X} \prod_{X\in A_{\emptyset}} h_X(\chi)^{n}.
    \end{align*}
    Clearly $L\in \CC(\mathcal{O}_{\sigma\CC})^{W(M,\mathcal{O}_{\sigma\CC})}.$ By choosing $n$ large, condition (1) holds by the construction of $L.$ 

     Denote by $\mathfrak{L}$ the collection of all possible functions $L$ constructed above. For each $L\in \mathfrak{L}$ and $X\in A\cup A_\pi,$ we let $H_{L,X}$ be the coset chosen in the construction of $L$. Since each coset of positive corank is a finite intersection of cosets of corank $1$, for a fixed $\chi_0\in\Lambda_G$ we can choose $\{H_{L,X}:\,X\in A\cup A_\pi\}$ so that $\chi_0 \in X$ if and only if $\chi_0\in p_M(H_{L,X})$. For this choice, by \eqref{eq:intermediate2} and \eqref{eq:sumequal} we have
     \begin{align*}
         \left.\frac{L(\sigma_\chi)}{L(0,\pi_\chi,\rho)}\right|_{\chi=\chi_0} \neq 0.
     \end{align*} 
    Note $\Lambda_G\ni \chi\mapsto L(0,\pi_\chi,\rho)$ is invariant by $U_\pi$ and hence by $U_\sigma\cap \Lambda_G$. Thus by Hilbert's Nullstellensatz, the ideal in $\CC[\Lambda_G]^{U_\sigma\cap \Lambda_G}$ generated by
    \begin{align*}
        \left\{  \Lambda_G\ni\chi\mapsto \frac{L(\sigma_\chi)}{L(0,\pi_\chi,\rho)}: L\in \mathfrak{L}\right\}
    \end{align*}
    is unital. Since $\CC[\mathcal{O}_{\sigma\CC}]\longrightarrow \CC[\Lambda_G]^{U_\sigma\cap \Lambda_G}$ is surjective, and it factors through the averaging $\CC[\mathcal{O}_{\sigma\CC}]\to \CC[\mathcal{O}_{\sigma\CC}]^{W(M,\mathcal{O}_{\sigma\CC})}$, the lemma follows.
\end{proof}

\subsection{Some remarks on discrete series}\label{ssec:rem:discrete}

We review in this subsection some facts on (essentially) discrete series that will be repeatedly used later in \S \ref{ssec:multigcd:cuspidal} and \S \ref{ssec:multigcd:noncuspidal}.

Let $(\pi,V)$ be a smooth admissible representation of $G(F)$. For $\chi\in \Lambda_G,$ we let
\begin{align*}
     V_\chi:=\{v \in V : \textrm{there is } n \in \ZZ_{\ge 0} \textrm{ such that for all } a \in A_G(F), (\pi(a)-\chi(a))^n v = 0\}.
\end{align*}
If $V_\chi\neq 0,$ we say $\chi$ is an exponent of $V$. Let $\mathcal{E}xp(\pi)\subseteq \Lambda_G$ be the set of exponents of $V$. Then
\begin{align*}
    V=\bigoplus_{\chi\in \mathcal{E}xp(\pi)} V_\chi.
\end{align*}
Let $P=MN\in \mathcal{P}$ and $r^G_P$ be the normalized Jacquet functor. Then $r^G_P \pi$ is a smooth admissible representation of $M(F)$. If $\pi$ has finite length, $\mathcal{E}xp(\pi)$ is finite and $r^G_P \pi$ also has finite length.  
\begin{theorem}[Casselman's criterion for square-integrability {\cite[VII.1.2]{Representation-p-adic}}]\label{CasselmanCriterion}
    A smooth admissible representation $(\pi,V)$ of $G(F)$ with unitary central character is square-integrable if and only if for any $P=MN\in \mathcal{P},$ the real part of each exponent lies in $\RR_{>0}\Delta(P),$ i.e.,
    \begin{align*}
        \Re \mathcal{E}xp(r^G_P\pi)\subseteq \RR_{>0}\Delta(P)= \left\{\sum_{\alpha\in\Delta(P)} x_\alpha \alpha: x_\alpha\in \RR_{>0}\right\}.
    \end{align*}\qed
\end{theorem}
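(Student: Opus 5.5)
This is Casselman's criterion, which the paper cites from \cite[VII.1.2]{Representation-p-adic}. I would not reprove it; I would reduce it to its standard ingredients.

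The approach is to compare the growth of matrix coefficients along the positive Weyl chamber with the exponents of the Jacquet modules. The two basic inputs are the Cartan decomposition \eqref{decomp:group:Cartan} and the volume estimate \eqref{Cartan:measure}. Together they reduce square-integrability (modulo $A_G$) of a $K$-finite coefficient $\langle\pi(g)v,w\rangle$ to the convergence of
\begin{align*}
\sum_{m\in\Omega^+/\Omega_{A_G}}\delta_{P_0}^{-1}(m)\,|\langle \pi(m)v,w\rangle|^2 .
\end{align*}
Here it suffices to treat finitely many $K$-translates of $v$ and $w$.

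The key step is Casselman's asymptotic expansion. Fix a $K_n$ with an Iwahori factorization (see \S\ref{subsec:Cartan}). For each standard $P=MN$ and each $m$ in the cone where $|\alpha(m)|$ is small for $\alpha\in\Delta(P)$ and bounded for $\alpha\in\Delta_M$ (the ``$\epsilon$-dominant'' region), one has
\begin{align*}
\langle\pi(m)v,w\rangle=\delta_P^{1/2}(m)\langle r^G_P\pi(m)\bar v,\bar w\rangle,
\end{align*}
where $\bar v$ and $\bar w$ are the images in the Jacquet module and the canonical pairing on the Jacquet modules is used. Decomposing $r^G_P\pi$ into generalized $A_M$-eigenspaces then expresses the coefficient on that region as a finite sum of terms $\chi(m)\,p(H_M(m))\,\delta_P^{1/2}(m)$, with $\chi\in\mathcal{E}xp(r^G_P\pi)$ and $p$ polynomial. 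The regions indexed by the standard $P$ cover $\Omega^+$ up to compact pieces.

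With this expansion, sufficiency is direct. Square each term and multiply by $\delta_{P_0}^{-1}\asymp\delta_P^{-1}$ on the region. The summand becomes $|\chi(m)|^2$ times a polynomial, summed over the lattice cone in $A_M$ where the roots in $\Delta(P)$ are large. With the sign conventions of the statement, the hypothesis $\Re\chi\in\RR_{>0}\Delta(P)$ makes $|\chi|$ decay exponentially along every direction of that cone. So the sum converges, while the $M$-directions contribute only bounded factors.

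For necessity, fix $P$ and let $\chi$ be an exponent violating the condition. Choose $\bar v,\bar w$ that pick out the corresponding generalized eigenspace at top polynomial degree. Then along some ray $a^n$ in the closed cone $A_M^+$, the coefficient is bounded below by a constant times $\delta_P^{1/2}(a^n)|\chi(a^n)|$, which is not square-summable against $\delta_P^{-1}$. This contradicts square-integrability; linear independence of distinct exponential-polynomials rules out cancellation.

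I expect the main obstacle to be the uniformity in the asymptotic expansion. One must pass from the Jacquet-module identity, valid for $m$ sufficiently deep in $A_M^+$ with depth depending on $K_n$, to a covering of the whole chamber by finitely many regions. In the necessity direction one also has to handle the boundary rays carefully. I would handle both by the standard induction on $|\Delta-\Delta_M|$, in the same way as the decomposition $A_0^+=\bigcup_\theta A_0^+(\theta,t)$ in \S\ref{ssec:Supportproof}.
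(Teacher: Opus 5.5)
The paper does not prove this statement; it quotes it from \cite[VII.1.2]{Representation-p-adic}. Your sketch is the standard proof behind that citation: the Cartan decomposition with the volume estimate, Casselman's Jacquet-module asymptotics on a finite cover of $\Omega^+$ by regions of the type $A_0^+(\theta,t,h_\theta(t))$ as in \S\ref{ssec:Supportproof}, and then decay or growth of the exponents. So it is fine as a proof outline.

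Two small simplifications for the necessity direction. First, use a translated ray $a_1a^n$, with $a_1$ deep enough that the whole ray lies in the region where Casselman's identity holds. This is the one place where care is needed, exactly when $\Re\chi$ lies on the boundary of the closed cone, and it makes the induction unnecessary. Second, take $\bar v$ to be an honest $A_M$-eigenvector. Then the coefficient along the ray is a nonzero constant times $\delta_P^{1/2}(a_1a^n)\chi(a_1a^n)$, so you do not need the top-degree or linear-independence argument.
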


For an admissible representation $\pi$ of finite length, we let $\mathrm{s.s.}(\pi)$ be its semisimplification. For $\theta\subseteq \Delta,$ let $P_\theta=M_\theta N_\theta$ be the standard parabolic subgroup associated to $\theta$. For $\theta,\omega\subseteq \Delta,$ let 
\begin{align*}
    W^{M_\theta,M_\omega} := \{s\in W(G,A_0):s^{-1}\theta, s\omega\subseteq \Sigma\}.
\end{align*}
By \cite[V.4.6]{Representation-p-adic} this is a set of representatives of the double quotient $W(M_\theta,A_0)\backslash W(G,A_0)/W(M_\omega,A_0)$. Furthermore, 
\begin{align*}
    (s^{-1}.P_\theta\cap P_\omega)N_\omega=P_{s^{-1}\theta\cap \omega}
\end{align*}
whose unipotent radical is generated by $N_\omega$ and $s^{-1}N_\theta s\cap N_\emptyset,$ and its standard Levi subgroup is $s^{-1}.M_\theta\cap M_\omega.$ The group $s^{-1}.P_\theta\cap M_\omega$ is a parabolic subgroup of $M_\omega$ with unipotent radical $s^{-1}.N_\theta  \cap M_\omega$ and Levi subgroup $s^{-1}.M_\theta \cap M_\omega$.

\begin{theorem}[Geometric Lemma {\cite[VI.5.1]{Representation-p-adic}}]\label{geometriclemma:1}
    Let $\theta,\omega\subseteq \Delta.$ Let $\sigma$ be an admissible representation of $M_\theta(F)$ of finite length. Then 
    \begin{align*}
        \mathrm{s.s.}\Big(r^G_{P_\omega}\circ \mathrm{Ind}_{P_\theta}^G \sigma\Big)=\sum_{s\in W^{M_\theta,M_\omega}} \mathrm{s.s.} \left(\mathrm{Ind}^{M_\omega}_{s^{-1}.P_\theta\cap  M_\omega} \circ r^{s^{-1}.M_\theta}_{P_{s^{-1}\theta\cap \omega}\cap s^{-1}.M_{\theta}} s^{-1}.\sigma\right).
    \end{align*}
    \qed    
\end{theorem}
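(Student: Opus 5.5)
The plan is the standard Bernstein--Zelevinsky/Casselman argument: filter $\mathrm{Ind}_{P_\theta}^G\sigma$ by the geometry of the double coset space $P_\theta(F)\backslash G(F)/P_\omega(F)$, apply the exact functor $r^G_{P_\omega}$, and identify each graded piece. First I use the Bruhat decomposition relative to the pair of standard parabolics. By \cite[V.4.6]{Representation-p-adic}, the double cosets $P_\theta(F)\,s\,P_\omega(F)$ are indexed by $s^{-1}\in W^{M_\theta,M_\omega}$; the inverse occurs because the induced space consists of functions transforming on the left by $P_\theta$. Each orbit is locally closed. I order $W^{M_\theta,M_\omega}$ compatibly with the Bruhat order, so that I obtain an increasing chain of open $P_\theta\times P_\omega$-stable subsets $Y_1\subseteq Y_2\subseteq\cdots\subseteq G(F)$, and each difference $Y_i-Y_{i-1}$ is a single double coset.

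Next I take $I_i\subseteq \mathrm{Ind}_{P_\theta}^G\sigma$ to be the functions supported in $Y_i$ (compactly supported modulo $P_\theta$). This is a filtration by $P_\omega(F)$-submodules. Since restriction of smooth functions from an open set to its closed complement is surjective in the $\ell$-space setting, each quotient $I_i/I_{i-1}$ is the space of smooth functions on the single orbit $P_\theta(F)\,s^{-1}P_\omega(F)$ that are compactly supported modulo $P_\theta(F)$. As a $P_\omega(F)$-module, this quotient is the compact induction $\mathrm{c\text{-}Ind}_{P_\omega\cap s.P_\theta}^{P_\omega}(s.\sigma)$, suitably twisted by modular characters.

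The core step is computing the Jacquet module $r^G_{P_\omega}$ of one such piece, that is, the $N_\omega(F)$-coinvariants with the $\delta^{-1/2}$ normalization. I use the facts recalled above: $(s.P_\theta\cap P_\omega)N_\omega=P_{s\theta\cap\omega}$ (with the appropriate convention for $s$), its Levi is $s.M_\theta\cap M_\omega$, and $s.P_\theta\cap M_\omega$ is a parabolic of $M_\omega$ with unipotent radical $s.N_\theta\cap M_\omega$. Integrating over $N_\omega\cap s.\overline{N}_\theta$ kills the directions of $N_\omega$ not lying in $s.P_\theta$. On the remaining part, taking coinvariants of $s.\sigma$ under $s.M_\theta\cap N_\omega$, which is the unipotent radical of $P_{s^{-1}\theta\cap\omega}\cap s^{-1}.M_\theta$ after conjugation, produces $r^{s^{-1}.M_\theta}_{P_{s^{-1}\theta\cap\omega}\cap s^{-1}.M_\theta}s^{-1}.\sigma$. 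What is left is then induced to $M_\omega$ from $s^{-1}.P_\theta\cap M_\omega$. This identifies the graded piece with $\mathrm{Ind}^{M_\omega}_{s^{-1}.P_\theta\cap M_\omega}\circ r^{s^{-1}.M_\theta}_{\cdots}\,s^{-1}.\sigma$. The induction is not just compact induction, because $(s^{-1}.P_\theta\cap M_\omega)\backslash M_\omega$ is compact.

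Finally, since $r^G_{P_\omega}$ is exact, the filtration $\{I_i\}$ induces a filtration of $r^G_{P_\omega}\mathrm{Ind}_{P_\theta}^G\sigma$ with these graded pieces. All of them have finite length, because $\sigma$ does and Jacquet modules and parabolic induction preserve finite length. Passing to semisimplifications then gives the stated sum. I expect the main obstacle to be the third step, specifically the bookkeeping of modular characters. I would have to check that the products of $\delta_{P_\theta}^{1/2}$, $\delta_{P_\omega}^{-1/2}$ and the Jacobians from the change of variables on $N_\omega$ and $s.N_\theta$ combine exactly into the normalized $\delta^{1/2}$'s of the inner Jacquet functor and the outer induction. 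I would first verify this on $A_0$ using $\eta_G$ and the decomposition of $\Sigma(P_0)$ into the four intersections determined by $s$.
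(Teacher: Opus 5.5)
The paper gives no proof of its own here; it cites \cite[VI.5.1]{Representation-p-adic}, and the proof there is the Bernstein--Zelevinsky argument you outline. That argument filters $\Ind_{P_\theta}^G\sigma$ by $P_\theta(F)\times P_\omega(F)$-orbits, uses exactness of $r^G_{P_\omega}$, and computes the Jacquet module of each compactly induced piece, so your overall route is the right one. There is, however, a concrete error in how you parametrize the orbits, and it carries through to your final formula.

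The set $W^{M_\theta,M_\omega}=\{s: s^{-1}\theta\subseteq\Sigma,\ s\omega\subseteq\Sigma\}$ consists of minimal length representatives of $W(M_\theta,A_0)\backslash W(G,A_0)/W(M_\omega,A_0)$. It therefore indexes $P_\theta(F)\backslash G(F)/P_\omega(F)$ directly, via $s\mapsto P_\theta(F)sP_\omega(F)$. The inverses form $W^{M_\omega,M_\theta}$, which represents the other double quotient and in general does not represent this one. For example, take $G=\GL_3$, $\theta=\{\alpha_1\}$, $\omega=\emptyset$, and let $s_i$ be the simple reflection in $\alpha_i$. Then $W^{M_\theta,M_\omega}=\{1,s_2,s_2s_1\}$, and the inverses $\{1,s_2,s_1s_2\}$ give the orbit $P_\theta s_2P_0=P_\theta s_1s_2P_0$ twice while missing the open orbit $P_\theta s_2s_1P_0$.

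Consequently your filtration, as written, does not run over all orbits. The graded pieces you actually obtain are $\mathrm{c\text{-}Ind}^{P_\omega}_{P_\omega\cap s.P_\theta}(s.\sigma)$. Passing at the end from $s.P_\theta$, $s.\sigma$ to $s^{-1}.P_\theta$, $s^{-1}.\sigma$ is not a harmless relabelling, because $W^{M_\theta,M_\omega}$ is not stable under inversion. Your explanation of the inverse via left $P_\theta$-equivariance also puts it in the wrong place.

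To fix this, use the orbits $P_\theta(F)sP_\omega(F)$ with $s\in W^{M_\theta,M_\omega}$.

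\begin{enumerate}
\item The stabilizer in $P_\omega(F)$ of the point $P_\theta(F)s$ is $P_\omega(F)\cap s^{-1}.P_\theta(F)$.
\item For $q$ in this stabilizer, $f(sq)=f\big((sqs^{-1})s\big)$ picks up $\sigma(sqs^{-1})=(s^{-1}.\sigma)(q)$. So the graded piece is $\mathrm{c\text{-}Ind}^{P_\omega}_{P_\omega\cap s^{-1}.P_\theta}(s^{-1}.\sigma)$ up to modulus twists, and this is where the inverse enters.
\item The facts recalled just before the statement now apply literally, since they use $s^{-1}\theta\subseteq\Sigma$. From $(s^{-1}.P_\theta\cap P_\omega)N_\omega=P_{s^{-1}\theta\cap\omega}$ you get $P_\omega\cap s^{-1}.M_\theta=P_{s^{-1}\theta\cap\omega}\cap s^{-1}.M_\theta$, whose unipotent radical is $N_\omega\cap s^{-1}.M_\theta$.
\item The $N_\omega$-coinvariant computation then proceeds in three moves: integrate over $N_\omega\cap s^{-1}.\overline{N}_\theta$, take coinvariants under $N_\omega\cap s^{-1}.M_\theta$, and induce from $s^{-1}.P_\theta\cap M_\omega$. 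This gives exactly the summand in the statement, with no need for an ``appropriate convention''.
\end{enumerate}

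The modulus-character bookkeeping you single out as the main obstacle is then the routine verification done in the cited reference.
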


\begin{corollary}\label{geometriclemma:2} Let $\omega\subseteq \theta\subseteq\Delta.$ Suppose $\sigma$ is $M_\theta(F)$-subrepresentation of  $\mathrm{Ind}_{P_\omega\cap M_\theta}^{M_\theta} \sigma',$ where $\sigma'$ is a supercuspidal representation of $M_\omega(F)$. Then
    \begin{align*}
         \mathrm{s.s.}\left(r_{P_\omega}^G \circ \mathrm{Ind}_{P_\theta}^G \sigma\right)=\sum_{\substack{s\in W^{M_\theta,M_\omega}\\ M_\omega\leq s^{-1}.M_\theta}}  \mathrm{s.s.}\left( r^{s^{-1}.M_\theta}_{P_{s^{-1}\theta\cap \omega}\cap s^{-1}.M_\theta}s^{-1}.\sigma\right).
    \end{align*}
    In particular, for $P=MN\in \mathcal{P}^{\mathrm{std}}$ and $\sigma$ a supercuspidal representation of $M(F)$, 
    \begin{align*}
            \mathrm{s.s.}\left(r_P^G \circ \mathrm{Ind}_P^G \sigma\right)=\sum_{s\in W(G|M)} s.\sigma.
    \end{align*}
    \qed

\end{corollary}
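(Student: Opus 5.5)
The statement to prove is Corollary~\ref{geometriclemma:2}, and I plan to deduce it from the Geometric Lemma (Theorem~\ref{geometriclemma:1}). The first step is to use exactness of Jacquet functors and of parabolic induction to pass from $\sigma$ to $\sigma'$. I then want to read off the full list of constituents of $r^G_{P_\omega}\Ind_{P_\theta}^G\sigma$ by comparing with the bigger representation $\Ind_{P_\omega}^G\sigma'$, using the fact that $\sigma'$ is supercuspidal.

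The particular case ($\theta=\omega$, $\sigma$ supercuspidal) is the model, so I treat it first. Theorem~\ref{geometriclemma:1} writes the semisimplified Jacquet module as a sum over $s\in W^{M,M}$ of $\Ind^{M}_{s^{-1}.P\cap M}\, r^{s^{-1}.M}_{P_{s^{-1}\omega\cap\omega}\cap s^{-1}.M}\, s^{-1}.\sigma$. The representation $s^{-1}.\sigma$ of $s^{-1}.M$ is still supercuspidal. Hence its Jacquet module along the parabolic $P_{s^{-1}\omega\cap\omega}\cap s^{-1}.M$ of $s^{-1}.M$ vanishes unless that parabolic is all of $s^{-1}.M$. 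Equivalently, the term survives only when $s^{-1}.M\cap M=s^{-1}.M$, i.e.\ $s^{-1}.M\le M$. Both Levis have the same rank, so this forces $s^{-1}.M=M$. Then $s^{-1}.P\cap M=M$, the induction is trivial, and the term equals $s^{-1}.\sigma$. Finally, the set of $s\in W^{M,M}$ normalizing $M$ is in bijection with $W(G|M)$, and $s\mapsto s^{-1}$ reindexes the sum to $\sum_{s\in W(G|M)}s.\sigma$.

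For the general statement I again expand by Theorem~\ref{geometriclemma:1}, now with $\sigma\hookrightarrow\Ind_{P_\omega\cap M_\theta}^{M_\theta}\sigma'$. The term for $s\in W^{M_\theta,M_\omega}$ is $\Ind^{M_\omega}_{s^{-1}.P_\theta\cap M_\omega}$ applied to a Jacquet module of $s^{-1}.\sigma$. By exactness, that Jacquet module is a subquotient of the corresponding Jacquet module of $s^{-1}.\Ind\sigma'$. Applying the Geometric Lemma inside $s^{-1}.M_\theta$, its constituents have supercuspidal support on Levi subgroups conjugate to $M_\omega$ and are themselves Jacquet modules of conjugates of $\sigma'$. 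I will use this to show that the parabolic $s^{-1}.P_\theta\cap M_\omega$ of $M_\omega$ must have Levi factor $s^{-1}.M_\theta\cap M_\omega$ containing a conjugate of $M_\omega$. That Levi factor is a subgroup of $M_\omega$, so it can contain a conjugate of $M_\omega$ only if it equals $M_\omega$, i.e.\ $M_\omega\le s^{-1}.M_\theta$.

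When $M_\omega\le s^{-1}.M_\theta$, the group $s^{-1}.P_\theta\cap M_\omega$ is all of $M_\omega$, the induction is trivial, and the term reduces to $r^{s^{-1}.M_\theta}_{P_{s^{-1}\theta\cap\omega}\cap s^{-1}.M_\theta}s^{-1}.\sigma$, as claimed. The main obstacle is the vanishing of the other terms, i.e.\ the step in the previous paragraph. A term with $M_\omega\not\le s^{-1}.M_\theta$ is the induction to $M_\omega$ of a Jacquet module taken to a Levi $L=s^{-1}.M_\theta\cap M_\omega\subsetneq M_\omega$. That Jacquet module has cuspidal support of rank equal to that of $M_\omega$, which is too large to occur on the smaller Levi $L$, so it must be zero. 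I expect to make this rigorous by comparing semisimple ranks, or by invoking the standard fact that Jacquet modules of representations with supercuspidal support on $M_\omega$ vanish along parabolics whose Levi contains no conjugate of $M_\omega$ \cite[VI]{Representation-p-adic}.
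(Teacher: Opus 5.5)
Your proposal is correct and is the standard deduction from Theorem~\ref{geometriclemma:1} that the paper leaves implicit, since the corollary is stated without proof. Terms with $M_\omega\not\le s^{-1}.M_\theta$ vanish because a nonzero constituent of the Jacquet module of $s^{-1}.\sigma$ to $L=s^{-1}.M_\theta\cap M_\omega\subsetneq M_\omega$ would need a cuspidal support conjugate to $(M_\omega,\sigma')$ inside a proper Levi of $M_\omega$; for the surviving terms $s^{-1}.P_\theta\supseteq M_\omega$, so the induction is trivial.

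One phrase is imprecise. The inner Geometric Lemma does not give ``Jacquet modules of conjugates of $\sigma'$''. It gives constituents of inductions to $L$ of conjugates of $\sigma'$, because the Jacquet functor applied to the supercuspidal must be trivial for the term to be nonzero. Your conclusion that $L$ must contain a conjugate of $M_\omega$ is unaffected.
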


\begin{corollary}\label{cor:notfull}
    Suppose $M\in \mathcal{M}^{\mathrm{std}}$ is proper. For an admissible representation $\sigma$ of finite length of $M(F),$ $\Ind_P^G\sigma$ is not essentially square-integrable.
\end{corollary}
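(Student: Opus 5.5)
The plan is to use Casselman's criterion (Theorem \ref{CasselmanCriterion}) together with the Geometric Lemma, in the form of Corollary \ref{geometriclemma:2}. Twisting by a character of $G(F)$ (an element of $\Lambda_G$) changes neither essential square-integrability nor the form $\Ind_P^G\sigma$, so we may assume $\Ind_P^G\sigma$ has unitary central character. If it were essentially square-integrable, it would then be square-integrable. Consequently every exponent of every Jacquet module $r^G_{P'}\Ind_P^G\sigma$ would have real part in $\RR_{>0}\Delta(P')$, for all $P'=M'N'\in\mathcal{P}$.

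The contradiction will come from looking along $P$ and its opposite. First, reduce to the case of a single irreducible supercuspidal constituent. Every irreducible subquotient of $\sigma$ embeds into some $\Ind_{P_\omega\cap M}^{M}\sigma'$ with $\sigma'$ supercuspidal on a standard Levi $M_\omega\le M$. Exponents of $r^G_{P_\omega}$ of a subquotient of $\Ind_P^G\sigma$ are among those of $r^G_{P_\omega}\Ind_P^G\sigma$, by exactness of $r$. By transitivity of induction it is therefore enough to exhibit, for $\Ind_{P_\omega}^G\sigma'$, two exponents coming from $w$ and $w w_0^{M}$-type Weyl elements whose real parts cannot both lie in the open positive cone.

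Concretely, Corollary \ref{geometriclemma:2} gives that the semisimplification of $r_{P_\omega}^G\Ind_{P_\omega}^G\sigma'$ is $\sum_{s\in W(G|M_\omega)} s.\sigma'$. Hence the exponents are the central characters of $s.\sigma'$ restricted to $A_{M_\omega}$, namely $s.\Re(\chi_{\sigma'})$ for $s\in W(G|M_\omega)$. Since $M$ is proper, $P$ contains the element $w=w_0^Gw_0^{M}$ sending $M$ to a standard Levi and $N$ to $\overline{N}$ (or we use $\overline{P}$ directly).

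The main obstacle is the sign argument on the $A_G$-orthogonal part of $X^*(M_\omega)_\RR$. Because the central character is unitary, the $X^*(G)_\RR$-component of $\Re\chi_{\sigma'}$ vanishes. Let $\lambda$ be the projection of $\Re(\chi_{\sigma'})$ to $X^*(A_M)_\RR$ modulo $X^*(G)_\RR$, and fix $\varpi\in\widehat\Delta$ a fundamental weight attached to $\alpha\in\Delta-\Delta_M$, which exists because $M$ is proper. Casselman's condition for the identity term forces $\langle\varpi^\vee\text{-pairing},\lambda\rangle>0$ along $\alpha$. For the term $s=w$, which maps $A_M$-positive roots to negative ones, it forces the pairing of $w.\lambda$ with the corresponding positive coweight to be positive, i.e.\ $\langle \lambda, -\varpi'\rangle>0$ for a coweight $\varpi'$ positive on $\Delta(P)$.

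I would make this precise by pairing both exponent conditions against $\sum_{\beta\in\Delta(P)}\omega_\beta^\vee$. The quantity obtained is strictly positive for the identity and strictly negative for $w$, which is the desired contradiction.

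The delicate point is verifying that the $w$-term genuinely appears among the constituents for the parabolic $P_\omega$, and that the orthogonal-projection bookkeeping between $M_\omega$ and $M$ is consistent; I expect this bookkeeping to need the most care.
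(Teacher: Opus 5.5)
\paragraph{A gap in the reduction step.} Your guiding idea is right, and it is what the paper does: play the exponent coming from $\sigma$ itself against one coming from a Weyl element that reverses $\Delta(P)$. The problem is the reduction to cuspidal support, which runs in the wrong direction.

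After passing to an irreducible subquotient $\sigma_1\hookrightarrow\Ind^M_{P_\omega\cap M}\sigma'$, you have $\Ind_P^G\sigma_1\hookrightarrow\Ind_{P_\omega}^G\sigma'$. So the exponents of Jacquet modules of $\Ind_P^G\sigma_1$ form a \emph{subset} of those of $\Ind_{P_\omega}^G\sigma'$. A non-positive exponent of $r^G_{P_\omega}\Ind^G_{P_\omega}\sigma'$ therefore contradicts nothing: it may belong to a constituent of the ambient representation that is not in $\Ind_P^G\sigma_1$. Already for $\GL_2$ this happens: $\mathrm{St}\hookrightarrow\Ind_B^{\GL_2}\delta_B^{1/2}$, yet $\delta_B^{-1/2}$ is an exponent of the ambient representation, coming from the trivial quotient.

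Your identity exponent is legitimate. Indeed $r^M_{P_\omega\cap M}\sigma_1$ is a constituent of $r^G_{P_\omega}\Ind_P^G\sigma_1$, and it surjects onto $\sigma'$ by Frobenius reciprocity. But the opposite exponent is exactly what must be shown to come from $\Ind_P^G\sigma_1$, and you leave this open as the ``delicate point''.

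The choice of $w$ has further problems:
\begin{itemize}
\item $w=w_0^Gw_0^M$ need not normalize $M$ (e.g.\ $M=\GL_1\times\GL_2\subset\GL_3$). It then sends $N$ to the opposite of the unipotent radical of the standard parabolic with Levi $w.M$, not to $\overline N$.
\item $w$ need not normalize $M_\omega$. In that case $w.\sigma'$ is not a constituent of $r^G_{P_\omega}\Ind^G_{P_\omega}\sigma'$ at all, since Corollary~\ref{geometriclemma:2} only produces $s\in W(G|M_\omega)$.
\end{itemize}

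\paragraph{How to repair it.} Skip cuspidal support entirely. Your sign argument only uses the $A_M$-central character, and that is the central character of $\sigma_1$. So, using exactness of $\Ind$ and $r$, take $\sigma$ irreducible with central character $\chi_\sigma$, twisted so that $\Ind_P^G\sigma$ has unitary central character.
\begin{itemize}
\item The term $s=1$ of Theorem~\ref{geometriclemma:1} shows that $\sigma$ is a constituent of $r^G_P\Ind_P^G\sigma$.
\item Let $P^*$ be the standard parabolic with Levi $w.M$. The term $s=w^{-1}$, with $\theta=\Delta_M$ and $\omega=\Delta_{w.M}$, shows that $w.\sigma$ is a constituent of $r^G_{P^*}\Ind_P^G\sigma$.
\end{itemize}
Since $w$ maps $\Sigma(P)$ onto $-\Sigma(P^*)$, it maps $\RR_{>0}\Delta(P)$ onto $-\RR_{>0}\Delta(P^*)$. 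Theorem~\ref{CasselmanCriterion} would then put $\Re(\chi_\sigma\vert_{A_M})$ in both $\RR_{>0}\Delta(P)$ and $-\RR_{>0}\Delta(P)$. This is impossible, because $\Delta(P)$ is nonempty (as $M$ is proper) and linearly independent.

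Equivalently, you can use $r^G_{\overline P}\Ind_P^G\sigma$, which contains $\sigma$ via the open cell $P\overline P$; Casselman's criterion applies to every $P'\in\mathcal{P}$. This is, in substance, the paper's proof, which compares $\sigma$ and $w_0.\sigma$ as constituents of Jacquet modules of $\Ind_P^G\sigma$ directly.
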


\begin{proof}
    We may assume $\sigma$ is irreducible. Let $w_0$ be the long Weyl element in $W(G|M)$. By the \hyperref[geometriclemma:1]{Geometric lemma}, both $w_0.\sigma$ and $\sigma$ are subquotients of $r_P^G\circ \mathrm{Ind}_P^G\sigma$. Thus central characters of $\sigma$ and $w_0.\sigma$ are exponents of $r_P^G\circ \mathrm{Ind}_P^G\sigma.$ Since $P$ is proper, the real part of one of the two central characters must not lie in $\Re\Lambda_G+\RR_{>0}\Delta(P)$. The assertion then follows from \hyperref[CasselmanCriterion]{Casselman's criterion}.
\end{proof}

\subsection{Proof of \hyperref[multigcd]{Theorem \ref{multigcd}} for supercuspidal representations}\label{ssec:multigcd:cuspidal} For $(M,\sigma)\in \widetilde{\mathrm{Temp}}_{\mathrm{Ind}}(G),$ let $\mathcal{O}_\sigma$ be the $\Im\Lambda_M$-orbit in $\widetilde{\mathrm{Temp}}_{\mathrm{Ind}}(G)$. Fix $M$ and $\sigma\in \Pi_0(M)$.  Consider the set of pairs $(M_i,\mathcal{O}_{\sigma_i})$ where $M_i>M$ is a standard Levi subgroup and $\sigma_i\in \Pi_2(M_i)-\Pi_0(M_i)$ is isomorphic to a subrepresentation of $\Ind_{P\cap M_i}^{M_i} \sigma_{\chi_i}$ for some $\chi_i\in \Lambda_M.$ Denote this set of pairs by $J_{(M,\sigma)}$ indexed by $i\in Y_{(M,\sigma)}$. By \cite[Proposition 3.1]{BDK:TracePW}, $Y_{(M,\sigma)}$ is finite.  

Recall the notations in \S \ref{ssec:HCplancherel}. Let $U_\sigma=\{\tau\in \Im \Lambda_M: \sigma\cong \sigma_\tau\}.$ It is a finite group. We have a group action 
\begin{align}\label{eq:Usigmaaction}
\begin{split}
    U_\sigma\times \mathrm{End}_\CC (\Ind_{K_P}^K \sigma|_{K_M})&\longrightarrow \mathrm{End}_\CC (\Ind_{K_P}^K \sigma|_{K_M})\\
    (\tau,T)&\longmapsto  \tau.T.
\end{split}
\end{align}
For each $\tau\in U_\sigma,$ fix an $M(F)$-equivariant isomorphism $\varphi_\tau:\sigma\cong \sigma_\tau$. This induces an isomorphism $\sigma_\chi\cong \sigma_{\chi\tau}$ for all $\chi\in \Lambda_M.$ For $i\in Y_{(M,\sigma)}$ and $\tau\in U_\sigma,$ let $\sigma_{i,\tau}$ denote the maximal semisimple subrepresentation of $\Ind_{P\cap M_i}^{M_i} \sigma_{\tau\chi_i}$ whose irreducible subrepresentations are all isomorphic to $\sigma_i$. Let $V_{i,\tau}:=\left(\Ind_{P\cap M_i}^{M_i} \sigma_{\tau\chi_i}\right)/\sigma_{i,\tau}.$ It is nonzero by \hyperref[cor:notfull]{Corollary \ref{cor:notfull}}. For each $\tau,\tau'\in U_\sigma,$ $\varphi_\tau$ induces $M_i(F)$-equivariant isomorphisms
\begin{align}\label{eq:tauiso}
    &\sigma_{i,\tau'}\cong (\Ind^{M_i}_{P\cap M_i}\varphi_\tau)(\sigma_{i,\tau'})=\sigma_{i,\tau\tau'}, \quad V_{i,\tau'}\cong (\Ind^{M_i}_{P\cap M_i}\varphi_\tau)(V_{i,\tau'})=V_{i,\tau\tau'}.
\end{align} 
Identify $\mathrm{Ind}_{K_{P_i}}^K \sigma_{i,\tau}|_{K_{M_i}}$ as a subspace of $\mathrm{Ind}_{K_{P}}^{K} \sigma|_{K_M}$ via the inclusion $\Ind_{P_i}^G\sigma_{i,\tau}\le \Ind_{P}^G\sigma_{\tau\chi_i},$ and identify $\mathrm{Ind}_{K_{P_i}}^{K}V_{i,\tau}^\lor$ as a subspace of $\mathrm{Ind}_{K_P}^K \sigma^\lor|_{K_M}$ via the quotient map $\Ind_{P}^G \sigma_{\tau\chi_i}\to \Ind_{P_i}^G V_{i,\tau}.$ Note that if $\tau\in \Lambda_{M_i},$ then isomorphisms in \eqref{eq:tauiso} preserve the underlying spaces. In particular, in this case we have an identity of the underlying vector spaces
\begin{align*}
    \mathrm{Ind}_{K_{P_i}}^{K}V_{i,\tau'}^\lor=\mathrm{Ind}_{K_{P_i}}^{K}V_{i,\tau\tau'}^\lor.
\end{align*}

For each subset $\emptyset\neq   S\subseteq Y_{(M,\sigma)},$ 
put
\begin{align*}
    \kappa_S := \left\{U_\sigma \bigcap_{i\in S} \tau_{i}\chi_i\Lambda_{M_i}\subseteq\Lambda_M: \bigcap_{i\in S} \tau_{i}\chi_i\Lambda_{M_i}\neq \emptyset, \tau_i\in U_\sigma\right\},
\end{align*}
and let $\kappa_\emptyset:=\{\Lambda_M\}$. Note that if $C,C'$ are distinct sets in $\kappa_S,$ then $C\cap C'=\emptyset.$ For each $C\in \kappa_S$, fix a choice $\underline{\tau}_C=(\tau_{i,C})_{i\in S}\in U_\sigma^{\# S}$ such that $C=U_\sigma E$ with $E=\bigcap_{i\in S} \tau_{i,C}\chi_i\Lambda_{M_i}.$ 

\begin{lemma}\label{lem:cuspnonzero}
 Suppose $S$ is nonempty. For $\chi\in \bigcap_{i\in S}\tau_{i,C}\chi_i\Lambda_{M_i},$ $\Ind_P^G \sigma_\chi\ge \sum_{i\in S} \Ind_{P_i}^G (\sigma_{i,\tau_{i,C}})_{\chi(\tau_{i,C}\chi_i)^{-1}}$ is a proper containment. In particular,
\begin{align*}
    \bigcap_{i\in S} \Ind_{K_{P_i}}^K V_{i,\tau_{i,C}}^\vee \neq 0.
\end{align*}
\end{lemma}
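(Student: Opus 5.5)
The plan is to show that the sum of the subrepresentations $\Ind_{P_i}^G (\sigma_{i,\tau_{i,C}})_{\chi(\tau_{i,C}\chi_i)^{-1}}$ cannot exhaust $\Ind_P^G\sigma_\chi$. We argue through Jacquet modules along $P$, that is, exponents, using the Geometric lemma and Casselman's criterion. The ``in particular'' statement then follows by duality.

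First fix $\chi\in\bigcap_{i\in S}\tau_{i,C}\chi_i\Lambda_{M_i}$ and write $\chi=\tau_{i,C}\chi_i\lambda_i$ with $\lambda_i\in\Lambda_{M_i}$. Induction in stages gives
\[
\Ind_P^G\sigma_\chi=\Ind_{P_i}^G\bigl(\Ind_{P\cap M_i}^{M_i}\sigma_{\tau_{i,C}\chi_i}\bigr)_{\lambda_i},
\]
so each $\Ind_{P_i}^G(\sigma_{i,\tau_{i,C}})_{\lambda_i}$ really is a subrepresentation, and the sum makes sense.

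To show properness, apply $r^G_P$ and compare semisimplifications. By \hyperref[geometriclemma:2]{Corollary \ref{geometriclemma:2}}, $\mathrm{s.s.}(r_P^G\Ind_P^G\sigma_\chi)=\sum_{s\in W(G|M)}s.\sigma_\chi$. Each constituent $s.\sigma_\chi$ has multiplicity one and its own cuspidal exponent. For each $i$, $r_P^G\Ind_{P_i}^G(\sigma_{i,\tau_{i,C}})_{\lambda_i}$ is computed by the same corollary applied inside $M_i$. Only those $s\cdot\sigma_\chi$ occur for which the corresponding $M_i$-exponent satisfies Casselman's criterion (\hyperref[CasselmanCriterion]{Theorem \ref{CasselmanCriterion}}) for the discrete series $\sigma_i$.

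The natural witness is the constituent $w.\sigma_\chi$, where $w$ is the long element of $W(M_i|M)$ composed appropriately. Take the exponent $\sigma_{\tau_{i,C}\chi_i}$ twisted by the long element of $M_i$, which is the ``anti-dominant'' one. Its real part lies in $-\RR_{>0}\Delta(P\cap M_i)$ modulo the centre, so it is not an exponent of the square-integrable $\sigma_i$. This is exactly the mechanism of \hyperref[cor:notfull]{Corollary \ref{cor:notfull}}, and it is why $V_{i,\tau}\neq0$.

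To make a single $s\in W(G|M)$ work for all $i\in S$ simultaneously, I would choose $s=w_0$, the long element of $W(G|M)$ relative to $P$. The aim is to show that $w_0.\sigma_\chi$ does not occur in $r_P^G$ of any of the summands. For each $i$, the constituents of $r_P^G\Ind_{P_i}^G(\sigma_{i})_{\lambda_i}$ are of the form $s^{-1}.(\text{exponent of }r_{P\cap M_i}^{M_i}\sigma_i)$ twisted by $\lambda_i$, with $s\in W^{M_i,M}$. For $w_0.\sigma_\chi$ to occur, we would need $w_0=s^{-1}u$ for some exponent $u.\sigma_{\tau\chi_i}$ of $\sigma_i$, with $u\in W(M_i|M)$. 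Since $w_0$ factors as (minimal coset representative)$\cdot$(long element of $M_i$), the exponent $u$ would have to be the long element of $W(M_i|M)$. Casselman's criterion excludes that exponent.

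One caveat: several Weyl elements could give isomorphic twists $s.\sigma_\chi$ if $\chi$ is non-regular. In that case I would count multiplicities rather than constituents. The anti-dominant exponent occurs with multiplicity zero in every $\sigma_i$ but with positive multiplicity in $r_P^G\Ind_P^G\sigma_\chi$. The sum of subrepresentations has Jacquet module a quotient of the direct sum, hence a subquotient of it, so the multiplicity count still forces properness. I expect this multiplicity bookkeeping, handling the exponent $w_0$ uniformly across all $i$ and at non-regular $\chi$, to be the main obstacle.

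Finally, for the ``in particular'': the quotient $Q:=\Ind_P^G\sigma_\chi/\sum_i\Ind_{P_i}^G(\sigma_{i,\tau_{i,C}})_{\lambda_i}$ is nonzero, so $Q^\vee\neq0$. The dual $Q^\vee$ is the common annihilator of the summands inside $(\Ind_P^G\sigma_\chi)^\vee$. Under the identifications made above, the annihilator of the $i$-th summand is $\Ind_{K_{P_i}}^KV_{i,\tau_{i,C}}^\vee$, since twisting by $\lambda_i\in\Lambda_{M_i}$ does not change the underlying spaces. Hence the intersection of these spaces equals $Q^\vee$ and is nonzero.
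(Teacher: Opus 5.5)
Your reduction to Jacquet modules via exactness of $r^G_P$ and your duality argument for the ``in particular'' are fine, but your choice of witness fails. You take $w_0$ independent of $\chi$. The step ``for $w_0.\sigma_\chi$ to occur we would need $w_0=s^{-1}u$'' requires $\sigma_\chi$ to be regular. In this lemma, however, $\chi$ sits on an intersection of reducibility loci, where regularity can fail, and then the class $w_0.\sigma_\chi$ genuinely occurs in the summands.

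For instance, take $G=\GL_3$, $M=T$, $\sigma=1$, $x=|\cdot|^{1/2}$, $y=|\cdot|^{-1/2}$, $\chi=(x,y,x)$ and the summand $\Ind_{P_{(2,1)}}^G(\mathrm{St}\otimes x)$. Here $w_0.\chi=\chi$, which is an exponent of that summand. So your assertion that the ``anti-dominant exponent'' $w_0.\sigma_\chi$ has multiplicity zero in every summand is false.

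The multiplicity fallback cannot rescue it either. Take $G=\GL_6$, $\chi=(x,y,x,y,x,y)$, and the three summands obtained by putting $\mathrm{St}$ on the blocks $\{1,2\}$, $\{3,4\}$, $\{5,6\}$; all three are admissible in $S$. The class $(y,x,y,x,y,x)=w_0.\chi$ has multiplicity $36$ in $r^G_B\Ind_B^G\chi$ and multiplicity $12$ in the Jacquet module of each summand. So no count of this class against the direct sum of the summands can detect properness.

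The missing idea is that the witness must be chosen according to $\Re\chi$. The constituent $w_0.\sigma_\chi$ has anti-dominant real part only when $\Re\chi$ is dominant, and nothing forces that. The paper instead picks $s_0\in W(G|M)$ with $\Re(s_0.\chi)\in-\overline{X^*(M)}^+_\RR$.

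Now fix $i\in S$ and $s\in W^{M_i,M}$ with $M\le s^{-1}.M_i$ appearing in the Geometric lemma. Then $\Re(s_0.\chi)$ lies in $X^*(s^{-1}.M_i)_\RR$ plus a nonpositive combination of fundamental weights attached to $\Delta(P\cap s^{-1}.M_i)$. That set is disjoint from $X^*(s^{-1}.M_i)_\RR+\RR_{>0}\Delta(P\cap s^{-1}.M_i)$, which by Casselman's criterion contains the real parts of all exponents of that piece.

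Hence $\Re(s_0.\chi)$ is the real part of an exponent of $r^G_P\Ind_P^G\sigma_\chi$, but of no exponent of $r^G_P$ of any summand. Since comparing sets of real parts of exponents (not multisets of constituents) suffices, regularity and multiplicities never enter. In the $\GL_6$ example this witness is $(y,y,y,x,x,x)$, which occurs in none of the summands.
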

\begin{proof} 
Write $\chi'_i:=\chi(\tau_{i,C}\chi_i)^{-1}\in \Lambda_{M_i}.$ Choose $s_0\in W(G|M)$ such that
    \begin{align*}
        \Re(s_0.\chi) \in - \overline{X^\ast(M)}^{+}_\RR =\mathbb{R}_{\leq0}\{\omega_\alpha: \alpha\in\Delta(P)\} + X^*(G)_\RR.
    \end{align*}
    Let $i\in S$ and $s\in W^{M_i,M}$ such that $M\le s^{-1}.M_i$.
    Then
    \begin{align*}
        \Re(s_0.\chi) \in \mathbb{R}_{\leq0}\{\omega_\alpha: \alpha\in\Delta(P\cap s^{-1}.M_i)\} + X^*(s^{-1}.M_i)_\RR \subseteq X^*(M)_\RR
    \end{align*}
    By \hyperref[CasselmanCriterion]{Casselman's criterion},
    \begin{align*}
        \Re\mathcal{E}xp  \,\left(r^{s^{-1}.M_i}_{P\cap s^{-1}.M_i}s^{-1}.(\sigma_{i,\tau_{i,C}})_{\chi_i'}\right)\subseteq X^*(s^{-1}.M_i)_\RR+\RR_{>0}\Delta(P\cap s^{-1}.M_i).
    \end{align*}
    It follows from the \hyperref[geometriclemma:2]{Geometric lemma} that $$\Re (s_0.\chi)\in \Re \mathcal{E}xp \left(r^G_P\circ \mathrm{Ind}_P^G \sigma_\chi\right)-\bigcup_{i\in S} \Re \mathcal{E}xp \left(r^G_P\circ \Ind_{P_i}^G (\sigma_{i,\tau_{i,C}})_{\chi_i'}\right),$$ 
    and the lemma follows.
\end{proof}

Recall the action \eqref{eq:Usigmaaction}. Let $U_\sigma$ act diagonally on $\CC[\Lambda_M]\otimes \mathrm{End}_\CC (\Ind_{K_P}^K \sigma|_{K_M})$.

\begin{lemma}\label{lem:varying}
    There exists a family of holomorphic sections 
    \begin{align*}
        \mathfrak{H}=\{T_{S,C}:S\subseteq Y_{(M,\sigma)}, C\in \kappa_S\}\subseteq \left(\CC[\Lambda_M]\otimes \mathrm{End}_\CC (\Ind_{K_P}^K \sigma|_{K_M})\right)^{U_\sigma}
    \end{align*}
    such that for each $i\in Y_{(M,\sigma)},  v\in \mathrm{Ind}_{K_{P_i}}^K \sigma_{i,1}|_{K_{M_i}}$ and $\chi\in \chi_i\Lambda_{M_i}$
    \begin{align*}
        T_{S,C}(\chi)(v)=0
    \end{align*}
    for all $(S,C).$ 
    
    Moreover, if we let $\mathcal{H}$ denote the collection of all such families, the ideal in $\CC[\Lambda_M]$ generated by the set 
    \begin{align*}
        \{\Lambda_{M}\ni \chi \mapsto \langle T_{S,C}(\chi)(v),w \rangle: (v,w)\in \mathrm{Ind}_{K_{P}}^K \sigma|_{K_M}\times \mathrm{Ind}_{K_{P}}^K \sigma^\lor|_{K_M}, T_{S,C}\in \mathfrak{H}, \mathfrak{H}\in \mathcal{H}\}
    \end{align*}
    is unital. 
\end{lemma}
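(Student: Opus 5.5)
The plan is to build each $T_{S,C}$ as a product of a scalar polynomial that cuts out everything outside a neighborhood of $C$ and an endomorphism that kills the relevant $\sigma_{i,\tau}$'s. Fix $S\subseteq Y_{(M,\sigma)}$ and $C\in\kappa_S$ with chosen $\underline{\tau}_C$ and $E=\bigcap_{i\in S}\tau_{i,C}\chi_i\Lambda_{M_i}$. By Lemma \ref{lem:cuspnonzero} we can pick a nonzero vector $w_{S,C}\in\bigcap_{i\in S}\Ind_{K_{P_i}}^K V^\vee_{i,\tau_{i,C}}$ (for $S=\emptyset$ take any nonzero $w$). Since $w_{S,C}$ lies in the annihilator of $\Ind_{K_{P_i}}^K\sigma_{i,\tau_{i,C}}|_{K_{M_i}}$ for every $i\in S$, the rank-one endomorphism $v\mapsto\langle v,w_{S,C}\rangle u$, for any $u\in\Ind_{K_P}^K\sigma|_{K_M}$, kills $\Ind_{K_{P_i}}^K\sigma_{i,\tau_{i,C}}$ for $i\in S$. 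The underlying subspace of $\sigma_{i,\tau}$ inside $\Ind_{K_P}^K\sigma|_{K_M}$ is constant along $\tau\chi_i\Lambda_{M_i}$. So the constant section $u\otimes w_{S,C}$ vanishes on the required vectors at all $\chi\in\tau_{i,C}\chi_i\Lambda_{M_i}$, $i\in S$.

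For $i\notin S$, and for the other cosets $\tau\chi_i\Lambda_{M_i}$ with $i\in S$, $\tau\neq\tau_{i,C}$, I multiply by a scalar polynomial $p_{S,C}\in\CC[\Lambda_M]$ vanishing on the closed set $Z_{S,C}:=\bigcup_{(i,\tau)}\tau\chi_i\Lambda_{M_i}$, where $(i,\tau)$ ranges over pairs whose coset is not one of the cosets $\tau_{i,C}\chi_i\Lambda_{M_i}$, $i\in S$. Take $T^0_{S,C}(\chi):=p_{S,C}(\chi)\,u\otimes w_{S,C}$. To get $U_\sigma$-invariance, average over $U_\sigma$ using \eqref{eq:Usigmaaction}, translating the scalar part by $\tau$. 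The isomorphisms \eqref{eq:tauiso} carry $\sigma_{i,\tau'}$ to $\sigma_{i,\tau\tau'}$, and the collection of all cosets $\tau\chi_i\Lambda_{M_i}$ is $U_\sigma$-stable. Hence each translate vanishes on the translated locus, so the average $T_{S,C}$ still satisfies the required vanishing on $\chi_i\Lambda_{M_i}$ for $\sigma_{i,1}$. A cleaner way to arrange this is to require $p_{S,C}$ to vanish on all cosets except the $U_\sigma$-translates of $E$'s defining cosets. Then the vanishing condition is checked uniformly.

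The main obstacle is unitality. By Hilbert's Nullstellensatz it suffices to show that for each $\chi_0\in\Lambda_M$ some member of some family gives a nonzero matrix coefficient at $\chi_0$. Given $\chi_0$, let $S$ be the set of $i$ with $\chi_0\in\bigcup_\tau\tau\chi_i\Lambda_{M_i}$, and choose the $\tau_i$ witnessing this. Then $\chi_0\in E:=\bigcap_{i\in S}\tau_i\chi_i\Lambda_{M_i}$, so $E\ne\emptyset$ and $C=U_\sigma E\in\kappa_S$. Since the choice of $\underline\tau_C$ is fixed, I may need to replace $\chi_0$ by a $U_\sigma$-translate; this is harmless because the ideal is $U_\sigma$-stable under translation. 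The delicate point is the possibility that $\chi_0$ lies in two different cosets $\tau\chi_i\Lambda_{M_i}$, $\tau'\chi_i\Lambda_{M_i}$ for the same $i$. In that case I would enlarge the vanishing requirement so that $w$ also annihilates $\sigma_{i,\tau'}$, applying the argument of Lemma \ref{lem:cuspnonzero} to the full collection of subrepresentations at $\chi_0$; that lemma's exponent argument works for any finite family. With this, $\chi_0\notin Z_{S,C}$, so I can choose $p_{S,C}(\chi_0)\neq0$. I also choose $u$ with $\langle u,w_{S,C}\rangle\ne0$ and pair against suitable $(v,w)$.

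The remaining difficulty is that averaging over $U_\sigma$ could cancel the value at $\chi_0$. I would handle this by choosing $p_{S,C}$ supported away from the translates $\tau\chi_0$ with $\tau\neq1$ not fixing $\chi_0$; for $\tau$ in the stabilizer, I would choose $u,w$ compatibly via $\varphi_\tau$. Since the collection $\mathcal{H}$ allows all such choices, the generated ideal has no common zero and is therefore unital.
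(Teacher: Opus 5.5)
Your proposal is correct and is essentially the paper's proof. The shared steps are: $w_{S,C}$ from Lemma~\ref{lem:cuspnonzero}, a rank-one section built from it, scalar cutoffs killing the remaining cosets, averaging over $U_\sigma$, and the Nullstellensatz at a point $\chi_0$ with $S$ maximal. The one real deviation is that you merge the paper's $\eta_{S,C}$ and $h_{S,C,\xi}$ into a single cutoff $p$ chosen for each $\chi_0$ (legitimate since $\mathcal{H}$ contains all families), and you also make $p$ vanish at the other $U_\sigma$-translates of $\chi_0$. Because translation by $U_\sigma$ is free, this kills every cross term of the full $U_\sigma$-average at $\chi_0$ and replaces the paper's $U'_\sigma$-eigenvector step.

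Your ``delicate point'' and stabilizer clause are vacuous, since distinct cosets of $\Lambda_{M_i}$ are disjoint and no $\tau\neq 1$ fixes $\chi_0$. Drop the ``cleaner way'' aside, though: if $p$ need not vanish on the other translates $\tau\tau_{i,C}\chi_i\Lambda_{M_i}$, nothing forces $w_{S,C}$ to annihilate $\Ind_{K_{P_i}}^K\sigma_{i,\tau\tau_{i,C}}|_{K_{M_i}}$ there, so the required vanishing can fail.
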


\begin{proof}      
  Choose any $U_\sigma$-eigenvector $T\in \mathrm{End}_\CC (\Ind_{K_P}^K \sigma|_{K_M})$ and $h\in \CC[\Lambda_M]^\times$ such that $T'_{\emptyset,\Lambda_M}:=hT\in (\CC[\Lambda_M]\otimes \mathrm{End}_\CC (\Ind_{K_P}^K \sigma|_{K_M}))^{U_\sigma}.$ If $J_{(M,\sigma)}$ is empty, take $$\mathfrak{H}:=\{T_{\emptyset,\Lambda_M}:=T'_{\emptyset,\Lambda_M}\},$$ and we are done.
 
 Thus we assume $J_{(M,\sigma)}$ is nonempty. Let $S\neq \emptyset$ and $C\in \kappa_S$. Then $C=U_\sigma E_C,$ where $E_C:=\bigcap_{i\in S} \tau_{i,C}\chi_i\Lambda_{M_i}.$  
Let $U'_\sigma\le U_\sigma$ be the subgroup that stabilizes $E_C$. Then $U'_\sigma$ is a subgroup of $\bigcap_{i\in S}\Lambda_{M_i}.$  Therefore, for all $\tau\in U'_{\sigma}$, we have identities of underlying vector spaces
\begin{align*}
    \bigcap_{i\in S}\mathrm{Ind}_{K_{P_i}}^{K}V_{i,\tau_{i,C}}^\lor=\bigcap_{i\in S}\mathrm{Ind}_{K_{P_i}}^{K}V_{i,\tau\tau_{i,C}}^\lor.
\end{align*}
By \hyperref[lem:cuspnonzero]{Lemma \ref{lem:cuspnonzero}} we can choose $0\neq w_{S,C}\in \bigcap_{i\in S}\mathrm{Ind}_{K_{P_i}}^{K}V_{i,\tau_{i,C}}^\lor \subseteq\mathrm{Ind}_{K_{P}}^{K} \sigma^\lor|_{K_M}$ so that
\begin{align*}
\langle v,w_{S,C}\rangle=0 \quad\textrm{ for any } v\in \sum_{i\in S}\mathrm{Ind}_{K_{P_i}}^{K} \sigma_{i,\tau_{i,C}}|_{K_{M_i}}.
\end{align*}
Choose $v_{S,C}\in \Ind_{K_P}^K\sigma|_{K_M}$ such that $\langle v_{S,C},w_{S,C}\rangle=1$. Consider the morphism $T'''_{S,C}\in \mathrm{End}_\CC (\Ind_{K_P}^K \sigma|_{K_M})$ corresponding to $v_{S,C}\otimes w_{S,C}\in \mathrm{Ind}_{K_P}^K\sigma|_{K_M}\otimes \mathrm{Ind}_{K_P}^K\sigma^\lor|_{K_M}.$ Choose a $U_\sigma'$-eigenvector 
\begin{align*}
    T\in \mathrm{span}_\CC\{ \tau.T'''_{S,C}: \tau\in U_{\sigma}'\}
\end{align*}
and $h\in \CC[\Lambda_M]^\times$ such that $T''_{S,C}:=hT\in (\CC[\Lambda_M]\otimes \mathrm{End}_\CC (\Ind_{K_P}^K \sigma|_{K_M}))^{U_\sigma'}.$ Then for any $\chi\in \Lambda_M$
\begin{align*}
    T_{S,C}''(\chi)(v)=0\quad\textrm{ for any } v\in \sum_{i\in S}\mathrm{Ind}_{K_{P_i}}^{K} \sigma_{i,\tau_{i,C}}|_{K_{M_i}},
\end{align*}
and there exists $(v_0,w_0)\in \Ind_{K_{P}}^K \sigma|_{K_{M}}\times \Ind_{K_{P}}^K \sigma^\lor|_{K_{M}}$ such that $\langle T_{S,C}''(\chi)(v_0),w_0\rangle\neq 0$ for any $\chi\in \Lambda_M.$

Let $\eta_{S,C}$ be a function in $\CC[\Lambda_M]^{U_\sigma'}$ whose vanishing locus $V(\eta_{S,C})$ in $\Lambda_M$ contains $U_\sigma \chi_i\Lambda_{M_i}-\tau_{i,C}\chi_i\Lambda_{M_i}$ for all $i\in S,$ and $V(\eta_{S,C})\cap E_C=\emptyset.$ Note that $V(\eta_{S,C})\supseteq C-E_C.$ Put
\begin{align*}
    T_{S,C}':=\sum_{\tau\in U_\sigma/U_{\sigma}'} \tau. \bigg(\eta_{S,C} T''_{S,C}\bigg).
\end{align*}
Then $T'_{S,C}\in \left(\CC[\Lambda_M]\otimes \mathrm{End}_\CC (\Ind_{K_P}^K \sigma|_{K_M})\right)^{U_\sigma}$ satisfies the following properties: 
\begin{itemize}
    \item For any $i\in S$ and $\chi\in \chi_i\Lambda_{M_i}$
\begin{align*}
    T_{S,C}'(\chi)(v)=0\quad\textrm{ for any } v\in \mathrm{Ind}_{K_{P_i}}^{K} \sigma_{i,1}|_{K_{M_i}}.
\end{align*}
    \item One has 
    \begin{align*}
        \langle T_{S,C}'(\chi)(v_0),w_0\rangle\neq 0.
    \end{align*}
    for any $\chi\in C$.
\end{itemize}

For each $i\in Y_{(M,\sigma)},$ let $\{f_{i,1},\ldots,f_{i,p_i}\}\subseteq\CC[\Lambda_M]^{U_\sigma}$ be a generating set of the defining ideal of $U_\sigma\chi_i\Lambda_{M_i}$, viewed as a Zariski closed subspace of $U_\sigma\backslash \Lambda_M$. Fix a map $\xi:Y_{(M,\sigma)}\to \ZZ_{\geq 1}$ with $1\leq \xi(i)\leq p_i$. Define a partial order $\le$ on the finite set $\{(S,C): S\subseteq Y_{(M,\sigma)}, C\in \kappa_S\}$ by specifying $(S,C)\le (S',C')$ if and only if $C'\subseteq C$. Note that if $(S,C)\le (S',C'),$ then $C'\in \kappa_{S\cup S'}$ and $(S,C)\le (S\cup S', C').$

    If $(S,C)$ is not maximal (with respect to $\leq$) with $S\neq\emptyset$, let
    \begin{align*}
        h_{S,C,\xi}:=\prod_{\substack{i\in Y_{(M,\sigma)}\\ C\cap U_\sigma \chi_i\Lambda_{M_i}\subsetneq C}}f_{i,\xi(i)}=\prod_{\substack{i\notin S\\ C\cap U_\sigma \chi_i\Lambda_{M_i}\subsetneq C}} f_{i,\xi(i)}.
    \end{align*}
    For $(S,C)$ maximal, one has $C\cap U_\sigma\chi_i\Lambda_{M_i} = \emptyset$ for $i\not\in S$. We choose $h_{S,C,\xi}$ that is nonvanishing on $C$ and is zero on $U_\sigma\chi_i\Lambda_{M_i}$ for any $i\not\in S.$ 
    
    Let $\{f_{1},\ldots,f_{p_\emptyset}\}\subseteq \CC[\Lambda_M]^{U_\sigma}$ be a generating set of the defining ideal of $\bigcup_{i\in Y_{(M,\sigma)}}U_\sigma \chi_i\Lambda_{M_i}.$ Extend the domain of $\xi$ to  $\{\emptyset\} \cup Y_{(M,\sigma)}$ with $1\le \xi(\emptyset)\le p_{\emptyset}.$ Let $h_{\emptyset,\Lambda_M,\xi}:=f_{\xi(\emptyset)}.$ Define for any $(S,C),\xi$
    \begin{align*}
       T_{S,C,\xi}:=h_{S,C,\xi}T'_{S,C}.
    \end{align*}
     Consider the families indexed by $\xi$
    \begin{align*}
        \mathfrak{H}_{\xi}:=\{ T_{S,C,\xi}: S\subseteq Y_{(M,\sigma)}, C\in \kappa_S\}.
    \end{align*}
    It follows by construction and Hilbert's Nullstellensatz that the ideal in $\CC[\Lambda_M]$ generated by the set 
    \begin{align*}
        \{\Lambda_{M}\ni \chi \mapsto \langle T_{S,C,\xi}(\chi)(v),w \rangle: (v,w)\in \mathrm{Ind}_{K_{P}}^K \sigma|_{K_M}\times \mathrm{Ind}_{K_{P}}^K \sigma^\lor|_{K_M}, T_{S,C,\xi}\in \mathfrak{H}_\xi, \xi\}
    \end{align*}
    is unital. In addition, given $(S,C),\xi,$ for any $i\in Y_{(M,\sigma)}, v\in \mathrm{Ind}_{K_{P_i}}^K \sigma_{i,1}|_{K_{M_i}}$ and $\chi\in \chi_i\Lambda_{M_i},$ we have $$T_{S,C}'(\chi)(v)=\delta_{i\notin S}T_{S,C}'(\chi)(v).$$
     On the other hand, by the definition of $h_{S,C,\xi}$ we have  $h_{S,C,\xi}(\chi)=0$ for $\chi\in \chi_i\Lambda_{M_i}$ if $i\not\in S$. Thus 
    \begin{align*}
    T_{S,C,\xi}(\chi)(v)=h_{S,C,\xi}(\chi)T_{S,C}'(\chi)(v)=0.
    \end{align*}
    This proves the lemma.
\end{proof}

Let $\mathfrak{H}$ be a family satisfying the conclusion of \hyperref[lem:varying]{Lemma \ref{lem:varying}}. Let $T_{S,C}\in \mathfrak{H}.$ Note that $\chi\mapsto L(1/2,\sigma_\chi,\rho_M)$ is $U_\sigma$-invariant, so 
\begin{align*}
    \Im \Lambda_{M}\ni\chi\mapsto L(1/2,\sigma_\chi,\rho_M)T_{S,C}(\chi)\in C^\infty(\mathcal{O}_\sigma, P).
\end{align*}
By the 
\hyperref[HCPlan]{Harish-Chandra Plancherel theorem}, there is a unique function $f_{S,C}\in \mathcal{C}(G(F))$ such that $\mathrm{HP}(f_{S,C})$ is supported on $(M,\mathcal{O}_\sigma)$ and
\begin{align*}
    \mathrm{HP}(f_{S,C})(M,\sigma_\chi)=L(1/2,\sigma_\chi,\rho_M)T_{S,C}(\chi).
\end{align*}
Clearly, $f_{S,C}\in \mathcal{S}_\rho^{\mathrm{as}}(G(F)).$

\begin{lemma}\label{lem:cuspconst}
We have $f_{S,C}\in \mathcal{S}^{\mathrm{as}}_\rho(G(F))\cap C^\infty_{\mathrm{ac}}(G(F)).$
\end{lemma}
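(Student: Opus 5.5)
Since $f_{S,C}\in\mathcal{S}_\rho^{\mathrm{as}}(G(F))$ already, it suffices to show $f_{S,C}\in\mathcal{C}_{\mathrm{cs}}(G(F))$. We cannot invoke Corollary \ref{Sas+csisac} directly, since that is what we want, so instead we show that each $f_{S,C}\mathbf{1}_\alpha$ is compactly supported. By \eqref{HP:orthoproj:alpha} and the Corollary following Lemma \ref{alpha:supp:1}, $\HP(f_{S,C}\mathbf{1}_\alpha)=e_\alpha\HP(f_{S,C})$ is a polynomial section supported on the single component $(M,\mathcal{O}_\sigma)$. Explicitly,
\begin{align*}
    e_\alpha\HP(f_{S,C})(M,\sigma_\chi)=T_{S,C,\alpha}(\chi):=\frac{1}{\vol(\Im\Lambda_G)}\int_{\Im\Lambda_G}\chi'(\alpha)^{-1}L(\tfrac12,\sigma_{\chi\chi'},\rho_M)T_{S,C}(\chi\chi')d\chi',
\end{align*}
which is a finite sum of terms of the form $q^{-\langle\chi,\beta\rangle}$ with $\beta|_{X^*(G)}=\alpha$. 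It therefore lies in $(\CC[\Lambda_M]\otimes\mathrm{End})^{U_\sigma}$. Since $\sigma$ is supercuspidal, $(M,\mathcal{O}_\sigma)$ is a component of $\widetilde{\Temp}_{\Ind,0}(G)$. By Theorem \ref{BH:PW} there is $f'_\alpha\in C_c^\infty(G(F))$ with $\HP(f'_\alpha)|_{\Temp_{\Ind,0}(G)}=e_\alpha\HP(f_{S,C})|_{\Temp_{\Ind,0}(G)}$. This uses that $T_{S,C,\alpha}$ descends to a $W(G|M)$-compatible polynomial section of $\Temp_{\Ind,0}(G)$ supported on the orbit of $\mathcal{O}_\sigma$, which is arranged by extending it by the cocycle ${}^\circ c$, as is implicit in defining $f_{S,C}$ via Theorem \ref{HCPlan}.

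The key step is to show $\HP(f'_\alpha)=\HP(f_{S,C}\mathbf{1}_\alpha)$ on all of $\widetilde{\Temp}_\Ind(G)$; Theorem \ref{HCPlan} then gives $f_{S,C}\mathbf{1}_\alpha=f'_\alpha\in C_c^\infty$. On components $(M'',\sigma'')$ whose cuspidal support is not in the class of $(M,\mathcal{O}_\sigma)$, both sides vanish. For $f'_\alpha$ this holds because a compactly supported function's spectral transform on $\Ind\sigma''$ is determined by its values on the cuspidal support component, as in the proof of Lemma \ref{lem:comp+poly}. For $f_{S,C}$ it holds by construction. The remaining components are exactly the $(M_i,\mathcal{O}_{\sigma_i})$, $i\in Y_{(M,\sigma)}$, up to $W$-conjugacy. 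On these, $\HP(f_{S,C}\mathbf{1}_\alpha)(M_i,\sigma_i)=0$ because $\HP(f_{S,C})$ is supported on $(M,\mathcal{O}_\sigma)$. For $f'_\alpha$ we use that $f'_\alpha\in C_c^\infty$, so its zeta integrals are entire and compatible, in the sense of Lemma \ref{lem:cs:subquo} for compactly supported functions:
\begin{align*}
    Z(\Ind_{P_i}^G(\sigma_i)_\chi,f'_\alpha,v,w)=Z(\Ind_P^G\sigma_{\chi_i\chi},f'_\alpha,v,\tilde w)=\langle T_{S,C,\alpha}(\chi_i\chi)v,\tilde w\rangle ,
\end{align*}
for $v\in\Ind_{K_{P_i}}^K\sigma_{i,1}|_{K_{M_i}}\subseteq\Ind_{K_P}^K\sigma|_{K_M}$ and $\chi\in\Lambda_{M_i}$. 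By Lemma \ref{lem:varying}, $T_{S,C}(\chi')v=0$ for $\chi'\in\chi_i\Lambda_{M_i}$. Since $\chi_i\chi\chi'$ stays in $\chi_i\Lambda_{M_i}$ for $\chi'\in\Im\Lambda_G\subseteq\Lambda_{M_i}$, the averaged section $T_{S,C,\alpha}$ also kills $v$ there. Hence these zeta integrals vanish, and $\HP(f'_\alpha)(M_i,\sigma_i)=0$. The same argument at the $U_\sigma$-translates $\tau\chi_i\Lambda_{M_i}$ follows from $U_\sigma$-invariance of $T_{S,C}$ together with the isomorphisms \eqref{eq:tauiso}.

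The main obstacle is making this last step airtight. Components $(M_i,\sigma_i)$ embed into $\Ind\sigma_{\tau\chi_i}$ in multiple ways, through $\sigma_{i,\tau}$, which is a multiple of $\sigma_i$, and through possibly different $P$-conjugates. We must check that every lift used in the comparison lands inside $\Ind_{K_{P_i}}^K\sigma_{i,\tau}$, and that the vanishing holds there; this is where the $U_\sigma$-equivariance and the maximal-semisimple choice of $\sigma_{i,\tau}$ are used. Once $f_{S,C}\mathbf{1}_\alpha\in C_c^\infty$ for all $\alpha$, we have $f_{S,C}\in C^\infty_{\mathrm{ac}}(G(F))$, and the lemma follows.
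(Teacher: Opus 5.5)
Your argument is correct, but it localizes in $\alpha$ differently from the paper.

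\textbf{The paper's route.} It expands $L(\tfrac12,\sigma_\chi,\rho_M)=\sum_{\alpha}c_\alpha(\sigma_{|\nu|^{1/2}\chi})$ via Lemma~\ref{Lfunction:expand:cone}. It then applies Theorem~\ref{BH:PW} to each polynomial section $c_\alpha T_{S,C}$ to get $f_\alpha\in C_c^\infty(G(F))$, checking vanishing on the $(M_i,\mathcal{O}_{\sigma_i})$ exactly as you do. Finally, it observes that $e_\beta\HP(f_{S,C})=\sum_\alpha e_\beta\HP(f_\alpha)$ is a finite sum because $T_{S,C}$ is a polynomial. Hence $f_{S,C}\mathbf{1}_\beta$ is a finite sum of compactly supported functions.

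\textbf{Your route.} You apply Theorem~\ref{BH:PW} directly to $e_\alpha\HP(f_{S,C})$, using the corollary to Lemma~\ref{alpha:supp:1} for polynomiality. This avoids the series and the finiteness bookkeeping.

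\textbf{What it costs you.} In the paper, $c_\alpha T_{S,C}$ is a polynomial multiple of $T_{S,C}$, so it kills $\Ind_{K_{P_i}}^K\sigma_{i,1}|_{K_{M_i}}$ on $\chi_i\Lambda_{M_i}$ for free. Your averaging formula, with $L(\tfrac12,\sigma_{\chi\chi'},\rho_M)$ inside the integral, is only valid where the expansion of $L$ converges, namely $\Re(\chi)\in|\nu|^{-1/2}C_{\rho,M}$. But $\chi_i$ is in general not unitary, and the coset $\chi_i\Lambda_{M_i}$ can meet the polar divisor of $L$. So the sentence ``the averaged section also kills $v$ there'' needs one more line. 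Either of the following fixes it:
\begin{enumerate}
\item Note that $\{\chi\in\chi_i\Lambda_{M_i}:\Re(\chi)\in|\nu|^{-1/2}C_{\rho,M}\}$ is nonempty and open: translate by $|\nu|^r$ with $r$ large, using that $C_{\rho,M}$ is an open convex cone containing $\nu$. It is therefore Zariski dense in the coset, so the polynomial identity $T_{S,C,\alpha}(\chi)v=0$ propagates to the whole coset.
\item Write $e_\alpha(L\,T_{S,C})=\sum_{\alpha'}c_{\alpha'}\,e_{\alpha-\alpha'}(T_{S,C})$, where averaging a polynomial over $\Im\Lambda_G$ is legitimate at every $\chi$. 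This is just the paper's decomposition.
\end{enumerate}

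\textbf{The ``main obstacle'' is not one.} A single embedding $\sigma_i\hookrightarrow\sigma_{i,1}\subseteq\Ind^{M_i}_{P\cap M_i}\sigma_{\chi_i}$ suffices to detect $\HP(f'_\alpha)$ on the whole orbit $(M_i,\mathcal{O}_{\sigma_i})$, because twisting by $\Lambda_{M_i}$ does not move the underlying subspace. The choice of lift $\tilde w$ is irrelevant once $T_{S,C,\alpha}(\chi_i\chi)v=0$. $W$-conjugate components are handled by the $W$-compatibility of the sections.

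\textbf{A minor point.} Your opening reduction to $\mathcal{C}_{\mathrm{cs}}(G(F))$ is a detour; what you actually prove, correctly, is membership in $C^\infty_{\mathrm{ac}}(G(F))$ directly, which is what the lemma asks.
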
 

\begin{proof} It remains to show $f_{S,C}\in C^\infty_{\mathrm{ac}}(G(F)).$ By \hyperref[Lfunction:expand:cone]{Lemma \ref{Lfunction:expand:cone}},  we can write 
\begin{align*}
       L(1/2,\sigma_\chi,\rho_M) = \sum_{\alpha\in\widetilde{\sigma}_{\rho,G}\cap \Omega_G} c_\alpha(\sigma_{|\nu|^{1/2}\chi})
\end{align*}
as a series that converges absolutely for $\mathrm{Re}(\chi)\in |\nu|^{-1/2}C_{\rho,M}.$ Let $\alpha\in \widetilde{\sigma}_{\rho,G}\cap \Omega_G.$ Note that $c_\alpha\in \CC[\mathcal{O}_{\sigma\CC}].$ Let $f_\alpha$ be the unique function in $\mathcal{S}^{\mathrm{as}}_\rho(G(F))$ such that $\mathrm{HP}(f_\alpha)$ is supported on $(M,\mathcal{O}_\sigma)$ and
\begin{align*}
    \mathrm{HP}(f_\alpha)(M,\sigma_\chi)=c_\alpha(\sigma_{|\nu|^{1/2}\chi})T_{S,C}(\chi).
\end{align*}
By \hyperref[BH:PW]{Theorem \ref{BH:PW}} there is a unique $f\in C^\infty_c(G(F))$ such that $\HP(f)|_{\Temp_{\Ind,0}(G)}=\HP(f_{\alpha}).$ We claim $\HP(f)=\HP(f_\alpha)$ and thus $f=f_\alpha$ by \hyperref[HCPlan]{Harish-Chandra Plancherel theorem}. Assuming the claim, since $T_{S,C}$ is a holomorphic section, for $\beta\in \Omega_G$
\begin{align*}
    e_\beta\HP(f_{S,C})(M,\sigma_\chi) = \sum_\alpha e_\beta \HP(f_\alpha)(M,\sigma_\chi)
\end{align*}
is a finite sum. By \eqref{HP:orthoproj:alpha} we conclude $f_{S,C}\mathbf{1}_\beta = \sum_\alpha f_\alpha\mathbf{1}_\beta\in C_c^\infty(G(F))$ for all $\beta\in \Omega_G$. This proves $f_{S,C}\in C^\infty_{\mathrm{ac}}(G(F))$.

    To prove the claim, observe that 
    \begin{align*}
        \supp \HP(f)\subseteq (M,\mathcal{O}_\sigma)\cup \bigcup_{i\in Y_{(M,\sigma)}}  (M_i,\mathcal{O}_{\sigma_i}).
    \end{align*}
    If $J_{(M,\sigma)}$ is empty, then we are done. Otherwise, we only need to show $\HP(f)$ vanishes on $(M_i,\mathcal{O}_{\sigma_i})$ for all $i$. For any $(v,w)\in \Ind_{K_{P}}^K \sigma|_{K_M}\times \Ind_{K_{P}}^K \sigma^\lor|_{K_M},$ we have
    \begin{align*}
        Z(\mathrm{\Ind}_{P}^G \sigma_\chi,f,v,w)=c_{\alpha}(\sigma_{|\nu|^{1/2}\chi})\langle T_{S,C}(\chi)(v),w\rangle.
    \end{align*}
    By \hyperref[lem:varying]{Lemma \ref{lem:varying}} this vanishes if $\chi\in \chi_i\Lambda_{M_i}$ and $v\in \mathrm{Ind}_{K_{P_i}}^K \sigma_{i,1}|_{K_{M_i}}.$ Since $f\in C^\infty_c(G(F)),$  by  \hyperref[lem:cs:subquo]{Lemma \ref{lem:cs:subquo}} $\HP(f)$ vanishes on $(M_i,\mathcal{O}_{\sigma_i})$. This completes the proof.
\end{proof}

    For $(v,w)\in \mathrm{Ind}_{K_P}^K  \sigma|_{K_M}\times \mathrm{Ind}_{K_P}^K \sigma^\lor|_{K_M},$
    \begin{align*}
    Z(\mathrm{Ind}_P^G\sigma_\chi,f_{S,C},v,w)=L(1/2,\sigma_\chi,\rho_{M})\langle T_{S,C}(\chi)(v),w\rangle.
    \end{align*}
    Therefore, $\langle T_{S,C}(|\nu|^{-1/2}\chi)(v),w\rangle\in I_{(M,\sigma)}$ for all $S,C$. Then \hyperref[multigcd]{Theorem \ref{multigcd}} for supercuspidal representations follows from \hyperref[lem:ideal]{Lemma \ref{lem:ideal}} and \hyperref[lem:varying]{Lemma \ref{lem:varying}} by varying $v,w,T_{S,C},\mathfrak{H}.$ \qed

\subsection{Proof of \hyperref[multigcd]{Theorem \ref{multigcd}} for non-supercuspidal discrete series}\label{ssec:multigcd:noncuspidal}

Assume \eqref{LLC:gamma} holds for all semi-standard Levi subgroups of $G$. The proof of \hyperref[multigcd]{Theorem \ref{multigcd}} for $\sigma'\in \Pi_2(M')-\Pi_0(M')$ is similar but slightly more involved. Retain the notations in \S \ref{ssec:multigcd:cuspidal}. Let $M< M'$ be the unique standard Levi subgroup such that $\sigma'$ is isomorphic to a subrepresentation of $\Ind_{P
\cap M'}^{M'}\sigma_{\lambda}$ for some $\sigma\in \Pi_0(M)$ and $\lambda\in \Lambda_{M}.$   Let $i_0\in Y_{(M,\sigma)}$ be the index such that $(M',\mathcal{O}_{\sigma'})=(M_{i_0},\mathcal{O}_{\sigma_{i_0}}),$ and we may assume $\lambda=\chi_{i_0}.$ Recall that $\sigma'_1$ is the maximal semisimple subrepresentation of $\Ind_{P\cap M'}^{M'} \sigma_{\lambda}$ whose irreducible subrepresentations are all isomorphic to $\sigma'$.

Let
\begin{align*}
    Y'_{(M,\sigma)} := \left\{i\in Y_{(M,\sigma)}:\begin{array}{l}
         \text{either $M_i\not\le M'$, or}  \\
          \text{$M_i\le M'$ and $\sigma'$ is not a subquotient of $\Ind^{M'}_{P_i\cap M'} (\sigma_i)_{\lambda_i}$ for any $\lambda_i\in \Lambda_{M_i}.$}
    \end{array}\right\}
\end{align*}
and $J_{(M,\sigma)}' := \{(M_i,\mathcal{O}_{\sigma_i})\in J_{(M,\sigma)}: i\in Y'_{(M,\sigma)} \}$. Let $S\subseteq Y'_{(M,\sigma)}$ and $C\in \kappa_{\{i_0\}\cup S}.$ Recall we have chosen $\underline{\tau}_C=(\tau_{i,C})_{i\in \{i_0\}\cup S}\in U_\sigma^{1+\#S}$ so that $C=U_{\sigma}E$ with $E=\bigcap_{i\in \{i_0\}\cup S} \tau_{i,C}\chi_i\Lambda_{M_i}.$ We normalize $\underline{\tau}_{C}$ so that  $\tau_{i_0,C}=1.$ 

\begin{lemma}\label{lem:discretenonzero}
Fix any $M'(F)$-equivariant embedding $\sigma'\to \sigma'_1$. Let $\emptyset\neq S\subseteq Y'_{(M,\sigma)}$ and $C\in \kappa_{\{i_0\}\cup S}.$ For $\chi\in \lambda^{-1}\bigcap_{i\in \{i_0\}\cup S} \tau_{i,C}\chi_i\Lambda_{M_i},$
\begin{align*}
    \mathrm{Ind}_{P'}^G \sigma_{\chi}'\not\leq \sum_{i\in S}\mathrm{Ind}_{P_i}^{G} (\sigma_{i,\tau_{i,C}})_{\chi\lambda(\tau_{i,C}\chi_i)^{-1}}
\end{align*}
as subspaces of $\mathrm{Ind}_{P}^G \sigma_{\lambda\chi}.$
\end{lemma}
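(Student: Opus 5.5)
The plan is to imitate the proof of Lemma \ref{lem:cuspnonzero}: produce an exponent of $r^G_P\circ \mathrm{Ind}_{P'}^G\sigma'_\chi$ that is not an exponent of $r^G_P$ of any $\mathrm{Ind}_{P_i}^G(\sigma_{i,\tau_{i,C}})_{\chi_i'}$, $i\in S$, where $\chi_i':=\chi\lambda(\tau_{i,C}\chi_i)^{-1}\in\Lambda_{M_i}$. Since the Jacquet functor is exact, a containment $\mathrm{Ind}_{P'}^G\sigma'_\chi\le\sum_{i\in S}\mathrm{Ind}_{P_i}^G(\cdots)$ inside $\mathrm{Ind}_P^G\sigma_{\lambda\chi}$ would force every exponent of the left side to occur on the right side. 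So it suffices to find one exponent of the left side that does not occur on the right.

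First, compute the left side. Since $\sigma'\le\mathrm{Ind}_{P\cap M'}^{M'}\sigma_\lambda$ with $\sigma$ supercuspidal, Corollary \ref{geometriclemma:2} gives
\[
\mathrm{s.s.}\big(r^G_P\mathrm{Ind}_{P'}^G\sigma'_\chi\big)=\sum_{s\in W^{M',M},\,M\le s^{-1}.M'}\mathrm{s.s.}\big(r^{s^{-1}.M'}_{P\cap s^{-1}.M'}s^{-1}.\sigma'_\chi\big).
\]
By Casselman's criterion (Theorem \ref{CasselmanCriterion}) applied to $\sigma'$ in $M'$, the real parts of the exponents of the $s$-term lie in $\Re(s^{-1}.\chi)+\RR_{>0}\Delta(P\cap s^{-1}.M')$, after transporting the $M$-part of $\chi$. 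I will choose $s\in W^{M',M}$ (equivalently an element $s_0\in W(G|M)$ as in Lemma \ref{lem:cuspnonzero}) so that $\Re(s.(\lambda\chi))$ is antidominant relative to the roots outside $M'$, i.e. lies in $-\overline{X^*(M')}^+_\RR$ modulo the $M'$-directions. Then I pick a specific exponent $e$ of the $s$-term.

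Second, compare with the right side. Each term on the right is, by Corollary \ref{geometriclemma:2} and Casselman's criterion applied to $\sigma_i$ in $M_i$, a sum whose exponents have real parts in $\Re(t^{-1}.(\lambda\chi))+\RR_{>0}\Delta(P\cap t^{-1}.M_i)$. I must show that $e$ is not of this form.

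This comparison is the main obstacle. In Lemma \ref{lem:cuspnonzero} the left side was the full induced representation from the supercuspidal, so one could simply take the "most negative" exponent $\Re(s_0.\chi)$, which cannot lie in any strictly positive cone. Here the left side is already a discrete series $\sigma'$, so its exponents also sit in positive cones. To separate the two sides I will use the defining property of $Y'_{(M,\sigma)}$. If $M_i\not\le M'$, the positive cone $\RR_{>0}\Delta(P\cap t^{-1}.M_i)$ involves roots outside $M'$, whereas $e$ can be chosen with no positive contribution along those roots because of the antidominance choice; so these terms are excluded by comparing coefficients of the fundamental weights for $\Delta(P)-\Delta_{M'}$. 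If $M_i\le M'$ and $\sigma'$ is not a subquotient of $\mathrm{Ind}^{M'}_{P_i\cap M'}(\sigma_i)_{\lambda_i}$ for any $\lambda_i$, then the exponent comparison happens inside $M'$.

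For this second case I would first try to choose $e$ to be the exponent of the specific summand $\sigma'\subset\sigma'_1$ that is extremal within $M'$. I would then argue that if $e$ appeared in $r^{M'}_{P\cap M'}$ of some $\mathrm{Ind}^{M'}_{P_i\cap M'}(\sigma_i)_{\lambda_i}$, the corresponding $\sigma'$-isotypic Jacquet piece would force $\sigma'$ to be a subquotient there. This uses Frobenius reciprocity together with uniqueness of the constituent carrying that exponent in $\mathrm{Ind}_{P\cap M'}^{M'}\sigma_\lambda$, which I expect must be handled carefully, possibly using multiplicity counts from Corollary \ref{geometriclemma:2}. Once both cases are settled, exactness of $r^G_P$ gives the non-containment.
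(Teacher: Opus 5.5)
Your treatment of the indices $i\in S$ with $M_i\not\le M'$ is essentially what the paper does: the antidominant choice of $s_0$, Casselman's criterion and the Geometric lemma. The case $M_i\le M'$, however, contains a genuine gap.

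There you exclude your exponent $e$ from $r^{M'}_{P\cap M'}\Ind^{M'}_{P_i\cap M'}(\sigma_{i,\tau_{i,C}})_{\chi_i'}$ by appealing to ``uniqueness of the constituent carrying that exponent'' in $\Ind^{M'}_{P\cap M'}\sigma_{\lambda\chi}$. This uniqueness is false in general. When $\sigma_{\lambda\chi}$ has a nontrivial stabilizer in $W(M'|M)$, the Jacquet module of $\Ind^{M'}_{P\cap M'}\sigma_{\lambda\chi}$ has repeated constituents, and distinct irreducible constituents can carry identical exponents. This already happens for $\SL_2$ with $\xi$ quadratic: $\Ind_B^{\SL_2}\xi=T_+\oplus T_-$, and both $T_\pm$ have Jacquet module $\xi$. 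Frobenius reciprocity does not tell you which constituent carries a given exponent. So an exponent of $\Ind_{P'}^G\sigma'_\chi$ that is absent from all right-hand terms need not exist, and your reduction ``find one exponent on the left that does not occur on the right'' can simply fail for these $i$.

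The missing idea is to handle $S-S'$, where $S':=\{i\in S: M_i\not\le M'\}$, structurally rather than through exponents.
\begin{enumerate}
\item By the definition of $Y'_{(M,\sigma)}$, the irreducible $\sigma'_\chi$ is not a subquotient of any $\Ind^{M'}_{P_i\cap M'}(\sigma_{i,\tau_{i,C}})_{\chi_i'}$ with $i\in S-S'$. All of these live inside $\Ind^{M'}_{P\cap M'}\sigma_{\lambda\chi}$, so $\sigma'_\chi\cap\sum_{i\in S-S'}\Ind^{M'}_{P_i\cap M'}(\sigma_{i,\tau_{i,C}})_{\chi_i'}=0$.
\item Applying the exact functor $\Ind_{P'}^G$ (induction in stages) gives $\Ind_{P'}^G\sigma'_\chi\cap\sum_{i\in S-S'}\Ind_{P_i}^G(\sigma_{i,\tau_{i,C}})_{\chi_i'}=0$.
\item Hence a containment in the full sum over $S$ would embed $\Ind_{P'}^G\sigma'_\chi$ into the quotient of the sum over $S$ by the sum over $S-S'$. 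That quotient is a quotient of $\sum_{i\in S'}\Ind_{P_i}^G(\sigma_{i,\tau_{i,C}})_{\chi_i'}$.
\item It therefore suffices to prove $\mathrm{s.s.}(\Ind_{P'}^G\sigma'_\chi)\not\le\mathrm{s.s.}\big(\sum_{i\in S'}\Ind_{P_i}^G(\sigma_{i,\tau_{i,C}})_{\chi_i'}\big)$. For this, your exponent comparison is only needed against the $i\in S'$, which is exactly where it works.
\end{enumerate}
The paper phrases the last step as showing that $s_0.\sigma'_\chi$ is not a subquotient of $r^G_{P'}\Ind_{P_i}^G(\sigma_{i,\tau_{i,C}})_{\chi_i'}$ for each $i\in S'$.
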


\begin{proof}
     By the \hyperref[geometriclemma:2]{Geometric lemma}, $r^G_{P'}\circ \mathrm{Ind}_{P'}^G \sigma'_\chi$
     contains $s.\sigma'_\chi$ as subquotients for all $s\in W(G|M').$ Choose $s_0$ such that 
     \begin{align*}
         \mathrm{Re}(s_0.\sigma'_\chi)=\mathrm{Re}(s_0.\chi) \in -\overline{X^\ast(M')}_\RR^+= \mathbb{R}_{\leq 0}\{\omega_\alpha:\alpha\in \Delta(P')\} + X^*(G)_\RR.
     \end{align*}
     For $i\in S,$ write $\chi_i':=\chi\lambda(\tau_{i,C}\chi_i)^{-1}\in \Lambda_{M_i}.$ 

    Let $S':=\{ i\in S: M_i\not\le M'\}$. 
    For $i\in S'$, by \hyperref[CasselmanCriterion]{Casselman's criterion} and the \hyperref[geometriclemma:2]{Geometric lemma}, 
    \begin{align*}
        \Re \mathcal{E}xp\left( r_{P}^G\circ\mathrm{Ind}_{P_i}^G (\sigma_{i,\tau_{i,C}})_{\chi_i'}\right)\subseteq\bigcup_{\substack{s\in W^{M_i,M}\\ M\leq s^{-1}.M_i}} X^*(s^{-1}.M_i)_\RR+\RR_{>0}\Delta(P\cap s^{-1}.M_i).
    \end{align*} 
    By \hyperref[CasselmanCriterion]{Casselman's criterion} again, we have
    \begin{align*}
        \Re \mathcal{E}xp(r^{M'}_{P\cap M'} s_0.\sigma'_\chi) \subseteq \Re(s_0.\chi) + \mathbb{R}_{>0}\Delta(P\cap M').
    \end{align*}
    Since $M_i\not\le M',$ by the choice of $s_0$ we have
    \begin{align*}
    \Re \mathcal{E}xp(r^{M'}_{P\cap M'} s_0.\sigma'_\chi)\not\subseteq  \Re \mathcal{E}xp\left( r_{P}^G\circ\mathrm{Ind}_{P_i}^G (\sigma_{i,\tau_{i,C}})_{\chi_i'}\right).
    \end{align*}
    Hence, $s_0.\sigma'_\chi$ is not a subquotient of $r_{P'}^G\circ \Ind_{P_i}^G (\sigma_{i,\tau_{i,C}})_{\chi_i'}.$ Thus $\mathrm{s.s.}\left(\Ind_{P'}^G \sigma'_\chi\right)\not\le \mathrm{s.s.}\left(\sum_{i\in S'} \Ind_{P_i}^G (\sigma_{i,\tau_{i,C}})_{\chi_i'}\right).$
    
    For $i\in S-S'$ so that $M_i\le M',$ by definition $\sigma'$ is not a subquotient of $\Ind_{P_i\cap M'}^{M'} (\sigma_{i,\tau_{i,C}})_{\lambda_i}$ for any $\lambda_i\in \Lambda_{M_i}.$  As both $\sigma'_\chi$ and $\Ind_{P_i\cap M'}^{M'}(\sigma_{i,\tau_{i,C}})_{\chi_i'}$ are subpresentations of $\Ind_{P\cap M'}^{M'} \sigma_{\lambda\chi},$ we have $\sigma'_\chi\cap \sum_{i\in S-S'}\Ind_{P_i\cap M'}^{M'}(\sigma_{i,\tau_{i,C}})_{\chi_i'}=0.$  Now suppose on the contrary that $\mathrm{Ind}_{P'}^G \sigma'_\chi\le  \sum_{i\in S} \mathrm{Ind}_{P_i}^G (\sigma_{i,\tau_{i,C}})_{\chi_i'}.$ Then we have an injection
    \begin{align*}
        \mathrm{Ind}_{P'}^G \sigma'_\chi&\hookrightarrow{} \left( \sum_{i\in S} \mathrm{Ind}_{P_i}^G (\sigma_{i,\tau_{i,C}})_{\chi_i'}\right)\bigg/ \left(\sum_{i\in S-S'} \mathrm{Ind}_{P_i}^G (\sigma_{i,\tau_{i,C}})_{\chi_i'}\right)\\
        &\quad\cong \left( \sum_{i\in S'} \mathrm{Ind}_{P_i}^G (\sigma_{i,\tau_{i,C}})_{\chi_i'}\right)\bigg/ \left(\left(\sum_{i\in S-S'} \mathrm{Ind}_{P_i}^G (\sigma_{i,\tau_{i,C}})_{\chi_i'}\right)\cap \left(\sum_{i\in S'} \mathrm{Ind}_{P_i}^G (\sigma_{i,\tau_{i,C}})_{\chi_i'}\right)\right).
    \end{align*}
    This contradicts $\mathrm{s.s.}\left(\Ind_{P'}^G \sigma'_\chi\right)\not\le \mathrm{s.s.}\left(\sum_{i\in S'} \Ind_{P_i}^G (\sigma_{i,\tau_{i,C}})_{\chi_i'}\right),$ and the lemma is justified. 
\end{proof}

\begin{lemma}\label{lem:varying:discrete}
There exists a family of holomorphic sections 
    \begin{align*}
        \mathfrak{H}=\{T_{S,C}:S\subseteq Y_{(M,\sigma)}', C\in \kappa_{\{i_0\}\cup S}\}\subseteq (\CC[\Lambda_M]\otimes \mathrm{End}_\CC (\Ind_{K_P}^K \sigma|_{K_M}))^{U_\sigma}
    \end{align*}
    such that for each $i\in Y_{(M,\sigma)}',v\in \mathrm{Ind}_{K_{P_i}}^K \sigma_{i,1}|_{K_{M_i}}$ and $\chi\in \chi_i\Lambda_{M_i}$
    \begin{align*}
        T_{S,C}(\chi)(v)=0
    \end{align*}
    for all $(S,C).$ 
    
    Consider the collection $\mathcal{H}$ of all such families. The ideal in $\CC[\Lambda_{M'}]$ generated by the set 
    \begin{align*}
        \big\{\Lambda_{M'}\ni \chi\mapsto \langle T_{S,C}(\lambda\chi)(v),w \rangle: (v,w)\in \mathrm{Ind}_{K_{P}}^K \sigma'_{1}|_{K_M}\times \mathrm{Ind}_{K_{P}}^K \sigma^\lor|_{K_M}, T_{S,C}\in \mathfrak{H}, \mathfrak{H}\in \mathcal{H}\big\}
    \end{align*}
    is unital. 
\end{lemma}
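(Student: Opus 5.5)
The construction will copy the proof of Lemma \ref{lem:varying}, with three changes: the index set becomes $Y'_{(M,\sigma)}$, the pairs $(S,C)$ now have $C\in\kappa_{\{i_0\}\cup S}$, and the ideal is formed in $\CC[\Lambda_{M'}]$ after restricting to the coset $\lambda\Lambda_{M'}=\chi_{i_0}\Lambda_{M'}$.

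\textbf{Step 1: the rank-one sections $T''_{S,C}$.} Fix $S\subseteq Y'_{(M,\sigma)}$ and $C\in\kappa_{\{i_0\}\cup S}$. Write $E_C=\bigcap_{i\in\{i_0\}\cup S}\tau_{i,C}\chi_i\Lambda_{M_i}$, normalized so that $\tau_{i_0,C}=1$. Lemma \ref{lem:discretenonzero} says $\Ind_{P'}^G\sigma'_\chi$ is not contained in $\sum_{i\in S}\Ind_{P_i}^G(\sigma_{i,\tau_{i,C}})_{\chi'_i}$. Hence there are $v_{S,C}\in \Ind_{K_{P'}}^K\sigma'_1|_{K_{M'}}$ and $w_{S,C}\in\bigcap_{i\in S}\Ind_{K_{P_i}}^KV^\vee_{i,\tau_{i,C}}$ with $\langle v_{S,C},w_{S,C}\rangle=1$. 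These vector spaces do not depend on $\chi\in E_C$, and for $S=\emptyset$ any pair works.

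Take the rank-one endomorphism $v_{S,C}\otimes w_{S,C}$ and average it over the stabilizer $U'_\sigma$ of $E_C$, choosing an eigenvector whose pairing against $v_{S,C}$ stays nonzero. Multiply by a unit of $\CC[\Lambda_M]$ to make it $U'_\sigma$-invariant, and call the result $T''_{S,C}$. This section kills $\sum_{i\in S}\Ind_{K_{P_i}}^K\sigma_{i,\tau_{i,C}}$, and its matrix coefficient against some $v_0\in\Ind\sigma'_1$ is nowhere zero.

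\textbf{Step 2: the cut-off functions.} Next multiply $T''_{S,C}$ by $\eta_{S,C}\in\CC[\Lambda_M]^{U'_\sigma}$ and average over $U_\sigma/U'_\sigma$ to get $T'_{S,C}$. We require $\eta_{S,C}$ to vanish on $U_\sigma\chi_i\Lambda_{M_i}-\tau_{i,C}\chi_i\Lambda_{M_i}$ for $i\in S\cup\{i_0\}$ and to be nonzero on $E_C$. Then multiply $T'_{S,C}$ by $h_{S,C,\xi}$, a product of generators $f_{i,\xi(i)}$ of the ideals of $U_\sigma\chi_i\Lambda_{M_i}$ for $i\in Y'_{(M,\sigma)}\setminus S$.

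The required vanishing then comes from two sources. For $i\in S$ it is built into $T'_{S,C}$. For $i\in Y'\setminus S$ it comes from the factor $h_{S,C,\xi}$.

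For the case $(S,C)$ with $S=\emptyset$, $C$ is the orbit $U_\sigma\lambda\Lambda_{M'}$ itself. Here $h_{\emptyset,C,\xi}$ must vanish on $\bigcup_{i\in Y'}U_\sigma\chi_i\Lambda_{M_i}$ but must not vanish identically on $\lambda\Lambda_{M'}$. This is possible because no $\chi_i\Lambda_{M_i}$ with $i\in Y'$ contains $\lambda\Lambda_{M'}$. Indeed, if $\chi_i\Lambda_{M_i}\supseteq\lambda\Lambda_{M'}$ then $M_i\le M'$, and $\sigma'$ would be a subquotient of an induction from $\sigma_i$, which is exactly what the definition of $Y'$ rules out.

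\textbf{Step 3: unitality.} We show the resulting restrictions $\chi\mapsto\langle T_{S,C,\xi}(\lambda\chi)v,w\rangle$ have no common zero on $\Lambda_{M'}$; Hilbert's Nullstellensatz then gives a unital ideal. Fix $\chi_0\in\lambda\Lambda_{M'}$. Let $S=\{i\in Y':\chi_0\in U_\sigma\chi_i\Lambda_{M_i}\}$, and let $C\in\kappa_{\{i_0\}\cup S}$ be the class containing $\chi_0$; this is a maximal pair with respect to $\chi_0$. Then $T'_{S,C}(\chi_0)$ pairs nontrivially with $v_0\in\Ind\sigma'_1$, and we may choose $\xi$ so that $h_{S,C,\xi}(\chi_0)\ne0$.

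\textbf{Main obstacle.} The hardest step is choosing the $\eta_{S,C}$ and $h$'s so that they stay nonzero on the subtorus $\lambda\Lambda_{M'}$, and not merely at generic points of $\Lambda_M$. This needs a careful check that the components removed by these functions meet $\lambda\Lambda_{M'}$ only in proper closed subsets. I would handle it using the disjointness of distinct classes in $\kappa$ together with the maximal-pair choice above.
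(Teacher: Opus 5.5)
Your proposal is correct and is essentially the paper's argument. The paper's proof just reruns the construction from the proof of Lemma~\ref{lem:varying}, choosing $v_{\{i_0\}\cup S,C}\in\Ind_{K_{P'}}^K\sigma'_1|_{K_{M'}}$ and $w_{\{i_0\}\cup S,C}\in\bigcap_{i\in S}\Ind_{K_{P_i}}^K V_{i,\tau_{i,C}}^\vee$ with pairing $1$ via Lemma~\ref{lem:discretenonzero}, and leaves the rest to the reader. Your Step 3 (at each point take the maximal $S$, and choose $h_{S,C,\xi}=\prod_{i\in Y'_{(M,\sigma)}\setminus S}f_{i,\xi(i)}$ nonvanishing there) is a clean way to carry out that remaining Nullstellensatz step.

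The one inaccurate remark is in your $S=\emptyset$ discussion. Writing $\lambda=\tau\chi_i\mu$ with $\tau\in U_\sigma$ and $\mu\in\Lambda_{M_i}$, you only get that $\sigma'$ is a subquotient of $\Ind^{M'}_{P_i\cap M'}\bigl(\Ind^{M_i}_{P\cap M_i}\sigma_{\chi_i}\bigr)_\mu$, not of $\Ind^{M'}_{P_i\cap M'}(\sigma_i)_\mu$. So the containment $\lambda\Lambda_{M'}\subseteq U_\sigma\chi_i\Lambda_{M_i}$ for some $i\in Y'_{(M,\sigma)}$ is not excluded by the definition of $Y'_{(M,\sigma)}$. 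Nothing depends on it, however: in that case every point of $\lambda\Lambda_{M'}$ has $i$ in its maximal $S$, so the $S=\emptyset$ section is never used.
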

\begin{proof}
    Let $S\subseteq Y'_{(M,\sigma)}, C\in \kappa_{\{i_0\}\cup S}$ be given. In the construction of $T'''_{\{i_0\}\cup S,C}$ in the proof of \hyperref[lem:varying]{Lemma \ref{lem:varying}},  by  \hyperref[lem:discretenonzero]{Lemma \ref{lem:discretenonzero}} we can choose instead $v_{\{i_0\}\cup S,C}\in \Ind_{K_{P'}}^{K} \sigma'_1|_{K_M'}$ and $w_{\{i_0\}\cup S,C}\in \bigcap_{i\in S} \Ind_{K_{P_i}}^{K}V_{i,\tau_{i,C}}^\lor$ such that $\langle v_{\{i_0\}\cup S ,C},w_{\{i_0\}\cup S,C}\rangle=1.$ The rest of the proof is similar and we leave the details to the reader.
\end{proof}

Let
\begin{align*}
    \widetilde{Y}_{(M,\sigma)} := \left\{i\in Y_{(M,\sigma)}: M_i<M'\text{ and $\sigma'$ is a subquotient of $\Ind_{P_i\cap M'}^{M'} (\sigma_i)_{\lambda_i}$ for some $\lambda_i\in \Lambda_{M_i}$}\right\}
\end{align*}
and $\widetilde{J}_{(M,\sigma)} = \{(M_i,\mathcal{O}_{\sigma_i}):i\in\widetilde{Y}_{(M,\sigma)} \}$. By definition, we have
\begin{align*}
    Y_{(M,\sigma)} &= \{i_0\}\cup Y'_{(M,\sigma)}\cup \widetilde{Y}_{(M,\sigma)},\\
    J_{(M,\sigma)} &= \{ (M',\mathcal{O}_{\sigma'})\}\sqcup J_{(M,\sigma)}'\sqcup  \widetilde{J}_{(M,\sigma)}.
\end{align*}

By \hyperref[lem:auxL]{Lemma \ref{lem:auxL}}, there exists a function $L\in \CC(\mathcal{O}_{\sigma\CC})^{W(M,\mathcal{O}_{\sigma\CC})}$ with the following properties:
\begin{itemize}
    \item $\Lambda_M\ni \chi \mapsto L(\sigma_\chi)\in L(1/2,\sigma_\chi,\rho_M)\CC[\Lambda_M].$
    \item $\Lambda_{M_i}\ni \chi \mapsto  L(\sigma_{\lambda_i\chi})\in L(1/2,(\sigma_i)_\chi,\rho_{M_i})\CC[\Lambda_{M_i}]$ for all $i\in \widetilde{Y}_{(M,\sigma)}.$
    \item $\Lambda_{M'}\ni \chi \mapsto L(\sigma_{\lambda\chi})\in L(1/2,\sigma'_\chi,\rho_{M'})\CC[\Lambda_{M'}]^\times$.
\end{itemize}
By \hyperref[Lfunction:expand:cone]{Lemma \ref{Lfunction:expand:cone}}, there is a finite set $X\subset \Omega_G$ such that
\begin{align*}
       L(\sigma_\chi) = \sum_{\alpha\in X + \widetilde{\sigma}_{\rho,G}\cap \Omega_G} c_\alpha(\chi),
\end{align*}
and the series converges absolutely for $\mathrm{Re}(\chi)$ in a positive cone in $\Lambda_{M}.$ Let $\mathfrak{H}$ be a family satisfying the conclusion of \hyperref[lem:varying:discrete]{Lemma \ref{lem:varying:discrete}}. Let $S\subseteq Y'_{(M,
\sigma)}$ and $C\in \kappa_{\{i_0\}\cup S}.$ Let $\alpha\in X+ \widetilde{\sigma}_{\rho,G}\cap \Omega_G.$ Note that $c_\alpha\in \CC[\Lambda_M]^{U_\sigma}=\CC[\mathcal{O}_{\sigma\CC}].$ By \hyperref[BH:PW]{Theorem \ref{BH:PW}} there is a unique $f_\alpha\in C^\infty_c(G(F))$ such that $\mathrm{HP}(f_\alpha)|_{\mathrm{Temp}_{\mathrm{Ind},0}(G)}$ is supported on $(M,\mathcal{O}_\sigma)$ and
\begin{align*}
    \mathrm{HP}(f_\alpha)(M,\sigma_{\chi})=c_\alpha(\chi)T_{S,C}(\chi).
\end{align*}
Then 
\begin{align*}
    \supp \mathrm{HP}(f_\alpha)\subseteq (M,\mathcal{O}_{\sigma})\cup (M',\mathcal{O}_{\sigma'})\cup \bigcup_{i\in  Y'_{(M,\sigma)}\sqcup\widetilde{Y}_{(M,\sigma)}} (M_i,\mathcal{O}_{\sigma_i}).
\end{align*}

For any $(v,w)\in \Ind_{K_{P}}^K \sigma|_{K_{M}}\times \Ind_{K_{P}}^K \sigma^\lor|_{K_{M}},$ we have for $\chi\in \Lambda_M$
    \begin{align*}
        Z(\mathrm{\Ind}_{P}^G \sigma_{\chi},f_\alpha,v,w)=c_{\alpha}(\chi)\langle T_{S,C}(\chi)(v),w\rangle.
    \end{align*}
    For $i\in Y_{(M,\sigma)}', v\in \mathrm{Ind}_{K_{P_i}}^K \sigma_{i,1}|_{K_{M_i}}$ and $\chi\in \chi_i\Lambda_{M_i},$ $ Z(\mathrm{\Ind}_{P}^G \sigma_{\chi},f_\alpha,v,w)=0$. Since $f_\alpha\in C^\infty_c(G(F)),$  by  \hyperref[lem:cs:subquo]{Lemma \ref{lem:cs:subquo}} $\HP(f_\alpha)$ vanishes on $(M_i,\mathcal{O}_{\sigma_i})$ for $i\in Y'_{(M,\sigma)}$. On the other hand, by the choice of $L$ and \hyperref[lem:cs:subquo]{Lemma \ref{lem:cs:subquo}}, we have
\begin{align*}
    f_{S,C}:=\sum_{\alpha\in X+\widetilde{\sigma}_{\rho,G}\cap \Omega_G} f_\alpha\in \mathcal{S}^{\mathrm{as}}_\rho(G(F))\cap C^\infty_{\mathrm{ac}}(G(F)).
\end{align*}
Identify $\sigma'$ as a subrepresentation of $\sigma'_1$. Let $(v,w)\in \mathrm{Ind}_{K_{P'}}^K  \sigma'|_{K_{M'}}\times \mathrm{Ind}_{K_{P}}^K \sigma^\lor|_{K_{M}}.$ For $\chi\in \Lambda_{M'}$
\begin{align*}
    Z(\mathrm{Ind}_P^G \sigma_{\lambda\chi},f_{S,C},v,w)= L(\sigma_{\lambda\chi})\langle T_{S,C}(\lambda\chi)(v),w\rangle.
\end{align*}
Let $\bar{w}$ be the image of $w$ in $\mathrm{Ind}_{K_{P'}}^G \sigma'^\lor|_{K_{M'}}.$ Then again by \hyperref[lem:cs:subquo]{Lemma \ref{lem:cs:subquo}}
\begin{align*}
    \Lambda_{M'}\ni\chi \mapsto Z(\mathrm{Ind}_{P'}^G\sigma'_\chi,f_{S,C},v,\bar{w})\in L(1/2,\sigma'_\chi,\rho_{M'})\langle T_{S,C}(\lambda\chi)(v),w\rangle\CC[\Lambda_{M'}]^\times.
\end{align*}
 Thus \hyperref[multigcd]{Theorem \ref{multigcd}} follows from \hyperref[lem:ideal]{Lemma \ref{lem:ideal}} and \hyperref[lem:varying:discrete]{Lemma \ref{lem:varying:discrete}} by varying $v,w, T_{S,C}, \mathfrak{H},$
\qed

\subsection{Tate-Godement-Jacquet theory}\label{ssec:Tate}


\begin{theorem}\label{thm:gcd}
    Suppose \eqref{LLC:gamma} holds for all $M\in \mathcal{M}$. Let $\pi$ be a smooth irreducible  representation of $G(F)$ with Langlands data $(P,\sigma,\lambda).$ Let $\mathcal{C}(\pi)\subseteq C^\infty(G(F))$ denote the space of matrix coefficients of $\pi$. 
    \begin{enumerate}
        \item Let $f\in \mathcal{S}_\rho(G(F))$ and $c\in\mathcal{C}(\pi).$ There is a positive cone $C$ in $\Re \Lambda_G$ such that for $\chi\in \Lambda_G,$ the integral
    \begin{align*}
        Z(\pi_\chi,f,c):= \int_{G(F)} f(g)c(g)\chi(g)dg
    \end{align*}
    converges absolutely if $\Re(\chi)\in C$. If $\pi\in \mathrm{Temp}(G),$ then one can take $C=C_{\rho,G}.$
        \item We have
        \begin{align*}
            I_\pi:=\mathrm{Span}_\CC \left\{\frac{Z(\pi_\chi,f,c)}{L(0,\sigma_\chi,\rho_M)}: (f,c)\in \mathcal{S}_\rho(G(F))\times\mathcal{C}(\pi)\right\}=\CC[\Lambda_G].
        \end{align*}
        \item For $(f,c)\in\mathcal{S}_\rho(G(F))\times\mathcal{C}(\pi)$, one has a functional equation
        \begin{align*}
            Z((\pi^\lor)_{|\nu|\chi^{-1}},|\nu|^{-1/2}\mathcal{F}_\rho (|\nu|^{1/2}f),c^\vee)=\gamma(0,\sigma_{\lambda\chi},\rho_M)Z(\pi_\chi,f,c).
        \end{align*}\qed
    \end{enumerate}
\end{theorem}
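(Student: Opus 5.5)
The plan is to reduce all three assertions to the induced-representation statements already proved. We realize $\pi$ as the unique irreducible quotient of $\Ind_P^G\sigma_\lambda$, with $(P,\sigma,\lambda)$ a Langlands triple. Dually, $\pi^\vee$ is the unique irreducible subrepresentation of $\Ind_P^G(\sigma_\lambda)^\vee$, exactly as in \hyperref[prop:generalFE]{Proposition \ref{prop:generalFE}}. Every $c\in\mathcal{C}(\pi)$ is then a matrix coefficient $c(g)=\langle(\Ind_P^G\sigma_\lambda)(g)\tilde v,w\rangle$ for a lift $\tilde v$ of $v$. Consequently
\begin{align*}
    Z(\pi_\chi,f,c)=Z(\Ind_P^G\sigma_{\lambda\chi},f,\tilde v,w),\qquad \chi\in\Lambda_G.
\end{align*}

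For (1) we invoke \eqref{S1}. By \hyperref[cor:Cfpositive]{Corollary \ref{cor:Cfpositive}}, applied to $\sigma$ on $M$, there is a positive cone in $\Re\Lambda_M$ on which $Z(\Ind_P^G\sigma_{\chi'},f,\tilde v,w)$ converges absolutely. Restricted to $\chi'=\lambda\chi$ with $\chi\in\Lambda_G$, this gives convergence for $\Re(\chi)$ in a translate of a positive cone in $\Re\Lambda_G$: condition (C2) is preserved because $|\nu|^s\in\Lambda_G$. In the tempered case we instead use \hyperref[Srho:in:Sas]{Corollary \ref{Srho:in:Sas}}: $f\chi\in\mathcal{C}(G(F))$ for $\Re\chi\in C_{\rho,G}$, and \hyperref[lem:CGconverge]{Lemma \ref{lem:CGconverge}} then gives convergence.

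For (3) we apply \hyperref[prop:generalFE]{Proposition \ref{prop:generalFE}} to $|\nu|^{1/2}f\in\mathcal{S}^{\mathrm{as}}_\rho\cap C^\infty_{\mathrm{ac}}$; this membership holds by \hyperref[Schwartz=as+cs]{Theorem \ref{Schwartz=as+cs}}. We replace $\chi$ there by $|\nu|^{1/2}\chi$. The identities $Z(\pi_{|\nu|^{1/2}\chi},|\nu|^{1/2}f,c)=Z(\pi_{|\nu|\chi},f,c)$ and $\gamma(\tfrac12,\sigma_{\lambda|\nu|^{1/2}\chi})=\gamma(0,\sigma_{\lambda\chi})$ are then simple bookkeeping with the shifts $|\nu|^{\pm1/2}$, after renaming $\chi$. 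The same substitution turns the left side into the stated $Z((\pi^\vee)_{|\nu|\chi^{-1}},|\nu|^{-1/2}\mathcal{F}_\rho|\nu|^{1/2}f,c^\vee)$.

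For (2) there are two inclusions.
\begin{enumerate}
    \item \emph{Containment $I_\pi\subseteq\CC[\Lambda_G]$.} By the displayed identity and \eqref{S2}, $Z(\pi_\chi,f,c)$ is the restriction to $\lambda\Lambda_G$ of an element of $L(0,\sigma_{\chi'},\rho_M)\CC[\Lambda_M]$. Here \eqref{S2} applies to tempered $\sigma$ after decomposing $\Ind_P^G\sigma$ into inductions from discrete series; this is \hyperref[lem:cs:subquo]{Lemma \ref{lem:cs:subquo}}. Dividing by $L(0,\sigma_{\lambda\chi},\rho_M)$ therefore gives a polynomial.
    \item \emph{Unit ideal.} We first check that $I_\pi$ is an ideal: it is stable under multiplication by $\chi(g_0)$ via left translation of $f$ by $g_0\in G(F)$, using \hyperref[Srhomodule]{Corollary \ref{Srhomodule}}, as in \hyperref[lem:ideal]{Lemma \ref{lem:ideal}}. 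Next write $\sigma\subseteq\Ind_{P_1\cap M}^{M}\sigma_1$ with $\sigma_1\in\Pi_2(M_1)$. Then $\Ind_P^G\sigma_{\lambda\chi}\subseteq\Ind_{P_1}^G\sigma_{1,\lambda\chi}$, and $L(0,\sigma_{\lambda\chi},\rho_M)=L(0,\sigma_{1,\lambda\chi},\rho_{M_1})$ by \eqref{LLC:para}. \hyperref[multigcd]{Theorem \ref{multigcd}} supplies $f_i,v_i,w_i$ with $\sum_i Z(\Ind_{P_1}^G\sigma_{1,\chi'},f_i,v_i,w_i)=L(0,\sigma_{1,\chi'},\rho_{M_1})$ for all $\chi'\in\Lambda_{M_1}$. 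We restrict this identity to $\chi'\in\lambda\Lambda_G$.
\end{enumerate}

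The main obstacle is that these $(v_i,w_i)$ need not be vectors whose matrix coefficients factor through $\pi$: $\pi$ is only a quotient of $\Ind_P^G\sigma_\lambda$, so $Z(\Ind\ldots)$ is a matrix coefficient of $\pi$ only when $w$ pairs through $\pi^\vee$. To handle this, I would first try using the $G(F)\times G(F)$-action to replace $(v_i,w_i)$ by vectors with $w_i\in\pi^\vee$. This works because $\Ind_P^G\sigma_{\lambda\chi}$ is cyclic, with $\pi^\vee\subseteq(\Ind)^\vee$ generated by the image of $J_{\overline P|P}$. Concretely, we precompose with $J_{\overline P|P}(\sigma_{\lambda\chi})$: the functions $Z(\Ind_{\overline P}^G\sigma_{\lambda\chi},f,J\tilde v,w')$ are matrix coefficients of $\pi$, since the image of $J$ is $\pi$. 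We then need the rational normalization of $J_{\overline P|P}$ on $\lambda\Lambda_G$ to be holomorphic and invertible near the dominant chamber where $\Re\lambda$ lives, via \hyperref[lem:J]{Lemma \ref{lem:J}}. Once this holds, the unit ideal for $\Ind_{\overline P}^G$ — again from \hyperref[multigcd]{Theorem \ref{multigcd}} by $W$-conjugation, \hyperref[intertwine:W]{Lemma \ref{intertwine:W}} — transfers to $I_\pi$. If the poles of $J$ obstruct this, the fallback is the codimension-two argument of \hyperref[lem:divides]{Lemma \ref{lem:divides}}. That argument uses that $I_\pi$ contains $L(0,\sigma_\chi,\rho_M)^{-1}$ (take $f=\mathbf 1_{K_0}$) and, by (3), $L(1,(\sigma_{\lambda\chi})^\vee,\rho_M)^{-1}$. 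When $\dim\Lambda_G=1$ this already suffices; in general the ideal is proper at most in codimension two, and the multivariate input above is needed to close the gap.
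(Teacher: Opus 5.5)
Your arguments for (1), (3), and the inclusion $I_\pi\subseteq\CC[\Lambda_G]$ are fine and agree with the paper: Corollary~\ref{Srho:in:Sas}, Lemma~\ref{lem:CGconverge}, and Proposition~\ref{prop:generalFE} applied to $|\nu|^{1/2}f$.

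\textbf{The gap: the unit ideal.} Quoting Theorem~\ref{multigcd} only gives identities $\sum_i Z(\Ind_{P_1}^G\sigma_{1,\chi'},f_i,v_i,w_i)=L(0,\sigma_{1,\chi'},\rho_{M_1})$ with \emph{arbitrary} $(v_i,w_i)$. After restriction to $\lambda\Lambda_G$, these are zeta integrals of $\pi_\chi$ only when the $w_i$ lie in $\pi^\vee$. The same problem already occurs in your reduction from tempered $\sigma$ to $\sigma_1$.

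\emph{The device via $J_{\overline P|P}$ cannot fix this.} For $\chi\in\Lambda_G$, $\chi$ is trivial on unipotent elements and on $K$, so $\chi(\mathbf{a}_P(\bar n k))=1$. Hence in the compact picture $J_{\overline P|P}(\sigma_{\lambda\chi})=J_{\overline P|P}(\sigma_\lambda)$ is constant along $\lambda\Lambda_G$, and its kernel is the maximal proper subrepresentation. It is holomorphic there but never invertible unless $\pi=\Ind_P^G\sigma_\lambda$. A unit-ideal identity for $\Ind_{\overline P}^G$ involves arbitrary vectors, not vectors in the image $\pi$ of $J$, so it does not transfer.

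\emph{The fallback fails too, even when $\dim\Lambda_G=1$.} The codimension-two argument of Lemma~\ref{lem:naivegen} uses that the Frobenius eigenvalues have absolute value $1$, and the non-unitary twist $\lambda$ destroys this. Take $G=\GL_2$, $\rho=\mathrm{std}$, $\pi$ trivial, $(P,\sigma,\lambda)=(B,1,\delta_B^{1/2})$, $\chi=|\det|^s$. Then $L(0,\sigma_{\lambda\chi},\rho_M)^{-1}=(1-q^{-s-1/2})(1-q^{-s+1/2})$ and $L(1,(\sigma_{\lambda\chi})^\vee,\rho_M)^{-1}=(1-q^{s-1/2})(1-q^{s-3/2})$. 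Both vanish at $s=1/2$, so these two elements generate a proper ideal.

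\textbf{The missing idea.} The paper reopens the construction behind Theorem~\ref{multigcd} instead of citing its conclusion.
\begin{itemize}
\item Since $\pi_\chi$ is the unique irreducible quotient of $\Ind_P^G\sigma_{\lambda\chi}$, $\pi^\vee$ annihilates every proper subrepresentation.
\item In particular, for $S\neq\emptyset$ and $C\in\kappa_S$ with $\lambda\in C$, it annihilates the subrepresentations $\Ind_{P_i}^G\sigma_{i,\tau_{i,C}}$, which are proper by Lemma~\ref{lem:cuspnonzero}. That is, $\pi^\vee\subseteq\bigcap_{i\in S}\Ind_{K_{P_i}}^K V_{i,\tau_{i,C}}^\vee$.
\item So in the proof of Lemma~\ref{lem:varying} you may choose $w_{S,C}\in\pi^\vee$ when forming $T'''_{S,C}$. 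For the resulting $f_{S,C}$ and $w\in\pi^\vee$, $Z(\Ind_P^G\sigma_{\lambda\chi},f_{S,C},v,w)$ is a genuine zeta integral of $\pi_\chi$.
\item The Nullstellensatz step, run on $\lambda\Lambda_G$, then shows that the functions $\chi\mapsto\langle T_{S,C}(\lambda\chi)v,w\rangle$, with $v$ arbitrary and $w\in\pi^\vee$, generate $\CC[\Lambda_G]$.
\end{itemize}
The discrete-series case uses Lemma~\ref{lem:varying:discrete} in the same way. Tempered $\sigma$ is handled by the same choice inside $\Ind_{P_1}^G\sigma_1$, using $L(0,\sigma_{\lambda\chi},\rho_M)=L(0,\sigma_{1,\lambda\chi},\rho_{M_1})$.
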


\begin{proof}
    Statements (1) and (3) follow from  \hyperref[Srho:in:Sas]{Corollary \ref{Srho:in:Sas}} and \hyperref[prop:generalFE]{Proposition \ref{prop:generalFE}}. Statement (2) essentially follows from the proof of \hyperref[multigcd]{Theorem \ref{multigcd}} as we shall explain. Retain the notations in \S \ref{ssec:multigcd:cuspidal}.

    For $\chi\in \Lambda_G,$ recall that $\pi_\chi$ is the unique irreducible quotient of $\mathrm{Ind}_P^G\sigma_{\lambda\chi}.$ Assume first $\sigma\in \Pi_0(M).$ For any nonempty subset $S\subseteq Y_{(M,\sigma)}$ and $C\in \kappa_S$ such that $\lambda\in C,$ we have $\pi^\lor\subseteq\bigcap_{i\in S} \mathrm{Ind}_{K_{P_i}}^K V_{i,\tau_{i,C}}^\lor.$ Thus in the proof of \hyperref[lem:varying]{Lemma \ref{lem:varying}}, we can choose $w_{S,C}\in \pi^\lor$ in the construction of $T_{S,C}'''.$  Then (2) follows from the fact that the ideal in $\CC[\Lambda_G]$ generated by the set
    \begin{align*}
    \left\{ \Lambda_G\ni \chi\mapsto \langle T_{S,C}(\lambda\chi)(v),w\rangle: v\in \Ind_{K_P}^K \sigma|_{K_M}, w\in \pi^\lor, T_{S,C}\in \mathfrak{H},  \mathfrak{H}\in \mathcal{H}\right\}
    \end{align*}
    is unital. The proof for $\sigma\in \Pi_2(M)-\Pi_0(M)$ is similar, which we leave to the reader.

    For general $\sigma\in \mathrm{Temp}(M),$ note that $L(0,\sigma'_{\lambda\chi},\rho_{M'})=L(0,\sigma_{\lambda\chi},\rho_{M})$ if $\sigma$ is a subrepresentation of $\Ind_{P'\cap M}^M\sigma'$ for some standard parabolic subgroup $P'\le P$ and $\sigma'\in \Pi_2(M').$ A similar argument as above proves (2). 
\end{proof}


\begin{remark}\label{remark:small}
    Up to a normalization shift, it is proved in \cite{shahidi2022resolution} that if $G$ is split then $b_\rho\in C^\infty_c(G(F))+\mathcal{F}_\rho(C^\infty_c(G(F)))$ presuming that $\mathbf{1}_{\mathcal{O}_F^n}\in C^\infty_c(\GG_m^n(F))+\mathcal{F}_n(C^\infty_c(\GG_m^n(F)))$ for any $n$. We give a simple proof in \hyperref[rem:faillocal]{Remark \ref{rem:faillocal}} below to show their assumption fails whenever $n\ge 2$. Therefore, $I_{M_0,1}=\CC[\Lambda_{M_0}]$ does not follow from $C^\infty_c(G)(F)\subseteq \mathcal{S}_\rho(G(F))$. More generally, by the proof of \hyperref[lem:naivegen]{Lemma \ref{lem:naivegen}}, the ideal
    \begin{align*}
        \mathrm{Span}_\CC\left\{\frac{Z(\mathrm{Ind}_P^G \sigma_\chi,f,v,w)}{L(1/2,\sigma_\chi,\rho_M)}: f\in C^\infty_c(G(F))+\mathcal{F}_\rho(C^\infty_c(G(F))),(v,w)\in \Ind_{K_P}^K\sigma\vert_{K_M}\times \Ind_{K_P}^K\sigma^\vee\vert_{K_M}\right\}
    \end{align*}
    in $\CC[\Lambda_M]$ is always proper if the split rank of $M$ is at least $2$. Therefore, by \hyperref[multigcd]{Theorem \ref{multigcd}} $C^\infty_c(G(F))+\mathcal{F}_\rho(C^\infty_c(G(F)))\subsetneq |\nu|^{1/2}\mathcal{S}_\rho(G(F))$ except possibly when $G$ has split rank $1$.
\end{remark}

\section{On affine toric varieties}\label{sec:toric:Schwartz}

In this section, we discuss the underlying geometry of the function space $\mathcal{S}_\rho(G(F))$ when $G=T$ is a torus.

\subsection{Affine normal toric varieties}\label{subsec:affinetoric}
Let $T$ be a torus over $F$. Assume first $T$ is split. We review some standard facts on affine normal toric varieties with torus $T$ in \cite{Fulton:toric}. While the base field in ibid. is $\CC$, but the same argument applies over general fields.

Let $\sigma \subseteq X_*(T)$ be a submonoid such that $\RR_{\ge 0}\sigma$ is a strongly convex rational polyhedral cone in $X_*(T)_{\mathbb{R}}$. We shall write $\sigma^\vee := (\mathbb{R}_{\geq0}\sigma)^\vee\subseteq X^*(T)_\RR$ for brevity. The same convention applies throughout this section. Then $\overline{T}:=\Spec F[\sigma^\lor\cap X^\ast(T)]$ is an affine normal $T$-toric variety.  By \cite[\S2.3]{Fulton:toric} a character $\chi:T\to\mathbb{G}_m$ extends to a morphism $\overline{T}\to\mathbb{A}^1$ if and only if $\chi\in \sigma^\vee\cap X^*(T),$ and a cocharacter $\lambda:\mathbb{G}_m\to T$ extends to a morphism $\mathbb{A}^1\to \overline{T}$ if and only if $\lambda\in \sigma$. Furthermore, by \cite[\S3.1]{Fulton:toric} there is an order-reversing bijection between faces $\tau$ of $\RR_{\ge 0}\sigma$ and $T$-orbits $\mathcal{O}(\tau)$ of $\overline{T}:$
\begin{align*}
    \tau_1\subseteq \tau_2 \quad\quad \xLeftrightarrow{\qquad}\quad\quad \mathcal{O}(\tau_2)\subseteq \overline{\mathcal{O}}(\tau_1). 
\end{align*}
Explicitly, for a face $\tau,$ $\mathcal{O}(\tau)$ is the minimal $T$-orbit in the affine toric variety attached to $\tau$. The map $\mathbf{1}_{\tau^\perp\cap X^*(T)}:\sigma^\lor\cap X^\ast(T)\longrightarrow \ZZ$ defines a distinguished point $\gamma_\tau\in\mathcal{O}(\tau)(F)$ such that for any $\lambda \in X_*(T)$ in the relative interior of $\tau$, one has $\lim\limits_{\substack{|z|\to 0}}\lambda(z) = \gamma_\tau$ and $\mathcal{O}(\tau)(F)=T(F)\gamma_\tau.$ As a consequence, we have

\begin{lemma}\label{toricdense} Suppose $T$ is split. Then $T(F)$ is dense in $\overline{T}(F)$.\qed
\end{lemma}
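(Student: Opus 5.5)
The statement to prove is Lemma \ref{toricdense}: when $T$ is split, $T(F)$ is dense in $\overline{T}(F)$ for the analytic topology. The plan is to use the orbit decomposition $\overline{T}(F)=\bigsqcup_\tau \mathcal{O}(\tau)(F)$ over faces $\tau$ of $\RR_{\ge0}\sigma$, recalled just above. Since the closure $\overline{T(F)}$ is $T(F)$-stable, it suffices to show that each distinguished point $\gamma_\tau$ lies in $\overline{T(F)}$. Indeed, $\mathcal{O}(\tau)(F)=T(F)\gamma_\tau$, and for any $t\in T(F)$ the map $x\mapsto tx$ is a homeomorphism of $\overline{T}(F)$ preserving $T(F)$.

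First I will make sure the orbit decomposition on $F$-points is valid. For split $T$, the standard argument from \cite[\S3.1]{Fulton:toric} works over any field: a point of $\overline{T}(F)$ is a monoid homomorphism $x:\sigma^\vee\cap X^*(T)\to (F,\cdot)$. Its support $\{u: x(u)\neq 0\}$ is a face $\tau^\perp\cap\sigma^\vee\cap X^*(T)$ of the dual cone. On the group generated by this face, $x$ is a homomorphism to $F^\times$. This homomorphism extends to $X^*(T)\to F^\times$ because the face generates a saturated sublattice, so the quotient lattice is free. Hence $x=t\gamma_\tau$ for some $t\in T(F)$.

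Next I will fix $\lambda\in X_*(T)$ in the relative interior of $\tau$ and take $z=\varpi^n$ with $n\to\infty$. I will then check that $\lambda(\varpi^n)\to\gamma_\tau$ in $\overline{T}(F)$. The topology on $\overline{T}(F)$ is induced from a closed embedding into affine space given by finitely many generators $u_1,\dots,u_k$ of $\sigma^\vee\cap X^*(T)$. So it suffices to show $u_j(\lambda(\varpi^n))=\varpi^{n\langle u_j,\lambda\rangle}$ converges to $\gamma_\tau(u_j)$ for each $j$. Because $\lambda$ is in the relative interior of $\tau$, for $u\in\sigma^\vee$ we have $\langle u,\lambda\rangle\ge0$, with equality iff $u\in\tau^\perp$. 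So the coordinate equals $1$ when $u_j\in\tau^\perp$, and it tends to $0$ otherwise. This matches $\gamma_\tau=\mathbf{1}_{\tau^\perp}$. Since $\lambda(\varpi^n)\in T(F)$, we get $\gamma_\tau\in\overline{T(F)}$, which completes the argument.

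There is no real obstacle here. The only point that needs care is the surjectivity of $T(F)\to\mathcal{O}(\tau)(F)$, which I handled above using splitness; this is exactly where the split hypothesis enters.
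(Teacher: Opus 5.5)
Your proposal is correct and follows the same route as the paper, which deduces the lemma from the orbit decomposition $\overline{T}(F)=\bigsqcup_\tau T(F)\gamma_\tau$ (citing Fulton \S3.1) and from the limit $\lim_{|z|\to 0}\lambda(z)=\gamma_\tau$ for $\lambda$ in the relative interior of $\tau$. You additionally fill in the details the paper leaves implicit: the saturation and freeness argument giving $\mathcal{O}(\tau)(F)=T(F)\gamma_\tau$, and the coordinate-wise convergence of $\lambda(\varpi^n)$ to $\gamma_\tau$.
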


Now let $T$ be general. Let $\sigma \subseteq X_*(T_{F^{\mathrm{sep}}})$ be a submonoid stable under $\Gal_F$-action such that $\RR_{\ge 0}\sigma$ is a strongly convex rational polyhedral cone in $X_*(T_{F^{\mathrm{sep}}})_{\mathbb{R}}$. By Galois descent, 
\begin{align*}
    \overline{T}:=\Spec F^{\mathrm{sep}}[\sigma^\vee\cap X^*(T_{F^{\mathrm{sep}}})]^{\Gal_F}
\end{align*}
is an integral affine normal scheme of finite type over $F$ containing $T$ as an open subscheme. It has a unique monoid structure extending the group structure on $T$. Note that if $T_E$ is split for some finite Galois extension $E/F,$ then $X_\ast(T_{F^{\mathrm{sep}}})=X_\ast(T_E)$ and the above construction can be carried out over $E$. Suppose $T$ is unramified, i.e.,  $T_E$ is split for some finite unramified extension $E/F$. Then $\mathcal{T}:=\Spec\mathcal{O}_{E}[X^*(T_{E})]^{\Gal(E/F)}$ is a smooth model of $T$. The same proof as in \cite[\S 2.1]{Fulton:toric} shows that $\Spec\mathcal{O}_{E}[\sigma^\vee\cap X^*(T_{E})]$ is normal. Thus by faithfully flat descent
\begin{align*}
    \overline{\mathcal{T}}:=\Spec\mathcal{O}_{E}[\sigma^\vee\cap X^*(T_{E})]^{\Gal(E/F)}
\end{align*}
defines an affine normal integral model of $\overline{T}$. We let $\overline{T}(\mathcal{O}_F):=\overline{\mathcal{T}}(\mathcal{O}_F).$

Fix a finite Galois extension $E/F$ such that $T_E$ is split, so $\sigma\subseteq X_\ast(T_E).$  Since $X^*(T_{E})/X^*(T_{E})^{\Gal(E/F)}$ is torsion-free, the canonical map 
\begin{align*}
    X_*(T_{E}) = \Hom_\ZZ(X^*(T_{E}),\ZZ) \twoheadrightarrow \Hom_\ZZ(X^*(T_{E})^{\Gal(E/F)},\ZZ) = \Hom_\ZZ(X^*(T),\ZZ)
\end{align*}
is surjective. We let $\widetilde{\sigma}$ denote the image of $\sigma$ under this map. Let $\mathcal{B}_0\subseteq X^*(T)$ be a finite generating set of the dual cone $\widetilde{\sigma}^\vee\subseteq X^*(T)_\RR$. 

\begin{lemma}\label{toric:relcpt:1}
For any $c>0$, the set $\{x\in T(F): |\chi(x)|\leq c\text{ for all }\chi\in \mathcal{B}_0\}$ is relatively compact in $\overline{T}(F)$.
\end{lemma}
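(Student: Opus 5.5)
We prove the statement by reducing to the split case over $E$ and then reading off boundedness in a closed embedding of $\overline{T}_E$ into affine space.

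First, since $\widetilde{\sigma}$ is the image of $\sigma$ under restriction to $X^*(T)\subseteq X^*(T_E)$, the dual cone $\widetilde{\sigma}^\vee\subseteq X^*(T)_\RR$ equals $\sigma^\vee\cap X^*(T)_\RR$. Thus every $\chi\in\mathcal{B}_0$, regarded as a character of $T_E$, lies in $\sigma^\vee\cap X^*(T_E)$, and so it extends to a regular function on $\overline{T}_E$.

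Next, choose a finite generating set $\chi_1,\dots,\chi_N$ of the monoid $\sigma^\vee\cap X^*(T_E)$, which exists by Gordan's lemma. This gives a closed embedding $\overline{T}_E\hookrightarrow\mathbb{A}^N_E$. Since $\overline{T}$ is affine and $\overline{T}(F)\subseteq\overline{T}(E)$ is closed, it suffices to show that the set in question has bounded image in $E^N$. So we must bound $|\chi_i(x)|_E$ for $x\in T(F)$ satisfying $|\chi(x)|\le c$ for all $\chi\in\mathcal{B}_0$.

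The key step is the following. For $x\in T(F)$, the absolute value $|\chi_i(x)|_E$ depends only on $H_T(x)\in\Hom(X^*(T),\ZZ)$. Indeed, $|\chi_i(x)|_E=|N_{E/F}\chi_i(x)|^{[E:F]/\cdot}$ up to a fixed normalization, and $N_{E/F}\circ\chi_i=\sum_{g\in\Gal(E/F)}g\chi_i$ restricted to $T$ is a character in $X^*(T)$. More precisely, $\bar\chi_i:=\sum_g g\chi_i\in X^*(T)$, and since $|g\chi_i(x)|_E=|\chi_i(x)|_E$ for $x\in T(F)$, we get
\begin{align*}
|\chi_i(x)|_E=|\bar\chi_i(x)|_E^{1/[E:F]}.
\end{align*}
Because $\sigma^\vee$ is Galois-stable, $\bar\chi_i$ lies in $\sigma^\vee\cap X^*(T)=\widetilde{\sigma}^\vee\cap X^*(T)$. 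Hence $\bar\chi_i=\sum_{\tau\in\mathcal{B}_0}a_\tau\tau$ with $a_\tau\ge 0$ real, which gives
\begin{align*}
|\bar\chi_i(x)|=\prod_\tau|\tau(x)|^{a_\tau}\le\max(1,c)^{\sum a_\tau}.
\end{align*}
Therefore each coordinate is bounded, and the set is relatively compact in $\overline{T}(F)$, because bounded closed subsets of $E^N$ are compact and $\overline{T}(F)$ is closed in $E^N$.

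The main obstacle is keeping the normalization of absolute values straight and checking the identity $\sigma^\vee\cap X^*(T)_\RR=\widetilde{\sigma}^\vee$, where $X^*(T)=X^*(T_E)^{\Gal}$. The identity is immediate once one notes that pairing $\chi\in X^*(T)$ with $\lambda\in\sigma$ equals pairing $\chi$ with the image of $\lambda$ in $\widetilde{\sigma}$. Everything else is routine.
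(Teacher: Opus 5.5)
Your proof is correct and follows the paper's argument. For $x\in T(F)$ you use the Galois norm/averaging identity $|\chi_i(x)|_E=|\bar\chi_i(x)|_E^{1/[E:F]}$ to move every generator of $\sigma^\vee\cap X^*(T_E)$ into $\widetilde{\sigma}^\vee=\sigma^\vee\cap X^*(T)_\RR$, bound it there via the cone generators $\mathcal{B}_0$, and then conclude from the affine embedding of $\overline{T}(E)$ given by those generators together with the closedness of $\overline{T}(F)$ in $\overline{T}(E)$. The expression ``$|N_{E/F}\chi_i(x)|^{[E:F]/\cdot}$'' is garbled, but the formula in your ``more precisely'' sentence is the right one.
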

\begin{proof} Let $\mathrm{av}:X^*(T_E)_\RR\to X^*(T)_\RR$ denote the averaging map
\begin{align*}
    \mathrm{av}(\chi) := \frac{1}{[E:F]}\sum_{\gamma\in\Gal(E/F)}\gamma.\chi.
\end{align*}
Let $\chi\in \sigma^\vee\cap X^*(T_E)$ and $x\in T(F)$. Then
\begin{align*}
    |\chi|(x)= \left|\prod_{\gamma\in \Gal(E/F)}\gamma.\chi(x)\right|^{\frac{1}{[E:F]}} = |\mathrm{av}(\chi)|(x).
\end{align*}
Let $S_c := \{x\in T(F): |\chi(x)|\leq c\text{ for all }\chi\in \mathcal{B}_0\}$. As $\mathrm{av}(\sigma^\vee)\subseteq\widetilde{\sigma}^\vee$ and $\mathcal{B}_0$ generates $\widetilde{\sigma}^\vee$, we have $|\chi|(S_c)$ is bounded in $\mathbb{R}$ for all $\chi\in\sigma^\vee\cap X^*(T_E)$. Since $\overline{T}(E)$ has the initial topology induced by $\sigma^\vee\cap X^*(T_E)$ and $\overline{T}(F)$ is closed in $\overline{T}(E)$, we conclude that $S_c$ is relatively compact in $\overline{T}(F)$.
\end{proof}

\begin{corollary}\label{toric:Srho:cptsupp} Every function in $\mathcal{S}_\rho(T(F))$ has compact support in $\overline{T}(F)$.
\end{corollary}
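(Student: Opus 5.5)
The plan is to combine the support statement of \hyperref[Schwartz=as+cs]{Theorem \ref{Schwartz=as+cs}} with \hyperref[toric:relcpt:1]{Lemma \ref{toric:relcpt:1}}. When $G=T$ is a torus we have $M_0=T$, the Weyl group is trivial, $K=T(F)^1$ (the maximal compact subgroup), and the Cartan decomposition is just $T(F)=\bigsqcup_{m\in\Omega}mK$. Hence for a finite generating set $\mathcal{B}_0$ of $\overline{C}_{\rho,T}$ the set $S_0(r)$ is simply $\{x\in T(F): |\tau|(x)\le q^{r}\ \forall \tau\in\mathcal{B}_0\}$. Let $f\in\mathcal{S}_\rho(T(F))$. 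By \hyperref[Schwartz=as+cs]{Theorem \ref{Schwartz=as+cs}} there is $r$ with $\mathrm{supp}(f)\subseteq S_0(r)$. (Alternatively, \hyperref[Snur:HCSchwartz]{Proposition \ref{Snur:HCSchwartz}} places $|\nu|^{1/2}f$ in $\mathcal{S}^{\mathrm{as}}_\rho(T(F))$, and \hyperref[Sas:f1alpha:polynomial]{Corollary \ref{Sas:f1alpha:polynomial}} gives $\mathrm{supp}\subseteq S_T(r)$ directly, since multiplying by $|\nu|^{1/2}$ does not change the support.)

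Next I must match the cone data. Here $\overline{C}_{\rho,T}=(\RR_{\ge0}\widetilde{\sigma}_{\rho,T})^\vee$ by \eqref{CrhoM:dualcone:defn}, where $\widetilde{\sigma}_{\rho,T}$ is the image of $\sigma_{\rho,T}\subseteq X^*(Z^\circ_{T^\vee})=X_*(T_{F^{\mathrm{sep}}})$ in $\Hom_\ZZ(X^*(T),\ZZ)$. I take $\sigma=\sigma_{\rho,T}$ as the Galois-stable monoid defining $\overline{T}$ in \S\ref{subsec:affinetoric}, which is the $L$-monoid/toric variety attached to $\rho$. Then the $\widetilde{\sigma}$ of \S\ref{subsec:affinetoric} coincides with $\widetilde{\sigma}_{\rho,T}$, and $\widetilde{\sigma}^\vee=\overline{C}_{\rho,T}$. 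So the same $\mathcal{B}_0$ serves in both places. The only real check in this step is that $\RR_{\ge0}\sigma_{\rho,T}$ is strongly convex, which follows from \eqref{G}.

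Finally, with $c=q^r$, \hyperref[toric:relcpt:1]{Lemma \ref{toric:relcpt:1}} shows that $S_0(r)$ is relatively compact in $\overline{T}(F)$. Hence $\mathrm{supp} f$ has compact closure in $\overline{T}(F)$.

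The main obstacle is bookkeeping: the notation $\sigma$ here is overloaded, and I need to confirm that the $\mathcal{B}_0$ in $S_0(r)$ may be any finite generating set of the cone. Any two such sets give comparable truncations after adjusting $r$, so this is harmless.
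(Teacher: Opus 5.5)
Your proposal is correct and is essentially the paper's argument. The paper combines Corollary~\ref{Sas:f1alpha:polynomial}, applied to $|\nu|^{1/2}f\in\mathcal{S}^{\mathrm{as}}_\rho(T(F))$ (your parenthetical route), with Lemma~\ref{toric:relcpt:1}, using $\widetilde{\sigma}^\vee=\overline{C}_{\rho,T}$; your main route through the support statement of Theorem~\ref{Schwartz=as+cs} is also valid, since $S_0(r)=S_T(r)$ for a torus, but it invokes far heavier machinery than needed.
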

\begin{proof} 
Combine \hyperref[Sas:f1alpha:polynomial]{Corollary \ref{Sas:f1alpha:polynomial}} and \hyperref[toric:relcpt:1]{Lemma \ref{toric:relcpt:1}}.
\end{proof}

\begin{lemma}\label{toric:relcpt:2} If $U\subseteq \overline{T}(F)$ is relatively compact, then 
\begin{align*}
    U\cap T(F)\subseteq \{x\in T(F): |\chi(x)|\leq c\text{ for all }\chi\in \mathcal{B}_0\}
\end{align*}
for some $c>0$.
\end{lemma}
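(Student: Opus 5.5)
The plan is to prove the converse of Lemma \ref{toric:relcpt:1}: on a relatively compact subset of $\overline{T}(F)$, every regular function is bounded, and this gives the required bound on the characters in $\mathcal{B}_0$.

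First, I will show that each $\chi\in\mathcal{B}_0$ has a power that extends to a regular function on $\overline{T}$. Since $\mathcal{B}_0\subseteq X^*(T)_{\mathbb{R}}$ lies in $\widetilde{\sigma}^\vee$, I will view $\chi$ inside $X^*(T_E)^{\Gal(E/F)}\subseteq X^*(T_E)$. Because $\widetilde{\sigma}$ is the image of $\sigma$ under restriction, for every $\lambda\in\sigma$ we have $\langle\chi,\lambda\rangle=\langle\chi,\tilde\lambda\rangle\ge 0$, where $\tilde\lambda$ is the image of $\lambda$. Hence $\chi\in\sigma^\vee\cap X^*(T_E)$. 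It is $\Gal(E/F)$-invariant, so it lies in $E[\sigma^\vee\cap X^*(T_E)]^{\Gal(E/F)}\otimes_F E$. If $\mathcal{B}_0$ is only rational rather than integral, I will replace $\chi$ by a positive integer multiple $n\chi$; bounding $|n\chi|$ bounds $|\chi|$, so nothing is lost. Thus $\chi$ (or $n\chi$) is a regular function on $\overline{T}_E$, invariant under Galois, and therefore descends to a regular function $\overline{T}\to\mathbb{A}^1$ over $F$ whose restriction to $T$ is $\chi$.

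Second, I will use continuity. A regular function $\overline{T}(F)\to F$ is continuous for the analytic topology, so it maps the compact closure $\overline{U}$ of $U$ to a compact subset of $F$. Hence $|\chi|$ is bounded on $\overline{U}$, say by $c_\chi$, for each $\chi\in\mathcal{B}_0$. Taking $c=\max_{\chi\in\mathcal{B}_0}c_\chi$, which is finite because $\mathcal{B}_0$ is finite, gives
\[
U\cap T(F)\subseteq\{x\in T(F): |\chi(x)|\le c\ \text{for all }\chi\in\mathcal{B}_0\}.
\]

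The only delicate point is the descent step, namely that a Galois-invariant element of $\sigma^\vee\cap X^*(T_E)$ really defines an $F$-morphism $\overline{T}\to\mathbb{A}^1$. This follows from the definition $\overline{T}=\Spec E[\sigma^\vee\cap X^*(T_E)]^{\Gal(E/F)}$: the Galois-invariant element is exactly an element of the coordinate ring of $\overline{T}$. It can be checked directly on the twisted Galois action on the coordinate ring.
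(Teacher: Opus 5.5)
Your proof is correct and follows essentially the paper's argument. In both, the key point is that each $\chi\in\mathcal{B}_0\subseteq\widetilde{\sigma}^\vee$ lies in $\sigma^\vee\cap X^*(T_E)$, so it is a regular function on the toric variety and hence bounded on relatively compact sets. The paper stays over $E$, using $|\chi|=|\mathrm{av}(\chi)|$ and boundedness on $\overline{T}(E)$, whereas you descend $\chi$ to an $F$-morphism $\overline{T}\to\mathbb{A}^1$; the difference is only cosmetic.
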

\begin{proof} Keep the notation in the proof of \hyperref[toric:relcpt:1]{Lemma \ref{toric:relcpt:1}}. Since $\overline{T}(F)$ is closed in $\overline{T}(E)$, $U\cap T(F)$ is relatively compact in $\overline{T}(E)$. Hence $|\chi|(U) = |\mathrm{av}(\chi)|(U)$ is bounded in $\mathbb{R}$ for all $\chi\in\sigma^\vee\cap X^*(T_E)$. As $\widetilde{\sigma}^\vee\subseteq\mathrm{av}(\sigma^\vee)$ under the natural inclusion $X^*(T)_\RR\subseteq X^*(T_E)_\RR$, the lemma follows.
\end{proof}

\subsection{Comparison to Luo-Ng\^o's construction} \label{subsec:LN}
Let $\rho:{}^LT\to \GL_{V_\rho}(\mathbb{C})$ be a tempered representation such that $(T,\rho)$ satisfies \eqref{G}, so there exists a distinguished character $\nu = \nu_T\in X^*(T)$. Decompose $\rho\vert_{T^\vee} = \rho_1\oplus\cdots\oplus \rho_n$ into characters. Then $[n]$ admits a $\Gal_F$-action and
\begin{align*}
    \sigma_{\rho,T} = \sum_{i=1}^n \mathbb{Z}_{\geq 0}\rho_i \subseteq X^*(T^\vee)
\end{align*}
defines a submonoid invariant under $\Gal_F$-action. By \eqref{G} $\mathbb{R}_{\geq 0}\sigma_{\rho,T}$ is a strongly convex rational cone in $X^*(T^\vee)_{\mathbb{R}} = X_*(T_{F^{\mathrm{sep}}})_{\mathbb{R}}$, stable under $\Gal_F$-action. Following the recipe in \S\ref{subsec:affinetoric}, we obtain an affine normal $T$-toric variety $\overline{T}$ over $F$.

Under the identification $X^*(T^\vee) = X_*(T_{F^{\mathrm{sep}}})$, the characters $\rho_1,\ldots,\rho_n$ define a morphism $\mathbb{G}_{mF^{\mathrm{sep}}}^n\to T_{F^{\mathrm{sep}}}$ which extends to a morphism of monoids $\mathbb{A}_{F^{\mathrm{sep}}}^n\to \overline{T}_{F^{\mathrm{sep}}}$. By Galois descent, we have a commutative diagram
\begin{center}
    \begin{tikzcd}
        D^\rho\arrow[r]\arrow[d,"\rho_T"]&\mathbb{A}^\rho\arrow[d,"\rho_{\overline{T}}"]\\
        T\arrow[r]&\overline{T}.
    \end{tikzcd}
\end{center}
Let $S_1,\ldots,S_k\subseteq [n]$ denote the Galois orbits. For each $i\in[k],$ let $E_i/F$ denote the finite separable extension corresponding to the orbit $S_i$. Then
\begin{align*}
    D^\rho \cong \prod_{i=1}^k \mathrm{Res}_{E_i/F}\mathbb{G}_{mE_i},\qquad \mathbb{A}^\rho \cong \prod_{i=1}^k \mathrm{Res}_{E_i/F}\mathbb{A}^1_{E_i}.
\end{align*}
Let 
\begin{align}\label{eq:pii}
\pi_i:\mathrm{Res}_{E_i/F}\mathbb{G}_{mE_i}\longrightarrow D^\rho\xrightarrow{\,\,\rho_T\,\,}T.
\end{align}
\begin{lemma}\label{etaleTate:local:factr:equal} 
We have identities of functions in $\chi\in \Hom_{\mathbf{TopGp}}(T(F),\mathbb{C}^\times)$
\begin{align*}
    L(0,\chi,\rho) &= \prod_{i=1}^k L(0,\chi\circ \pi_i),\\
    \gamma(0,\chi,\rho,\psi) &= \prod_{i=1}^k \gamma(0,\chi\circ \pi_i,\psi),
\end{align*}
where the local factors on the right are the $\GL_1$ local factors of Tate.
\end{lemma}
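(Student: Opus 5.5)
The plan is to reduce both identities to statements about Weil group representations, using the functoriality of the local Langlands correspondence for tori (\eqref{LLC:LCFT}) together with inductivity of Artin--Weil $L$- and $\gamma$-factors. Let $\phi_\chi: W_F\to {}^LT$ be the parameter of $\chi$. By definition \eqref{eq:Lgamma:LL}, $L(0,\chi,\rho)=L(0,\rho\circ\phi_\chi)$ and $\gamma(0,\chi,\rho,\psi)=\gamma(0,\rho\circ\phi_\chi,\psi)$.

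The first step is to decompose $\rho\circ\phi_\chi$ as a representation of $W_F$. Since $\rho|_{T^\vee}=\rho_1\oplus\cdots\oplus\rho_n$, the character lines $V_{\rho_j}$ are permuted by $W_F$ according to its action on $[n]$, which factors through $\Gal_F$. Hence
\begin{align*}
\rho\circ\phi_\chi\cong\bigoplus_{i=1}^k \Ind_{W_{E_i}}^{W_F}\eta_i,
\end{align*}
where, for a fixed $j_i\in S_i$ with stabilizer $W_{E_i}$, $\eta_i$ is the character of $W_{E_i}$ acting on the line $V_{\rho_{j_i}}$.

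The second step is to identify $\eta_i$ via class field theory with the character $\chi\circ\pi_i$ of $E_i^\times=(\mathrm{Res}_{E_i/F}\mathbb{G}_m)(F)$. The homomorphism $\pi_i:\mathrm{Res}_{E_i/F}\mathbb{G}_{mE_i}\to T$ corresponds to the $\Gal_F$-equivariant map of cocharacter lattices determined by $\rho_{j_i}\in X^*(T^\vee)=X_*(T_{F^{\mathrm{sep}}})$. Dually, it gives an $L$-homomorphism ${}^LT\to {}^L(\mathrm{Res}_{E_i/F}\mathbb{G}_m)$. By Shapiro's lemma, parameters of $\mathrm{Res}_{E_i/F}\mathbb{G}_m$ are characters of $W_{E_i}$, and the composite of $\phi_\chi$ with this $L$-map is exactly $\eta_i$. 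Functoriality of Langlands' correspondence for tori \cite{Langlands:tori} under morphisms of tori then gives that $\eta_i$ corresponds to $\chi\circ\pi_i$ under local class field theory for $E_i$.

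The third step combines these. Additivity of $L$- and $\gamma$-factors in direct sums reduces the claim to the individual summands. Inductivity in degree zero gives $L(s,\Ind_{W_{E_i}}^{W_F}\eta_i)=L(s,\eta_i)$ and $\gamma(s,\Ind\eta_i,\psi)=\lambda(E_i/F,\psi)\,\gamma(s,\eta_i,\psi\circ\tr_{E_i/F})$. Finally, Tate's thesis identifies $L(s,\eta_i)$ and $\gamma(s,\eta_i,\psi\circ\tr)$ with the $\GL_1(E_i)$ factors of $\chi\circ\pi_i$.

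The main obstacle is the $\gamma$-factor identity. Inductivity introduces the Langlands $\lambda$-constant $\lambda(E_i/F,\psi)$, and the additive character changes to $\psi\circ\tr_{E_i/F}$. I would interpret ``$\gamma(0,\chi\circ\pi_i,\psi)$'' as the Tate factor of $\mathrm{Res}_{E_i/F}\mathbb{G}_m$ with respect to $\psi$, meaning Tate's factor over $E_i$ with character $\psi\circ\tr$. I would then check the convention for $\lambda(E_i/F,\psi)$: either it is absorbed into the Weil-group $\gamma$-factor of an induced representation, or the intended statement only concerns the $\gamma$-factor of the restriction-of-scalars torus computed via Langlands' correspondence for it, in which case the identity is tautological from functoriality. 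The most delicate bookkeeping is verifying that Langlands' correspondence is compatible with restriction of scalars.
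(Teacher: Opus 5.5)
Your argument for the $L$-factors is the paper's proof. The paper also reduces to a single Galois orbit and observes that $\rho$ factors through $\pi^\vee\colon{}^LT\to{}^L\mathrm{Res}_{E/F}\mathbb{G}_{mE}$. It then uses Langlands' construction and Shapiro's lemma to get $\pi^\vee\circ\mathrm{LL}_T(\chi)=\Ind_{W_E}^{W_F}\mathrm{LL}_{\mathbb{G}_{mE}}(\chi\circ\pi)$, which is your Steps 1--2, and concludes by inductivity of Artin $L$-factors.

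There is one tacit hypothesis that you share with the paper. Your claim that $W_F$ permutes the lines $V_{\rho_j}$ (the paper's factorization through $\pi^\vee$) needs $W_F$ to permute some basis of $T^\vee$-weight vectors. This is automatic when the weights are multiplicity-free, but not in general. For example, take $\rho(z,w)=z\,\tau(w)$ on ${}^L\mathbb{G}_m=\mathbb{C}^\times\times W_F$ with $\tau$ irreducible of dimension $2$ and bounded image. Then $L(s,\chi,\rho)=1$, while the right-hand side is nonconstant on unramified $\chi$ for any choice of $\Gal_F$-action on $[2]$.

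For the $\gamma$-factors the paper only says ``similarly'', and your worry is correct. You should draw the conclusion rather than leave an either/or. Deligne's $\varepsilon$-factors are inductive only in degree zero, so
\[
\gamma(s,\Ind_{W_{E_i}}^{W_F}\eta_i,\psi)=\lambda(E_i/F,\psi)\,\gamma(s,\eta_i,\psi\circ\mathrm{Tr}_{E_i/F}),
\]
and there is nowhere to absorb $\lambda(E_i/F,\psi)$.

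Your two alternatives are really the same reading: $\gamma(0,\chi\circ\pi_i,\psi)$ means the Langlands--Deligne factor of $\Ind_{W_{E_i}}^{W_F}\eta_i$, i.e.\ of $\chi\circ\pi_i$ on the torus $\mathrm{Res}_{E_i/F}\mathbb{G}_{mE_i}$ with respect to $\psi$. In that reading your Steps 1--2 already prove the identity.

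The paper, however, uses the other reading afterwards. It builds $\mathcal{F}_{\mathbb{A}^\rho}$ from $\psi_i=\psi\circ\mathrm{Tr}_{E_i/F}$ and deduces $\mathcal{F}^{\mathrm{NL}}_\rho=|\nu_T|^{-1/2}\mathcal{F}_\rho|\nu_T|^{1/2}$, so there the right-hand side is Tate's factor on $E_i$ for $\psi_i$. In that reading the correct statement is
\[
\gamma(0,\chi,\rho,\psi)=\prod_{i=1}^k\lambda(E_i/F,\psi)\,\gamma(0,\chi\circ\pi_i,\psi\circ\mathrm{Tr}_{E_i/F}).
\]
The extra factor is a constant of absolute value $1$, independent of $\chi$.

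This constant equals $1$ if $T$ is split, or if every $E_i/F$ is unramified and $\psi$ has conductor $\mathcal{O}_F$. It is not $1$ in general. Take $E/F$ unramified quadratic and $\psi=\psi_0(\varpi\,\cdot)$ with $\psi_0$ of conductor $\mathcal{O}_F$. Then $\Ind_{W_E}^{W_F}1_E$ is $1$ plus the unramified quadratic character, and comparing Tate's $\varepsilon$-factors gives $\lambda(E/F,\psi)=-1$. Accordingly, $\mathcal{F}^{\mathrm{NL}}_\rho$ and $|\nu_T|^{-1/2}\mathcal{F}_\rho|\nu_T|^{1/2}$ agree only up to this constant.
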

\begin{proof} 
By definition $L(0,\chi,\rho) = \prod_{i=1}^k\prod_{j\in S_i} L(0,\chi,\rho_j),$
so we only need to show for each $i$ that
\begin{align*}
    L(s,\chi\circ \pi_i) = \prod_{j\in S_i} L(s,\chi,\rho_j).
\end{align*}
Thus we assume $k=1$ and drop the index $i$.

Recall notations in \S \ref{ssec:assumptions}. By the construction of $\mathrm{LL}_T$ in \cite{Langlands:tori}, one has a commutative diagram
\begin{center}
    \begin{tikzcd}
            \Temp(\GG_{mE})_\CC\arrow[d,equal]\arrow[rr,"\mathrm{LL}_{\GG_{mE}}"]&&\Phi(\GG_{mE})\arrow[d,"\sim"]\\
        \Temp(\mathrm{Res}_{E/F} \GG_{mE})_\CC\arrow[rr,"\mathrm{LL}_{\mathrm{Res}_{E/F}\GG_{mE}}"]&&\Phi(\mathrm{Res}_{E/F}\GG_{mE}).
    \end{tikzcd}
\end{center}
Here the right vertical map is given by Shapiro's Lemma. Therefore, viewing $\mathrm{LL}_{\mathrm{Res}_{E/F}\mathbb{G}_{mE}}(\chi\circ \pi): W_F\to (\mathrm{Res}_{E/F}\mathbb{G}_{mE})^\vee(\CC)\cong (\CC^\times)^{[E:F]}$ as a representation of $W_F$, we have
\begin{align*}
    \mathrm{LL}_{\mathrm{Res}_{E/F}\mathbb{G}_{mE}}(\chi\circ\pi) = \Ind_{W_{E}}^{W_F}\mathrm{LL}_{\mathbb{G}_{mE}}(\chi\circ\pi).
\end{align*}
Note that $\rho$ factors through the dual morphism $\pi^\lor=\bigoplus \rho_j:{}^LT\longrightarrow {}^L\mathrm{Res}_{E/F}\GG_{mE}$ induced by $\pi.$ Thus $\pi^\vee\circ\mathrm{LL}_{T}(\chi) = \mathrm{LL}_{\mathrm{Res}_{E/F}\mathbb{G}_{mE}}(\chi\circ\pi)$ and by the inductivity of Artin $L$-functions
\begin{align*}
     \prod_{j\in S} L(0,\chi,\rho_j)&=L(0,\pi^\vee\circ\mathrm{LL}_{T}(\chi))=L(0, \mathrm{LL}_{\mathrm{Res}_{E/F}\mathbb{G}_{mE}}(\chi\circ\pi))\\
     &=L(0,\Ind_{W_{E}}^{W_F}\mathrm{LL}_{\mathbb{G}_{mE}}(\chi\circ\pi)) = L(0,\mathrm{LL}_{\mathbb{G}_{mE}}(\chi\circ\pi)) = L(0,\chi\circ\pi).
\end{align*}
This proves the equality for $L$-factors. The identity for $\gamma$-factors can be proved similarly.
\end{proof}

Let $\mathrm{std}_{D^\rho}:{}^LD^\rho\to \GL_{n}(\CC)$ denote the standard representation. Let $\mathrm{N}_{\mathbb{A}^\rho/F}$ be the norm map. Note that $(D^\rho,\mathrm{std}_{D^\rho},\mathrm{N}_{\mathbb{A}^\rho/F})$ satisfies the assumption \eqref{G}.
 
\begin{lemma}\label{lem:Tate:etale} One has $\mathcal{S}(\mathbb{A}^\rho(F)) = \mathcal{S}_{\mathrm{std}_{D^\rho}}(D^\rho(F))$.
\end{lemma}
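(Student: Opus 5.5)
The plan is to reduce both sides to an explicit description via Mellin transforms along $D^\rho(F)=\prod_{i=1}^k E_i^\times$. Here $\mathrm{Temp}_{\Ind}(D^\rho)=\Pi_2(D^\rho)=\Pi_0(D^\rho)$ is the group of unitary characters, and $\Xi\equiv 1$. Also $\Lambda_{D^\rho}\cong(\CC^\times)^k$ with coordinates $z_i=q_{E_i}^{-s_i}$, where $s_i$ is the unramified twist in the $i$-th factor. By \eqref{LLC:LCFT} and the definition of $\mathrm{std}_{D^\rho}$ we have
\begin{align*}
L(0,\chi,\mathrm{std}_{D^\rho})=\prod_{i=1}^k L(0,\chi_i),
\end{align*}
a product of Tate factors. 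Each factor equals $(1-z_i)^{-1}$ if $\chi_i$ is unramified, after absorbing the unitary unramified part into $z_i$, and equals $1$ otherwise. Fix $f$ that is invariant under an open subgroup $K_0\le D^\rho(\calO_F)=\prod_i\calO_{E_i}^\times$. Decomposing $f$ according to the finitely many characters $\eta=(\eta_i)_i$ of $\prod_i\calO_{E_i}^\times/K_0$, it suffices to treat $f$ transforming by a single $\eta$ on units. Such an $f$ is determined by the coefficients $a_n=f(\varpi^n)$ for $n=(n_1,\ldots,n_k)\in\ZZ^k$, where $\varpi^n:=(\varpi_{E_1}^{n_1},\ldots,\varpi_{E_k}^{n_k})$. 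Its zeta integral against $\eta^{-1}\chi$, with $\chi$ unramified, is, up to a nonzero constant, the Laurent series $\sum_n a_n z^n$.

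For $\subseteq$: $\mathcal{S}(\mathbb{A}^\rho(F))$ is spanned by products $\otimes_i\phi_i$ with $\phi_i\in\mathcal{S}(E_i)$, and each $\phi_i$ restricted to $E_i^\times$ is a finite sum of functions of the forms $\eta_i\mathbf{1}_{\varpi^{m}\calO_{E_i}^\times}$ and $\mathbf{1}_{\varpi^{m}\calO_{E_i}}$. By Fubini, the zeta integral factors into one-variable Tate integrals. These converge absolutely for $\Re(\chi_i)>0$, and $C_{\rho,D^\rho}$ is precisely this product cone; it contains $|\nu|^s$ for $s>0$, so it is a positive cone and \eqref{S1} holds. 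By Tate's thesis each factor is a polynomial multiple of $L(0,\chi_i)$, which gives \eqref{S2}. Finally, $|\nu|^r f$ is bounded by $\prod_i|x_i|^r$ on a set of bounded valuations. Hence it decays exponentially in $\log\|x\|$, and so $|\nu|^r f\in\mathcal{C}(D^\rho(F))$ for $r>0$.

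For $\supseteq$: let $f\in\mathcal{S}_{\mathrm{std}}(D^\rho(F))$ transform by $\eta$. First, \hyperref[Schwartz=as+cs]{Theorem \ref{Schwartz=as+cs}}, or directly \hyperref[Sas:f1alpha:polynomial]{Corollary \ref{Sas:f1alpha:polynomial}} together with \hyperref[Snur:HCSchwartz]{Proposition \ref{Snur:HCSchwartz}}, gives $\supp f\subseteq\{|x_i|\le q^{r}\ \forall i\}$. Thus $a_n=0$ unless every $n_i\ge -r'$ for some $r'$, and $\sum_n a_nz^n$ is a formal power series in $z$ up to a monomial shift; it converges on the cone from \eqref{S1}. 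By \eqref{S2} it equals $P(z)\prod_{i\in U}(1-z_i)^{-1}$, where $P$ is a Laurent polynomial and $U$ is the set of indices with $\eta_i$ unramified. Expanding each $(1-z_i)^{-1}=\sum_{m\ge0}z_i^m$ shows that $a$ is a finite linear combination of sequences of the form $\prod_{i\in U}\mathbf{1}_{n_i\ge m_i}\prod_{i\notin U}\mathbf{1}_{n_i=m_i}$. Hence $f$ is a finite combination of products $\prod_{i\in U}\mathbf{1}_{\varpi^{m_i}\calO_{E_i}}\prod_{i\notin U}\eta_i\mathbf{1}_{\varpi^{m_i}\calO_{E_i}^\times}$; in the unramified factors, $\eta_i$ extends trivially to the lattice once it is absorbed into $z_i$. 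Each such product lies in $\mathcal{S}(\mathbb{A}^\rho(F))$.

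The main obstacle I expect is bookkeeping rather than analysis. One point is identifying the unramified-character variety $\Lambda_{D^\rho}$ and the polynomial ring $\CC[\Lambda_{D^\rho}]$ with Laurent polynomials in the $z_i$. The ramification degrees of the $E_i$ make $\Omega_{D^\rho}$ a lattice in $\Hom_\ZZ(X^*(D^\rho),\ZZ)$ rather than the standard one, and this has to be reconciled with the coordinates $z_i$. The other point is justifying the uniqueness of the Laurent expansion on the positive cone, so that the identity between the convergent series and $P(z)\prod_{i\in U}(1-z_i)^{-1}$ can legitimately be compared coefficient by coefficient.
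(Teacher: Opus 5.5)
Your proposal is correct and follows the paper's argument. The paper identifies $L(0,\cdot,\mathrm{std}_{D^\rho})$ with a product of Tate factors, as in Lemma \ref{etaleTate:local:factr:equal}, and then cites Tate's thesis extended to \'etale algebras; you carry out the same argument explicitly, proving the converse inclusion by comparing Laurent coefficients. The support bound you import from Corollary \ref{Sas:f1alpha:polynomial} is harmless but unnecessary: uniqueness of Laurent coefficients on a single polycircle $|z_i|=r_i<1$ inside the region of absolute convergence from \eqref{S1} already determines the $a_n$.
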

\begin{proof} By \hyperref[etaleTate:local:factr:equal]{Lemma \ref{etaleTate:local:factr:equal}}, one has $L(0,\chi,\rho) = L(0,\chi\circ\rho_T,\mathrm{std}_{D^\rho})$ for $\chi\in\Hom_{\textbf{TopGp}}(T(F),\CC^\times)$. The lemma follows from Tate thesis (c.f. \cite[\S1.5]{Igusa}). Strictly speaking, both references only deal with the case when $\mathbb{A}^\rho$ is a field, but the argument easily generalizes to the case of an et\'ale algebra.
\end{proof}

 Consider the extension
\begin{center}
    \begin{tikzcd}
    D^\rho\times T\arrow[r]\arrow[d,"\rho_T'"]&\mathbb{A}^\rho\times T\arrow[d,"\rho_{\overline{T}}'"]\\
        T\arrow[r]&\overline{T}
    \end{tikzcd}
\end{center}
where $\rho_T'(t,a):=\rho_T(t)a$ and similarly for $\rho'_{\overline{T}}.$ Let $U:=\ker\rho_T' \subseteq D^\rho\times T$. Let $du$ denote the induced Haar measure on $U(F),$ and define 
\begin{center}
    \begin{tikzcd}
        (\rho_{T}')_!:\mathcal{S}(\mathbb{A}^\rho(F)\times T(F))\arrow[r]& C^\infty(T(F))
    \end{tikzcd}
\end{center}
by the absolutely convergent integral
\begin{align*}
    (\rho_{T}')_!f(t) := \int_{U(F)} f(t'u) du
\end{align*}
for $t\in T(F)$ where $t'\in \rho_T'^{-1}(t)$. Note that we have
\begin{align*}
     \int_{T(F)}(\rho_{T}')_!f(t)dt=\int_{(D^\rho\times T)(F)} f(t)dt,
\end{align*}
whenever the integral on the right converges absolutely. Set $\mathcal{S}^{\mathrm{NL}}_\rho(T(F)) := (\rho_T')_!\mathcal{S}(\mathbb{A}^\rho(F)\times T(F)).$

\begin{lemma}\label{SNL:in:Srho} With the setting as above, one has $
\mathcal{S}^{\mathrm{NL}}_\rho(T(F))\subseteq\mathcal{S}_\rho(T(F)).$
\end{lemma}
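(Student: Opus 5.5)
The plan is to check the definition of $\mathcal{S}_\rho(T(F))$ directly. For a torus we have $\widetilde{\Temp}_\Ind(T)=\Pi_2(T)=\Temp(T)$, the unitary characters $\chi$, and the zeta integral against $f$ is $\int_{T(F)} f(t)\chi(t)\,dt$. We must show three things for $f=(\rho_T')_!\Phi$ with $\Phi\in\mathcal{S}(\mathbb{A}^\rho(F)\times T(F))$: \eqref{S1}, \eqref{S2}, and $|\nu|^r f\in\mathcal{C}(T(F))$ for $r$ large. Smoothness and $T(F)^1$-finiteness of $f$ are immediate, since $\Phi$ is locally constant and compactly supported.

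\textbf{Convergence (\eqref{S1}).} By the integration identity stated above, $\int_{T(F)}|f|\,|\chi|\,dt\le\int_{(D^\rho\times T)(F)}|\Phi(d,a)|\,|\chi(\rho_T(d)a)|\,dd\,da$. The factor coming from $a$ ranges over a compact set, so only the $D^\rho(F)$ integral matters. That integral is a Tate integral of a Schwartz function on $\mathbb{A}^\rho(F)=\prod_i E_i$ against the character $\chi\circ\rho_T=\prod_i\chi\circ\pi_i$. It converges once each $|\chi\circ\pi_i|$ has positive exponent. By the same computation as in Lemma~\ref{etaleTate:local:factr:equal}, this condition is exactly $\Re\chi\in C_{\rho,T}$, which is a positive cone. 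The same estimate with $\chi=|\nu|^r$ unitary-twisted gives $\sum_\alpha\sup|f\mathbf 1_\alpha||\nu|^r(\alpha)<\infty$. Because $T(F)^1$ is compact and $\Xi_T\equiv1$, we have $\mathcal{C}(T(F))$ equal to the functions of rapid decay in $H_T$. Rapid decay of $|\nu|^rf$ then follows from two facts: support in $S_T(r_0)$, obtained from the support of $\Phi$ through Lemma~\ref{toric:relcpt:2} via the image in $\overline{T}(F)$, and exponential decay in the interior of the cone from the Tate integral bound, which we apply with slightly larger $r$.

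\textbf{Meromorphy and divisibility (\eqref{S2}).} Reduce to pure tensors $\Phi=\phi\otimes\varphi$ with $\phi\in\mathcal{S}(\mathbb{A}^\rho(F))$ and $\varphi\in C_c^\infty(T(F))$. Fubini then gives $Z(\chi,f)=Z_{\mathrm{Tate}}(\chi\circ\rho_T,\phi)\cdot\int_T\varphi\chi$. The second factor lies in $\CC[\Lambda_T]$. By Lemma~\ref{lem:Tate:etale}, Tate's thesis on $\mathbb{A}^\rho$ shows that the first factor is $L(0,\chi\circ\rho_T,\mathrm{std}_{D^\rho})$ times a Laurent polynomial in $\chi\circ\rho_T$. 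Via Lemma~\ref{etaleTate:local:factr:equal} this equals $L(0,\chi,\rho)$ times a polynomial pulled back along $\rho_T$, hence an element of $\CC[\Lambda_T]$.

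\textbf{Main obstacle.} The hardest step is the bookkeeping that the polynomial in $\Lambda_{D^\rho}$ restricts to an honest element of $\CC[\Lambda_T]$, and that the $L$-factors match exactly. This means tracking the map $\Lambda_T\to\Lambda_{D^\rho}$, $\chi\mapsto\chi\circ\rho_T$, on unramified parts. If $\rho_T$ is not surjective on $F$-points, ramified characters of $D^\rho$ may restrict oddly. I would handle this by working per $\Im\Lambda_T$-orbit, using the identity $L(0,\chi,\rho)=\prod_iL(0,\chi\circ\pi_i)$ character by character.
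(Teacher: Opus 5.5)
Your proposal is correct and follows essentially the paper's route. Fubini splits the zeta integral of $(\rho_T')_!(\phi\otimes\varphi)$ into a Tate integral over $D^\rho(F)$ against $\chi\circ\rho_T$ times the integral of a compactly supported function over $T(F)$. Convergence on a positive cone comes from $|\chi\circ\pi_i(t)|=|t|_{E_i}^{\sum_{j\in S_i}\langle\chi,\rho_j\rangle}$, and divisibility by $L(0,\chi,\rho)$ comes from Tate's thesis together with Lemma~\ref{etaleTate:local:factr:equal}.

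Your explicit check that $|\nu|^rf\in\mathcal{C}(T(F))$ usefully supplies a condition that the paper's proof leaves implicit. The ``obstacle'' you flag is not a real one: you only ever pull characters back along $\rho_T$, and $\chi\mapsto\chi\circ\rho_T$ is always an algebraic morphism $\Lambda_T\to\Lambda_{D^\rho}$, with the unitary part $\sigma\circ\rho_T$ simply a fixed character of $D^\rho(F)$.
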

\begin{proof} 
Let $\phi\otimes \phi'\in \mathcal{S}(\mathbb{A}^\rho(F))\otimes C^\infty_c(T(F))$ and $\chi\in\Hom_{\mathbf{TopGp}}(T(F),\mathbb{C}^\times).$ One formally has
\begin{align}\label{etaleTate:zeta:decomp}
    \int_{T(F)} (\rho_T')_!(\phi \otimes \phi')(t) \chi(t) dt = \int_{D^\rho(F)} \phi(t)(\chi\circ\rho_T)(t)dt \times \int_{T(F)} \phi'(t)\chi(t) dt.
\end{align}
The latter integral on the right converges absolutely for any $\phi'$ and $\chi$. We claim there exists $r>0$ such that the integral over $D^\rho(F)$ is absolutely convergent for $\chi\in |\nu_T|^rC_{\rho,T}$.

Assume $\phi=\otimes\phi_i\in\mathcal{S}(\mathbb{A}^\rho(F)) = \bigotimes\mathcal{S}(E_i)$. Then
\begin{align}\label{etaleTate:puretensor}
    \int_{D^\rho(F)} \phi(t)(\chi\circ\rho_T)(t) dt = \prod_{i=1}^k \int_{E_i^\times} \phi_i(t)(\chi\circ \pi_i)(t) d^\times t,
\end{align}
where $\pi_i$ is defined in \eqref{eq:pii}. Observe that for $t\in E_i^\times,$ 
\begin{align*}
    |\chi\circ \pi_i (t)| = |t|_{E_i}^{\sum_{j\in S_i}\langle\chi,\rho_j\rangle}.
\end{align*}
For $\chi = |\nu_T|^r\chi'$ with $\chi'\in C_{\rho,T}$, one has
\begin{align*}
    \int_{E_i^\times} |\phi_i(t)||(\chi\circ \pi_i)(t)| d^\times t = \int_{E_i^\times} \phi_i(t) |t|_{E_i}^{\sum_{j\in S_i}\langle r\nu_T + \chi',\rho_j\rangle} d^\times t = \int_{E_i^\times} \phi_i(t) |t|_{E_i}^{r + \sum_{j\in S_i}\langle\chi',\rho_j\rangle} d^\times t.
\end{align*}
Since $\langle\chi',\rho_j\rangle\geq 0$, this integral is convergent when $r>0$ by Tate's thesis. Hence $\mathcal{S}^{\mathrm{NL}}_\rho(T(F))\subseteq \mathcal{S}^\infty(T(F))$.

To finish the proof, by \eqref{etaleTate:puretensor} and Tate's thesis, the expression
\begin{align*}
    \chi\mapsto \dfrac{\displaystyle\int_{D^\rho(F)} \phi(t)(\chi\circ\rho_T)(t) dt}{\prod\limits_{i=1}^k L(0,\chi\circ \pi_i)}\times \int_{T(F)} \phi'(t)\chi(t) dt
\end{align*}
extends to polynomial in $\chi\in\Lambda_T$. The proof is concluded by \hyperref[etaleTate:local:factr:equal]{Lemma \ref{etaleTate:local:factr:equal}}. 
\end{proof}

\begin{proposition}\label{NL=Srho} One has $\mathcal{S}^{\mathrm{NL}}_\rho(T(F))=\mathcal{S}_\rho(T(F)).$
\end{proposition}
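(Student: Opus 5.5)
The inclusion $\mathcal{S}^{\mathrm{NL}}_\rho(T(F))\subseteq\mathcal{S}_\rho(T(F))$ is \hyperref[SNL:in:Srho]{Lemma \ref{SNL:in:Srho}}, so the plan is to prove the reverse inclusion spectrally. Since $T$ is a torus, every $\pi\in\Temp_\Ind(T)$ is a unitary character, $\HP$ is the Fourier transform on the abelian group $T(F)$, and each $\Theta^{K_0}$ is a finite union of $\Im\Lambda_T$-orbits $\chi_0\Im\Lambda_T$. Fix $f\in\mathcal{S}_\rho(T(F))$, bi-invariant under a compact open $K_0\le T(F)^1$. By \eqref{S2}, for each of the finitely many characters $\chi_0$ of $T(F)^1/K_0$ (extended to $T(F)$), the zeta integral has the form $Z(\chi_0\chi,f)=L(0,\chi_0\chi,\rho)\,p_{\chi_0}(\chi)$ with $p_{\chi_0}\in\CC[\Lambda_T]$. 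It therefore suffices to produce, for each such $\chi_0$, a function $h_{\chi_0}\in\mathcal{S}^{\mathrm{NL}}_\rho(T(F))$ whose zeta integral is $L(0,\chi_0\chi,\rho)$ on the orbit of $\chi_0$ and $0$ on the other orbits in $\Theta^{K_0}$.

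Granting this, $\mathcal{S}^{\mathrm{NL}}_\rho(T(F))$ is a $T(F)$-module (translating $\phi\otimes\phi'$ in the $T$-factor), and translation by $t$ multiplies the zeta integral by $\chi_0\chi(t)$. These functions span $\CC[\Lambda_T]$ times the orbit restriction, so finite combinations $\sum_j c_j R(t_j)h_{\chi_0}$ realize $L(0,\chi_0\chi,\rho)p_{\chi_0}(\chi)$. Summing over $\chi_0$ gives $f'\in\mathcal{S}^{\mathrm{NL}}_\rho(T(F))$ with the same zeta integrals as $f$ on $\Im\Lambda_T$-orbits, both being absolutely convergent on $\Im\Lambda_T$ after shifting by $|\nu|^r$, as in \hyperref[Snur:HCSchwartz]{Proposition \ref{Snur:HCSchwartz}}. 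Injectivity of the Fourier transform on $T(F)$, applied to $|\nu|^rf-|\nu|^rf'\in\mathcal{C}(T(F))$ (using \hyperref[Srho:in:Sas]{Corollary \ref{Srho:in:Sas}}), then gives $f=f'$.

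To construct $h_{\chi_0}$, take $\phi=\otimes_i\phi_i$ and $\phi'\in C_c^\infty(T(F))$, and use \eqref{etaleTate:zeta:decomp} together with \eqref{etaleTate:puretensor} and \hyperref[etaleTate:local:factr:equal]{Lemma \ref{etaleTate:local:factr:equal}}. By Tate's thesis, choosing $\phi_i$ as $\chi_0\circ\pi_i$-adapted functions (for example $\mathbf{1}_{\mathcal{O}_{E_i}}$ twisted by $\overline{\chi_0\circ\pi_i}$ on units, or a unit-supported function when $\chi_0\circ\pi_i$ is ramified) makes the $D^\rho$-factor equal to $L(0,\chi_0\chi,\rho)$ times a unit, for all $\chi$ in the orbit. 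Then choose $\phi'=\overline{\chi_0}\mathbf{1}_{K_1}$ for a small compact open $K_1$, which isolates the orbit of $\chi_0$ modulo a finite group. The main obstacle is the compatibility here: $\chi\circ\rho_T$ on $D^\rho(F)$ does not see all characters of $T(F)^1/K_0$ independently, since $\rho_T$ need not be surjective on $F$-points. This means the $D^\rho$-factor alone cannot separate orbits, and the separation has to come from $\phi'$. I would handle this by averaging $\phi'$ over the finite group $T(F)^1/K_0$ against $\overline{\chi_0}$, which projects onto the $\chi_0$-isotypic part, and then check that the $D^\rho$-factor is nonvanishing up to units on that orbit. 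If this fails at some point, I would fall back on the ideal argument of \hyperref[lem:naivegen]{Lemma \ref{lem:naivegen}}: the zeta integrals of $\mathcal{S}^{\mathrm{NL}}_\rho$ contain all translates, so they form an ideal of $\CC[\Lambda_T]$ times $L$, and one shows it has no common zero using the choice of units above.
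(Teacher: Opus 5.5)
Your proposal is correct, but it takes a more hands-on route than the paper.

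\textbf{The flagged obstacle.} The compatibility issue you worry about does not actually arise, so the fallback via \hyperref[lem:naivegen]{Lemma \ref{lem:naivegen}} is unnecessary. Take any $\chi$ in the $\chi_0$-orbit. Then $\chi\circ\pi_i$ and $\chi_0\circ\pi_i$ agree on $\mathcal{O}_{E_i}^\times$, because $\pi_i(\mathcal{O}_{E_i}^\times)$ is compact and hence lies in $T(F)^1$, where $\Lambda_T$ is trivial. Choose $\phi_i=\mathbf{1}_{\mathcal{O}_{E_i}}$ when $\chi_0\circ\pi_i$ is unramified, and $\phi_i=\overline{\chi_0\circ\pi_i}\,\mathbf{1}_{\mathcal{O}_{E_i}^\times}$ when it is ramified. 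With these choices the $D^\rho$-factor is a nonzero constant times $L(0,\chi,\rho)$ on the whole orbit. For the $T$-factor, take $\phi'=\overline{\chi_0}\mathbf{1}_{T(F)^1}$ rather than $\overline{\chi_0}\mathbf{1}_{K_1}$ with $K_1$ small. This cuts out exactly the $\chi_0$-orbit. The rest of your argument then goes through as written:
\begin{itemize}
\item translates give all polynomial multiples on the orbit;
\item summing over the finitely many $\chi_0$ gives $f'$;
\item \hyperref[SNL:in:Srho]{Lemma \ref{SNL:in:Srho}} and \hyperref[Srho:in:Sas]{Corollary \ref{Srho:in:Sas}} make $|\nu|^r(f-f')\in\mathcal{C}(T(F))$;
\item Fourier injectivity on $T(F)$ finishes.
\end{itemize}

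\textbf{Comparison with the paper.} The paper avoids orbit-by-orbit bookkeeping entirely. Its steps are:
\begin{itemize}
\item By Paley--Wiener on the torus, write $Z(f,\chi)=L(0,\chi,\rho)Z(h,\chi)$ with $h\in C_c^\infty(T(F))$.
\item Lift $h$ to some $h'\in C_c^\infty(D^\rho(F)\times T(F))$ with $(\rho_T')_!h'=h$.
\item Use Tate's thesis, in the form of \hyperref[lem:Tate:etale]{Lemma \ref{lem:Tate:etale}}, to get $f'\in\mathcal{S}(\mathbb{A}^\rho(F)\times T(F))$ with $Z(f',\chi')=L(0,\chi'|_{D^\rho(F)},\mathrm{std}_{D^\rho})Z(h',\chi')$.
\item Push down using \eqref{etaleTate:zeta:decomp} and \hyperref[etaleTate:local:factr:equal]{Lemma \ref{etaleTate:local:factr:equal}}.
\end{itemize}
Since the $T(F)$-factor of $h'$ carries all of $h$, the fact that $\chi\circ\rho_T$ cannot separate orbits never enters. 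Your approach in effect reproves by hand the surjectivity statement from Tate's thesis that the paper uses as a black box. You do this with explicit unramified and ramified test functions, plus the $T(F)$-module structure of $\mathcal{S}^{\mathrm{NL}}_\rho(T(F))$ to generate all polynomial multiples. That makes your proof more elementary and explicit. It is longer, and it needs the orbit decomposition and the isotypic projection, which the paper's lifting trick renders unnecessary.
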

\begin{proof} Let $f\in \mathcal{S}_\rho(T(F))$. By definition, we can find $h\in C_c^\infty(T(F))$ such that
\begin{align*}
    Z(f,\chi) = L(0,\chi,\rho)Z(h,\chi) 
\end{align*}
for all $\chi\in\Hom_{\textbf{TopGp}}(T(F),\CC^\times)$. Choose $h'\in C_c^\infty(D^\rho(F) \times T(F))$ such that $(\rho_T')_!h' = h$. By \hyperref[lem:Tate:etale]{Lemma \ref{lem:Tate:etale}}, there exists $f'\in \mathcal{S}(\mathbb{A}^\rho(F)\times T(F))$ such that
\begin{align*}
    Z(f',\chi') = L(0,\chi'\vert_{D^\rho(F)},\mathrm{std}_{D^\rho})Z(h',\chi')
\end{align*}
for all $\chi'\in \Hom_{\textbf{TopGp}}(D^\rho(F)\times T(F),\CC^\times)$. A repeated use of \eqref{etaleTate:zeta:decomp} shows for all $\chi\in\Hom_{\textbf{TopGp}}(T(F),\CC^\times)$ that
\begin{align*}
    Z((\rho_T')_!f',\chi) = Z(f',\chi\circ \rho_T') &=  L(0,(\chi\circ \rho_T')\vert_{D^\rho(F)},\mathrm{std}_{D^\rho})Z(h',\chi\circ \rho_T')\\
    &= L(0,\chi\circ \rho_T,\mathrm{std}_{D^\rho})Z((\rho_T')_!h',\chi)= L(0,\chi,\rho)Z(h,\chi) = Z(f,\chi).
\end{align*}
Hence $(\rho_T')_!f' = f$. This finishes the proof.
\end{proof}

View $C^\infty(\overline{T}(F))$ as a subspace of $C^\infty(T(F))$ by restriction.
\begin{theorem}\label{thm:localT}
$\mathcal{S}_\rho(T(F))$ is a $C^\infty(\overline{T}(F))$-submodule under the usual function multiplication.
\end{theorem}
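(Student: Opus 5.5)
The approach is to use Proposition \ref{NL=Srho}, which identifies $\mathcal{S}_\rho(T(F))$ with $\mathcal{S}^{\mathrm{NL}}_\rho(T(F))=(\rho_T')_!\mathcal{S}(\mathbb{A}^\rho(F)\times T(F))$. This turns the statement into a question about pushforwards of ordinary Schwartz functions. Let $\phi\in C^\infty(\overline{T}(F))$ and $f\in\mathcal{S}_\rho(T(F))$, and write $f=(\rho_T')_!F$ with $F\in\mathcal{S}(\mathbb{A}^\rho(F)\times T(F))$. Since $\rho'_{\overline{T}}:\mathbb{A}^\rho\times T\to\overline{T}$ is a morphism of $F$-varieties extending $\rho'_T$, the pullback $\phi\circ\rho'_{\overline{T}}$ is a locally constant function on $\mathbb{A}^\rho(F)\times T(F)$. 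It is invariant under $U(F)$, because $\rho'_{\overline{T}}(xu)=\rho'_{\overline{T}}(x)$ for $u\in U(F)=\ker\rho'_T$; this follows from $\rho'_{\overline{T}}$ being a monoid morphism extending $\rho'_T$. Hence, formally,
\begin{align*}
    (\rho_T')_!\big((\phi\circ\rho'_{\overline{T}})F\big)(t)=\int_{U(F)}\phi(\rho'_T(t'u))F(t'u)\,du=\phi(t)\,(\rho_T')_!F(t)=\phi(t)f(t).
\end{align*}
So it suffices to show that $(\phi\circ\rho'_{\overline{T}})F$ again lies in $\mathcal{S}(\mathbb{A}^\rho(F)\times T(F))$.

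The key step is that $\mathcal{S}(\mathbb{A}^\rho(F)\times T(F))=C^\infty_c(\mathbb{A}^\rho(F)\times T(F))$ in the non-Archimedean setting. The product of a compactly supported locally constant function with a locally constant function is again compactly supported and locally constant. The only care needed is that $\phi\circ\rho'_{\overline{T}}$ is defined and locally constant on all of $\mathbb{A}^\rho(F)\times T(F)$, including the boundary where some coordinates of $\mathbb{A}^\rho$ vanish. This holds because $\rho'_{\overline{T}}$ is continuous on $F$-points, so the pullback of a locally constant function is locally constant. I would check that the extension $\mathbb{A}^\rho\to\overline{T}$ from \S\ref{subsec:LN} descends to $F$ as a morphism, which the Galois-descent construction there provides.

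The main obstacle is the bookkeeping of the identity $(\rho'_T)_!((\phi\circ\rho'_{\overline{T}})F)=\phi\,(\rho'_T)_!F$. The integral defining $(\rho'_T)_!$ is over $U(F)$-translates of a point $t'\in\rho_T'^{-1}(t)\subseteq (D^\rho\times T)(F)$, which lie in the open torus where $\rho'_{\overline{T}}=\rho'_T$. The $U(F)$-invariance is therefore immediate there, and the boundary of $\mathbb{A}^\rho$ has measure zero and does not enter the integral. One also has to handle surjectivity issues for $\rho'_T$ on $F$-points ($t$ outside the image gives $0$ on both sides). Absolute convergence is inherited from that of $(\rho'_T)_!F$, since $\phi\circ\rho'_{\overline{T}}$ is bounded on the compact support of $F$. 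Combining these with Proposition \ref{NL=Srho} shows $\phi f\in\mathcal{S}_\rho(T(F))$.
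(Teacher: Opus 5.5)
Your proposal is correct and follows the paper's proof: you write $f=(\rho_T')_!F$ via Proposition \ref{NL=Srho}, note that $F\cdot(\phi\circ\rho'_{\overline{T}})$ is still in $\mathcal{S}(\mathbb{A}^\rho(F)\times T(F))$, and push it forward to get $\phi f$. Your added checks (the $U(F)$-invariance of $\phi\circ\rho'_{\overline{T}}$ and that $\mathcal{S}=C_c^\infty$ is stable under multiplication by locally constant functions) are exactly what the paper's short argument uses implicitly.
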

\begin{proof} Let $h\in C^\infty(\overline{T}(F))$ and  $f\in \mathcal{S}_\rho(T(F))$. By \hyperref[NL=Srho]{Proposition \ref{NL=Srho}} there exists $f'\in \mathcal{S}(\mathbb{A}^\rho(F)\times T(F))$ with $(\rho_T')_!f' = f$. Since $h\circ \rho_{\overline{T}}'\in C^\infty(\mathbb{A}^\rho(F)\times T(F))$, we have $f'\cdot (h\circ\rho_{\overline{T}}')\in\mathcal{S}(\mathbb{A}^\rho(F)\times T(F))$. Therefore,
\begin{align*}
    fh = (\rho_T')_!(f'\cdot (h\circ\rho_{\overline{T}}')) \in \mathcal{S}_\rho(T(F)).
\end{align*}
This finishes the proof.
\end{proof}


Fix a nontrivial additive character $\psi:F\to \mathbb{C}^\times$, and let $\psi_{\mathbb{A}^\rho} = \prod\limits_{i=1}^k\psi_{i}$, where $\psi_i:=\psi\circ\mathrm{Tr}_{E_i/F}$. Let $dx_i$ be the self-dual measure on $E_i$ with respect to $\psi_i$. Then the product measure $dx:=\bigotimes dx_i$ is the self-dual Haar measure on $\mathbb{A}^{\rho}(F)$ with respect to $\psi_i$. Let $\mathcal{F}_{E_i}$ and $\mathcal{F}_{\mathbb{A}^\rho}$ be the Fourier transform on $E_i$ and $\mathbb{A}^\rho(F)$ defined by $\psi_i$ and $\psi_{\mathbb{A}^\rho}$, respectively. Following \cite{Luo:Ngo}, we have a Fourier transform on $\mathcal{S}_\rho^{\mathrm{NL}}(T(F))$ by descent
\begin{equation*}
    \begin{tikzcd}
        \mathcal{S}(\mathbb{A}^\rho(F)\times T(F))\arrow[rrr,"\mathcal{F}_{\mathbb{A}^\rho}\otimes  |\nu_T|^{-1}(\cdot)^\lor"]\arrow[d]& & & \mathcal{S}(\mathbb{A}^\rho(F)\times T(F))\arrow[d]\\
       \mathcal{S}_\rho^{\mathrm{NL}}(T(F))\arrow[rrr,"\mathcal{F}_\rho^{\mathrm{NL}}"]& & &\mathcal{S}_\rho^{\mathrm{NL}}(T(F)).
    \end{tikzcd}
\end{equation*}
Here recall that for a function $f$, we set $f^\vee(x):= f(x^{-1})$. By \hyperref[Sas:FE:near0]{Corollary \ref{Sas:FE:near0}}, \hyperref[etaleTate:local:factr:equal]{Lemma \ref{etaleTate:local:factr:equal}}, \hyperref[NL=Srho]{Proposition \ref{NL=Srho}}  and the functional equations for $\mathcal{F}_\rho^{\mathrm{NL}}$ proved in \cite{Luo:Ngo}, we see that
\begin{align*}
    \mathcal{F}_\rho^{\mathrm{NL}} = |\nu_T|^{-\frac{1}{2}}\mathcal{F}_{\rho}|\nu_T|^{\frac{1}{2}}.
\end{align*}

\begin{lemma}\label{toric:basic} Suppose $T$ is unramified. One has
\begin{align*}
    (\rho'_T)_!(\mathbf{1}_{\mathbb{A}^\rho(\mathcal{O}_F)}\otimes\mathbf{1}_{T(\mathcal{O}_F)})= b_\rho|\nu_T|^{-1/2}.
\end{align*}
In particular, $b_\rho(\mathrm{id})=1.$
\end{lemma}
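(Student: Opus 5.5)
Since $T$ is a torus, $\Temp_\Ind(T)$ consists of the unitary characters of $T(F)$, and $\HP$ is the Fourier transform on the abelian group $T(F)$. The plan is therefore to compare Mellin transforms. Put $f:=(\rho'_T)_!(\mathbf{1}_{\mathbb{A}^\rho(\mathcal{O}_F)}\otimes\mathbf{1}_{T(\mathcal{O}_F)})$ and show that for every unitary character $\chi$ of $T(F)$,
\begin{align*}
    Z(\chi,f)=\int_{T(F)} f(t)\chi(t)\,dt
\end{align*}
equals $\int_{T(F)} b_\rho(t)|\nu_T|^{-1/2}(t)\chi(t)\,dt$. By the definition of $b_\rho$, the latter is $L(\tfrac12,\chi|\nu_T|^{-1/2},\rho)\,\HP(\mathbf{1}_{T(\mathcal{O}_F)})(\chi)=L(0,\chi,\rho)\,\mathbf{1}_{\chi\text{ unramified}}$, using \eqref{LLC:twist} and $\rho\circ\nu^\vee(z)=z$ to absorb the shift, together with $\mathrm{vol}(T(\mathcal{O}_F))=1$. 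Both sides are defined by absolutely convergent integrals on a positive cone ($f\in\mathcal{S}_\rho(T(F))$ by \hyperref[SNL:in:Srho]{Lemma \ref{SNL:in:Srho}}). It therefore suffices to prove the identity of rational functions on $\Lambda_T$ twisted by all ramified characters, and then invoke injectivity of $\HP$ (\hyperref[HCPlan]{Theorem \ref{HCPlan}}) together with \hyperref[Snur:HCSchwartz]{Proposition \ref{Snur:HCSchwartz}} to identify $Z$ with $Z_0$ on the unitary axis.

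For the computation, apply \eqref{etaleTate:zeta:decomp} and \eqref{etaleTate:puretensor}. This gives
\begin{align*}
    Z(\chi,f)=\prod_{i=1}^k\int_{E_i^\times}\mathbf{1}_{\mathcal{O}_{E_i}}(x)\,(\chi\circ\pi_i)(x)\,d^\times x\;\cdot\;\int_{T(\mathcal{O}_F)}\chi(t)\,dt.
\end{align*}
The last factor is $1$ if $\chi$ is trivial on $T(\mathcal{O}_F)$ and $0$ otherwise. Since $T$ is unramified, each $E_i/F$ is unramified, and $\pi_i$ comes from an $\mathcal{O}_F$-morphism of smooth models, so $\pi_i(\mathcal{O}_{E_i}^\times)\subseteq T(\mathcal{O}_F)$. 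Hence for unramified $\chi$ each character $\chi\circ\pi_i$ is unramified, and Tate's local computation (with the measure normalized so that $\mathrm{vol}(\mathcal{O}_{E_i}^\times)=1$, which matches $\mathrm{vol}(K_H)=1$ and the induced measure on $U(F)$) gives $L(0,\chi\circ\pi_i)$. By \hyperref[etaleTate:local:factr:equal]{Lemma \ref{etaleTate:local:factr:equal}}, the product is then $L(0,\chi,\rho)$.

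The main obstacle is the measure normalization on $U(F)=\ker\rho'_T$. One must check that the induced measure $du$ is compatible with giving $D^\rho(\mathcal{O}_F)\times T(\mathcal{O}_F)$ and $T(\mathcal{O}_F)$ volume one, which amounts to showing that $\rho'_T$ maps $D^\rho(\mathcal{O}_F)\times T(\mathcal{O}_F)$ onto $T(\mathcal{O}_F)$ with fibers $U(\mathcal{O}_F)$ of volume one. I would handle this by noting that $\rho'_T$ is a split surjection of tori (via the second factor), so $U\cong D^\rho$ over $\mathcal{O}_F$ and surjectivity on integral points is automatic. This compatibility is also what the integration identity $\int_T(\rho'_T)_!f=\int f$ encodes.

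Finally, $b_\rho(\mathrm{id})=f(\mathrm{id})$. I would compute this directly as the volume of the set of $u\in U(F)$ with $u\in\mathbb{A}^\rho(\mathcal{O}_F)\times T(\mathcal{O}_F)$. Writing $u=(d,\rho_T(d)^{-1})$, the requirement $\rho_T(d)^{-1}\in T(\mathcal{O}_F)$ together with $d\in\mathbb{A}^\rho(\mathcal{O}_F)$ forces every $|\chi(\rho_T(d))|=1$. Strong convexity of the cone (assumption \eqref{G}, via the $\nu_T$-grading where each coordinate has degree $1$) then forces $d\in D^\rho(\mathcal{O}_F)$, so the value is $\mathrm{vol}(D^\rho(\mathcal{O}_F))=1$.
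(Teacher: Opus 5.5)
Your proposal is correct and follows the same route as the paper. You factor the Mellin transform via \eqref{etaleTate:zeta:decomp}, apply Tate's unramified computation together with Lemma~\ref{etaleTate:local:factr:equal} to get $L(0,\chi,\rho)$ for unramified $\chi$ and $0$ otherwise, conclude from the definition of $b_\rho$, and obtain $b_\rho(\mathrm{id})$ as the volume of the fibre over the identity. Your $\nu_T$-grading argument, that $d\in\mathbb{A}^\rho(\mathcal{O}_F)$ with $\rho_T(d)\in T(\mathcal{O}_F)$ forces $d\in D^\rho(\mathcal{O}_F)$, is the precise form of the paper's assertion $\rho_T^{-1}(T(\mathcal{O}_F))=D^\rho(\mathcal{O}_F)$. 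Taken literally, that assertion can fail, for instance for $\rho=\mathrm{std}\oplus\mathrm{std}$ on $\mathbb{G}_m$.
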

\begin{proof} 
Since $\mathbf{1}_{\mathbb{A}^\rho(\mathcal{O}_F)}\otimes\mathbf{1}_{T(\mathcal{O}_F)}$ is $D^\rho(\mathcal{O}_F)\times T(\mathcal{O}_F)$-invariant, $(\rho'_T)_!(\mathbf{1}_{\mathbb{A}^\rho(\mathcal{O}_F)}\otimes\mathbf{1}_{T(\mathcal{O}_F)})$ is $T(\mathcal{O}_F)$-invariant. Then
\begin{align*}
    \int_{T(F)} (\rho'_T)_!(\mathbf{1}_{\mathbb{A}^\rho(\mathcal{O}_F)}\otimes\mathbf{1}_{T(\mathcal{O}_F)})(t)\chi(t)dt
\end{align*}
vanishes unless $\chi\in\Lambda_T$, in which case by \eqref{etaleTate:zeta:decomp} it is equal to
\begin{align*}
    \left(\prod_{i=1}^k \int_{E^\times_i}\mathbf{1}_{\mathcal{O}_{E_i}}(t_i)(\chi\circ\pi_i)(t_i) d^\times t_i\right) \times \int_{T(F)} \mathbf{1}_{T(\mathcal{O}_F)}(t) \chi(t) dt.
\end{align*}
By \hyperref[etaleTate:local:factr:equal]{Lemma \ref{etaleTate:local:factr:equal}} this equals
\begin{align*}
    L(0,\chi,\rho)\vol(T(\mathcal{O}_F),dt) = L(0,\chi,\rho).
\end{align*}
The first assertion follows from the definition of the basic function. Since $\rho_T^{-1}(T(\mathcal{O}_F))=D^\rho(\mathcal{O}_F),$ we have
\begin{align*}
     b_\rho(\mathrm{id})&=(\rho'_T)_!(\mathbf{1}_{\mathbb{A}^\rho(\mathcal{O}_F)}\otimes\mathbf{1}_{T(\mathcal{O}_F)})(\mathrm{id})=\mathrm{vol}(D^\rho(\calO_F)\times T(\mathcal{O}_F),dt)=1.
\end{align*}
\end{proof}

\begin{remark}\label{rem:faillocal}
    In general, $C^\infty_c(T(F))+\mathcal{F}_\rho(C^\infty_c(T(F)))$
    is a proper subspace of $\mathcal{S}_\rho(T(F))$ if the split rank of $T$ is greater than $1$. When
    $T$ is unramified, the space does not necessarily contain $b_\rho$. This is already the case for the standard representation of $\GG_m^n$ for $n\ge 2$. We claim 
    \begin{center}
        $\mathbf{1}_{\mathcal{O}_F^n}\notin C^\infty_c(\GG_m^n(F))+\mathcal{F}_n(C^\infty_c(\GG_m^n(F)))$ for $n\ge 2,$
    \end{center}
    where $\mathcal{F}_n$ is the standard Fourier transform on $\mathbb{A}^n(F)=F^n.$ We prove the claim for $n=2.$ The argument can be easily generalized.

    Consider the space $$\mathcal{S}_c:=\{f\in \mathcal{S}(F): f(0)=\mathcal{F}_1(f)(0)=0\}=\{f\in C^\infty_c(F^\times):\mathcal{F}_1(f)(0)=0\}.$$
    One can easily show that 
    \begin{align*}
        &C_c^\infty(\GG_m^2(F))+\mathcal{F}_{2}(\mathcal{S}_c\otimes C^\infty_c(F^\times))=C^\infty_c(\GG_m^2(F))+\mathcal{S}_c\otimes \mathcal{S}(F),\\
        &C_c^\infty(\GG_m^2(F))+\mathcal{F}_{2}(C^\infty_c(F^\times)\otimes S_c)=C_c^\infty(\GG_m^2(F))+\mathcal{S}(F)\otimes \mathcal{S}_c.
    \end{align*}
    Since 
    \begin{align*}
        C^\infty_c(\GG_m^2(F))=\mathcal{S}_c\otimes C^\infty_c(F^\times)+C^\infty_c(F^\times)\otimes \mathcal{S}_c+\CC\mathbf1{_{\mathcal{O}_F^\times\times \mathcal{O}_F^\times}},
    \end{align*}
 we have
    \begin{align*}
       &C^\infty_c(\GG_m^2(F))+\mathcal{F}_2(C^\infty_c(\GG_m^2(F)))=C^\infty_c(\GG_m^2(F))+\mathcal{S}_c\otimes \mathcal{S}(F)+ \mathcal{S}(F)\otimes \mathcal{S}_c+\CC\mathcal{F}_2(\mathbf{1}_{\mathcal{O}_F^\times\times \mathcal{O}_F^\times}).
    \end{align*}
Write $\mathcal{F}_2=\mathcal{F}_\ell\otimes\mathcal{F}_r$ as the tensor of partial Fourier transforms in the first and the second entry. Then we have an isomorphism of vector spaces
\begin{align*}
    \varphi:\mathcal{S}(F^2)/\bigg(C^\infty_c(\GG_m^2(F))+\mathcal{S}_c\otimes \mathcal{S}(F)+ \mathcal{S}(F)\otimes \mathcal{S}_c\bigg)&\tilde{\longrightarrow}\, \CC^3\\
    f&\mapsto (f(0,0),\mathcal{F}_\ell(f)(0,0),\mathcal{F}_r(f)(0,0)),
\end{align*}
and $\varphi(\mathbf{1}_{\mathcal{O}_F^2})=(1,1,1).$ On the other hand, $\mathcal{F}_1(\mathbf{1}_{\mathcal{O}_F^\times})=\mathbf{1}_{\mathcal{O}_F}-q^{-1}\mathbf{1}_{\varpi^{-1}\mathcal{O}_F}.$ Thus,
\begin{align*}
    \varphi(\mathcal{F}_2(\mathbf{1}_{\mathcal{O}_F^\times\times \mathcal{O}_F^\times}))=\big((1-q^{-1})^2,1-q^{-1},1-q^{-1}\big)
\end{align*}
is not parallel to $\varphi(\mathbf{1}_{\mathcal{O}_F^2}).$ This justifies the claim. 
\end{remark}

Suppose $T$ is split. Let $\mathcal{S}_{\mathrm{ES}}(\overline{T}(F)):=C_c^\infty(\overline{T}(F))$ which we view as a subspace of $C^\infty(T(F))$ by restriction (c.f. \hyperref[toricdense]{Lemma \ref{toricdense}}). 
By reindexing we assume $\{\rho_1,\ldots,\rho_k\}$ is a minimal generating set of $\mathbb{R}_{\geq 0}\sigma_{\rho,T}$. Then $\mathbb{R}_{\geq 0}\rho_1,\ldots,\mathbb{R}_{\geq 0}\rho_k$ exhaust the rays of $\mathbb{R}_{\geq 0}\sigma_{\rho,T}$.

\begin{proposition}\label{prop:ESinrho} Suppose $T$ is split. One has $\mathcal{S}_{\mathrm{ES}}(\overline{T}(F))\subseteq \mathcal{S}_\rho(T(F))$.
\end{proposition}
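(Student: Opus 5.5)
The plan is to use Proposition \ref{NL=Srho}, which gives $\mathcal{S}_\rho(T(F))=(\rho_T')_!\mathcal{S}(\mathbb{A}^\rho(F)\times T(F))$. It then suffices to show that every $h\in C_c^\infty(\overline{T}(F))$ lies in this image.

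Since $T$ is split, all $E_i=F$ and $\mathbb{A}^\rho=\mathbb{A}^n$. The morphism $\rho'_{\overline{T}}:\mathbb{A}^n\times T\to\overline{T}$, $(x,t)\mapsto\rho_{\overline{T}}(x)t$, is surjective on $F$-points, and this is the first thing to check. Every point of $\overline{T}(F)$ lies in some orbit $\mathcal{O}(\tau)(F)=T(F)\gamma_\tau$. The point $\gamma_\tau$ is the image of the coordinate vector $e_\tau\in\mathbb{A}^n(F)$ with $(e_\tau)_j=1$ if $\rho_j\in\tau$ and $0$ otherwise. Indeed, the limit defining $\gamma_\tau$ can be realized as $\lim_{z\to 0}$ of $\rho_{\overline{T}}$ applied to a vector with entries $z$ on the coordinates with $\rho_j\in\tau$, using that $\sum_{\rho_j\in\tau}\rho_j$ lies in the relative interior of $\tau$.

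Next I would show that $\rho'_{\overline{T}}$ admits local sections near each point, or more simply that it is open and proper on suitable compact pieces. This is the main obstacle. The local structure of $\overline{T}$ near $\gamma_\tau$ is $\mathcal{O}(\tau)\times(\text{affine toric variety of }\tau)$ only up to finite quotients when the cone is not smooth.

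I would avoid needing sections by working with bump functions. It suffices to handle $h=\mathbf{1}_V$ for $V$ a small compact open neighborhood of a point $y=t_0\gamma_\tau$. For such $V$, take $f=\mathbf{1}_{W}\otimes\mathbf{1}_{t_0K'}$, where $W$ is a small neighborhood of $e_\tau$ in $F^n$ and $K'\le T(\mathcal{O}_F)$. Then $(\rho_T')_!f$ is locally constant on $T(F)$, supported in $\rho'_{\overline{T}}(W\times t_0K')\cap T(F)$. Near $y$ it equals a positive constant on a neighborhood of $y$ intersected with $T(F)$, since the fibres of $\rho'_{\overline{T}}$ over $T$ are $U(F)$-torsors and the action of $T(\mathcal{O}_F)$ through $D^\rho(\mathcal{O}_F)$ makes the fibre integral invariant. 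Openness of $\rho'_{\overline{T}}$ on $F$-points at $(e_\tau,t_0)$ should follow because $\rho'_{\overline{T}}$ is smooth there. On the level of monoid algebras, its local ring at the source is obtained from that at $\gamma_\tau$ by adjoining free variables, since near $e_\tau$ the coordinates with $\rho_j\in\tau$ are units and the rest are the free part. If that fails, I would reduce to the smooth subcone case by subdividing, which is harmless because $C_c^\infty$ functions on $\overline{T}(F)$ pull back to compactly supported functions on a resolution.

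Finally, $C_c^\infty(\overline{T}(F))$ is spanned by translates and shrinkings of such $(\rho_T')_!f$ and their differences. To see this I would proceed by induction on orbits ordered by dimension. Subtract an appropriate $(\rho_T')_!f$ so that the remainder vanishes near the closed orbit. Continue until the remainder lies in $C_c^\infty(T(F))$, which is contained in $\mathcal{S}_\rho(T(F))$ by Lemma \ref{SNL:in:Srho} applied to $f$ with $\mathbf{1}$-type factors in the unit locus. This concludes the proof.
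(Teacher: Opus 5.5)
There is a genuine gap. Your reduction itself is legitimate. By Proposition \ref{NL=Srho} the statement is equivalent to $C_c^\infty(\overline{T}(F))\subseteq(\rho'_T)_!\mathcal{S}(\mathbb{A}^n(F)\times T(F))$, and $\rho'_{\overline T}$ is surjective on $F$-points. (Your $e_\tau$ has the zeros and ones swapped: $\gamma_\tau=\rho_{\overline T}(e)$ with $e_j=0$ for $\rho_j\in\tau$ and $e_j=1$ otherwise.)

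The argument breaks at the key local step. $\rho'_{\overline T}$ is in general not smooth at $(e,t_0)$. It cannot be whenever $\overline T$ is singular at $\gamma_\tau$, since regularity descends along flat local homomorphisms. It also fails when two weights lie on the same ray. Consequently $(\rho'_T)_!(\mathbf{1}_W\otimes\mathbf{1}_{t_0K'})$ need not be locally constant at $y$. Invariance under $T(\mathcal{O}_F)$ says nothing about the directions transverse to the orbit.

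Concretely, take $T=\mathbb{G}_m$ and $\rho=\mathrm{std}\oplus\mathrm{std}$, so $\overline T=\mathbb{A}^1$ and $\rho_{\overline T}(x_1,x_2)=x_1x_2$. For $t\in\mathcal{O}_F$ with $|t|=q^{-m}$, $(\rho'_T)_!(\mathbf{1}_{\mathcal{O}_F^2}\otimes\mathbf{1}_{\mathcal{O}_F^\times})(t)$ is a positive multiple of $m+1$. This is unbounded near $0$, and the same linear growth occurs for any smaller $W$ and $K'$. So the base step of your induction on orbits, subtracting a single pushforward to kill $h$ near the closed orbit, is unavailable.

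What is missing is a cancellation mechanism. In the example, only a difference such as $(\rho'_T)_!\bigl(\mathbf{1}_{\mathcal{O}_F^2}\otimes(\mathbf{1}_{\mathcal{O}_F^\times}-\mathbf{1}_{\varpi\mathcal{O}_F^\times})\bigr)$ is locally constant at $0$; it equals a multiple of $\mathbf{1}_{\mathcal{O}_F}$ on $F^\times$. In general the geometric reformulation does not make the problem easier. By Proposition \ref{NL=Srho}, asking whether a bump function at a boundary point is a combination of pushforwards is the same as asking whether its Mellin transforms are divisible by $L(0,\chi,\rho)$, which is the proposition itself.

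Your subdivision fallback does not supply the cancellation either. A function supported in one smooth chart of a toric resolution is generally not in $\mathcal{S}_\rho(T(F))$, because its Mellin transform carries $L$-factors of the new rays. For example, with weights $(1,0),(1,2)$ subdivided by $(1,1)$, the unit box in a chart has Mellin transform $L(0,\chi\circ(1,0))L(0,\chi\circ(1,1))$, which is not a polynomial multiple of $L(0,\chi,\rho)$. So the cancellation across charts is again exactly what must be proved.

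The paper avoids $(\rho'_T)_!$ and verifies (S2) directly:
\begin{itemize}
\item Lemma \ref{SESinSinfty} gives (S1) and the Harish-Chandra condition.
\item For each unitary $\sigma$, it applies $\prod(\mathrm{id}-L_{\rho_i})$ over the ray generators with $\sigma\circ\rho_i$ unramified.
\item Uniform local constancy of $f$ on $\overline T(F)$, together with averaging over $\rho_i(\mathcal{O}_F^\times)$ in the ramified case, shows that the coefficients $\int_{T(\mathcal{O}_F)}h(\lambda(\varpi)u)\sigma(u)\,du$ vanish once $\lambda$ is deep enough toward some ray.
\item This leaves only finitely many $\lambda$ in each coset $\alpha+\sigma_{\rho,T}$.
\end{itemize}
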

\begin{proof} Let $f\in \mathcal{S}_{\mathrm{ES}}(\overline{T}(F))$ and let $\sigma$ be a unitary character of $T(F)$. By \hyperref[SESinSinfty]{Lemma \ref{SESinSinfty}} below, we have $f\in \mathcal{S}^\infty(T(F))$ and $|\nu_T|^r\mathcal{S}_{\mathrm{ES}}(\overline{T}(F))\subseteq \mathcal{C}(T(F))$ for any $r>0$ sufficiently large. Thus it remains to show for $\chi\in \Lambda_T$ that
\begin{align}\label{eq:toric:original}
     \prod_{i=1}^k L(0,\sigma_\chi,\rho_i)^{-1} \times \int_{T(F)} f(t)\sigma_\chi(t)dt\in \CC[\Lambda_T].
\end{align}
By reordering, we may assume $\sigma\circ \rho_i:F^\times\to S^1$ is unramified for $1\le i\le \ell$, and $\sigma\circ\rho_i$ is ramified for $\ell<i\le k$. For $1\le i\le\ell$, define an operator $L_{\rho_i}:\mathcal{S}_{\mathrm{ES}}(\overline{T}(F))\to \mathcal{S}_{\mathrm{ES}}(\overline{T}(F))$ by $L_{\rho_i}(h)(t) := h(\rho_i(\varpi)^{-1}t).$
Then \eqref{eq:toric:original} is
\begin{align*}
    \prod_{i=1}^\ell (1-\sigma_\chi(\rho_i(\varpi))\times \int_{T(F)} f(t)\sigma_\chi(t)dt = \int_{T(F)} \left(\prod_{i=1}^\ell(\mathrm{id}-L_{\rho_i})\right)f(t) \sigma_\chi(t) dt.
\end{align*}

For $1\le i\le \ell,$ let 
\begin{align*}
h_i := \left(\prod_{\substack{1\leq j\leq \ell\\j\neq i}}(\mathrm{id}-L_{\rho_j})\right)f, \quad  h:=\left(\prod_{\substack{1\leq j\leq \ell}}(\mathrm{id}-L_{\rho_j})\right)f.
\end{align*}
Since $\overline{T}(F)$ has a metric topology induced by the closed embedding $\overline{T}(F)\to F^{\#\mathcal{B}_0}$ given by $x\mapsto (\tau(x))_{\tau\in\mathcal{B}_0}$, functions in $\mathcal{S}_{\mathrm{ES}}(\overline{T}(F))$ are uniformly locally constant. Namely, there exists $N>0$ such that  $h_i(x) = h_i(y)$ and $h(x) = h(y)$ whenever $x,y\in \overline{T}(F)$ satisfy $|\tau(x) - \tau(y)|< q^{-N}$ for all $\tau\in \mathcal{B}_0$.

Write
\begin{align*}
    \int_{T(F)} h(t)\sigma_\chi(t)dt = \sum_{\lambda\in X_*(T)} \sigma_\chi(\lambda(\varpi)) \int_{T(\mathcal{O}_F)} h(\lambda(\varpi)u) \sigma(u)du.
\end{align*}
Fix $1\le i\le k$, and let $S_i := \{\tau\in\mathcal{B}_0: \langle\tau,\rho_i\rangle>0\}$. For $\tau\in \mathcal{B}_0$ and $v\in \mathcal{O}_F^\times$, one has
\begin{align*}
    |\tau(\lambda(\varpi)u) - \tau(\lambda(\varpi) u \rho_i(v))|= q^{-\langle \tau,\lambda\rangle} |1-v^{\langle\tau,\rho_i\rangle}| \le \left\{\begin{array}{ll}
        0 &\textrm{if }\tau\in\mathcal{B}_0-S_i,  \\
        q^{-\langle\tau,\lambda\rangle} &\textrm{if }\tau\in S_i.
    \end{array}\right.
\end{align*}
Hence, for $\lambda\in X_*(T)$ with $\langle\tau,\lambda\rangle>N$ for all $\tau\in S_i$, one has
\begin{align*}
    \int_{T(\mathcal{O}_F)} h(\lambda(\varpi)u) \sigma(u)du &= \int_{T(\mathcal{O}_F)/\rho_i(\mathcal{O}_F^\times)}\left(\int_{\rho_i(\mathcal{O}_F^\times)} h(\lambda(\varpi)uv) \sigma(uv) dv\right)du\\
    &=\int_{T(\mathcal{O}_F)/\rho_i(\mathcal{O}_F^\times)} h(\lambda(\varpi)u)\sigma(u)\left(\int_{\rho_i(\mathcal{O}_F^\times)}\sigma(v) dv\right)du.
\end{align*}
This is zero if $\sigma\circ\rho_i$ is ramified. Assume $\sigma\circ\rho_i$ is unramified. Then by definition
\begin{align*}
    \int_{T(\mathcal{O}_F)} h(\lambda(\varpi)u) \sigma(u)du = \int_{T(\mathcal{O}_F)} \left(h_i(\lambda(\varpi)u)-h_i(\lambda(\varpi)\rho_i(\varpi)^{-1}u)\right)\sigma(u)du.
\end{align*}
One has 
\begin{align*}
    |\tau(\lambda(\varpi)u) - \tau(\lambda(\varpi)\rho_i(\varpi)^{-1}u)| = q^{-\langle\tau,\lambda-\rho_i\rangle} |\varpi^{\langle\tau,\rho_i\rangle} - 1| = \left\{\begin{array}{ll}
        0 &\text{if }\tau\in\mathcal{B}_0-S_i,  \\
        q^{-\langle\tau,\lambda-\rho_i\rangle} &\text{if }\chi\in S_i.
    \end{array}\right.
\end{align*}
Hence if $\langle\tau,\lambda-\rho_i\rangle>N$ for all $\tau\in S_i$, one has $h_i(\lambda(\varpi)u) = h_i(\lambda(\varpi)\rho_i(\varpi)^{-1}u)$ for all $u\in T(\mathcal{O}_F)$, and the integral vanishes.

In conclusion, we obtain that for $\lambda\in X_\ast(T)$
\begin{align*}
    \int_{T(\mathcal{O}_F)} h(\lambda(\varpi)u) \sigma(u)du = 0 
\end{align*}
if there is some $1\le i\le k$ such that $\langle \tau,\lambda\rangle>N+\delta_{\mathrm{unr},i}\langle \tau,\rho_i\rangle$ for all $\tau\in S_i,$ where 
\begin{align*}
    \delta_{\mathrm{unr},i}:=\begin{cases}
        1 & \textrm{if } \sigma\circ \rho_i \textrm{ is unramified,}\\
        0 & \textrm{if } \sigma\circ \rho_i \textrm{ is ramified.}
    \end{cases}
\end{align*}
To finish the proof, by \hyperref[Sas:f1alpha:polynomial]{Corollary \ref{Sas:f1alpha:polynomial}} it remains to show that
\begin{align*}
    \left(\bigcap_{i\in[k]} \{\lambda\in X_*(T): \langle \tau,\lambda\rangle \le  N+\delta_{\mathrm{unr},i}\langle \tau,\rho_i\rangle \text{ for some }\tau\in S_i\}\right) \cap (\alpha + \sigma_{\rho,T})
\end{align*}
is finite for all $\alpha\in X_*(T)$. It suffices to notice for each $N\geq 1$ and $(\tau_1,\ldots,\tau_k)\in S_1\times\cdots\times S_k$ that the set 
\begin{align*}
    \left\{\lambda\in \alpha+\sigma_{\rho,T}: \langle\tau_i,\lambda\rangle \leq N\text{ for }1\leq i\leq k \right\}
\end{align*}
is finite. 
\end{proof}



\section{On $L$-monoids}\label{sec:Lmonoids}

Now we discuss the underlying geometry of $\mathcal{S}_\rho(G(F))$ for general $G$. As explained in \cite[\S5.1]{Ngo:Hankel}, we can associate to each $\rho$ a normal reductive monoid with unit group $G$ following the construction of Renner in \cite[\S 5.1]{Renner:AlgebraMonoids}. We remark that in  \cite[\S 5.1]{Renner:AlgebraMonoids} the classification of normal reductive monoids is proved over algebraically closed fields. However, the same arguments with minor modification work over separably closed fields. In the more general setting of spherical varieties, this is proved in \cite{Wedhorn:Spherical}. 

Let $A_0\le T\le M_0$ be a maximal torus of $G$. Let $\sigma_{\rho}\subseteq X_*(T_{F^\mathrm{sep}}) \cong X^*((T_{F^\mathrm{sep}})^\vee)$ denote the submonoid generated by the $(T_{F^{\mathrm{sep}}})^\vee$-weights of $\rho$. Then $\sigma_{\rho}$ is stable under actions of both $W(G_{F^{\mathrm{sep}}},T_{F^{\mathrm{sep}}})$ and $\Gal_F$, and $\mathbb{R}_{\geq 0}\sigma_{\rho}$ is a strongly convex rational polyhedral cone by \eqref{G}. By \cite[Theorem 5.4]{Renner:AlgebraMonoids}, there exists a unique normal reductive monoid $X_{\rho,F^{\mathrm{sep}}}$ with unit group $G_{F^{\mathrm{sep}}},$ and such that the Zariski closure of $T_{F^{\mathrm{sep}}}$ in  $X_{\rho,F^{\mathrm{sep}}}$ is the affine normal toric variety $\overline{T}_{F^{\mathrm{sep}}}$ attached to the cone $\RR_{\ge 0}\sigma_{\rho}$. By the uniqueness and Galois descent, $X_{\rho,F^{\mathrm{sep}}}$ admits an $F$-rational structure $X_\rho:=X_{\rho,G}$ which is a normal monoid with unit group $G$. 


For later use we recall the explicit construction of $X_{\rho,F^{\mathrm{sep}}}$. Fix a Borel subgroup $T_{F^{\mathrm{sep}}}\le B\le G_{F^{\mathrm{sep}}}$. Let $\{\alpha_1,\ldots,\alpha_r\}\subset \sigma_{\rho}^\vee\cap X^*(T_{F^{\mathrm{sep}}})^+$ be a $\Gal_F$-invariant set of dominant weights such that its $W(G_{F^{\mathrm{sep}}},T_{F^{\mathrm{sep}}})$-orbit  generates $\sigma_{\rho}^\vee$. Let $V_{\alpha_i,F^{\mathrm{sep}}}$ be the irreducible $G_{F^{\mathrm{sep}}}$-representation of highest weight $\alpha_i$. Then we have an immersion
\begin{align}\label{eq:immersion}
    \omega:G_{F^{\mathrm{sep}}}\longrightarrow \bigoplus_{i=1}^r\mathrm{End}_{F^{\mathrm{sep}}}(V_{\alpha_i,F^{\mathrm{sep}}}).
\end{align}
The normalization of the Zariski closure of $\omega(G_{F^{\mathrm{sep}}})$ is (isomorphic to) $X_{\rho,F^{\mathrm{sep}}}$.

Let $\calB_0\subset X^\ast(M_0)$ be a finite $W(G,A_0)$-stable generating set of  $\overline{C}_{\rho,M_0}= (\mathbb{R}_{\geq0}\widetilde{\sigma}_{\rho,M_0})^\vee$. Recall that for $r\in \RR$
\begin{align*}
     S_0(r)=K\left\{ m\in M_0(F): |\chi|(m)\le q^r \textrm{ for all } \chi\in \calB_0\right\}K.
\end{align*}
\begin{lemma}\label{lem:bounded}
    For any $r\in\RR$, the set $S_0(r)$ is relatively compact in $X_\rho(F)$. 
\end{lemma}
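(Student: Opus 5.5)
Since $S_0(r)=K\,A(r)\,K$ with $A(r):=\{m\in M_0(F): |\chi|(m)\le q^r\ \forall \chi\in\calB_0\}$, and $K$ is compact while multiplication $G(F)\times X_\rho(F)\times G(F)\to X_\rho(F)$ is continuous, it suffices to show that $A(r)$ is relatively compact in $X_\rho(F)$. The image of $K\times \overline{A(r)}\times K$ under this continuous map is then compact and contains $S_0(r)$.

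To handle $A(r)$, let $\overline{M_0}$ denote the Zariski closure of $M_0$ in $X_\rho$. I will identify $\overline{M_0}$ with (a finite modification of) an affine toric-type monoid attached to $\rho_{M_0}$ and then use \hyperref[toric:relcpt:1]{Lemma \ref{toric:relcpt:1}}. The cleanest route is to avoid describing $\overline{M_0}$ intrinsically and work directly with the embedding \eqref{eq:immersion}. Since $X_\rho$ is the normalization of the closure of $\omega(G)$, and normalization is finite hence proper on $F$-points for the topology, a subset of $G(F)$ is relatively compact in $X_\rho(F)$ if and only if its image under $\omega$ is bounded in $\bigoplus_i \mathrm{End}(V_{\alpha_i})(F^{\mathrm{sep}})$ (more precisely, in a finite extension $E$ splitting $T$, with $X_\rho(F)$ closed in $X_\rho(E)$). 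So the task is to bound the matrix entries of $\omega(m)$ for $m\in A(r)$. Write $m=a\,u$ with $u\in M_0(F)^1$, which is compact, so $u$ can be ignored. The matrix entries of $\omega(a)$ in a weight basis over $E$ are values of $T_E$-weights $\mu$ of the $V_{\alpha_i}$, up to conjugation by a compact piece relating $M_0$ to $T$. Every such $\mu$ lies in the $W$-orbit-generated cone $\sigma_\rho^\vee$. By the averaging argument in the proof of \hyperref[toric:relcpt:1]{Lemma \ref{toric:relcpt:1}}, $|\mu(a)|=|\mathrm{av}(\mu)|(a)$ with $\mathrm{av}(\mu)$ landing in $(\RR_{\ge0}\widetilde{\sigma}_{\rho,M_0})^\vee=\overline{C}_{\rho,M_0}$. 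Because $\calB_0$ generates this cone, $\mathrm{av}(\mu)=\sum c_\tau\tau$ with $c_\tau\ge0$, giving $|\mu(a)|\le q^{r\sum c_\tau}$ uniformly over the finitely many weights $\mu$.

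The main obstacle is the passage from $T$ to $M_0$ when $G$ is not quasi-split: $M_0$ is anisotropic modulo $A_0$, and one has to check that the weights of $\rho$ relative to $T$ restrict and average to $\widetilde{\sigma}_{\rho,M_0}$, the image of the $Z_{M_0^\vee}^\circ$-characters. This is a compatibility between $\sigma_\rho\subseteq X_*(T_{F^{\mathrm{sep}}})$ and \eqref{X*ZGvee:as:HomX*GZ} for $M_0$. I would verify it by noting that both are obtained by restricting weights of $\rho$ to $Z_{M_0^\vee}^\circ\subseteq T^\vee$ and then pairing with $X^*(M_0)\subseteq X^*(T)^{\Gal_F}$. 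Boundedness on $M_0(F)^1$ and the Cartan decomposition \eqref{decomp:group:Cartan} then finish the argument.
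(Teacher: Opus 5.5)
Your argument is correct and is essentially the paper's proof. Both reduce by compactness of $K$ to the torus part, pass to a splitting field $E$, and show that every character in $\sigma_\rho^\vee\cap X^*(T_E)$ is bounded there by averaging it into $\overline{C}_{\rho,M_0}$ and expanding in $\calB_0$. The paper detects relative compactness through the toric closure $\overline{T}_E\subseteq X_{\rho,E}$ rather than through $\omega$ and finiteness of the normalization, and it simply asserts the compatibility $\varphi(\sigma_\rho^\vee)=\widetilde{\sigma}_{\rho,M_0}^\vee$ that you sketch.

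One small fix: $T(F)M_0(F)^1$ has in general only finite index in $M_0(F)$, so you should write $m=a\,j\,u$ with $j$ ranging over a fixed finite set. This changes nothing in the argument.
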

\begin{proof}
Choose a finite separable extension $E/F$ such that $G_E$ is split. The Zariski closure $\overline{T}_E$ of $T_E$ in $X_{\rho,E}$ is isomorphic to the affine toric variety of $T_E$ attached to the cone $\RR_{\geq 0}\sigma_{\rho}$.

Since $X_{\rho}(F)\subseteq X_{\rho}(E)$ is closed, it suffices to prove $S_0(r)$ is relatively compact in $X_{\rho}(E)$. In other words, we must show
\begin{align*}
    S'_0(r):=\left\{ m\in T(F): |\chi|(m)\le q^r \textrm{ for all } \chi\in \calB_0\right\}
\end{align*}
is relatively compact in $\overline{T}_E(E)$. Consider the surjection
\begin{align*}
    \varphi:X^*(T_E)_\RR \longrightarrow X^*(A_{(M_0)_E})_\RR = X^*((M_{0})_E)_\RR \stackrel{\mathrm{av}}{\longrightarrow}X^*(M_0)_\RR
\end{align*}
where the last arrow $\mathrm{av}$ is defined as in the proof of \hyperref[toric:relcpt:1]{Lemma \ref{toric:relcpt:1}}. One has $\varphi(\sigma_\rho^\vee) = \mathrm{av}(\sigma_{\rho,M_0}^\vee) = \widetilde{\sigma}_{\rho,M_0}^\vee$. With this in mind, by the same argument as in \hyperref[toric:relcpt:1]{Lemma \ref{toric:relcpt:1}}, we see every function in $\sigma_{\rho}^\vee\cap X^*(T_E)$ is bounded on $S'_0(r)$.  This finishes the proof.
\end{proof}

\begin{corollary}\label{cor:compactsupport} Every function in $\mathcal{S}_\rho(G(F))$ has compact support in $X_\rho(F)$.
\end{corollary}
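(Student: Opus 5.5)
The plan is to combine the support statement of Theorem \ref{Schwartz=as+cs} with Lemma \ref{lem:bounded}.

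Let $f\in\mathcal{S}_\rho(G(F))$. The final assertion of Theorem \ref{Schwartz=as+cs} gives a constant $r$ with $\supp(f)\subseteq S_0(r)$. Here one has to check that the $\mathcal{B}_0$ used there (a finite $W(G,A_0)$-stable generating set of $\overline{C}_{\rho,M_0}$ containing $\mathcal{B}_G$) can be taken to be the same as, or compared with, the $\mathcal{B}_0$ in Lemma \ref{lem:bounded}. Any two finite generating sets of the same cone define mutually cofinal families of sets $S_0(r)$: each element of one set is a nonnegative combination of the other, so a bound $|\tau|\le q^{r}$ on the generators gives a bound $|\tau'|\le q^{r'}$ on the other set. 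So the choice of $\mathcal{B}_0$ does not matter.

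Lemma \ref{lem:bounded} says $S_0(r)$ is relatively compact in $X_\rho(F)$. The closure of $\supp(f)$ in $X_\rho(F)$ is therefore a closed subset of a compact set, hence compact. To read this as ``compact support in $X_\rho(F)$'' we regard $f$ as a function on $X_\rho(F)$, extended by zero off $G(F)$, and view $G(F)$ as open in $X_\rho(F)$. This uses that $G$ is an open subscheme (the unit group) of the normal monoid $X_\rho$.

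The only nontrivial input is the support bound $\supp(f)\subseteq S_0(r)$, i.e. Proposition \ref{prop:key}, and that is already established. The remaining step is the bookkeeping about generating sets described above, which is routine.
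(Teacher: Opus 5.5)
Your proposal is correct and follows the paper's argument exactly: the paper also combines the support bound $\supp(f)\subseteq S_0(r)$ from Theorem \ref{Schwartz=as+cs} with the relative compactness of $S_0(r)$ in $X_\rho(F)$ from Lemma \ref{lem:bounded}. Your comparison of generating sets $\mathcal{B}_0$ is valid, though it can be avoided since the same kind of $W(G,A_0)$-stable $\mathcal{B}_0$ may be chosen in both places; and openness of $G(F)$ in $X_\rho(F)$ is harmless but not needed, because the closure of $\{f\neq 0\}$ in $X_\rho(F)$ already lies in the compact set $\overline{S_0(r)}$.
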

\begin{proof} Combine \hyperref[Schwartz=as+cs]{Theorem \ref{Schwartz=as+cs}} and \hyperref[lem:bounded]{Lemma \ref{lem:bounded}}.
\end{proof}

\begin{lemma}\label{lem:bounded:2} If $U\subseteq X_\rho(F)$ is relatively compact, then $U\cap G(F)\subseteq S_0(r)$ for some $r\in\RR$. 
\end{lemma}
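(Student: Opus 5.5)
The plan is to reduce to the torus case, as in \hyperref[toric:relcpt:2]{Lemma \ref{toric:relcpt:2}}, by way of the Cartan decomposition. Write $g\in U\cap G(F)$ as $g=k_1mk_2$ with $k_1,k_2\in K$ and $m\in M_0(F)^+$. Since $X_\rho(F)$ carries a continuous $G(F)\times G(F)$-action extending the translation action on $G(F)$, and $K$ is compact, the set $K(U\cap G(F))K$ is again relatively compact in $X_\rho(F)$; indeed it is contained in the image of the compact set $K\times\overline{U}\times K$ under the action map. So it suffices to show that if $V\subseteq X_\rho(F)$ is relatively compact, then every $m\in V\cap M_0(F)$ satisfies $|\chi|(m)\le q^r$ for all $\chi\in\calB_0$. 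Moreover $M_0(F)=T(F)\cdot M_0(F)^1$ up to finite index, and $M_0(F)^1$ is compact, so we may multiply by a compact set once more and reduce to $m\in V\cap T(F)$, where $T$ is the maximal torus with $A_0\le T\le M_0$.

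Next we pass to a splitting field. Choose a finite separable extension $E/F$ over which $G$ is split. Since $X_\rho(F)\subseteq X_\rho(E)$ is closed and carries the subspace topology, $V$ is relatively compact in $X_\rho(E)$. The Zariski closure $\overline{T}_E$ of $T_E$ in $X_{\rho,E}$ is closed, and it is the affine toric variety attached to $\RR_{\ge0}\sigma_\rho$. Hence $V\cap T(F)$ is relatively compact in $\overline{T}_E(E)$, which is a closed subset of $X_\rho(E)$. It follows that every regular function on $\overline{T}_E$, in particular every $\chi\in\sigma_\rho^\vee\cap X^*(T_E)$, is bounded on $V\cap T(F)$.

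Finally we transfer these bounds to $\calB_0$. As in the proof of \hyperref[lem:bounded]{Lemma \ref{lem:bounded}}, consider $\varphi:X^*(T_E)_\RR\to X^*(M_0)_\RR$, given by restriction to $A_{(M_0)_E}$ followed by Galois averaging. For $m\in T(F)$ we have $|\chi|(m)=|\varphi(\chi)|(m)$, computed via $H_{M_0}$. We also have $\varphi(\sigma_\rho^\vee)=\widetilde{\sigma}_{\rho,M_0}^\vee=\overline{C}_{\rho,M_0}$. So every $\tau\in\calB_0$, or a positive integer multiple of it, is of the form $\varphi(\chi)$ with $\chi\in\sigma_\rho^\vee\cap X^*(T_E)$. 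Boundedness of $|\chi|$ on $V\cap T(F)$ then gives $|\tau|(m)\le q^{r}$ for a uniform $r$, because $\calB_0$ is finite.

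The main point needing care is the surjectivity $\varphi(\sigma_\rho^\vee)\supseteq\overline{C}_{\rho,M_0}$ on the level of lattice points (up to positive multiples), rather than just the inclusion used in \hyperref[lem:bounded]{Lemma \ref{lem:bounded}}. I would handle it as in \hyperref[toric:relcpt:2]{Lemma \ref{toric:relcpt:2}}. Restriction from $T_E$ to $A_0$ is dual to the inclusion of $\widetilde{\sigma}$, and a $W$- and Galois-invariant element of $\sigma_\rho^\vee$ maps to itself under averaging. Hence any rational point of $\overline{C}_{\rho,M_0}$, viewed inside $X^*(T_E)_\RR$ via the natural inclusion, lies in $\sigma_\rho^\vee$, since $\overline{C}_{\rho,M_0}$ is dual to the projection of $\sigma_\rho$. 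Clearing denominators then gives integral characters.
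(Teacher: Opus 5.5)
Your proposal is correct and is essentially the paper's argument: you use the Cartan decomposition to reduce to $T(F)$, then the boundedness on relatively compact sets of the characters in $\sigma_\rho^\vee\cap X^*(T_E)$ viewed as regular functions on $\overline{T}_E$, and finally transfer to $\calB_0$ via $\varphi$. The paper handles the last step by choosing a generating set $\mathcal{B}'$ of $\sigma_\rho^\vee$ with $\varphi(\mathcal{B}')\supseteq\calB_0$; your observation that each $\tau\in\calB_0$, restricted to $T_E$, already lies in $\sigma_\rho^\vee$ and satisfies $\varphi(\tau|_{T_E})=\tau$ is exactly the justification for that choice, which the paper leaves implicit.
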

\begin{proof} Retain the notation in the proof of \hyperref[lem:bounded]{Lemma \ref{lem:bounded}}. Let $\mathcal{B}'\subseteq X^*(T_E)$ be a finite generating set of the cone $\sigma_{\rho}^\vee\subseteq X^*(T_E)_\RR$ such that $\varphi(\mathcal{B}') \supseteq \mathcal{B}_0$. Choose a finite subset $J\subseteq M_0(F)$ such that $KT(F)JK=KM_0(F)K$. By \hyperref[toric:relcpt:2]{Lemma \ref{toric:relcpt:2}} and the Cartan decomposition, there is some $c>0$ such that $U\cap G(F)\subseteq K\{x\in T(E) : |\chi(x)|\leq c\text{ for all }\chi\in \mathcal{B}'\}JK$. This implies $U\cap G(F) \subseteq S_0(r)$ for some $r>0$.
\end{proof}

Let $\mathcal{S}_{\mathrm{ES}}(X_\rho(F)) := C_c^\infty(X_\rho(F))$. Note that if $X_\rho\hookrightarrow V$ is any $G\times G$-equivariant closed $F$-embedding into a finite-dimensional vector space $V$ over $F$, then $\mathcal{S}_{\mathrm{ES}}(X_\rho(F))=\mathcal{S}(V(F))|_{X_\rho(F)}.$ We will often view $f\in \mathcal{S}_{\mathrm{ES}}(X_\rho(F))$ as a function on $G(F)$ by restriction.

\begin{lemma}\label{SESinSinfty}
    We have $\mathcal{S}_{\mathrm{ES}}(X_\rho(F))\subseteq \mathcal{S}^\infty(G(F)).$ Furthermore,  $|\nu|^r\mathcal{S}_{\mathrm{ES}}(X_\rho(F))\subseteq  \mathcal{C}(G(F))$ for any $r>0$ sufficiently large.
\end{lemma}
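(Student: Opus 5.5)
The plan is to reduce both assertions to support estimates in the Cartan decomposition, and to use Lemma~\ref{lem:bounded:2} and the proof of Lemma~\ref{supporttrunc}. Let $f\in\mathcal{S}_{\mathrm{ES}}(X_\rho(F))$, viewed as a function on $G(F)$. It is bi-$K_0$-invariant for some compact open $K_0$, since $G(F)\times G(F)$ acts continuously on $X_\rho(F)$ and $f$ has compact support there. Hence $f\in C^\infty_u(G(F))$. Its support in $X_\rho(F)$ is compact, so Lemma~\ref{lem:bounded:2} gives $\supp(f)\cap G(F)\subseteq S_0(r)$ for some $r$. Together with boundedness of $f$, this gives $|f|\le \|f\|_\infty\mathbf{1}_{S_0(r)}$.

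\emph{First assertion.} By Proposition~\ref{Cfpositive} it suffices that $C_f$ contains a positive cone. This holds if $\mathbf{1}_{S_0(r)}\Xi_\chi\in L^1(G(F))$ for $\chi$ in a positive cone. I would rerun the computation in the proof of Lemma~\ref{supporttrunc} without the factor $\Xi\sigma^{-d}$:
\begin{align*}
\int_{S_0(r)}\Xi_\chi(g)\,dg\ll\sum_{\substack{m\in\Omega^+\\ \langle\tau,m\rangle\ge -r\ \forall\tau\in\calB_0}}\delta_{P_0}^{-1}(m)\,q^{-\langle\eta_G,m\rangle}\max_{s}q^{-\langle s.\chi,m\rangle},
\end{align*}
using \eqref{Cartan:measure} and the formula for $\Xi_\chi$ from \cite[Theorem 4.2]{Casselman:unramifiedI} for regular dominant $\chi$. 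The factor $\delta_{P_0}^{-1}(m)q^{-\langle\eta_G,m\rangle}=q^{\langle\eta_G,m\rangle}$ grows, and this growth must be absorbed. I will take $\chi=|\nu|^{s}\chi_1$ with $\chi_1$ in the open cone $C_{\rho,M_0}\cap X^*(M_0)^+_\RR$ and $s$ large.

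The key observation is this. Since $\calB_0$ generates $\overline{C}_{\rho,M_0}$ and is $W$-stable, every $\eta\in X^*(M_0)_\RR$ can be dominated on the region $\{\langle\tau,m\rangle\ge -r\}$. More precisely, for $\chi_1$ deep enough in the interior of the cone, $\langle s.\chi_1-\eta_G,m\rangle\ge \epsilon\|m\|-c$ on that region, uniformly in $s\in W$, because $W$ preserves the cone. Then the sum converges like a geometric series. Adding a multiple of $|\nu|$ keeps us in a positive cone, since $\nu$ pairs positively with the cone generators by \eqref{G}. So $C_f$ contains the translate of an open cone by a point, which is a positive cone.

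\emph{Second assertion.} We need $p_d(|\nu|^rf)<\infty$, i.e. $\sup_{S_0(r_0)}|\nu|^r(g)\,\Xi(g)^{-1}\sigma(g)^d<\infty$. I would use the lower bound $\Xi(m)\gg\delta_{P_0}^{1/2}(m)$ on $\Omega^+$, which follows from Macdonald's formula or \cite[Lemme II.1.1]{Waldspurger:Plancherel}. Then it suffices that $q^{-r\langle\nu,m\rangle}q^{\langle\eta_G,m\rangle}\sigma(m)^d$ is bounded on $\{\langle\tau,m\rangle\ge -r_0\}\cap\Omega^+$.

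Here $\nu$ lies in the interior of $\overline{C}_{\rho,M_0}$, since it pairs to $1$ with every $\widetilde\mu_j$. Hence $\langle\nu,m\rangle\ge\epsilon\|m\|-c$ on the region. For $r$ large, $r\nu-\eta_G$ still controls the region, giving exponential decay that beats $\sigma(m)^d$.

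The main obstacle I expect is making the claim ``interior of the cone gives $\langle\cdot,m\rangle\gtrsim\|m\|$ on the shifted dual region'' precise. This needs $\overline{C}_{\rho,M_0}$ to be full-dimensional with dual cone strongly convex. That may fail when $\Omega$ has directions on which all $\tau$ vanish, in which case the region is not bounded in those directions. I would handle this by noting that $\mathbb{R}_{\ge0}\widetilde\sigma_{\rho,M_0}$ is strongly convex, which makes its dual full-dimensional. Positive-definiteness then holds on the whole cone $\mathbb{R}_{\ge0}\widetilde\sigma_{\rho,M_0}$, which contains the region up to a bounded shift.
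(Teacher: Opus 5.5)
Your proposal is correct and follows essentially the paper's proof. For the first assertion, the paper likewise reduces via Proposition~\ref{Cfpositive} and Lemma~\ref{lem:bounded:2} to convergence of $\int_{S_0(r)}\Xi_\chi(g)\,dg$ and leaves the Casselman-formula estimate of Lemma~\ref{supporttrunc} to the reader; your absorption of $q^{\langle\eta_G,m\rangle}$ by requiring $s.\chi-\eta_G\in C_{\rho,M_0}$ for all $s\in W(G,A_0)$ is exactly the needed modification. For the second assertion, the paper uses $\Xi\gg\delta_{P_0}^{1/2}$ on $\Omega^+$ from \cite[Lemme II.1.1]{Waldspurger:Plancherel} with $r$ large, controlling $\sigma^d$ by a finite set $S=-S$ of characters instead of your linear-growth bound.

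One small point, shared with the paper's proof of Lemma~\ref{supporttrunc}, concerns regularity. Casselman's formula covers only regular dominant $\chi$, and these never include $|\nu|^s$. To meet (C2) literally you should add that $C_f$ is $W(G,A_0)$-stable and convex, since $\Xi_{s.\chi}=\Xi_\chi$ and $\chi\mapsto\Xi_\chi(g)$ is convex in real $\chi$. This then gives a positive cone containing $|\nu|^s$ for $s\gg0$.
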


\begin{proof}
   For the first assertion, by \hyperref[Cfpositive]{Proposition \ref{Cfpositive}} and \hyperref[lem:bounded]{Lemma \ref{lem:bounded:2}}, it suffices to show
   \begin{align*}
       \int_{S_0(r)} \Xi_\chi(g)dg
   \end{align*}
   converges absolutely for $\chi$ in a positive cone in $\Re \Lambda_{M_0}$. The proof is similar to that of \hyperref[supporttrunc]{Lemma \ref{supporttrunc}}. We leave it to the reader.

   For the second statement, by the definition of log norm, there is a finite subset $S\subset X^\ast(M_0)$ such that $S=-S$ and for any $d\ge 0,$ $\sigma(m)^d\ll_d \max_{\chi \in S} |\chi(m)|.$ Therefore, by \cite[Lemme II.1.1]{Waldspurger:Plancherel} for $m\in \Omega^+$
   \begin{align*}
       \Xi(m)\sigma(m)^{-d}|\nu|^{-r}(m)\gg_d \min_{\chi\in S} (|\nu^{-r}\chi|\delta_{P_0}^{1/2})(m).
   \end{align*}
   Given $f\in \mathcal{S}_{\mathrm{ES}}(X_\rho(F)),$ by \hyperref[lem:bounded]{Lemma \ref{lem:bounded:2}} there exist $c_1,c_2>0$ such that $|f|(g)\le c_1\mathbf{1}_{S_0(c_2)}(g)$ for all $g\in G(F).$ For $r>0$ large, we have $\left(|\nu^{-r}\chi|\delta_{P_0}^{1/2}\right)^{-1}\in C_{\rho,M_0}$ for all $\chi\in S.$ Thus for $m\in \Omega^+\cap S_0(c_2)$ and all $\chi\in S,$ $(|\nu^{-r}\chi|\delta_{P_0}^{1/2})(m)\gg_{c_2} 1.$ Hence for $g\in \mathrm{supp}(f),$ if $g\in KmK$ for some $m\in \Omega^+$ then
   \begin{align*}
      |f(g)|\le c_1\ll_{c_1,c_2} \min_{\chi\in S} (|\nu^{-r}\chi|\delta_{P_0}^{1/2})(m)\ll_d \Xi(m)\sigma(m)^{-d}|\nu|^{-r}(m)=\Xi(g)\sigma(g)^{-d}|\nu|^{-r}(g)
   \end{align*}
   for any $d\ge 0$ and $r>0$ large. This completes the proof.
\end{proof}

We expect \hyperref[prop:ESinrho]{Proposition \ref{prop:ESinrho}} to be true in general.

\begin{conjecture}  \label{conj:local}
The space $\mathcal{S}_{\mathrm{ES}}(X_\rho(F))$ is contained in $\mathcal{S}_\rho(G(F))$, up to a unique central twist.
\end{conjecture}

\noindent 
If \hyperref[conj:local]{Conjecture \ref{conj:local}} holds, we can write down a quite general Schwartz function in $\mathcal{S}_\rho(G(F))$ easily. In addition, it provides insight into the $G(F)\times G(F)$-module structure of $\mathcal{S}_\rho(G(F)).$ When $G=\GL_n$ and $\rho=\mathrm{std},$ the conjecture is well-known. One has $\mathcal{S}(M_{n}(F))\subseteq|\det|^{-\frac{n-1}{2}}\mathcal{S}_\rho(\GL_n(F))$\footnote{The containment is expected to be an equality.}. If the center of $G$ is one-dimensional and $\rho$ is irreducible with highest weight $\lambda_\rho,$ the central shift is relevant to $|\nu|^{-\langle \eta_G,\lambda_\rho\rangle}$ (c.f. \cite[\S 4.2]{Ngo:Hankel}). We will verify this expectation in several cases when $G=\GL_2$ and $\SL_2\times \GG_m$ in \S \ref{ssec:example}. As a preparation, we prove the following.

\begin{lemma}\label{lem:cuspidalhol}
    Suppose $G/Z_G$ is isotropic and $\RR_{\ge 0}\widetilde{\sigma}_{\rho,G}$ is a ray. Let $f\in \mathcal{S}_{\mathrm{ES}}(X_\rho(F)).$ For any $\pi\in \Pi_0(G)$ and $(v,w)\in\pi\times \pi^\lor,$ $\chi\mapsto Z(\pi_\chi,f,v,w)\in \CC[\Lambda_G].$ 
\end{lemma}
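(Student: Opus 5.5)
The plan is to reduce the zeta integral to a sum over cosets of $G(F)^1$ and show that only finitely many cosets contribute. Write
\begin{align*}
Z(\pi_\chi,f,v,w)=\sum_{\alpha\in\Omega_G}\chi(\alpha)\int_{G(F)}(f\mathbf{1}_\alpha)(g)\langle\pi(g)v,w\rangle\,dg,
\end{align*}
which is absolutely convergent for $\Re(\chi)$ in a positive cone by \hyperref[SESinSinfty]{Lemma \ref{SESinSinfty}}. Each summand is a finite integral, because the support bound (\hyperref[lem:bounded:2]{Lemma \ref{lem:bounded:2}}) combined with the fact that a matrix coefficient of the supercuspidal $\pi$ is compactly supported modulo $Z_G(F)$ shows that $f\mathbf{1}_\alpha c$ has compact support. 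The substance of the proof is to show that the coefficient of $\chi(\alpha)$ vanishes for all but finitely many $\alpha$; the sum is then an element of $\CC[\Lambda_G]$.

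Since $\RR_{\ge0}\widetilde{\sigma}_{\rho,G}$ is a ray, which by \eqref{G} is generated by the image of $\nu^\vee$, the support of $f$ is contained in $S_0(r)$. Hence only the cosets $\alpha$ in a half-line direction with $|\nu(g)|\to0$ contribute, together with finitely many others. It is therefore enough to show that the coefficient of $\chi(\alpha)$ vanishes when $|\nu|$ is sufficiently small on $\alpha G(F)^1$.

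For this I would use the isotropy of $G/Z_G$. Choose a proper parabolic subgroup $P=MN$ of $G$. Because $X_\rho(F)$ is a monoid containing $G$ and $f$ is locally constant with compact support on $X_\rho(F)$, I expect $f$ to be invariant under some compact open subgroup $N_c$ of $N(F)$ in a uniform way. More precisely, I expect that for $g$ with $|\nu(g)|$ small (so that $g$ lies near the boundary $X_\rho-G$), the function $f$ is invariant under $g N_c' g^{-1}$ for arbitrarily large compact open subgroups $N_c'\subseteq N(F)$. The reason is that the translates $g n g^{-1}$ should converge to $g$ in $X_\rho(F)$ in a controlled way; this can be made precise either via a closed $G\times G$-equivariant embedding $X_\rho\hookrightarrow V$ into a vector space, as in the remark on $\mathcal{S}_{\mathrm{ES}}$, or via the toric structure of $\overline{T}$.

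Given this invariance, I would average the matrix coefficient over $N_c'$. Supercuspidality of $\pi$ means that the Jacquet module vanishes, so $\int_{N_c'}\pi(n)v\,dn=0$ once $N_c'$ is large relative to $v$. This kills the coefficient for $|\nu(\alpha)|$ small. It is the same mechanism as the one in Godement--Jacquet showing that zeta integrals of cuspidal representations of $\GL_n$ are Laurent polynomials.

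The main obstacle is the uniform invariance step. Concretely, one must show that for $g\in S_0(r)$ with $|\nu(g)|\le q^{-k}$ and $k$ large, the function $f$ is right-invariant under a subgroup of $N(F)$ whose size grows with $k$, uniformly over the $K\times K$ Cartan cosets. I would first attempt this by using the Cartan decomposition to write $g=k_1mk_2$ and checking it on the toric variety $\overline{T}$. There, small $|\nu(m)|$ means that every $\tau\in\mathcal{B}_0$ pairs very positively with $m$, and conjugation by $m$ contracts or expands the root groups in a way that is bounded by the $\mathcal{B}_0$-coordinates of $m$. It is precisely here that the ray hypothesis is needed: it forces every coordinate of $m$ to become small together, so that no direction of $m$ can stay near $G$ while $|\nu(m)|\to0$.
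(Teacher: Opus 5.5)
Your reduction to cosets of $G(F)^1$ and your Jacquet-module averaging (over a compact open $N_c'\subseteq N(F)$ with $\int_{N_c'}\pi(n)v\,dn=0$) are fine. The uniform invariance step you flag as the main obstacle is false, however, so the argument cannot be completed as planned.

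Here is a counterexample. Take $G=\GL_2$ and $\rho=\mathrm{std}$; the ray hypothesis holds automatically since $X^*(\GL_2)=\ZZ\det$. Then $X_\rho=\mathrm{Mat}_2$. Let $f=\mathbf{1}_{\mathrm{Mat}_2(\mathcal{O}_F)}$, let $P$ be the upper triangular Borel, and let $g=\begin{psmatrix}1&\\&\varpi^k\end{psmatrix}\in S_0(0)$. Then $|\det g|=q^{-k}$, yet $f(g\begin{psmatrix}1&x\\&1\end{psmatrix})=\mathbf{1}_{\mathcal{O}_F}(x)$ for every $k$. So $f$ is not right invariant under $N(\varpi^{-1}\mathcal{O}_F)$ at points where $|\nu|$ is arbitrarily small. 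The element $g=\begin{psmatrix}\varpi^k&\\&1\end{psmatrix}$ does the same for $\overline{N}$, so no fixed parabolic works.

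The ray hypothesis only controls the image of $g$ in $\Omega_G$. It does not control the individual torus coordinates of the Cartan component of $g$, which can stay units while $|\nu(g)|\to 0$. In particular, small $|\nu(m)|$ does not force every $\tau\in\mathcal{B}_0$ to pair very positively with $m$: here $\tau=e_1$ pairs to $0$ with $m=(0,k)$.

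The missing idea is that you only need to control $f$ on $\supp(c)\cap\alpha G(F)^1$, where $c(g)=\langle\pi(g)v,w\rangle$, not on all of $\alpha G(F)^1\cap S_0(r)$. Write $G(F)=\bigsqcup_{j\in J}\bigsqcup_{z}zG(F)^1j$ with $J$ finite and $z$ ranging over central elements $m(\varpi)$. Since $c$ has compact support modulo the center, $\supp(c)\cap zG(F)^1j=zUj$ for a compact $U\subseteq G(F)^1$ independent of $z$.

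A central $z$ acts on each $V_{\alpha_i}$ in \eqref{eq:immersion} by the scalar $\alpha_i(z)$. Hence for $z$ deep in the ray, $f(z\,\cdot)$ is constant on the compact set $Uj$. Each term then becomes a constant times $\int_{G(F)^1}\langle\pi(g)\pi(j)v,w\rangle\,dg$, and this vanishes by exactly your $N_c'$-averaging. That is the paper's proof. There the ray hypothesis serves only to make all but finitely many $z\in S+\widetilde{\sigma}_{\rho,G}$ deep.

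Be aware that the constancy step genuinely needs $\langle\alpha_i,z\rangle>0$ for every $\alpha_i$ in \eqref{eq:immersion}. This holds in the examples of \S\ref{ssec:example}, but it does not follow from the ray hypothesis. For instance, take $G=\SL_2\times\GG_m$ with $\rho$ trivial on $\mathrm{PGL}_2(\CC)$. Then $X_\rho=\SL_2\times\mathbb{A}^1$. For $f=f_1\otimes\mathbf{1}_{\mathcal{O}_F}$ and $\pi$ trivial on $\GG_m$, one gets $Z(\pi_{|\nu|^s},f,v,w)=\zeta_F(s)\int_{\SL_2(F)}f_1(g)\langle\pi(g)v,w\rangle\,dg$. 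This is not a Laurent polynomial whenever the integral is nonzero.
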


\begin{proof}
    We identify $m\in X_\ast(Z_G)$ with its image $m(\varpi)$ in $G(F)$. Write
    \begin{align*}
        G(F)=\bigsqcup_{j\in J}\bigsqcup_{m\in X_\ast(Z_G)} mG(F)^1j
    \end{align*}
    for some finite subset $J\subset G(F)$. Let $\chi_\pi$ be the (unitary) central character of $\pi$. Since $f\in \mathcal{S}_{\mathrm{ES}}(X_\rho(F)),$ by \hyperref[lem:bounded:2]{Lemma \ref{lem:bounded:2}} there exists a finite subset $S\subset X_*(Z_G)$ such that 
    \begin{align}\label{cuspidalhol:expansion}
        Z(\pi_\chi,f,v,w)=\sum_{m\in S+\widetilde{\sigma}_{\rho,G}\cap X_\ast(Z_G)} \sum_{j\in J}q^{-\langle \chi,m\rangle}\chi_\pi(m)\chi(j)\int_{G(F)^1} f(mgj)\langle \pi(g)\pi(j)v,w\rangle dg.
    \end{align}
    We show that the sum over $m$ is finite so the lemma follows.

    Let $U$ be the union over $j$ of the support of $g\mapsto \langle \pi(g)\pi(j)v,w\rangle$ in $G(F)^1.$ The set $U$ is compact. Let $E/F$ be a finite Galois extension such that $G_E$ is split, so the morphism \eqref{eq:immersion} is defined over $E$. For $1\leq i\leq r$, let $C_i$ be a finite set of matrix coefficients of $V_{\alpha_i,E}$ that forms an $E$-basis of $\mathrm{End}_E(V_{\alpha_i,E})$. Since $f\in\mathcal{S}_{\mathrm{ES}}(X_\rho(F))$, there is $n\geq 0$ such that $f(x)=f(y)$ whenever $|\phi(x) - \phi(y)|_E \leq q^{-n}$ for all $\phi\in \bigcup\limits_{1\leq i\leq r}C_i$.
    
    For $\phi\in C_i$, $m\in S+\widetilde{\sigma}_{\rho,G}\cap X_\ast(Z_G)$ and $m_0\in \widetilde{\sigma}_{\rho,G}\cap X_\ast(Z_G)$, one has 
    \begin{align*}
        |\phi(m_0gj) - \phi(mgj)|_E = q^{-[E:F]\langle\alpha_i,m_0\rangle}|\phi(gj)|_E |1-\varpi_F^{\langle\alpha_i,m-m_0\rangle}|_E.
    \end{align*}
    Since $U$ is compact, $S$ is finite, and $\mathbb{R}_{\geq 0}\widetilde{\sigma}_{\rho,G}$ is a ray, we can find $m_0\in\widetilde{\sigma}_{\rho,G}\cap X_\ast(Z_G)$ such that
    \begin{align*}
        |\phi(mgj) - \phi(m_0gj)|_E \leq q^{-n}
    \end{align*}
    for all $g\in U$, $m\in S+m_0+\widetilde{\sigma}_{\rho,G}\cap X_\ast(Z_G)$ and $\phi\in \bigcup\limits_{1\leq i\leq r}C_i$. Then
    \begin{align*}
        \int_{G(F)^1} f(mgj)\langle \pi(g)\pi(j)v,w\rangle dg=f(m_0gj)\int_{G(F)^1} \langle \pi(g)\pi(j)v,w\rangle dg
    \end{align*}
    for all $m\in S+m_0+\widetilde{\sigma}_{\rho,G}\cap X_{\ast}(Z_G)$. As $G/Z_G$ is isotropic, $\pi$ is infinite-dimensional, and thus there are no $G(F)^1$-fixed vectors. This implies the integral above is zero for $m\in S+m_0+\widetilde{\sigma}_{\rho,G}\cap X_\ast(Z_G)$, so that \eqref{cuspidalhol:expansion} is a finite sum because $\mathbb{R}_{\geq 0}\widetilde{\sigma}_{\rho,G}$ is a ray.
\end{proof}

\subsection{On the basic function}
Assume $G$ is unramified in this subsection. For future global application, we establish analytic properties of $b_\rho$. Let $\mathcal{G}$ be a smooth model of $G$ such that $\mathcal{G}(\mathcal{O}_F)=K$.  Let $E/F$ be a finite unramified extension such that $G_E$ is split. By \cite[I.10.4]{Jantzen:Reps}, we can choose an $\mathcal{O}_E$-lattice $V_{\alpha_i,\mathcal{O}_{E}}$ of $V_{\alpha_i,E}$ that is stable under $\mathcal{G}_{\mathcal{O}_E}$. Furthermore, we choose the lattices in a way so that $\{V_{\alpha_i,\mathcal{O}_{E}}: i=1,\ldots,r\}$ is $\Gal(E/F)$-stable. Then the morphism \eqref{eq:immersion} can be defined over $\mathcal{O}_E$ 
\begin{align*}
    \omega:\mathcal{G}_{\mathcal{O}_E}\longrightarrow \bigoplus_{i=1}^r\mathrm{End}_{\mathcal{O}_E}(V_{\alpha_i,\mathcal{O}_E}).
\end{align*}
Let $\mathcal{X}_{\rho,\mathcal{O}_E}\longrightarrow\overline{\omega(\mathcal{G}_{\mathcal{O}_E})}$ be its normalization. It is a finite morphism since the schematic closure $\overline{\omega(\mathcal{G}_{\mathcal{O}_E})}$ is an excellent integral scheme \cite[\href{https://stacks.math.columbia.edu/tag/07QW}{Tag 07QW}]{stacks-project}. By faithfully flat descent, $\mathcal{X}_{\rho,\mathcal{O}_E}$ admits an $\mathcal{O}_F$-rational structure $\mathcal{X}_\rho,$ which is an integral affine normal scheme of finite type.  Since $\mathcal{O}_F$ is a discrete valuation ring and $\mathcal{X}_\rho$ is integral and hence torsion-free, $\mathcal{X}_\rho$ is flat over $\mathcal{O}_F$. Thus $\mathcal{X}_\rho$ is a model of $X_\rho$ by its very definition. We let $X_\rho(\mathcal{O}_F):=\mathcal{X}_\rho(\mathcal{O}_F).$ 

Since $G$ is unramified, $G$ is quasi-split and $T=M_0$ is a maximal torus that is unramified. Therefore, the Zariski closure $\overline{T}$ of $T$ in $X_\rho$ is the affine normal toric variety attached to the cone $\RR_{\ge 0}\sigma_\rho$ defined in \S \ref{subsec:affinetoric}. Recall the $\mathcal{O}_F$-models $\mathcal{T}$ and $\overline{\mathcal{T}}$ of $T$ and $\overline{T},$ respectively.

\begin{lemma}\label{lem:modelmap}
 Up to automorphisms of $\mathcal{T},$ we have a canonical closed immersion of group schemes $\mathcal{T}\longrightarrow \mathcal{G}.$ It extends to a closed immersion of monoids $\overline{\mathcal{T}}\longrightarrow\mathcal{X}_\rho$.
\end{lemma}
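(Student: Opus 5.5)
The plan is to build the map at the level of coordinate rings, working first over $\mathcal{O}_E$ and then descending. Since $G$ is unramified, $\mathcal{G}$ is reductive over $\mathcal{O}_F$ with $\mathcal{G}(\mathcal{O}_F)=K$ hyperspecial, and $T=M_0$ extends to a maximal torus of $\mathcal{G}$. Concretely, the schematic closure $\mathcal{T}'$ of $T$ in $\mathcal{G}$ is a closed subgroup scheme with generic fibre $T$. Because $K$ is the hyperspecial stabilizer of a point in the apartment of $A_0$, we have $\mathcal{T}'(\mathcal{O}_F)=T(F)\cap K$, the maximal bounded subgroup. By Bruhat--Tits, $\mathcal{T}'$ is then the connected N\'eron model. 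For unramified $T$ this connected N\'eron model is $\mathcal{T}=\Spec\mathcal{O}_E[X^*(T_E)]^{\Gal(E/F)}$, and the identification is unique up to automorphisms of $\mathcal{T}$. This gives the closed immersion $\mathcal{T}\to\mathcal{G}$. To check this, base change to $\mathcal{O}_E$, where $\mathcal{G}_{\mathcal{O}_E}$ is split and $\mathcal{T}_{\mathcal{O}_E}$ is the split maximal torus $\Spec\mathcal{O}_E[X^*(T_E)]$; the immersion is then standard, and it descends by faithfully flat descent.

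Next I extend the immersion to the monoids. Compose $\mathcal{T}_{\mathcal{O}_E}\to\mathcal{G}_{\mathcal{O}_E}\xrightarrow{\omega}\bigoplus_i\mathrm{End}_{\mathcal{O}_E}(V_{\alpha_i,\mathcal{O}_E})$. Since the lattices $V_{\alpha_i,\mathcal{O}_E}$ are $\mathcal{G}_{\mathcal{O}_E}$-stable, they decompose into $\mathcal{T}_{\mathcal{O}_E}$-weight spaces (a split torus acting on a lattice is diagonalizable). Hence the matrix coefficients of $\omega|_{\mathcal{T}}$ are $\mathcal{O}_E$-combinations of weight characters $\mu$ of the $V_{\alpha_i}$. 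These weights lie in the convex hull of $W.\alpha_i$, so they lie in $\sigma_\rho^\vee\cap X^*(T_E)$. Therefore the coordinate-ring map $\mathcal{O}_E[\bigoplus\mathrm{End}]\to\mathcal{O}_E[X^*(T_E)]$ lands in $\mathcal{O}_E[\sigma_\rho^\vee\cap X^*(T_E)]$. This gives a morphism $\overline{\mathcal{T}}_{\mathcal{O}_E}\to\overline{\omega(\mathcal{G}_{\mathcal{O}_E})}$. Since $\overline{\mathcal{T}}_{\mathcal{O}_E}$ is normal and integral and dominates into the closure, it lifts uniquely through the normalization $\mathcal{X}_{\rho,\mathcal{O}_E}$. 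The lift is a monoid map because it is one on the dense open $\mathcal{T}$ and the target is separated. Galois equivariance follows from uniqueness, so the map descends to $\overline{\mathcal{T}}\to\mathcal{X}_\rho$.

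The main obstacle is proving this extended map is a closed immersion, not merely finite. Both schemes are affine, so I will show surjectivity of the coordinate-ring map $\mathcal{O}(\mathcal{X}_{\rho,\mathcal{O}_E})\to\mathcal{O}_E[\sigma_\rho^\vee\cap X^*(T_E)]$. First, I claim each character $\mu\in\sigma_\rho^\vee\cap X^*(T_E)$ is the restriction of a regular function on $\mathcal{X}_\rho$. The generic fibre is fine by Renner's theorem: the closure of $T$ in $X_\rho$ is $\overline{T}$, so $\overline{T}\to X_\rho$ is a closed immersion over $E$. For the integral structure, take $\mu$ dominant after a Weyl translate, and realize it as an extremal weight-vector matrix coefficient $\langle\omega(g)v_\mu,v_\mu^*\rangle$, where $v_\mu$ spans a rank-one weight line of an integral lattice (e.g.\ a Weyl module or $\mathrm{Sym}$ of the $V_{\alpha_i}$). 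This function is integral on $\overline{\omega(\mathcal{G})}$ and restricts to $\mu$ on $\mathcal{T}$. Then I argue by normality. The coordinate ring $\mathcal{O}(\mathcal{X}_{\rho,\mathcal{O}_E})$ is the integral closure, and the image of $\overline{\mathcal{T}}_{\mathcal{O}_E}$ is closed because the map is finite: it is integral, since it is generated by restrictions of integral functions. So the image is a closed subscheme $Y$ that is finite and birational under $\overline{\mathcal{T}}$. Showing $Y$ is normal (for instance via a retraction $\mathcal{X}_\rho\to\overline{\mathcal{T}}$ given by Vinberg-type torus projection, or fibrewise via Renner's result over both residue and generic fields plus flatness) would finish the argument. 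I expect this normality and integrality step to need the most care, and I would attempt the fibrewise argument first.
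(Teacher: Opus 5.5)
The first part is fine. You identify $\mathcal{T}$ with the schematic closure of $T$ in $\mathcal{G}$ via Bruhat--Tits, where the paper cites \cite[Proposition A.8.2]{KP:BruhatTits} and \cite[Corollary B.3.5]{Conrad:reductivegroup}. Your construction of the monoid map $\overline{\mathcal{T}}_{\mathcal{O}_E}\to\mathcal{X}_{\rho,\mathcal{O}_E}$ through the universal property of normalization is also fine.

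The gap is the closed-immersion step, which you leave open. You reduce to showing that the image $Y$ of $\overline{\mathcal{T}}_{\mathcal{O}_E}$ in $\mathcal{X}_{\rho,\mathcal{O}_E}$ is normal, then only sketch strategies. Since $\overline{\mathcal{T}}\to Y$ is finite and birational and $\overline{\mathcal{T}}$ is normal, normality of $Y$ is equivalent to the statement you want. The fibrewise route is circular: it needs the special fibre of $Y$ to be the toric variety over the residue field.

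Your preceding claim is also not established. You assert that each $\mu\in\sigma_\rho^\vee\cap X^*(T_E)$ restricts from a regular function on $\mathcal{X}_{\rho,\mathcal{O}_E}$, via Weyl-module matrix coefficients. You give no reason such a coefficient is integral over $\mathcal{O}(\overline{\omega(\mathcal{G}_{\mathcal{O}_E})})$; regularity on the generic fibre does not exclude poles along components of the special fibre of the normalization. And if that claim were proved, it would already give surjectivity of $\mathcal{O}(\mathcal{X}_{\rho,\mathcal{O}_E})\to\mathcal{O}_E[\sigma_\rho^\vee\cap X^*(T_E)]$, making the normality step unnecessary.

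The missing idea is to use the ambient coordinates and the normality of $\overline{\mathcal{T}}$, not of $Y$. Choose $\mathcal{T}_{\mathcal{O}_E}$-weight bases of the lattices $V_{\alpha_i,\mathcal{O}_E}$. The coordinate functions of $\bigoplus_i\mathrm{End}_{\mathcal{O}_E}(V_{\alpha_i,\mathcal{O}_E})$ then restrict on $\mathcal{T}_{\mathcal{O}_E}$ to $0$ or to weights of the $V_{\alpha_i}$. All these weights lie in $\sigma_\rho^\vee\cap X^*(T_E)$, and the $W$-orbits of the $\alpha_i$ are taken to generate this monoid. Note this means monoid generation, not just cone generation, which is what the paper's ``by its very definition'' tacitly uses. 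So these restrictions generate $\mathcal{O}_E[\sigma_\rho^\vee\cap X^*(T_E)]$, and the schematic closure of $\mathcal{T}$ in the ambient space is already $\overline{\mathcal{T}}$.

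Hence the composite ring map $\mathcal{O}(\bigoplus_i\mathrm{End})\to\mathcal{O}(\mathcal{X}_{\rho,\mathcal{O}_E})\to\mathcal{O}(\overline{\mathcal{T}}_{\mathcal{O}_E})$ is surjective, so the second map is surjective and $\overline{\mathcal{T}}\to\mathcal{X}_\rho$ is a closed immersion. This is just your ``$\mathrm{Sym}$ of the $V_{\alpha_i}$'' variant with the generation hypothesis made explicit. The paper phrases it the other way round: the reduced closure $Z$ of $\mathcal{T}$ in $\mathcal{X}_\rho$ maps finitely and birationally onto this normal $\overline{\mathcal{T}}$, so $Z\cong\overline{\mathcal{T}}$ by \cite[Tag 0AB1]{stacks-project}.
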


\begin{proof}
By \cite[Proposition A.8.2]{KP:BruhatTits} $\mathcal{G}$ contains a closed $\mathcal{O}_F$-torus $\mathcal{T}'$ that is a smooth model of $T$. It follows from \cite[Corollary B.3.5]{Conrad:reductivegroup} that $\mathcal{T}$ and $\mathcal{T}'$ are canonically isomorphic up to automorphisms of $\mathcal{T}.$ 
To prove $\mathcal{T}\longrightarrow \mathcal{G}$ extends, by faithfully flat descent we may assume $G$ is split, so we take $E=F$. The schematic closure of $\mathcal{T}$ in $\bigoplus_{i=1}^r\mathrm{End}_{\mathcal{O}_F}(V_{\alpha_i,\mathcal{O}_F})$ is isomorphic to $\overline{\mathcal{T}}$ by its very definition. Let $Z$ be the closure of $\mathcal{T}$ in $\mathcal{X}_\rho$ equipped with the reduced structure. Then we have a finite birational morphism of integral schemes  $Z\longrightarrow\overline{\mathcal{T}}$. It is an isomorphism by  \cite[\href{https://stacks.math.columbia.edu/tag/0AB1}{Tag 0AB1}]{stacks-project}.

\end{proof}

\begin{lemma}[Satake]\label{Satake} Suppose $G$ is unramified. The map $(-)^{(P_0)}$ restricts to an isomorphism
\begin{align*}
    \mathrm{Sat}:\mathcal{C}(G(F)/\!/K)\longrightarrow \mathcal{C}(M_0(F)/\!/K_{M_0})^{W(G,A_0)}
\end{align*}
extending the usual Satake isomorphism (c.f. \cite[Theorem 7.5.3]{GH:book})
\begin{align*}
    \mathrm{Sat}:C_c^\infty(G(F)/\!/K)\longrightarrow C_c^\infty(M_0(F)/\!/K_{M_0})^{W(G,A_0)}.
\end{align*}
\end{lemma}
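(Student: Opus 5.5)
The plan is to combine the spectral characterization of $\mathcal{C}(G(F)/\!/K)$ with the classical Satake isomorphism. Under $\HP_G$, the space $\mathcal{C}(G(F)/\!/K)$ corresponds to $K$-biinvariant sections supported on the unramified component $(M_0,\Im\Lambda_{M_0}\cdot\mathbf{1})$ of $\Theta^{K}$. Since $G$ is unramified, $K$ is hyperspecial. So for $\sigma=\chi\in\Im\Lambda_{M_0}$ the space $(\Ind_{P_0}^G\chi)^K$ is the line spanned by $v_0$. Hence such a section is determined by a scalar function $\phi(\chi):=\langle\HP(f)(M_0,\chi)v_0,v_0\rangle$, which is smooth on $\Im\Lambda_{M_0}$. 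The invariance condition in $C^\infty(\Theta)^{\mathrm{inv}}$, via the normalized intertwining operators ${}^\circ c(s,\chi)$ acting on spherical vectors by scalars of modulus one that are trivial after normalization, makes $\phi$ $W(G,A_0)$-invariant. (Equivalently, $\Ind_{P_0}^G\chi\cong\Ind_{P_0}^G s.\chi$ and the spherical vector is unique.) Conversely, any $W$-invariant smooth $\phi$ gives a section in $\mathcal{C}(\Temp_\Ind(G))^{K\times K}$. Thus $\HP_G$ identifies $\mathcal{C}(G(F)/\!/K)$ topologically with $C^\infty(\Im\Lambda_{M_0})^{W}$.

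On the torus side, $\HP_{M_0}$ identifies $\mathcal{C}(M_0(F)/\!/K_{M_0})$ with $C^\infty(\Im\Lambda_{M_0})$ through the Fourier transform on the lattice $\Omega$. Rapid decay in $\Omega$ corresponds to smoothness on the compact torus, and $\Xi_{M_0}\equiv 1$ on $M_0$. Taking $W$-invariants on both sides is compatible, since $W$ acts on $\Omega$ and dually on $\Im\Lambda_{M_0}$.

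I would then use the commutative diagram \eqref{CT:spectral} for $P=P_0$. The spectral constant-term map sends the section $\chi\mapsto\phi(\chi)v_0\otimes v_0$ to $\chi\mapsto\phi(\chi)$, because evaluating at the identity of $K\times K$ gives $v_0(1)\otimes v_0(1)=1$. Therefore $f^{(P_0)}$ has spectral transform $\phi$. This shows that $(-)^{(P_0)}$ is the composite of topological isomorphisms
\begin{align*}
\mathcal{C}(G(F)/\!/K)\xrightarrow{\ \HP_G\ }C^\infty(\Im\Lambda_{M_0})^{W}\xrightarrow{\ \HP_{M_0}^{-1}\ }\mathcal{C}(M_0(F)/\!/K_{M_0})^{W}.
\end{align*}
In particular it lands in $W$-invariants and is bijective. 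Compatibility with the classical Satake map on $C_c^\infty$ is immediate, since both are given by the same constant-term integral; indeed \cite[Theorem 7.5.3]{GH:book} uses $\delta^{1/2}\int_N$.

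The main obstacle is the middle step: checking that the $\mathrm{inv}$-condition on the unramified component reduces exactly to $W$-invariance of $\phi$. This requires showing that ${}^\circ c_{P_0|P_0}(s,\chi)$ acts trivially on $v_0\otimes v_0$, rather than by some unitary scalar depending on $\chi$. I would verify this with the Gindikin–Karpelevich formula: $J(s,\chi)v_0=c_s(\chi)v_0$, and the normalization in ${}^\circ c$ (tensoring with $J(\cdot,\chi^\vee)^{-1}$) cancels the $c$-functions. Since both factors are scalars on the spherical line, the product is $1$. Equivalently, the Plancherel measure factor absorbs these scalars, so no extra cocycle twist appears.
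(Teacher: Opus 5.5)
Your proposal is correct and is essentially the paper's argument: both factor $(-)^{(P_0)}$ through the diagram \eqref{CT:spectral}. On the $G$ side, $\mathcal{C}(\Temp_\Ind(G))^{K\times K}$ is identified with $C^\infty(\Im\Lambda_{M_0})^{W(G,A_0)}$ by restriction to the unramified component, and on the torus side $\mathcal{C}(\Temp_\Ind(M_0))^{K_{M_0}\times K_{M_0}}$ is identified with $C^\infty(\Im\Lambda_{M_0})$. The only difference is that the paper cites \cite[Lemma 4.11]{DRS} for the first identification (and also invokes density of $C_c^\infty(G(F)/\!/K)$), whereas you verify the key point ${}^\circ c_{P_0|P_0}(s,\chi)(v_0\otimes v_0)=v_0\otimes v_0$ by Gindikin--Karpelevich cancellation; that works, though it also follows at once because $\HP(\mathbf{1}_K)$, whose value at every $(M_0,\chi)$ is $v_0\otimes v_0$, lies in $C^\infty(\Theta)^{\mathrm{inv}}$.
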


\begin{proof} The diagram in \eqref{CT:spectral} induces a commutative diagram
\begin{center}
    \begin{tikzcd}
        \mathcal{C}(G(F)/\!/K)\arrow[d,swap,"(-)^{(P_0)}"]\arrow[rr,"\HP_G"]& &\mathcal{C}(\Temp_\Ind(G))^{K\times K}\arrow[d,"(-)^{(P_0)}"]\\
        \mathcal{C}(M_0(F)/\!/K_{M_0})\arrow[rr,"\HP_{M_0}"]& &\mathcal{C}(\Temp_\Ind(M_0))^{K_{M_0}\times K_{M_0}}.
    \end{tikzcd}
\end{center}
Let $\Im\Lambda_{M_0}$ denote the connected component in $\Temp_\Ind(G)$ containing $(M_0,1)$. Since the trivial representation is Weyl-invariant, by \cite[Lemma 4.11]{DRS} restriction to $\Im\Lambda_{M_0}$ gives an isomorphism 
\begin{align}\label{CTemp:Kbiinv}
    \mathcal{C}(\Temp_{\Ind}(G))^{K\times K} \cong C^\infty(\Im\Lambda_{M_0})^{W(G,A_0)}.
\end{align}
Since $M_0$ is a torus, one has $\mathcal{C}(\Temp_\Ind(M_0))^{K_{M_0}\times K_{M_0}} = C^\infty(\Im\Lambda_{M_0})$. The lemma follows from \eqref{CTemp:Kbiinv} and the density of $C_c^\infty(G(F)/\!/K)$ in $\mathcal{C}(G(F)/\!/K)$.
\end{proof}

\begin{lemma}
    One has $\supp b_\rho \subseteq K\{g\in M_0(F): \chi(g)\leq 1 \text{ for all }\chi\in \overline{C}_{\rho,M_0}\}K$ and $b_\rho(\mathrm{id})=1.$ In particular, $\supp b_\rho\subseteq X_\rho(\mathcal{O}_F).$
\end{lemma}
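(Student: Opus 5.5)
The plan is to reduce the whole statement to the torus $M_0=T$ via the Satake map of \hyperref[Satake]{Lemma \ref{Satake}}, and then use the explicit toric description of \hyperref[toric:basic]{Lemma \ref{toric:basic}}.

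\emph{Step 1: the constant term of $b_\rho$.} Since $b_\rho$ is bi-$K$-invariant and lies in $\mathcal{C}(G(F))$, its constant term $b_\rho^{(P_0)}$ is a $W(G,A_0)$-invariant, $K_{M_0}$-invariant function on $M_0(F)$. By \eqref{CT:spectral} and \eqref{CTemp:Kbiinv}, its spectral transform at an unramified $\chi\in\Im\Lambda_{M_0}$ is $\HP(b_\rho)(M_0,\chi)$ evaluated on the spherical vector. This equals $L(\tfrac12,\chi,\rho_{M_0})$, since $\HP(\mathbf{1}_K)$ is the spherical projector. Hence $b_\rho^{(P_0)}$ is the basic function of $(M_0,\rho_{M_0})$. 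By \hyperref[toric:basic]{Lemma \ref{toric:basic}} (after the $|\nu|^{1/2}$ twist), it is $|\nu|^{1/2}(\rho'_T)_!(\mathbf{1}_{\mathbb{A}^\rho(\mathcal{O}_F)}\otimes\mathbf{1}_{T(\mathcal{O}_F)})$. The fibres of $\rho'_T$ over a point $t$ meet the integral points only when $H_{M_0}(t)\in\widetilde{\sigma}_{\rho,M_0}$. So $b_\rho^{(P_0)}$ is supported on $\{m: |\chi|(m)\le 1\ \forall\chi\in\overline{C}_{\rho,M_0}\}$, with nonnegative values.

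\emph{Step 2: from the constant term to $b_\rho$ on $KmK$.} For $m\in\Omega^+$, I will use the classical triangularity of the Satake transform. The constant term satisfies
\begin{align*}
b_\rho^{(P_0)}(m') = \delta_{P_0}^{1/2}(m')\sum_{m} b_\rho(m)\,\vol\bigl(N_0(F)\cap Km K m'^{-1}\bigr),
\end{align*}
where the sum is over dominant $m$ with $m'\le m$ in the dominance order (i.e.\ $m-m'\in\ZZ_{\ge0}\Sigma^\vee$ modulo $\Omega$ ambiguities) and the coefficient of $m=m'$ nonzero. Inverting this unitriangular relation, $b_\rho$ is supported on dominant $m$ such that some dominant $m''\ge m$ has $b_\rho^{(P_0)}(m'')\ne0$. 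Inverting over $\mathcal{C}$ rather than $C_c^\infty$ is legitimate because, by \hyperref[Sas:f1alpha:polynomial]{Corollary \ref{Sas:f1alpha:polynomial}}, each $\alpha$-slice $b_\rho\mathbf{1}_\alpha$ is finitely supported in $\Omega^+$. I then need convexity: if $m''$ is dominant with $\langle\tau,m''\rangle\ge0$ for all $\tau$ in the $W$-stable cone $\overline{C}_{\rho,M_0}$, and $m\le m''$ is dominant, then $m$ lies in the convex hull of $W m''$. This follows from the standard fact that a dominant $m\le m''$ lies in $\mathrm{conv}(Wm'')$. Since the set $\{\langle\tau,\cdot\rangle\ge0\ \forall\tau\}$ is $W$-stable and convex, $m$ satisfies the same inequalities. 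This gives the support claim. I expect this step to be the main obstacle: verifying the triangular expansion carefully in the non-split unramified setting, and controlling the $\Omega$ versus coroot-lattice issues. If that fails, I would instead argue directly with Macdonald's formula and the positivity of $q$-analogue weight multiplicities.

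\emph{Step 3: the remaining claims.} The value $b_\rho(\mathrm{id})$ is the coefficient at $m=0$. By triangularity and Step 1, the only dominant $m''\ge0$ in the support with $\langle\tau,m''\rangle\ge0$ that contributes at $0$ is forced to be $0$. Indeed, strong convexity of $\RR_{\ge0}\widetilde{\sigma}_{\rho,M_0}$, together with pairing against $\nu$ (whose value on coroots is zero), gives $\langle\nu,m''\rangle=\langle\nu,m\rangle$, so $m''$ and $0$ lie in the same $\alpha$-slice $\alpha=0$. There $b_\rho^{(P_0)}$ is the point mass $b_\rho^{(P_0)}(\mathrm{id})=1$ from \hyperref[toric:basic]{Lemma \ref{toric:basic}}. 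Hence $b_\rho(\mathrm{id})=1$. Finally, $K\subseteq X_\rho(\mathcal{O}_F)$, and by \hyperref[lem:modelmap]{Lemma \ref{lem:modelmap}} each $m$ with $|\chi|(m)\le1$ on $\overline{C}_{\rho,M_0}$ lies in $\overline{T}(\mathcal{O}_F)\subseteq X_\rho(\mathcal{O}_F)$, since the characters in $\sigma_\rho^\vee$ are integral on $m$. Because $X_\rho(\mathcal{O}_F)$ is stable under the $K\times K$ action, this gives $\supp b_\rho\subseteq X_\rho(\mathcal{O}_F)$.
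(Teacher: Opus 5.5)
Your proposal is correct and follows essentially the paper's route: you identify $\mathrm{Sat}(b_\rho)=b_\rho^{(P_0)}$ with the toric basic function $b_{\rho_{M_0}}$ via Lemma~\ref{toric:basic}, read off its support in $\overline{T}(\mathcal{O}_F)$, and transfer this to $b_\rho$ using the triangularity of the Satake transform and a maximal-element argument on each slice. Your convexity step (a dominant $m\le m''$ lies in the convex hull of $W(G,A_0)m''$, and the cone is $W(G,A_0)$-stable) is exactly what the paper tacitly uses when it invokes maximality of $m_0$ to force $\lambda=m_0$. One correction: the per-slice finite support of $b_\rho$ is the statement $b_\rho\in C^\infty_{\mathrm{ac}}(G(F))$ from \cite[Proposition 6.16]{DRS}, as the paper uses; Corollary~\ref{Sas:f1alpha:polynomial} does not give it, since that corollary only constrains which slices $\alpha$ occur.
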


\begin{proof}
 By \hyperref[toric:basic]{Lemma \ref{toric:basic}} $\mathrm{Sat}(b_\rho) = b_{\rho_{M_0}}$ is a nonnegative function with $b_{\rho_{M_0}}(\mathrm{id})=1.$ Since for $\chi\in C_{\rho,M_0}$
 \begin{align*}
    \int_{M_0(F)}b_{\rho_{M_0}}(m) |\nu|^{-1/2}\chi(m)dm=L(0,\chi,\rho_{M_0}),
 \end{align*}
we have
\begin{align*}
    \supp\, b_{\rho_{M_0}}\subseteq \{ m\in M_0(F): \chi(m)\le 1 \textrm{ for all } \chi\in \overline{C}_{\rho,M_0} \}=\overline{T}(O_F).
\end{align*}

By \hyperref[Schwartz=as+cs]{Theorem \ref{Schwartz=as+cs}} there exists $r\in \RR$ such that $\supp b_\rho \subseteq S_0(r).$ We need to show we can take $r=0$. Recall $\Omega=X_\ast(A_0)$ as $G$ is unramified. Let $\ge$ denote the usual partial order on $\Omega=X_*(A_0)$ given by specifying $y\ge x $ if and only if $y-x = \sum_{\gamma\in \Delta}c_\gamma\gamma^\vee$ with $c_\gamma\in \mathbb{Z}_{\geq 0}.$ Suppose on the contrary that $\supp b_\rho \not\subseteq S_0(0).$ Choose $m_0\in S_0(r)-S_0(0).$ Let $X^\ast(G)_\RR^\perp:=\{\lambda\in \Omega: \langle \chi,\lambda\rangle = 0\text{ for all }\chi\in X^*(G)_\RR\}.$ As $b_\rho\in C^\infty_{\mathrm{ac}}(G(F)),$ the set 
\begin{align*}
    J:=\{m\in \Omega\cap \supp b_\rho: m_0 - m\in X^\ast(G)_\RR^\perp\}
\end{align*}
is finite. Since $\mathcal{B}_0$ is $W(G,A_0)$-invariant, the space $(S_0(r)-S_0(0))\cap \supp b_\rho\cap \Omega$ is $W(G,A_0)$-invariant. Therefore, we may choose $m_0\in \Omega^+$ to be maximal with respect to the partial order $\ge$.

Write $b_\rho = \sum_{\lambda\in \Omega^+} b_\rho(\lambda)\mathbf{1}_{K\lambda K}.$ Then
\begin{align*}
    b_{\rho_{M_0}}(m_0) = \mathrm{Sat}(b_\rho)(m_0) &= \sum_{\lambda\in \Omega^+} b_\rho(\lambda) \mathrm{Sat}(\mathbf{1}_{K\lambda K})(m_0) \\
    &= \sum_{\lambda\in \Omega^+} b_\rho(\lambda)\sum_{\mu\in \Omega^+} a_{\lambda}(\mu) \left(\sum_{s\in W(G,A_0)/\mathrm{Stab}(\mu)}\mathbf{1}_{s.\mu T(F)^1}\right)(m_0)
\end{align*}
for some constants $a_\lambda(\mu)\in \RR_{\ge 0}.$ By the discussion around \cite[(6.7)]{Satake}, one has $a_{\lambda}(\mu)\neq 0$ only if $\mu\leq\lambda$ and $\lambda-\mu\in X^*(G)^\perp_\RR$. Hence 
\begin{align*}
    b_{\rho_{M_0}}(m_0) &= \sum_{\lambda\in \Omega^+\cap J}b_\rho(\lambda)\sum_{\substack{\mu\in \Omega^+\cap (m_0+X^\ast(G)_\RR^\perp)\\ \mu\le \lambda}} a_{\lambda}(\mu) \left(\sum_{s\in W(G,A_0)/\mathrm{Stab}(\mu)}\mathbf{1}_{s.\mu T(F)^1}\right)(m_0).
\end{align*}
If $b_\rho(\lambda)\neq 0$ and $s.\mu=m_0,$ then $\lambda\ge \mu\ge s.\mu=m_0,$ so it must be $\lambda=\mu=m_0$ by the maximality of $m_0$. Therefore,
\begin{align*}
    b_{\rho_{M_0}}(m_0) = b_\rho(m_0)a_{m_0}(m_0).
\end{align*}
By loc. cit. one has $a_{m_0}(m_0)\neq 0$. This implies $b_{\rho_{M_0}}(m_0)\neq 0$, a contradiction. Hence $\supp b_\rho\subseteq S_0(0)$. The last assertion follows from \hyperref[lem:modelmap]{Lemma \ref{lem:modelmap}}. Since $\mathrm{Sat}(\mathbf{1}_{K})(\mathrm{id})=1$, the discussion above then implies $1 = b_{\rho_{M_0}}(\mathrm{id}) = b_\rho(\mathrm{id})$. 
\end{proof}

As a special case of the \hyperref[HCPlan]{Harish-Chandra Plancherel theorem}, we have the spherical inversion formula: For $f\in\mathcal{C}(G(F)/\!/K)$, one has
\begin{align*}
    f(g) = \frac{\gamma(G|M_0)}{\#W(G,A_0)}\int_{\Im\Lambda_{M_0}} \HP(f)(M_0,\chi)\Xi_{\chi^{-1}}(g) j(\chi)^{-1}d\chi
\end{align*}
and
\begin{align*}
    \HP(f)(M_0,\chi) = \int_{G(F)} f(g)\Xi_\chi(g)dg.
\end{align*}

Let $m\in G(F)$. Since
\begin{align*}
    \HP(\mathbf{1}_{KmK})(M_0,\chi) = \int_{G(F)} \mathbf{1}_{KmK}(g)\Xi_\chi(g) dg = \vol(KmK) \Xi_\chi(m),
\end{align*}
applying the spherical inversion formula to the function $\mathbf{1}_{KmK},$ we have
\begin{align*}
    1 = \mathbf{1}_{KmK}(m) = \frac{\gamma(G|M_0)}{\# W(G,A_0)}\int_{\Im\Lambda_{M_0}} \vol(KmK) \Xi_\chi(m)\Xi_{\chi^{-1}}(m)j(\chi)^{-1} d\chi,
\end{align*}
so 
\begin{align}
\begin{split}\label{spherical:KmK}
    \vol(KmK)^{-1} &= \frac{\gamma(G|M_0)}{\# W(G,A_0)}\int_{\Im\Lambda_{M_0}} \Xi_\chi(m)\Xi_{\chi^{-1}}(m)j(\chi)^{-1} d\chi\\
    &=\frac{\gamma(G|M_0)}{\# W(G,A_0)}\int_{\Im\Lambda_{M_0}} |\Xi_\chi(m)|^2j(\chi)^{-1} d\chi.
\end{split}
\end{align}

Let $\{\widetilde{\mu}_j\}$ be the image of the central characters of irreducible subrepresentations of $\rho$ in $\Hom_\ZZ(X^\ast(G),\ZZ)$ as considered in \S\ref{ssec:LLC}. Let $\zeta_F(s)$ be the usual local zeta function attached to $F$.

\begin{proposition} Let $\tau\in |\nu|^{-1/2}C_{\rho,G}$ and $\epsilon=\mathrm{min}_j \langle \tau, \widetilde{\mu}_j\rangle>-1/2.$ Then  
\begin{align*}
    \tau(m)b_\rho(m)\le  \zeta_F(1/2+\epsilon)^{\dim \rho}\delta_{P_0}^{1/2}(m)
\end{align*}
for all $m\in \Omega^+\cap \overline{C}_{\rho,M_0}^\lor.$
\end{proposition}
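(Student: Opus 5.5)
We will use the spherical inversion identity \eqref{spherical:KmK} together with the spectral description of $b_\rho$. Since $b_\rho$ is bi-$K$-invariant, we have $b_\rho(m)=\vol(KmK)^{-1}\int_{KmK}b_\rho(g)\,dg$. Applying the spherical inversion formula to $b_\rho\tau$, which lies in $\mathcal{C}(G(F)/\!/K)$ by \hyperref[Sas:twist:HC]{Lemma \ref{Sas:twist:HC}} because $\Re\tau\in|\nu|^{-1/2}C_{\rho,G}$, gives
\begin{align*}
\tau(m)b_\rho(m)=\frac{\gamma(G|M_0)}{\#W(G,A_0)}\int_{\Im\Lambda_{M_0}} \HP(b_\rho\tau)(M_0,\chi)\,\Xi_{\chi^{-1}}(m)\,j(\chi)^{-1}d\chi .
\end{align*}
By the definition of $b_\rho$ and \hyperref[Sas:twist:HC]{Lemma \ref{Sas:twist:HC}}, $\HP(b_\rho\tau)(M_0,\chi)=L(\tfrac12,\chi\tau,\rho_{M_0})$ for unitary unramified $\chi$, using $\HP(\mathbf 1_K)(M_0,\chi)=1$.

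The first step is the pointwise bound on the $L$-factor. Writing $L(\tfrac12,\chi\tau,\rho_{M_0})$ as the product over $T^\vee$-weights of $\rho$, each factor is $(1-z\,q^{-1/2-\langle\tau,\widetilde\mu_j\rangle})^{-1}$ with $|z|=1$. Since $\tau$ is central, only its pairing with the central part $\widetilde\mu_j$ enters. Each factor is therefore bounded in absolute value by $(1-q^{-1/2-\epsilon})^{-1}=\zeta_F(1/2+\epsilon)$. There are $\dim\rho$ factors, so $|L|\le\zeta_F(1/2+\epsilon)^{\dim\rho}$ uniformly on $\Im\Lambda_{M_0}$.

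The second step bounds the remaining integral $\frac{\gamma(G|M_0)}{\#W}\int|\Xi_{\chi^{-1}}(m)|j(\chi)^{-1}d\chi$ by $\delta_{P_0}^{1/2}(m)$. Positivity of the Plancherel density $j(\chi)^{-1}$ and Cauchy--Schwarz give
\begin{align*}
\int|\Xi_{\chi}(m)|\,j^{-1}d\chi\le\Big(\int|\Xi_\chi(m)|^2 j^{-1}d\chi\Big)^{1/2}\Big(\int j^{-1}d\chi\Big)^{1/2}.
\end{align*}
By \eqref{spherical:KmK}, the first factor equals a constant times $\vol(KmK)^{-1/2}$. The second factor is a constant, namely $\mathbf 1_K(\mathrm{id})$ up to normalization, since spherical inversion applied to $\mathbf 1_K$ at $g=\mathrm{id}$ gives $\frac{\gamma(G|M_0)}{\#W}\int j^{-1}d\chi=1$. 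Combining these yields $\vol(KmK)^{-1/2}\cdot 1$.

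The main obstacle is converting $\vol(KmK)^{-1/2}$ into exactly $\delta_{P_0}^{1/2}(m)$ with no constant. By \eqref{Cartan:measure} we only know $\vol(KmK)\asymp\delta_{P_0}^{-1}(m)$. For unramified $G$, however, Macdonald's formula gives $\vol(KmK)=\delta_{P_0}^{-1}(m)\cdot W(q^{-1})/W_m(q^{-1})\ge\delta_{P_0}^{-1}(m)$, since the ratio of Poincaré polynomials is at least $1$. This gives $\vol(KmK)^{-1/2}\le\delta_{P_0}^{1/2}(m)$, and the proposition follows. The restriction $m\in\overline{C}^\vee_{\rho,M_0}$ is not needed for the estimate; it only records where $b_\rho$ is supported, by the preceding lemma. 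If the Cauchy--Schwarz route loses a constant, the fallback is the direct estimate $|\Xi_\chi(m)|\le\Xi(m)$ combined with an exact evaluation of $\int j^{-1}$.
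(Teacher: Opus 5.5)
Your argument is correct and is essentially the paper's proof: spherical inversion for the $\tau$-shifted spectral integral, Cauchy--Schwarz against the exact identities \eqref{spherical:KmK} and $\frac{\gamma(G|M_0)}{\#W(G,A_0)}\int_{\Im\Lambda_{M_0}} j(\chi)^{-1}d\chi=1$, the uniform bound $|L(\tfrac12,\chi\tau,\rho_{M_0})|\le\zeta_F(1/2+\epsilon)^{\dim\rho}$, and Macdonald's formula giving $\vol(KmK)^{-1/2}\le\delta_{P_0}^{1/2}(m)$. The differences are cosmetic: you get the shifted formula by applying inversion to $b_\rho\tau\in\mathcal{C}(G(F)/\!/K)$ via Lemma~\ref{Sas:twist:HC} rather than by shifting the contour, and you bound the $L$-factor through Frobenius eigenvalues rather than the Weil-restriction lemma. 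Since the Cauchy--Schwarz step loses no constant, your stated fallback is unnecessary, and it would in any case only give the weaker bound with $\Xi(m)$ in place of $\delta_{P_0}^{1/2}(m)$.
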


\begin{proof} 
Let $m\in \Omega^+\cap \overline{C}_{\rho,M_0}^\lor.$ By  the spherical inversion formula, we have
\begin{align*}
    b_\rho(m) = \frac{\gamma(G|M_0)}{\# W(G,A_0)}\int_{\Im\Lambda_{M_0}} L(1/2,\chi,\rho_{M_0}) \Xi_{\chi^{-1}}(m)j(\chi)^{-1} d\chi.
\end{align*}
Let $\tau\in |\nu|^{-1/2}C_{\rho,G}$ and $\epsilon=\mathrm{min}_j \langle \tau, \widetilde{\mu}_j\rangle>-1/2.$ By shifting contour of $\chi$ to $\Re(\chi)=\tau$, we have
\begin{align*}
    \tau(m)b_\rho(m) = \frac{\gamma(G|M_0)}{\# W(G,A_0)}\int_{\Im\Lambda_{M_0}} L(1/2+\langle \tau,\rho\rangle,\chi,\rho_{M_0}) \Xi_{\chi^{-1}}(m)j(\chi)^{-1} d\chi.
\end{align*}
By the Cauchy-Schwarz inequality, its absolute value is bounded by the product of 
\begin{align}\label{eq:unramified1}
     \left(\frac{\gamma(G|M_0)}{\# W(G,A_0)}\int_{\Im\Lambda_{M_0}} |L(1/2+\langle \tau,\rho\rangle,\chi,\rho_{M_0}) |^2j(\chi)^{-1} d\chi\right)^{1/2}
\end{align}
and 
\begin{align}\label{eq:unramified2}
     \left(\frac{\gamma(G|M_0)}{\# W(G,A_0)}\int_{\Im\Lambda_{M_0}} |\Xi_{\chi}(m)|^2j(\chi)^{-1} d\chi\right)^{1/2}.
\end{align}

By \eqref{spherical:KmK}, the quantity \eqref{eq:unramified2} is $\mathrm{vol}(KmK)^{-1/2}$. We claim $\mathrm{vol}(KmK)^{-1/2}\le \delta_{P_0}^{1/2}(m).$  When $G$ is split and simply connected, this is a consequence of \cite[Proposition 3.2.15]{Macdonald:spherical} (see also \cite{Gross:Satake}). The statements in loc. ~cit. can be generalized when $G$ is unramified and $K$ is hyperspecial \cite[\S 1.6]{CasselmanCelyHales}. On the other hand, the quantity \eqref{eq:unramified1} is bounded by
\begin{align*}
   & \sup_{\chi\in \Im\Lambda_{M_0}} |L(1/2+\langle \tau,\rho\rangle,\chi,\rho_{M_0})|\left(\frac{\gamma(G|M_0)}{\# W(G,A_0)}\int_{\Im\Lambda_{M_0}} j(\chi)^{-1} d\chi\right)^{1/2}.
\end{align*}
Apply \eqref{spherical:KmK} again with $m=\mathrm{id}_{G(F)}\in K$. Using the fact that $\Xi_\chi(\mathrm{id}) =1$, one has
\begin{align*}
    \frac{\gamma(G|M_0)}{\# W(G,A_0)}\int_{\Im\Lambda_{M_0}} j(\chi)^{-1} d\chi=\vol(K) = 1.
\end{align*}
It remains to show $|L(1/2+\langle \tau,\rho\rangle,\chi,\rho_{M_0})|\le \zeta_F(1/2+\epsilon)^{\dim \rho},$ which is justified by the lemma below.
\end{proof}

\begin{lemma} Let $T$ be an unramified torus over $F$ and $\rho:{}^LT\to \GL_{V_\rho}(\CC)$ be a tempered representation. For $\epsilon>0$ and any $\chi\in\Im\Lambda_T,$
\begin{align*}
    |L(\epsilon,\chi,\rho)| \leq \zeta_F(\epsilon)^{\dim \rho}.
\end{align*}

\end{lemma}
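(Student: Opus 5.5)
We will prove the bound by factoring $L(\epsilon,\chi,\rho)$ into one-dimensional pieces and estimating each factor separately. Since $T$ is unramified, the Weil group acts on $X^*(T^\vee)$ through the Frobenius, which permutes the $T^\vee$-weights of $\rho$. We first decompose $\rho|_{T^\vee}$ into characters $\rho_1,\dots,\rho_n$, with $n=\dim\rho$, and group them into Frobenius orbits $S_1,\dots,S_k$.

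As in the proof of Lemma \ref{etaleTate:local:factr:equal}, we then have $L(s,\chi,\rho)=\prod_i L(s,\chi\circ\pi_i)$, where $\pi_i:\mathrm{Res}_{E_i/F}\mathbb{G}_m\to T$ and $E_i/F$ is the unramified extension of degree $d_i=\#S_i$. (Alternatively, we can work directly with the $L$-parameter: the inertia acts trivially on the relevant invariants, since otherwise the factor is simply $1$.) Each factor $L(s,\chi\circ\pi_i)$ is a Tate $L$-factor of the character $\chi\circ\pi_i$ of $E_i^\times$. Because $\chi\in\Im\Lambda_T$ is unitary, this factor equals either $1$ (when $\chi\circ\pi_i$ is ramified) or $(1-u\,q_{E_i}^{-s})^{-1}$ with $|u|=1$ and $q_{E_i}=q^{d_i}$.

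The key elementary estimate is, for real $s=\epsilon>0$ and $|u|=1$,
\begin{align*}
|1-uq^{-d\epsilon}|^{-1}\le (1-q^{-d\epsilon})^{-1}=\prod_{\zeta^d=1}(1-\zeta q^{-\epsilon})^{-1}\le \prod_{\zeta^d=1}|1-\zeta q^{-\epsilon}|^{-1}\le (1-q^{-\epsilon})^{-d}=\zeta_F(\epsilon)^d.
\end{align*}
It is cleaner to write this more simply: $1-q^{-d\epsilon}\ge 1-q^{-\epsilon}$, and each bound $(1-q^{-\epsilon})^{-1}\ge 1$. Hence every unramified factor is bounded by $\zeta_F(\epsilon)\le\zeta_F(\epsilon)^{d_i}$, and every ramified factor equals $1\le\zeta_F(\epsilon)^{d_i}$.

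Multiplying these bounds over $i$ gives
\begin{align*}
|L(\epsilon,\chi,\rho)|\le\prod_i\zeta_F(\epsilon)^{d_i}=\zeta_F(\epsilon)^{\dim\rho},
\end{align*}
using $\zeta_F(\epsilon)=(1-q^{-\epsilon})^{-1}\ge1$. The only point requiring care is the identification of the local factor when $T$ is not split, namely making sure that temperedness yields unit-modulus Frobenius eigenvalues; this is exactly the computation already carried out in Lemma \ref{Lfunction:expand:cone}, where all eigenvalues of Frobenius on inertia invariants have norm $1$ and the $\mathrm{Sym}^k$ parts are absent for tori. Accordingly, it is more direct to argue straight from that lemma: we have $L(\epsilon,\chi,\rho)=\det(I-q^{-\epsilon}\phi(\mathrm{Frob})|_{V^{I_F}})^{-1}$ with all eigenvalues $\lambda_l$ satisfying $|\lambda_l|=1$. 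Then
\begin{align*}
|L(\epsilon,\chi,\rho)|=\prod_l|1-\lambda_lq^{-\epsilon}|^{-1}\le (1-q^{-\epsilon})^{-\dim V^{I_F}}\le\zeta_F(\epsilon)^{\dim\rho},
\end{align*}
which is the claimed bound. We expect no genuine obstacle in this argument.
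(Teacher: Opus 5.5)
Your proof is correct. Your first route, which groups the weights into Frobenius orbits, writes $L(s,\chi,\rho)=\prod_i L(s,\chi\circ\pi_i)$ via Lemma~\ref{etaleTate:local:factr:equal}, and bounds each Tate factor by $(1-q^{-d_i\epsilon})^{-1}\le(1-q^{-\epsilon})^{-1}$, is exactly the paper's argument. The only difference is the last step: the paper uses $\zeta_F(\epsilon)^k\le\zeta_F(\epsilon)^{\dim\rho}$ ($k$ the number of orbits), whereas you use $\zeta_F(\epsilon)\le\zeta_F(\epsilon)^{d_i}$ and $\sum_i d_i=\dim\rho$. Two small remarks. The roots-of-unity chain is unnecessary. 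The ramified case also never occurs: $\pi_i(\mathcal{O}_{E_i}^\times)$ is compact, hence contained in $T(F)^1$, so $\chi\circ\pi_i$ is unramified for $\chi\in\Im\Lambda_T$.

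Your closing argument is a genuinely different route. You bound the Artin factor of the bounded $W_F$-representation $\rho\circ\mathrm{LL}_T(\chi)$ directly: $|L(\epsilon,\chi,\rho)|=\prod_l|1-\lambda_l q^{-\epsilon}|^{-1}\le(1-q^{-\epsilon})^{-\dim V^{I_F}}$, with $|\lambda_l|=1$ as in Lemma~\ref{Lfunction:expand:cone}. This is shorter and more general.

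It uses neither that $T$ is unramified nor the torus $D^\rho$ of \S\ref{subsec:LN}, so it bypasses Lemma~\ref{etaleTate:local:factr:equal}. That lemma's proof needs $\rho$ to factor through ${}^LT\to{}^LD^\rho$ followed by the standard representation, i.e.\ $W_F$ must permute a basis of weight vectors of $\rho$. This is not automatic, e.g.\ if $\rho$ is twisted by a nontrivial unramified unitary character of $W_F$, although the bound itself is unaffected.

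Your route also extends verbatim to tempered parameters with $\mathrm{Sym}^k$ constituents, where the extra factor $q^{-k/2}$ only helps. This gives $|L(\epsilon,\sigma,\rho_M)|\le\zeta_F(\epsilon)^{\dim\rho}$ for all $\sigma\in\Pi_2(M)$. What the paper's route buys in exchange is an explicit formula for $L(s,\chi,\rho)$ as a product of Tate factors, reusing machinery already set up for the comparison with Luo--Ng\^o.
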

\begin{proof} Let $D^\rho$ denote the torus over $F$ as constructed in \S\ref{subsec:LN}, and $\rho_T:D^\rho\to T$ denote the canonical map. Since $T$ is unramified, we have $D^\rho \cong \prod\limits_{i=1}^k\mathrm{Res}_{E_i/F}\mathbb{G}_{mE_i}$ for some finite unramified extensions $E_i$ of $F$. Let $\pi_i$ be morphisms defined in  \eqref{eq:pii}. By \hyperref[etaleTate:local:factr:equal]{Lemma \ref{etaleTate:local:factr:equal}}, one has 
\begin{align*}
    L(s,\chi,\rho) = \prod_{i=1}^k L(s,\chi\circ \pi_i)= \prod_{i=1}^k\frac{1}{1-(\chi\circ \pi_i)(\varpi_{E_i})q^{-[E_i:F]s}}.
\end{align*}
Since $\chi$ is unitary, for $\epsilon>0$ 
\begin{align*}
    |L(\epsilon,\chi\circ \pi_i)| \leq \frac{1}{1-q^{-[E_i:F]\epsilon}} \leq \frac{1}{1-q^{-\epsilon}}=\zeta_F(\epsilon).
\end{align*}
As $\dim\rho\geq k$ and $\zeta_F(\epsilon)\geq 1$, we conclude that
\begin{align*}
    |L(\epsilon,\chi,\rho)|\le \zeta_F(\epsilon)^k \le \zeta_F(\epsilon)^{\dim \rho}.
\end{align*}
\end{proof}

\subsection{Examples}\label{ssec:example} 

In this subsection we discuss examples related to symmetric powers of $\GL_2$ (c.f. \cite{BKNtriples}). 



\subsubsection{Odd symmetric powers} Suppose $G=\GL_2$ and let $\nu:=\det.$ Let $V_{\mathrm{std}}$ be the standard representation of $\GL_2$. Let $B> T$ be the group of upper triangular matrices and diagonal matrices respectively. Identify $X^\ast(T)=\ZZ^2$ so that $\begin{psmatrix}
    t_1\\
    & t_2
\end{psmatrix}\mapsto t_i$ corresponds to the standard basis $e_i$. Fix $n\ge 1.$ Let $\rho=\mathrm{Sym}^{2n+1} V_{\mathrm{std}}\otimes \det^{-n}$ be an irreducible representation of $\GL_2(\CC)=\GL_2^\vee(\CC).$  Its highest weight $\lambda_\rho$ is $(n+1,-n).$ The Hilbert basis of the cone $\overline{C}_{\rho,T}\cap \overline{X^\ast(T)}^+$ is $(1,1),  (n+1,n).$ Consider the map
\begin{align*}
    \omega:\GL_2&\longrightarrow \mathrm{End}(V_{\mathrm{std}}\otimes \det \nolimits^n)\oplus\mathrm{End}(\det)\\
    g &\mapsto \left(g\det g^{n},\det g\right).
\end{align*}
Then $\overline{\omega(\GL_2)}=\{(A,z)\in \mathrm{Mat}_{2}\times \GG_a: \det A=z^{2n+1}\}.$ It has a unique singularity at the origin. Since $\overline{\omega(\GL_2)}$ is a hypersurface in $\mathrm{Mat}_{2}\times \GG_a,$ by Serre's criterion for normality (c.f. \cite[\href{https://stacks.math.columbia.edu/tag/031S}{Tag 031S}]{stacks-project}), $\overline{\omega(\GL_2)}$ is normal and thus $X_\rho=\overline{\omega(\GL_2)}.$

Define 
\begin{align}\label{eq:centralshift}
    \chi_{\rho,G} : =|\nu|^{-(2n+1)/2}=|\nu|^{-\langle \eta_G,\lambda_\rho\rangle}.
\end{align}
One has 
\begin{align*}
     \chi_{\rho,G}\begin{psmatrix}
        t_1 & \\
        & t_2
    \end{psmatrix}:=|t_1t_2|^{-(2n+1)/2}.
\end{align*}

\begin{lemma}\label{lem:SESdescent}
    Let $f\in \mathcal{S}_{\mathrm{ES}}(X_\rho(F)).$ Then $f^{(B)}\in \chi_{\rho,G}\mathcal{S}_{\mathrm{ES}}(X_{\rho_T}(F)).$
\end{lemma}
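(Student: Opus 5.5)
We compute the constant term directly. Here $f^{(B)}(t)=\delta_B^{1/2}(t)\int_{N(F)}f(tn)\,dn$ with $N$ the unipotent upper triangular group, and $\delta_B^{1/2}\begin{psmatrix} t_1&\\&t_2\end{psmatrix}=|t_1/t_2|^{1/2}$. Since $\mathcal{S}_{\mathrm{ES}}(X_\rho(F))=\mathcal{S}(\mathrm{Mat}_2(F)\times F)|_{X_\rho(F)}$, we may take $f(A,z)=\phi(A)\phi_0(z)$ with $\phi=\phi_{11}\otimes\phi_{12}\otimes\phi_{21}\otimes\phi_{22}$ a product over matrix entries and $\phi_0\in\mathcal{S}(F)$; by linearity this suffices.

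For $t=\mathrm{diag}(t_1,t_2)$ and $n=\begin{psmatrix}1&x\\&1\end{psmatrix}$ we have $\omega(tn)=\big((t_1t_2)^n\begin{psmatrix}t_1&t_1x\\0&t_2\end{psmatrix},\,t_1t_2\big)$. Write $d=(t_1t_2)^n$. Then
\begin{align*}
f^{(B)}(t)=|t_1/t_2|^{1/2}\,\phi_{11}(dt_1)\phi_{21}(0)\phi_{22}(dt_2)\phi_0(t_1t_2)\int_F\phi_{12}(dt_1x)\,dx.
\end{align*}
The last integral equals $|dt_1|^{-1}\widehat{\phi_{12}}(0)$. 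Hence
\begin{align*}
f^{(B)}(t)=c\,|t_1t_2|^{-n}\,|t_1|^{-1/2}|t_2|^{-1/2}\,\phi_{11}(dt_1)\phi_{22}(dt_2)\phi_0(t_1t_2)
=c\,\chi_{\rho,G}(t)\,\Phi(t),
\end{align*}
where $\Phi(t)=\phi_{11}((t_1t_2)^nt_1)\,\phi_{22}((t_1t_2)^nt_2)\,\phi_0(t_1t_2)$. The exponents combine exactly: $-n-1/2=-(2n+1)/2$.

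It remains to check that $\Phi\in\mathcal{S}_{\mathrm{ES}}(X_{\rho_T}(F))$. The weights of $\rho|_{T^\vee}$ are $(n+1-k,-n+k)$ for $0\le k\le 2n+1$, so $\sigma_\rho$ is the cone spanned by $(n+1,-n)$ and $(-n,n+1)$. Its dual $\overline{C}_{\rho,T}$ is generated by $(n+1,n)$ and $(n,n+1)$, and it contains $(1,1)$. The three characters $t_1^{n+1}t_2^n$, $t_1^nt_2^{n+1}$ and $t_1t_2$ lie in $\sigma_\rho^\vee\cap X^*(T)$, so they are regular functions on $\overline{T}=X_{\rho_T}$. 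Consequently $\Phi$ is the pullback of a Schwartz function under a morphism $\overline{T}\to\mathbb{A}^3$, and is therefore locally constant on $\overline{T}(F)$.

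The main obstacle is compact support of $\Phi$. The morphism $\overline{T}\to\mathbb{A}^3$ is the restriction of the map used in $\omega$, namely $\overline{T}\to\overline{\omega(\GL_2)}\subset\mathrm{Mat}_2\times\GG_a$ via Lemma \ref{lem:modelmap}, and $\overline{T}$ is closed in $X_\rho$. The first two coordinates alone are the diagonal entries and already give a closed embedding, since $\overline{T}$ is cut out in $\mathbb{A}^2$ by its coordinate ring. Concretely, the Hilbert basis of $\sigma_\rho^\vee\cap\ZZ^2$ is $\{(n+1,n),(n,n+1)\}$ together with the intermediate lattice points, and $(1,1)=(n+1,n)+(n,n+1)-2n(1,1)$ is not generated this way. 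So we use all three functions as coordinates, which embeds $\overline{T}$ as a closed subvariety of $\mathbb{A}^3$. Under this closed embedding the preimage of a compact set is compact, so $\Phi$ has compact support. Equivalently, by Lemma \ref{toric:relcpt:2} and Lemma \ref{toric:relcpt:1}, the support of $\Phi$ is contained in $\{|t_1t_2|,\,|t_1^{n+1}t_2^n|,\,|t_1^nt_2^{n+1}|\le c\}$, which is relatively compact in $\overline{T}(F)$ because these three characters generate the dual cone. Hence $f^{(B)}\in\chi_{\rho,G}\mathcal{S}_{\mathrm{ES}}(X_{\rho_T}(F))$.
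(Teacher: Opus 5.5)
Your proof is correct and is essentially the paper's argument. After reducing to pure tensors, the substitution $x\mapsto x\,(t_1^{n+1}t_2^{n})^{-1}$ in the unipotent integral combines with $\delta_B^{1/2}$ to give exactly $\chi_{\rho,G}$ times a constant. What remains is the restriction to $X_{\rho_T}$ of a Schwartz function of the diagonal entries and $\det$.

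There is one slip in your last paragraph. The two diagonal coordinates alone do not give a closed embedding of $X_{\rho_T}=\{xy=z^{2n+1}\}$: the Hilbert basis of $\sigma_\rho^\vee\cap\ZZ^2$ is exactly $\{(1,1),(n+1,n),(n,n+1)\}$, so the coordinate $z=t_1t_2$ is genuinely needed. Your final argument, which uses all three coordinates, is correct.
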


\begin{proof}
By \hyperref[lemma:constant]{Lemma \ref{lemma:constant}} and \hyperref[SESinSinfty]{Lemma \ref{SESinSinfty}}, the integral defining $f^{(B)}$ is absolutely convergent. As $\mathcal{S}_{\mathrm{ES}}(X_{\rho_T}(F))$ is stable under multiplication by $C^\infty(X_{\rho_T}(F))$, to prove the lemma, it suffices to show that for $f\in \mathcal{S}(F)$, the smooth function on $T(F)$
\begin{align*}
   h\begin{psmatrix}
       t_1\\
       & t_2
   \end{psmatrix}:=\chi_{\rho,G}^{-1}\delta_{B}^{1/2}\begin{psmatrix}
       t_1& \\
        & t_2
   \end{psmatrix}\int_{F}f(t_1^{n+1}t_2^nu) du.
\end{align*}
extends to a smooth function on $X_{\rho_T}(F).$ By a change of variables $u\mapsto ut_1^{-(n+1)}t_2^{-n},$ we have
\begin{align*}
    h\begin{psmatrix}
       t_1\\
       & t_2
   \end{psmatrix}=\int_{F}f(u) du
\end{align*}
is a constant. The lemma follows.
\end{proof}

\begin{corollary}
    For any $\epsilon>0,$ $\chi_{\rho,G}^{-1}\mathcal{S}_{\mathrm{ES}}(X_\rho(F))\subset L^1(G(F),|\det g|^{\epsilon}dg).$
\end{corollary}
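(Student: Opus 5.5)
We need to show that for $f\in\mathcal{S}_{\mathrm{ES}}(X_\rho(F))$ and $\epsilon>0$, $\int_{G(F)}|f(g)|\,|\chi_{\rho,G}^{-1}(g)|\,|\det g|^{\epsilon}dg<\infty$. Replacing $f$ by $|f|$ is harmless: $|f|$ is again locally constant with compact support in $X_\rho(F)$, so $|f|\in\mathcal{S}_{\mathrm{ES}}(X_\rho(F))$. We may therefore assume $f\ge 0$; then all integrals below have nonnegative integrands, and Tonelli lets us rearrange them freely.

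We use the Iwasawa decomposition $G(F)=B(F)K$ with $dg=\delta_B^{-1}(t)\,dt\,du\,dk$ for $g=tuk$, $t\in T(F)$, $u\in N(F)$. The factor $|\chi_{\rho,G}^{-1}|\,|\det|^\epsilon$ is a character of $G(F)$ that is trivial on $K$ and on $N(F)$, so the integral equals
\begin{align*}
\int_K\int_{T(F)}\chi_{\rho,G}^{-1}(t)|\det t|^{\epsilon}\,\delta_B^{-1/2}(t)\Big(\delta_B^{1/2}(t)\int_{N(F)}(R(1,k)f)(tu)\,du\Big)dt\,dk .
\end{align*}
The parenthetical is $(R(1,k)f)^{(B)}(t)$. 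Since $\mathcal{S}_{\mathrm{ES}}(X_\rho(F))$ is $G\times G$-stable and $f$ is $K$-finite, only finitely many translates $R(1,k)f$ occur, up to a locally constant partition of $K$. It therefore suffices to treat a single $f$, for which the integral is
\begin{align*}
\int_{T(F)}\chi_{\rho,G}^{-1}(t)\,\delta_B^{-1/2}(t)\,|\det t|^\epsilon\, f^{(B)}(t)\,dt .
\end{align*}

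Next apply Lemma \ref{lem:SESdescent}: $f^{(B)}=\chi_{\rho,G}\phi$ with $\phi\in\mathcal{S}_{\mathrm{ES}}(X_{\rho_T}(F))$ and $\phi\ge 0$. We are reduced to showing
\begin{align*}
\int_{T(F)}\phi(t)\,\delta_B^{-1/2}(t)\,|t_1t_2|^{\epsilon}\,dt<\infty .
\end{align*}
By Lemma \ref{toric:relcpt:2}, $\supp\phi$ lies in $\{t:|\tau(t)|\le c\ \forall\tau\in\mathcal{B}_0\}$, where $\mathcal{B}_0$ generates $\overline{C}_{\rho,T}$. Here $\overline{C}_{\rho,T}$ is the dual cone to the weights $(n+1-j,\,j-n)$, $j=0,\dots,2n+1$; its extreme rays are $\tau_1=(n+1,n)$ and $\tau_2=(n,n+1)$. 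Write $a=\mathrm{ord}(t_1)$, $b=\mathrm{ord}(t_2)$. The support condition becomes $(n+1)a+nb\ge -C$ and $na+(n+1)b\ge -C$.

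The integrand equals $q^{(a-b)/2}q^{-\epsilon(a+b)}$ up to a bounded factor. Change variables to $x=(n+1)a+nb$ and $y=na+(n+1)b$, both $\ge -C$. Then $a+b=(x+y)/(2n+1)$ and $a-b=x-y$. The sum becomes
\begin{align*}
\sum q^{(x-y)/2}\,q^{-\epsilon(x+y)/(2n+1)} .
\end{align*}
The factor $q^{x/2}$ grows in $x$, so this naive bound diverges. This is the main obstacle: we need to use the decay of $\phi$ itself rather than the crude support bound.

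The fix is to use both Weyl-conjugate directions and the $W$-symmetry of $f$'s origin. Since $\int_K\int_T$ of $g\mapsto|f(g)|\cdots$ is independent of the choice of Borel, we may average over $B$ and its opposite. More directly, we exploit the fact that $\phi$ is compactly supported in $X_{\rho_T}(F)$ and locally constant there. The remaining character $\delta_B^{-1/2}|\det|^\epsilon$ should be checked against the support cone. The cone $\{x,y\ge -C\}$ allows $a\to+\infty$ with $b\approx -\tfrac{n+1}{n}a$. Along that direction $a-b>0$, so $q^{(a-b)/2}$ grows. The correct move is to use the explicit formula from the proof of Lemma \ref{lem:SESdescent}: $f^{(B)}(t)$ is, up to $\chi_{\rho,G}$, the value at $(t\det t^n,\det t)$ of a function obtained by integrating out the upper-right entry. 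That entry ranges over $\mathrm{Mat}_2$-points with fixed diagonal $(t_1^{n+1}t_2^n,\,t_1^nt_2^{n+1})$ and fixed $z=t_1t_2$, subject to $\det A=z^{2n+1}$. Compactness forces both diagonal entries to be bounded, so $x,y\ge -C$. But in the original, unreduced integral over $G(F)$, the lower-left entry is also bounded. Rather than descending, I will therefore bound directly via the Cartan decomposition: $\int_{G}|f|\,|\chi^{-1}||\det|^\epsilon\le\sum_{m}\mathrm{vol}(KmK)\,\chi^{-1}(m)\,|\det m|^\epsilon$ over dominant $m=\mathrm{diag}(\varpi^a,\varpi^b)$ with $a\le b$ and $S_0$-bounded. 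Using \eqref{Cartan:measure}, $\mathrm{vol}(KmK)\asymp q^{b-a}$. On the dominant chamber $a\le b$ the binding constraint is $x=(n+1)a+nb\ge -C$. The summand is $q^{(b-a)}\,q^{-(a+b)(2n+1)/2}\cdots$, and this is the computation to carry out. I expect it to give geometric decay in both $a+b$ and $b-a$ exactly when $\epsilon>0$, with the $W$-symmetry handling the other chamber.
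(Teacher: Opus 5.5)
Your reduction is correct, and so is your observation that the resulting sum diverges. The gap is the final step: the Cartan sum does not decay for every $\epsilon>0$. In fact the statement as written is false for $0<\epsilon\le n+\tfrac12$, and no use of decay of $\phi$, averaging over Borel subgroups, or the Cartan decomposition can change that.

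On the reduction itself, one slip: for $g=tuk$ the Iwasawa measure is $dg=dt\,du\,dk$, not $\delta_B^{-1}(t)\,dt\,du\,dk$. Your next display is nevertheless the one the correct measure gives. After Lemma \ref{lem:SESdescent} you are indeed left with $\int_{T(F)}\phi(t)\,\delta_B^{-1/2}(t)\,|t_1t_2|^{\epsilon}\,dt$.

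The divergence is genuine. Take $f=\mathbf{1}_{X_\rho(\mathcal{O}_F)}$, i.e.\ $f(g)=1$ iff $(\det g)^n g\in\mathrm{Mat}_2(\mathcal{O}_F)$ and $\det g\in\mathcal{O}_F$. Then $\phi$ is exactly the indicator of $\{x\ge 0,\ y\ge 0\}$ in your coordinates, so your divergent sum is the integral itself, not a crude bound.

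The Cartan side fails the same way. Let $m_k=\mathrm{diag}(\varpi^{-nk},\varpi^{(n+1)k})$. Then $(\det m_k)^n m_k=\mathrm{diag}(1,\varpi^{(2n+1)k})$ and $\det m_k=\varpi^{k}$, so $Km_kK\subseteq\supp f$. Since $\mathrm{vol}(Km_kK)\asymp q^{(2n+1)k}$, this double coset contributes $\asymp q^{(2n+1)k}\,q^{-k(n+\frac12+\epsilon)}=q^{k(n+\frac12-\epsilon)}$. These $m_k$ lie on the wall $(n+1)a+nb=0$ of the region you describe, where $b-a=(2n+1)(a+b)$, and there your summand grows. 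Your expected geometric decay for all $\epsilon>0$ therefore fails. Hence $\chi_{\rho,G}^{-1}f\notin L^1(G(F),|\det g|^{\epsilon}dg)$ for $0<\epsilon\le n+\tfrac12$. Your own $(x,y)$-sum shows the inclusion holds exactly when $\epsilon>n+\tfrac12$.

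For comparison, the paper's proof gets finiteness for every $\epsilon>0$ only because its reduced integral $\int_{(F^\times)^2}f(t_1^{n+1}t_2^n,t_1^nt_2^{n+1},t_1t_2)\,|t_1t_2|^{\epsilon}\,d^\times t_1\,d^\times t_2$ omits the factor $\delta_B^{-1/2}(t)=|t_2/t_1|^{1/2}$ that you correctly kept. That reduced integral is what one gets from $\int_{G(F)}|\chi_{\rho,G}^{-1}f|(g)\,|\det g|^{\epsilon}\,\Xi(g)\,dg$ for bi-$K$-invariant $f$. Indeed, for bi-$K$-invariant $\Phi$ one has $\int_{G(F)}\Phi\,\Xi\,dg=\int_{T(F)}\delta_B^{1/2}(t)\int_{N(F)}\Phi(tu)\,du\,dt$, and this integral is finite for all $\epsilon>0$.

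So two statements can actually be proved:
\begin{itemize}
\item integrability against $\Xi$ (i.e.\ against tempered matrix coefficients, the kind of condition entering $\mathcal{S}^\infty(G(F))$) for every $\epsilon>0$;
\item plain membership in $L^1(G(F),|\det g|^{\epsilon}dg)$ for $\epsilon>n+\tfrac12$.
\end{itemize}
Rather than look for a fix that works for small $\epsilon$, restate the claim in one of these two forms.
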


\begin{proof}
    By the Iwasawa decomposition and the computation above, it suffices to show for $f\in \mathcal{S}(F^3)$
\begin{align*}
    \int_{(F^\times)^2} f(t_1^{n+1}t_2^n,t_1^n t_2^{n+1},t_1t_2)|t_1t_2|^{\epsilon}d^\times t_1d^\times t_2<\infty.
\end{align*}
By changing variables $t_1\mapsto t_1t_2^{-1}$ and  $t_2\mapsto t_1^{-n}t_2$ successively, the integral is
\begin{align*}
    \int_{(F^\times)^2} f(t_1^{2n+1}t_2^{-1},t_2, t_1) |t_1|^{\epsilon} d^\times t_1d^\times t_2,
\end{align*}
which is clearly finite.
\end{proof}

\begin{proposition}\label{prop:contain}
    We have $\mathcal{S}_{\mathrm{ES}}(X_\rho(F))\subseteq \chi_{\rho,G}\mathcal{S}_\rho(G(F)).$ 
\end{proposition}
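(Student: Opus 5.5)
The plan is to check, for $f\in\mathcal{S}_{\mathrm{ES}}(X_\rho(F))$, that $\chi_{\rho,G}^{-1}f$ satisfies the three conditions defining $\mathcal{S}_\rho(G(F))$. These are: membership in $\mathcal{S}^\infty(G(F))$, condition \eqref{S2}, and $|\nu|^r\chi_{\rho,G}^{-1}f\in\mathcal{C}(G(F))$ for $r$ large. The first and third are essentially already available. By \hyperref[SESinSinfty]{Lemma \ref{SESinSinfty}}, $f\in\mathcal{S}^\infty(G(F))$ and $|\nu|^rf\in\mathcal{C}(G(F))$ for large $r$. Twisting by the character $\chi_{\rho,G}^{-1}=|\nu|^{(2n+1)/2}$ only shifts the positive cone: it stays a positive cone, since $|\nu|^s$ lies in it for $s$ large. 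It also shifts $r$ by a constant. So the content of the proof is \eqref{S2}: for every $(M,\sigma)\in\widetilde{\Temp}_\Ind(G)$ and every $(v,w)$, the quotient $Z(\Ind_P^G\sigma_\chi,\chi_{\rho,G}^{-1}f,v,w)/L(0,\sigma_\chi,\rho_M)$ should lie in $\CC[\Lambda_M]$. For $G=\GL_2$ there are three cases: $M=T$ with $\sigma$ a unitary character, $M=G$ with $\sigma$ supercuspidal, and $M=G$ with $\sigma$ a twisted Steinberg representation.

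\emph{Case $M=T$.} By \hyperref[constant:zeta:identity]{Lemma \ref{constant:zeta:identity}}, the zeta integral reduces to finite sums of toric zeta integrals of constant terms $(R(k,k')\chi_{\rho,G}^{-1}f)^{(B)}$, and \hyperref[lem:constant]{Lemma \ref{lem:constant}} transports meromorphy. Since $\chi_{\rho,G}$ is central-type, $(\chi_{\rho,G}^{-1}f)^{(B)}=\chi_{\rho,G}^{-1}f^{(B)}$. By \hyperref[lem:SESdescent]{Lemma \ref{lem:SESdescent}} this lies in $\mathcal{S}_{\mathrm{ES}}(X_{\rho_T}(F))$, and $\mathcal{S}_{\mathrm{ES}}$ is stable under translation by $K\times K$. \hyperref[prop:ESinrho]{Proposition \ref{prop:ESinrho}} for the split torus $T$ then gives the required divisibility by $L(0,\sigma_\chi,\rho_T)$.

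\emph{Case $M=G$, $\sigma$ supercuspidal.} Here $\RR_{\ge0}\widetilde\sigma_{\rho,G}$ is a ray, because $X^*(G)$ has rank one, and $G/Z_G$ is isotropic. \hyperref[lem:cuspidalhol]{Lemma \ref{lem:cuspidalhol}} applied to $f$ shows that $Z(\sigma_\chi,f,v,w)$ is a polynomial. The twist by $\chi_{\rho,G}^{-1}$ is a shift inside $\Lambda_G$, so the twisted zeta integral is also a polynomial, and dividing by $L^{-1}$ keeps it in $\CC[\Lambda_G]$.

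\emph{Case $M=G$, $\sigma$ Steinberg.} This is the step I expect to be the main obstacle. I would realize $\mathrm{St}\otimes\mu$ as a subrepresentation of $\Ind_B^G(\mu|\cdot|^{1/2}\otimes\mu|\cdot|^{-1/2})$. \hyperref[lem:cs:subquo]{Lemma \ref{lem:cs:subquo}} needs only $f\in\mathcal{S}^\infty$ and the meromorphic continuation, both of which are available. With it, the Steinberg zeta integral equals a principal-series zeta integral at $\lambda=(|\cdot|^{1/2},|\cdot|^{-1/2})$, restricted to $\Lambda_G$. The $T$ case then shows the restricted function lies in $L(0,\mu_{\lambda\chi},\rho_T)\CC[\Lambda_G]$. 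It remains to compare this with $L(0,\mathrm{St}_\chi,\rho)$. For $\rho=\mathrm{Sym}^{2n+1}\otimes\det^{-n}$ the parameter of $\mathrm{St}\otimes\mu$ is $\mu\otimes\mathrm{Sym}^{2n+1}$ of $\mathrm{SL}_2$, so its $L$-factor is the single term $L(s+(2n+1)/2,\mu^{2n+1})$ coming from the top weight. The principal-series factor is $\prod_{j=0}^{2n+1}L(s+(2n+1)/2-j,\mu^{2n+1})$, which has extra poles. So I must show those extra poles cancel: the restricted zeta integral of the Steinberg vector vanishes to the appropriate order.

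For that cancellation I would use the lifts of $(v,w)$ from \hyperref[lem:cs:subquo]{Lemma \ref{lem:cs:subquo}}, taking $w$ to pair trivially with the trivial-character line, together with the explicit form of $f^{(B)}$ from \hyperref[lem:SESdescent]{Lemma \ref{lem:SESdescent}}. Its proof shows that the integral over $N$ collapses the first coordinate to the constant $\int_F f(u)\,du$. This suggests the $B$-constant term, restricted along the Steinberg line, has the form of a Tate integral in the remaining coordinates only. That would give exactly one surviving factor $L(s+(2n+1)/2,\mu^{2n+1})$.

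A fallback is the approach of \hyperref[lem:cuspidalhol]{Lemma \ref{lem:cuspidalhol}}. Steinberg matrix coefficients restricted to $G(F)^1$ are not compactly supported, but they decay like $\delta_B^{1/2}$ times a polynomial. Using the uniform local constancy of $f$ on $X_\rho(F)$, I would show that the large-$m$ tail sums to exactly one geometric series in $q^{-s}$. That series would account for the single pole of $L(0,\mathrm{St}_\chi,\rho)$.
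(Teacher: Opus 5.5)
Your treatment of everything except the twisted Steinberg case matches the paper. Lemma~\ref{SESinSinfty} leaves only \eqref{S2}. Principal series follow from Lemma~\ref{constant:zeta:identity}, Lemma~\ref{lem:SESdescent} and Proposition~\ref{prop:ESinrho}, and supercuspidals from Lemma~\ref{lem:cuspidalhol}. The gap is the Steinberg case. It is the only step that needs a new idea, and there you give heuristics rather than an argument.

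\emph{Route (a).} The $N$-integral in Lemma~\ref{lem:SESdescent} integrates out the $(1,2)$ matrix entry, not a toric coordinate. So $\chi_{\rho,G}^{-1}f^{(B)}$ is still a general element of $\mathcal{S}_{\mathrm{ES}}$ of the surface $\{xy=z^{2n+1}\}$. Its Tate integral along $\lambda\Lambda_G$ generically has poles other than that of $L(0,\mathrm{St}_\chi,\rho)$: already the indicator of the integral points of that surface shows the pole coming from the lowest-weight ray. Nothing reduces to ``a Tate integral in the remaining coordinates only''. A cancellation would have to come from $v$ lying in the kernel of $\Ind_B^G\delta_B^{1/2}\to\mathbf{1}$, interacting with the $K\times K$-sum of Lemma~\ref{constant:zeta:identity}, and you do not carry this out.

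\emph{Route (b).} This only asserts its conclusion. Steinberg coefficients are not integrable on $G(F)^1$, so the mechanism of Lemma~\ref{lem:cuspidalhol} is unavailable. You would still need three things:
a substitute for that mechanism (e.g.\ vanishing of $\int_{KaK}\langle\pi(g)v,w\rangle\,dg$, since $\mathrm{St}^K=0$);
control of the boundary shells, where $f$ is evaluated near the rank-one stratum of $X_\rho$ rather than near $0$;
a check that the two cosets of $Z_G(F)G(F)^1$ combine to $(1-q^{-s})^{-1}$ rather than $(1-q^{-2s})^{-1}$.

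Also, $\rho\circ(\mu\otimes\mathrm{Sym}^1)=\mu\otimes\mathrm{Sym}^{2n+1}$, because the $\det^{-n}$ contributes $\mu^{-2n}$. So the relevant factors are $L(\cdot,\mu)$, not $L(\cdot,\mu^{2n+1})$.

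The missing idea is the paper's reduction to Godement--Jacquet theory.
\begin{enumerate}
\item Reduce to trivial central character. Ramified $\mu$ is already handled by your principal-series bound, since all relevant $L$-factors are $1$. Unramified unitary $\mu$ can be absorbed into $s$.
\item Since $X_\rho\subset\mathrm{Mat}_2\times\GG_a$ is closed, you may take $f=f_1\otimes f_2$. Up to finitely many shells, replace $f_2(\det g)$ by $f_2(0)$ on $|\det g|\le 1$.
\item Trivial central character gives $\langle\pi(g)v,w\rangle=\langle\pi(g\det g^n)v,w\rangle$. On the shell $|\det g|=q^{-j}$, the substitution $h=g\det g^n$ yields $q^{-js}$ times an integral of $f_1(h)\langle\pi(h)v,w\rangle$ over $|\det h|=q^{-(2n+1)j}$, with $\det h$ restricted to $\varpi^{(2n+1)j}(\mathcal{O}_F^\times)^{2n+1}$.
\item Using $K$-finiteness and Fourier analysis on the finite group $\mathcal{O}_F^\times/(U\cdot(\mathcal{O}_F^\times)^{2n+1})$, this becomes a finite sum of twisted shell integrals $\int_{|\det h|=q^{-(2n+1)j}}f_1(h)\langle\pi(h)v,w\rangle\chi(\det h)\,dh$.
\item Godement--Jacquet for $\mathrm{St}\otimes\chi$ says these zeta integrals lie in $L(s,\chi)\CC[q^{\pm s}]$ in this normalization. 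Hence the shell integrals equal $c'\delta_{\chi=1}$ for $j\gg0$.
\end{enumerate}
The tail is therefore $c'\sum_j q^{-js}$, which gives $Z(\pi_{|\nu|^s},f,v,w)\in\zeta_F(s)\CC[q^{\pm s}]=L(s-\tfrac{2n+1}{2},\pi,\rho)\CC[q^{\pm s}]$. This is exactly \eqref{S2} for $\chi_{\rho,G}^{-1}f$.
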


\begin{proof}
    Let $f\in \mathcal{S}_{\mathrm{ES}}(X_\rho(F)).$ View $f$ as a function on $G(F)$ by restriction. By \hyperref[SESinSinfty]{Lemma \ref{SESinSinfty}}, we only need to check $\chi_{\rho,G}^{-1}f$ satisfies \eqref{S2}. For $\sigma\in \Pi_2(T),$ by \hyperref[constant:zeta:identity]{Lemma \ref{constant:zeta:identity}}, \hyperref[prop:ESinrho]{Proposition \ref{prop:ESinrho}} and \hyperref[lem:SESdescent]{Lemma  \ref{lem:SESdescent}}, we have for $(v,w)\in \Ind_{K_B}^K \sigma|_{K_T}\times \Ind_{K_B}^K \sigma^\lor |_{K_T}$
    \begin{align}\label{eq:inductionstep}
    \begin{split}
        &Z(\mathrm{Ind}_P^G \sigma_\chi,\chi_{\rho,G}^{-1}f,v,w)\\
        &=\vol(K_0,dg)^2\sum_{k,k'\in K/K_0} Z(\sigma_\chi,\chi_{\rho,G}^{-1}(R(k,k')f)^{(B)},v(k'),w(k))\in L(0,\sigma_\chi,\rho_T)\CC[\Lambda_T].
    \end{split}
    \end{align}
 
Let $\pi\in \Pi_2(G)$ and $(v,w)\in \pi\times\pi^\lor.$ When $\pi\in \Pi_0(G),$ then $Z(\pi_{|\nu|^s},\chi_{\rho,G}^{-1}f,v,w)\in \CC[q^{\pm s}]$ by \hyperref[lem:cuspidalhol]{Lemma \ref{lem:cuspidalhol}}. Thus assume $\pi\in \Pi_2(G)-\Pi_0(G),$ say $\pi< \Ind_B^G \sigma$ where $\sigma$ is a quasi-character of $T(F)$. By \eqref{eq:inductionstep}, we have $Z(\pi_{|\nu|^s},\chi_{\rho,G}^{-1}f,v,w)\in L(s,\sigma,\rho_{T})\CC[q^{\pm s}]$. Therefore, by \eqref{eq:centralshift} to prove the proposition it suffices to show
\begin{align}\label{eq:containment:entire}
    \frac{Z(\pi_{|\nu|^s},f,v,w)}{L(0,\pi_{|\nu|^s\chi_{\rho,G}},\rho)}= \frac{Z(\pi_{|\nu|^s},f,v,w)}{L(s-\tfrac{2n+1}{2},\pi,\rho)}
\end{align}
is entire.

Decompose $\rho_T=\oplus_i\rho_i$ into irreducible submodules. By the Bernstein-Zelevinsky classification (c.f. \cite[Theorem 8.4.3]{GH:book}), $\pi$ is a twisted Steinberg representation and $\sigma=\eta_G\cdot (\sigma_0\circ\det)$ for some unitary character $\sigma_0$ of $F^\times$. We have 
\begin{align*}
    L(s-\tfrac{2n+1}{2},\sigma,\rho_T)=\prod_{i=0}^{2n+1} L(s-\tfrac{2n+1}{2}, \sigma_0|\cdot|^{\tfrac{2n+1-2i}{2}})=\prod_{i=0}^{2n+1}L(s-i,\sigma_0).
\end{align*}
Therefore, if $\sigma_0$ is ramified, then $Z(\pi_{|\nu|^s},f,v,w)\in \CC[q^{\pm s}].$ We hence assume $\sigma_0$ is trivial, i.e., $\pi$ has trivial central character. We claim $Z(\pi_{|\nu|^s}, f,v,w)\in \zeta_F(s)\CC[q^{\pm s}].$ Note that
\begin{align*}
     L(1+\tfrac{2n+1}{2}-s,\sigma^\lor,\rho_{T})=\prod_{i=0}^{2n+1} L(1-s+i,\sigma_0^\lor)=\prod_{i=0}^{2n+1}\zeta_F(-(s-i-1)).
\end{align*}
Thus by \eqref{LLC:gamma}
\begin{align*}
    \gamma(0,\pi_{|\nu|^s\chi_{\rho,G}},\rho)=\gamma(0,\mathrm{Ind}_B^G \sigma_{|\nu|^s\chi_{\rho,G}},\rho)=\gamma(0, \sigma_{|\nu|^s\chi_{\rho,G}},\rho_T)
\end{align*}
vanishes at $s=0,$ so $L(0,\pi_{|\nu|^s\chi_{\rho,G}},\rho)$ has a pole at $s=0$. The claim implies \eqref{eq:containment:entire} is entire.

To prove the claim, we may assume $f=f_1\otimes f_2\in \mathcal{S}(\mathrm{Mat}_2(F))\otimes \mathcal{S}(F)$. We need to show
\begin{align*}
    \int_{\GL_2(F)} f_1(g\det g^{n})f_2(\det g)\langle\pi(g)v ,w\rangle |\det g|^s dg\in \zeta_F(s)\CC[q^{\pm s}].
\end{align*}
Since $f_2$ is locally constant at the origin, it amounts to showing
\begin{align}\label{eq:GL2goal}
    \int_{|\det g|\le 1} f_1(g\det g^{n})\langle\pi(g)v ,w\rangle |\det g|^s dg\in \zeta_F(s)\CC[q^{\pm s}].
\end{align}
Recall the well-known results of Godement-Jacquet: for $f_1\in \mathcal{S}(\mathrm{Mat}_2(F))$ and characters $\chi:F^\times \to \CC^\times$
\begin{align}\label{eq:GJ}
    \int_{\GL_{2}(F)} f_1(g)\langle\pi(g)v,w\rangle \chi(\det g)|\det g|^s dg\in L(s,\chi)\CC[q^{\pm s}].
\end{align}
Since $\pi$ has trivial central character, one has for $j\ge 0$ 
\begin{align*}
     &\int_{|\det g|=q^{-j}} f_1(g\det g^{n})\langle\pi(g)v ,w\rangle |\det g|^s dg\\
     &=\int_{|\det g|=q^{-j}} f_1(g\det g^{n})\langle\pi(g\det g^n)v ,w\rangle |\det g|^s dg\\
     &=q^{-js}\int_{\mathcal{O}_F^\times}\int_{\SL_2(F)} f_1\left(g\begin{psmatrix}
         \varpi^{2nj+1}a^{2n+1}&\\
         & 1
     \end{psmatrix}\right)\langle\pi\left(g\begin{psmatrix}
         \varpi^{2nj+1}a^{2n+1}&\\
         & 1
     \end{psmatrix}\right)v ,w\rangle dgda.
\end{align*}
By $K$-finiteness, there is an open subgroup $U\le \mathcal{O}_F^\times$ such that for all $g\in \GL_2(F)$ and $u\in U$
\begin{align*}
    f_1\left(g\begin{psmatrix}
        u&\\
         & 1
     \end{psmatrix}\right)\langle\pi\left(g\begin{psmatrix}
         u&\\
         & 1\end{psmatrix}\right)v ,w\rangle =f_1\left(g\right)\langle\pi\left(g\right)v ,w\rangle.
\end{align*}
 
Consider the finite abelian group $H:=\mathcal{O}_F^\times/(U\cdot(\mathcal{O}_F^\times)^{2n+1}).$ Note that every $\chi\in \mathrm{Hom}_{\mathbf{Gp}}(H,\CC^\times )$ can be extended (non-canonically) to a unitary character of $F^\times$ by setting $\chi(\varpi)=1.$ By the Fourier inversion of finite abelian groups, there is an absolute constant $c>0$ such that 
the integral above is equal to
\begin{align*}
     &cq^{-js}\sum_{\chi\in \mathrm{Hom}_{\mathbf{Gp}}(H,\CC^\times )}\int_{|\det g|=q^{-(2n+1)j}} f_1(g)\langle\pi(g)v ,w\rangle \chi(\det g)dg.
\end{align*}
By \eqref{eq:GJ} there is a constant $c'\in \CC$ such that for any $\chi$ and $j$ sufficiently large 
\begin{align*}
    \int_{|\det g|=q^{-(2n+1)j}} f_1(g)\langle\pi(g)v ,w\rangle \chi(\det g)dg=c'\delta_{\chi=1}.
\end{align*}
We conclude that \eqref{eq:GL2goal} is an element in $\zeta_F(s)\CC[q^{\pm s}]$. This concludes the proof. 
\end{proof}

\subsubsection{Even symmetric powers}

Suppose $G=\SL_2\times \GG_m$ and let $\nu$ be the projection to $\GG_m$. Let $B> T$ be the group of upper triangular matrices and diagonal matrices in $\SL_2$ respectively. Let $\alpha$ be the unique positive root of $\SL_2$ with respect to $B$. We identify $X^\ast(T\times \GG_m)=\frac{1}{2}\ZZ\alpha\oplus  \ZZ\nu$. Fix $n\ge 1.$ Let $\rho:=\mathrm{Sym}^{2n} V_{\mathrm{std}}\otimes \nu$ be an irreducible representation of $G^\vee(\CC)=\mathrm{PGL}_2(\CC)\times \CC^\times.$ The Hilbert basis of the cone $\overline{C}_{\rho,T\times\GG_m}\cap \overline{X^\ast(T\times \GG_m)}^+$ is $(0,\nu), (\frac{1}{2}\alpha,n\nu).$ Consider the map
\begin{align*}
    \omega:G&\longrightarrow \mathrm{End}(V_{\mathrm{std}}\otimes \nu^{n})\oplus \mathrm{End(}\nu)\\
    (g,a) &\mapsto \left(a^{n}g,a\right).
\end{align*}
Then $\overline{\omega(G)}=\{(A,z)\in \mathrm{Mat}_2(F)\oplus F: \det A=z^{2n}\}.$ It has a unique singularity at the origin, so it is normal by Serre's normality criterion. Thus $X_\rho=\overline{\omega(G)}.$

Let $f\in \mathcal{S}_{\mathrm{ES}}(X_\rho(F)).$ By a change of variables as in \hyperref[lem:SESdescent]{Lemma \ref{lem:SESdescent}}, one has $f^{(B\times \GG_m)}\in \chi_{\rho,G}\mathcal{S}_{\mathrm{ES}}(X_{\rho_{T\times\GG_m}}(F)),$ where $\chi_{\rho,G}:=-n\nu$. Note that the highest weight of $\rho$ is $\lambda_\rho=(n\alpha^\lor,\nu^\lor).$ Therefore,
\begin{align*}
    \langle \eta_G,(n\alpha^\lor,\nu^\lor)\rangle=n=-\langle \chi_{\rho,G},(n\alpha^\lor,\nu^\lor)\rangle.
\end{align*}
By a similar argument as in \hyperref[prop:contain]{Proposition \ref{prop:contain}}, we have $\mathcal{S}_{\mathrm{ES}}(X_\rho(F))\subseteq \chi_{\rho,G}\mathcal{S}_\rho(G(F)).$ 


\bibliographystyle{alpha}
\bibliography{ref}
\end{document}